\documentclass[reqno]{amsart}
\usepackage[utf8]{inputenc}
\usepackage{amssymb, amsthm}
\usepackage{mathrsfs}
\usepackage{bm,bbm}
\usepackage{microtype} 
\usepackage{comment} 

\usepackage{amsfonts}
\DeclareMathAlphabet{\mathpgoth}{OT1}{pgoth}{m}{n}
\DeclareMathAlphabet{\mathesstixfrak}{U}{esstixfrak}{m}{n}
\DeclareMathAlphabet{\mathboondoxfrak}{U}{BOONDOX-frak}{m}{n}
\usepackage{amsmath,amssymb}
\usepackage[bbgreekl]{mathbbol}
\usepackage{yfonts,mathtools}

\usepackage{tikz-cd}
\usepackage[all,cmtip]{xy}

\numberwithin{equation}{section}
\usepackage{soul}


\usepackage{hyperref}
\hypersetup{colorlinks}
\definecolor{darkred}{rgb}{0.5,0,0}
\definecolor{darkgreen}{rgb}{0,0.5,0}
\definecolor{darkblue}{rgb}{0,0,0.5}
\hypersetup{colorlinks, linkcolor=darkblue, filecolor=darkgreen, urlcolor=darkred, citecolor=darkblue}
\makeatletter 
\@addtoreset{equation}{section}
\makeatother  

\numberwithin{equation}{section}


\newtheorem{thma}{Theorem}

\newtheorem{thm}{Theorem}[section]
\newtheorem{cor}[thm]{Corollary}

\newtheorem{prop}[thm]{Proposition}
\newtheorem{setup}[thm]{Setup} 
\newtheorem{lemma}[thm]{Lemma}
\theoremstyle{definition}
\newtheorem{defn}[thm]{Definition}
\theoremstyle{remark}
\newtheorem{rem}[thm]{Remark}
\newtheorem{hyp}[thm]{Hypothesis}

\usepackage{xcolor}
\newtheorem{example}[thm]{Example}
\newtheorem{notation}[thm]{Notation}



\newcommand{\beq}{\begin{equation}}
\newcommand{\eeq}{\end{equation}}
\newcommand{\beqn}{\begin{equation*}}
\newcommand{\eeqn}{\end{equation*}}
\newcommand{\ov}{\overline}
\newcommand{\mb}{\mathbb}
\newcommand{\mc}{\mathcal}
\newcommand{\mf}{\mathfrak}
\newcommand{\ms}{\mathscr}

\newcommand{\bA}{{\mathbb A}}
\newcommand{\bF}{{\mathbb F}}

\newcommand{\dep}{{\rm depth}}

\newcommand{\bpr}{{\boxplus r}}

\newcommand{\V}{{\bm V}}
\newcommand{\W}{{\bm W}}

\newcommand{\wembed}{\rightharpoonup}

\newcommand{\pss}{{\rm PSS}}
\newcommand{\ssp}{{\rm SSP}}
\newcommand{\hmtp}{{\rm hmtp}}
\newcommand{\per}{{\rm Per}}
\newcommand{\morse}{{\rm Morse}}
\newcommand{\floer}{{\rm Floer}}
\newcommand{\pearl}{{\rm pearl}}

\newcommand{\uds}[1]{\underline{\smash{#1}}}

\renewcommand{\i}{{\bm i}}
\renewcommand{\L}{{\rm L}}

\newcommand{\Gammait}{{\mathit{\Gamma}}}

\newcommand{\ev}{{\rm ev}}


\title{Arnold Conjecture over Integers}

\author{Shaoyun Bai}
\address{Mathematical Sciences Research Institute, Berkeley, CA 94720, USA}
\email{shaoyunb@msri.org}

\author{Guangbo Xu}
\address{Department of Mathematics, Texas A{\&}M University, College Station, Texas 77843, USA\\
and\\
Institute for Advanced Study, Princeton, New Jersey, 08540, USA}
\email{guangboxu@tamu.edu}
\thanks{This material is based upon work supported by the NSF under the Grant No. DMS-1928930, while the first author served as the McDuff postdoctoral fellow at the Simons Laufer Mathematical Sciences Institute (previously known as MSRI) Berkeley, California, during the Fall 2022 semester. }
\thanks{The second author is supported by NSF DMS-2204321.}

\date{\today}

\begin{document}

\maketitle

\begin{abstract}
    For any closed symplectic manifold, we show that the number of $1$-periodic orbits of a nondegenerate Hamiltonian thereon is bounded from below by a version of total Betti number over ${\mb Z}$ of the ambient space taking account of the total Betti number over ${\mb Q}$ and torsions of all characteristic. The proof is based on constructing a Hamiltonian Floer theory over the Novikov ring with integer coefficients, which generalizes our earlier work for constructing integer-valued Gromov--Witten type invariants. In the course of the construction, we build a Hamiltonian Floer flow category with compatible smooth global Kuranishi charts. This generalizes a recent work of Abouzaid--McLean--Smith, which might be of independent interest.
\end{abstract}

\setcounter{tocdepth}{1}
\tableofcontents

\section{Introduction}

Let $(M, \omega)$ be a closed symplectic manifold. Suppose $H: S^1 \times M \to {\mb R}$ is a smooth function, which is usually referred to as a $1$-periodic Hamiltonian function by identifying $S^1 = {\mb R} / {\mb Z}$. Denote by $H_t$ the smooth function obtained by restricting $H$ to $\{t\} \times M$. Then the Hamiltonian vector field $X_{H_t}$ of $H$ is a vector field on $M$ determined by the formula
$$
\omega(X_{H_t}, \cdot) = dH_t.
$$
A smooth map $x(t): S^1 \to M$ solving the ordinary differential equation
$$
\dot{x}(t) = X_{H_t}(x(t))
$$
is called a $1$-periodic orbit of $H$. Write $\phi_{t}: M \to M$ the time $t$ flow of $X_{H_t}$. Then then the set of $1$-periodic orbits of $H$ has a one-to-one correspondence with the set of fixed points $\phi_1 : M \to M$ by evaluating $x(t)$ at $t=0$. A periodic orbit $x(t)$ is called nondegenerate if the linear map 
$$
d \phi_1 : T_{x(0)}M \to T_{x(0)}M
$$
does not have $1$ as an eigenvalue. If all $1$-periodic orbits of $H$ are nondegenerate, the Hamiltonian $H$ is called nondegenerate.

Given the symplectic manifold $(M,\omega)$, there is a contractible choice of almost complex structures on $X$ which are compatible with $\omega$. The first Chern class of $(M, \omega)$, denoted by $c_1(M, \omega)=c_1(M)$, is defined to be the first Chern class of $TM$ endowed with a choice of, equivalently, any choice of almost complex structure compatible with $\omega$. The minimal Chern number of $(M, \omega)$ is defined to be the nonnegative integer $N \in \mathbb{Z}_{\geq 0}$ such that the range of the map
$$
\pi_2(M) \xrightarrow{\text{Hurewicz}} H_2(M;{\mb Z}) \xrightarrow{c_1(M)} \mathbb{Z}
$$
is $N{\mb Z} \subset {\mb Z}$. For any $i \in {\mb Z}/2N$, introduce the ${\mb Z}$-module
$$
H^{(2N)}_i(M;{\mb Z}) := \bigoplus_{j \equiv i \text{ mod } 2N} H_{j}(M;{\mb Z}).
$$
Namely, we collapse the natural ${\mb Z}$-grading on $H_{*}(M;\mathbb{Z})$ to a ${\mb Z} / 2N$-grading. Because $H^{(2N)}_i(M;{\mb Z})$ is a finitely generated ${\mb Z}$-module, there exist an integer $b_i \geq 0$ and a sequence of integers $a_1 | a_2 | \cdots | a_k$ such that
$$
H^{(2N)}_i(M;{\mb Z}) \cong {\mb Z}^{b_i} \oplus {\mb Z}/a_1 \oplus \cdots \oplus {\mb Z}/a_k,
$$
where the integers $a_1, \dots, a_k$ are the invariant factors of $H^{(2N)}_i(M;{\mb Z})$ as a finitely generated ${\mb Z}$-module. Using this decomposition, define the quantity
$$
\tau_{i}^{(2N)}(M) := \text{number of invariant factors of }H^{(2N)}_i(M;{\mb Z}).
$$

The main result of this paper is a solution to the homological Arnold conjecture over ${\mb Z}$ for arbitrary closed symplectic manifolds. This bound is similar to the Morse inequality over ${\mb Z}$ which takes into account the torsion part of the homology.

\begin{thma}\label{thm_intro_main}
Let $(M, \omega)$ be a closed symplectic manifold with minimal Chern number $N$. Suppose $H: S^1 \times M \to {\mb R}$ is a nondegenerate $1$-periodic Hamiltonian. Then the number of $1$-periodic orbits of $H$ is bounded below by
\beq\label{eqn:intro-low}
\text{rank } H_{*}(M;{\mb Q}) + 2\sum_{i \in {\mb Z}/2N} \tau_{i}^{(2N)}(M).
\eeq
\end{thma}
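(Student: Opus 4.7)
The plan is to build an integer Hamiltonian Floer homology and then deduce the bound from an algebraic Morse-type inequality over $\mathbb{Z}$. First, I would assemble a $\mathbb{Z}/2N$-graded chain complex $CF_*(H;\Lambda_{\mathbb{Z}})$ freely generated over the Novikov ring $\Lambda_{\mathbb{Z}}$ (with integer coefficients) by the $1$-periodic orbits of $H$, with grading by the Conley--Zehnder index modulo $2N$ and differential defined as a signed virtual count of isolated Floer cylinders. Since these moduli spaces are not in general transversely cut out, I would equip their compactifications with smooth global Kuranishi charts, generalizing the Abouzaid--McLean--Smith construction from moduli of pseudoholomorphic spheres to the full Hamiltonian Floer flow category referenced in the abstract. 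Integer virtual counts are then produced via the FOP (finite-dimensional perturbation) scheme from our earlier work on integer Gromov--Witten invariants; chain-level compatibility of these perturbations along broken and sphere-bubble strata yields $\partial^2=0$.

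Next, I would establish a PSS-type isomorphism
$$
HF_*(H;\Lambda_{\mathbb{Z}}) \;\cong\; \bigoplus_{i \in \mathbb{Z}/2N} H^{(2N)}_i(M;\mathbb{Z}) \otimes_{\mathbb{Z}} \Lambda_{\mathbb{Z}},
$$
by constructing PSS and SSP chain maps between a $\mathbb{Z}$-coefficient Morse (or singular) complex of $M$ tensored with $\Lambda_{\mathbb{Z}}$ and the Floer complex. Both maps arise from signed integer counts of mixed Floer--Morse moduli spaces equipped with coherent global Kuranishi charts, and the homotopy inverses come from $1$-parameter families of data. All of this lives in the same flow-category framework as in Step~1, so the integer chain-level structure is preserved throughout.

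Finally, I would invoke the algebraic Morse inequality. The number of $1$-periodic orbits equals the $\Lambda_{\mathbb{Z}}$-rank of $CF_*$, whose homology by Step~2 is $\bigoplus_i H^{(2N)}_i(M;\mathbb{Z}) \otimes_{\mathbb{Z}} \Lambda_{\mathbb{Z}}$. For any finitely generated free $\mathbb{Z}/2N$-graded chain complex over $\mathbb{Z}$, each $\mathbb{Z}$-summand of the homology forces at least one generator in the complex and each invariant factor of $\mathbb{Z}$-torsion forces at least two additional generators (one to host the torsion class, one to kill its multiple in the adjacent parity). Using flatness of $\Lambda_{\mathbb{Z}}$ over $\mathbb{Z}$ together with an action-filtration argument (so that the infinite-rank Novikov complex reduces in each window to finite-rank $\mathbb{Z}$-complexes with the analogous invariant-factor structure), this classical bound transfers to the Novikov setting. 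Summing the contributions $\tau_i^{(2N)}(M)$ over $i \in \mathbb{Z}/2N$ then yields exactly \eqref{eqn:intro-low}.

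The main obstacle is Step~1: producing smooth global Kuranishi charts coherently on all moduli spaces of Floer cylinders and their compactifications within a single flow-category framework, and then verifying that the FOP scheme, originally set up for Gromov--Witten moduli of stable maps, extends to deliver an integer chain-level structure with $\partial^2=0$ across the full boundary stratification (broken trajectories and sphere bubbling interacting). This is the analytic and topological backbone of the paper and the source of the promised generalization of Abouzaid--McLean--Smith; once it is in place, the Floer-theoretic PSS machinery of Step~2 and the algebraic inequality of Step~3 are comparatively routine.
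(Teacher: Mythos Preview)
Your overall strategy is sound and matches the paper's architecture, but Step~2 overclaims relative to what the paper actually establishes, and this changes the shape of Step~3.

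The paper does \emph{not} prove a PSS isomorphism. It only constructs the homotopy $\Psi^{\rm SSP}\circ\Psi^{\rm PSS}=\mathrm{Id}+O(T)$ (via the pearly bimodule and the chain-homotopy moduli spaces of Section~4.6), which shows that $\Psi^{\rm PSS}:H_*(M;\Lambda)\to HF_*(H;\Lambda)$ is an \emph{injection}. The reverse homotopy $\Psi^{\rm PSS}\circ\Psi^{\rm SSP}\sim\mathrm{Id}$ is never built, and is not needed. Your route would require an additional $1$-parameter family of moduli spaces with its own derived orbifold lift; this is presumably doable with the same machinery, but the paper deliberately avoids it.

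Consequently the algebraic step is different. Since the paper only has an injection, it cannot apply the classical Morse inequality to a complex with known homology. Instead it proves that $\Lambda$ is a PID and uses two lemmas: (i) for an inclusion of finitely generated $\Lambda$-modules $N\subset M$, both rank and number of invariant factors of $N$ are bounded by those of $M$ (Lemma~4.14); (ii) if a free $\Lambda$-module $Z$ has a submodule $S$ with $Z/S$ of rank $b$ and $k$ invariant factors, then $\mathrm{rank}\,Z\geq b+k$ and $\mathrm{rank}\,S\geq k$ (Lemma~4.13). Applying these to $\ker d_i^{\rm Floer}$ and $\mathrm{im}\,d_{i+1}^{\rm Floer}$ with the injected Morse torsion numbers yields the bound. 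Your proposed ``action-filtration reduction to finite-rank $\mathbb{Z}$-complexes'' is not how the paper proceeds and is vague as stated; working directly over the PID $\Lambda$ is both cleaner and sufficient.

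In summary: the paper trades one piece of geometry (the second homotopy) for a sharper algebraic argument over $\Lambda$. Your approach would also succeed, at the cost of more moduli-space work than is strictly necessary.
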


We briefly comment on the historical background of the Arnold conjecture. Arnold conjectured (\cite[Appendix 9]{Arnold_book}, \cite{Arnold_conjecture}) that the number of $1$-periodic orbits of any nondegenerate Hamiltonian is at least the minimal number of critical points of a Morse function on $M$ (the {\it strong} Arnold conjecture) and suggested that it is a consequence of a version of Morse inequality. While the strong Arnold conjecture is difficult to prove, there have been significant progresses towards the homological Arnold conjecture, namely, the number of periodic orbits being bounded from below by the total Betti number.\footnote{From now on, in this paper, the Arnold conjecture refers to the homological version.} These progresses started with the breakthrough of Conley--Zehnder \cite{Conley_Zehnder_1983}, who solved the strong Arnold conjecture for $T^{2n}$ using finite dimensional methods. Conley--Zehnder's method was later extended by Floer to solve the strong Arnold conjecture for surfaces with genus at least $2$ and for certain classes of K\"ahler manifolds \cite{Floer-Kahler}. It was then followed by the revolutionary work of Floer \cite{Floer_CMP} which invented the Floer homology and established the Arnold conjecture for $(M, \omega)$ satisfying $[\omega] = \lambda c_1(M)$ for some $\lambda \in {\mb R}$ (monotone symplectic manifolds). Floer's result was extended by Hofer--Salamon \cite{Hofer_Salamon} and Ono \cite{Ono_1995} to cover all semi-positive symplectic manifolds, i.e., $(M^{2n}, \omega)$ which does not have $A \in \pi_2(M)$ such that $\omega(A) > 0$ and $3-n \leq c_1(A) <0$. It is worth noting that these results hold over $\mathbb{Z}$. 

For general symplectic manifolds, Fukaya--Ono\cite{Fukaya_Ono}, Liu--Tian\cite{Liu_Tian_Floer}, and Ruan \cite{Ruan_virtual} proved the homological Arnold conjecture with lower bound coming from the \emph{rational} total Betti number. These papers are based on a kind of abstract machinery, generally called the ``virtual technique," which is designed for generalizing Floer's construction (along side with the mathematical theory of Gromov--Witten invariants). The most recent advancement towards the homological Arnold conjecture by Abouzaid--Blumberg \cite{Abouzaid_Blumberg}, which relies more heavily on stable homotopy theory, bounds the number of periodic orbits from below by the sum of Betti numbers in any finite field. In addition to the aforementioned works, using different versions of the virtual technique, the weak Arnold conjecture over rational numbers is reproved in \cite{pardon-VFC} and \cite{Filippenko_Wehrheim_2022}.

Theorem \ref{thm_intro_main} allows one to obtain a sharper lower bound for the number of $1$-periodic orbits of a given non-degenerate Hamiltonian by treating the torsion components of $H_{*}(M; \mathbb{Z})$ uniformly and simultaneously. The lower bound provided by ${\rm rank} H_{*}(M; \mathbb{F}_p)$ as from \cite{Abouzaid_Blumberg} is also recovered from Theorem \ref{thm_intro_main} by the universal coefficient theorem. Our bound is also strictly stronger than the bound of Abouzaid--Blumberg, for example, when $H_{\rm even} (M)$ has only $p$-torsion and $H_{\rm odd}(M)$ has only $q$-torsion and $p\neq q$.

\subsection{Proof strategy}

The proof of Theorem \ref{thm_intro_main} is based on constructing a version of Hamiltonian Floer homology over the Novikov ring $\Lambda$ with integer coefficients and exponents, where
\beqn
\Lambda:= \Big\{ \sum_{i=-m}^\infty a_i T^i\ |\ m \in {\mb Z},\ a_i \in {\mb Z} \Big\},
\eeqn
and a comparison with the Morse homology of $M$ with $\Lambda$-coefficients using the Piunikhin--Salamon--Schwarz (PSS) \cite{PSS} map. 

\begin{thma}\label{thm-intro-floer}
Let $(M, \omega)$ be a closed symplectic manifold such that $[\omega]$ is contained in the image of $H^2(M;{\mb Z}) \to H^2(M;{\mb R})$. Assume that the minimal Chern number of $(M, \omega)$ is $N$. Suppose $H: S^1 \times M \to {\mb R}$ is a non-degenerate $1$-periodic Hamiltonian such that the symplectic action of any capped $1$-periodic orbit of $H$ takes value in ${\mb Z}$. After choosing an almost complex structure $J$ compatible with $\omega$ and some other auxiliary data, there is a ${\mb Z}/2N$-graded chain complex 
$$
CF_{*}(H;\Lambda)
$$
freely generated over $\Lambda$ by all contractible $1$-periodic orbits of $H$, graded by the Conley--Zehnder index, with differential given by suitably counting stable Floer trajectories \emph{with trivial isotropy group} connecting the $1$-periodic orbits.
\end{thma}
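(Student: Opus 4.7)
The plan is to reduce everything to two technical ingredients: compatible smooth global Kuranishi charts on all Floer moduli spaces, in the spirit of the Abouzaid--McLean--Smith framework but assembled into a flow category, together with the integer-valued virtual perturbation scheme developed by the authors for their construction of integer-valued Gromov--Witten invariants. Once both are in place, Theorem \ref{thm-intro-floer} becomes essentially a bookkeeping exercise.

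First I would record the algebraic setup. Nondegeneracy makes the set $\mc{P}(H)$ of contractible $1$-periodic orbits finite. The hypothesis that $[\omega]$ lies in the image of $H^2(M;\mb{Z})\to H^2(M;\mb{R})$, combined with the integrality of the symplectic action on capped orbits, ensures that action differences of capped orbits are integers, so the Novikov variable $T$ can indeed carry integer exponents; the Conley--Zehnder index modulo $2N$ is well defined independently of the capping. This gives the underlying $\mb{Z}/2N$-graded module $CF_*(H;\Lambda)$ freely generated over $\Lambda$ by $\mc{P}(H)$.

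The core technical step is to construct, for each pair $(x_-,x_+)$ and each relative homotopy class $A$, the compactified moduli space $\overline{\mc{M}}(x_-,x_+;A)$ of stable Floer trajectories and equip it with a smooth global Kuranishi chart $(G,V,E,s)$. I would proceed inductively on the combined action and expected-dimension filtration. At each step the chart is built by (i) stabilizing domains using a Donaldson-type auxiliary hypersurface in $M$ to rigidify automorphisms on bubble components, (ii) forming a framed thickening in which the Cauchy--Riemann--Floer equation becomes a finite-dimensional $G$-equivariant section of a $G$-equivariant bundle, and (iii) arranging that near each boundary or corner stratum corresponding to a breaking $x_- \raw x_0 \raw x_+$ or to a sphere bubble, the chart is canonically a fiber product of lower-level charts under the Floer gluing map, with smooth transition data. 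This is the direct generalization of the Abouzaid--McLean--Smith construction from moduli of closed pseudoholomorphic spheres to the Hamiltonian Floer flow category. With the compatible charts in hand I would apply the integer-valued perturbation to the equivariant sections, extending perturbations from lower strata to higher ones in a manner compatible with the inductive product structure on the boundary. The zero-dimensional strata of the perturbed moduli, restricted to the open locus where the underlying stable trajectory has \emph{trivial isotropy}, then consist of finitely many signed points; counting these, weighted by $T^{\omega(A)}$, defines the matrix coefficients of $d$, giving a $\Lambda$-linear endomorphism that drops the Conley--Zehnder index by $1$ mod $2N$. The identity $d^2=0$ follows from the standard cobordism argument applied to the one-dimensional trivial-isotropy strata of the perturbed moduli spaces, whose boundary, by the coherence built into (iii), matches exactly the broken configurations enumerated by $d\circ d$.

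The hard part will be step (iii): arranging smooth product-type compatibility of the global Kuranishi charts at every iterated corner of the Floer compactification, uniformly across the entire flow category, while keeping the construction compatible with the integer perturbation framework and with the restriction to trivial-isotropy configurations. This is precisely the generalization of Abouzaid--McLean--Smith singled out in the abstract as potentially of independent interest, and it is where the bulk of the new engineering --- choosing domain-stabilizing divisors, framings and thickenings coherently across the infinite partially ordered family of Floer moduli --- will lie.
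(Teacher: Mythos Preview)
Your overall strategy---assembling AMS-type global Kuranishi charts into a Floer flow category and then applying the FOP perturbation scheme from the authors' earlier work---is the paper's approach, but two essential ingredients are missing and one technical step is mis-described.

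First, the FOP scheme requires the derived orbifold charts to be \emph{normally complex}: for each finite isotropy group $\Gamma$, the normal bundle to the $\Gamma$-fixed locus in the thickening and the nontrivial $\Gamma$-summand of the obstruction bundle must carry compatible $\Gamma$-invariant complex structures. Without this, FOP sections are not even defined and no integer count exists. You do not address how to obtain it; the paper does so by combining the complex structures on the obstruction bundles and on the auxiliary base $B_d$ of stable cylinders in $\mathbb{CP}^d$ with a homotopy of the vertical linearized operator to a complex-linear one (Proposition~\ref{base_normal_complex}, Lemma~\ref{lem:handwaving}, Theorem~\ref{thm:normal-c-orient}). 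Second, your step (iii) asserts that near a breaking the chart is ``canonically a fiber product of lower-level charts''. This is not achievable: the restriction of $C_{pq}$ to the stratum $pr_1\cdots r_lq$ is only the \emph{stabilization} of the product $C_{pr_1}\times\cdots\times C_{r_lq}$ by an extra complex vector bundle (the ``difference bundle'' in the scaffolding, Definition~\ref{defn_scaffolding}). Even this weaker compatibility requires the paper's multi-layered thickening (Section~\ref{subsubsec:multi}), in which each moduli space is thickened using a tower of $d_{pq}$ successively defined bundles rather than a single one; with a single-layer AMS thickening the product charts do not embed in the boundary stratum at all. Finally, the domain stabilization here is not via Donaldson-type hypersurfaces in $M$ but via the AMS framing: a basis of sections of the line bundle $L_u$ determined by the $2$-form $\Omega_{u,H}$ yields a map $\Sigma\to\mathbb{CP}^d$ and hence a forgetful map from the thickening to the smooth base $B_d$.
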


\begin{thma}\label{thm-intro-pss}
Let $(M, \omega)$ and $H: S^1 \times M \to {\mb R}$ be the same as in Theorem \ref{thm-intro-floer}. Suppose $f: M \to {\mb R}$ is a smooth Morse function and let $CM_{*}(f;\Lambda)$ be the Morse chain complex associated with $f$ equipped with the reduced ${\mb Z}/2N$-grading. Then there exist a pair of $\Lambda$-linear chain maps
$$
\Psi^{\rm PSS}: CM_{*}(f;\Lambda) \to CF_{*}(H;\Lambda),
$$
$$
\Psi^{\rm SSP}: CF_{*}(H;\Lambda) \to CM_{*}(f;\Lambda),
$$
such that their composition satisfies
$$
\Psi^{\rm SSP} \circ \Psi^{\rm PSS} = Id + \text{terms with positive $T$-exponent}.
$$
\end{thma}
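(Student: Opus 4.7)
The plan is to define $\Psi^{\rm PSS}$ and $\Psi^{\rm SSP}$ by counting, with integer multiplicity, the zero-dimensional trivial-isotropy strata of moduli spaces of \emph{spiked half-cylinders}, and then to prove the composition formula by interpolating the relevant moduli along a one-parameter family. A PSS configuration will consist of a map $u:\mathbb{R}\times S^1\to M$ satisfying a Floer-type equation $(\partial_s u+J(\partial_t u-X_{K_s H}))=0$ where $K_s$ is a cutoff function equal to $0$ for $s\ll 0$ and $1$ for $s\gg 0$, asymptotic at $+\infty$ to a $1$-periodic orbit $x$ of $H$ and extending continuously across $s=-\infty$ to a point $u(-\infty)\in M$, together with a half-infinite gradient flow line of $f$ starting at a critical point $p$ and ending at $u(-\infty)$. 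SSP is defined symmetrically with the roles of $\pm\infty$ swapped.

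The first step is to equip the moduli spaces $\mathcal{M}^{\rm PSS}(p,x;A)$ and $\mathcal{M}^{\rm SSP}(x,p;A)$, indexed by the homology class $A$ of the capping, with smooth global Kuranishi charts by adapting the construction built in this paper for the Floer flow category, augmented by standard Morse-theoretic fiber products with the descending and ascending manifolds of $f$. The resulting charts must be arranged so that their boundary strata match those arising from breaking a Floer cylinder off the cylindrical end, or breaking a Morse flow line at an intermediate critical point. Using the integer count $\#^{\rm triv}$ of the zero-dimensional trivial-isotropy stratum with coherent orientations, one then sets
\beqn
\Psi^{\rm PSS}(p)=\sum_{x,A}\#^{\rm triv}\mathcal{M}^{\rm PSS}(p,x;A)_0\cdot T^{\omega(A)+\mathcal{A}_H(x)}\,x,
\eeqn
and analogously for $\Psi^{\rm SSP}$; the hypothesis that the symplectic action is integer-valued on cappings ensures the exponents lie in ${\mb Z}$.

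The chain map identities follow from the codimension-one boundary analysis of the one-dimensional strata of $\mathcal{M}^{\rm PSS}$ and $\mathcal{M}^{\rm SSP}$: the two types of breaking --- a Floer trajectory peeling off at the cylindrical end, and a Morse flow line breaking at a critical point --- contribute the Floer differential composed with the map and the map composed with the Morse differential, respectively, which cancel by the coherence of the Kuranishi charts and the oriented-boundary formula from the Floer-theoretic part of the paper. For the composition formula, consider the one-parameter moduli $\mathcal{M}^R(p,q;A)$ parameterizing maps on a cylinder of neck-length $R\in[0,\infty)$ with cutoff Hamiltonian supported on a central interval of length $R$ and with Morse spikes from $p$ and to $q$ attached at the two ends. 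At $R=\infty$ the configurations degenerate to a glued pair (SSP)$\circ$(PSS), while at $R=0$ the Floer equation reduces to the plain $J$-holomorphic equation with two marked points joined to critical points via gradient flow. The constant $J$-holomorphic spheres ($A=0$) contribute $\mathrm{Id}$ on $CM_*(f;\Lambda)$ (they are zero-dimensional after the Morse constraint and carry trivial isotropy), and every other class has $\omega(A)>0$, so its contribution enters with a strictly positive power of $T$, yielding the stated identity.

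The hard part is the same technical core that underlies Theorem \ref{thm-intro-floer}: constructing global Kuranishi charts on the PSS, SSP and one-parameter interpolation moduli spaces that are \emph{simultaneously} compatible with those of the Floer differential and of the Morse complex on all boundary strata, so that the algebraic identities hold at the level of the integer counts on trivial-isotropy loci. In particular, one must arrange compatible stabilizations, obstruction bundles, and coherent orientations across the entire collection of moduli (PSS, SSP, Floer, Morse, and the $R$-family), and verify that the gluing description at $R=\infty$ identifies the boundary stratum with a fiber product of Kuranishi charts whose integer count equals the composition of the two chain-level maps. Orientations and sign conventions in the presence of isotropy-free resolutions are the second source of technical overhead, but once these compatibilities are in place the three identities of Theorem \ref{thm-intro-pss} reduce to standard cobordism arguments.
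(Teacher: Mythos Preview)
Your proposal is essentially the same strategy as the paper's: you build PSS/SSP moduli of spiked half-cylinders, equip them with compatible global Kuranishi charts, count trivial-isotropy zero loci via FOP-type perturbations, and prove the composition formula by a one-parameter interpolation whose $R=\infty$ end is the (SSP)$\circ$(PSS) composition and whose $R=0$ end reduces to $J$-holomorphic pearls with Morse spikes. The paper packages all of this in the language of flow bimodules---your PSS/SSP moduli become the bimodules $M^{\rm PSS}$, $M^{\rm SSP}$ between the Morse and Floer flow categories, your $R=0$ limit is isolated as a separate ``pearly bimodule'' $M^{\rm pearl}$ whose induced map is shown to be unitriangular (your ``constant sphere gives $\mathrm{Id}$'' argument), and your $R$-family is the ``homotopy moduli space'' producing a chain homotopy $\Psi^{\rm SSP}\circ\Psi^{\rm PSS}\simeq\Psi^{\rm pearl}$---but the geometry and the analytic core are identical.
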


\begin{proof}[Proof of Theorem \ref{thm_intro_main}]
When $(M, \omega)$ and $H$ satisfy the assumptions in Theorem \ref{thm-intro-floer}, Theorem \ref{thm-intro-pss} implies that the induced map on homology
$$
\Psi^{\rm PSS}: H_{*}(M;\Lambda) \to HF_{*}(H;\Lambda)
$$
in an injection. Using the algebraic arguments in Section \ref{subsec-algebra}, we see that Theorem \ref{thm_intro_main} holds in this setting.

It is easy to see that if $[\omega]$ lies in the image of $H^2(M;{\mb Q}) \to H^2(M;{\mb R})$ and the symplectic action of $1$-periodic orbits of $H$ are all ${\mb Q}$-valued, Theorem \ref{thm_intro_main} also holds. Indeed, one can suitably rescale the symplectic form $\omega$ and the Hamiltonian $H$ by a positive integer to reduce to the integral setting, because such a rescaling process does not change the number of $1$-periodic orbits of $H$.

In general, we can choose a sequence of symplectic forms $\{ \omega_{k} \}$ which represent rational cohomology classes and converge to $\omega$ as $k \to \infty$. If $H: S^1 \times M \to {\mb R}$ is nondegenerate, then for $k$ sufficiently large, the $1$-periodic orbits of $H$ with respect to $\omega_k$ has a one-to-one correspondence with the $1$-periodic orbits of $H$ with respect to $\omega$. Therefore, without loss of generality, we can assume that $[\omega]$ represents a rational cohomology class. Based on the elementary discussion after Hypothesis \ref{hyp51}, we can modify $H$ to obtain a $1$-periodic Hamiltonian whose $1$-periodic orbits all have ${\mb Q}$-valued symplectic action without changing the number of $1$-periodic orbits. Thus the theorem follows from the previous discussions.
\end{proof}

The proof of Theorem \ref{thm-intro-floer} has two major steps, which account for the most important novelties of this paper. First, we show that any moduli space of stable Floer trajectories can be \emph{globally} presented as the zero locus of a continuous section on a \emph{smooth normally complex} orbifold vector bundle over a \emph{smooth normally complex} orbifold, by generalizing a recent result of Abouzaid--McLean--Smith  \cite{AMS} which works for the moduli space of closed genus $0$ $J$-holomorphic curves. Moreover, we show that these presentations are coherent, packaged using the language of \emph{flow categories} introduced by Cohen--Jones--Segal \cite{Cohen_Jones_Segal}. Second, we apply the \emph{Fukaya--Ono--Parker (FOP) perturbation scheme} introduced in our early work \cite{Bai_Xu_2022}, which was originally proposed by Fukaya--Ono \cite{Fukaya_Ono_integer}, to obtain ${\mb Z}$-valued virtual fundamental chains from the aforementioned presentations of the moduli spaces of stable Floer trajectories, and show that the induced algebraic counts from moduli spaces of virtual dimension $0$ can be organized to define a chain complex over $\Lambda$ freely generated by contractible $1$-periodic orbits of $H$.

The proof of Theorem \ref{thm-intro-pss} is quite similar to the proof of Theorem \ref{thm-intro-floer}, except that the relevant moduli spaces parametrize solutions of the PSS/SSP equations, which are also known as the ``spiked discs." We package these moduli spaces and their presentations using the language of \emph{flow bimodules}, which are investigated systematically in the symplectic context in the recent work of Abouzaid \cite{abouzaid2022axiomatic}.

Next, we give a slightly more detailed description of the technical ingredients.

\subsection{Flow categories and derived orbifold presentation}

In this subsection, we discuss about the geometric construction of compatible global charts for the moduli spaces arising from the proof of Theorem \ref{thm_intro_main}.

Given a closed symplectic manifold $(M, \omega)$ and a nondegenerate $1$-periodic Hamiltonian $H: S^1 \times M \to {\mb R}$, following \cite{Cohen_Jones_Segal}, one can introduce a topologically enriched category $T^{\floer}$ such that:
\begin{enumerate}

\item The objects are capped $1$-periodic orbits of $H$.

\item The morphism space from $p$ to $q$ is the compact Hausdorff topological space $\ov{\mc M}{}^\floer_{pq}$, the moduli space of stable Floer trajectories connecting $p$ and $q$.
\item For $p,r,q \in T^{\floer}$, the composition map
$$
\ov{\mc M}{}^\floer_{pr} \times \ov{\mc M}{}^\floer_{rq} \to \ov{\mc M}{}^\floer_{pq}
$$
is given by the concatenation which forms a broken Floer trajectory breaking at $r$. The composition maps are homeomorphisms onto their images and satisfy the natural associativity relation.
\end{enumerate}
The composition maps naturally equip the space $\ov{\mc M}{}^\floer_{pq}$ with a stratification structure, with strata indexed by words of the form $pr_1 \cdots r_l q$. The $pr_1 \cdots r_l q$-stratum of $\ov{\mc M}{}^\floer_{pq}$ is homeomorphic to the product
$$
\ov{\mc M}{}^\floer_{pr_1} \times \cdots \times \ov{\mc M}{}^\floer_{r_l q}.
$$
The moduli space $\ov{\mc M}{}^\floer_{pq}$ is only an orbispace in general, though the usual Kuranishi reduction process describes it \emph{locally} as the zero locus of a section on an orbifold vector bundle over an orbifold. Moreover, even if every $\ov{\mc M}{}^\floer_{pq}$ has a global Kuranishi model, its restriction to the boundary stratum $\ov{\mc M}{}^\floer_{pr_1} \times \cdots \times \ov{\mc M}{}^\floer_{r_l q}$ may not be the product of the global Kuranishi models on each factor. This would obstruct any meaningful inductive construction on $T^{\floer}$ from these Kuranishi models, leaving alone the issue concerning the \emph{smoothness} of the (thickened) moduli spaces. We solve all of these problems.

\begin{thm}(See Theorem \ref{thm:dorb-lift})
Let $(M, \omega)$ and $H: S^1 \times M \to {\mb R}$ be as in Theorem \ref{thm-intro-floer}. Then for any capped $1$-periodic orbits $p,q$ of $H$ such that $\ov{\mc M}{}^\floer_{pq} \neq \emptyset$, there exist a smooth effective normally complex orbifold with corners ${\mc U}_{pq}$ with the same stratification structure as $\ov{\mc M}{}^\floer_{pq}$, a smooth normally complex orbifold vector bundle ${\mc E}_{pq} \to {\mc U}_{pq}$, a continuous section ${\mc S}_{pq}: {\mc U}_{pq} \to {\mc E}_{pq}$, and a map
$$
\psi_{pq}: {\mc S}^{-1}(0) \to \ov{\mc M}{}^\floer_{pq}
$$
which defines an isomorphism of orbispaces.
\end{thm}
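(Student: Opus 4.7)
The plan is to extend the global Kuranishi chart construction of Abouzaid--McLean--Smith \cite{AMS} from moduli spaces of genus zero $J$-holomorphic spheres to moduli spaces of Floer cylinders, and then to assemble the resulting charts into a coherent datum indexed by the flow category $T^{\floer}$. For an individual pair $(p,q)$, the local chart is produced in the familiar way: one chooses a complex Hermitian bundle over the universal Floer cylinder together with a finite-dimensional subspace $V_{pq}$ of smooth sections whose values at every stable map in $\ov{\mc M}{}^\floer_{pq}$ span the cokernel of the linearized Floer operator, and then one adds auxiliary marked points constrained to lie on a Cieliebak--Mohnke type Donaldson hypersurface in $M$ to rigidify the cylindrical domains. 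One takes ${\mc U}_{pq}$ to be the moduli space of stable perturbed-Floer data together with such an auxiliary framing, lets ${\mc E}_{pq}$ be the extension to ${\mc U}_{pq}$ of the thickening bundle with fiber $V_{pq}$, and defines ${\mc S}_{pq}$ to be the Floer section shifted by the $V_{pq}$-parameter. Smoothness of ${\mc U}_{pq}$ and ${\mc E}_{pq}$ is inherited from the hypersurface stabilization, while the $U(1)$-symmetry of the neck gluing parameter at each broken configuration supplies the normally complex structure.

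The second stage enforces strict compatibility with the composition maps in $T^{\floer}$. I would induct on the maximal breaking length $l$ that occurs in $\ov{\mc M}{}^\floer_{pq}$, or equivalently on the action gap $\mc A_H(p) - \mc A_H(q)$. The inductive hypothesis furnishes charts on every proper boundary stratum. For the extension to $\ov{\mc M}{}^\floer_{pq}$, one prescribes $V_{pq}$ on a collar of the $pr_1\cdots r_l q$-stratum to be the direct sum $V_{pr_1}\oplus\cdots\oplus V_{r_l q}$ of the previously chosen subspaces, and then enlarges it into the interior by partition-of-unity patching together with finitely many additional sections needed to cover the remaining cokernel. The Donaldson divisor is a single subvariety fixed on $M$, so the stabilizing marked points on a broken configuration are automatically the disjoint union of the marked points on its factors, which produces the on-the-nose identification of the $pr_1\cdots r_l q$-stratum of ${\mc U}_{pq}$ with the fiber product of the factor charts, up to a canonical complex collar coming from the gluing parameters. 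The map $\psi_{pq}\colon {\mc S}_{pq}^{-1}(0)\to\ov{\mc M}{}^\floer_{pq}$ is then tautologically defined by forgetting the auxiliary framing and is an orbispace isomorphism by construction.

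The hard part will be maintaining smoothness, the normally complex structure, and strict stratum-wise compatibility \emph{simultaneously}. Smoothness requires the Donaldson hypersurface to meet every curve in the thickened moduli transversally, so one must adapt the Cieliebak--Mohnke stabilization to the cylindrical setting and verify that it survives both breaking and sphere bubbling. The normally complex structure must respect every level of iterated gluing, hence the inductively produced $V_{pq}$ together with the collar gluing must be compatible with the $U(1)$-action on each neck at once. And the required strict equality with fiber products along broken strata leaves essentially no freedom in the inductive extension, since any perturbation of $V_{pq}$ or of the divisor conditions near the boundary would spoil the identification. Controlling the interaction of these three constraints is the Floer-cylinder analogue of the main technical construction of \cite{AMS}, and it should account for the bulk of the argument behind Theorem \ref{thm:dorb-lift}.
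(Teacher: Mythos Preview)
Your proposal conflates two distinct regularization paradigms and misidentifies the source of the normally complex structure; both are genuine gaps.

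First, the rigidification in the paper is \emph{not} via Cieliebak--Mohnke hypersurfaces in $M$. Following \cite{AMS}, one constructs from each Floer map $u$ a Hermitian line bundle $L_u$ on the domain (with curvature $-2\pi i\,\Omega_{u,H}$, see Corollary~\ref{HK2}) and then records a frame $F=(f_0,\ldots,f_d)$ of $H^0(L_u)$, producing a map $\iota_F\colon\Sigma\to\mathbb{CP}^d$. The resulting thickened moduli space $V_{pq}$ carries an action of $G_{pq}\cong U(d)$ acting on frames, and ${\mc U}_{pq}$ is the quotient orbifold $V_{pq}/G_{pq}$ after smoothing. The obstruction bundle is the canonical space \eqref{obstruction1} of holomorphic sections of a specific bundle twisted by $\iota_F^*\mathcal{O}(k)$, not a hand-chosen subspace. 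Your partition-of-unity patching of ad hoc subspaces $V_{pr_1}\oplus\cdots\oplus V_{r_lq}$ would yield a Kuranishi \emph{atlas}, not a single global chart, and would not give the strict product identification along strata that the theorem asserts; the paper solves this with the \emph{multi-layered thickening} of Section~\ref{subsubsec:multi}, where increasing the number of layers $d$ canonically accommodates the embedding of product charts into boundary strata.

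Second, smoothness is not ``inherited'' from stabilization. The thickened space $V_{pq}$ is only a priori a topological manifold with a $C^1_{\rm loc}$ fiberwise smooth structure over an auxiliary moduli space $B_d$ of stable cylinders in $\mathbb{CP}^d$ (Proposition~\ref{prop:c1loc}). Producing a genuine $G_{pq}$-smoothing requires Lashof's relative equivariant stable smoothing theory (Theorem~\ref{relative_smoothing}), applied inductively over strata after first outer-collaring and stabilizing by further representations ${\bm R}_{pq}$; this is the content of Section~\ref{sec-6} and is not elementary.

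Third, the normally complex structure has nothing to do with the $U(1)$ action on neck gluing parameters---those are \emph{boundary} coordinates of the manifold-with-corners ${\mc U}_{pq}$, not normal directions to isotropy fixed loci. The normal complex structure (Definition~\ref{defn21}(3)) concerns the normal bundle $NU^G$ and the bundle $\check E^G$ at points with nontrivial stabilizer $G$. In the paper it arises from two sources: the quotient $(B_d\times{\bm Q}_d)/G_d$ is normally complex by a direct argument (Proposition~\ref{base_normal_complex}), and the vertical tangent bundle $T^{\rm vt}V_{pq}$ is stably complex by index theory (Lemma~\ref{lem:handwaving}). Combining these after a further stabilization by ${\bm Q}_d$ yields the normal complex structure on ${\mc U}_{pq}$ (Theorem~\ref{thm:normal-c-orient}).
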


We call the quadruple $({\mc U}_{pq}, {\mc E}_{pq}, {\mc S}_{pq}, \psi_{pq})$ a \emph{derived orbifold chart} (D-chart for short) of $\ov{\mc M}{}^\floer_{pq}$. Given an index $pr_1 \cdots r_l q$, one can take the product
\begin{equation}\label{eqn-intro-prod}
({\mc U}_{pr_1} \times \cdots \times {\mc U}_{r_l q}, {\mc E}_{pr_1} \boxplus \cdots \boxplus {\mc U}_{r_l q}, {\mc S}_{pr_1} \boxplus \cdots \boxplus {\mc S}_{r_l q} , \psi_{pr_1} \times \cdots \times \psi_{r_l q})
\end{equation}
which defines a D-chart of $\ov{\mc M}{}^\floer_{pr_1} \times \cdots \times \ov{\mc M}{}^\floer_{r_l q}$.

\begin{thm}\label{thm-intro-1.2}(See Theorem \ref{thm:dorb-lift})
The restriction of $({\mc U}_{pq}, {\mc E}_{pq}, {\mc S}_{pq}, \psi_{pq})$ to the stratum of ${\mc U}_{pq}$ indexed by $pr_1 \cdots r_l q$ is isomorphic to the stabilization (Definition \ref{stabilization}) of the product chart \eqref{eqn-intro-prod} by a complex orbifold vector bundle. 
\end{thm}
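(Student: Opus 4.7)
The plan is to construct each derived orbifold chart $({\mc U}_{pq}, {\mc E}_{pq}, {\mc S}_{pq}, \psi_{pq})$ by generalizing the Abouzaid--McLean--Smith global chart construction from closed genus $0$ curves to cylindrical Floer trajectories, and to verify the stratification compatibility by tracking how each piece of the thickening data restricts along a break. The thickening data I would use for ${\mc U}_{pq}$ consists of (i) a framing at each asymptotic periodic orbit $p$ and $q$, used to rigidify the domain and remove the $\mathbb{R}$-translation symmetry; (ii) a Donaldson-type almost complex hypersurface in $M$ together with domain stabilization data recording how each map meets it; and (iii) a finite-dimensional obstruction space mapping to the $L^2$-cokernel of the linearized Floer operator in a way that produces a smooth normally complex thickened moduli space. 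The section ${\mc S}_{pq}$ records the component of the actual Floer equation lying in this obstruction space, and $\psi_{pq}$ is the forgetful map from the zero locus to $\ov{\mc M}{}^\floer_{pq}$.

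Next I would carry out the construction by induction on the height of the stratum, equivalently on the action difference between $p$ and $q$, assuming the D-charts for all shorter pairs have already been built with the required compatibility. For a broken configuration lying in the stratum indexed by $pr_1 \cdots r_l q$, each intermediate orbit $r_j$ carries a framing that was fixed once and for all when constructing the lower-dimensional charts ${\mc U}_{pr_1}, \ldots, {\mc U}_{r_l q}$; reusing these \emph{same} framings on both sides of every break is the combinatorial device that forces the broken stratum of ${\mc U}_{pq}$ to map naturally to the product ${\mc U}_{pr_1} \times \cdots \times {\mc U}_{r_l q}$. Similarly, since the Donaldson divisor is chosen once in $M$, the stabilization data of a broken trajectory decomposes as the stabilization data of each component. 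Up to this point, restriction to the broken stratum yields a natural map to the product chart, and the remaining task is to compare the obstruction bundles.

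The source of the stabilization in the theorem is precisely this obstruction data. The space ${\mc E}_{pq}$ used globally contains, beyond the obstructions inherited from each component, extra directions that account for the change in cokernel of the linearized operator at a broken configuration relative to a nearby smooth one. These extra directions assemble into a complex orbifold vector bundle $V$ over the product chart (complex because the AMS framework equips the obstruction bundles with canonical almost complex structures compatible with the normally complex refinement of \cite{Bai_Xu_2022}), and the restricted chart on the $pr_1 \cdots r_l q$-stratum is exactly the stabilization of the product chart by $V$, with section ${\mc S}_{pq}$ restricting to the direct sum of the product section and the tautological section of $V$. This gives the claimed isomorphism of D-charts.

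The main technical obstacle I anticipate is threefold. First, the thickened moduli space ${\mc U}_{pq}$ must be genuinely smooth along its corner strata rather than only $C^0$, which demands a careful gluing analysis producing smooth charts that match the flow-category stratification; this is the place where the generalization from closed curves to Floer cylinders requires the most new work. Second, the stabilizing bundle $V$ must carry a normally complex structure compatible with that of ${\mc U}_{pq}$, so that the FOP perturbation scheme of \cite{Bai_Xu_2022} still produces integer counts after restriction. Third, the inductive choice of obstruction spaces must be organized coherently across the entire flow category, so that the bundles $V$ arising at different levels are mutually consistent under iterated breaks; this reduces to an equivariant extension problem for sections of smooth orbifold bundles on manifolds with corners, solved by starting from the deepest strata and extending outward while enlarging the obstruction space as needed to absorb any obstruction to extension.
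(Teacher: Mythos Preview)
Your proposal sketches a plausible high-level strategy, but it differs substantially from what the paper actually does, and several of your ingredients are not the AMS framework you claim to be generalizing.

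The paper's construction uses neither framings at periodic orbits nor Donaldson divisors. Following AMS, each map $u$ determines a Hermitian line bundle $L_u$ over the domain via the $2$-form $\Omega_{u,H} = u^*\omega - d(H_t(u)\,dt)$ (of integral degree $d_{pq}$, thanks to the integrality hypothesis), and a \emph{framing} is a basis $F = (f_0,\ldots,f_d)$ of $H^0(L_u)$, which yields an embedding $\iota_F: \Sigma \to \mathbb{CP}^d$. The Lie group $G_{pq} \cong U(d_{pq})$ acts by change of basis, and the thickened moduli space $V_{pq}$ sits over an auxiliary moduli space $B_{d_{pq}}$ of stable cylinders in $\mathbb{CP}^{d_{pq}}$; the eventual D-chart is the quotient $V_{pq}/G_{pq}$.

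The mechanism that makes the boundary compatibility work---and which your proposal lacks---is the \emph{multi-layered thickening} (Section~\ref{subsubsec:multi}). The chart for $\ov{\mc M}_{pq}$ is thickened not by a single obstruction space but by a tower of bundles $E_1,\ldots,E_d$ indexed by a fixed increasing sequence $k_1 < k_2 < \cdots$, where $d = d_{pq}$. Since the chart for a shorter piece $\ov{\mc M}_{r_ir_{i+1}}$ uses only the first $d_{r_ir_{i+1}} < d_{pq}$ layers, the product $V_{pr_1} \times \cdots \times V_{r_lq}$ embeds into $\partial^\alpha V_{pq}$ by setting the extra $\eta_i$ to zero. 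The stabilizing bundle $F_{pq,\alpha}$ is then identified explicitly as $O_{pq,\alpha} \oplus Q_{pq,\alpha}$, where $O_{pq,\alpha}$ is the orthogonal complement of the product obstruction space and $Q_{pq,\alpha}$ is a space of off-diagonal Hermitian matrices, which is complex. Your account of the stabilizing bundle as ``extra cokernel directions at a break'' is morally right but does not supply the construction.

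Finally, the smooth structure is \emph{not} obtained by gluing analysis on the thickened moduli space. The paper first shows that $\pi_{pq}: V_{pq} \to B_{d_{pq}}$ is a $C^1_{\rm loc}$ $G_{pq}$-bundle, giving a vector bundle lift of the tangent microbundle, and then applies a relative version of Lashof's equivariant stable smoothing theorem inductively along the strata (outer-collaring provides the room for this induction). Only after smoothing and a further stabilization does one pass to quotient orbifolds. Your anticipated ``careful gluing analysis producing smooth charts'' is precisely the step the paper avoids.
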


If the above data satisfy some further compatibility conditions spelled out in Definition \ref{defn:flow-lift}, we call this system of derived orbifold charts of $\ov{\mc M}{}^\floer_{pq}$ a (normally complex) \emph{derived orbifold lift} of the flow category $T^{\floer}$. The full statement of Theorem \ref{thm:dorb-lift} exactly asserts the existence of an oriented and normally complex derived orbifold lift of $T^{\floer}$.

As mentioned above, the construction of derived orbifold chart for a single moduli space $\ov{\mc M}{}^\floer_{pq}$ is based on generalizing a recent result of Abouzaid--McLean--Smith \cite[Theorem 1.7]{AMS}. We replace their framed $J$-holomorphic spheres by framed $J$-holomorphic cylinders and modify the perturbation method accordingly. The main difficulty is the compatibility mentioned above. To this end, we introduce the \emph{multi-layered thickening} (see Section \ref{subsubsec:multi}) to make sure that the thickened moduli space ${\mc U}_{pr_1} \times \cdots \times {\mc U}_{r_l q}$ is truly embedded in ${\mc U}_{pq}$. To endow the space ${\mc U}_{pq}$ with a smooth structure, we follow the application of classical smoothing theory from \cite{AMS}. However, because ${\mc U}_{pq}$ is an orbifold with corners in general, the traditional smoothing theory does not suffice for our purpose. We develop a \emph{relative equivariant smoothing} technique to construct the smoothing in an inductive fashion, which also makes the resulting smooth structures on various moduli spaces compatible.

\begin{rem}
We expect that the construction of derived orbifold charts presented in this paper could be useful in the study of symplectic field theory (SFT) \cite{SFT} because the methods presented can be used to deal with punctures.
\end{rem}

\begin{rem}
The derived orbifold chart lift of the flow category $T^\floer$ should also be applied to construct certain Hamiltonian Floer homotopy theory for a complex-oriented generalized cohomology theory. As a first application, these geometric constructions should simplify the definition of Floer Morava $K$-theory \cite{Abouzaid_Blumberg} because the algebraic gluing of local virtual fundamental chains should be largely simplified given these global charts.
\end{rem}

\begin{rem}
We would like to remark that there are some other technical issues one needs to resolve in order to carry out our perturbation scheme on the derived orbifold charts. For instance, to make sure an inductive scheme could work, one needs to guarantee that any prescribed construction on the ``boundary statum" \eqref{eqn-intro-prod} can be extended to a neighborhood of it in $({\mc U}_{pq}, {\mc E}_{pq})$. To this end, a suitable collar structure is necessary and one has to keep track of the compatibility of such collar structures with the ``difference bundle" appearing in Theorem \ref{thm-intro-1.2}. Considerations of this form partially account for the length of the later sections, because we will need a more refined geometric construction compared to the one in \cite{AMS}.
\end{rem}

Given a Morse function $f: M \to {\mb R}$ and a Riemannian metric $g$ on $M$, if the pair $(f,g)$ satisfies certain properties stronger than the usual Morse--Smale condition, the work of Wehrheim \cite{Wehrheim-Morse} shows that the moduli spaces of unpamatrized gradient flow lines of $f$ with respect to $g$ are compact smooth manifolds with corners. Therefore, one can associate $(f,g)$ with a flow category $T^\morse$ such that:
\begin{enumerate}
    \item The objects are ``capped" critical points of $f$, see Section \ref{subsec:morse-flow}.
    \item The morphism space $\ov{\mc M}{}_{xy}^{\morse}$ is given by the moduli space of unparametrized gradient flow lines connecting the underlying critical points of $x$ and $y$ if the cappings are the same, otherwise it is the empty set.
    \item The composition
    $$
    \ov{\mc M}{}_{xz}^{\morse} \times \ov{\mc M}{}_{zy}^{\morse} \to \ov{\mc M}{}_{xy}^{\morse}
    $$
    is defined to be the concatenation of gradient flow lines, which is a diffeomorphism onto its image.
\end{enumerate}

To relate the two flow categories $T^\floer$ and $T^\morse$, we introduce the PSS and SSP \emph{flow bimodules} $M^\pss$ and $M^\ssp$. For instance, $M^\pss$ is described by the following data:
\begin{enumerate}

    \item For $x \in T^\morse$ and $p \in T^\floer$, we associate the pair $(x,p)$ with the moduli space of solutions to PSS equations $\ov{\mc M}{}^{\pss}_{xp}$ connecting $x$ and $p$. 
    
    \item The flow category $T^\morse$ acts on $M^\pss$ on the left, i.e., there is a map
    $$
    \ov{\mc M}{}^\morse_{xy} \times \ov{\mc M}{}^{\pss}_{yp} \to \ov{\mc M}{}^{\pss}_{xp}
    $$
    satisfying an associativity condition. Geometrically, this map is again given by concatenating solutions to gradient flow line equations and Floer-type equations, and it is a homeomorphism onto its image.
    \item The flow category $T^\floer$ acts on $M^\pss$ on the right in a similar fashion, i.e., there is a map
    $$
    \ov{\mc M}{}^\pss_{xq} \times \ov{\mc M}{}^{\floer}_{qp} \to \ov{\mc M}{}^{\pss}_{xp}
    $$
    satisfying an associativity condition.
    \item The right and left actions should satisfy certain associativity relation.
\end{enumerate}

The flow bimodule $M^\ssp$ is characterized similarly. Just like flow categories, one can define the notion of derived orbifold lift for flow bimodules (Definition \ref{defn:bimod-lift}). It turns out that we can indeed construct a derived orbifold lift for both $M^\pss$ and $M^\ssp$ via geometric arguments.

\begin{thm}(See Theorem \ref{thm:pss-ssp})
Each of the flow bimodules $M^\pss$ and $M^\ssp$ admits a derived orbifold lift (which extends the D-chart lift of $T^\floer$ and the trivial lift of $T^\morse$).
\end{thm}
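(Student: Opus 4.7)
The plan is to mimic, with adaptations, the inductive construction that produces the derived orbifold lift of $T^\floer$ in Theorem \ref{thm:dorb-lift}, now applied to the PSS/SSP moduli spaces, and then to verify the extra compatibility conditions built into Definition \ref{defn:bimod-lift} with the already-fixed lift of $T^\floer$ on one side and the trivial lift of $T^\morse$ on the other. I will focus the discussion on $M^\pss$; the SSP case is entirely analogous, with the roles of the left $T^\morse$-action and the right $T^\floer$-action exchanged.

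For a single pair $(x,p)$ with $x \in T^\morse$, $p \in T^\floer$ and $\ov{\mc M}{}^\pss_{xp} \neq \emptyset$, I would first construct a global chart $({\mc U}_{xp}, {\mc E}_{xp}, {\mc S}_{xp}, \psi_{xp})$ by the framed-curve/thickening construction used for $T^\floer$, adapted to the spiked-disc geometry: a negative gradient segment of $(f,g)$ starting at $x$, concatenated with a half-cylinder solving the PSS equation asymptotic to $p$ at $+\infty$. The Morse half is already a finite-dimensional smooth object, so the auxiliary framing and the finite-dimensional obstruction bundle are only introduced on the Floer half, exactly as in the generalization of \cite{AMS} carried out earlier in the paper. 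The natural stratification of $\ov{\mc M}{}^\pss_{xp}$ has strata of the shape
$$
\ov{\mc M}{}^\morse_{xy_1} \times \cdots \times \ov{\mc M}{}^\morse_{y_{k-1}y_k} \times \ov{\mc M}{}^\pss_{y_k q_0} \times \ov{\mc M}{}^\floer_{q_0 q_1} \times \cdots \times \ov{\mc M}{}^\floer_{q_l p},
$$
and I want ${\mc U}_{xp}$ to restrict on each such stratum to a stabilization (in the sense of Definition \ref{stabilization}) of the product of the previously chosen charts on the factors.

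To achieve this, I would run an induction on the sum of the energy of the PSS solution and the Morse index drop, combined with the length of the stratum word. At each inductive step the boundary D-chart is the one forced by the already-constructed lifts of $T^\morse$ and $T^\floer$ together with the inductively-constructed PSS factor, assembled by the multi-layered thickening of Section \ref{subsubsec:multi} so that the product charts truly embed as substrata of ${\mc U}_{xp}$. The relative equivariant smoothing technique developed for Theorem \ref{thm:dorb-lift} then upgrades the continuous chart to a smooth normally complex orbifold chart with corners whose restriction to the boundary stratification agrees with the previously fixed smooth structures on the factor charts; the orientation data is propagated from framed determinant-line trivializations in parallel with the $T^\floer$ construction, and is arranged so that the left $T^\morse$-action and the right $T^\floer$-action are orientation-preserving.

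The main obstacle, as in the remark after Theorem \ref{thm-intro-1.2}, is maintaining compatibility of collar structures with the ``difference bundles'' that appear when a boundary product chart is stabilized to match the ambient chart. Here the situation is slightly more delicate than for $T^\floer$ alone because the D-charts on the $T^\floer$ end are \emph{not} free to choose; they are dictated by the lift from Theorem \ref{thm:dorb-lift}. I would accommodate this by building the collar structure on ${\mc U}_{xp}$ in two steps: first extend the ambient Floer D-chart along a half-open collar of the Floer-breaking strata using exactly the collar data provided by the lift of $T^\floer$, then extend over the Morse-breaking strata, where the triviality of the Morse lift means only the smooth Morse-theoretic gluing map needs to be respected. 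With these collars in place the relative smoothing construction applies verbatim, completing the inductive step and producing the desired derived orbifold lift of $M^\pss$.
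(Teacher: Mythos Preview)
Your proposal is in the right spirit but takes a different organizational route from the paper, and in doing so leaves a concrete gap on the Morse side.

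The paper does \emph{not} build charts for $\ov{\mc M}{}^\pss_{xp}$ directly. Instead it first constructs global K-charts for the ``unconstrained'' PSS moduli $\ov{\mc M}{}^\pss_{\bullet q}$ (where $\bullet$ records only the extension of the map to $-\infty$), using an auxiliary moduli space $B_d^\pss$ of stable PSS thimbles in $\mb{CP}^d$ as the base of a $C^1_{\rm loc}$ submersion, exactly parallel to the $T^\floer$ case. These are stratified only by Floer breakings, so the multi-layered thickening, scaffolding, and relative smoothing go through with cosmetic changes. The key extra step is that, after a further stabilization (via \cite[Lemma 4.5]{AMS}), the evaluation map $\ev_\bullet: \widehat V_{\bullet q}^\pss \to M$ can be made a smooth equivariant submersion. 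Only then are the bimodule charts obtained by taking the fiber product with the compactified unstable manifold $\ov{W^u(\uds x)}$; the Morse-breaking corners and the compatibility with the trivial lift of $T^\morse$ come for free from the smooth Morse--Smale structure on $\ov{W^u(\uds x)}$ and the transversality guaranteed by submersiveness.

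Your plan of inducting directly on $\ov{\mc M}{}^\pss_{xp}$ and handling Morse breakings via ``the triviality of the Morse lift means only the smooth Morse-theoretic gluing map needs to be respected'' is where the gap lies: you have not said what mechanism makes the thickened chart transverse to the unstable manifolds, nor why the resulting intersection is a smooth manifold with corners whose Morse-breaking faces are honest products with the Morse charts. The paper's submersive-evaluation-then-intersect device is precisely the missing ingredient. Your two-step collar construction (Floer collars first, then Morse collars) could in principle be made to work, but it would require reproving this transversality by hand at every inductive stage, whereas the paper decouples the Morse side entirely until after smoothing is complete.
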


In addition to these statements, the geometric underpinning of Theorem \ref{thm-intro-pss} also needs certain moduli spaces designed for interpolating between $\Psi^{\rm SSP} \circ \Psi^{\rm PSS}$ and the identity map on $H_{*}(M;\Lambda)$. Such a structure should be better thought of as a morphism between two flow bimodules ``parametrized" by the interval $[-1,1]$, as exploited in detail in \cite{abouzaid2022axiomatic}. We do not follow this viewpoint in this paper. Instead, we simply list out the necessary geometric input in Section \ref{sec-4} for the proof of Theorem \ref{thm-intro-pss}.

\subsection{Fukaya--Ono--Parker perturbations}
Now we discuss about the perturbation methods adapted in the course of the proof of Theorem \ref{thm_intro_main}.

The perturbation scheme to be used in our construction was proposed by Fukaya--Ono \cite{Fukaya_Ono_integer} and further developed by B. Parker \cite{BParker_integer}. So this scheme will be referred to as the FOP perturbation scheme. We give a detailed introduction to its idea. For complete details, the reader could consult \cite{Bai_Xu_2022}. In a moduli problem related to pseudo-holomorphic curves, objects (such as stable maps) may have nontrivial automorphism groups; in the case of Hamiltonian Floer theory, the sphere bubbles in a stable Floer trajectory may be multiple covers carrying nontrivial automorphisms. This means that the moduli spaces have certain orbifold-type behavior. It is well-known that transversality could not always be achieved in the orbifold setting. To define invariants using abstract perturbations (see \cite{Fukaya_Ono} \cite{Li_Tian}), one needs to use ``multi-valued perturbations'' and count of transverse zeroes with ${\mb Q}$-valued weights. Such constructions result in ${\mb Q}$-valued invariants or chain complexes with ${\mb Q}$-coefficients. Another more algebraic reasoning to explain the appearance of ${\mb Q}$ is that Poincar\'e duality for oriented orbifolds only holds over ${\mb Q}$ (see the discussions of Pardon \cite{pardon-VFC}). These two perspectives could account for the reasons why Gromov--Witten invariants and Hamiltonian Floer homology of a general symplectic manifold are only defined over the field of rational numbers. 

Let us set up a finite-dimensional model to illustrate the failure of equivariant transversality and the proposal of Fukaya--Ono. Consider a compact smooth manifold $U$ acted on by a finite group $\Gammait$ and ${\bm W}$ is a representation of $\Gammait$. We may regard the orbifold $U / \Gammait$ as the moduli space and the orbifold vector bundle $(U \times {\bm W}) /\Gammait$ as the obstruction bundle. To simplify the exposition even further, let us assume ${\rm dim } U = {\rm dim} {\bm W}$. In this case, a weighted count of the zeroes of an equivariant transverse maps $S: U \to {\bm W}$ will give an Euler number, which is an invariant of the obstruction bundle over the orbifold $U / \Gammait$. However, the equivariance of $S$ implies that $S^{-1}(0)$ may contain higher dimensional pieces; for example, when ${\bm W}$ contains no trivial subreprsentations, the fixed point locus $U^\Gammait$ must be contained in $S^{-1}(0)$. Hence ordinary transversality cannot be achieved equivariantly in general. 

The existence of a \emph{normal complex structure} allows one to consider a more delicate kind of section so that one form of equivariant transversality \emph{can} hold. In the above finite-dimensional model, consider the normal bundle $NU^\Gammait \to U^\Gammait$ over the $\Gammait$-fixed point set $U^\Gammait$. Meanwhile, consider the decomposition 
\beqn
{\bm W} = \mathring {\bm W}^\Gammait \oplus \check {\bm W}^\Gammait
\eeqn
where $\mathring {\bm W}^\Gammait$ is the direct sum of trivial subrepresentations and $\check {\bm W}^\Gammait$ is the direct sum of nontrivial irreducible subrepresentations. A normal complex structure in this case is a $\Gammait$-equivariant complex structure on both $NU^\Gammait$ and $\check {\bm W}^\Gammait$. To illustrate the FOP perturbation scheme, assume for simplicity that $NU^\Gammait$ is trivial with fiber a complex $\Gammait$-representation ${\bm V}$ and $\mathring {\bm W}^\Gammait = \{0\}$. A section $S$ defined near $U^\Gammait$ can then be regarded as a map 
\beqn
f: U^\Gammait \to C^\infty({\bm V}, {\bm W})^\Gammait,
\eeqn
where $C^\infty( {\bm V}, {\bm W} )^\Gammait$ denotes the space of smooth $\Gammait$-equivariant maps from ${\bm V}$ to ${\bm W}$. In \cite{Fukaya_Ono_integer} Fukaya--Ono proposed to consider sections corresponding to fiberwise polynomial maps 
\beqn
f: U^\Gammait \to {\rm Poly}_d^\Gammait({\bm V}, {\bm W}),
\eeqn
i.e., sections of $E$ whose restriction to each normal fiber is an equivariant complex polynomial map of degree at most $d$. Assume $U = U^\Gammait \times {\bm V}$ and denote the corresponding section by $S_f: U \to {\bm W}$. Then one has 
\beqn
S_f^{-1}(0) = U^\Gammait \cup \big( S_f^{-1}(0) \cap (U \setminus U^\Gammait)\big).
\eeqn
Although the zero locus of $S_f$ still contains $U^\Gammait$, Fukaya--Ono asserted that the count of the second component (modulo $\Gammait$), is an invariant, if $d$ is sufficiently large and $f$ is generic. If this is true, one could indeed define an integer-valued ``Euler number" by counting the zeroes of $S_f$ with trivial stabilizer.

A difficulty to implement Fukaya--Ono's idea is that we need to introduce a new notion of transversality for those fiberwise polynomial maps. Certain delicate properties are needed for this transversality notion, especially when we change the cut-off degree $d$ for polynomial maps and when we change the isotropy groups. The crucial idea in the preprint \cite{BParker_integer} of Brett Parker which addresses these difficulties played an important role in the recent construction of the authors \cite{Bai_Xu_2022}. In short, one can define a canonical notion of transversality for those normally polynomial perturbations which behaves well when we change the degree $d$ and the isotropy group and which is satisfifed by generic such perturbations such that the integral counting is well-defined as proposed in \cite{Fukaya_Ono_integer}. 

Going back to the Floer-theoretic construction, given a derived orbifold lift of the flow category $T^\floer$, we need to construct the FOP perturbation on all the thickened moduli spaces inductively. Although such a construction is cumbersome and somewhat routine, one novelty showing up is the \emph{multiplicativity} of the FOP perturbation.

\begin{thm}(See Corollary 2.8)
Let $({\mc U}_i, {\mc E}_i, {\mc S}_i)$, $i = 1, 2$ be normally complex derived orbifold charts (Definition \ref{defn21}) such that ${\mc S}_i$ is a strongly transverse FOP section (Definition \ref{defn:strongly_transverse}). Then the product section 
\beqn
{\mc S}_1 \boxplus {\mc S}_2: {\mc U}_1 \times {\mc U}_2 \to {\mc E}_1 \boxplus {\mc E}_2
\eeqn
is also a strongly transverse FOP section.
\end{thm}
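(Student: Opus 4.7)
The plan is to verify the three ingredients of ``strongly transverse FOP'' for the product section one by one, reducing each to the corresponding statement for the factors.

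First I would unpack the local structure. Around a point $(x_1,x_2)\in \mathcal{U}_1\times\mathcal{U}_2$ with isotropy $\Gammait_1\times\Gammait_2$, any local uniformizing chart for $\mathcal{U}_1\times\mathcal{U}_2$ may be taken to be the product of local uniformizing charts for the factors, and similarly for the orbifold vector bundles. The fixed-point set of a subgroup $H_1\times H_2\le \Gammait_1\times\Gammait_2$ on the product is $(\mathcal{U}_1)^{H_1}\times(\mathcal{U}_2)^{H_2}$, and its normal bundle, together with the decomposition of the fiber of $\mathcal{E}_1\boxplus\mathcal{E}_2$ into trivial and nontrivial isotypic pieces, is the orthogonal direct sum of the corresponding pieces coming from each factor. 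In particular, the normal complex structures on $(\mathcal{U}_i,\mathcal{E}_i)$ combine canonically into a normal complex structure on $(\mathcal{U}_1\times\mathcal{U}_2,\mathcal{E}_1\boxplus\mathcal{E}_2)$, showing that the product is a normally complex derived orbifold chart in the sense of Definition \ref{defn21}.

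Next I would check the FOP condition. Writing $\mathcal{S}_i$ locally near a fixed stratum as $(x_i,v_i)\mapsto f_i(x_i)(v_i)$ with $f_i$ valued in $\Gammait_i$-equivariant polynomials of bounded degree from the normal representation $\bm V_i$ to $\bm W_i$, the product section is
\[
(\mathcal{S}_1\boxplus\mathcal{S}_2)(x_1,v_1,x_2,v_2) = \bigl(f_1(x_1)(v_1),\,f_2(x_2)(v_2)\bigr),
\]
which is manifestly a $(\Gammait_1\times\Gammait_2)$-equivariant fiberwise polynomial map from $\bm V_1\oplus\bm V_2$ to $\bm W_1\oplus\bm W_2$ whose degree is bounded by the maximum of the degrees of $f_1,f_2$. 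Being a polynomial in each variable separately is stable under the operations (restriction to subgroups, further stratification) used in Definition \ref{defn:strongly_transverse}, so the FOP property survives.

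The main point is strong transversality on each isotropy stratum, and this is the step that actually requires a small argument rather than inspection. For a stratum with isotropy $H_1\times H_2$, strong transversality of $\mathcal{S}_1\boxplus\mathcal{S}_2$ amounts to the transversality of its restriction to $(\mathcal{U}_1)^{H_1}\times(\mathcal{U}_2)^{H_2}$ viewed as a section of the trivial-isotropic part $\mathring{\bm W}_1^{H_1}\oplus\mathring{\bm W}_2^{H_2}$, together with a normal-linearization condition in the non-fixed directions $\check{\bm W}_1^{H_1}\oplus\check{\bm W}_2^{H_2}$. Because the section splits as a direct sum and the two linear conditions live in complementary summands, the total derivative on the product stratum is block diagonal, so surjectivity follows from the surjectivity of each block; this is exactly the transversality hypothesis for $\mathcal{S}_i$ on the $H_i$-stratum. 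The normal-linearization condition of \cite{Bai_Xu_2022} is also a block-diagonal condition on the normal jet in each $\bm V_i$ direction with values in $\check{\bm W}_i^{H_i}$, so it again reduces to the factors. Since every isotropy subgroup of $\Gammait_1\times\Gammait_2$ has this product form, strong transversality holds on every stratum of the product.

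The only subtlety I anticipate is bookkeeping: one must verify that the notion of ``strongly transverse FOP'' used in Definition \ref{defn:strongly_transverse} really decomposes cleanly over products of strata, and in particular that the normal polynomial gauge and the degree bound built into the definition are preserved by direct sums of representations and of chart data. This is essentially formal given the discussion in \cite{Bai_Xu_2022}, and no new analytic input is needed.
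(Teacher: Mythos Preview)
There is a genuine gap in your argument: you have misidentified what ``strong transversality'' means. Looking at Definition \ref{defn:strongly_transverse} (item (3)), an FOP section is strongly transverse at a point if the \emph{graph} of a local lift $f$ into ${\rm Poly}_d^{\Gammait}(V,W)$ is transverse to all strata of the \emph{canonical Whitney stratification} on the variety $\mathcal{Z}_d^{\Gammait}(V,W)$. This is not the same as ``transversality of the trivial part plus a normal-linearization condition''; the Whitney stratification in question is a delicate object whose construction is one of the main technical inputs from \cite{Bai_Xu_2022}.

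For the product section, the local lift $f = f_1 \times f_2$ lands in the subspace ${\rm Poly}_{d_1}^{G_1}({\bm V}_1,{\bm W}_1) \times {\rm Poly}_{d_2}^{G_2}({\bm V}_2,{\bm W}_2) \subset {\rm Poly}_d^{G_1\times G_2}({\bm V}_1\oplus{\bm V}_2,{\bm W}_1\oplus{\bm W}_2)$. Your block-diagonal argument shows that ${\rm graph}(f)$ is transverse to the \emph{product} of the canonical Whitney stratifications on $Z_{d_1}^{G_1}\times Z_{d_2}^{G_2}$. But what you need is transversality to the canonical Whitney stratification on $Z_d^{G_1\times G_2}$, which is \emph{a priori} a different object. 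The content of the paper's proof is exactly Proposition \ref{prop27}: the inclusion $\phi$ of the product polynomial spaces into the large one is transverse to all strata of the canonical Whitney stratification on $Z_d^{G_1\times G_2}$, and pulls this stratification back to the product of the canonical stratifications on the factors. This is not formal bookkeeping; its proof (Section \ref{subsection24}) requires constructing an explicit retraction $\psi$ and invoking the characterization of the canonical stratification as a minimal Whitney stratification respecting the action prestratification, together with a result on products of such minimal stratifications (Proposition \ref{propa5} in Appendix \ref{appendixa}). Your final paragraph dismisses precisely the step that carries all the weight.
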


The upshot of the inductive construction of FOP perturbations can be summarized schematically as follows, after defining the relevant counts using $0$-dimensional moduli spaces.

\begin{thm}(See Theorem \ref{thm_FOP_2})
Using the derived orbifold lift of $T^\floer$, one can define the chain complex $CF_{*}(H;\Lambda)$ from Theorem \ref{thm-intro-floer} after choosing a compatible family of strongly transverse FOP perturbations. Similarly, the chain maps $\Psi^{\rm PSS}$, $\Psi^{\rm SSP}$, and the homotopy
$$
\Psi^{\rm SSP} \circ \Psi^{\rm PSS} = Id + O(T)
$$
can be constructed by choosing a compatible family of strongly transverse FOP perturbations on the derived orbifold lifts of the relevant moduli spaces.
\end{thm}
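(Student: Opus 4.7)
The plan is to build all three algebraic structures by a single induction whose base case is the normally complex, oriented derived orbifold lifts of $T^\floer$, $M^\pss$, and $M^\ssp$ that were constructed earlier in the paper. Since $H$ is nondegenerate, it has only finitely many $1$-periodic orbits; combined with Gromov compactness this implies that for each energy window only finitely many of the moduli spaces $\ov{\mc M}{}^\floer_{pq}$, $\ov{\mc M}{}^\pss_{xp}$, and $\ov{\mc M}{}^\ssp_{px}$ are nonempty. It therefore suffices to produce strongly transverse FOP perturbations within each such window and check that the resulting signed counts assemble into coefficients of elements of $\Lambda$.

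First I would pick strongly transverse FOP perturbations $\mc S_{pq}^{\rm pert}$ on each chart $(\mc U_{pq}, \mc E_{pq}, \mc S_{pq})$ by induction on virtual dimension. By Theorem \ref{thm-intro-1.2}, each boundary stratum of $\mc U_{pq}$ is a stabilization of a product chart $\mc U_{pr_1} \times \cdots \times \mc U_{r_l q}$, and the multiplicativity corollary guarantees that the product $\mc S_{pr_1}^{\rm pert} \boxplus \cdots \boxplus \mc S_{r_l q}^{\rm pert}$ supplied by the inductive hypothesis is already strongly transverse on each such face. Using the collar structure emphasized in the preceding remarks, I would extend these boundary data into the interior of $\mc U_{pq}$ by an equivariant partition of unity on the space of fiberwise complex polynomial maps of bounded degree; a Sard-type argument internal to the FOP framework shows that a generic such extension remains strongly transverse. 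When $\mc U_{pq}$ has virtual dimension zero, such a perturbation has isolated zeroes all with trivial isotropy, and the signed count $n_{pq} \in {\mb Z}$ (using the orientation provided by the normally complex structure) defines the differential via
$$
d q = \sum_{p} n_{pq}\, T^{E(q) - E(p)}\, p,
$$
where $E$ denotes the symplectic action of a capped orbit. The identity $d^2 = 0$ is then verified by analyzing the oriented boundary of the $1$-dimensional perturbed moduli spaces: these boundary points lie in the codimension-one strata indexed by intermediate orbits $r$, and, thanks to the product compatibility on boundary faces, their signed count computes precisely the $(p,q)$ matrix coefficient of $d \circ d$.

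The construction of $\Psi^\pss$ and $\Psi^\ssp$ repeats this pattern applied to the derived orbifold lifts of the flow bimodules $M^\pss$ and $M^\ssp$, with the added requirement that FOP perturbations on boundary faces involving a $T^\floer$ factor be products with the perturbations already fixed on $\mc U_{pq}$, and that faces involving a $T^\morse$ factor use the unperturbed Morse moduli spaces. The chain map identities follow from the standard boundary analysis of the $1$-dimensional PSS/SSP moduli spaces. For the chain homotopy $\Psi^\ssp \circ \Psi^\pss = \mathrm{Id} + O(T)$, I would introduce an auxiliary family of moduli spaces parametrized by a gluing length $R \in [0,\infty]$: at $R = \infty$ they break into elements of the fiber product $M^\ssp \times_{T^\floer} M^\pss$, while at $R = 0$ they reduce, up to strictly positive $T$-powers coming from sphere bubbling, to the Morse trajectories that compute the identity. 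The compactification admits a derived orbifold lift by the same inductive machinery, and a compatible strongly transverse FOP perturbation on this lift yields the desired chain homotopy.

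The principal obstacle, and the reason the earlier sections are so long, lies in the coherent extension step at each stage of the induction: one must simultaneously preserve strong transversality in the FOP sense, the smooth structure produced by the relative equivariant smoothing, the collar structure compatible with the difference bundle of Theorem \ref{thm-intro-1.2}, and all previously fixed boundary perturbations. Multiplicativity of the FOP framework disposes of strong transversality on products, reducing the extension to a relative partition-of-unity argument on normally complex polynomial maps; verifying that this extension respects the smoothing data and the collar identifications is where the bulk of the technical effort resides, but once it is in place, the counting arguments above go through routinely and yield the chain complex $CF_*(H;\Lambda)$ and the maps $\Psi^\pss$, $\Psi^\ssp$ with the asserted properties.
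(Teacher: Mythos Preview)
Your overall architecture matches the paper's: inductively construct FOP perturbations on the D-chart lift, use multiplicativity (Corollary~\ref{cor28}) to know that products of previously constructed perturbations are strongly transverse on boundary strata, extend over the collar, and then apply the relative existence result for strongly transverse FOP sections (the ``CUDV'' version of Proposition~\ref{prop:FOP_existence}) to fill in the interior. The chain-level consequences and the homotopy via a one-parameter family of moduli spaces are also handled in the paper essentially as you sketch.

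There is, however, a genuine error in your induction scheme. You propose to induct on virtual dimension. This does not work: the boundary stratum $\partial^{prq}\mc U_{pq}$ is (a stabilization of) the product $\mc U_{pr}\times\mc U_{rq}$, and while the product has virtual dimension one less than $\mc U_{pq}$, the individual factors $\mc U_{pr}$ and $\mc U_{rq}$ can have arbitrary virtual dimension, possibly larger than that of $\mc U_{pq}$. So when you reach $\mc U_{pq}$ in a virtual-dimension induction, there is no reason the perturbations $\mc S_{pr}^{\rm pert}$ and $\mc S_{rq}^{\rm pert}$ have already been constructed. The paper instead inducts on the action gap $d_{pq}=\mc A^\floer(q)-\mc A^\floer(p)$ (equivalently, on the partial order of the flow category): since $d_{pr},d_{rq}<d_{pq}$ for every intermediate $r$, all boundary factors are guaranteed to have been handled at an earlier stage. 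This is the ordering that makes the inductive construction in the proof of Theorem~\ref{thm_FOP_2} go through.

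A smaller omission: you do not mention the straightening (Definition~\ref{defn:straightening_2}) among the data being respected. In the paper this is not optional---the very notion of an FOP section, and hence of strong transversality, depends on the straightening, and compatibility of the straightening with the collar and scaffolding (Definitions~\ref{defn329} and~\ref{defn330}) is exactly what makes the collar-pullback of a boundary FOP section remain an FOP section. Your ``equivariant partition of unity on fiberwise polynomial maps'' and ``Sard-type argument'' are heuristically right, but in the paper they are packaged as Proposition~\ref{prop:FOP_existence}, whose hypotheses include a fixed straightening.
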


As a consequence, Theorem \ref{thm-intro-floer} and Theorem \ref{thm-intro-pss} hold.

\begin{rem}
We would like to point out that the definition of FOP sections depend on an extra structure which is called a \emph{straightening} (see Definition \ref{defn:straightening_2}). Heuristically, it consists of a suitable Riemannian metric on ${\mc U}_{pq}$ and a special form of connection on ${\mc E}_{pq}$ for each of the derived orbifold charts appearing in the derived orbifold lift. Again, there is an issue of compatibility with various structures on our derived orbifold lift. Although the relevant compatibility conditions will be shown to hold after a somewhat routine and cumbersome induction construction, pinning down the correct formulations seems to be a nontrivial task.
\end{rem}

\subsection{Discussions}
We would like to comment on the implications of our result and some notable features of the techniques.

\subsubsection{Towards stronger forms of the Arnold conjecture}
One important quantity in the statement of Theorem \ref{thm_intro_main} is the minimal Chern number. In the extremal case when $N=1$, if the only torsion components of $H_{*}(M;{\mb Z})$ are ${\mb Z}/2$ and ${\mb Z}/3$, and both of them have odd degree, then they together contribute $2$ to the quantity \eqref{eqn:intro-low}. In other words, our lower bound is only as strong as the lower bound from \cite{Abouzaid_Blumberg} in this special case. This might suggest that the sharp lower bound in the Arnold conjecture might need to incorporate certain symplectic information from the ambient symplectic manifold, as witnessed by the minimal Chern number in our statement.

On the other hand, it is possible that the methods developed in this paper might eventually resolve the \emph{strong} Arnold conjecture for simply connected closed symplectic manifolds of dimension $\geq 6$. Indeed, following the bifurcation methods initiated in \cite{Floer_unregularized}, it should be possible to study the simple homotopy type of our integral Floer chain complex, which in turn is closely related to the \emph{stable} Morse number of the ambient manifold. When the ambient manifold satisfies the condition at the beginning of this paragraph, the stable Morse number actually coincides with the Morse number.

\subsubsection{Stable/normal complex structure}
As emphasized in the exposition of FOP perturbations, the normal complex structure of derived orbifold charts plays a crucial role even in the definition of these polynomial-like perturbations. The closely related notion of stable complex structures also plays an important role in the construction of Floer homotopy type in \cite{Abouzaid_Blumberg}, in which the stable complex structures are necessary for applying Poibcar\'e duality to the Morava K-theory of the classifying space of orbifolds. These structures are not necessary for the proof of Arnold conjecture over ${\mb Q}$. It is natural to expect more applications which make essential use of the normal/stable complex structures on the moduli spaces.

\subsubsection{Algebraic structures and operations}
Following the concept of ``flow multimodules" as developed in \cite{abouzaid2022axiomatic}, one should be able to define multiplicative structure on the integral Floer homology. For instance, it should be possible to generalize the definition of quantum Steenrod operations on Hamiltonian Floer theory of semi-positive symplectic manifolds \cite{wilkins2020construction} to all symplectic manifolds using the ${\mb F}_p$-reduction of our Hamiltonian Floer homology. Our construction of derived orbifold lifts should also be useful for regularizing moduli spaces of $J$-holomorphic curves originated from the algebraic structures.

\subsection{Outline}
The following describes the content of every section.

\begin{itemize}

\item In Section \ref{section2}, we review the differential topology related to the perturbation scheme used in this paper. Most notably, the multiplicativity of the FOP strongly transverse condition is derived in Corollary \ref{cor28}.

\item Section \ref{sec-3} is devoted to the discussions of flow categories, flow bimodules and their derived orbifold lifts. We show that for a flow category whose derived orbifold lift has a normal complex structure, it is possible to construct a family of compatible strongly transverse FOP perturbations so that one can define a chain complex over $\Lambda$ from these data if there is a further compatible orientation structure. The discussions culminate at Theorem \ref{thm_FOP_2}. A similar result for flow bimodules is presented as well.

\item We bring back concrete symplectic geometry in Section \ref{sec-4}. In particular, we describe the flow categories $T^\floer$, $T^\morse$, and flow bimodules $M^\pss$, $M^\ssp$, and the structures of the interpolating moduli spaces in detail. After presenting the main geometric statements in Section \ref{subsec-main} and the necessary input from a $1$-parameter family of PSS/SSP moduli spaces (``the chain homotopy moduli spaces") in Section \ref{subsec:hmtp}, we prove our main theorem.

\item In Section \ref{sec-5}, we show how to construct a compatible family of \emph{topological} global Kuranishi charts for the moduli spaces $\ov{\mc M}_{pq}$.

\item In Section \ref{sec-6}, We discuss how to use (relative) equivariant smoothing theory to endow the global Kuranishi charts constructed in Section \ref{sec-5} with smooth structures. Moreover, we describe how to construct a normally complex lift of all derived orbifold lifts.

\item The parallel constructions for PSS, SSP type moduli spaces as in Section \ref{sec-5} and Section \ref{sec-6} are presented in Section \ref{sec:pss}. Because most of the arguments are only a matter of cosmetic modification, most of the proofs are only sketched.

\end{itemize}

\begin{rem}
We were informed by Semon Rezchikov on September 14, 2022 that he had an independent approach towards similar results following our early work \cite{Bai_Xu_2022}.
\end{rem}



\subsection*{Acknowledgements}

We thank Mohammed Abouzaid, Kenji Fukaya, Helmut Hofer, Suguru Ishikawa, Alexander Kupers, John Pardon, Mohan Swaminathan, and Weiwei Wu for interesting discussions.

This paper is completed during the second author's visit at the Institute for Advanced Study hosted by Professor Helmut Hofer. He would like to thank IAS and Professor Hofer for their hospitality. 

The second author would like to express his sincere gratitude to his family, especially his wife, Dr. Ning Lin, for her understanding, support, advice, encouragement, and love.

\section{Recap of the FOP natural transformation and multiplicativity}\label{section2}

In this section we briefly review the construction of \cite{Bai_Xu_2022} concerning the Fukaya--Ono--Parker perturbations and prove an additional property of FOP perturbations regarding the multiplicativity. Some necessary ingredients from the theory of Whitney stratifications are provided in Appendix \ref{appendixa}.

\subsection{Orbifolds and the canonical Whitney stratification on $\mathcal{Z}_d$}\label{subsec:orbi-setup}

Now we describe the setup for FOP perturbations. In this section we consider effective\footnote{A stabilization can make a non-effective orbifold effective.} orbifolds only. An $n$-dimensional {\bf orbifold} is a locally compact, Hausdorff, and second countable topological space ${\mc U}$ equipped with an atlas of orbifold charts: each chart is of the form $C = (U, \Gammait, \psi)$ where $\Gammait$ is a finite group with an effective linear action on ${\mb R}^n$, $U \subset {\mb R}^n$ is an invariant open subset, and $\psi: U \to {\mc U}$ is a $\Gammait$-invariant continuous map such that the induced map $U/\Gammait \to {\mc U}$ is a homeomorphism onto an open subset of ${\mc U}$. There is a compatibility requirement for overlapping charts which we will not recall here. In notation the map $\psi$ is often suppressed and we identify its image with $U/\Gammait$. Similarly, over an orbifold ${\mc U}$, an orbifold vector bundle consists of an orbifold ${\mc E}$, a continuous map $\pi_{\mc E}: {\mc E} \to {\mc U}$, and an atlas of bundle charts of ${\mc E}$. A bundle chart consists of a chart $C = (U, \Gammait)$ of the base ${\mc U}$, a $\Gammait$-equivariant vector bundle $\pi: E \to U$, and a homeomorphism $E /\Gammait \cong \pi_{\mc E}^{-1}(U/\Gammait)$ which is compatible with the projections. We denote by the triple $(\Gammait, E, U)$ a bundle chart. We ofter use charts which are centered at certain points of the orbifold. For each point $x \in {\mc U}$, an orbifold chart {\bf centered at} $x$ is a chart $C_x = (U_x, \Gammait_x, \psi_x)$ where $U_x \subset {\mb R}^n$ is an invariant open neighborhood of the origin, such that $\psi_x(0) = x$. A bundle chart for ${\mc E} \to {\mc U}$ centered at $x$ is then denoted by $(\Gammait_x, E_x, U_x)$ where $(\Gammait_x, U_x)$ is an orbifold chart centered at $x$ and $E_x \to U_x$ is a $\Gammait_x$-equivariant vector bundle.

We need a few other frequently used notations. Let $(\Gammait, E, U)$ be a bundle chart. For each subgroup $G \subset \Gammait$, denote by $U^G \subset U$ the fixed point locus of the induced $G$-action and denote by $NU^G$ the normal bundle of $U^G \hookrightarrow U$. Then the fibers of $NU^G$ are representations of $G$ whose decompositions into the direct sum of irreducible representations contain no trivial summand. We can also decompose the restriction
\beqn
E|_{U^G} = \mathring E^G \oplus \check E^G 
\eeqn
where $\mathring E^G \subset E$ is the subbundle whose fibers are the maximal trivial subrepresentations and $\check E^G$ is the complement, whose fibers are direct sums of nontrivial irreducible representations.

\subsubsection{Derived orbifold charts}

We recall the notion of derived orbifold charts introduced in \cite[Section 5]{pardon2020orbifold} and used in \cite{Bai_Xu_2022}, as well as a few related concepts. We will soon generalize this notion to the case of orbifolds with faces but this generalization is straightforward.

\begin{defn}\label{defn21}\hfill
\begin{enumerate}
    \item An (effective) {\bf derived orbifold chart} ({\bf D-chart} for short) is a triple $({\mc U}, {\mc E}, {\mc S})$ where ${\mc U}$ is an effective orbifold, ${\mc E}\to {\mc U}$ is an orbifold vector bundle, and ${\mc S}: {\mc U} \to {\mc E}$ is a continuous section. We say the triple $({\mc U}, {\mc E}, {\mc S})$ is compact if ${\mc S}^{-1}(0)$ is compact. 
    
    \item A (smooth) {\bf perturbation} of a compact derived orbifold chart $({\mc U}, {\mc E}, {\mc S})$ is a smooth section ${\mc S}': {\mc U} \to {\mc E}$ such that there exists a precompact open neighborhood ${\mc D}$ of ${\mc S}^{-1}(0)$ and a continuous norm on ${\mc E}$ such that
    \beqn
    \| {\mc S} - {\mc S}'\|_{C^0({\mc U}\setminus {\mc D})} < \inf_{x\in {\mc U}\setminus {\mc D}} | {\mc S}(x)|.
    \eeqn
    In particular, $({\mc S}')^{-1}(0)$ is still contained in ${\mc D}$ and hence $({\mc U}, {\mc E}, {\mc S}')$ is also a compact derived orbifold chart. 
    
    \item A {\bf normal complex structure} on a derived orbifold chart $({\mc U}, {\mc E}, {\mc S})$ consists of, for each bundle chart $(\Gammait, E, U)$ and for each subgroup $G \subseteq \Gammait$, a $G$-invariant complex structure $I_G$ on $NU^G$ and a $G$-invariant complex structure $J_G$ on $\check E^G$. Moreover, these complex structures are compatible in the following sense. \begin{itemize}
        
    \item Within the same chart, for each pair of subgroups $H \subset G\subset \Gammait$ for which we have $H$-equivariant inclusions
    \begin{align*}
    &\ NU^H|_{U^G} \subset NU^G,\ &\ \check E^H|_{U^G} \subset \check E^G
    \end{align*}
    we require that they are complex linear with respect to the complex structures $I_G$, $J_G$ and $I_H, J_H$. 
    
    \item The system of invariant complex structures are compatible with chart embeddings. 
    \end{itemize}
    
    \item A {\bf straightening} of a derived orbifold chart $({\mc U}, {\mc E}, {\mc S})$ consists of a Riemannian metric on ${\mc U}$ and a connection on ${\mc E}$ satisfying the following conditions.
    \begin{itemize}
        \item For each chart $(U, E, \Gammait)$, the pullback Riemannian metric $g_U$ is ``straightened.'' Namely, for each subgroup $G \subset \Gammait$, near $U^G$, the ambient Riemannian metric $g_U$ agrees with the bundle metric on $NU^G$ induced by $g_U$ via the exponential map along the normal directions.
        
        \item For each chart $(U, E, \Gammait)$, the pullback connection $\nabla^E$ on $E$ is ``straightened.'' Namely, for each subgroup $G \subset \Gammait$, we can identify a neighborhood of $U^G$ with a neighborhood of the zero section of $NU^G$ using the exponential map associated with $g_U$. After identifying $E|_{NU^G}$ with the pullback of $E|_{U^G}$ using the projection $NU^G \to U^G$ and the parallel transport along normal geodesics using $\nabla^E$, the connection $\nabla^E$ agrees with the pullback connection of the restriction of $\nabla^E$ to $U^G$.
    \end{itemize}
    \end{enumerate}
\end{defn}

\begin{lemma}\cite[Lemma 3.15, Lemma 3.20]{Bai_Xu_2022} For a compact derived orbifold chart $({\mc U}, {\mc E}, {\mc S})$ there exists a straightening in a neighborhood of ${\mc S}^{-1}(0)$.
\end{lemma}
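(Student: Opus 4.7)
\medskip

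\noindent\textbf{Proof proposal.} The plan is to produce a straightening on a neighborhood of $\mathcal{S}^{-1}(0)$ by a two-level induction: first reduce to the local problem on a single orbifold chart $(\Gammait,U)$ with equivariant bundle $E\to U$, then inside each such chart perform a reverse induction on the poset of isotropy subgroups $G\subseteq\Gammait$ ordered by inclusion (equivalently, on the strata $U^G$ ordered by depth, deepest first). Since $\mathcal{S}^{-1}(0)$ is compact, we can cover a relatively compact neighborhood of it by finitely many orbifold charts; by a $\Gammait$-invariant partition of unity subordinate to this cover it will be enough to produce a straightening on each chart and then verify that convex combinations of straightened metrics and of straightened connections remain straightened near each fixed-point stratum, provided the local pieces are mutually compatible on overlaps.

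Inside a single chart $(\Gammait,U)$, I would argue as follows. The subgroups of $\Gammait$ that occur as isotropy groups form a finite poset $\mathcal{P}$. Start with a maximal $G_{\max}\in\mathcal{P}$: choose any $G_{\max}$-invariant Riemannian metric $g_{G_{\max}}$ on $U^{G_{\max}}$ together with any $G_{\max}$-invariant fiber metric on $NU^{G_{\max}}$, and extend these to a metric $g^{(G_{\max})}$ on a tubular neighborhood of $U^{G_{\max}}$ by declaring the normal exponential map to be an isometry from the total space of $NU^{G_{\max}}$ (with the obvious bundle metric) onto its image. This metric is straightened near $U^{G_{\max}}$ tautologically. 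Now suppose we have constructed a $\Gammait$-invariant metric $g^{(\geq H)}$, defined and straightened on an open neighborhood $V_{\geq H}$ of $\bigcup_{G\supsetneq H}U^G$, for every $G$ strictly above some $H\in\mathcal{P}$. To extend over $U^H$, restrict $g^{(\geq H)}$ to a tubular neighborhood of $U^H\cap V_{\geq H}$ inside $U^H$, extend this restriction to an $H$-invariant metric on all of $U^H$ using an $H$-invariant partition of unity on $U^H$ (the resulting object still agrees with $g^{(\geq H)}$ near the deeper strata, because straightening near $U^G$ for $G\supsetneq H$ forces $g^{(\geq H)}|_{U^H}$ to be a genuinely fixed piece of data on $U^H\cap V_{\geq H}$). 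Similarly extend an $H$-invariant fiber metric on $NU^H$ consistent with $g^{(\geq H)}$ on the overlap, then define the new straightened metric near $U^H$ by pulling back via $\exp$ from $NU^H$; interpolate with $g^{(\geq H)}$ on a collar using a cutoff in the normal direction. The key point, which needs to be checked, is that the straightening condition along $U^G$ for $G\supsetneq H$ is preserved by this interpolation: this follows because along $U^G$ the two metrics being interpolated are both isometric pullbacks of the same data under $\exp$, so their convex combination is too.

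The construction of the connection $\nabla^{\mathcal{E}}$ proceeds in complete parallel. At each inductive stage, on a neighborhood of $U^H$ one first chooses any $H$-invariant connection on $E|_{U^H}$ (compatible with the already constructed data on $U^H\cap V_{\geq H}$), and then declares the extension near $U^H$ to be the pullback connection under the identification
\[
E|_{\mathrm{nbhd}(U^H)}\;\cong\;\pi^*\bigl(E|_{U^H}\bigr)
\]
provided by normal geodesic parallel transport for $\nabla^{\mathcal{E}}$ with respect to the already chosen metric $g$. This is straightened near $U^H$ by construction, and the compatibility with the already straightened data on $V_{\geq H}$ follows, as for the metric, from the fact that along any deeper stratum $U^G$ the identification via normal parallel transport factors through the corresponding identification for $NU^G$. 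Convex combinations of straightened connections remain straightened near each $U^G$ by the same argument as for the metric.

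The main obstacle I anticipate is purely bookkeeping: ensuring that at each step of the induction, the newly chosen extensions on $U^H$ and on $NU^H$ match, on the open set where they meet the previously constructed data, the stronger requirement imposed by straightening at deeper strata, so that the final convex combination and the final tubular interpolation do not destroy straightening along any $U^G$ with $G\supsetneq H$. This is handled by being careful to perform the partition-of-unity gluing only in directions tangent to the stratum currently being treated, so that normal-geodesic structure transverse to deeper strata is never disturbed; equivalently, one works throughout with the normal-form identifications $\exp:NU^G\to U$ and only ever modifies data inside $U^G$, never in the normal fiber. Once this is carried out chart by chart, a final $\Gammait$-invariant partition of unity subordinate to the cover of a neighborhood of $\mathcal{S}^{-1}(0)$ glues the local straightened pairs $(g,\nabla^{\mathcal{E}})$ into a global one, with the straightening property preserved near every stratum by the convex-combination observation above.
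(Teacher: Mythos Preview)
Your overall strategy---reverse induction on the isotropy poset, building the metric first and then the connection, using the normal exponential map to impose the product form near each stratum---is the standard one and matches what the cited lemmas in \cite{Bai_Xu_2022} do. The within-chart inductive step is fine.

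The gap is in your final global patching. You construct a straightening on each chart independently and then propose to glue by a partition of unity, asserting that convex combinations of straightened metrics remain straightened ``provided the local pieces are mutually compatible on overlaps.'' But you never arrange that compatibility, and without it the claim is false. If $g_1,g_2$ are both straightened near $U^G$ but induce different metrics on $U^G$ or different fiber metrics on $NU^G$, then $\exp_{g_1}\neq\exp_{g_2}$, and $\rho g_1+(1-\rho)g_2$ has no reason to be a bundle metric pulled back by its \emph{own} exponential map. Your justification (``both isometric pullbacks of the same data under $\exp$'') applies only to the within-chart interpolation, where you explicitly built the new metric to agree with the old one near deeper strata; it does not transfer to the across-chart step.

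The fix is to run the induction globally rather than chart-by-chart. The isotropy stratification of $\mathcal{U}$ is intrinsic: each stratum $\mathcal{U}_{(G)}$ is a suborbifold with an orbifold normal bundle. At each inductive stage you first construct the metric on the stratum and the fiber metric on its normal bundle \emph{globally} (here a partition of unity is harmless, since there is no straightening condition to preserve on the stratum itself), and only then push out via $\exp$ to a tubular neighborhood. This determines the straightened metric uniquely near the stratum, so there is nothing to patch across charts at the end. The same remark applies to $\nabla^{\mathcal{E}}$.
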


There are several natural relations between derived orbifold charts. 

\begin{defn}\hfill\label{stabilization}
\begin{enumerate}
    \item An {\bf open embedding} from a derived orbifold chart $({\mc U}, {\mc E}, {\mc S})$ to $({\mc U}', {\mc E}', {\mc S}')$ consists of an open embedding $\phi: {\mc U} \to {\mc U}'$ of orbifolds and a bundle isomorphism $\widehat \phi: {\mc E} \to {\mc E}'|_{\phi({\mc U})}$ covering $\phi$ such that $\widehat \phi \circ {\mc S}= {\mc S}' \circ \phi$ and $({\mc S}')^{-1}(0) \subset \phi({\mc U})$.
    
    \item A {\bf germ} of open embeddings from a derived orbifold chart $({\mc U}, {\mc E}, {\mc S})$ to $({\mc U}', {\mc E}', {\mc S}')$ is an equivalence class of open embeddings from an open neighborhood of ${\mc S}^{-1}(0)$ with the restrictions of ${\mc E}$ and ${\mc S}$ over it, to $({\mc U}', {\mc E}', {\mc S}')$, where the equivalence relation is induced by shrinking the neighborhood. Such a germ is called to induce a \textbf{germ equivalence}.
    
    \item The {\bf product} of derived orbifold charts $({\mc U}_i, {\mc E}_i, {\mc S}_i)$, $i = 1, \ldots, k$, is the derived orbifold chart
    \beqn
    ({\mc U}_1\times \cdots \times {\mc U}_k, {\mc E}_1 \boxplus \cdots \boxplus {\mc E}_k, {\mc S}_1 \boxplus \cdots \boxplus {\mc S}_k).
    \eeqn
     
    \item Let ${\mc C}= ({\mc U}, {\mc E}, {\mc S})$ be a derived orbifold chart and $\pi_{\mc F}: {\mc F} \to {\mc U}$ be another orbifold vector bundle. The {\bf stabilization} of ${\mc C}$ by ${\mc F}$ is the chart
    \beqn
    {\rm Stab}_{\mc F}({\mc C}) = ( {\mc F}, \pi_{\mc F}^* {\mc E}\oplus \pi_{\mc F}^* {\mc F}, \pi_{\mc F}^* {\mc S} \oplus \tau_{\mc F})
    \eeqn
    where $\tau_{\mc F}: {\mc F} \to \pi_{\mc F}^* {\mc F}$ is the tautological section.
\end{enumerate}
\end{defn}

\begin{rem}
The stabilization operation intertwines with many other constructions. For example, suppose $\phi: {\mc C} \to {\mc C}'$ is an open embedding and ${\mc F}' \to {\mc U}'$ is a orbifold vector bundle. Denote by ${\mc F}$ the pullback bundle $\phi^* {\mc F}$. Then there is an obvious extension of $\phi$ which defines an open embedding from ${\rm Stab}_{\mc F}({\mc C})$ into ${\rm Stab}_{{\mc F}'}({\mc C}')$. On the other hand, if ${\mc C}$ is equipped with a straightening, one can endow the stabilization ${\rm Stab}_{\mc F} ({\mc C})$ with a straightening naturally once ${\mc F}$ is equipped with a bundle metric and a compatible connection.
\end{rem}

\subsubsection{Equivariant polynomial maps}

To describe the use of FOP sections we need to recall some basic properties of equivariant polynomial maps. Let $G$ be a finite group and ${\bm V}, {\bm W}$ be finite dimensional complex $G$-representations. Let ${\rm Poly}^G({\bm V}, {\bm W})$ be the space of equivariant complex polynomial maps from $V$ to $W$, and for each nonnegative integer $d$, let ${\rm Poly}_d^G({\bm V}, {\bm W}) \subset {\rm Poly}^G({\bm V}, {\bm W})$ be the subspace of maps with degree at most $d$. There is a natural $G$-equivariant evaluation map 
\beqn
{\rm ev}: {\bm V} \times {\rm Poly}_d^G({\bm V}, {\bm W}) \to {\bm W}
\eeqn
whose zero locus is denoted by 
\beq\label{eqn21}
Z:= Z_d^G:= Z_d^G({\bm V}, {\bm W}).
\eeq
This is a complex algebraic variety. 

The application of FOP sections crucially relies on the existence of certain Whitney stratifications on the above variety. By a classical theorem of Whitney \cite{Whitney_1965}, any complex algebraic subvariety inside a smooth variety admits a canonical Whitney stratification (see Theorem \ref{thm:Whitney_1965}) whose strata are smooth algebraic submanifolds. However, for the variety $Z$ \eqref{eqn21}, this canonical Whitney stratification may not respect the group action. One needs to use a ``more symmetric'' Whitney stratification with a few nice properties---this was the observation of B. Parker \cite{BParker_integer}. The necessity of having such nice properties comes from the consideration that we need to consider the Whitney stratifications on the variety $Z$ for different cut-off degrees $d$ and different groups $G$. In our previous work \cite[Theorem 4.3]{Bai_Xu_2022} we proved the following result showing the existence of certain canonical ``symmetric'' Whitney stratifications which have the desired nice properties.

\begin{thm}
There exists a unique Whitney stratification on $Z_d^G({\bm V}, {\bm W})$ subject to the following conditions.
\begin{enumerate}

    \item For each subgroup $H\subset G$, let ${\bm V}_H^* \subset {\bm V}$ be the subset of points whose stabilizer is exactly $H$. Then for each $x \in Z_d^G({\bm V}, {\bm W}) \cap ( {\bm V}_H^* \times {\rm Poly}_d^G({\bm V}, {\bm W}))$, the germ through $x$ is contained in ${\bm V}_H^* \times {\rm Poly}_d^G({\bm V}, {\bm W})$. 
    
    \item The Whitney stratification is the minimal one among all which satisfy the above condition.
\end{enumerate}
Moreover, this Whitney stratification enjoys the following additional properties.
\begin{enumerate}
    \item It is $G$-invariant.
    
    \item It is induced from a Whitney prestratification (see Definition \ref{defna1} and \ref{defna2}) on $Z$ whose strata are all algebraic submanifolds. 
    
    \item It is invariant under all $G$-equivariant diffeomorphisms of ${\bm V} \times {\rm Poly}_d^G({\bm V}, {\bm W})$ which preserve $Z$.
\end{enumerate}
\end{thm}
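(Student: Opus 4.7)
The plan is to build the stratification explicitly from the partition of $Z$ by stabilizer type and then to recognize this construction as the unique minimal Whitney stratification satisfying the germ condition (1).

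First, I would decompose
\[
Z = Z_d^G({\bm V}, {\bm W}) = \bigsqcup_{H \subseteq G} Z_H, \qquad Z_H := Z \cap \bigl( {\bm V}_H^* \times {\rm Poly}_d^G({\bm V}, {\bm W}) \bigr).
\]
Each ${\bm V}_H^*$ is a locally closed constructible subset of ${\bm V}$, cut out by the polynomial equations $h v = v$ for $h \in H$ together with the inequations $g v \neq v$ for $g \in G \setminus H$; consequently each $Z_H$ is a locally closed algebraic subvariety of ${\bm V} \times {\rm Poly}_d^G({\bm V}, {\bm W})$. Next, I would apply the classical Whitney stratification theorem (Theorem \ref{thm:Whitney_1965}) to the tuple of closed algebraic subvarieties $(\overline{Z_H})_{H \subseteq G}$ inside the ambient smooth variety, producing a Whitney prestratification of $Z$ simultaneously compatible with each $\overline{Z_H}$ and whose strata are smooth algebraic submanifolds. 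Restricting to connected components and passing to the induced Whitney stratification via the frontier procedure recalled in Appendix \ref{appendixa} then yields a Whitney stratification of $Z$ which, by construction, satisfies condition (1).

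For the uniqueness and minimality assertion in (2), I would argue that the family of Whitney stratifications of $Z$ satisfying (1) is closed under common refinement by algebraic locally closed partitions, because Whitney's regularity condition (b) is preserved under such refinements. The common refinement over all members of the family, which exists thanks to algebraicity and the finiteness of the orbit-type decomposition, then provides a well-defined minimum, giving both existence and tautologically uniqueness. The three additional properties follow from canonicity: $G$-invariance holds because the data defining the minimum are $G$-equivariant, so the $G$-translate of the minimal stratification is again a minimal stratification satisfying (1) and therefore equal to itself; the same argument applied to any $G$-equivariant self-diffeomorphism of ${\bm V} \times {\rm Poly}_d^G({\bm V}, {\bm W})$ preserving $Z$ yields the third property; and algebraicity of the prestratification is preserved at each step of the construction.

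The main obstacle is the careful verification that the collection of Whitney stratifications satisfying (1) is genuinely closed under common refinement and admits a minimum, as opposed to merely being downward directed in a weak sense, and that the minimum thus obtained is actually generated by an algebraic prestratification. This is essentially the substance of our earlier result \cite[Theorem 4.3]{Bai_Xu_2022}, whose proof we would follow here; the verification combines the constructibility of the orbit-type decomposition with the algorithmic extraction of a canonical Whitney stratification from an algebraic prestratification, both of which are standard modulo bookkeeping but require some care to pin down in the equivariant setting.
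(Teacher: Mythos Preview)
The paper does not prove this theorem in-text; it simply cites \cite[Theorem 4.3]{Bai_Xu_2022}, and the relevant machinery is recalled in Appendix~\ref{appendixa}. Your overall strategy---decompose $Z$ by orbit type into an algebraic prestratification and then extract the minimal Whitney stratification compatible with it---is the correct one and matches what the cited paper does.

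There is, however, a genuine gap in your minimality argument. You claim that the family of Whitney stratifications satisfying condition (1) is closed under common refinement because ``Whitney's regularity condition (b) is preserved under such refinements.'' This is not true in general: if ${\mc S}_1$ and ${\mc S}_2$ are two Whitney stratifications of the same set, their common refinement need not satisfy condition (b), since subdividing a stratum $S_\beta$ can create new frontier pairs among the pieces that have no reason to be Whitney-regular. The correct route, as sketched in Appendix~\ref{appendixa} after Theorem~\ref{thm:Whitney_1965}, is a direct dimension-descent construction: one builds the filtration $S^{(m)} \supseteq S^{(m-1)} \supseteq \cdots$ by, at each step, throwing into $S^{(k-1)}$ the singular points of $S^{(k)}$, the points of lower local dimension, \emph{and} the points where some pair $(\mathring S^{(l)}, S^{(k)}_{\rm reg})$ fails condition (b), all adapted so that each $S^{(k)}$ is a union of closures of orbit-type pieces. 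Minimality (in the dimension-filtration order of Definition~\ref{minimal}, which is not refinement) and uniqueness then follow from the explicit algorithm rather than from any lattice property. Since you already acknowledge deferring to \cite{Bai_Xu_2022} for the details, this is a fixable imprecision, but the refinement argument as written should be dropped.
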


The nice behaviors of the canonical Whitney stratification allow us to extend our consideration to bundles. Let $B$ be a smooth manifold acted on trivially by a finite group $G$. Let $V, W \to B$ be smooth $G$-equivariant complex vector bundles with fibers isomorphic to representations ${\bm V}$ and ${\bm W}$ respectively. Such datum defines a locally trivial bundle 
\beqn
{\rm Poly}_d^G(V, W) \to B
\eeqn
whose fiber at $x \in B$ is the space ${\rm Poly}_d^G(V_x, W_x)$. Then there is a subbundle 
\beqn
{\mc Z}_d^G( V, W) \subset V \oplus {\rm Poly}_d^G(V, W)
\eeqn
whose fiber at $x$ is the zero locus associated with ${\rm ev}: V_x \times {\rm Poly}_d^G(V_x, W_x) \to W_x$. Using the invariance property of the canonical Whitney stratification of $Z_d^G({\bm V}, {\bm W})$ under $G$-equivariant diffeomorphisms, the canonical Whitney stratification on the fibers can be ``patched together" to define a canonical Whitney stratification on the fiber bundle ${\mc Z}_d^G(V, W)$ which is ``locally trivial.''

Another statement relevant to us, which was originally proved by Fukaya--Ono, says that when $d$ is sufficiently large, the variety $Z$ is a union of smooth pieces. For a proof, the readers could refer to \cite[Proposition 4.9]{Bai_Xu_2022}.

\begin{thm}
For $d$ sufficiently large, for each subgroup $H \subset G$, the locus 
\beqn
Z_d^G({\bm V}, {\bm W})_H:= Z_d^G({\bm V}, {\bm W}) \cap ( {\bm V}_H^* \cap {\rm Poly}_d^G({\bm V}, {\bm W}))
\eeqn
is a smooth algebraic submanifold of complex dimension
\beqn
\dim_{\mb C} {\rm Poly}_d^G({\bm V}, {\bm W}) + \dim_{\mb C} \mathring{\bm V}^H - \dim_{\mb C} \mathring{\bm W}^H.
\eeqn
\end{thm}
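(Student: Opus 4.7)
The plan is to present $Z_d^G({\bm V},{\bm W})_H$ as a regular fiber of a smooth map, from which smoothness and the dimension count follow by the implicit function theorem. Concretely, I would observe first that for any $v\in{\bm V}_H^*$ and any $G$-equivariant $f$, the value $f(v)$ is $H$-invariant, hence lies in $\mathring{\bm W}^H$. Thus evaluation descends to a map
\[
\Phi_H:{\bm V}_H^*\times{\rm Poly}_d^G({\bm V},{\bm W})\longrightarrow \mathring{\bm W}^H,\qquad (v,f)\longmapsto f(v),
\]
whose zero locus is exactly $Z_d^G({\bm V},{\bm W})_H$. The source is a smooth algebraic manifold of complex dimension $\dim_{\mb C}\mathring{\bm V}^H+\dim_{\mb C}{\rm Poly}_d^G({\bm V},{\bm W})$ (using that ${\bm V}_H^*$ is Zariski-open in ${\bm V}^H=\mathring{\bm V}^H$), so once $\Phi_H$ is a submersion along the zero locus the dimension formula is forced.

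The main step is therefore to show that, for $d$ sufficiently large, the derivative of $\Phi_H$ is surjective at every $(v,f)\in\Phi_H^{-1}(0)$. Differentiating in the polynomial direction gives $\delta f\mapsto\delta f(v)$, so the task reduces to proving that the restricted evaluation
\[
{\rm ev}_v:{\rm Poly}_d^G({\bm V},{\bm W})\longrightarrow\mathring{\bm W}^H,\qquad f\longmapsto f(v),
\]
is surjective for every $v\in{\bm V}_H^*$ once $d$ is large enough. Given $w\in\mathring{\bm W}^H$, the orbit $G\cdot v$ consists of $|G|/|H|$ distinct points, and the assignment $g\cdot v\mapsto g\cdot w$ is well-defined precisely because $w$ is $H$-fixed. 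For $d$ larger than a bound depending only on the configuration of orbits in ${\bm V}$, Lagrange interpolation produces a (not necessarily equivariant) polynomial $\widetilde f$ of degree $\le d$ with $\widetilde f(g\cdot v)=g\cdot w$ for all $g\in G$. Averaging, $f(x):=\frac1{|G|}\sum_{g\in G}g^{-1}\widetilde f(g\cdot x)$, yields an equivariant polynomial of the same degree with $f(v)=w$, witnessing surjectivity of ${\rm ev}_v$.

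To make $d$ uniform in $v$, I would argue that the locus of $v\in{\bm V}_H^*$ for which ${\rm ev}_v$ fails to surject is Zariski-closed and $G$-invariant, and by the pointwise construction above it is proper for all sufficiently large $d$; since ${\bm V}_H^*/G$ has only finitely many irreducible components relevant to our discussion (and the required interpolation degree is uniformly controlled by the size $|G/H|$ together with a choice of generic affine chart on ${\bm V}^H$), a single $d$ works. The hard part of the argument is exactly this uniform choice: one must verify that the Lagrange interpolation construction can be performed with a degree bound independent of $v\in{\bm V}_H^*$, which I would handle by exhibiting an explicit interpolation scheme (for instance, multiplying a basis of $G$-equivariant polynomials on $G\cdot v$ built from products of affine linear forms separating the orbit) whose degree depends only on $|G|$ and $\dim_{\mb C}{\bm V}$, not on $v$.

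Once surjectivity of the differential is established uniformly on $\Phi_H^{-1}(0)$, the regular value theorem applied to $\Phi_H$ shows $Z_d^G({\bm V},{\bm W})_H$ is a smooth algebraic submanifold of the advertised complex dimension. I expect the main obstacle to be the bookkeeping behind the uniform degree bound and the verification that the constructed equivariant interpolants indeed give a surjection onto $\mathring{\bm W}^H$ and not merely into some proper $G$-invariant subspace; this is where some care with the representation-theoretic structure of ${\rm Poly}_d^G({\bm V},{\bm W})$ (e.g.\ decomposing via the symmetric algebra of ${\bm V}^\vee$ tensored with ${\bm W}$ and taking $G$-invariants) will be needed.
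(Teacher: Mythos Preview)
Your approach is correct and is essentially the standard Fukaya--Ono argument; the paper does not give its own proof but refers to \cite[Proposition 4.9]{Bai_Xu_2022}, where the argument follows the same template (present the stratum as a regular fiber of the evaluation map and verify surjectivity via interpolation and averaging).

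Your residual concerns are easily dispatched. For the uniform degree bound: given $v\in{\bm V}_H^*$ the orbit $G\cdot v$ has at most $|G|$ points, and for any finite set of $k$ distinct points in ${\bm V}$ one can build, for each point, a product of $k-1$ affine-linear forms that vanishes on the other points and is nonzero there. Hence $d\ge |G|-1$ suffices for every $v$ simultaneously, with no dependence on the particular orbit configuration. For the surjectivity onto $\mathring{\bm W}^H$: your averaging formula already gives $f(v)=\frac{1}{|G|}\sum_{g}g^{-1}\widetilde f(gv)=\frac{1}{|G|}\sum_{g}g^{-1}(gw)=w$ on the nose, so there is no danger of landing in a proper subspace and no further representation-theoretic decomposition is needed.
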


\subsubsection{FOP sections and strong transversality}

Now we recall the notion of normally complex sections (which we call by FOP sections, owing credit to Fukaya--Ono and B. Parker), which is Parker's generalization of the notion of normally polynomial sections.

We first consider the case for a single chart. Let $B$ be a smooth manifold, $G$ be a finite group trivially acting on $B$, and $\pi_V: V \to B$, $\pi_W: W \to B$ be $G$-equivariant complex vector bundles. Choose a positive integer $d$. Then there is a vector bundle 
\beqn
{\rm Poly}_d^G(V, W) \to B
\eeqn
whose fiber over each $b\in B$ is the vector space ${\rm Poly}_d^G(V_x, W_x)$ of $G$-equivariant polynomial maps with degree at most $d$. Let $V_\epsilon \subset V$ be an open $G$-invariant disk subbundle with respect to an auxiliary bundle metric. Consider smooth sections of the pullback bundle $\pi_V^* W \to V_\epsilon$. 

\begin{defn}\label{defn:strongly_transverse}
Let $s: V_\epsilon \to \pi_V^* W$ be a smooth $G$-equivariant section. 
\begin{enumerate}
    \item $s$ is called a {\bf normally polynomial section} of degree at most $d$ if its restriction to each fiber $V_x \cap V_\epsilon$ coincides with the restriction of an element of ${\rm Poly}_d^G(V_x, W_x)$. 
    
    \item $s$ is called a {\bf normally complex section} of degree at most $d$ if for each $(x, v) \in V_\epsilon$, there exists a smooth $G$-equivariant bundle map $f: V_\epsilon \to {\rm Poly}_d^G(V, W)$ such that for points $(x', v')$ near $(x, v)$, one has 
\beqn
s(x', v') = f(x', v')( v').
\eeqn
The map $f$ is called a {\bf local lift} of $s$ near $(x, v)$. We also call a normally complex section an {\bf FOP section}.

\item An FOP section $s$ is called {\bf strongly transverse} at $(x, v) \in V_\epsilon$ if for any local lift $f: V \to {\rm Poly}_d^G(V, W)$ near $(x, v)$, the graph of $f$, as a submanifold of the total space of $V_\epsilon \oplus {\rm Poly}_d^G(V, W)$, is transverse to the canonical Whitney stratification of ${\mc Z}_d^G(V, W)$ near $(x, v)$, i.e., the graph of $f$ is transverse to all the strata of the canonical Whitney (pre)stratification of ${\mc Z}_d^G(V, W)$.
\end{enumerate}
\end{defn}

With the above preparations, now we consider the global situation for a normally complex derived orbifold chart. 

\begin{defn}
Let $({\mc U},{\mc E}, {\mc S})$ be a compact normally complex derived orbifold chart equipped with a straightening. 

\begin{enumerate}

\item A section ${\mc S}': {\mc U} \to {\mc E}$ is called an {\bf FOP section} if for each bundle chart $(\Gammait, E, U)$ for which ${\mc S}'$ lifts to a $\Gammait$-equivariant section $S: U \to E$, the following condition is true. Over $U^\Gammait$ we can decompose $E|_{U^\Gammait} = \mathring E^\Gammait \oplus \check E^\Gammait$. The straightening induces an identification of a tubular neighborhood of $U^\Gammait$ with a disk bundle $N_\epsilon U^\Gammait$ inside the normal bundle $NU^\Gammait \to U^\Gammait$ as well as an equivariant bundle isomorphism
\beqn
E|_{N_\epsilon U^\Gammait} \cong \pi_{NU^\Gammait}^* \mathring E^\Gammait \oplus \pi_{NU^\Gammait}^* \check E^\Gammait.
\eeqn
Then with respect to this splitting we can decompose 
\beqn
S|_{N_\epsilon U^\Gammait} = (\mathring S, \check S).
\eeqn
We require that $\check S: N_\epsilon U^\Gammait \to \pi_{NU^\Gammait}^* \check E^\Gammait$ is an FOP section.

\item An {\bf FOP perturbation} of $({\mc U}, {\mc E}, {\mc S})$ is a smooth perturbation ${\mc S}': {\mc U} \to {\mc E}$ (see Definition \ref{defn21}) such that ${\mc S}'$ is an FOP section near its zero locus.

\item An FOP section ${\mc S}': {\mc U} \to {\mc E}$ is called {\bf strongly transverse} at $x \in {\mc U}$ if the following conditions are satisfied. Let $(\Gammait_x, U_x, E_x)$ be a bundle chart centered at $x$. By shrinking the chart we assume that $U_x$ is identified with a disk bundle $N_\epsilon U_x^{\Gammait_x}$ of the normal bundle $NU_x^{\Gammait_x}$ of $U_x^{\Gammait_x}$ and ${\mc S}'$ lifts to an equivariant section $S_x: U_x \to E_x$. With respect to the splitting (induced from the straightening)
\beqn
E_x\cong \pi_{NU_x^{\Gammait_x}}^* \mathring E_x \oplus \pi_{NU_x^{\Gammait_x}}^* \check E_x
\eeqn
we write $S_x = (\mathring S_x, \check S_x)$. Then we require that there exists an local lift $f: N_\epsilon U_x^{\Gammait_x} \to {\rm Poly}_d^{\Gammait_x}( NU_x^{\Gammait_x}, \check E_x^{\Gammait_x})$ of $\check S_x$ such that the induced bundle map
\beqn
(\mathring S_x, {\rm graph}f): N_\epsilon U_x^{\Gammait_x} \to \mathring E_x^{\Gammait_x} \oplus ( NU_x^{\Gammait_x} \oplus {\rm Poly}_d^{\Gammait_x}(NU_x^{\Gammait_x}, \check E_x) )
\eeqn
is transverse to the subbundle $\{0\}\oplus {\mc Z}_d^{\Gammait_x}(NU_x^{\Gammait_x}, \check E_x^{\Gammait_x})$ with respect to the canonical Whitney stratification at the point $0 \in U_x$. 
\end{enumerate}
\end{defn}

\begin{rem}
The strong transversality condition of FOP sections presented as above \emph{a priori} depends on the choices of bundle charts and the cut-off degree $d$ appearing in ${\rm Poly}_d^{\Gammait_x}( NU_x^{\Gammait_x}, \check E_x^{\Gammait_x})$. The most important output from \cite[Section 4]{Bai_Xu_2022} is showing that the canonical Whitney stratification on ${\mc Z}_d^G(V, W)$ is compatible with the change of the cut-off degree $d$ and the group $G$, when interpreted suitably. The upshot is, once fixing a straightening datum of $({\mc U},{\mc E}, {\mc S})$, the strong transversality condition is in fact intrinsic. The readers could refer to \cite{Bai_Xu_2022} for details, and we continue our discussions with such background in mind.
\end{rem}

It is a general fact that generic smooth maps are transverse to a given Whitney stratified object. As a consequence, a generic FOP section is strongly transverse. We formulate this fact as the following Proposition.

\begin{prop}\cite[Proposition 6.4]{Bai_Xu_2022}\label{prop:FOP_existence}
Let $({\mc U}, {\mc E}, {\mc S})$ be a compact normally complex derived orbifold chart equipped with a straightening. Fix a continuous norm on ${\mc E}$. Fix a precompact open neighborhood ${\mc D}\subset {\mc U}$ of ${\mc S}^{-1}(0)$. 

\begin{enumerate}

\item {\bf (Absolute version)} Given $\epsilon>0$, there exists a smooth section ${\mc S}_\epsilon: {\mc U} \to {\mc E}$ satisfying the following conditions. 
\begin{enumerate}
    \item ${\mc S}_\epsilon$ is an FOP section in a neighborhood of $\ov{\mc D}$ and it is strongly transverse near $\ov{\mc D}$.
    
    \item $\| {\mc S} - {\mc S}_\epsilon \|_{C^0({\mc D})} < \epsilon$. 
\end{enumerate}

\item {\bf (Relative version)} More generally, let $K \subset {\mc U}$ be a compact subset, ${\mc U}' \subset {\mc U}$ be an open neighborhood of $K$. Suppose we are given a section ${\mc S}_1': {\mc U}' \to {\mc E}|_{{\mc U}'}$ which is an FOP section and strongly transverse near $K \cap \ov{\mc D}$. Then there exists a smooth perturbation ${\mc S}': {\mc U} \to {\mc E}$ which is an FOP section and strongly transverse near $\ov{\mc D}$, such that ${\mc S}'$ coincides with ${\mc S}_1'$ near $K$. In addition, if $\epsilon>0$ is sufficiently small and $\| {\mc S} - {\mc S}_1'\|_{C^0} < \epsilon$, then we can choose ${\mc S}'$ such that $\| {\mc S} - {\mc S}'\|_{C^0} < 2\epsilon$. 

\end{enumerate}
\end{prop}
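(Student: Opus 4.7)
The plan is to construct the strongly transverse FOP perturbation $\mathcal{S}_\epsilon$ by working chart-by-chart and then patching, with an induction that descends through the lattice of isotropy subgroups. First I would cover $\overline{\mathcal{D}}$ by finitely many bundle charts $(\Gammait_i, E_i, U_i)$; using the given straightening, each $U_i$ may be taken, after shrinking, to be a disk bundle $N_{\epsilon_i} U_i^{\Gammait_i}$ inside the normal bundle of its deepest fixed locus, and the bundle decomposes as $E_i|_{N_{\epsilon_i}U_i^{\Gammait_i}} \cong \pi^* \mathring{E}_i^{\Gammait_i} \oplus \pi^* \check{E}_i^{\Gammait_i}$. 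It suffices to produce a $\Gammait_i$-equivariant section on each chart whose $\check{E}_i^{\Gammait_i}$-component is normally complex of some common large cut-off degree $d$, and then to patch these via a $\Gammait$-invariant partition of unity subordinate to the chart cover. The patching preserves the FOP condition because in overlaps both local lifts take values in the space of equivariant polynomial maps, which is a linear space, so convex combinations of local lifts remain local lifts.

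Inside a single bundle chart I would construct the strongly transverse FOP section by an inductive descent on the lattice of isotropy subgroups $H \subseteq \Gammait$, starting from the deepest stratum $U^\Gammait$ and working outward. On a given stratum $U^H$, the FOP structure pins the normal-direction component of the section to be described by an equivariant bundle map $f_H \colon U^H \to {\rm Poly}_d^H(NU^H, \check{E}^H)$ followed by the tautological evaluation. Thom's jet transversality theorem for smooth maps into a bundle, applied stratum-by-stratum to the canonical Whitney stratification of $\mathcal{Z}_d^H(NU^H, \check{E}^H)$, gives that a generic choice of $(\mathring{S}_H, f_H)$ has graph transverse to every stratum. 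The key point that makes the induction go through is the compatibility of the canonical Whitney stratifications under changes of group $H \subseteq G$ and of cut-off degree $d$ established in Section~4 of \cite{Bai_Xu_2022}: strong transversality on $U^G$ remains strong transversality when viewed in the larger ambient chart $U^H$, so the extension across stratum boundaries via equivariant tubular neighborhoods does not destroy the hypothesis already achieved. The $C^0$-smallness $\|\mathcal{S}-\mathcal{S}_\epsilon\|_{C^0(\mathcal{D})} < \epsilon$ is automatic because the space of $\Gammait$-equivariant polynomial maps is a topological vector space, strong transversality is an open and dense condition there, and the straightening was chosen near $\mathcal{S}^{-1}(0)$ so the chart-level straightened section is already normally complex in degree zero at the original $\mathcal{S}$.

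For the relative version I would rerun the same stratum-by-stratum induction, but at each step apply a relative form of the transversality step: on a neighborhood of $K$ one keeps $\mathcal{S}_1'$ unchanged, perturbs only outside a slightly larger neighborhood by choosing $f_H$ via an equivariant cutoff $\chi$ in the polynomial parameter bundle, and then invokes openness of the strong transversality condition at points of $K \cap \overline{\mathcal{D}}$ to conclude that the cut-off interpolation is still strongly transverse there (for $\epsilon$ small enough). The factor $2$ in $\|\mathcal{S} - \mathcal{S}'\|_{C^0} < 2\epsilon$ is exactly the triangle-inequality cost of the cutoff interpolation between $\mathcal{S}_1'$ and its extension. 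The main obstacle throughout is precisely the bookkeeping across isotropy strata: one needs the canonical Whitney stratifications to transform correctly under inclusion of subgroups and under increase of $d$, so that strong transversality achieved on a deeper stratum survives when re-examined in a shallower chart. Once this compatibility (which is the hard content imported from \cite{Bai_Xu_2022}) is in hand, both the absolute and relative conclusions follow from the partition-of-unity patching described above.
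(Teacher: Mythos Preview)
This proposition is not proved in the present paper at all; it is imported verbatim from \cite[Proposition 6.4]{Bai_Xu_2022}, so there is no ``paper's own proof'' to compare against. Your sketch is a reasonable outline of the argument carried out in that reference: chart-by-chart construction via a stratum induction on isotropy types, Thom-type transversality applied to the graph map into the canonical Whitney stratification of ${\mc Z}_d^G$, partition-of-unity patching (which preserves the FOP condition because local lifts live in a linear space), and a cutoff interpolation for the relative version. The crucial technical inputs you correctly identify---compatibility of the canonical Whitney stratifications under change of group and degree---are exactly what \cite[Section 4]{Bai_Xu_2022} supplies, and your proposal is consistent with how that reference proceeds.
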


The relative version of the above proposition is often referred to as a ``CUDV'' type statement. This means that a good perturbation has been constructed on an open neighborhood $U$ of a closed subset $C$ and we would like to find a good perturbation on a neighborhood of $C \cup D$ where $D$ is another closed subset, while we want to maintain the original perturbation near $C$ and do not change anything outside an open neighborhood $V$ of $D \setminus C$. 

Moreover, as proposed by Fukaya--Ono, the isotropy free part of the zero locus of a strongly transverse FOP section should induce a homology class. 

\begin{prop}\label{prop:FOP}
Let $({\mc U}, {\mc E}, {\mc S})$ be an oriented (i.e., both ${\mc U}$ and ${\mc E}$ are oriented) and compact normally complex derived orbifold chart equipped with a straightening. Let ${\mc U}^* \subset {\mc U}$ be the manifold part, i.e., the open and dense subset of points whose isotropy groups are trivial. Then the following is true. 

\begin{enumerate}
\item Let ${\mc S}'$ be a strongly transverse FOP perturbation. Then the set $({\mc S}')^{-1}(0) \cap {\mc U}^*$ is an oriented smooth submanifold of ${\mc U}^*$ of real dimension being ${\rm dim} {\mc U} - {\rm rank} {\mc E}$ and the inclusion map $({\mc S}')^{-1}(0) \cap {\mc U}^* \hookrightarrow {\mc U}$ is an oriented pseudocycle, hence represents an integral homology class. \footnote{In \cite{Bai_Xu_2022} we extend the notion of pseudocycles in manifolds to general Thom--Mather stratified spaces (including orbifolds) and proved that they represent integral homology classes.}

\item The resulting homology class, called the {\bf FOP Euler class}, denoted by 
\beqn
\chi^{\rm FOP}({\mc U}, {\mc E}, {\mc S})\in H_*( {\mc U}; {\mb Z}),
\eeqn
is independent of the choice of strongly transverse FOP perturbations and is independent of the choice of straightening, and hence is an invariant of the normally complex derived orbifold chart. 
\end{enumerate}
\end{prop}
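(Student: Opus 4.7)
The plan is to prove part (1) by first reducing the strong transversality condition on the isotropy-free locus ${\mc U}^*$ to ordinary transversality, and then verifying the pseudocycle property by a dimension count on strata of nontrivial isotropy that crucially uses the normal complex structure. Part (2) will then follow from a standard cobordism argument based on the relative existence statement of Proposition \ref{prop:FOP_existence}.

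For part (1), at any point $x \in ({\mc S}')^{-1}(0) \cap {\mc U}^*$ the isotropy $\Gammait_x$ is trivial, so the normal directions $NU_x^{\Gammait_x}$ and the non-trivial summand $\check E_x^{\Gammait_x}$ both collapse; the FOP condition becomes mere smoothness and strong transversality reduces to ordinary transversality of the local section to the zero section of $E_x$. This exhibits $({\mc S}')^{-1}(0) \cap {\mc U}^*$ as a smooth oriented submanifold of ${\mc U}^*$ of real dimension $\dim {\mc U} - \operatorname{rank} {\mc E}$, with the orientation induced from those of ${\mc U}$ and ${\mc E}$. To upgrade the inclusion to a pseudocycle, note that the full zero locus is compact because it lies in the precompact open ${\mc D}$ supplied by Definition \ref{defn21}; hence the limit set $\Omega$ is compact and contained in $({\mc S}')^{-1}(0) \cap ({\mc U}\setminus {\mc U}^*)$. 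Stratifying by conjugacy class of isotropy and working in a bundle chart $(\Gammait_x, E_x, U_x)$ centered at a positive-isotropy point, strong transversality asserts that a local lift $f$ has graph transverse to the canonical Whitney stratification on ${\mc Z}_d^{\Gammait_x}(NU_x^{\Gammait_x}, \check E_x)$. Combined with the dimension formula from the theorem preceding Definition \ref{defn:strongly_transverse} --- and crucially with the fact that the normal and $\check E$ summands carry complex structures, so that each drop in isotropy contributes an \emph{even and strictly positive} codimension --- this forces the intersection of $({\mc S}')^{-1}(0)$ with each positive-isotropy stratum to have real dimension at most $\dim{\mc U} - \operatorname{rank}{\mc E} - 2$. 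Finitely many such strata then realise $\Omega$ as the image of a smooth map from a manifold of dimension $\leq \dim{\mc U} - \operatorname{rank}{\mc E} - 2$, and the pseudocycle-to-integral-cycle machinery extended to Thom--Mather stratified spaces in \cite{Bai_Xu_2022} yields the desired homology class.

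For part (2), given two strongly transverse FOP perturbations ${\mc S}'_0, {\mc S}'_1$ of $({\mc U}, {\mc E}, {\mc S})$ with respect to the same straightening, form the product chart $({\mc U}\times [0,1], \pi_1^* {\mc E}, \widetilde{\mc S})$ where $\widetilde{\mc S}$ is an FOP section that equals ${\mc S}'_i$ near $t=i$, and apply the relative version of Proposition \ref{prop:FOP_existence} with $K = {\mc U}\times\{0,1\}$ to produce a strongly transverse FOP perturbation on the slab without disturbing its restriction at the endpoints. Its zero locus inside $({\mc U}\times[0,1])^* = {\mc U}^*\times[0,1]$ is, by the same dimension count as above, a compact oriented pseudocycle bordism between the two isotropy-free zero loci, and hence the induced homology classes agree. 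Independence of the straightening follows from the identical device applied to a smooth family of straightenings on ${\mc U}\times[0,1]$ interpolating two chosen ones, since the FOP and strong transversality conditions depend continuously on the straightening. The main obstacle throughout is the dimension count at positive-isotropy strata: one must verify that the canonical Whitney stratification together with strong transversality really deliver real codimension at least two in every such stratum and along every possible drop of isotropy. This is precisely the step where the normal complex structure is indispensable --- without it the count would only yield codimension one, and the resulting invariant would merely be rational, as in the earlier virtual fundamental class constructions.
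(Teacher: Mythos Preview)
The paper does not supply its own proof of this proposition: it is stated in the recap Section \ref{section2} as a result imported from \cite{Bai_Xu_2022}, so there is no in-paper argument to compare against. Your outline follows the architecture of that original proof and is correct in its essentials for both parts.

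There is one imprecision worth flagging in part (1). You assert that the intersection of $({\mc S}')^{-1}(0)$ with \emph{every} positive-isotropy stratum has real dimension at most $\dim{\mc U}-\operatorname{rank}{\mc E}-2$. That is not what the dimension formula delivers in general: for a subgroup $H\neq\{e\}$ the quantity $\dim_{\mb C}\mathring{\bm V}^H-\dim_{\mb C}\mathring{\bm W}^H$ can exceed $\dim_{\mb C}{\bm V}-\dim_{\mb C}{\bm W}$, so the $H$-isotropy piece of the zero locus may well be larger than the free piece. What is true, and what you actually need, is the weaker statement that the \emph{limit set} $\Omega$ of the free zero locus has dimension at most $\dim{\mc U}-\operatorname{rank}{\mc E}-2$. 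Locally near a limit point, $\Omega$ lands in the preimage of a Whitney stratum contained in $\overline{Z_d^{\Gammait}({\bm V},{\bm W})_{\{e\}}}\setminus Z_d^{\Gammait}({\bm V},{\bm W})_{\{e\}}$; since the Whitney strata are complex algebraic submanifolds, this boundary set has real codimension at least $2$ in the closure of the free stratum, and strong transversality transports that codimension bound to $\Omega$. Once you phrase the estimate this way---bounding the limit set rather than the full nonfree zero locus---your pseudocycle argument and the cobordism proof of part (2) go through exactly as written.
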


\begin{rem}
In fact there exist a collection of homology classes associated to a derived orbifold chart indexed by a finite group and a pair of complex representations. The FOP Euler class in Proposition \ref{prop:FOP} is the leading one in this collection. 
\end{rem}

\subsection{FOP sections and products}

In this subsection we prove that the product of strongly transverse FOP sections is still strongly transverse. This is a necessary ingredient for the inductive construction of perturbations in Floer theory. Moreover, we show that the natural transformation ${\mc FOP}_{\gamma}$ from the stably complex derived orbifold bordism $\ov\Omega{}_*^{{\mb C}, {\rm der}}$ to the integral homology constructed by \cite[Theorem 1.4]{Bai_Xu_2022} is multiplicative.

\subsubsection{Products of transverse FOP sections}

We fix our notations. Let $G_i$, $i = 1, 2$ be finite groups. Let $\V_i, \W_i$ be complex $G_i$-representations. Then $\V_1 \oplus \V_2$ and $\W_1 \oplus \W_2$ are $G_1 \times G_2$ representations under the product action. Choose nonnegative integers $d, d_1, d_2$ such that $d \geq d_1, d_2$. Consider the space
\beqn
{\rm Poly}_d^{G_1 \times G_2} (\V_1 \oplus \V_2, \W_1 \oplus \W_2).
\eeqn
It has a subspace 
\beqn
{\rm Poly}_{d_1}^{G_1}(\V_1, \W_1) \times {\rm Poly}_{d_2}^{G_2}(\V_2, \W_2).
\eeqn
One also has the inclusion of $Z$-varieties
\beqn
Z_{d_1}^{G_1}(\V_1, \W_1) \times Z_{d_2}^{G_2} (\V_2, \W_2) \subset Z_d^{G_1\times G_2}(\V_1 \oplus \V_2, \W_1\oplus \W_2).
\eeqn
Abbreviate the three $Z$-varieties as $Z_{d_1}^{G_1}$, $Z_{d_2}^{G_2}$, and $Z_d^{G_1 \times G_2}$ respectively.

\begin{prop}\label{prop27}
When $d_1, d_2$ are sufficiently large, the inclusion 
\begin{multline*}
\phi: \big( \V_1 \times {\rm Poly}_{d_1}^{G_1} ( \V_1, \W_1) \big) \times \big( \V_2 \times {\rm Poly}_{d_2}^{G_2}(\V_2, \W_2) \big)\\
\hookrightarrow (\V_1 \oplus \V_2) \times  {\rm Poly}_d^{G_1\times G_2}( \V_1\oplus \V_2, \W_1 \oplus \W_2)
\end{multline*}
is transverse to all the strata of the canonical Whitney stratification on $Z_d^{G_1 \times G_2}$. Moreover, the inclusion pulls back the canonical Whitney stratification on the target to the canonical (product) Whitney stratification on the domain.
\end{prop}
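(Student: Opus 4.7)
The plan is to reduce both claims to a pointwise linear-algebra computation, using Theorem~2.4's explicit description of the smooth strata of the canonical Whitney stratification.

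The first observation is that the stabilizer of a point $(v_1, v_2) \in \V_1 \oplus \V_2$ under the product action is ${\rm Stab}_{G_1}(v_1) \times {\rm Stab}_{G_2}(v_2)$, so a subgroup $K \subset G_1 \times G_2$ arises as an isotropy group only when $K = H_1 \times H_2$ with $H_i \subset G_i$, and then $(\V_1 \oplus \V_2)_{H_1 \times H_2}^* = (\V_1)_{H_1}^* \times (\V_2)_{H_2}^*$. For $d$ sufficiently large, the strata of the canonical Whitney stratification on $Z_d^{G_1 \times G_2}(\V_1 \oplus \V_2, \W_1 \oplus \W_2)$ are therefore (components of) the smooth algebraic loci $Z_d^{G_1 \times G_2}(\V_1 \oplus \V_2, \W_1 \oplus \W_2)_{H_1 \times H_2}$ for subgroups $H_i \subset G_i$.

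To verify transversality at $x = \phi((v_1, f_1), (v_2, f_2))$ with $v_i \in (\V_i)_{H_i}^*$, I would compute that the cokernel of $d\phi_x$ is ${\rm Poly}_d^{G_1 \times G_2}/\bigl({\rm Poly}_{d_1}^{G_1} + {\rm Poly}_{d_2}^{G_2}\bigr)$ sitting inside the polynomial factor. Given any $g \in {\rm Poly}_d^{G_1 \times G_2}$, I seek a stratum-tangent vector of the form $(0, \delta f)$ with $\delta f \equiv g$ modulo separated polynomials; the stratum-tangency condition with $\delta v = 0$ is just $\delta f(v_1, v_2) = 0$, so it suffices to produce $h_i \in {\rm Poly}_{d_i}^{G_i}$ satisfying $(g + h_1 + h_2)(v_1, v_2) = 0$. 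Equivariance forces $g(v_1, v_2) = (w_1, w_2) \in \W_1^{H_1} \oplus \W_2^{H_2}$, reducing the problem to prescribing $h_i(v_i) = -w_i$ in each factor separately.

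The main technical step will be the resulting evaluation surjectivity lemma: for $d_i$ sufficiently large (depending only on $G_i, \V_i, \W_i$), the evaluation map ${\rm Poly}_{d_i}^{G_i}(\V_i, \W_i) \to \W_i^{H_i}$, $h \mapsto h(v_i)$, is surjective for every $v_i \in (\V_i)_{H_i}^*$. The plan is Reynolds averaging: first use Lagrange-type interpolation on the finite orbit $G_i \cdot v_i \subset \V_i$ (which requires only a bounded degree) to construct a non-equivariant polynomial $\tilde h_i$ with $\tilde h_i(v_i) = w_i$ and $\tilde h_i(g v_i) = 0$ for $g v_i \neq v_i$; then set $\bar h_i(x) = |G_i|^{-1} \sum_{g \in G_i} g \, \tilde h_i(g^{-1} x)$. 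Equivariance of $\bar h_i$ is automatic, and counting how many $g$ satisfy $g^{-1} v_i = v_i$ yields $\bar h_i(v_i) = (|H_i|/|G_i|) w_i$, which may be rescaled to $w_i$.

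Granted transversality, a direct computation using the decomposition $\W_1 \oplus \W_2$ gives
\[
\phi^{-1}\bigl( Z_d^{G_1 \times G_2}(\V_1 \oplus \V_2, \W_1 \oplus \W_2)_{H_1 \times H_2} \bigr) = Z_{d_1}^{G_1}(\V_1, \W_1)_{H_1} \times Z_{d_2}^{G_2}(\V_2, \W_2)_{H_2},
\]
because a separated polynomial evaluated at $(v_1, v_2)$ vanishes if and only if each $\W_i$-component vanishes separately. This identifies the pullback of each target stratum with the corresponding product stratum, so the pulled-back Whitney stratification is exactly the product of the canonical Whitney stratifications on the two factors, completing the proof.
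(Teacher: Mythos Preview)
Your approach has a genuine gap at the very first step. You assert that ``for $d$ sufficiently large, the strata of the canonical Whitney stratification on $Z_d^{G_1\times G_2}$ are (components of) the smooth loci $Z_d^{G_1\times G_2}(\V_1\oplus\V_2,\W_1\oplus\W_2)_{H_1\times H_2}$,'' but this is not what Theorem~2.5 says. That theorem only tells you each $Z_H$ is a smooth submanifold; it does not say the $Z_H$-decomposition is itself a Whitney stratification (Whitney's condition~(b) between pairs is not verified), and hence the canonical minimal Whitney stratification of Theorem~2.4 may be a strict refinement of the $Z_H$-decomposition. Your transversality computation then proves only that $\mathrm{Im}(d\phi_x) + T_x Z_{H_1\times H_2}$ spans the ambient, because the ``stratum-tangency condition $\delta f(v_1,v_2)=0$'' you use is the tangent condition to $Z_H$, not to a possibly smaller stratum $S\subsetneq Z_H$. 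The same issue infects the second half: your preimage calculation identifies $\phi^{-1}(Z_{H_1\times H_2})$, not the preimage of an actual Whitney stratum, and you have not argued that the resulting product decomposition on the domain is the canonical Whitney stratification there (this needs the minimality characterization, not just a set-theoretic match).

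The paper's proof avoids ever identifying the strata explicitly. It constructs a retraction $\psi$ from the ambient onto the image of $\phi$ that preserves the evaluation map, then uses Parker's lemma (comparing two bundle maps whose difference has graph inside $\mathcal{Z}$) to transfer transversality and the canonical stratification from the identity map to $\phi\circ\psi$; since $\psi$ is surjective and $\psi\circ\phi=\mathrm{Id}$, transversality of $\phi$ follows. The pullback statement is then deduced from the minimality property of the canonical Whitney stratification rather than from a stratum-by-stratum preimage calculation. Your evaluation-surjectivity lemma via Reynolds averaging is correct and pleasant, but it is plugged into a framework that does not match the object you need to be transverse to.
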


The proof is given in Subsection \ref{subsection24}. 

\begin{cor}\label{cor28}
Let $({\mc U}_i, {\mc E}_i, {\mc S}_i)$, $i = 1, 2$ be normally complex derived orbifold charts such that ${\mc S}_i$ is a strongly transverse FOP section. Then the product section 
\beqn
{\mc S}_1 \boxplus {\mc S}_2: {\mc U}_1 \times {\mc U}_2 \to {\mc E}_1 \boxplus {\mc E}_2
\eeqn
is also a strongly transverse FOP section.
\end{cor}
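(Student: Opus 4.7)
The plan is to verify the strong-transversality condition for ${\mc S}_1 \boxplus {\mc S}_2$ chart-by-chart on ${\mc U}_1 \times {\mc U}_2$, using Proposition \ref{prop27} to reduce the transversality problem on the product chart to transversality on each factor. The product derived orbifold chart inherits a normal complex structure and a straightening from the factors, so the decompositions into trivial and nontrivial subrepresentations, the tubular neighborhoods of fixed loci, and the polynomial bundles all behave multiplicatively.

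I would fix a point $(x_1, x_2) \in {\mc U}_1 \times {\mc U}_2$ with isotropy $\Gammait_{x_1} \times \Gammait_{x_2}$ and work in the product bundle chart. Since an irreducible representation of a product of finite groups is trivial iff both tensor factors are trivial, one has
\beqn
\mathring{(E_1 \boxplus E_2)}^{\Gammait_{x_1} \times \Gammait_{x_2}} = \mathring{E}_1^{\Gammait_{x_1}} \oplus \mathring{E}_2^{\Gammait_{x_2}}, \quad \check{(E_1 \boxplus E_2)}^{\Gammait_{x_1} \times \Gammait_{x_2}} = \check{E}_1^{\Gammait_{x_1}} \oplus \check{E}_2^{\Gammait_{x_2}},
\eeqn
and the normal bundle of the product fixed locus splits as $NU_{x_1}^{\Gammait_{x_1}} \oplus NU_{x_2}^{\Gammait_{x_2}}$. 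The product straightening further identifies the product tubular neighborhood with the product of disk bundles compatibly with these splittings. By strong transversality of each ${\mc S}_i$ at $x_i$, I can choose local lifts $f_i$ of $\check{S}_i$ in ${\rm Poly}_{d_i}^{\Gammait_{x_i}}$ whose induced bundle maps $\Phi_i = (\mathring{S}_i, {\rm graph}(f_i))$ are transverse at the origin to the canonical Whitney stratification of $\{0\} \oplus {\mc Z}_{d_i}^{\Gammait_{x_i}}$. Taking $d \geq \max(d_1, d_2)$ large enough for Proposition \ref{prop27} to apply, the inclusion $\phi$ there packages $(f_1, f_2)$ into a $(\Gammait_{x_1} \times \Gammait_{x_2})$-equivariant polynomial of degree $\leq d$, which is a valid local lift of $\check{({\mc S}_1 \boxplus {\mc S}_2)}$; the bundle map $\Phi_{12}$ needed to certify strong transversality of ${\mc S}_1 \boxplus {\mc S}_2$ factors, after an obvious reordering of summands, as $({\rm id}, \phi) \circ (\Phi_1 \times \Phi_2)$.

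With this factorization in hand, I would conclude by applying both halves of Proposition \ref{prop27}: $\phi$ is transverse to the canonical Whitney stratification of ${\mc Z}_d^{\Gammait_{x_1} \times \Gammait_{x_2}}$ and pulls it back to the product of the canonical stratifications on the ${\mc Z}_{d_i}^{\Gammait_{x_i}}$. Transversality of each $\Phi_i$ to its factor stratification gives transversality of $\Phi_1 \times \Phi_2$ to the product stratification (a standard fact for Whitney stratifications, checked stratum-by-stratum), and the transversality-of-composition principle then upgrades this to transversality of $\Phi_{12}$ to the canonical stratification of $\{0\} \oplus {\mc Z}_d^{\Gammait_{x_1} \times \Gammait_{x_2}}$. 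Since $(x_1, x_2)$ was arbitrary, this gives strong transversality of ${\mc S}_1 \boxplus {\mc S}_2$ everywhere.

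The main difficulty will be the bookkeeping: the polynomial bundles, normal bundles, and $Z$-varieties for the product group are strictly larger than the naive products of the factor objects, so one must verify both that the candidate lift obtained from $\phi$ really is a local lift of $\check{({\mc S}_1 \boxplus {\mc S}_2)}$ in the sense of Definition \ref{defn:strongly_transverse}, and that the product of straightenings on the factors equals the straightening used on the product chart, so the two sides of the comparison live in genuinely matched ambient data. Once these identifications are pinned down, Proposition \ref{prop27} reduces the whole argument to the standard product-of-transverse-maps fact.
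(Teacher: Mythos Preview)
Your proposal is correct and follows essentially the same approach as the paper's proof: work in a product bundle chart, split into trivial and nontrivial parts, take local lifts $f_i$ on each factor, and invoke Proposition \ref{prop27} to pass from transversality to the product stratification on $Z_{d_1}^{G_1}\times Z_{d_2}^{G_2}$ to transversality to the canonical stratification on $Z_d^{G_1\times G_2}$. The paper streamlines the endgame slightly by first reducing to $\mathring{\bm W}_i^{G_i}=0$ and then observing directly that ${\rm graph}(f_1\times f_2)\cap Z_d^{G_1\times G_2}\subset Z_{d_1}^{G_1}\times Z_{d_2}^{G_2}$, which is the concrete content of your ``transversality-of-composition'' step; your factorization $\Phi_{12}=({\rm id},\phi)\circ(\Phi_1\times\Phi_2)$ is an equivalent packaging of the same observation.
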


\begin{proof}
As transversality is defined locally, it suffices to restrict our consideration to local charts. Let $(U_i, E_i, G_i)$ be a bundle chart of ${\mc E}_i$ centered at $x_i\in {\mc U}_i$ such that ${\mc S}_i$ is pulled back to a $G_i$-equivariant section 
\beqn
S_i: U_i \to E_i.
\eeqn
Suppose $NU_i^{G_i}$ is trivial with fiber $\V_i$ and $E_i$ is trivial with fiber $\W_i$. Decompose $\W_i = \mathring \W_i^{G_i} \oplus \check \W_i^{G_i}$. Then near $NU_i^{G_i}$ we can write 
\beqn
S_i = (\mathring S_i, \check S_i).
\eeqn
By the definition of FOP sections, there exists a local lift of $\check S_i$ near the origin
\beqn
f_i: U_i^{G_i} \times \V_i \to {\rm Poly}_{d_i}^{G_i}( \V_i, \check \W_i^{G_i}).
\eeqn

By assumption, $\mathring S_i$ is transverse to $0\in \mathring \W_i$ (in the usual sense). Denote $U = U_1 \times U_2$, $\W = \W_1 \oplus \W_2$. Then 
\beqn
\mathring \W^{G_1 \times G_2} = \mathring \W_1^{G_1}\oplus \mathring \W_2^{G_2}.
\eeqn
Hence $\mathring S = (\mathring S_1, \mathring S_2)$ is transverse to $0\in \mathring \W^{G_1 \times G_2}$ and 
\beqn
\mathring S^{-1}(0) = \mathring S_1^{-1}(0) \times \mathring S_2^{-1}(0).
\eeqn
Hence we may assume that $\mathring \W_1^{G_1} = 0$, $\mathring \W_2^{G_2} = 0$. Next, the graph of $f_i$ is transverse to $U_i^{G_i}\times Z_{d_i}^{G_i} ( \V_i, \check \W_i^{G_i})$. Notice that 
\beqn
{\rm graph}(f) = {\rm graph} (f_1 \times f_2) = {\rm graph}(f_1) \times {\rm graph}(f_2).
\eeqn
It follows that the graph of $f$ is transverse to the product Whitney stratification on $U^{G_1 \times G_2} \times Z_{d_1}^{G_1} ( \V_1, \check \W_1^{G_2}) \times Z_{d_2}^{G_2} ( \V_2, \check \W_2^{G_2})$. If we view $f$ as a map 
\beqn
f: U^{G_1\times G_2} \times \V_1 \times \V_2 \to {\rm Poly}_{{\rm max}(d_1, d_2)}^{G_1 \times G_2}( \V_1 \oplus \V_2, \check \W_1^{G_1} \oplus \check \W_2^{G_2}),
\eeqn
it is easy to see that the intersection between ${\rm graph}(f)$ and $U^{G_1 \times G_2} \times Z_{{\rm max}(d_1, d_2)}^{G_1 \times G_2}$ is contained in $U^{G_1 \times G_2} \times Z_{d_1}^{G_1} \times Z_{d_2}^{G_2}$. By Proposition \ref{prop27}, as the inclusion $Z_{d_1}^{G_1} \times Z_{d_2}^{G_2} \hookrightarrow Z_{{\rm max}(d_1, d_2)}^{G_1 \times G_2}$ respects the canonical Whitney stratifications, we see that ${\rm graph}(f)$ is transverse to all the strata of $U^{G_1 \times G_2} \times Z_{{\rm max}(d_1, d_2)}^{G_1 \times G_2}$. By definition, this means that $S = S_1\times S_2$ is strongly transverse at $x = (x_1, x_2)$. 
\end{proof}

\subsubsection{Multiplicativity of the FOP natural transformation}

A consequence of previous discussions is that the FOP natural transformation defined in \cite{Bai_Xu_2022} is natural with respect to products. Although this result will not be used in the setting of Floer theory, we include it here. 

Let us recall the relevant notations. An {\bf isotropy type} is a triple $(G, {\bm V}, {\bm W})$ where $G$ is a finite group, ${\bm V}, {\bm W}$ are finite-dimensional complex $G$-representations which do not contain trivial $G$-summands. A {\bf stable isotropy type} is an equivalence classes of isotropy types with respect to the equivalence relation generated by 
\beqn
(G, {\bm V}, {\bm W}) \sim (G, {\bm V} \oplus {\bm R}, {\bm W} \oplus {\bm R} )
\eeqn
where ${\bm R}$ is a nontrivial irreducible complex representation of $G$. A stable isotropy type is denoted by $\gamma$ or $[G, {\bm V}, {\bm W}]$ if $(G, {\bm V}, {\bm W})$ represents it. To discuss products, we define the multiplication of (stable) isotropy types in the obvious way: 
\beqn
[G_1, {\bm V}_1, {\bm W}_1] \times [G_2, {\bm V}_2, {\bm W}_2]:= [G_1\times G_2, {\bm V}_1 \oplus {\bm V}_2, {\bm W}_1 \oplus {\bm W}_2].
\eeqn

The pseudocycles defined by strongly transverse FOP sections induce natural transformations of generalized homology theories. First, for a topological space $Y$ one can define the stably complex derived orbifold bordism group
\beqn
\overline{\Omega}{}_*^{{\mb C}, \rm der}(Y)
\eeqn
generated by isomorphism classed of quadruples $({\mc U}, {\mc E}, {\mc S}, f)$ where $({\mc U}, {\mc E}, {\mc S})$ is a compact stably complex derived orbifold chart and $f: {\mc U} \to Y$ is a continuous map, modulo the equivalence relations generated by 
\begin{enumerate}
    \item ({\it Restriction}) $({\mc U}, {\mc E}, {\mc S}, f)\sim ({\mc U}', {\mc E}', {\mc S}', f')$ if ${\mc U}' \subset {\mc U}$ is an open neighborhood of ${\mc S}^{-1}(0)$ and ${\mc E}' = {\mc E}|_{{\mc U}'}$, ${\mc S}' = {\mc S}|_{{\mc U}'}$, and $f' = f|_{{\mc U}'}$.
    
    \item ({\it Stabilization}) $({\mc U}, {\mc E}, {\mc S}, f)\sim ({\mc U}', {\mc E}', {\mc S}', f')$ if ${\mc U}'$ is equal to the total space of a vector bundle $\pi_{\mc F}: {\mc F} \to {\mc U}$, ${\mc E}' = \pi_{\mc F}^* {\mc E} \oplus \pi_{\mc F}^* {\mc F}$, ${\mc S}' = \pi_{\mc F}^* {\mc S} \oplus \tau_{\mc F}$ where $\tau_{\mc F}: {\mc F} \to \pi_{\mc F}^* {\mc F}$ is the tautological section, and $f' = f\circ \pi_{\mc F}$. 
    
    \item ({\it Cobordism}) $({\mc U}, {\mc E}, {\mc S}, f)\sim ({\mc U}', {\mc E}', {\mc S}', f')$ if there is a bordism between them extending the stable complex structures.
\end{enumerate}
A stable complex structure on $({\mc U}, {\mc E}, {\mc S})$ is roughly a lift of the virtual bundle $T{\mc U} - {\mc E}$ to a complex virtual bundle. Disjoint union of derived orbifold charts induces the group structure on $\overline{\Omega}{}_*^{{\mb C}, \rm der}(Y)$. For details of the relevant terminologies, the readers could refer to \cite[Section 7]{Bai_Xu_2022}. For each stable isotropy type $\gamma$ represented by $(G, {\bm V}, {\bm W})$, denote $n_\gamma = {\rm dim}_{\mb R} {\bm  V} - {\rm dim}_{\mb R} {\bm W}$. Then in \cite{Bai_Xu_2022} we constructed a natural transformation of generalized homology theories made of linear maps (see \cite[Theorem 1.4]{Bai_Xu_2022})
\beqn
\mc{FOP}_\gamma^Y: \overline{\Omega}{}_*^{{\mb C}, \rm der}(Y) \to H_{* - n_\gamma}(Y; {\mb Z}).
\eeqn

The main consequence of the fact that strong transversality of FOP perturbations is preserved after taking products regarding the stably complex derived bordism theory is the following theorem.

\begin{thm}
Let $Y_1$ and $Y_2$ be topological spaces and $\gamma_1, \gamma_2$ be stable isotropy types. Then the follow diagram is commutative: 
\beq\label{eqn23}
\xymatrix{
\overline{\Omega}{}_*^{{\mb C}, \rm der} (Y_1) \times \overline{\Omega}{}_*^{{\mb C}, \rm der} (Y_2) \ar[d] \ar[rrr]^-{\mc{FOP}_{\gamma_1}^{Y_1} \times \mc{FOP}_{\gamma_2}^{Y_2}} & & & H_{* - n_{\gamma_1}}(Y_1; {\mb Z}) \times H_{* - n_{\gamma_2}}(Y_2; {\mb Z}) \ar[d] \\
{\overline{\Omega}{}_{*}^{\mathbb{C}, {\rm der}}(Y_1 \times Y_2)} \ar[rrr]^-{\mc{FOP}^{Y_1 \times Y_2}_{\gamma_1\times \gamma_2}}  & & & H_{* - n_{\gamma_1} - n_{\gamma_2} }(Y_1 \times Y_2; {\mb Z}). 
}
\eeq
Here the left vertical arrow is the map induced by product of derived orbifold charts:
\beqn
({\mc U}_1, {\mc E}_1, {\mc S}_1, f_1) \times ({\mc U}_2, {\mc E}_2, {\mc S}_2, f_2) \mapsto ({\mc U}_1 \times {\mc U}_2, {\mc E}_1\boxplus {\mc E}_2, {\mc S}_1 \boxplus {\mc S}_2, f_1 \times f_2)
\eeqn
and the right vertical arrow is the Eilenberg--Zilber map.
\end{thm}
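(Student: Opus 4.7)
The plan is to verify commutativity at the level of geometric representatives, after which the multiplicativity follows from Corollary \ref{cor28} together with the product formula for the cross product of pseudocycles. Fix representatives ${\mc C}_i = ({\mc U}_i, {\mc E}_i, {\mc S}_i, f_i)$ of classes in $\overline{\Omega}{}_*^{{\mb C}, \rm der}(Y_i)$; after stabilizing along the lines of Definition \ref{stabilization} we may assume each $({\mc U}_i, {\mc E}_i, {\mc S}_i)$ is normally complex and equipped with a straightening. Using Proposition \ref{prop:FOP_existence}, choose strongly transverse FOP perturbations ${\mc S}'_i$ of ${\mc C}_i$. The product straightening on $({\mc U}_1 \times {\mc U}_2, {\mc E}_1 \boxplus {\mc E}_2, {\mc S}_1 \boxplus {\mc S}_2)$ is compatible with the product normal complex structure, and Corollary \ref{cor28} guarantees that ${\mc S}'_1 \boxplus {\mc S}'_2$ is a strongly transverse FOP perturbation of the product derived orbifold chart.

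Next I would identify the pseudocycle representing $\mc{FOP}^{Y_1 \times Y_2}_{\gamma_1 \times \gamma_2}({\mc C}_1 \times {\mc C}_2)$. The local model from Proposition \ref{prop27} shows that the canonical Whitney stratification of the product $Z$-variety coincides with the product Whitney stratification, so in every bundle chart the locus of isotropy type $\gamma_1 \times \gamma_2$ in $({\mc U}_1 \times {\mc U}_2)$ is the product of the $\gamma_i$-strata of ${\mc U}_i$. Therefore
\beqn
({\mc S}'_1 \boxplus {\mc S}'_2)^{-1}(0) \cap ({\mc U}_1 \times {\mc U}_2)_{\gamma_1 \times \gamma_2} = \bigl(({\mc S}'_1)^{-1}(0) \cap ({\mc U}_1)_{\gamma_1}\bigr) \times \bigl(({\mc S}'_2)^{-1}(0) \cap ({\mc U}_2)_{\gamma_2}\bigr),
\eeqn
and the product orientation from the two normally complex structures agrees with the normally complex orientation on the product. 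Under the map $f_1 \times f_2$, this realizes $\mc{FOP}^{Y_1 \times Y_2}_{\gamma_1 \times \gamma_2}({\mc C}_1 \times {\mc C}_2)$ as the external product of the pseudocycles representing $\mc{FOP}^{Y_1}_{\gamma_1}({\mc C}_1)$ and $\mc{FOP}^{Y_2}_{\gamma_2}({\mc C}_2)$.

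The final step is to invoke the standard fact, extended in \cite{Bai_Xu_2022} to pseudocycles in Thom--Mather stratified spaces, that the homology class represented by a product of pseudocycles is the Eilenberg--Zilber cross product of the individual classes. This yields the commutativity of \eqref{eqn23}. The main obstacle I foresee is the bookkeeping of the stable complex structure and orientations through the stabilization process used to pass between representatives: one has to verify that stabilizing ${\mc C}_i$ by bundles ${\mc F}_i$ and then taking the product agrees, as a stably complex derived orbifold chart, with stabilizing ${\mc C}_1 \times {\mc C}_2$ by ${\mc F}_1 \boxplus {\mc F}_2$, so that the product of the resulting pseudocycles computes the same homology class. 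Once this compatibility is checked the above local argument carries through, since Corollary \ref{cor28} and Proposition \ref{prop27} reduce the strong transversality and stratification issues to the already established product statements.
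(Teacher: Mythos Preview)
Your proposal is correct and follows essentially the same line as the paper's proof: pick strongly transverse FOP perturbations on each factor, use Corollary \ref{cor28} to see that their box sum is strongly transverse on the product chart with the product straightening, identify the $\gamma_1\times\gamma_2$-stratum of the product zero locus with the product of the $\gamma_i$-strata, and finish with the multiplicativity of pseudocycle classes. The stabilization bookkeeping you flag is not a genuine obstacle, since $\mc{FOP}_\gamma$ is already known from \cite{Bai_Xu_2022} to be well-defined on stably complex derived bordism classes (in particular invariant under stabilization and choice of straightening/perturbation), so any choice of normally complex representative computes the same homology class.
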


\begin{proof}
Let $({\mc U}_i, {\mc E}_i, {\mc S}_i, f_i)$, $i = 1, 2$ be a representative of an element of $\overline{\Omega}{}_*^{{\mb C}, \rm der}(Y_i)$. By the definition of $\mc{FOP}_{\gamma_i}^{Y_i}$, we choose straightenings on $({\mc U}_i, {\mc E}_i)$ and choose strongly transverse FOP sections ${\mc S}_i': {\mc U}_i \to {\mc E}_i$ which is $C^0$-close to ${\mc S}_i$. Then ${\mc S}_i'$ defines a pseudocycle in ${\mc U}_i$ which is contained in the closure of the stratum ${\mc U}_{i, \gamma_i} \subset {\mc U}_i$, i.e., the set of points $x_i \in{\mc U}_i$ whose stabilizers are isomorphic to $G_i$ and for a bundle chart $(G_i, E_i, U_i)$ centered at $x_i$, the stable isotropy type defined by $(G_i, (N U^{G_i})_{x_i}, (\check{E}_i)_{x_i} )$ lies in the class $\gamma$. Then 
\beqn
\mc{FOP}_{\gamma_i}^{Y_i} ([ {\mc U}_i, {\mc E}_i, {\mc S}_i, f_i]) = (f_i)_* [\ov{({\mc S}_i')^{-1}(0) \cap \mc U_{i, \gamma_i}}].
\eeqn
Now consider the product chart $({\mc U}_1 \times {\mc U}_2, {\mc E}_1 \boxplus {\mc E}_2, {\mc S}_1 \boxplus {\mc S}_2, f_1 \times f_2)$. The chosen straightenings produce a straightening on the product, with respect to which the product ${\mc S}_1' \boxplus {\mc S}_2'$ is an FOP section. Corollary \ref{cor28} implies that ${\mc S}_1' \boxplus {\mc S}_2'$ is also a strongly transverse FOP section and $({\mc S}_1' \boxplus {\mc S}_2')^{-1}(0) = ({\mc S}_1')^{-1}(0) \times ({\mc S}_2')^{-1}(0)$. Restricting the product ${\mc U}_{1, \gamma_1}\times {\mc U}_{2, \gamma_2}$, one has, as sets 
\beqn
({\mc S}_1' \boxplus {\mc S}_2')^{-1}(0) \cap ({\mc U}_1 \times {\mc U}_2)_{\gamma_1 \times \gamma_2} = \big( ({\mc S}_1')^{-1}(0) \cap {\mc U}_{1, \gamma_1} \big) \times \big( ({\mc S}_2')^{-1}(0) \cap {\mc U}_{2, \gamma_2} \big).
\eeqn
As the product of pseudocycles is still a pseudocycle and the homology classes represented by pseudocycles respect such product structures, 
therefore the commutativity of the diagram \eqref{eqn23} follows. 
\end{proof}

\subsection{Proof of Proposition \ref{prop27}}\label{subsection24}

First we consider the canonical Whitney stratification on $Z_{d_1}^{G_1} \times Z_{d_2}^{G_2}$. Abbreviate 
\beqn
{\mc Y}_i = {\bm V}_i \times {\rm Poly}_{d_i}^{G_i} ( {\bm V}_i, {\bm W}_i ).
\eeqn
${\mc Y}_i$ is endowed with the action prestratification, i.e., 
\beqn
{\mc Y}_i = \bigsqcup_{H_i \in {\mf A}_i} {\bm V}_{i, H_i}^*
\eeqn
We explain the notations. For each subgroup $H_i \subset G_i$, ${\bm V}_{i, H_i}^* \subset  {\bm V}_i$ is the set of points whose stabilizers are exactly $H_i$. The symbol ${\mf A}_i$ denotes the set of all subgroups of $G_i$ for which ${\bm V}_{i, H_i}^* \neq \emptyset$. Then on the product ${\mc Y}:= {\mc Y}_1 \times {\mc Y}_2$ which has the $G_1 \times G_2$-action, the strata of the action prestratification is indexed exactly by ${\mf A}_1 \times {\mf A}_2$ and 
\beqn
{\mc Y} = \bigsqcup_{(H_1, H_2) \in {\mf A}_1 \times {\mf A}_2 } {\bm V}_{1, H_1}^* \times {\bm V}_{2, H_2}^*.
\eeqn
The strata of this prestratification are all algebraic submanifolds. Hence by \cite[Theorem A.21]{Bai_Xu_2022}, the variety $Z_{d_1}^{G_1}\times Z_{d_2}^{G_2}$ has a canonical Whitney stratification, which is the minimal Whitney stratification respecting the action prestratification; by Proposition \ref{propa5} of Appendix \ref{appendixa}, this Whitney stratification is the product of the canonical Whitney stratifications on $Z_{d_1}^{G_1}$ and $Z_{d_2}^{G_2}$. 

Now we prove Proposition \ref{prop27}. As the canonical Whitney stratification on $Z_d^G$ respects the inclusion $Z_d^G \hookrightarrow Z_{d'}^G$ for $d \leq d'$ (see \cite[Theorem 4.12]{Bai_Xu_2022}), one may assume that $d_1 = d_2 = d$. The inclusion map sends $Z_d^{G_1}\times Z_d^{G_2}$ into $Z_d^{G_1\times G_2}$. 

The proof of the following lemma is analogous to that of \cite[Lemma 4.14]{Bai_Xu_2022}.

\begin{lemma}
There exists a map  
\beqn
\psi: ({\bm V}_1 \oplus {\bm V}_2) \times {\rm Poly}_d^{G_1\times G_2} ( {\bm V}_1 \oplus {\bm V}_2, {\bm W}_1 \oplus {\bm W}_2) \to {\mc Y}_1 \times {\mc Y}_2
\eeqn
satisfying the following conditions. 
\begin{enumerate}
    \item $\psi \circ \phi$ is the identity map on ${\mc Y}_1 \times {\mc Y}_2$.
    
    \item For each $(v, P) \in ({\bm V}_1 \oplus {\bm V}_2) \times {\rm Poly}_d^{G_1\times G_2}( {\bm V}_1 \oplus {\bm V}_2, {\bm W}_1 \oplus {\bm W}_2)$, ${\rm ev}( \psi(v, P)) = {\rm ev}(v, P)$.
    
    \item $\phi \circ \psi$ is transverse to $Z_d^{G_1\times G_2}$ and pulls back the canonical Whitney stratification to itself.
\end{enumerate}
\end{lemma}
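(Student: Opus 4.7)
The candidate retract I would use is the natural partial-substitution map. Given $(v,P)$ with $v = (v_1, v_2)\in {\bm V}_1 \oplus {\bm V}_2$ and $P \in {\rm Poly}_d^{G_1 \times G_2}({\bm V}_1\oplus{\bm V}_2, {\bm W}_1\oplus{\bm W}_2)$, set
\[
\psi(v,P) := \bigl((v_1, P_1'), (v_2, P_2')\bigr), \qquad
P_1'(x_1) := \pi_{{\bm W}_1} P(x_1, v_2), \quad P_2'(x_2) := \pi_{{\bm W}_2} P(v_1, x_2).
\]
Under the reduction $d_1 = d_2 = d$ made at the start of Subsection~\ref{subsection24}, each $P_i'$ is a polynomial of degree at most $d$, and the $G_1 \times G_2$-equivariance of $P$ together with the product structure of the action makes $P_i'$ a $G_i$-equivariant polynomial in $x_i$ (the factor $G_{3-i}$ acts trivially on ${\bm W}_i$, so freezing $v_{3-i}$ does no harm). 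Conditions (1) and (2) are then immediate by direct substitution: if $P = P_1 \oplus P_2$ one recovers $P_i' = P_i$, so $\psi \circ \phi = \mathrm{id}$; and ${\rm ev}(\psi(v,P)) = (P_1'(v_1), P_2'(v_2)) = (\pi_{{\bm W}_1} P(v_1,v_2), \pi_{{\bm W}_2} P(v_1,v_2)) = {\rm ev}(v,P)$.

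Set $F := \phi \circ \psi$. It is a smooth retraction of the ambient space onto the submanifold $\phi({\mc Y}_1 \times {\mc Y}_2)$, and by condition (2) it sends $Z_d^{G_1 \times G_2}$ into $\phi(Z_{d_1}^{G_1} \times Z_{d_2}^{G_2})$. Since $F$ preserves the ${\bm V}$-coordinate, it also preserves the action prestratification indexed by pairs of stabilizer subgroups $(H_1, H_2)$. At each point $z$ in $\phi({\mc Y}_1 \times {\mc Y}_2)$ the differential $dF_z$ is a projection onto the tangent space of the retract, so the transversality assertion in (3) reduces to the claim that $\phi({\mc Y}_1 \times {\mc Y}_2)$ meets every stratum of the canonical Whitney stratification of $Z_d^{G_1\times G_2}$ transversely in the ambient space. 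I would first verify this at the level of the coarser action prestratification by a direct computation, using that the stabilizer of $(v_1,v_2)$ factors as the product of the stabilizers of $v_1$ and $v_2$ and that off-block-diagonal polynomial directions serve as a transverse complement.

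The main obstacle, and the step I would treat in direct analogy with \cite[Lemma 4.14]{Bai_Xu_2022}, is upgrading this transversality from the action prestratification to the canonical Whitney refinement, and simultaneously identifying the pullback of this refinement through $F$ with the product canonical Whitney stratification on $Z_{d_1}^{G_1} \times Z_{d_2}^{G_2}$ described at the start of Subsection~\ref{subsection24}. For this I would invoke the minimality and uniqueness characterization of the canonical Whitney stratification together with Proposition~\ref{propa5}, which identifies the minimal Whitney refinement of a product prestratification with the product of the minimal Whitney refinements of the factors. The existence of a smooth retract $F$ that carries strata into strata, combined with this uniqueness, forces the two refinements to agree. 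In this way the explicit formula for $\psi$ packages both the transversality assertion and the pullback compatibility into a single geometric object, bypassing any stratum-by-stratum combinatorial check.
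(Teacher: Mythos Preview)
Your construction of $\psi$ via partial substitution is exactly the paper's map, and your verification of conditions (1) and (2) is correct and matches the paper's argument word for word.

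The divergence is entirely in condition (3). Your strategy is to reduce transversality of $F = \phi\circ\psi$ to transversality of the retract $\phi({\mc Y}_1\times{\mc Y}_2)$ itself to the canonical Whitney strata, then verify that transversality by direct computation on the action prestratification and ``upgrade.'' This is where the argument breaks down. First, your reduction ``$dF_z$ is a projection onto the tangent space of the retract'' is only asserted for $z$ already in the retract; you would still need to check that $F$ is a submersion onto its image at every point (this is true here, but requires a computation you do not give). More seriously, the reduction lands you exactly at the statement of Proposition~\ref{prop27} --- transversality of $\phi$ to the canonical Whitney stratification --- which is what the lemma is being used to \emph{prove}. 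Your appeal to minimality, Proposition~\ref{propa5}, and ``the retract carries strata into strata'' does not supply a mechanism: preserving the coarser action prestratification gives no control over the finer Whitney refinement, and the uniqueness characterization of the minimal Whitney stratification does not by itself produce transversality statements.

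The paper's argument bypasses this circularity entirely. It observes that condition (2) says exactly that the polynomial $P - \uds\phi(\psi(v,P))$ always evaluates to zero at $v$, i.e.\ the graph of the difference $\uds f_1 - \uds f_2$ lies in ${\mc Z}_d^{G_1\times G_2}$. Parker's lemma (\cite[Lemma 4.7]{Bai_Xu_2022}) then transfers transversality from $\uds f_1$ (the identity, trivially transverse) to $\uds f_2$ (which encodes $\phi\circ\psi$), and \cite[Lemma 4.8]{Bai_Xu_2022} transfers the pullback property. The analogy with \cite[Lemma 4.14]{Bai_Xu_2022} that both you and the paper cite is precisely this Parker-lemma mechanism; your outlined ``upgrade'' route does not actually follow that analogy.
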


\begin{proof}
For each $P \in {\rm Poly}_d^{G_1\times G_2}( {\bm V}_1 \oplus {\bm V}_2,  {\bm W}_1 \oplus {\bm W}_2)$, denote its ${\bm W}_1$-component by $P_1$ and its ${\bm W}_2$-component by $P_2$. Then we can regard $P_1$ as a $G_2$-invariant polynomial map 
\beqn
P_1 \in {\rm Poly}_d^{G_2} ( {\bm V}_2, {\rm Poly}_d^{G_1}( {\bm V}_1, {\bm W}_1))
\eeqn
and regard $P_2$ as a $G_1$-invariant polynomial map
\beqn
P_2 \in {\rm Poly}_d^{G_1} ({\bm V}_1, {\rm Poly}_d^{G_2} ({\bm V}_2, {\bm W}_2)).
\eeqn
Then for $v = (v_1, v_2) \in {\bm V}_1 \oplus {\bm V}_2$, define 
\beqn
\psi(v, P) = \psi(v_1, v_2, P_1, P_2) =\Big( (v_1, P_1(\cdot, v_2)), (v_2, P_2(v_1, \cdot)) \Big) \in {\mc Y}_1 \times {\mc Y}_2.
\eeqn
Then it is easy to verify that $\psi \circ \phi= {\rm Id}$ and that 
\beq\label{eqn22}
{\rm ev}(\psi(v, P)) = {\rm ev}(v, P).
\eeq

Now we prove the last property. Consider the manifold $B = {\rm Poly}_d^{G_1\times G_2} ( {\bm V}_1 \oplus {\bm V}_2, {\bm W}_1 \oplus  {\bm W}_2 ) $ over which there are the trivial bundles $V = B \times ({\bm V}_1 \oplus {\bm V}_2)$ and $W = B \times ( {\bm W}_1 \oplus {\bm W}_2)$. Then there are two bundle maps
\beqn
\uds f_1, \uds f_2: V \to {\rm Poly}_d^{G_1\times G_2}( V, W) \cong B \times {\rm Poly}_d^{G_1\times G_2}( {\bm V}_1 \oplus {\bm V}_2, {\bm W}_1 \oplus  {\bm W}_2)
\eeqn
where (here the first variable is the fiber coordinate and the second variable is the base coordinate)
\begin{align*}
&\ \uds f_1( v, P) = (P, P),\ &\ \uds f_2(v, P) = ( \uds \phi (\psi(v, P)), P)
\end{align*}
where $\uds\phi(v, P) = P$. Then \eqref{eqn22} implies that 
\beqn
{\rm graph}( \uds f_1 - \uds f_2) \in {\mc Z}_d^{G_1 \times G_2}(V, W).
\eeqn
As the identity map of $( {\bm V}_1 \oplus {\bm V}_2) \times {\rm Poly}_d^{G_1\times G_2} ( {\bm V}_1 \oplus {\bm V}_2,  {\bm W}_1 \oplus  {\bm W}_2)$ is transverse to $Z_d^{G_1\times G_2}$, which means that the graph of $\uds f_1$ is transverse to ${\mc Z}_d^{G_1\times G_2}$, by a lemma of B. Parker (see \cite[Lemma 4.10]{BParker_integer}, also \cite[Lemma 4.7]{Bai_Xu_2022}), the graph of $\uds f_2$ is also transverse to ${\mc Z}_d^{G_1\times G_2}$, implying that $\phi \circ \psi$ is transverse to $Z_d^{G_1\times G_2}$. Moreover, as the identity map pulls back any Whitney stratification to itself, by \cite[Lemma 4.8]{Bai_Xu_2022}, the map $\phi\circ\psi$ also pulls back the canonical Whitney stratification on $Z_d^{G_1\times G_2}$ to itself.
\end{proof}

\begin{proof}[Proof of Proposition \ref{prop27}]
As $\phi \circ \psi$ is transverse to $Z_d^{G_1 \times G_2}$, it follows that $\phi$ is transverse to $Z_d^{G_1 \times G_2}$ along the image of $\psi$. As $\psi$ is surjective, it follows that $\phi$ is transverse to $Z_d^{G_1\times G_2}$ everywhere. Moreover, as $\psi \circ \phi$ is the identity, it follows that $\psi$ is transverse to $Z_d^{G_1}\times Z_d^{G_2}$ along the image of $\phi$. Moreover, as $\psi = \psi \circ \phi \circ \psi$, which implies that the image of $d\psi$ at any point is equal to the image of $d\psi$ at some point in ${\rm Im}(\phi)$. Hence $\psi$ is transverse to $Z_d^{G_1}\times Z_d^{G_2}$ everywhere. 

The proof of the claim that $\phi$ resp. $\psi$ pulls back the canonical Whitney stratification to the canonical one is similar to the proof of \cite[Theorem 4.12]{Bai_Xu_2022}, which relies crucially on a property of minimal Whitney stratifications (see \cite[Lemma A.11]{Bai_Xu_2022}).
\end{proof}

\section{Abstract constructions of chain complexes and maps over the integers}\label{sec-3}

In this section we provide an abstract recipe of constructing chain complexes associated to flow categories and chain maps associated to flow bimodules. We explain the list of necessary structures on flow categories and flow bimodules which allow one to use FOP perturbations to define the algebraic counts over the integers. This section also serves as a source of notations. In Subsection \ref{subsection31} we set up the notations for partially ordered sets and abstract stratified spaces. In Subsection \ref{subsection32} we introduce the abstract notion of topological flow categories and flow bimodules. In Subsection \ref{subsection33} we define the notion of derived orbifold lifts of flow categoreis and bimodules which are abstract frameworks for regularizing the moduli spaces. In Subsection \ref{subsection34} we lift certain auxiliary structures which will be necessary to carry out the FOP perturbation scheme. In Subsection \ref{subsection35} we consider the important notion of stable normal complex structures. In Subsection \ref{subsection36} and Subsection \ref{subsection37} we explain the recipe of inductively constructing FOP perturbations associated to derived orbifold lifts of flow categories and flow bimodules and the recipe of extracting chain complexes and chain maps from the countings.



\subsection{Stratified spaces}\label{subsec:strat}\label{subsection31}

\subsubsection{Partially ordered sets}

Many objects in Floer theory are indexed by certain partially ordered sets. We abbreviate the phrase ``partially ordered set'' by the word {\bf poset}. In this paper posets are always countable. We use different symbols such as $\leq$, $\preceq$, etc. to denote the partial order relations. For a poset $\bA$, let $\bA^{\max}\subseteq \bA$ be the subset of maximal elements. A poset $\bA$ has a canonical {\bf Alexandrov topology}: a subset $U \subset \bA$ is open if $\alpha \in U$ and $\alpha \leq \beta$ imply that $\beta \in U$. 

The product of finitely many posets carries a canonically induced partial order. Indeed, if $\bA_1, \ldots, \bA_k$ are posets, then the relation 
\beqn
(\alpha_1, \ldots, \alpha_k) \leq ( \beta_1, \ldots, \beta_k )\ \text{if and only if}\ \alpha_i \leq \beta_i\ \forall i = 1, \ldots, k
\eeqn
is a partial order on $\bA_1 \times \cdots \times \bA_k$. 

We often consider posets with a well-defined ``depth'' function. We introduce the following notion of homogeneous posets. In fact all moduli spaces considered in this paper are stratified by the following kind of posets. 

\begin{defn}\label{defn:graded_poset}
A poset ${\mb A}$ is called {\bf homogeneous} if for each $\alpha \in \bA$, the length of a maximal sequence of elements $\alpha = \alpha_0 < \alpha_1 < \cdots < \alpha_k$ such that $\alpha_k \in \bA^{\max}$ is finite and only depends on $\alpha$. This length is called the {\bf depth} or {\bf codimension} of $\alpha$, denoted by $\dep( \alpha )$. In particular, 
\beqn
\alpha \in \bA^{\max} \Longleftrightarrow \dep(\alpha) = 0.
\eeqn
A poset map between homogeneous posets is a called a {\bf homogeneous} poset map if it  preserves the depth.
\end{defn}

\begin{defn}\label{defn_adjacent_face}
Given a homogeneous poset $\bA$, the {\bf set of adjacent faces} of $\alpha \in \bA$ is defined to be
$$
{\mb F}_{\alpha} := \{ \beta \in \bA | \alpha \leq \beta \text{ and } \dep(\beta)=1 \}.
$$
\end{defn}

It is straightforward to check that homogeneous posets also admit finite products with depth function being
\beqn
\dep(\alpha_1, \ldots, \alpha_k) = \dep (\alpha_1) + \cdots + \dep (\alpha_k).
\eeqn

We introudce the following ``boundary stratum'' notation. Given a poset $\bA$ and an element $\alpha \in \bA$, denote
\beqn
\partial^\alpha \bA:= \{ \alpha' \in \bA\ |\ \alpha' \leq \alpha \}.
\eeqn
More generally, if ${\mb B} \subset {\mb A}$ is a subset, denote 
\beqn
\partial^{\mb B} {\mb A}:= \bigcup_{\beta \in {\mb B}} \partial^\beta \bA.
\eeqn
It has the induced partial order. If $\bA$ is a homogeneous poset, then $\partial^\alpha \bA$ is also homogeneous with depth function being shifted by $\dep(\alpha)$. Moreover, for each nonnegative integer $k$, denote 
\beqn
\partial^{[k]} \bA:= \partial^{\dep^{-1}(k)} \bA
\eeqn
which is still homogeneous with the depth function shifted by $k$.



\subsubsection{Stratified spaces}

We introduce our notion of stratified topological spaces. We emphasize here that stratified spaces always refer to a poset. Moreover, the notion of stratified spaces should not be confused with the notion of prestratified spaces discussed in Appendix \ref{appendixa}. 

\begin{defn}
Let $\bA$ be a (countable) poset. An \textbf{$\bA$-stratified space} ($\bA$-space for short) is a locally compact, Hausdorff and second countable topological space $X$ endowed with a continuous map
\beqn
s: X \to \bA
\eeqn
with respect to the Alexandrov topology on $\bA$ such that the range of $s$ is finite. In particular, we can write
\beqn
X = \bigsqcup_{\alpha \in \bA} X_\alpha,\ {\rm where}\ X_\alpha:= s^{-1}(\alpha)
\eeqn
satisfying the following conditions. 
\begin{enumerate}
    \item Each $X_\alpha$ (called a {\bf stratum}) is locally closed (which can be empty\footnote{For example, in a moduli space of stable Floer cylinders, the subset of smooth Floer cylinders, which should be the top stratum, could be empty.}).
    
    \item All but finitely many strata are empty.
    
    \item For each $\alpha\in \bA$, the subset
    \beqn
    \partial^\alpha X:= \bigsqcup_{\beta \leq \alpha} X_\beta
    \eeqn
    is a closed set (which may contain the closure of $X_\alpha$ properly). Note that this condition follows from the continuity of $s$.
\end{enumerate}
\end{defn}

We introduce the following notions for stratified spaces.

\begin{defn}\hfill
\begin{enumerate}
    \item A map from an $\bA_1$-space $X_1$ to an $\bA_2$-space $X_2$ is a commutative diagram
    \beqn
    \xymatrix{ X_1 \ar[r]^f \ar[d]_{s_1} & X_2 \ar[d]^{s_2} \\
               \bA_1 \ar[r]_i & \bA_2} 
               \eeqn
    where $i: \bA_1 \to \bA_2$ is a poset map and $f$ is a continuous map. If $\bA_1$ and $\bA_2$ are both homogeneous (see Definition \ref{defn:graded_poset}), then we require that $i$ is a homogeneous map. We usually call such a map a {\bf stratified map} to emphasize that it respects the stratifications.
            
    \item A stratified map $f: X_1 \to X_2$ as above is called an {\bf embedding} if $f$ is a homeomorphism onto its image and $i: \bA_1 \to \bA_2$ is an injection. 
    An open embedding is an embedding with an open image.
    
    \item A stratified map $f: X_1 \to X_2$ is called a homeomorphism if $f$ is a homeomorphism of topological spaces and the underlying poset map is an isomorphism.
    
    \item The product of $\bA_i$-spaces $X_i$ ($i = 1, \ldots, k$) is the product topological space $X_1 \times \cdots \times X_k$ stratified by the product poset $\bA_1 \times \cdots \times \bA_k$. It is easy to see 
    \beqn
    \partial^{(\alpha_1, \ldots, \alpha_k)} (X_1 \times \cdots \times  X_k) = \partial^{\alpha_1} X_1 \times \cdots \times \partial^{\alpha_k} X_k.
    \eeqn
    
    \item Let $G$ be a topological group. A $G$-action on an $\bA$-space $X$ is a continuous $G$-action on $X$ which preserves each stratum. In this case, the $\partial^\alpha \bA$-space $\partial^\alpha X$ has an induced $G$-action.
\end{enumerate}
\end{defn}



    


\subsection{Topological flow categories and bimodules}\label{subsec:top-flow-cat}\label{subsection32}

The concept of flow categories was introduced by Cohen--Jones--Segal \cite{Cohen_Jones_Segal}. We need a variant of the original construction similar to \cite[Section 7]{pardon-VFC} and \cite[Section 7,8]{abouzaid2022axiomatic}.

\begin{setup}\label{setup:poset}
Let $N$ be a nonnegative integer, $\Pi$ be an infinite cyclic group, and $\omega: \Pi \to {\mb Z}$ be a group injection. 

Let ${\mc P}$ be a countable poset equipped with the following extra data: a free $\Pi$-action and two functions (called the {\bf action} and the {\bf index})
\begin{align*}
&\ {\mc A}^{\mc P}: {\mc P} \to {\mb R},\ &\ {\rm ind}^{\mc P}: {\mc P} \to {\mb Z}/ 2N.
\end{align*}
Assume the following conditions. 
\begin{enumerate}
    \item The $\Pi$-action is order-preserving. Namely, for all $p, q \in {\mc P}$ and $a \in \Pi$
    \beqn
    p \leq q \Longleftrightarrow a \cdot p \leq a \cdot q.
    \eeqn
    
    \item For all $p \in {\mc P}$ and $a \in \Pi$,
\begin{equation}\label{eqn:action-shift}
{\mc A}^{\mc P} (a \cdot p) = {\mc A}^{\mc P} (p) + \omega(a)
\end{equation}
and 
\begin{equation}\label{eqn:CZ-shift}
{\rm ind}^{\mc P} (a \cdot p) = \rm{ind}^{\mc P} (p).
\end{equation}

\item For all $p, q \in {\mc P}$, 
\beqn
p < q \Longrightarrow {\mc A}^{\mc P} (p) < {\mc A}^{\mc P} (q).
\eeqn

\item The quotient set $\uds{\mc P}:= {\mc P}/ \Pi$ is finite. 

\end{enumerate}

\end{setup}

The conditions that $\omega: \Pi \to {\mb Z}$ is injective and that $\uds{\mc P}$ is finite imply that ${\mc P}$ is ``locally finite-dimensional,'' namely, for any pair of elements $p < q$ of ${\mc P}$, there are at most finitely many elements lying between them.

\begin{notation}\label{notation35}
Given a pair of elements $p < q$ in ${\mc P}$, we define a poset 
\beqn
\bA_{pq}^{\mc P}:= \big\{ \alpha = pr_1 \cdots r_l q\ |\ p< r_1 < \cdots < r_l < q,\ r_1, \ldots, r_l \in {\mc P} \big\}
\eeqn
whose partial order is induced by inclusion
\beqn
ps_1 \cdots s_m q \leq p r_1 \cdots r_l q\Longleftrightarrow \{r_1 \cdots r_l \} \subseteq \{ s_1, \ldots, s_m \}.
\eeqn
The poset $\bA_{pq}^{\mc P}$ has a unique maximal element $pq$ and is homogeneous whose depth function is 
\beqn
\dep(pr_1 \cdots r_l q) = l.
\eeqn

The following is a formal characterization of the feature that boundary strata of moduli spaces consist of broken trajectories. Namely, given a triple $prq \in \bA_{pq}^{\mc P}$, it is easy to see that there is an isomorphism of homogeneous posets
\beq\label{eqn33}
\bA_{pr}^{\mc P} \times \bA_{rq}^{\mc P} \cong \partial^{prq} \bA_{pq}^{\mc P}
\eeq
under the concatenation of strings.
\end{notation}


Now we introduce the notion of flow categories under the setting of Setup \ref{setup:poset}. 

\begin{defn}\label{def:flow-cat}
Let ${\mc P}$ be as in Setup \ref{setup:poset}. A \textbf{flow category} $T^{\mc P}$ over ${\mc P}$ is a topologically enriched category\footnote{Namely, the set of morphisms are topological spaces and composition maps are continuous.} with the set of objects given by ${\mc P}$, with morphism spaces $T_{pq}$ satisfying the following conditions.
\begin{enumerate}
    \item $T_{pq} \neq \emptyset$ only if $p \leq q$ in ${\mc P}$.\footnote{In the Morse or Floer case, it is indeed true that $T_{pq} \neq \emptyset$ if and only if $p \leq q$.}
    
    \item $T_{pp}$ is the singleton.
    
    \item $T_{pq}$ is a compact $\bA_{pq}^{\mc P}$-space. 
    
    \item Given a triple $p<r<q$ in ${\mc P}$, the composition map factors through a stratified homeomorphism
    \beqn
    \xymatrix{ T_{pr}\times T_{rq} \ar[r] \ar[d] & \partial^{prq} T_{pq} \ar[d] \\
              \bA_{pr}^{\mc P} \times \bA_{rq}^{\mc P} \ar[r]  & \partial^{prq} \bA_{pq}^{\mc P}}
              \eeqn
              where the underlying poset isomorphism is the map \eqref{eqn33}. We require that whenever $p<r<s<q$, the following associativity diagram holds:
    \begin{equation*}
        \begin{tikzcd}
T_{pr} \times T_{rs} \times T_{sq} \arrow[r] \arrow[d] & T_{ps} \times T_{sq} \arrow[d] \\
T_{pr} \times T_{rq} \arrow[r]                         & T_{pq}.                        
\end{tikzcd}
    \end{equation*}
    
    \item $\Pi$ defines a strict action on $T^{\mc P}$: for any $a \in \Pi$ and $p, q\in {\mc P}$, there is a stratified homeomorphism 
    \beqn
    \xymatrix{  T_{pq} \ar[r]^{\phi_a} \ar[d] & T_{a\cdot p \ a \cdot q } \ar[d]\\
                \bA_{pq}^{\mc P} \ar[r] & \bA_{a\cdot p\  a\cdot q}^{\mc P} }
    \eeqn
    where the underlying poset map is the natural isomorphism. Moreover, when $a_1, a_2 \in \Pi$, we require that the equation $\phi_{a_1 \cdot a_2} = \phi_{a_1} \circ \phi_{a_2}$ holds and $\phi_0$ is the identity map for $a = 0 \in \Pi$.
\end{enumerate}
\end{defn}

\begin{lemma}\label{lem:bound-prod}
Given $\alpha = pr_1 \cdots r_l q \in \mb{A}^{\mc P}_{pq}$, the space $\partial^\alpha T_{pq}$ is homeomorphic to $T_{p r_1} \times \cdots \times T_{r_l q}$ as $\mb{A}^{\mc P}_{pr_1} \times \cdots \times \mb{A}^{\mc P}_{r_l q}$-spaces.
\end{lemma}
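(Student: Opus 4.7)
The plan is to proceed by induction on the depth $l$ of $\alpha = pr_1 \cdots r_l q$ in $\bA^{\mc P}_{pq}$. The base case $l = 0$ is trivial, since $\alpha = pq$ is the maximal element and $\partial^{pq} T_{pq} = T_{pq}$, with the ``product'' on the right side being $T_{pq}$ itself (empty decomposition). The case $l = 1$, where $\alpha = p r_1 q$, is precisely condition (4) of Definition \ref{def:flow-cat} together with the concatenation poset isomorphism \eqref{eqn33}. So the content is in the inductive step.

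For $l \geq 2$, I would single out the coarsening $\alpha' := p r_1 q \in \bA^{\mc P}_{pq}$, which satisfies $\alpha \leq \alpha'$ because $\{r_1\} \subseteq \{r_1, \ldots, r_l\}$. Applying condition (4) of Definition \ref{def:flow-cat} to the triple $p < r_1 < q$ produces a stratified homeomorphism
\[
T_{p r_1} \times T_{r_1 q} \;\xrightarrow{\ \cong\ }\; \partial^{\alpha'} T_{pq}
\]
covering the poset isomorphism $\bA^{\mc P}_{p r_1} \times \bA^{\mc P}_{r_1 q} \cong \partial^{\alpha'} \bA^{\mc P}_{pq}$ from Notation \ref{notation35}. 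Under the latter isomorphism, the element $\alpha$ corresponds to the pair $(p r_1,\, r_1 r_2 \cdots r_l q)$ (its concatenation is $\alpha$). Restricting the homeomorphism above to the preimage of $\partial^{\alpha} \bA^{\mc P}_{pq}$ and using the product formula $\partial^{(\beta_1,\beta_2)}(X_1 \times X_2) = \partial^{\beta_1} X_1 \times \partial^{\beta_2} X_2$ for stratified products gives
\[
\partial^\alpha T_{pq} \;\cong\; T_{p r_1} \times \partial^{r_1 r_2 \cdots r_l q} T_{r_1 q}.
\]
The inductive hypothesis applied to $r_1 r_2 \cdots r_l q \in \bA^{\mc P}_{r_1 q}$, which has depth $l - 1$, identifies the second factor with $T_{r_1 r_2} \times \cdots \times T_{r_l q}$, yielding the desired homeomorphism. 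Throughout, the underlying poset maps on each factor are the canonical ones coming from \eqref{eqn33}, so compatibility with the $\bA^{\mc P}_{p r_1} \times \cdots \times \bA^{\mc P}_{r_l q}$-stratification is automatic.

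The only genuine consistency check is that the resulting identification does not depend on the choice of where to split first (I chose $r_1$, but one could equally start from $r_l$ or any intermediate $r_i$). This independence follows from the associativity diagram in condition (4) of Definition \ref{def:flow-cat} applied inductively, which is precisely the feature forcing iterated compositions to be well-defined. I do not anticipate any substantive obstacle here; the statement is fundamentally a formal unwinding of the axioms of a flow category, and the induction is the cleanest way to carry out the bookkeeping.
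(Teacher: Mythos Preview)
Your proposal is correct and follows essentially the same approach as the paper: induction on the depth $l$, splitting at $r_1$ via the composition axiom (4), restricting to the appropriate closed substratum, applying the inductive hypothesis on the second factor, and invoking associativity to show independence of the chosen split point.
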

\begin{proof}
We prove the statement by induction on $\dep(\alpha)$. For $\dep(\alpha) = 0$, this is tautology, and for $\dep(\alpha) = 1$, the assertion follows from Definition \ref{def:flow-cat}. Suppose the lemma holds for all $\dep(\alpha) \leq l-1$. Now suppose $\alpha = pr_1 \cdots r_l q$. Consider the homeomorphism between $\mb{A}^{\mc P}_{pr_1} \times \mb{A}^{\mc P}_{r_1 q}$ spaces $T_{p r_1} \times T_{r_1 q} \to \partial^{p r_1 q} T_{pq}$. Restricting the homeomorphism along the closed stratum $\mb{A}^{\mc P}_{pr_1} \times \partial^{r_1 \cdots r_l q} \mb{A}^{\mc P}_{r_1 q}$ and using the induction hypothesis, we obtain a homeomorphism of $\mb{A}^{\mc P}_{pr_1} \times \cdots \times \mb{A}^{\mc P}_{r_l q}$-spaces
$$ T_{p r_1} \times \cdots \times T_{r_l q} \to \partial^{\alpha} T_{pq}. $$
By associativity, if we construct such a homeomorphism by decomposing $\alpha$ as $pr_1 \cdots r_k$ and $r_k \cdots r_l q$ for some $1 \leq k \leq l$, the resulting homeomorphism between the stratified spaces is the same.
\end{proof}

Definition \ref{def:flow-cat} does not impose any regularity conditions or tangential structures on the morphism spaces. The purpose of such a definition is to single out the stratification structures and we will introduce several enhancements by putting various structures on the morphism spaces in the sequel.

\subsubsection{Flow bimodules}

We first introduce the posets indexing strata in flow bimodules. 

\begin{notation}
Suppose ${\mc P}$ and ${\mc P}'$ are two posets as in Setup \ref{setup:poset} equipped with own action and index functions 
\begin{align*}
&\ ({\mc A}^{\mc P}, {\rm ind}^{\mc P}): {\mc P} \to {\mb R} \times ({\mb Z}/ 2N),\ &\ ({\mc A}^{{\mc P}'}, {\rm ind}^{{\mc P}'}): {\mc P}' \to {\mb R}\times ( {\mb Z}/2N).
\end{align*}
For $p \in {\mc P}$ and $p' \in {\mc P}'$, define a poset 
\beqn
\bA_{pp'}:= \big\{ \alpha = p q_1 \cdots q_k q^{\prime}_{k'} \cdots q^{\prime}_{1} p' \ |\ p < q_1 < \cdots < q_k,\ q^{\prime}_{k'} < \cdots < q^{\prime}_{1} < p'\big\}
\eeqn
The partial order is again induced by inclusion: 
\begin{multline*}
p q_1 \cdots q_k q^{\prime}_{k'} \cdots q^{\prime}_{1} p' \leq p \tilde{q}_1 \cdots \tilde{q}_{\tilde{k}} \tilde{q}^{\prime}_{\tilde{k}'} \cdots \tilde{q}^{\prime}_{1} p'\\
\Longleftrightarrow \{ \tilde{q}_1, \dots, \tilde{q}_{\tilde{k}} \} \subseteq \{ q_1, \dots, q_k \}\ {\rm and}\ \{ \tilde{q}^{\prime}_{1} \cdots \tilde{q}^{\prime}_{\tilde{k}'} \} \subseteq \{ q^{\prime}_{1} \cdots q^{\prime}_{k'} \}.
\end{multline*}
$\bA_{pp'}$ is a homogeneous poset with a unique maximal element $pp'$ and depth function
\beqn
\dep(pq_1 \cdots q_k q_{k'}'\cdots q_1' p') = k + k'.
\eeqn

There are similar characterizations of ``broken configurations.'' If $p<q$ are elements in ${\mc P}$, it is easy to see that there is a natural isomorphism of homogeneous posets
\beqn
\bA_{pq}^{\mc P}\times \bA_{qp'} \cong \partial^{pqp'} \bA_{pp'}
\eeqn
by concatenation of words. Such an isomorphism makes the following diagram commute:
\beqn
\xymatrix{  &  \bA_{pq_1}^{\mc P}\times \bA^{\mc P}_{q_1 q_2} \times \bA_{q_2 p'}   \ar[rd] \ar[ld] & \\ 
\bA_{p q_2}^{\mc P}\times \bA_{q_2 p'} \ar[rd] & & \bA^{\mc P}_{pq_1}\times \bA_{q_1 p'} \ar[ld] \\
 &   \bA_{pp'} & }
\eeqn
Similarly, if $q' < p'$ in ${\mc P}'$, one has
\beqn
\bA_{pq'}\times \bA_{q'p'}^{{\mc P}'} \cong \partial^{pq'p'} \bA_{pp'},
\eeqn
which satisfies a similar commutative relation as above, and in this case the poset $\bA^{{\mc P}'}$ acts on the right. Moreover, these two types of isomorphisms are compatible in the following sense. Namely, the following diagram is commutative for which the arrows are induced by the obvious concatenation of words.
\beqn
\xymatrix{  &  \bA_{pq}^{\mc P}\times \bA_{qq'} \times \bA_{q'p'}^{{\mc P}'}   \ar[rd] \ar[ld] & \\ 
\bA_{pq}^{\mc P}\times \bA_{qp'} \ar[rd] & & \bA_{pq'}\times \bA_{q'p'}^{{\mc P}'} \ar[ld] \\
 &   \bA_{pp'} & }
\eeqn
\end{notation}

\begin{defn}\label{defn:flow-bimod}
Let $T^{\mc P}$ and $T^{{\mc P}'}$ be flow categories over ${\mc P}$ and ${\mc P}'$ respectively. A \textbf{flow bimodule} $M$ from $T^{\mc P}$ to $T^{{\mc P}'}$ consists of the following data.
\begin{enumerate}
    \item A compact $\bA_{pp'}$-space $M_{pp'}$ (which could be empty) for all $p \in {\mc P}$ and $p' \in {\mc P}'$.
    
    \item For $p<q$, a homeomorphism of stratified spaces
    \beqn
    \xymatrix{ T^{\mc P}_{pq} \times M_{qp'} \ar[r] \ar[d] & \partial^{pqp'} M_{pp'} \ar[d] \\
               \bA_{pq}^{\mc P} \times \bA_{qp'} \ar[r]_{\cong} & \partial^{pqp'} \bA_{pp'} }.
               \eeqn
               
    \item For $q' < p'$, a homeomorphism of stratified spaces
    \beqn
    \xymatrix{ M_{pq'} \times T^{{\mc P}'}_{q'p'}  \ar[r] \ar[d] & \partial^{pq'p'}  M_{pp'} \ar[d] \\
              \bA_{pq'}\times \bA_{q'p'}^{{\mc P}'} \ar[r]_{\cong} & \partial^{pq'p'} \bA_{pp'} }.  
              \eeqn
              \end{enumerate}
These data should be subject to the following conditions.
\begin{enumerate}
    \item There is a constant $ C > 0$ such that for all $p\in {\mc P}$, $p'\in {\mc P}'$, 
    \beq\label{eqn34}
    M_{pp'} \neq \emptyset \Longrightarrow {\mc A}^{\mc P}(p) < {\mc A}^{{\mc P}'}(p') + C.\footnote{The constant will be revealed to be related to the Hofer-type norm of a given Hamiltonian.}
    \eeq
    
    \item For $p < q_1 < q_2$ in ${\mc P}$ and $p' \in {\mc P}'$, the following diagram commutes:
    \begin{equation*}
        \begin{tikzcd}
T^{\mc P}_{pq_1} \times T^{\mc P}_{q_1 q_2} \times M_{q_2 p'} \arrow[d] \arrow[r] & T^{\mc P}_{pq_1} \times M_{q_1p'} \arrow[d] \\
T^{\mc P}_{pq_2} \times M_{q_2p'} \arrow[r]                               & M_{pp'}                            
\end{tikzcd}
    \end{equation*}
    where $T^{\mc P}_{pq_1} \times M_{q_1 p'} \rightarrow M_{pp'}$ is induced by the composition of the homeomorphism $T^{\mc P}_{pq_1} \times M_{q_1 p'} \rightarrow \partial^{p q_1 p'} M_{pp'}$ and the inclusion $\partial^{p q_1 p'} M_{pp'} \hookrightarrow M_{pp'}$ and so forth.
    \item Similarly, for $p \in {\mc P}$ and $q_2' < q_1' < p'$ in ${\mc P}'$, we have a commutative diagram
    \begin{equation*}
    \begin{tikzcd}
M_{pq_2'} \times T^{{\mc P}'}_{q_2' q_1'} \times T^{{\mc P}'}_{q_1' p'} \arrow[d] \arrow[r] & M_{p q_1'} \times T^{{\mc P}'}_{q_1' p} \arrow[d] \\
M_{pq_2'} \times T^{{\mc P}'}_{q_2' p'} \arrow[r]                                & M_{pp'}.                           
\end{tikzcd}
    \end{equation*}
    \item For $p<q$ in ${\mc P}$ and $q' < p'$ in ${\mc P}'$, we have a commutative diagram
        \begin{equation*}
    \begin{tikzcd}
T^{\mc P}_{pq} \times M_{qq'} \times T^{{\mc P}'}_{q' p'} \arrow[d] \arrow[r] & M_{p q'} \times T^{{\mc P}'}_{q' p'} \arrow[d] \\
T^{\mc P}_{pq} \times M_{qp'} \arrow[r]                                & M_{pp'}.                           
\end{tikzcd}
    \end{equation*}
    \item Strict $\Pi$-action: for any $a \in \Pi$, there is a stratified homeomorphism \beqn
    \xymatrix{ M_{pp'} \ar[r] \ar[d] & M_{a\cdot p \ a \cdot p'} \ar[d] \\
               \bA_{pp'} \ar[r] & \bA_{a\cdot p \ a \cdot p'} }
               \eeqn
 such that for $a_1, a_2 \in \Pi$ the equation $\phi^M_{a_1 \cdot a_2} = \phi^M_{a_1} \circ \phi^M_{a_2}$ holds, and such that $\phi^M_{id}$ is the identity map. Moreover, we require that the actions
$$
T^{\mc P}_{pq_1} \times M_{q_1 p'} \rightarrow M_{pp'}, M_{pq_1'} \times T^{{\mc P}'}_{q_1' p'} \to M_{pp'}
$$
are $\Pi$-equivariant.
\end{enumerate}
\end{defn}

The following statement is the analog of Lemma \ref{lem:bound-prod} for flow bimodules. The associativity conditions from Definition \ref{defn:flow-bimod} guarantees that the maps between the stratified spaces are well-defined.

\begin{lemma}
Suppose $M_{pp'}$ is nonempty. Given an element $\alpha = pq_1 \cdots q_k q^{\prime}_{k'} \cdots q^{\prime}_1 p' \in \bA_{pp'}$, we have a stratified homeomorphism
\beqn
\vcenter{ \xymatrix{ T^{\mc P}_{pq_1} \times \cdots \times M_{q_k q^{\prime}_{k'}} \times \cdots \times T^{{\mc P}'}_{q^{\prime}_1 p'} \ar[r] \ar[d] & \partial^\alpha M_{pp'} \ar[d] \\
           \bA^{\mc P}_{p q_1} \times \cdots \times \bA_{q_k,q^{\prime}_{k'}} \times \cdots \times \bA^{{\mc P}'}_{q^{\prime}_1, p'} \ar[r]^-{\cong} & \partial^\alpha \bA_{pp'} }  }.
           \eeqn
\end{lemma}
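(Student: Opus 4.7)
The plan is to induct on the depth $l = k + k'$ of $\alpha$, mirroring the proof of Lemma \ref{lem:bound-prod} for flow categories but now with the extra subtlety that two different ``atomic'' breakings are available: on the $T^{\mc P}$-side and on the $T^{{\mc P}'}$-side. The base case $l = 0$ is tautological. For $l = 1$ one has either $\alpha = pq_1 p'$ or $\alpha = pq_1' p'$, and in each case the required stratified homeomorphism is furnished directly by one of the two atomic factorizations in Definition \ref{defn:flow-bimod}.

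For the inductive step, assume the lemma holds for all depths strictly less than $l$ and suppose $\alpha = p q_1 \cdots q_k q'_{k'} \cdots q'_1 p'$ with $k + k' = l$. Assume first that $k \geq 1$. Apply the atomic homeomorphism
\[
T^{\mc P}_{pq_1} \times M_{q_1 p'} \xrightarrow{\ \cong\ } \partial^{pq_1 p'} M_{pp'}
\]
of $\bA^{\mc P}_{pq_1} \times \bA_{q_1 p'}$-spaces, and restrict it along the closed substratum indexed by $\bA^{\mc P}_{pq_1} \times \partial^{q_1 q_2 \cdots q_k q'_{k'} \cdots q'_1 p'} \bA_{q_1 p'}$. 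Applying the induction hypothesis to the depth-$(l-1)$ stratum of $M_{q_1 p'}$ produces a stratified homeomorphism
\[
T^{\mc P}_{pq_1} \times T^{\mc P}_{q_1 q_2} \times \cdots \times M_{q_k q'_{k'}} \times \cdots \times T^{{\mc P}'}_{q'_1 p'} \xrightarrow{\ \cong\ } \partial^{\alpha} M_{pp'}
\]
over the canonical concatenation isomorphism of posets. If instead $k = 0$ and $k' \geq 1$, the symmetric argument using the atomic homeomorphism $M_{pq'_1} \times T^{{\mc P}'}_{q'_1 p'} \xrightarrow{\cong} \partial^{pq'_1 p'} M_{pp'}$ and the inductive hypothesis applied to $M_{pq'_1}$ produces the required homeomorphism.

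It remains to argue that the homeomorphism constructed above is independent of the choices made in the induction (which factor to split off first, and on which side). This is where the three commutative diagrams in Definition \ref{defn:flow-bimod} are used. Concretely, if $k \geq 2$ one can split off either $T^{\mc P}_{pq_1}$ first or (after first using Lemma \ref{lem:bound-prod} to identify $\partial^{pq_1 q_2 \cdots q_k p'} M_{pp'}$ with $T^{\mc P}_{pq_1} \times T^{\mc P}_{q_1 q_2} \times M_{q_2 p'} \times \cdots$) split $T^{\mc P}_{q_1 q_2}$ from the middle; the first diagram in Definition \ref{defn:flow-bimod} together with the associativity of composition in $T^{\mc P}$ shows these give the same map. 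Analogously, the second diagram handles two left-splits on the $T^{{\mc P}'}$-side, and the third diagram handles the mixed case of splitting one factor on each side. A standard diamond-lemma argument then shows that any two sequences of atomic splittings leading to the same $\alpha$ induce the same stratified homeomorphism, completing the induction. The main obstacle is purely bookkeeping---verifying that every pair of decompositions is related by a sequence of the three elementary associativity diagrams---and presents no genuine difficulty beyond notational weight.
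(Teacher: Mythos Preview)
Your proposal is correct and follows exactly the approach the paper indicates: the paper does not spell out a proof but simply remarks that this is the analogue of Lemma~\ref{lem:bound-prod} and that the associativity conditions in Definition~\ref{defn:flow-bimod} guarantee the maps are well-defined. You have faithfully carried out the induction on depth in the spirit of Lemma~\ref{lem:bound-prod}, correctly identifying that the three commutative diagrams handle the $\mc P$-$\mc P$, $\mc P'$-$\mc P'$, and mixed cases needed for well-definedness.
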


As spelled out in \cite[Section 8]{abouzaid2022axiomatic}, one can define ``higher" homotopies between flow bimodules parametrized cubes $[0,1]^n$ for any $n \in {\mb Z}_{>0}$ and construct a cubically enriched category of flow bimodules. Such notions are useful for proving invariance of Floer homology/homotopy groups, but establishing such invariance is beyond the scope of this paper.

\subsection{Outer-collaring}\label{subsection:outer_collar}

In Floer theory one often needs to inductively construct structures (such as perturbations) on infinitely many moduli spaces. Each moduli space is an orbifold with corners in a suitable sense. One wishes to construct the structures such that near the boundary or corner they are of the product type. This requires various compatible collar structures near the boundary and corners. The existence of a compatible system of collar structures is difficult to construct directly. Instead, following \cite[Chapter 17]{FOOO_Kuranishi}, we take a short-cut by constructing the collars ``outside.''

We will first consider the outer-collaring construction for individual stratified spaces, flow categories, and bimodules. They are operated in the topological category. This discussion is not immediately used in this section. Later we will also discuss outer-collaring of stratified objects in the smooth category. 

\subsubsection{Outer-collaring of stratified spaces}

We first discuss the outer-collaring of a single moduli space which may appear in either a flow category or a bimodule. We consider the first case (flow category) and the second case is completely similar. Abbreviate $\bA_{pq}^{\mc P}$ by $\bA$ where ${\mc P}$ satisfies conditions of Setup \ref{setup:poset} and $p, q\in {\mc P}$. Let $X$ be an $\bA$-space whose strata are indexed by words $\alpha = pr_1 \cdots r_l q$. Choose $r \geq 0$ and we will define a new $\bA$-space denoted by $X^\bpr$. As a set, 
\beqn
X^\bpr = \left( \bigsqcup_{\alpha \in \bA} \partial^\alpha X \times [-r, 0]^{\bF_\alpha} \right)/ \sim,
\eeqn
where $\bF_\alpha \subset \bA$ is the set of adjacent faces and where the equivalence relation $\sim$ is generated by the following relation: if $\alpha \leq \beta$ (which implies the inclusion of the sets of adjacent faces $\bF_\beta \subset \bF_\alpha$), we identify 
\beqn
(x, (t_i)_{i \in \bF_\beta}) \in \partial^\beta X \times [-r, 0]^{\bF_\beta}
\eeqn
with
\beqn
(y, (s_j)_{j\in \bF_\alpha}) \in \partial^\alpha X \times [-r, 0]^{\bF_\alpha}
\eeqn
if $x = y \in \partial^\alpha X$, $s_j = 0$ when $j \notin \bF_\beta$ and $t_j = s_j$ when $j \in \bF_\beta$. We call $X^\bpr$ the {\bf outer collaring} of $X$ of width $r$ (see Figure \ref{outercollar}).

\begin{center}
\begin{figure}[h]
\begin{tikzpicture}

\node at (1, 1) {\scriptsize $X$};
\node at (0, 2.2) {\scriptsize $\partial^{\beta_2} X$};
\node at (2.4, -0.05) {\scriptsize $\partial^{\beta_1} X$};

\draw (0, 2) -- (0, 0) -- (2, 0);

\draw [very thick] (-2, 2) -- (-2, -2) -- (2, -2);

\draw [dotted] (-2, 0) -- (0, 0) -- (0, -2);

\node at (-1, -1) {\scriptsize $\partial^\alpha X \times [-r, 0]^2$};
\node at (1, -1) {\scriptsize $\partial^{\beta_1} X \times [-r, 0]$};
\node at (-1, 1) {\scriptsize $\partial^{\beta_2} X \times [-r, 0]$};
\end{tikzpicture}
\caption{The local picture of an outer collaring of a prestratified space for $\alpha < \beta_1, \beta_2$.}\label{outercollar}
\end{figure}
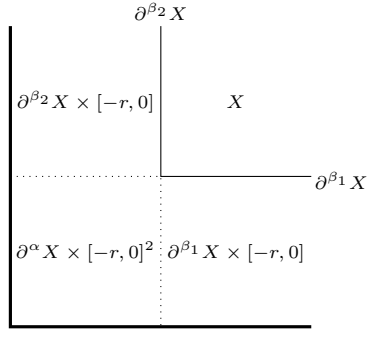
\end{center}

One has a natural identification $X = X^{\boxplus 0}$ and the natural inclusion $X^\bpr \hookrightarrow X^{\boxplus r'}$ when $r \leq r'$. Moreover, $X^\bpr$ has a structure of an $\bA$-space as follows. For each $\beta \in \bA$, define the boundary stratum
\beqn
\partial^\beta X^\bpr = \bigcup_{\alpha \leq \beta} \Big( \partial^{\alpha} X \times \{ (t_j)_{j\in \bF_\alpha} \ |\ t_j  = - r \ \forall j \in \bF_\beta \} \Big).
\eeqn
Then the corresponding stratum for $\beta \in \bA$ is given by
\beqn
X_\beta^\bpr = \partial^\beta X^\bpr \setminus \bigcup_{\alpha < \beta} \partial^\alpha  X^\bpr.
\eeqn
It is easy to see that the induced map $X^\bpr \to \bA$ is a stratification.

The following lemma shows that the outer-collaring construction respects products.

\begin{lemma}\label{outer_product}
\begin{enumerate}
    \item For any $\alpha \in \bA$, we have $(\partial^\alpha X)^{\bpr} = \partial^\alpha X^\bpr$ as $\partial^\alpha \bA$-spaces. 
    
    \item If $X_i$ are $\bA_i$-spaces for $i=1,2$, then $X_1^\bpr \times X_2^\bpr = (X_1 \times X_2)^{\bpr}$ as $\bA_1 \times \bA_2$-spaces.
    \end{enumerate}
\end{lemma}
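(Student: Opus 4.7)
The plan is to prove both assertions by unpacking the outer-collaring definition and matching constituent pieces and gluing relations.

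For part (2), the key point is that $X \mapsto X^{\bpr}$ commutes with products at the level of defining data. For $(\alpha_1,\alpha_2) \in \bA_1 \times \bA_2$, one has $\partial^{(\alpha_1,\alpha_2)}(X_1 \times X_2) = \partial^{\alpha_1} X_1 \times \partial^{\alpha_2} X_2$; using that each $\bA_i$ of the form $\bA_{pq}^{\mc P}$ has a unique maximal element, a depth-one element in the product poset lies in exactly one factor, giving $\bF_{(\alpha_1,\alpha_2)} = \bF_{\alpha_1} \sqcup \bF_{\alpha_2}$. Hence the piece $\partial^{(\alpha_1,\alpha_2)}(X_1 \times X_2) \times [-r,0]^{\bF_{(\alpha_1,\alpha_2)}}$ factors canonically as the product of the corresponding pieces for $X_1^{\bpr}$ and $X_2^{\bpr}$. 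I would then verify that the defining equivalence relation on $(X_1 \times X_2)^{\bpr}$ is the product of those on $X_i^{\bpr}$; this follows from the componentwise nature of $(\alpha_1,\alpha_2) \leq (\beta_1,\beta_2)$ and the observation that the freezing condition $s_j = 0$ for $j \notin \bF_{(\beta_1,\beta_2)}$ splits according to the two summands of the index set. The same matching yields the claimed equality of $\bA_1 \times \bA_2$-stratifications.

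For part (1), I would fix $\alpha \in \bA$ and match $\gamma$-pieces for each $\gamma \leq \alpha$. By the stratum formula, the $\gamma$-piece of $\partial^\alpha X^{\bpr}$ is $\partial^\gamma X \times \{(t_j)_{j \in \bF_\gamma} : t_j = -r \text{ for } j \in \bF_\alpha\} \cong \partial^\gamma X \times [-r,0]^{\bF_\gamma \setminus \bF_\alpha}$; on the right, using $\partial^\gamma(\partial^\alpha X) = \partial^\gamma X$ for $\gamma \leq \alpha$, the $\gamma$-piece of $(\partial^\alpha X)^{\bpr}$ is $\partial^\gamma X \times [-r,0]^{\bF_\gamma^{\partial^\alpha \bA}}$. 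For the posets $\bA_{pq}^{\mc P}$ (and, similarly, $\bA_{pp'}$) in use, elements are indexed by their ``middle letters'' and depth-one elements above $\gamma$ correspond bijectively to single middle letters of $\gamma$; under this labeling, both $\bF_\gamma \setminus \bF_\alpha$ and $\bF_\gamma^{\partial^\alpha \bA}$ are naturally in bijection with the middle letters of $\gamma$ lying outside those of $\alpha$. I would use this identification to produce a piece-by-piece homeomorphism, and then check that the gluing equivalence on the left (restricted from the one defining $X^{\bpr}$ after freezing $\bF_\alpha$-coordinates at $-r$) agrees with the gluing equivalence defining $(\partial^\alpha X)^{\bpr}$.

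The main obstacle is bookkeeping rather than any conceptual difficulty: the statement reduces to a poset-level bijection together with a compatibility check across gluings, both straightforward for the explicit posets at hand but notationally heavy. I would organize the argument around the explicit middle-letter bijection and treat the gluing compatibility as a direct verification from the definitions.
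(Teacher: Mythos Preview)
Your proposal is correct and follows the same approach as the paper: the paper's proof is the single sentence ``These statements are true by inspecting the definition,'' and what you have written is precisely a careful unpacking of that inspection. Your identification of $\bF_{(\alpha_1,\alpha_2)} \cong \bF_{\alpha_1} \sqcup \bF_{\alpha_2}$ for part (2) and of $\bF_\gamma \setminus \bF_\alpha \cong \bF_\gamma^{\partial^\alpha \bA}$ via middle letters for part (1) are the right observations, and the compatibility of gluings follows as you indicate.
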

\begin{proof}
These statements are true by inspecting the definition.
\end{proof}

\begin{lemma}\label{lem:collar-map}
Given a stratified map
\beqn
\xymatrix{ X_1 \ar[r]^f \ar[d] & X_2 \ar[d] \\
           \bA_1 \ar[r]_{\iota} & \bA_2},
           \eeqn
          there is a canonical extension 
          \beqn
          \xymatrix{ X_1^\bpr \ar[r]^{f^\bpr} \ar[d] & X_2^\bpr\ar[d] \\
                     \bA_1 \ar[r]_\iota & \bA_2}.
                     \eeqn
\end{lemma}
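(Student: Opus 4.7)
The strategy is to define $f^\bpr$ piecewise on the disjoint union $\bigsqcup_{\alpha \in \bA_1} \partial^\alpha X_1 \times [-r,0]^{\bF_\alpha}$ before passing to the quotient defining $X_1^\bpr$. Because $f$ is stratified, it restricts for each $\alpha \in \bA_1$ to a continuous map $\partial^\alpha X_1 \to \partial^{\iota(\alpha)} X_2$; because $\iota$ is a homogeneous poset map, it is both order- and depth-preserving, so every $j \in \bF_\alpha$ satisfies $\iota(j) \in \bF_{\iota(\alpha)}$, yielding a restricted map $\iota_\ast\colon \bF_\alpha \to \bF_{\iota(\alpha)}$. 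I would then define the piecewise map
\beqn
(x, (t_j)_{j \in \bF_\alpha})\ \longmapsto\ (f(x), (s_k)_{k \in \bF_{\iota(\alpha)}}), \qquad s_k := \min \bigl( \{0\} \cup \{ t_j : j \in \iota_\ast^{-1}(k) \} \bigr),
\eeqn
which reduces to $s_k = t_{\iota_\ast^{-1}(k)}$ in the typical case when $\iota_\ast$ is injective on $\bF_\alpha$.

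The main point is to check compatibility with the generating equivalence relation. Suppose $\alpha \leq \beta$ in $\bA_1$ and that $(x, (t_j)_{j \in \bF_\beta})$ on the $\beta$-piece is identified with $(x, (s_j)_{j \in \bF_\alpha})$ on the $\alpha$-piece, with $s_j = t_j$ for $j \in \bF_\beta$ and $s_j = 0$ otherwise. Write the outputs of the piecewise maps as $(f(x), (u_k)_{k \in \bF_{\iota(\beta)}})$ and $(f(x), (v_k)_{k \in \bF_{\iota(\alpha)}})$. For $k \in \bF_{\iota(\alpha)} \setminus \bF_{\iota(\beta)}$, every $j \in \iota_\ast^{-1}(k) \cap \bF_\alpha$ lies outside $\bF_\beta$ (otherwise $\iota(\beta) \leq \iota(j) = k$ would force $k \in \bF_{\iota(\beta)}$), so all the relevant $s_j = 0$ and hence $v_k = 0$, matching the equivalence relation on the $X_2^\bpr$ side. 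For $k \in \bF_{\iota(\beta)}$, the inclusion $\bF_\beta \subseteq \bF_\alpha$ together with the definition of the $s_j$ reduces the min formula to the identity $v_k = u_k$, because the extra contributions to the $\alpha$-side minimum beyond the $\beta$-side minimum are all $0$ and already absorbed into the explicit $\{0\}$ in the formula. Thus the piecewise maps descend to a well-defined map $f^\bpr$ on the quotient.

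Continuity of $f^\bpr$ follows from the continuity of the piecewise maps and the definition of the quotient topology, and $f^\bpr$ is stratified over $\iota$ because $t_j = -r$ on the input forces $s_{\iota_\ast(j)} = -r$ on the output, so $f^\bpr$ sends the stratum indexed by $\beta$ into the stratum indexed by $\iota(\beta)$. The main subtlety I anticipate is that $\iota_\ast$ can fail to be injective in general, which rules out a naive coordinate transplant; the min convention (with the extra $0$ included) is precisely what is needed to absorb the spurious $0$-entries arising from the equivalence relation and make the construction functorial in $(f,\iota)$.
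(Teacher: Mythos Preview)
Your core approach---defining $f^\bpr$ piecewise on $\bigsqcup_\alpha \partial^\alpha X_1 \times [-r,0]^{\bF_\alpha}$ and checking descent to the quotient---is exactly what the paper does. The difference is that the paper simply writes the piecewise map as $f \times \mathrm{id}$, implicitly using that in all of its applications the induced map $\iota_\ast\colon \bF_\alpha \to \bF_{\iota(\alpha)}$ is a bijection: the only poset maps $\iota$ that actually arise are the concatenation isomorphisms such as $\bA_{pr}^{\mc P}\times\bA_{rq}^{\mc P}\cong \partial^{prq}\bA_{pq}^{\mc P}$, for which this is automatic. Your min formula is an attempt to cover the general case where $\iota_\ast$ need not be injective, but this generality is never used in the paper and your formula collapses to $f\times\mathrm{id}$ in the bijective case.

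More importantly, the min device does not actually deliver the claimed generality. Your stratification check shows only that $t_j=-r$ forces $s_{\iota_\ast(j)}=-r$. For the image of the $\beta$-stratum to lie in the $\iota(\beta)$-stratum you need $s_k=-r$ for \emph{every} $k\in\bF_{\iota(\beta)}$, which requires $\iota_\ast|_{\bF_\beta}\colon\bF_\beta\to\bF_{\iota(\beta)}$ to be surjective. When $\iota_\ast$ fails to be surjective, any $k\in\bF_{\iota(\beta)}\setminus\iota_\ast(\bF_\beta)$ has $s_k=\min(\{0\})=0\neq -r$, so the image lands in a strictly higher stratum and $f^\bpr$ is not stratified over $\iota$. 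Since in the paper's posets $|\bF_\alpha|=\dep(\alpha)=|\bF_{\iota(\alpha)}|$, surjectivity of $\iota_\ast$ is equivalent to injectivity, and you are back to the bijective case the paper already handles with $f\times\mathrm{id}$.
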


\begin{proof}
Using the presentation
\beqn
X_i^\bpr = \left( \bigsqcup_{\alpha_i \in \bA_i} \partial^\alpha X_i \times [-r, 0]^{\bF_{\alpha_i}} \right)/ \sim \text{ for } i=1,2,
\eeqn
we define the restriction of $f^{\bpr}$ over $\partial^{\alpha_1} X_1 \times [-r, 0]^{\bF_{\alpha_1}} \to \partial^{\iota(\alpha_1)} X_2 \times [-r, 0]^{\bF_{\iota(\alpha_1)}}$ to be $f \times \text{id}$. It is easy to see that equivalent points are mapped to equivalent points, therefore we can define $f^{\bpr}$ to be the induced map.
\end{proof}

The outer collar construction is compatible with group actions. Let $G$ be a topological group acting on $X$ via stratum-preserving homeomorphisms. For each $r>0$, define the $G$-action on $X^\bpr$ as follows. For each $g \in G$ and $(x, (t_i)_{i \in \bF_\alpha})\in \partial^\alpha X \times [-r, 0]^{\bF_\alpha}$, define
\beqn
g \cdot (x, (t_i)_{i \in \bF_\alpha} ) = (gx, (t_i)_{i \in \bF_\alpha} ) \in \partial^\alpha X \times [-r, 0]^{\bF_\alpha} \subset X^\bpr.
\eeqn
It is easy to check that the action is well-defined, continuous, and preserves strata.

\subsubsection{Outer-collaring of flow categories and bimodules}\label{sec:outer-collar}

The outer-collaring construction described above can be naturally carried over to a system of stratified spaces included in a flow category or bimodule. Let ${\mc P}$ satisfy Setup \ref{setup:poset} and let $T^{\mc P}$ be a topological flow category over ${\mc P}$ (the $\Pi$-action in this discussion is trivial hence we omit the reference to it). Fix $r\geq 0$. All outer-collaring in this discussion will be of a fixed width $r$ and will be labelled by $+$ instead of $\bpr$. Then we can apply the outer-collaring construction to each individual space $T_{pq}$ for all $p< q$, obtaining $\bA_{pq}^{\mc P}$-spaces $T_{pq}^+$. Define ``composition maps''
\beqn
T_{pr}^+ \times T_{rq}^+ \cong \partial^{prq} T_{pq}^+ \hookrightarrow T_{pq}^+
\eeqn
as the composition
\beqn
\xymatrix{ T_{pr}^+ \times T_{rq}^+ \ar[r] & (T_{pr} \times T_{rq})^+ \ar[r] & (\partial^{prq} T_{pq})^+ \ar[r] & \partial^{prq} (T_{pq}^+)}.
\eeqn
Here the first map comes from item (2) of Lemma \ref{outer_product}, the second map is the canonical outer-collaring of the original composition map $T_{pr}\times T_{rq} \to \partial^{prq} T_{pq}$, and the third map comes from item (1) of Lemma \ref{outer_product}. It is easy to check that the composition maps are still associative. Hence we obtained a new topological flow category over ${\mc P}$, which we call the {\bf outer-collaring} of $T^{\mc P}$, denoted by 
\beqn
(T^{\mc P})^+
\eeqn
whose morphisms spaces are $T_{pq}^+$.

Similar happens when we apply outer-collaring to flow bimodules. Let $T^{\mc P}$ and $T^{{\mc P}'}$ be flow categories over ${\mc P}$ and ${\mc P}'$ respectively and let $M_{{\mc P}{\mc P}'}$ be a flow bimodule from $T^{{\mc P}}$ to $T^{{\mc P}'}$. By doing outer-collaring to each individual space $M_{pp'}$ for $p\in {\mc P}$ and $p'\in {\mc P}'$, obtaining new spaces $M_{pp'}^+$, one obtains a flow bimodule $M_{{\mc P}{\mc P}'}^+$ from $T^{{\mc P}+}$ to $T^{{\mc P}'+}$. This new flow bimodule is called the {\bf outer-collaring} of $M_{{\mc P}{\mc P}'}$.

\subsection{Derived orbifold charts with stratifications}\label{subsection33}

As mentioned before, the morphism spaces of flow categories and flow bimodules are \emph{a priori} only topological spaces. We would like to define the notion of \textbf{derived orbifold lift} for (topological) flow categories and flow bimodules.

\subsubsection{Stratified topological manifolds corners}

Before discussing smooth structures, we introduce a refined structure on stratified spaces in a special setting. It is well-known that ``manifolds with corners" are identical to manifolds with boundary in the topological category. One way to define topological $\bA$-manifolds is given by \cite[Definition 2.4]{abouzaid2022axiomatic}.  We specialize \emph{loc. cit.} which suffices for our purpose.

First we introduce some notations. The standard model for a manifold with corners is the space $[0, +\infty)^k$ for $k \geq 0$. It is stratified by subspaces where a subset of coordinates are equal to $0$. 

\begin{notation}\label{notation314}
For any finite set ${\mb F}$, let $\bA^{({\mb F})}$ be the set of all subsets of ${\mb F}$ where the partial order is induced by inclusion:
\beqn
\alpha \leq \beta \Longleftrightarrow \beta \subseteq \alpha.
\eeqn
$\bA^{({\mb F})}$ is homogeneous with a unique maximal element $\emptyset$ and its depth function being
\beqn
\dep(\alpha ) = \# \alpha.
\eeqn
We abbeviate $\bA^{(k)}:= \bA^{(\{ 1, \ldots, k\})}$. Then $[0, +\infty)^k$ is an $\bA^{(k)}$-space. 
\end{notation}





\begin{defn}\label{defn312}
Let $\bA$ be a (countable) homogeneous poset. An $\bA$-space is called a (topological) \textbf{$\bA$-manifold} if the following is true.
\begin{enumerate}

\item For any $\alpha \in \bA$, the space $\partial^\alpha X$ is a topological manifold (possibly with boundary) whose interior is $X_\alpha$. 

    \item 
 
Each $x \in X_\alpha \subset X$ has a {\bf corner chart}, i.e., a stratified open embedding 
\beqn
\xymatrix{  U_{x, \alpha} \times [0, 1)^{\dep(\alpha)} \ar[r]^-\varphi \ar[d] & X \ar[d] \\
             \{ \alpha \} \times \bA^{(\dep(\alpha))} \ar[r] &  \bA }
 \eeqn
 where $U_{x, \alpha}$ is an open subset of $x$ in $X_\alpha$ (with the trivial stratification indexed by the singleton $\{\alpha\}$). 
 
\item For each corner chart $\varphi: U_x \times [0, 1)^{\dep(\alpha)} \to X$ near $x \in X_\alpha$, one has 
\beqn
\alpha \leq \beta \Longrightarrow {\rm Im}\varphi \cap X_\beta \neq \emptyset.
\eeqn
In other words, corner charts touch all strata above $\alpha$.

\end{enumerate}
\end{defn}

\begin{rem}
Conditions of Definition \ref{defn312} impose more restrictions on the homogeneous poset $\bA$. For example, for each $\alpha \in \bA$, the set $\{ \beta\in \bA\ |\ \alpha \leq \beta \}$ is isomorphic to $\bA^{(\dep(\alpha))}$ and hence contains a unique maximal element. Moreover, each $\alpha \in \bA$ has a definite set of elements $\alpha \leq \beta$ with $\dep(\beta)=1$ whose cardinality is equal to its depth. The posets $\bA_{pq}^{\mc P}$ and $\bA_{pp'}$ appeared in the disucssions of flow categories and flow bimodules all satisfy these additional restrictions. 
\end{rem}

\begin{defn}
Let $G$ be a topological group acting on an $\bA$-manifold $X$ viewing $X$ as an $\bA$-space. We say that $X$ is an $\bA$-stratified $G$-manifold with corners if each $x \in X_\alpha$ has a $G$-equivariant corner chart, i.e., a corner chart $U_{x, \alpha} \times [0,1)^{\dep(\alpha)} \to X$ which is a $G$-equivariant map. Here $U_{x, \alpha} \subset U_\alpha$ is a $G$-invariant open subset and $G$ acts trivially on the factor $[0, 1)^{\dep(\alpha)}$.
\end{defn}

\subsubsection{Stratified smooth manifolds with corners}

Next we recall some notions related to smooth manifolds with corners. We use the following notion of smoothness for functions defined on arbitrary subsets of Euclidean spaces. Let $A \subseteq {\mb R}^n$ be a subset. A function $f: A \to {\mb R}$ is called smooth if for each $a \in A$ there is an open neighborhood $U_a \subset {\mb R}^n$ such that $f|_{A \cap U_a}$ can be extended to a smooth function $f_a: U_a \to {\mb R}$. Then one can define the notion of smooth maps $f: A \to B$ for subsets $A \subseteq {\mb R}^n$ and $B \subseteq {\mb R}^m$. Then compositions of smooth maps are still smooth. A smooth map $f: A \to B$ is a diffeomorphism if it admits a smooth inverse. 

The concept of smooth manifolds with corners is defined as follows. Let $M$ be a topological space. A chart (of an $n$-dimensional smooth manifold with corners) consists of an open subset $U\subset M$ and a homeomorphism $\varphi: U \to [0, +\infty)^n$ onto an open subset. Two charts $\varphi_i:U_i \to [0, +\infty)^n$, $i = 1, 2$, are compatible if $\varphi_1\circ \varphi_2^{-1}: \varphi_2(U_1 \cap U_2) \to \varphi_1(U_1 \cap U_2)$ is a diffeomorphism. A structure of smooth $n$-dimensional manifold with corners is a maximal compatible atlas on $M$. If $M$ is equipped with such a structure, we say that $M$ is a smooth $n$-dimensional manifold with corners. Over smooth manifolds with corners, one can define the notions of smooth vector bundles, smooth connections, and smooth Riemannian metrics, etc.

The concept of manifold with faces is one which can exclude certain pathological scenarios. Let $M$ be a smooth $n$-dimensional manifold with corners. For each point $x \in M$ and each chart $\varphi: U \to [0, +\infty)^n$ defined over an open neighborhood of $x$, the number $\dep (x)$ of coordinates of $\varphi(x)$ which are zero is independent of the choice of charts. Given $k \leq n$, the subset 
\beqn
\partial^k M:= \{ x \in M\ |\ \dep(x) = k \}\subset M
\eeqn
is naturally an $n-k$-dimensional smooth manifold. A {\bf connected face} of $M$ is a connected component of the set $\{ x\in M\ |\ \dep (x) = 1\}$. $M$ is called a {\bf smooth manifold with faces} if every $x \in M$ belongs to the closure of exactly $\dep(x)$ different connected faces. 

As we are treating various manifolds with faces which have more refined structure of stratifications, we would like to introduce the following concepts. 

\begin{defn}\label{defn31}
Let $\bA$ be a countable homogeneous poset.
\begin{enumerate}

\item A \textbf{smooth $\bA$-manifold} is a smooth manifold with faces $M$ which is also a topological $\bA$-manifold such that for each $\alpha \in \bA$, $M_\alpha$ is an open and closed subset of $\partial^{\dep(\alpha)} M$ 
and such that each $x \in M$ has a smooth corner chart.

\item Let $G$ be a compact Lie group. A smooth $G$-action on a smooth $\bA$-manifold $M$ is a $G$-action on the underlying topological $\bA$-manifold of $M$ which acts on $M$ by smooth maps.
\end{enumerate}
\end{defn}

Note that any smooth $\bA$-manifold $M$ has a natural structure of a topological $\bA$-manifold by forgetting the smooth structure.

\begin{defn}\label{defn:strong-sub}
A smooth map $f$ from a smooth $\bA$-manifold $M$ to a smooth manifold without boundary $N$ is called \emph{stratified submersive} if for any (open) stratum $M_\alpha$, the restriction 
\beqn
f|_{M_\alpha}: M_\alpha \rightarrow N
\eeqn
is a smooth submersion.
\end{defn}

\begin{lemma}\label{lem:regular-level}
Suppose $f: M \to N$ is stratified submersive. Then for any $y \in N$, the space $f^{-1}(y)$ is a smooth $\bA$-manifold. Moreover, if $G$ is a compact Lie group acting on $M$ and $f$ is $G$-invariant, then $f^{-1}(y)$ has a smooth $G$-action. 
\end{lemma}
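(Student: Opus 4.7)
The plan is to construct smooth corner charts on $f^{-1}(y)$ compatible with the inherited $\bA$-stratification, by combining the stratified submersion hypothesis with the implicit function theorem applied in ambient smooth corner charts of $M$.

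First, I would promote stratified submersion to an ambient submersion near each preimage point. Fix $x \in f^{-1}(y) \cap M_\alpha$ and a smooth corner chart $\varphi: U_{x,\alpha} \times [0,1)^{\dep(\alpha)} \to M$ centered at $x$. By the definition of smoothness at a corner, $f \circ \varphi$ extends to a smooth map $\hat f: U_{x,\alpha} \times (-\epsilon, 1)^{\dep(\alpha)} \to N$ on an ambient open neighborhood. The hypothesis that $f|_{M_\alpha}$ is a submersion means that the restriction of $d\hat f$ to $T_xU_{x,\alpha} \times \{0\}$ is surjective onto $T_yN$; since a submatrix of full row rank forces the full differential to have full row rank, $\hat f$ is a submersion at $(x,0)$, hence on an ambient neighborhood by openness.

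Next, I would parameterize $f^{-1}(y)$ near $x$ using the implicit function theorem. Pick local coordinates $(v,w)$ on $U_{x,\alpha}$ centered at $x$, with $v \in \mb{R}^{\dim N}$ chosen so that $\partial \hat f/\partial v$ is invertible at $(x,0)$, and identify a neighborhood of $y$ in $N$ with $\mb{R}^{\dim N}$. Solving $\hat f(v,w,t)=y$ yields a smooth function $V(w,t)$ on a neighborhood of $0$ in $\mb{R}^{\dim M_\alpha - \dim N} \times (-\epsilon, 1)^{\dep(\alpha)}$, and $\Phi(w,t) := (V(w,t), w, t)$ parameterizes $\hat f^{-1}(y)$ diffeomorphically onto its image. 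Restricting to $t \in [0,1)^{\dep(\alpha)}$ gives a stratification-preserving homeomorphism onto a neighborhood of $x$ in $f^{-1}(y)$, because the corner stratification is cut out by vanishing of the coordinates $t_i$, which $\Phi$ preserves.

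I would then declare these maps $\Phi$ to form a smooth corner atlas on $f^{-1}(y)$. Compatibility between any two such charts is automatic, since the transition is induced by the corresponding smooth transition between ambient corner charts on $M$ together with the implicit function parameterizations. The induced poset map is the evident one, the dimension of each stratum $f^{-1}(y) \cap M_\beta$ equals $\dim M_\beta - \dim N$, and the manifold-with-faces condition is inherited from $M$ since a single chart $\varphi$ already meets every face adjacent to $M_\alpha$ with the correct multiplicities, confirming that $f^{-1}(y)$ is a smooth $\bA$-manifold.

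For the equivariant statement, I would repeat the construction using $G$-equivariant corner charts around each $G$-orbit in $f^{-1}(y)$, which exist by the compact group slice theorem applied stratumwise (with averaging to produce $G$-invariant Riemannian data). One can arrange the coordinates $(v,w)$ to split $G$-equivariantly in a slice to an orbit, and the $G$-invariance of $f$ then forces the implicit function $V$ to be $G$-equivariant in the slice directions; the resulting atlas produces a smooth $G$-action on $f^{-1}(y)$. The main technical nuisance is precisely this equivariant coordinate setup at orbits with nontrivial isotropy, where the naive pointwise implicit function theorem must be upgraded to an equivariant version via the slice theorem; this is standard for smooth actions of compact Lie groups, so no genuine obstacle arises.
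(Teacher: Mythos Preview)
Your non-equivariant argument is correct and matches the paper's approach: both apply the implicit function theorem in a smooth corner chart, using that surjectivity of $df$ along $M_\alpha$ forces surjectivity of the ambient differential, and then inherit the face structure and $\bA$-stratification from $M$.

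For the equivariant statement you work much harder than necessary. The paper's Definition~\ref{defn31}(2) only asks that a smooth $G$-action on a smooth $\bA$-manifold is a stratum-preserving $G$-action in which each $g\in G$ acts by a smooth map; it does \emph{not} require equivariant corner charts. Since $f$ is $G$-invariant, the smooth $G$-action on $M$ restricts to $f^{-1}(y)$, preserves each stratum $f^{-1}(y)\cap M_\alpha$, and each $g$ acts smoothly on $f^{-1}(y)$ simply because $f^{-1}(y)$ is a smooth submanifold-with-corners of $M$. So the paper dispatches this in one line, and your appeal to the slice theorem and an equivariant implicit function theorem, while not incorrect, is unnecessary.
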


\begin{proof}
By Definition \ref{defn:strong-sub}, the subsets $f^{-1}(y) \cap M_\alpha$ are smooth manifolds. Using the collars near each stratum, we see that $f^{-1}(y)$ is a smooth manifold with corners. We now verify that $f^{-1}(y)$ is a manifold with faces, namely, each $x\in f^{-1}(y)$ is contained in exactly $\dep(x)$ connected faces. Indeed, viewing $x$ as a point in $M$, by definition, $x$ is contained in exactly $\dep(x)$ connected faces of $M$. The implicit function theorem implies that $x$ is contained locally in exactly $\dep(x)$ faces and these faces are still disconnected globally in $f^{-1}(y)$. Hence $f^{-1}(y)$ is a smooth manifold with faces. 

Next, we see that the decomposition
\beqn
f^{-1}(y) = \bigsqcup_{\alpha \in \bA} f^{-1}(y) \cap M_\alpha
\eeqn
induces an $\bA$-space structure on $f^{-1}(y)$. Conditions of Definition \ref{defn312} can then be verified using the implicit function theorem. Transversality implies that $f^{-1}(y)$ has smooth corner charts. Hence $f^{-1}(y)$ is a smooth $\bA$-manifold. 
The equivariant case follows immediately from the definition.
\end{proof}

\subsubsection{Stratified smooth orbifolds with corners and derived orbifold charts}

Smooth orbifolds with corners can be defined as in Section \ref{subsec:orbi-setup}, with the modification that for an $n$-dimensional orbifold with corners ${\mc U}$, each local chart $C = (U, \Gammait, \psi)$ has the following property: consider ${\mb R}^{n-k} \times {\mb R}_{\geq 0}^k$ where $\Gammait$ acts linearly on the first factor and trivially on the second factor. Then $U \subset {\mb R}^{n-k} \times {\mb R}_{\geq 0}^k$ is a $\Gammait$-invariant open subset and $\psi: U / \Gammait \to {\mc U}$ is a homeomorphism onto an open subset of ${\mc U}$. Just as the manifold case, one can define the notion of smooth orbifold with faces and the notion of smooth $\bA$-orbifolds for any homogeneous poset $\bA$. We omit the discussions of bundles, sections, and embeddings in the stratified sense. Note that if $\alpha \in \bA$ and ${\mc U}$ is an $\bA$-orbifold, the $\partial^\alpha \bA$-space $\partial^\alpha {\mc U}$ is naturally a $\partial^\alpha \bA$-orbifold.

Now we can generalize the definition of derived orbifold charts (see Definition \ref{defn21}) to the case when the domains have corners or faces. 

\begin{defn}[Stratified derived orbifold charts]\hfill
\begin{enumerate}
    \item An $\bA$-stratified derived orbifold chart is a triple $({\mc U}, {\mc E}, {\mc S})$, where ${\mc U}$ is a smooth $\bA$-orbifold, ${\mc E} \to {\mc U}$ is an orbifold vector bundle, and ${\mc S}: {\mc U} \to {\mc E}$ is a continuous section. We say that $({\mc U}, {\mc E}, {\mc S})$ is compact if ${\mc S}^{-1}(0)$ is compact.
    
    \item Given $\bA_i$-stratified derived orbifold charts $C_i = ({\mc U}_i, {\mc E}_i, {\mc S}_i)$, $i = 1, 2$, a chart embedding ${\bm \iota}_{21}: C_1\hookrightarrow C_2$ consists of a smooth stratified embedding 
    \beqn
    \xymatrix{ {\mc U}_1 \ar[r]^{\iota_{21}} \ar[d] &  {\mc U}_2 \ar[d] \\
            \bA_1 \ar[r] & \bA_2 }
            \eeqn
    and a bundle embedding $\widehat \iota_{21}: {\mc E}_1 \to {\mc E}_2$ covering $\iota_{21}$ which intertwines with the sections ${\mc S}_1$ and ${\mc S}_2$.

    \item Given an $\bA$-space $X$, a \textbf{derived orbifold presentation} (or called a {\bf D-chart presentation}) of $X$ is a quadruple $({\mc U}, {\mc E}, {\mc S}, \psi)$ consisting of an $\bA$-stratified derived orbifold chart $({\mc U}, {\mc E}, {\mc S})$ with a homeomorphism of $\bA$-spaces $\psi: {\mc S}^{-1}(0) \to X$.
\end{enumerate}
\end{defn}

The following set of notations are frequently used. If $({\mc U}, {\mc E}, {\mc S}, \psi)$ is a derived orbifold presentation of the $\bA$-space $X$, for any stratum $\alpha$, the datum $( \partial^\alpha {\mc U}, {\mc E}|_{\partial^\alpha{\mc U}},  {\mc S}|_{\partial^\alpha {\mc U}}, \psi|_{{\mc S}^{-1}(0)\cap \partial^\alpha {\mc U}} )$ is a derived orbifold presentation of $\partial^\alpha X$. In practice, we might have a derived orbifold presentation of the boundary stratum $\partial^\alpha X$ different from the restriction of a derived orbifold presentation of $X$. To carry out our construction of perturbations, we would like to relate the possibly different derived orbifold presentations of the boundary strata. This motivates the following definition, which should be thought of as a special case of the \emph{Kuranishi diagram} from \cite[Definition 6.6]{abouzaid2022axiomatic}. Some notions appear in the following were defined in Definition \ref{stabilization}.

\begin{defn}\label{defn:d-chart-pre}
A \textbf{system of D-chart presentation} of an $\bA$-space $X$ consists of the following objects.
\begin{enumerate}
\item A collection of derived orbifold presentations 
\beqn
\big\{ C_\alpha= ({\mc U}_\alpha, {\mc E}_\alpha, {\mc S}_\alpha, \psi_\alpha)  \big\}_{\alpha \in \bA}
\eeqn
of the collection of boundary strata $\{ \partial^\alpha X \}_{\alpha \in \bA}$ as stratified spaces. 

\item A collection of chart embeddings 
\beqn
\big\{ {\bm \iota}_{\beta\alpha}: C_\alpha \to \partial^\alpha C_\beta  \big\}_{\alpha\leq \beta}.
\eeqn
\end{enumerate}
These objects need to satisfy the following conditions.

\begin{enumerate}

\item[(A)] The collection of chart embeddings satisfy the cocycle condition. More precisely, for any triple of strata $\alpha \leq \beta \leq \gamma$, there holds
\beqn
\iota_{\gamma\beta} \circ \iota_{\beta\alpha} = \iota_{\gamma\alpha}.
\eeqn

\item[(B)] Adjacent strata differ by a stabilization. More precisely, for any pair of strata $\alpha \leq \beta$, there exist an orbifold vector bundle ${\mc F}_{\beta \alpha} \to {\mc U}_\alpha$ and a germ equivalence 
\beqn
 {\rm Stab}_{{\mc F}_{\beta\alpha}}( C_\alpha) \simeq \partial^\alpha C_\beta
\eeqn


    
\end{enumerate}
\end{defn}

In certain special cases the chart embeddings between different strata are open. For example in the case of Morse flow category and the pearly bimodule. We introduce the following notion.

\begin{defn}\label{defn:single-layer}
A system of derived orbifold presentations of an $\bA$-space $X$ is called {\bf single-layered} if all chart embeddings ${\bm \iota}_{\beta\alpha}: C_\alpha \to \partial^\alpha C_\beta$ are open embeddings, or equivalently, one can take ${\mc F}_{\beta\alpha} = 0$ for all $\alpha \leq \beta$.
\end{defn}



    

\subsubsection{Derived orbifold lifts of flow categories and bimodules}

Now we can go back to the setup in Section \ref{subsec:top-flow-cat}. The following definition imposes certain regular structures on morphism spaces of flow categories and  flow bimodules.

\begin{defn}\label{defn:flow-lift}
Let $T^{\mc P}$ be a flow category over the poset ${\mc P}$. A \textbf{derived orbifold lift} of $T^{\mc P}$, denoted by ${\mf D}^{\mc P}$, consists of the following objects. 
\begin{enumerate}
    \item A collection
\beqn
\big\{ C_{pq} = ({\mc U}_{pq}, {\mc E}_{pq}, {\mc S}_{pq}, \psi_{pq}) \big\}_{p\leq q}
\eeqn
of derived orbifold presentations of the $\bA^{\mc P}_{pq}$-space $T_{pq}$ such that for each connected component ${\mc U}_{pq, j}\subset {\mc U}_{pq}$, one has
\beq\label{index_formula_1}
{\rm dim}_{\mb R} {\mc U}_{pq, j} - {\rm rank}_{\mb R} {\mc E}_{pq}|_{{\mc U}_{pq, j}} \equiv {\rm ind}^{\mc P}(p) - {\rm ind}^{\mc P}(q) - 1\ {\rm mod}\ 2N.
\eeq

\item A collection of chart embeddings 
\beqn
\big\{ {\bm \iota}_{prq}: C_{pr}\times C_{rq} \hookrightarrow \partial^{prq} C_{pq} \big\}_{p \leq r \leq q}
\eeqn
(with the underlying poset identification $\bA^{\mc P}_{pr}\times \bA^{\mc P}_{rq} \cong \partial^{prq} \bA^{\mc P}_{pq}$). In particular, if $\iota_{prq}: {\mc U}_{pr}\times {\mc U}_{rq} \hookrightarrow {\mc U}_{pq}$ is the associated domain embedding and $\widehat \iota_{prq}: {\mc E}_{pr} \boxplus {\mc E}_{rq} \hookrightarrow {\mc E}_{pq}$ is the associated bundle embedding, then the following diagram commutes.
\beq\label{comm_diag_32}
\vcenter{ \xymatrix{  {\mc E}_{pr}\boxplus {\mc E}_{rq} \ar[rr]^{\widehat \iota_{prq}} \ar[d] & & {\mc E}_{pq}\ar[d]  \\
            {\mc U}_{pr}\times {\mc U}_{rq} \ar[rr]_{\iota_{prq}}  \ar@/^1.0pc/@[][u]^{{\mc S}_{pr}\times {\mc S}_{rq}} & & {\mc U}_{pq} \ar@/_1.0pc/@[][u]_{{\mc S}_{pq}}}   }
\eeq
\end{enumerate}
These objects need to satisfy the following conditions. 
\begin{enumerate}

\item[(A)] For $p=q$, the space ${\mc U}_{pp}$ is a singleton with trivial isotropy and ${\mc E}_{pp} = \{0\}$ is the trivial bundle.

\item[(B)] The chart embeddings satisfy the associativity. More precisely, whenever $p< r < s < q$, the following diagram commutes.
\beqn
\xymatrix{ &   C_{pr}\times C_{rs} \times C_{rq}  \ar[rd]^-{{\rm id}\times {\bm \iota}_{rsq}} \ar[ld]_-{{\bm \iota}_{prs}\times {\rm id}}  &  \\
            \partial^{prs} C_{ps} \times C_{rq}  \ar[rd]_{{\bm \iota}_{psq}} & &   C_{pr}\times \partial^{rsq} C_{rq} \ar[ld]^{{\bm \iota}_{prq}} \\
            &     \partial^{prsq} C_{pq} & }
\eeqn

\item[(C)] For each $pq$ and all $\alpha = pr_1 \cdots r_l q \in \bA_{pq}^{\mc P}$, define 
\beqn
C_\alpha:= C_{pr_1}\times \cdots \times C_{r_l q}.
\eeqn
Then condition (B) implies that for each pair of elements $\alpha \leq \beta$ in $\bA_{pq}^{\mc P}$, there is a well-defined chart embedding 
\beqn
{\bm \iota}_{\beta\alpha}: C_\alpha \hookrightarrow \partial^\alpha C_\beta.
\eeqn
Then we require that the collection $\{C_\alpha\}_{\alpha \in  \bA^{\mc P}_{pq}}$ of derived orbifold presentations and the collection $\{ {\bm \iota}_{\beta\alpha}\}_{\alpha \leq \beta}$ of chart embeddings constitute a system derived orbifold presentations of $T_{pq}$.

\item[(D)] The strict $\Pi$-equivariance condition: for any $a \in \Pi$ and $p<q$, there is an isomorphism between derived orbifold charts (in the obvious sense) $\tilde{\phi}_a : C_{pq} \to C_{a \cdot p \ a \cdot q}$ satisfying $\tilde{\phi}_{a_1 \cdot a_2} = \tilde{\phi}_{a_1} \circ \tilde{\phi}_{a_2}$, and $\tilde{\phi}_{a}$ restricts to $\phi_a$ along the zero locus ${\mc S}_{pq}^{-1}(0)$ to the map $\phi_a$ from Definition \ref{def:flow-cat}. Furthermore, $\tilde{\phi}_0 = {\rm Id}$ for $a = 0 \in \Pi$ should be the identity map.
\end{enumerate}
\end{defn}

Now consider derived orbifold lifts of bimodules.

\begin{defn}\label{defn:bimod-lift}
Let $M$ be a flow bimodule from a flow category $T^{\mc P}$ to $T^{{\mc P}'}$ as in Definition \ref{defn:flow-bimod}. Suppose $T^{\mc P}$ resp. $T^{{\mc P}'}$ is endowed with a derived orbifold lift 
\beqn
{\mf D}^{\mc P} = \Big( \big\{ C^{\mc P}_{pq} = ({\mc U}^{\mc P}_{pq}, {\mc E}^{\mc P}_{pq}, {\mc S}^{\mc P}_{pq}, \psi^{\mc P}_{pq}) \big\}_{p< q}, \big\{{\bm \iota}_{\beta \alpha}^{\mc P} \big\}_{\alpha \leq \beta} \Big) \text{    resp. } 
\eeqn
\beqn
{\mf D}^{{\mc P}'} = \Big( \big\{ C^{{\mc P}'}_{p' q'} = ({\mc U}^{{\mc P}'}_{p' q'}, {\mc E}^{{\mc P}'}_{p' q'}, {\mc S}^{{\mc P}'}_{p' q'}, \psi^{{\mc P}'}_{p' q'}) \big\}_{p' < q'}, \big\{{\bm \iota}_{\beta' \alpha'}^{{\mc P}'}\big\}_{\alpha' \leq \beta'}\Big).
\eeqn
A \textbf{derived orbifold lift} of $M$ \textbf{compatible with (or extending)} ${\mf D}^{\mc P}$ and ${\mf D}^{{\mc P}'}$ consists of the following objects.
\begin{enumerate}
    \item A collection
    \beqn
    \big\{ C^{M}_{pp'} = ({\mc U}^{M}_{pp'}, {\mc E}^{M}_{pp'}, {\mc S}^{M}_{pp'}, \psi^{M}_{pp'}) \big\}_{p\in {\mc P},p'\in {\mc P}'}
    \eeqn
    of derived orbifold presentations of $M_{pp'}$ for $M_{pp'}$ as an $\bA_{pp'}$-space such that for each connected component ${\mc U}_{pp', j}^M \subset {\mc U}_{pp'}^M$, one has 
    \beq\label{index_formula_2}
    {\rm dim}_{\mb R} {\mc U}_{pp', j}^M - {\rm rank}_{\mb R} {\mc E}_{pp'}^M|_{{\mc U}_{pp', j}^M} \equiv {\rm ind}^{\mc P}(p) - {\rm ind}^{{\mc P}'}(p')\ {\rm mod}\ 2N.
    \eeq
    
    \item Given $p \in {\mc P}$ and $p' \in {{\mc P}'}$ for $M_{pp'} \neq \emptyset$, for any $p \leq q$, a chart embedding
    $$ {\bm \iota}^{M}_{pqp'}: C^{\mc P}_{pq} \times C^{M}_{qp'} \hookrightarrow \partial^{pqp'}C^{M}_{pp'}, $$
    and for any $q' \leq p'$, a chart embedding
    $$ {\bm \iota}^{M}_{p q' p'}: C^{M}_{p q'} \times C^{{\mc P}'}_{q' p'} \hookrightarrow \partial^{pq' p'}C^{M}_{pp'}. $$
    The precise meaning can be spelled out as in Equation \eqref{comm_diag_32}.
\end{enumerate}
These objects are required to satisfy the following conditions.
\begin{enumerate}
    \item[(A)] For $p=q$, the chart embedding ${\bm \iota}_{pqp'}^{M}$ is the identity map after identifying $C^{\mc P}_{pq}$ with the trivial chart for the singleton. Similarly, for $q'=p'$, the chart embedding ${\bm \iota}_{p q' p'}^{M}$ is also the identity map.
    \item[(B)] The chart embeddings satisfy the associativity. Namely, given $M_{pp'} \neq \emptyset$, the following three diagrams commute if the relevant topological spaces are nonempty.
    
    For $p < q_1 < q_2$, we have
    \beqn
\xymatrix{ &   C^{\mc P}_{p q_1}\times C^{\mc P}_{q_1 q_2} \times C^M_{q_2 p'}  \ar[rd]^-{{\rm id}\times {\bm \iota}^{M}_{q_1 q_2 p'}} \ar[ld]_-{{\bm \iota}^{\mc P}_{p q_1 q_2}\times {\rm id}}  &  \\
            \partial^{pq_1 q_2} C^{\mc P}_{pq_2} \times C^M_{q_2 p'}  \ar[rd]_{{\bm \iota}^M_{p q_2 p'}} & &   C^{\mc P}_{p q_1}\times \partial^{q_1 q_2 p'} C^M_{q_1 p'} \ar[ld]^{{\bm \iota}^M_{p q_1 p'}} \\
            &     \partial^{p q_1 q_2 p'} C^M_{pp'}. & }
\eeqn
   For $q_2' < q_1' < p'$, we have
       \beqn
\xymatrix{ &   C^{M}_{p q_2'}\times C^{{\mc P}'}_{q_2' q_1'} \times C^{{\mc P}'}_{q_1' p'}  \ar[rd]^-{{\rm id}\times {\bm \iota}^{{\mc P}'}_{q_1' q_2' p'}} \ar[ld]_-{{\bm \iota}^{M}_{p q_2' q_1'}\times {\rm id}}  &  \\
            \partial^{pq_2' q_1'} C^{M}_{p q_1'} \times C^{{\mc P}'}_{q_1' p'}  \ar[rd]_{{\bm \iota}^M_{p q_1' p'}} & &   C^{M}_{pq_2'}\times \partial^{q_2' q_1' p'} C^{{\mc P}'}_{q_2' p'} \ar[ld]^{{\bm \iota}^M_{p q_2' p'}} \\
            &     \partial^{p q_2' q_1' p'} C^M_{pp'}. & }
\eeqn
Finally, for $p<q$ and $q' < p'$, we have
       \beqn
\xymatrix{ &   C^{{\mc P}}_{p q}\times C^M_{q q'} \times C^{{\mc P}'}_{q' p'}  \ar[rd]^-{{\rm id}\times {\bm \iota}^M_{qq' p'}} \ar[ld]_-{{\bm \iota}^{M}_{p q q'}\times {\rm id}}  &  \\
            \partial^{pq q'} C^{M}_{p q'} \times C^{{\mc P}'}_{q' p'}  \ar[rd]_{{\bm \iota}^M_{p q' p'}} & &   C^{{\mc P}}_{pq}\times \partial^{qq' p'} C^M_{q p'} \ar[ld]^{{\bm \iota}^M_{p q p'}} \\
            &     \partial^{p q q' p'} C^M_{pp'}. & }
\eeqn

\item[(C)] For each $p, p'$ and all $\alpha = pr_1 \cdots r_l r_l' \cdots r_1' p'$, define
\beqn
C_{\alpha}^M = C^{\mc P}_{pr_1} \times \cdots \times C^M_{r_l r_l'} \times \cdots \times C^{{\mc P}'}_{r_1' p'}.
\eeqn
As in Definition \ref{defn:flow-lift}, for each pair $\alpha \leq \beta$ in $\bA_{pp'}$, there is a well-defined chart embedding
$$ {\bm \iota}_{\beta \alpha}^{M}: C_{\alpha}^M \hookrightarrow C_{\beta}^M. $$
It is required that the derived orbifold presentations $\{ C_{\alpha}^M \}_{\alpha \in \bA_{pp'}}$ and the collection of chart embeddings $\{ {\bm \iota}_{\beta \alpha} \}_{\alpha \leq \beta}$ constitute a system of derived orbifold presentations of $M_{pp'}$.

\item[(D)] The strict $\Pi$-equivariance condition: for any $a \in \Pi$ and $M_{pp'} \neq \emptyset$, there is an isomorphism between derived orbifold charts $\tilde{\phi}^{M}_{a}: C^M_{pp'} \to C^M_{a \cdot p \ a \cdot p'}$ satisfying $\tilde{\phi}^{M}_{a_1 \cdot a_2} = \tilde{\phi}^{M}_{a_1} \circ \tilde{\phi}^{M}_{a_2}$, and $\tilde{\phi}^{M}_{a}$ restricts to the map $\phi^M_a$ from Definition \ref{defn:flow-bimod} along the zero locus $({\mc S}^{M}_{pp'})^{-1}(0)$. Moreover, $\tilde{\phi}^{M}_{id}$ for $a = id \in \Pi$ is the identity map. Moreover, the left and right actions of the charts from $T^{\mc P}$ and $T^{\mc P'}$ on the charts of $M$ should be $\Pi$-equivariant.
\end{enumerate}
\end{defn}

\subsection{Additional structures}\label{subsection34}

In order to construct a coherent system of FOP perturbations, we need more structures on the derived orbifold lifts. They are the {\it collar structure}, {\it scaffolding}, and {\it straightening}.

\subsubsection{Collar structure}

\begin{defn}\label{defn:collar}
\begin{enumerate}
    \item A {\bf collar structure} on an $\bA$-stratified derived orbifold chart $({\mc U}, {\mc E}, {\mc S})$ is a collection of open chart embeddings
\beqn
\vcenter{\xymatrix{ {\mc E}|_{\partial^\alpha {\mc U}} \times [0, \epsilon)^{\bF_\alpha} \ar[d] \ar[rr]^-{\widehat \theta_\alpha^{\rm collar}} & & {\mc E} \ar[d] \\
\partial^\alpha {\mc U} \times [0, \epsilon)^{\bF_\alpha} \ar[rr]_-{\theta_\alpha^{\rm collar}} & & {\mc U} } }\ \ \forall \alpha \in \bA
\eeqn
(where $\bF_\alpha \subset \bA$ is the set of adjacent faces to $\alpha$, see Definition \ref{defn_adjacent_face} and Notation \ref{notation314}) covering the poset map 
$$
\partial^{\alpha}\bA \times \bA^{(|{\mb F}_{\alpha}|)} \to \bA
$$
satisfying the following conditions. 
\begin{enumerate}

    \item For a pair of stratum $\alpha \leq \beta$, $v \in {\mc E}|_{\partial^\alpha {\mc U}}$, $t_\beta \in [0, \epsilon)^{\bF_\beta}$ and $t_{\beta\alpha}\in [0, \epsilon)^{\bF_\alpha \setminus \bF_\beta}$ (so that $(t_\beta, t_{\beta\alpha})$ is identified with an element of $[0, \epsilon)^{\bF_\alpha}$), one has 
    \beqn
    \widehat\theta_\alpha^{\rm collar}(v, t_\beta, t_{\beta\alpha}) = \widehat\theta_\beta^{\rm collar}(\widehat\theta_\alpha^{\rm collar}(v, 0, t_{\beta\alpha}), t_\beta).
    \eeqn
    
    \item For each $\alpha \in \bA$, $x\in \partial^\alpha {\mc U}$ and $t_\alpha \in [0, \epsilon)^{\bF_\alpha}$, one has 
    \beqn
    \widehat\theta_\alpha^{\rm collar} ({\mc S}(x), t_\alpha) = {\mc S}( \theta_\alpha^{\rm collar} ( x, t_\alpha) ).
    \eeqn
\end{enumerate}

\item A collar structure on a system of D-chart presentations $((C_\alpha)_{\alpha \in \bA}, ({\bm \iota}_{\beta\alpha})_{\alpha \leq \beta})$ of an $\bA$-stratified space $X$ (see Definition \ref{defn:d-chart-pre}) consists of collar structures on all derived orbifold charts $C_\alpha = ({\mc U}_\alpha, {\mc E}_\alpha, {\mc S}_\alpha)$ for all $\alpha \in \bA$. Namely, for each $\beta\in \bA$, a collection of open chart embedding 
\beqn
\xymatrix{ {\mc E}_\beta|_{\partial^\alpha {\mc U}_\beta} \times [0, \epsilon)^{\bF_\alpha \setminus \bF_\beta} \ar[rr]^-{\widehat\theta_{\beta\alpha}^{\rm collar}} \ar[d] & & {\mc E}_\beta \ar[d] \\
           \partial^\alpha {\mc U}_\beta \times [0, \epsilon)^{\bF_\alpha \setminus \bF_\beta} \ar[rr]_-{\theta_{\beta\alpha}^{\rm collar}} & & {\mc U}_\beta }
\eeqn
for all $\alpha < \beta$ which satisfy conditions in item (1) above. We require the following condition is satisfied. For each triple $\alpha < \beta < \gamma$
\beqn
\widehat \iota_{\gamma\beta} \Big( \widehat \theta_{\beta\alpha}^{\rm collar}( v_\beta, t_{\beta\alpha}) \Big) = \widehat \theta_{\gamma\alpha}^{\rm collar} \Big( \widehat\iota_{\gamma\beta}( v_\beta), t_{\beta\alpha}, 0_{\gamma\beta} \Big)
\eeqn
for all $v_\beta \in {\mc E}_\beta|_{\partial^\alpha {\mc U}_\beta}$ and $t_{\beta\alpha}\in [0, \epsilon)^{\bF_\alpha \setminus \bF_\beta}$. Here $0_{\gamma\beta} = (0, \ldots, 0)\in [0, \epsilon)^{\bF_\beta\setminus \bF_\gamma}$ and hence $(t_{\beta\alpha}, 0_{\gamma\beta})$ is regarded as a point of $[0, \epsilon)^{\bF_\alpha \setminus \bF_\gamma}$. 

\item A collar structure on a derived orbifold lift of a flow category (using the same notations as in Definition \ref{defn:flow-lift}), is a collection of collar structures on the chart $C_{pq}$ such that the following is satisfied. 
\begin{enumerate}

\item For each $\alpha = pr_1 \cdots r_l q \in \bA_{pq}^{\mc P}$, the collar structures on $C_{pr_1}, \ldots, C_{r_l q}$ define a product collar structure on the product chart $C_\alpha$. Then the collection of collar structures on $C_\alpha$ is a collar structure on the derived orbifold presentation of $T_{pq}$ as defined in above item (2).

\item The collar structure is strictly $\Pi$-equivariant. 
\end{enumerate}

\item Suppose ${\mf D}_{{\mc P}{\mc P}'}$ is a derived orbifold lift of a flow bimodule $M$ from $T^{\mc P}$ to $T^{{\mc P}'}$ which extends ${\mf D}^{\mc P}$ and ${\mf D}^{{\mc P}'}$. Suppose ${\mf D}^{\mc P}$ and ${\mf D}^{{\mc P}'}$ are equipped with collar structures. Then a collar structure on ${\mf D}_{{\mc P} {\mc P}'}$ which extends the collar structure on ${\mf D}^{\mc P}$ and ${\mf D}^{{\mc P}'}$ is a collection of collar structures on all the derived orbifold charts $({\mc U}_{pp'}^M, {\mc E}_{pp'}^M, {\mc S}_{pp'}^M, \psi_{pp'}^M)$ of the form 
\beqn
\vcenter{ \xymatrix{ {\mc E}^M_{pp'}|_{\partial^\alpha {\mc U}^M_{pp'}} \times [0, \epsilon)^{\bF_\alpha} \ar[d] \ar[rr]^-{\widehat\theta_{pp', \alpha}^{\rm collar}} & & {\mc E}_{pp'}^M \ar[d] \\
\partial^\alpha {\mc U}_{pp'}^M \times [0, \epsilon)^{\bF_\alpha} \ar[rr]_-{\theta_{pp', \alpha}^{\rm collar}} & & {\mc U}^M_{pp'} } }\ \ \forall p\in {\mc P},\ p' \in {\mc P'}, \ \alpha \in \bA_{pp'}
\eeqn
which satisfy similar compatibility conditions as the case of flow categories using the factorizations of boundary strata. We omit the details.

\end{enumerate}
\end{defn}

\begin{rem}
Although the description of collar structures seems to be very complicated, they are constructed using the outer-collaring method which implies the required conditions almost automatically. 
\end{rem}

\subsubsection{Scaffolding}

\begin{defn}\label{defn_scaffolding}
A {\bf scaffolding} of a system of D-chart presentations $((C_\alpha)_{\alpha \in \bA}, ({\bm \iota}_{\beta\alpha})_{\alpha \leq \beta})$ of an $\bA$-space $X$ is a collection of data
\beqn
\big( {\mc F}_{\beta\alpha}, {\bm \theta}_{\beta\alpha} \big)_{\alpha \leq \beta}
\eeqn
where for each pair $\alpha \leq \beta$
\begin{enumerate}
\item the {\bf difference bundle} ${\mc F}_{\beta\alpha} \to {\mc U}_\alpha$ is an orbifold vector bundle ${\mc F}_{\beta\alpha}\to {\mc U}_\alpha$. In notation, when $\bA = \bA_{pq}^{\mc P}$ resp. $\bA_{pp'}$ and $\beta$ is the maximal element $pq$ resp. $pp'$, denote ${\mc F}_{\beta\alpha}$ by ${\mc F}_{pq, \alpha}$ resp. ${\mc F}_{pp', \alpha}$. 

\item the {\bf stabilization map} ${\bm \theta}_{\beta\alpha}$ is a germ equivalence
\begin{equation}\label{eqn:germ-equiv}
{\bm \theta}_{\beta\alpha} = (\theta_{\beta\alpha}, \widehat\theta_{\beta\alpha}): {\rm Stab}_{{\mc F}_{\beta\alpha}}( C_\alpha) \simeq \partial^\alpha C_\beta
\end{equation}
which extends the chart embedding ${\bm \iota}_{\beta\alpha}: C_\alpha \hookrightarrow \partial^\alpha C_\beta$. This germ equivalence induces a projection map 
\beqn
\pi_{\beta\alpha}: \partial^\alpha C_\beta \to C_\alpha
\eeqn
as well as a bundle splitting
\beq\label{scaffolding_splitting}
{\mc E}_\beta|_{{\mc U}_\alpha} = {\mc E}_\alpha \oplus {\mc F}_{\beta\alpha}
\eeq
where ${\mc E}_\beta|_{{\mc U}_\alpha} = {\bm \iota}_{\beta \alpha}^* {\mc E}_{\beta}$ and a bundle isomorphism
\beq\label{vartheta}
\vartheta_{\beta\alpha}: \pi_{\beta\alpha}^* ( {\mc E}_\beta|_{{\mc U}_\alpha} ) \cong {\mc E}_\beta|_{\partial^\alpha {\mc U}_\beta}.
\eeq
\end{enumerate}
These objects need to satisfy the following conditions. 
\begin{enumerate}
    \item[(A)] For any triple of stratum $\alpha \leq \beta \leq \gamma$, as subbundles of ${\mc E}_\gamma|_{{\mc U}_\alpha}$ there holds
    \begin{equation}\label{eqn:sca-split}
    {\mc F}_{\gamma\alpha} = {\mc F}_{\beta\alpha} \oplus {\mc F}_{\gamma\beta}|_{{\mc U}_\alpha} .
    \end{equation}
    
    \item[(B)] The stabilization map preserves stratum. More precisely, the following diagram commutes
    \beqn
    \vcenter{ \xymatrix{  {\rm Stab}_{{\mc F}_{\beta\alpha}}(C_\alpha) \ar[r]^-{{\bm \theta}_{\beta\alpha}} \ar[d] & \partial^\alpha C_\beta\ar[d]^{{\bm \iota}_{\gamma\beta}} \\
                          {\rm Stab}_{{\mc F}_{\gamma\alpha}} (C_\alpha) \ar[r]_-{{\bm \theta}_{\gamma\alpha}}    &  \partial^\alpha C_\gamma }  }.
    \eeqn
    It follows that 
    \beq\label{eqn311}
    \pi_{\gamma\alpha} \circ (\iota_{\gamma\beta}|_{\partial^\alpha U_\beta}) = \pi_{\beta\alpha}.
    \eeq
    
    \item[(C)] The bundle isomorphism \eqref{vartheta} preserves stratum. More precisely, for any triple of strata $\alpha \leq \beta \leq \gamma$, consider the following diagram
    \beqn
    \vcenter{ \xymatrix{    \pi_{\gamma\alpha}^* \left( {\mc E}_\gamma|_{{\mc U}_\alpha} \right)|_{\partial^\alpha {\mc U}_\beta} \ar[rrr]^{\vartheta_{\gamma\alpha}|_{\partial^\alpha {\mc U}_\beta}} \ar@{=}[d] & & &   {\mc E}_\gamma|_{\partial^\alpha {\mc U}_\beta} \ar@{=}[dd] \\
                             \pi_{\beta\alpha}^* \left( {\mc E}_\gamma|_{{\mc U}_\alpha} \right)  \ar@{=}[d] & & & \\
                             \pi_{\beta\alpha}^* \left( {\mc E}_\beta|_{{\mc U}_\alpha} \oplus {\mc F}_{\gamma\beta}|_{{\mc U}_\alpha} \right) \ar[rrr]_{ \vartheta_{\beta\alpha} \oplus \vartheta_{\gamma\beta\alpha}}  & &  &  {\mc E}_\beta|_{\partial^\alpha {\mc U}_\beta} \oplus {\mc F}_{\gamma\beta}|_{\partial^\alpha {\mc U}_\beta}    }
                        }.
    \eeqn
    We explain the notations here. The first vertical equal arrow on the left is due to \eqref{eqn311} and the second one is due to \eqref{scaffolding_splitting}. Here the requirement is that, we require 
    \beqn
    \vartheta_{\gamma\alpha}\left( \pi_{\beta\alpha}^* ( {\mc F}_{\gamma\beta}|_{{\mc U}_\alpha} )\right) = {\mc F}_{\gamma\beta}|_{\partial^\alpha {\mc U}_\beta}
    \eeqn 
    and the restriction to $\vartheta_{\gamma\alpha}$ to $\pi_{\beta\alpha}^* ( {\mc F}_{\gamma\beta}|_{{\mc U}_\alpha} )$ is equal to a linear isomorphism $   \vartheta_{\gamma\beta\alpha}$ (which is in the above commutative diagram). And we require that the above diagram commutes. 
    
    \item[(D)] The stabilization maps satisfy the cocycle condition. Namely, for each triple of strata $\alpha \leq \beta \leq \gamma$, the following diagram commutes.
    \beq\label{comm_diag_36}
    \vcenter{ \xymatrix{ {\rm Stab}_{{\mc F}_{\gamma\alpha}}(C_\alpha) \ar@{=}[r] \ar[dd]_-{{\bm \theta}_{\gamma\alpha}} & {\rm Stab}_{{\mc F}_{\gamma\beta}|_{{\mc U}_\alpha} \oplus {\mc F}_{\beta\alpha}}(C_\alpha) \ar[d]^-{{\bm \theta}_{\beta\alpha}} \\
     & {\rm Stab}_{\pi_{\beta\alpha}^* ({\mc F}_{\gamma\beta}|_{{\mc U}_\alpha})}(\partial^\alpha C_\beta) \ar[d]^{\vartheta_{\gamma\alpha}} \\
                \partial^\alpha C_\gamma  &     {\rm Stab}_{{\mc F}_{\gamma\beta}}( \partial^\alpha C_\beta) \ar[l]^-{{\bm \theta}_{\gamma\beta}} }   }.
    \eeq
    Here the ``$=$'' arrow is induced from the identity ${\mc F}_{\gamma\alpha} = {\mc F}_{\gamma\beta}|_{{\mc U}_\alpha} \oplus {\mc F}_{\beta\alpha}$.
\end{enumerate}
\end{defn}

Now consider a derived orbifold lift of a flow category. 

\begin{defn}\label{scaffolding_2}
A {\bf scaffolding} of a derived orbifold lift of a flow category $T^{\mc P}$ consists of a collection of scaffoldings for the induced system of derived orbifold chart presentations of $T_{pq}$
\beqn
\Big( \big( {\mc F}_{\beta\alpha}, {\bm\theta}_{\beta\alpha} \big)_{\alpha \leq \beta}\Big)_{p < q}
\eeqn
satisfying 
 \begin{enumerate}

\item Suppose $pr_1 \cdots r_l q = \alpha \leq \beta = ps_1 \cdots s_m q$. Denote $\alpha_0 = pr_1 \cdots s_1$, $\ldots$, $\alpha_m = s_m \cdots r_l q$. Then as subbundles of ${\mc E}_\beta|_{{\mc U}_\alpha}$, in view of the identification ${\mc U}_\alpha = {\mc U}_{\alpha_0}\times \cdots \times {\mc U}_{\alpha_m}$, one has
\beqn
{\mc F}_{\beta\alpha} = {\mc F}_{ps_1, \alpha_0} \boxplus \cdots \boxplus {\mc F}_{s_m q, \alpha_m}.
\eeqn

\item With respect to the last identity, one has (as germs of maps) 
\beqn
{\bm \theta}_{\beta\alpha} = {\bm\theta}_{ps_1, \alpha_0}\times \cdots \times {\bm \theta}_{s_m q, \alpha_m}.
\eeqn
\end{enumerate}
\end{defn}

\begin{defn}\label{defn:bimod-sca}
Given two flow categories $T^{\mc P}$ and $T^{{\mc P}'}$ endowed with derived orbifold lifts ${\mf D}^{\mc P}$, ${\mf D}^{{\mc P}'}$ respectively, let $M$ be a flow bimodule from $T^{\mc P}$ to $T^{{\mc P}'}$ endowed with a compatible derived orbifold lift ${\mf D}^M$. Suppose ${\mf D}^{\mc P}$ and ${\mf D}^{{\mc P}'}$ come with scaffoldings
\beqn
\big(  \big\{ {\mc F}^{\mc P}_{\beta \alpha}, {\bm\theta}^{\mc P}_{\beta \alpha} \big\}_{\alpha \leq \beta} \big)_{p<q}, \text{        }  \big(  \big\{ {\mc F}^{{\mc P}'}_{\beta' \alpha'}, {\bm\theta}^{{\mc P}'}_{\beta' \alpha'} \big\}_{\alpha' \leq \beta'} \big)_{p'<q'}.
\eeqn
A \textbf{scaffolding} of such a derived orbifold lift \textbf{compatible with} the given scaffoldings is given by a collection of scaffoldings for the induced system of derived orbifold chart presentations
\beqn
\big( \big\{ {\mc F}_{\tilde \beta \tilde \alpha}, {\bm\theta}_{\tilde \beta\tilde\alpha} \big\}_{\tilde\alpha \leq \tilde \beta} \big)_{\tilde\alpha, \tilde\beta \in \bA_{pp'}^M}
\eeqn
satisfying similar conditions as in Definition \ref{scaffolding_2} using the factorization of the boundary strata (see Definition \ref{defn:bimod-lift} (C)).
\end{defn}

\subsubsection{Straightening}

As we will perform FOP perturbations over stratified charts, we need to consider straightenings in the stratified situation.

\begin{defn}[Straightenings]\label{defn:straightening_2}
\begin{enumerate}

\item Let $({\mc U}, {\mc E}, {\mc S})$ be an $\bA$-stratified derived orbifold chart. A {\bf straightening} consists of a Riemannian metric on ${\mc U}$ and a connection on ${\mc E}$
such that for each $\alpha \in \bA$, the restriction of the metric and the connection onto ${\mc U}_\alpha$ (which is itself a smooth orbifold) is a straightening of $({\mc U}_\alpha, {\mc E}|_{{\mc U}_\alpha}, {\mc S}|_{{\mc U}_\alpha})$ (see Definition \ref{defn21}).

\item Consider a system of derived orbifold presentations on an $\bA$-stratified space $X$. A {\bf straightening} consists of a collection of straightenings of $C_\alpha$ for all $\alpha \in \bA$ such that for each pair $\alpha \leq \beta$ (with associated chart embedding ${\bm \iota}_{\beta\alpha} = (\iota_{\beta\alpha}, \widehat \iota_{\beta\alpha}): C_\alpha \hookrightarrow \partial^\alpha C_\beta$), the domain embedding $\iota_{\beta\alpha}$ is isometric and the bundle embedding preserves the connection, i.e., the image of the bundle embedding is preserved by the connection $\nabla^{{\mc E}_\beta}$ and the restriction of $\nabla^{{\mc E}_\alpha}$ to the image of the bundle embedding coincides with $\nabla^{{\mc E}_\alpha}$.

\item Consider a derived orbifold lift ${\mf D}^{\mc P}$ of a topological flow category $T^{\mc P}$ over ${\mc P}$. A {\bf straightening} on ${\mf D}^{\mc P}$ consists of a straightening on all charts $C_{pq}$ satisfying the following condition. Fix $p< q$. For each $\alpha = pr_1 \cdots r_l q$, there is a product straightening on the associated chart $C_\alpha = C_{pr_1}\times \cdots \times C_{r_l q}$. Then we require that the collection of these straightenings for all $\alpha \in \bA_{pq}^{\mc P}$ is a straightening on the system of derived orbifold presentations of $T_{pq}^{\mc P}$.

\item It is straightforward to define the notion of straightenings on a derived orbifold lift of a flow bimodule $M$ compatible with existing straightenings on the lifts of the two flow categories $T^{\mc P}$ and $T^{\mc P'}$ with derived orbifold lifts ${\mf D}^{\mc P}$ and ${\mf D}^{\mc P'}$. Namely, it consists of a straightening on all charts $C^M_{pp'}$ such that: for $\alpha = pq_1 \cdots q_k q^{\prime}_{k'} \cdots q^{\prime}_1 p' \in \bA_{pp'}$ with $M_\alpha \neq \emptyset$, there is a product straightening on the product chart $C^{\mc P}_{p q_1} \times \cdots \times C^M_{q_k q^{\prime}_{k'}} \times \cdots \times C^{\mc P'}_{q^{\prime}_1 p'}$; we require that the collection of these straightenings for all $\alpha \in \bA_{pp'}$ is a straightening on the system of derived orbifold presentations of $M_{pp'}$.
\end{enumerate}
\end{defn}

\subsubsection{Compatibility}

We need the three kinds of additional structures (collar structure, scaffolding, and straightening) to be compatible in a certain sense. These compatibility conditions are necessary for the inductive construction of FOP perturbations. For example, once a perturbation is chosen on all boundary strata, it can be canonically extended to a neighborhood of the boundary using the collar structure; meanwhile, there is another automatic extension using the scaffolding. Hence a compatibility requirement between these two structures is necessary. We also need the automatic extensions remain in the class of FOP perturbations, which further requires that the straightening is compatible with the collar structure and the scaffolding. 

\begin{rem}
In practice the three kinds of structures are constructed in different ways. The collar structure is constructed by the outer-collaring construction, which is very easy and straightforward. Then (see Subsection \ref{subsection59}) we construct scaffolding in the topological category. The scaffolding will play a role in the smoothing process. After smoothing, the topological charts become smooth. Then finally we construct the compatible straightenings.
\end{rem}

\begin{rem}
In the following, we just define the notions of compatibility of a derived orbifold presentation. The relevant compatibility requirements for flow categories and flow bimodules are then reduced to the corresponding requirements for the induced derived orbifold presentations of all relevant topological spaces.
\end{rem}

\begin{defn}\label{defn329}
Consider a system of derived orbifold presentations $\{ C_\alpha \}_{\alpha \in \bA}$ of an $\bA$-stratified space $X$. Suppose it is equipped with a collar structure and a scaffolding. We say that they are compatible if the following conditions are satisfied.
\begin{enumerate}
    \item Given a triple $\alpha < \beta < \gamma$, we require that the following diagram commutes. 
    \beqn
    \xymatrix{   {\mc E}_\gamma|_{\partial^\alpha {\mc U}_\gamma} \times [0, \epsilon)^{\bF_\alpha \setminus \bF_\beta} \times \{0\}^{\bF_{\beta} \setminus \bF_{\gamma}} \ar[rr]^-{\widehat\theta_{\gamma\alpha}^{\rm collar}} & & {\mc E}_\gamma |_{\partial^\beta {\mc U}_\gamma} \\
                        {\mc E}_\gamma|_{\partial^\alpha {\mc U}_\beta} \times [0, \epsilon)^{\bF_\alpha \setminus \bF_\beta} \times \{0\}^{\bF_{\beta} \setminus \bF_{\gamma}} \ar[u] \ar[rr]^-{\widehat\theta_{\gamma\alpha}^{\rm collar}}  & & {\mc E}_\gamma|_{{\mc U}_\beta}   \ar[u] \\
                        {\mc F}_{\gamma\beta}|_{\partial^\alpha {\mc U}_\beta} \times [0, \epsilon)^{\bF_\alpha \setminus \bF_\beta} \times \{0\}^{\bF_{\beta} \setminus \bF_{\gamma}} \ar[rr]^-{\widehat \theta_{\gamma\alpha}^{\rm collar}} \ar[u] & &  {\mc F}_{\gamma\beta} \ar[u] }
    \eeqn
    Here the commutativity of the upper square is a requirement of the collar structure. Moreover, the bottom horizontal arrow uses the splitting \eqref{scaffolding_splitting} and $\widehat \theta_{\gamma\alpha}^{\rm collar}$ is the restriction to the ${\mc F}_{\gamma \beta}$ component. 
    
    \item The stablization maps also respect the collars. More precisely, for $\alpha < \beta < \gamma$, we require that
    \beqn
    \xymatrix{  \Big( {\rm Stab}_{{\mc F}_{\gamma\beta}|_{\partial^\alpha {\mc U}_\beta}} (\partial^\alpha C_\beta)\Big)  \times [0, \epsilon)^{\bF_\alpha \setminus \bF_\beta} \ar[rr] \ar[d] & &   {\rm Stab}_{{\mc F}_{\gamma\beta}}(C_\beta) \ar[d] \\
                \partial^\alpha C_\gamma \times [0, \epsilon)^{\bF_\alpha \setminus \bF_\beta} \ar[rr]^-{{\bm \theta}_{\gamma\alpha}^{\rm collar}} & & \partial^\beta C_\gamma            }.
    \eeqn
Here the horizontal arrows are induced from the collar structure and the vertical arrows are restrictions of the open chart embeddings associated to the stabilization and scaffolding.

\end{enumerate}
\end{defn}

Finally we spell out the meaning of compatible straightenings. 

\begin{defn}\label{defn330}
Consider a system of D-chart presentations equipped with a collar structure and a scaffolding which are compatible. Then a straightening (see Definition \ref{defn:straightening_2}) is compatible with the collar structure and the scaffolding if the following conditions are satisfied.
\begin{enumerate}
    \item For each pair $\alpha< \beta$, near $\partial^\alpha {\mc U}_\beta$, using the corner coordinates induced from the map $\theta_{\beta\alpha}^{\rm collar}$, near $\partial^\alpha {\mc U}_\beta$, one has
    \begin{equation}\label{eqn:metric-collar}
    g_\beta = g_\beta|_{\partial^\alpha {\mc U}_\beta} + \sum_{i \in \bF_\alpha \setminus \bF_\beta} dt_i^2.
    \end{equation}
    Moreover, with respect to the bundle isomorphism $\widehat \theta_{\beta\alpha}^{\rm collar}$, near $\partial^\alpha {\mc U}_\beta$ one has 
    \beqn
    \nabla^{{\mc E}_\beta} = \left( \widehat \pi_{\beta\alpha}^{\rm collar} \right)^* \nabla^{{\mc E}_\beta}|_{\partial^\alpha {\mc U}_\beta} .
    \eeqn

    \item For each pair $\alpha \leq \beta$, the domain embedding 
    \beqn
    \iota_{\beta\alpha}: {\mc U}_\alpha \hookrightarrow \partial^\alpha {\mc U}_\beta
    \eeqn
    is isometric and totally geodesic. The normal bundle of this embedding has an induced metric and metric connection. Via the stabilization map ${\bm \theta}_{\beta\alpha}$ which embeds ${\mc F}_{\beta\alpha}$ into ${\mc E}_\beta$, the bundle ${\mc F}_{\beta\alpha}$ then carries a metric and metric connection.
    
    \item The bundle embedding ${\mc F}_{\beta\alpha}\hookrightarrow {\mc E}_\beta$ preserves the connection. 
    
    \item The stabilization map on the domain is isometric.
    
    \item Via the stabilization map (on the bundle), the connection on ${\mc E}_\beta|_{\partial^\alpha {\mc U}_\beta}$ near the embedding image is the pullback connection, i.e., the direct sum of the pullback connections on ${\mc F}_{\beta\alpha}$ and ${\mc E}_\alpha$.

\end{enumerate}
\end{defn}

\subsection{Normal complex structure and orientations on flow categories}\label{subsection35}

In order to carry out the FOP perturbations and define counts over ${\mb Z}$, we need to introduce normal complex structures on the derived orbifold lifts and certain orientation structures.

\begin{defn}\label{defn:normal-C}
Let $T^{\mc P}$ be a flow category over ${\mc P}$ equipped with a derived orbifold lift ${\mf D}^{\mc P}$, with the collection of derived orbifold presentations 
\beqn
\big\{ C_{pq} = ({\mc U}_{pq}, {\mc E}_{pq}, {\mc S}_{pq}, \psi_{pq}) \big\}_{p\leq q}.
\eeqn
Moreover, assume that ${\mf D}^{\mc P}$ has a scaffolding
\beqn
{\mf F}^{\mc P} = \Big( \big( {\mc F}_{\beta\alpha}, {\bm\theta}_{\beta\alpha} \big)_{\alpha \leq \beta}\Big)_{p<q}.
\eeqn
Then a {\bf normal complex structure} (or normally complex lift) on the pair ${\mf D}^{\mc P}$ and ${\mf F}^{\mc P}$ consists of the following data.
\begin{enumerate}
    \item A normal complex structure on each of the derived orbifold chart $({\mc U}_{pq}, {\mc E}_{pq}, {\mc S}_{pq})$.
    \item A complex structure on each of the orbifold vector bundles ${\mc F}_{\beta \alpha} \to {\mc U}_{\alpha}$.
    \item For any pair $p, q \in {\mc P}$ with $T^{\mc P}_{pq} \neq \emptyset$ and any pair of strata $\alpha \leq \beta$ in $\bA^{\mc P}_{pq}$, the underlying map of the germ equivalence \eqref{eqn:germ-equiv}
    $$
    \theta_{\beta \alpha}: {\rm Stab}_{{\mc F}_{\beta\alpha}}( {\mc U}_\alpha) \simeq \partial^\alpha {\mc U}_\beta
    $$
    respects the normal complex structures.
    \item Under the same assumptions as the previous item, the map
    $$
    \widehat\theta_{\beta\alpha}: \pi^*_{\beta\alpha} {\mc E}_{\alpha} \oplus \pi^*_{\beta\alpha} {\mc F}_{\beta \alpha} \to {\mc E}_{\beta}|_{\partial^{\alpha}U_{\beta}}
    $$
    intertwines with the normal complex structures.
    \item The splitting of orbifold vector bundles \eqref{scaffolding_splitting} over ${\mc U}_{\alpha}$, the bundle isomorphism \eqref{vartheta}, and the splitting \eqref{eqn:sca-split} respect the normal complex structures.
    \item These (normal) complex structures admit a strict $\Pi$-action in the obvious sense.
\end{enumerate}
\end{defn}

\begin{defn}
Let $T^{\mc P}$, ${\mf D}^{\mc P}$, and ${\mf F}^{\mc P}$ be as in Definition \ref{defn:normal-C}. Suppose we are given a normal complex structure on the pair ${\mf D}^{\mc P}$ and ${\mf F}^{\mc P}$. Then a collar structure compatibile with ${\mf F}^{\mc P}$ (see Definition \ref{defn329}) is said to be compatible with the normal complex structure if the collar maps intertwine with the normal complex structures.
\end{defn}

\begin{rem}
Note that the complex structure on ${\mc F}_{\beta \alpha}$ and the normal complex structure on ${\mc U}_{\alpha}$ induce a normal complex structure on ${\rm Stab}_{{\mc F}_{\beta\alpha}}( {\mc U}_\alpha)$. Moreover, due to the behavior of orbifold charts near the boundary and corners, $\partial^\alpha {\mc U}_\beta$ indeed has a normal complex structure.
\end{rem}

\begin{rem}
We could define the notion of normal complex structure in a more intrinsic way which does not rely on the choice of scaffoldings.
\end{rem}

\begin{rem}
In our applications, the notion of normal complex structure is mostly useful for considering the space ${\mc U}_{\alpha}$, as the orbifold vector bundles ${\mc E}_{\alpha}$ and ${\mc F}_{\beta \alpha}$ would have fiberwise complex structures in practice.
\end{rem}

The normal complex structure is concerned with the tangential structure on the normal directions to strata specified by the isomorphism classes of the isotropy groups on an orbifold. In particular, an unorientable manifold has a (trivial) normal complex structure when being viewed as an orbifold. To define algebraic counts, we need to define orientation structures on the flow categories and flow bimodules, which is an extra structure beyond the normal complex structure.

\begin{defn}
Let $V$ be a vector space over ${\mb R}$. The \textbf{orientation line} $\mathfrak{o}_V$ of $V$ is defined to be the free ${\mb Z}$-module of rank $1$ given by $H_{\dim V}(V,V\setminus \{0\};{\mb Z})$ with ${\mb Z}/2$-grading the mod $2$ reduction of $\dim V$. More generally, if $(V^+, V^-)$ is a virtual vector space, its orientation line $\mathfrak{o}_{(V^+, V^-)}$ is defined to be the tensor product of orientation lines $\mathfrak{o}_{V^+} \otimes \mathfrak{o}_{V^-}^{\vee}$.
\end{defn}

\begin{defn}
An $\bA$-stratified derived orbifold chart $C = ({\mc U}, {\mc E}, {\mc S})$ is called \textbf{orientable} if both ${\mc U}$ and ${\mc E}$ are orientable. Given a connected component of an orientable derived orbifold chart, the \textbf{orientation line} $\mathfrak{o}_C$ of $({\mc U}, {\mc E}, {\mc S})$ is defined to be the orientation line of the virtual vector space
$$ (T_x {\mc U},{\mc E}_x) $$
for arbitrary $x \in {\mc U}$ lying in the top stratum of ${\mc U}$.
\end{defn}

If $C=({\mc U}, {\mc E}, {\mc S})$ is an orientable $\bA$-stratified derived orbifold chart, given an index $\alpha \in \bA$, the $\partial^{\alpha} \bA$-stratified derived orbifold chart $\partial^{\alpha}C = (\partial^{\alpha}{\mc U}, {\mc E}|_{\partial^{\alpha}{\mc U}}, {\mc S}|_{\partial^{\alpha}{\mc U}})$ is also orientable. Moreover, it is easy to see that there is a natural isomorphism
$$ \mathfrak{o}_{\partial^\alpha C} \xrightarrow{\sim} \mathfrak{o}_C \otimes (\mathfrak{o}_{\mb R}^{\vee})^{\otimes \dep(\alpha)}. $$

\begin{defn}\label{defn:orient}
Suppose $T^{\mc P}$, ${\mf D}^{\mc P}$ and ${\mf F}^{\mc P}$ are the same in Definition \ref{defn:normal-C} An \textbf{orientation} of such a normally complex derived orbifold lift is given by:
\begin{enumerate}
    \item A virtual vector space $(V_p^+, V_p^-)$ for each $p \in {\mc P}$. Define $\mathfrak{o}_{p} := \mathfrak{o}_{(V_p^+, V_p^-)}$.
    \item An isomorphism of orientation lines
    \begin{equation}\label{eqn:orient-line}
    \mathfrak{o}_{C_{pq}} \xrightarrow{\sim} \mathfrak{o}_p^{\vee} \otimes \mathfrak{o}_q^.
    \end{equation}
    \item For $p<r<q$, an isomorphism of orientation lines
    \begin{equation}\label{eqn:orient-coh}
    \mathfrak{o}_{C_{pr}} \otimes \mathfrak{o}_{C_{rq}} \otimes (\mathfrak{o}_{\mc F_{prq, pq}} \otimes \mathfrak{o}_{\mc F_{prq, pq}}^{\vee}) \xrightarrow{\sim} \mathfrak{o}_{\partial^{prq} C_{pq}}.
    \end{equation}
    \item The above isomorphisms are preserved by the $\Pi$-action.
\end{enumerate}
\end{defn}
Note that if the above derived orbifold lift is given an orientation, then for any $\alpha \leq \beta$ in $\bA^\floer_{pq}$, there is an isomorphism
$$ \mathfrak{o}_{C_\alpha} \xrightarrow{\sim} \mathfrak{o}_{\partial^{\alpha} C_\beta}.$$

\begin{defn}\label{defn:orient-complex-lift}
Given a flow category $T^{\mc P}$ endowed with a derived orbifold lift ${\mf D}^{\mc P}$, a scaffolding ${\mf F}^{\mc P}$, and a compatible collar structure, if they are further endowed with a normal complex structure and orientation, we call such a datum an {\bf oriented and normally complex derived orbifold lift} and abbreviate it as ${\mf D}^{\mc P}$.
\end{defn}

If we have a flow bimodule $M$ between $T^{\mc P}$ and $T^{\mc P'}$ such that these three objects admit a compatible derived orbifold lift, as well as a compatible scaffolding structure, one can spell out the meaning of a normal complex structure and an orientation on $M$ similarly to Definition \ref{defn:normal-C}.

\begin{defn}\label{defn:normal-C-bimod}
Let $T^{\mc P}$ (resp. $T^{\mc P'}$) be flow categories endowed with a derived orbifold lift ${\mf D}^{\mc P} = \{C^{\mc P}_{pq} \}_{p\leq q}$ (resp. ${\mf D}^{\mc P'} = \{C^{\mc P'}_{p' q'} \}_{p'\leq q'}$) and a scaffolding 
$$
{\mf F}^{\mc P} =\Big( \big( {\mc F}_{\beta\alpha}^{\mc P}, {\bm\theta}_{\beta\alpha} \big)_{\alpha \leq \beta} \Big)_{p<q} \text{ resp. } {\mf F}^{\mc P'} = \Big( \big( {\mc F}_{\beta' \alpha'}^{\mc P'}, {\bm\theta}_{\beta' \alpha'} \big)_{\alpha' \leq \beta'} \Big)_{p'<q'}.
$$
Moreover, both the pair $({\mf D}^{\mc P}, {\mf F}^{\mc P})$ and the pair $({\mf D}^{\mc P'}, {\mf F}^{\mc P'})$ have a normal complex structure in the sense of Definition \ref{defn:normal-C}. Suppose $M$ is a flow bimodule from $T^{\mc P}$ to $T^{\mc P'}$ with a derived orbifold lift extending ${\mf D}^{\mc P}$ and ${\mf D}^{\mc P'}$ (Definition \ref{defn:bimod-lift})
$$
{\mf D}^M = \Big(\big\{ C^{M}_{pp'} = ({\mc U}^{M}_{pp'}, {\mc E}^{M}_{pp'}, {\mc S}^{M}_{pp'}, \psi^{M}_{pp'}) \big\}_{p\in {\mc P},p'\in {\mc P}'}, {\bm \iota}^M_{pp'} \Big)
$$
and a scaffolding
$$
{\mf F}^M = \big( \big\{ {\mc F}_{\tilde \beta \tilde \alpha}^M, {\bm\theta}_{\tilde \beta\tilde\alpha}^M \big\}_{\tilde\alpha \leq \tilde \beta} \big)_{\tilde\alpha, \tilde\beta \in \bA_{pp'}^M}
$$
compatible with ${\mf F}^{\mc P}$ and ${\mf F}^{\mc P'}$ (Definition \ref{defn:bimod-sca}). Then a {\bf normal complex structure} (or normally complex lift) on this datum extending or compatible with the given ones on $({\mf D}^{\mc P}, {\mf F}^{\mc P})$ and $({\mf D}^{\mc P'}, {\mf F}^{\mc P'})$ consists of the following data.
\begin{enumerate}
    \item A normal complex structure on each of the derived orbifold chart $({\mc U}^{M}_{pp'}, {\mc E}^{M}_{pp'}, {\mc S}^{M}_{pp'})$.
    \item A complex structure on the each of the orbifold vector bundles ${\mc F}_{\tilde \beta \tilde \alpha}^M \to {\mc U}^{M}_{\alpha}$.
    \item The germ equivalence $\theta^{M}_{\tilde\alpha, \tilde\beta}$ and the bundle map covering it $\widehat\theta^{M}_{\tilde\alpha, \tilde\beta}$ all intertwine with the normal complex structures.
    \item Splittings or isomorphism of the form \eqref{scaffolding_splitting}, \eqref{vartheta}, and \eqref{eqn:sca-split} all respect the (normal) complex structures.
    \item The respective (normal) complex structures are identified under the $\Pi$-action.
\end{enumerate}
\end{defn}

\begin{defn}
Under the same setting as in Definition \ref{defn:normal-C-bimod}, suppose we are given a compatible normal complex structure on the pair ${\mf D}^{M}$ and ${\mf F}^{\mc P}$. Then a collar structure compatibile with ${\mf F}^M$ (see Definition \ref{defn329}) is said to be compatible with the normal complex structure if the collar maps intertwine with the normal complex structures.
\end{defn}

\begin{defn}\label{defn:orient-bimod}
Suppose $(T^{\mc P}, {\mf D}^{\mc P}, {\mf F}^{\mc P})$, $(T^{\mc P'}, {\mf D}^{\mc P'}, {\mf F}^{\mc P'})$, and $(M, {\mf D}^{M}, {\mf F}^{M})$ be the same as in Definition \ref{defn:normal-C-bimod}. Moreover, assume that $({\mf D}^{\mc P}, {\mf F}^{\mc P})$ and $({\mf D}^{\mc P'}, {\mf F}^{\mc P'})$ are equipped with an orientation in the sense of Definition \ref{defn:orient}. Then an {\bf orientation} on the given normally complex lift of ${\mf D}^{M}$ and ${\mf F}^{M}$ is given by
\begin{enumerate}
\item An isomorphism of orientation lines
\begin{equation}\label{eqn:sign-bimod}
\mathfrak{o}_{C_{pp'}^M} \xrightarrow{\sim} \mathfrak{o}_p^{\vee} \otimes \mathfrak{o}_{p'}.
\end{equation}
\item For $p<q$ and $p'$, an isomorphism of orientation lines
\begin{equation}\label{eqn: coh-bimod-1}
\mathfrak{o}_{C^{\mc P}_{pq}} \otimes \mathfrak{o}_{C^M_{q p'}} \otimes (\mathfrak{o}_{\mc F^M_{pqp', pq}} \otimes \mathfrak{o}_{\mc F^M_{pqp', pq}}^{\vee}) \xrightarrow{\sim} \mathfrak{o}_{\partial^{pqp'} C_{pp'}^M}.
\end{equation}
\item For $p$ and $q' < p'$, an isomorphism of orientation lines
\begin{equation}\label{eqn:coh-bimod-2}
\mathfrak{o}_{C^{M}_{pq'}} \otimes \mathfrak{o}_{C^{\mc P'}_{q' p'}} \otimes (\mathfrak{o}_{\mc F^M_{pq'p', pq}} \otimes \mathfrak{o}_{\mc F^M_{pq'p', pq}}^{\vee}) \xrightarrow{\sim} \mathfrak{o}_{\partial^{pq'p'} C_{pp'}^M}.
\end{equation}
\item The above isomorphisms are preserved by the $\Pi$-action.
\end{enumerate}
\end{defn}

For a flow bimodule $M$ from $T^{\mc P}$ to $T^{\mc P'}$ as in Definition \ref{defn:normal-C-bimod} (with ${\mf D}^{M}$, ${\mf F}^M$ and a compatible collar structure), if it is further endowed with a orientation structure, we say it has an {\bf oriented and normally complex derived orbifold lift} and abbreviate as ${\mf D}^M$.

\subsection{Constructing FOP perturbations}\label{subsection36}

We extend the notion of FOP perturbations from the case of a single derived orbifold chart to the case of a derived orbifold lift of a flow category. The case of a flow bimodule is similar.

\begin{defn}[Perturbations on a derived orbifold lift]\label{defn_perturbation}\hfill
Given a derived orbifold lift ${\mf D}^{\mc P}$ of a flow category $T^{\mc P}$ (see Definition \ref{defn:flow-lift}). 
\begin{enumerate}
\item A {\bf perturbation} on ${\mf D}^{\mc P}$ consists of a system of smooth perturbations  
\beqn
{\mf S}' = \Big( {\mc S}_{pq}': {\mc U}_{pq} \to {\mc E}_{pq}\Big)_{p< q}
\eeqn
of the compact derived orbifold chart $({\mc U}_{pq}, {\mc E}_{pq}, {\mc S}_{pq})$ (see Definition \ref{defn21}) such that the diagram \eqref{comm_diag_32} still commutes if we replace all ${\mc S}_{pq}$ by ${\mc S}_{pq}'$.

\item Suppose the derived orbifold lift is equipped with a scaffolding (see Definition \ref{defn_scaffolding}). Then we say that a perturbation ${\mf S}'$ {\bf respects the scaffolding} if the following is true. For each pair of strata $\alpha \leq \beta$ of $\bA^{\mc P}_{pq}$, the datum ${\mf S}'$ induces a perturbation ${\mc S}_\alpha'$ of $C_\alpha$ and a perturbation ${\mc S}_\beta'$ of $C_\beta$ by taking the product. Then we require 
\beqn
{\mc S}_\beta'|_{\partial^\alpha {\mc U}_\beta} = {\rm Stab}_{{\mc F}_{\beta\alpha}}({\mc S}_\alpha').
\eeqn

\item Suppose the derived orbifold lift is equipped with a collar structure. Then we say that a perturbation ${\mf S}'$ {\bf respects the collar structure} if for each $pq$ and each stratum $\alpha \in \bA_{pq}^{\mc P}$, the following diagram commutes.
\beqn
\xymatrix{  (\pi_{pq, \alpha}^{\rm collar})^* ( {\mc E}_{pq}|_{\partial^\alpha {\mc U}_{pq}}) \ar[rr]^-{\vartheta_{pq, \alpha}^{\rm collar}} & &  {\mc E}_{\partial^\alpha {\mc U}_{pq}} \\
            \partial^\alpha {\mc U}_{pq}\times [0, \epsilon)^{\bF_\alpha} \ar[u]^{(\pi_{pq, \alpha}^{\rm collar})^* ({\mc S}_{pq}'|_{\partial^\alpha {\mc U}_{pq}})}  \ar[rr]_-{\theta_{pq, \alpha}^{\rm collar}} & &  {\mc U}_{pq}                                         \ar[u]_{{\mc S}_{pq}'}    }
\eeqn

\item Suppose the derived orbifold lift is normally complex and is equipped with a package of compatible structures (scaffolding, collar structure, and straightening). Then a perturbation ${\mf S}'$ is called an {\bf FOP perturbation} if respects the scaffolding, respects the collar structure, and for each $pq$, with respect to the straightening, the restriction of ${\mc S}_{pq}'$ to the interior of each stratum $\partial^\alpha {\mc U}_{pq}$ is an FOP section of ${\mc E}_{pq}$.\footnote{Indeed being an FOP perturbation only needs to refer to the straightening and the normal complex structure.}

\item Under the assumptions of the last item, an FOP perturbation is called {\bf strongly transverse} if the restriction of each ${\mc S}_{pq}'$ to the interior of each stratum of ${\mc U}_{pq}$ is strongly transverse (see Definition \ref{defn:strongly_transverse}).
\end{enumerate}
\end{defn}

\begin{thm}\label{thm_FOP_2}
Given a flow category $T^{\mc P}$, a derived orbifold lift ${\mf D}^{\mc P}$ with a normally complex structure, together with a package of compatible extra structures (scaffolding, collaring, and straightening), there exists a strongly transverse FOP perturbation. Moreover, we can make the perturbation $\Pi$-invariant, i.e.,
\beqn
{\mc S}_{pq}' = {\mc S}_{a\cdot p\ a\cdot q}'\ \forall a \in \Pi\ p, q \in {\mc P}
\eeqn
after identifying $C_{pq}$ with $C_{a \cdot p\ a \cdot q}$. 
\end{thm}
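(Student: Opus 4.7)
The plan is to construct the family of perturbations ${\mc S}_{pq}'$ by induction on the action difference ${\mc A}^{\mc P}(q) - {\mc A}^{\mc P}(p) > 0$. Since $\uds{\mc P} = {\mc P}/\Pi$ is finite and the $\Pi$-action shifts ${\mc A}^{\mc P}$ by $\omega$, for each constant $D > 0$ only finitely many $\Pi$-orbits of pairs $(p,q)$ with $p < q$ satisfy ${\mc A}^{\mc P}(q) - {\mc A}^{\mc P}(p) \leq D$. Hence it suffices to choose a single representative of each $\Pi$-orbit, construct the perturbation on each chosen representative in order of increasing action difference, and then propagate via the strict $\Pi$-action using the isomorphisms $\tilde\phi_a$ from Definition \ref{defn:flow-lift}(D); this automatically delivers a $\Pi$-invariant family. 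In the base case, when $q$ covers $p$ in ${\mc P}$ so that $\bA_{pq}^{\mc P} = \{pq\}$ and $\partial {\mc U}_{pq} = \emptyset$, we simply apply the absolute version of Proposition \ref{prop:FOP_existence} to the compact normally complex derived orbifold chart $({\mc U}_{pq}, {\mc E}_{pq}, {\mc S}_{pq})$ equipped with the given straightening.

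For the inductive step, suppose ${\mc S}_{p'q'}'$ has been constructed for all pairs with strictly smaller action difference and already satisfies every condition of Definition \ref{defn_perturbation}. For each stratum $\alpha = pr_1 \cdots r_l q \in \bA_{pq}^{\mc P}$ of positive depth, the product section
$$ {\mc S}_\alpha' := {\mc S}_{pr_1}' \boxplus \cdots \boxplus {\mc S}_{r_l q}' $$
is, by iterated application of Corollary \ref{cor28}, a strongly transverse FOP section on the product chart $C_\alpha = C_{pr_1} \times \cdots \times C_{r_l q}$. Using the scaffolding germ equivalence ${\bm\theta}_{pq,\alpha}: {\rm Stab}_{{\mc F}_{pq,\alpha}}(C_\alpha) \simeq \partial^\alpha C_{pq}$, this product section, paired with the tautological section of ${\mc F}_{pq,\alpha}$, transports to a section defined on a neighborhood of $\partial^\alpha {\mc U}_{pq}$ inside ${\mc U}_{pq}$. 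The cocycle diagram \eqref{comm_diag_36} together with the product compatibility of Definition \ref{scaffolding_2} ensures that the pieces coming from different $\alpha$ agree along nested inclusions $\alpha \leq \beta$, and hence assemble into a single coherent section defined on $\partial {\mc U}_{pq}$.

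This boundary section then propagates into an open tubular neighborhood $V$ of $\partial {\mc U}_{pq}$ by pullback along the collar maps $\theta_{pq,\alpha}^{\rm collar}$. The compatibility conditions of Definitions \ref{defn329} and \ref{defn330} imply that on $V$ the Riemannian metric splits as in \eqref{eqn:metric-collar} and the connection on ${\mc E}_{pq}$ is the pullback of its boundary restriction, so the collar-extended section remains an FOP section on $V$ and, again by Corollary \ref{cor28}, remains strongly transverse there. We then apply the relative (``CUDV'') form of Proposition \ref{prop:FOP_existence} with $K = \partial {\mc U}_{pq}$ and ${\mc U}' = V$ to extend ${\mc S}_{pq}'$ over the rest of ${\mc U}_{pq}$ as a strongly transverse FOP section agreeing with the prescribed data near the boundary. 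This completes the inductive step, and $\Pi$-propagation over the chosen representatives yields the desired $\Pi$-invariant family.

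The principal technical obstacle is the multi-layered coherence requirement: the restriction of the not-yet-constructed ${\mc S}_{pq}'$ to $\partial^\alpha {\mc U}_{pq}$ must be the stabilization of the product section on $C_\alpha$ for every $\alpha$, and these stabilizations must be mutually consistent across all nested inclusions $\alpha \leq \beta$. This is precisely what the scaffolding cocycle \eqref{comm_diag_36} and the product compatibility of Definition \ref{scaffolding_2} are engineered to enforce, but securing this coherence while also preserving the FOP property through stabilization and through collar extension hinges on the delicate splittings of metric and connection spelled out in Definitions \ref{defn329} and \ref{defn330}. Once those compatibilities are granted, the rest is a standard CUDV extension powered by the multiplicativity Corollary \ref{cor28}, and the $\Pi$-equivariance is automatic from working over a fundamental domain.
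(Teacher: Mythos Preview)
Your proposal is correct and follows essentially the same approach as the paper: induction on the action difference, with the base case handled by the absolute version of Proposition \ref{prop:FOP_existence}, and the inductive step assembling product sections via Corollary \ref{cor28}, transporting them to $\partial^\alpha {\mc U}_{pq}$ through the scaffolding, extending inward along the collar, and finishing with the relative CUDV form of Proposition \ref{prop:FOP_existence}. The paper's proof makes the four verification points (overlap agreement, collar-compatibility, preservation of the FOP property under stabilization, and strong transversality of the stabilized product) slightly more explicit, but you have identified and correctly invoked the relevant compatibility conditions from Definitions \ref{defn329} and \ref{defn330}; your handling of $\Pi$-invariance via a fundamental domain is a minor organizational variant of the paper's remark that equivariance is maintained throughout the induction.
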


\begin{proof}
The construction is based on the same induction strategy as defining the Hamiltonian Floer homology (as well as continuation maps etc.) using abstract perturbations as in \cite{Fukaya_Ono} and \cite{Liu_Tian_Floer}. The package of additional structures is necessary because the FOP perturbation scheme is more rigid than the traditional smooth or continuous multivalued perturbation scheme. We start with a pair $p< q$ with minimal (nonzero) energy ${\mc A}^{\mc P}(q) - {\mc A}^{\mc P} (p)$. Then by definition, $\bA_{pq}^{\mc P}$ is a poset with a single element and hence the chart $C_{pq} = ({\mc U}_{pq}, {\mc E}_{pq}, {\mc S}_{pq})$ is a derived orbifold chart with no boundary or corners. By the absolute version of Proposition \ref{prop:FOP_existence}, with respect to the normal complex structure and the straightening, there exists a strongly transverse FOP perturbation ${\mc S}_{pq}': {\mc U}_{pq} \to {\mc E}_{pq}$ which can be arbitrarily close (measured in $C^0$) to ${\mc S}_{pq}$. In particular, ${\mc S}_{pq}'$ has a compact zero locus. We can also make such perturbations $\Pi$-invariant. 

Now for a given pair $p< q$, we state our induction hypothesis: for pair $r< s$ with $p \leq r < s \leq q$ and $(p, q) \neq (r, s)$, we have constructed a strongly transverse FOP section with respect to the straightening and normal complex structure
\beqn
{\mc S}_{rs}': {\mc U}_{rs} \to {\mc E}_{rs}
\eeqn
satisfying the compatibility condition for a perturbation (with respect to the collar structure and scaffolding, see Definition \ref{defn_perturbation}). We would like to construct a strongly transverse FOP perturbation ${\mc S}_{pq}'$ which extends the existing ones and which still satisfies the compatibility conditions. 

For each proper stratum $\alpha = pr_1 \cdots r_l q$ of $pq$, the product of ${\mc S}_{pr_1}'$, $\ldots$, ${\mc S}_{r_l q}'$ provides a section 
\beqn
{\mc S}_\alpha':= {\mc S}_{pr_1}'\times \cdots \times {\mc S}_{r_l q}': {\mc U}_\alpha \to {\mc E}_\alpha.
\eeqn
Then via the stabilization map 
\beqn
{\bm \theta}_{pq, \alpha}: {\rm Stab}_{{\mc F}_{pq, \alpha}} ( C_\alpha) \simeq \partial^\alpha C_{pq}
\eeqn
one obtains a section 
\beqn
{\mc S}_{pq, \alpha}': \partial^\alpha {\mc U}_{pq} \to {\mc E}_{pq} |_{\partial^\alpha {\mc U}_{pq}}
\eeqn
which is the stabilization of ${\mc S}_\alpha'$. We check the following conditions. 
\begin{enumerate}
    \item The collection of ${\mc S}_{pq, \alpha}'$ agree on overlaps. Indeed, for each pair of stratum $\alpha \leq \beta$, by the compatibility condition (see item (2) of Definition \ref{defn_perturbation}) satisfied by the existing perturbations (which is assumed as induction hypothesis), one has 
    \beqn
    {\mc S}_\beta'|_{\partial^\alpha {\mc U}_\beta} = {\rm Stab}_{{\mc F}_{\beta\alpha}}( {\mc S}_\alpha').
    \eeqn
    Then by the definition of scaffolding (see Definition \ref{defn_scaffolding} and \eqref{comm_diag_36}), one has 
    \begin{multline*}
    {\mc S}_{pq, \beta}' |_{\partial^\alpha {\mc U}_{pq}} = {\rm Stab}_{{\mc F}_{pq, \beta}}( {\mc S}_\beta'|_{\partial^\alpha {\mc U}_\beta} )  =  {\rm Stab}_{{\mc F}_{pq, \beta}} ( {\rm Stab}_{ {\mc F}_{\beta\alpha} }( {\mc S}_\alpha') ) \\
     = {\rm Stab}_{ {\mc F}_{pq, \alpha}} ( {\mc S}_{\alpha}') = {\mc S}_{pq, \alpha}'.
    \end{multline*}
    
    \item The collection of ${\mc S}_{pq, \alpha}'$ respect the collar structure. This is a consequence of the induction hypothesis and the fact that the scaffolding respects the collar structure (see Definition \ref{defn329}). 
    
    \item Each ${\mc S}_{pq, \alpha}'$ is an FOP perturbation. This is a consequence of the compatibility condition on the additional structures (see Definition \ref{defn330}) which guarantees that the stabilization of an FOP section is still an FOP section. 
    
    \item Each ${\mc S}_{pq, \alpha}'$ is strongly transverse within the interior of $\partial^\alpha {\mc U}_{pq}$. Notice that 
    \beqn
    {\rm Int} {\mc U}_\alpha = {\rm Int} {\mc U}_{pr_1}\times \cdots \times {\rm Int} {\mc U}_{r_l q}.
    \eeqn
    Corollary \ref{cor28} implies that the restriction of ${\mc S}_\alpha'$ to the interior of ${\mc U}_\alpha$ is an FOP section. Moreover, as strong transversality is preserved under stabilization, we know that ${\mc S}_{pq, \alpha}'$ is strongly transverse within ${\rm Int} \partial^\alpha {\mc U}_{pq} = {\rm Stab}_{{\mc F}_{pq, \alpha}}({\rm Int} {\mc U}_\alpha)$.
\end{enumerate}
Therefore, we can extend the collection of ${\mc S}_{pq, \alpha}'$ to a neighborhood of $\partial {\mc U}_{pq}$ using the collar structure, simply by pulling back the existing perturbations using the projection maps. Hence we have obtained a section ${\mc S}_{pq}'$ of ${\mc E}_{pq}$ defined in an open neighborhood of $\partial {\mc U}_{pq}$. The conditions on the collar structure imply that the extension is well-defined. The compatibility between straightenings and the collar structure implies that the extension is still an FOP section. As we can extend strongly transverse FOP sections in the standard ``CUDV'' fashion (see the relative version of Proposition \ref{prop:FOP_existence}), one can construct a strongly FOP perturbation ${\mc S}_{pq}'$ which extends the existing ones near the boundary. The compactness assumption on ${\mc S}_{pq}^{-1}(0)$ implies that one can make the perturbed zero locus $({\mc S}_{pq}')^{-1}(0)$ compact. The inductive construction can then be carried on. Moreover, the $\Pi$-equivariance of the perturbation can be maintained in the induction process.
\end{proof}

\subsection{Chain complexes and maps over the integers}\label{subsection37}

We recall the notion of Novikov coefficient ring. 

\begin{defn}\label{defn:Novikov}
\begin{enumerate}

\item The integral {\bf Novikov ring} is the ring of formal Laurent series in a single variable $T$ with integer coefficients, i.e.
\beqn
\Lambda:= {\mb Z}[[T]][T^{-1}] =  \Big\{ {\mf x} = \sum_{i=-m}^\infty a_i T^i,\ {\rm where}\ m \in {\mb Z}\ {\rm and}\ a_i \in {\mb Z}\Big\}.
\eeqn

\item The {\bf valuation} on $\Lambda$ is the map 
\beqn
{\rm val}: \Lambda \to {\mb Z},\ {\rm val}(\sum a_i T^i ) = {\rm min} \left\{ i\ |\ a_i \neq 0 \right\}.
\eeqn
Denote
\beqn
\Lambda_0:= \{ {\mf x} \in \Lambda\ |\ {\rm val}({\mf x}) = 0\}.
\eeqn
and
\beqn
\Lambda_+:= \{ {\mf x} \in \Lambda\ |\ {\rm val}({\mf x}) > 0 \}.
\eeqn
\end{enumerate}
\end{defn}

\subsubsection{The chain complexes}\label{sec:chain}

Now we start to build the chain complexes and chain maps. First, associated to a poset ${\mc P}$ as in Setup \ref{setup:poset}, one can define a ${\mb Z} / 2N$-graded free $\Lambda$-module as follows. Define  
\beqn
C_*^{\mc P}:= \left\{ \sum_{p \in {\mc P}} m_p p\ |\ m_p\in {\mb Z},\ \forall c>0, \#\{ p\ |\ m_p \neq 0, {\mc A}^{\mc P} (p) < c \} < \infty \right\}.
\eeqn
This is a free abelian group graded by the index function ${\rm ind}^{\mc P}: {\mc P} \to {\mb Z}/ 2N$. Define a $\Lambda$-module structure on $C_*$ by
\beqn
T^a\Big( \sum_{p \in {\mc P}} m_p p \Big):= \sum_{p\in {\mc P}} m_p (a\cdot p).
\eeqn
Conditions of Setup \ref{setup:poset} implies that $C_*$ is a ${\mb Z}/2N$-graded free $\Lambda$-module whose rank is equal to the cardinality of ${\mc P}/ \Pi$. 

Isolated zeroes in the free locus of each oriented derived orbifold chart induce integer counts. More precisely, fix a strongly transverse FOP perturbation 
\beqn
{\mf S}':= \Big( {\mc S}_{pq}': {\mc U}_{pq} \to {\mc E}_{pq}\Big)_{p<q}.
\eeqn
For each pair $p< q$, we can write 
\beqn
{\mc U}_{pq} = \bigsqcup_{i=-\infty}^{+\infty} {\mc U}_{pq}^{[i]}
\eeqn
where ${\mc U}_{pq}^{[i]} \subset {\mc U}_{pq}$ is the open and closed subset of points whose local virtual dimension is $i$. It follows from \eqref{index_formula_1} that
\beqn
{\mc U}_{pq}^{[i]} \neq \emptyset \Longrightarrow i \equiv {\rm ind}^{\mc P}(p) - {\rm ind}^{\mc P}(q) - 1\ {\rm mod}\ 2N.
\eeqn
Then let $n_{pq}$ be the count (with signs) of zeroes of ${\mc S}_{pq}'$ in the free locus of the zero-dimensional component ${\mc U}_{pq}^{[0]}$. The sign of an isolated zero point of ${\mc S}_{pq}'$ is uniquely determined by the sign read off from the isomorphism \eqref{eqn:orient-line}. The compactness assumption (see item (3) of Definition \ref{def:flow-cat}) implies that $n_{pq}$ is finite. Moreover, by the finiteness of ${\mc P}/\Pi$ and the compactness condition, these counts  define a ${\mb Z}$-linear map 
\beqn
d^{\mc P}: C_*^{\mc P} \to C_{*-1}^{\mc P}.
\eeqn
The $\Pi$-equivariance of the perturbation implies that 
\beqn
n_{pq} = n_{a\cdot p\ a \cdot q},\ \forall a \in {\mb Z}\ and\ p, q \in {\mc P}
\eeqn
which further implies that $d^{\mc P}$ is $\Lambda$-linear. By looking at 1-dimensional components of all derived orbifold charts, using the coherence of the orientations \eqref{eqn:orient-coh}, one can see that $d^{\mc P}$ is a differential map, i.e., $d^{\mc P}\circ d^{\mc P} = 0$. Therefore, one obtains a chain complex 
\beqn
(C_*^{\mc P}, d^{\mc P})
\eeqn
of $\Lambda$-modules. One hence obtains the homology
\beqn
H_*^{\mc P} = \bigoplus_{i \in {\mb Z}/ 2N} H_i^{\mc P}
\eeqn
which has a natural $\Lambda$-module structure. One can show that the chain homotopy equivalence class of the chain complex does not depend on the choice of the perturbations. We omit the details because such a fact is not needed in our application.

\subsubsection{The chain maps}

Now suppose we have two topological flow categories, $T^{\mc P}$ over ${\mc P}$ and $T^{{\mc P}'}$ over ${\mc P}'$ and a flow bimodule $M$ from $T^{\mc P}$ to $T^{{\mc P}'}$. 

\begin{thm}\label{thm_FOP_3}
Assume the following conditions. 
\begin{enumerate}
    \item There is an oriented and normally complex derived orbifold lift ${\mf D}^{\mc P}$ resp. ${\mf D}^{{\mc P}'}$ of $T^{\mc P}$ resp. $T^{{\mc P}'}$.
    
    \item There is an oriented and normally complex derived orbifold lift ${\mf D}_{{\mc P}{\mc P}'}$ of $M$ which extends ${\mf D}^{\mc P}$ and ${\mf D}^{{\mc P}'}$. 
    
    \item On these derived orbifold lifts there exist compatible collar structures, scaffoldings, and straightenings.
\end{enumerate}
Suppose we are given a strongly transverse FOP perturbation on ${\mf D}^{\mc P}$ and a strongly transverse FOP perturbation on ${\mf D}^{{\mc P}'}$. Then there exists a strongly transverse FOP perturbation on ${\mf D}_{{\mc P}{\mc P}'}$ which extends the existing ones which is $\Pi$-invariant.
\end{thm}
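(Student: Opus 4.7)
The plan is to mimic the inductive construction carried out in the proof of Theorem \ref{thm_FOP_2}, now indexed by pairs $(p,p') \in {\mc P}\times {\mc P}'$ with $M_{pp'}\neq \emptyset$, ordered by the ``relative energy'' $e(p,p') := {\mc A}^{{\mc P}'}(p') - {\mc A}^{\mc P}(p)$. By the boundedness condition \eqref{eqn34} and the finiteness of $\uds{\mc P}$ and $\uds{\mc P}'$, there are only finitely many $\Pi$-orbits of pairs with any given upper bound on $e(p,p')$, so the induction is well-founded after choosing one representative per $\Pi$-orbit and propagating by $\Pi$-equivariance. In the base case, $\bA_{pp'}$ consists of the single maximal element $pp'$, so the chart $C^M_{pp'}$ has empty boundary and the absolute version of Proposition \ref{prop:FOP_existence} produces a strongly transverse FOP perturbation ${\mc S}_{pp'}^{M,\prime}$ that is $C^0$-close to ${\mc S}_{pp'}^M$, which we then spread by $\Pi$ to the rest of the orbit.

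For the inductive step, assume strongly transverse FOP perturbations have been chosen on all $C^M_{\tilde p\tilde p'}$ with $e(\tilde p,\tilde p') < e(p,p')$, compatibly with the given perturbations on ${\mf D}^{\mc P}$ and ${\mf D}^{{\mc P}'}$. For each proper stratum $\alpha = pq_1\cdots q_k q'_{k'}\cdots q'_1 p' \in \bA_{pp'}$, the product section
\beqn
{\mc S}^{M,\prime}_\alpha := {\mc S}^{{\mc P},\prime}_{pq_1}\times \cdots \times {\mc S}^{M,\prime}_{q_k q'_{k'}}\times \cdots \times {\mc S}^{{\mc P}',\prime}_{q'_1 p'}
\eeqn
defines a section of ${\mc E}^M_\alpha$ over $C^M_\alpha$, and the scaffolding stabilization ${\bm\theta}^M_{pp',\alpha}$ pushes it to a section ${\mc S}^{M,\prime}_{pp',\alpha}$ on $\partial^\alpha{\mc U}^M_{pp'}$. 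The four things to verify are exactly those in the flow-category case: the ${\mc S}^{M,\prime}_{pp',\alpha}$ agree on overlaps $\partial^\alpha \cap \partial^\beta$, which follows from condition (D) of Definition \ref{defn:bimod-sca} (the scaffolding factorizes across strata) plus the induction hypothesis; they respect the collar structure, which follows from the compatibility of collars and scaffoldings in Definition \ref{defn329} restricted to the bimodule case; they are FOP sections, which follows from Definition \ref{defn330} guaranteeing that stabilization preserves the FOP property with respect to the given straightening and normal complex structure; and they are strongly transverse on the free locus of each $\partial^\alpha{\mc U}^M_{pp'}$, which is where Corollary \ref{cor28} does the essential work---strong transversality is preserved under the product of sections, hence under the stabilization by the complex difference bundle ${\mc F}^M_{pp',\alpha}$.

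Having produced a compatible strongly transverse FOP section on $\bigcup_\alpha \partial^\alpha{\mc U}^M_{pp'}$, one extends it to a neighborhood of $\partial{\mc U}^M_{pp'}$ via the collars, using $\theta^{M,\mathrm{collar}}_{pp',\alpha}$ to pull back. The collar compatibility in Definition \ref{defn329} ensures that these neighborhood extensions from different strata agree on their overlaps, and the straightening compatibility in Definition \ref{defn330} ensures the extension is still an FOP section that is strongly transverse near $\partial{\mc U}^M_{pp'}$. Now apply the relative (CUDV) version of Proposition \ref{prop:FOP_existence} with $K$ a compact neighborhood of $\partial{\mc U}^M_{pp'}$ inside ${\mc U}^M_{pp'}$ and ${\mc U}'$ a slightly larger open neighborhood where our constructed perturbation is defined; this produces a global strongly transverse FOP perturbation ${\mc S}^{M,\prime}_{pp'}$ on ${\mc U}^M_{pp'}$ that agrees with the boundary data near $\partial {\mc U}^M_{pp'}$ and is $C^0$-close to ${\mc S}^M_{pp'}$, hence has compact zero locus.

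Finally, $\Pi$-equivariance is maintained by choosing perturbations one $\Pi$-orbit at a time and transporting via the strict $\Pi$-action $\tilde\phi^M_a$ on $C^M_{pp'}$ from condition (D) of Definition \ref{defn:bimod-lift}; the existing perturbations on ${\mf D}^{\mc P}$ and ${\mf D}^{{\mc P}'}$ are already $\Pi$-invariant by hypothesis, the scaffolding, collar, and straightening are $\Pi$-equivariant by construction, and the CUDV extension can be performed $\Pi$-equivariantly because the ``already-done'' part of the orbit is closed and invariant. The main obstacle, as in Theorem \ref{thm_FOP_2}, is not any single analytic step but the bookkeeping: checking that the three compatibility packages (scaffolding + collars + straightening) interact correctly at every stratum of $\bA_{pp'}$, which now involves both the left $T^{\mc P}$-action and the right $T^{{\mc P}'}$-action as well as the ``mixed'' strata $pqq'p'$; once the axioms of Definitions \ref{defn:bimod-sca}, \ref{defn329}, and \ref{defn330} have been verified to hold in the bimodule setting, the induction runs exactly as before.
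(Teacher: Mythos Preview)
Your proposal is correct and takes essentially the same approach as the paper: the paper's own proof is a single sentence observing that this is a relative version of Theorem \ref{thm_FOP_2} and that the CUDV extension of Proposition \ref{prop:FOP_existence} carries the induction through. Your write-up is a careful spelling-out of exactly that argument, with the induction now running over pairs $(p,p')$ ordered by relative energy and the boundary data supplied by products of the given perturbations on ${\mf D}^{\mc P}$, ${\mf D}^{{\mc P}'}$ together with the inductively constructed bimodule perturbations; one small remark is that your citation of ``condition (D) of Definition \ref{defn:bimod-sca}'' should point instead to the product-factorization clause of that definition (it has no item (D)), but the content you invoke is correct.
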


\begin{proof}
This theorem is essentially a relative version of Theorem \ref{thm_FOP_2} and the proof is the same as we can always extend strongly transverse FOP perturbations from local to global using the standard ``CUDV'' fashion (see the relative version of Proposition \ref{prop:FOP_existence}). 
\end{proof}

Now under the assumptions of Theorem \ref{thm_FOP_3}, the existing strongly transverse FOP perturbations induce chain complexes 
\begin{align*}
    &\ (C_*^{\mc P}, d^{\mc P}),\ &\ (C_*^{{\mc P}'}, d^{{\mc P}'}).
\end{align*}
We would like to define a chain map using the extended FOP perturbations on ${\mf D}_{{\mc P}{\mc P}'}$. Indeed, for $p\in {\mc P}$ and $p'\in {\mc P}'$, consider the perturbation
\beqn
{\mc S}_{pp'}': {\mc U}_{pp'} \to {\mc E}_{pp'}.
\eeqn
Let $n_{pp'}$ be the count (with signs read off from the isomorphism between orientation lines \eqref{eqn:sign-bimod}) of zeros of ${\mc S}_{pp'}'$ in the free locus of all components of ${\mc U}_{pp'}$ with local virtual dimension zero. The compactness condition on $M_{pp'}$ (see Definition \ref{defn:flow-bimod}) implies that $n_{pp'}$ is finite. Moreover, \eqref{index_formula_2} implies that 
\beqn
n_{pp'} \neq 0 \Longrightarrow {\rm ind}^{\mc P}(p) \equiv {\rm ind}^{{\mc P}'}(p')\ {\rm mod}\ 2N.
\eeqn
Then we formally define 
\beqn
\Psi^M: C_*^{\mc P} \to C_*^{{\mc P'}}
\eeqn
by linearly extending
\beqn
\Psi^M(p) = \sum_{p'\in {\mc P}'} n_{pp'} p'.
\eeqn
We claim that the above is a legitimate element of $C_*^{{\mc P}'}$. Indeed, this is a consequence of the condition \eqref{eqn34} of Definition \ref{defn:flow-bimod} and the finiteness of ${\mc P}'/\Pi$. Hence $\Psi^M$ is a well-defined morphisms of graded abelian group. Moreover, by the $\Pi$-equivariance of all structures, one has 
\beqn
n_{pp'} = n_{a \cdot p\ n\cdot p'},\ \forall a\in \Pi.
\eeqn
Hence $\Psi^M$ is $\Lambda$-linear. Lastly, by the description of codimension one stratum of all bimodule moduli spaces and the coherence of orientations \eqref{eqn: coh-bimod-1} and \eqref{eqn:coh-bimod-2}, similar to the case of $d^{\mc P} \circ d^{\mc P} = 0$, one has that $\Psi^M$ is a chain map, i.e., 
\beqn
\Psi^M \circ d^{\mc P} = d^{{\mc P}'} \circ \Psi^M.
\eeqn

\begin{rem}
We would like to remark that the definitions of $d^{\mc P}$ and $\Psi^M$ from counting points in moduli spaces of virtual dimension $0$, and the usual proof for $(d^{\mc P})^2 = 0$ and $d^{\mc P'} \circ \Psi^M = \Psi^M \circ d^{\mc P}$ from counting the boundary points of moduli spaces of virtual dimension $1$ works in our setting, because the pseudocycle condition on the free locus of $({\mc S}_{pq}')^{-1}(0)$ guarantees that the topological boundary $\partial ( ({\mc S}_{pq}')^{-1}(0))^{\rm free}$ has codimension at least $2$ so the desired compactness property, therefore the finiteness of algebraic counts holds.
\end{rem}

\section{Proof of the integral Arnold conjecture}\label{sec-4}

In this section, we define the topological flow category associated with a non-degenerate Hamiltonian and the topological flow category constructed from a Morse--Smale function. Then we define two bimodules between these two flow categories, which are respectively the space-level lifts of the well-known Piunikhin--Salamon--Schwarz map (the PSS map) and its inverse (the SSP map). We state in Theorem \ref{thm:dorb-lift} the most important technical result in this paper, namely, the existence of derived orbifold lifts of the aforementioned objects, whose proof is contained in later sections. Finally, we explain how to use the perturbation scheme developed in Section \ref{section2} and Section \ref{sec-3} to prove Theorem \ref{thm_intro_main}.

The following is the standing assumption of our discussions later on.

\begin{hyp}\label{hyp51}
\begin{enumerate}
    \item The symplectic manifold $(M, \omega)$ is integral, i.e., the de Rham cohomology class $[\omega] \in H_{dR}^2(M) \cong H^2(M; {\mb R})$ lies in the image of $H^2(M; {\mb Z}) \to H^2(M; {\mb R})$.
    
    \item $H$ is a nondegenerate Hamiltonian on $(M, \omega)$.
    
    \item All 1-periodic orbits of $H$ are embedded and any two of them are disjoint. 
    
    \item The symplectic actions \eqref{eqn:symp-action} of all capped 1-periodic orbits are integral.
    
    \item $J$ is a time-independent $\omega$-compatible almost complex structure such that for all Floer trajectories with smooth domains (i.e., no sphere bubbles are attached and the trajectories are not broken) $u: \Theta \to X$, the linearized operator is surjective.
\end{enumerate}
\end{hyp}

We explain why it suffices to prove the Arnold conjecture under these assumptions in order to establish it in full generality. 

\begin{enumerate}
    \item As we explained in the introduction of this paper, the case of Arnold conjecture for which $[\omega]$ is rational implies the general case, and this is equivalent to the case when $[\omega] \in H^2(M;\mathbb{Z})$ by suitably rescaling $\omega$ and $H$ using a common integer-valued factor.
    \item Because any $C^2$-small perturbation of a nondegenerate Hamiltonian $H$ will not change the number of periodic orbits, we can freely perturb $H$ to guarantee that all $2 \pi$-periodic orbits are nondegenerate. 
    \item As the 2-dimensional case of the integral Arnold conjecture is known, we can restrict ourselves to the case when ${\rm dim} M\geq 4$. Then one can slightly perturb $H$ so that distinct periodic orbits do not intersect and all periodic orbits are embedded.
    
    \item Given a nondegenerate Hamiltonian $H$ (whose capped 1-periodic orbits are all discrete), by adding a $C^2$-small, $t$-independent function $f: M \to {\mb R}$ whose restriction to a neighborhood of the image of each periodic orbit is a constant does not change the set of capped 1-periodic orbits but will shift their symplectic actions by constants. Then we can add a sufficiently small function $f$ which makes the symplectic actions of all capped periodic orbits rational. Then the symplectic actions can be made integral by a further rescaling.
    
    \item For any given $J$, one can slightly perturb $H$ to achieve transversality for all smooth Floer trajectories. Moreover, the perturbed Hamiltonian can be chosen to agree with $H$ up to second order on each 1-periodic orbits of $H$ (see \cite[Theorem 5.1]{Floer_Hofer_Salamon}).
    
\end{enumerate}

\subsection{Hamiltonian Floer flow categories}\label{subsec:floer-flow}

\subsubsection{The moduli spaces of Floer trajectories}

We first review the basics about the moduli spaces of stable Floer trajectories. Let $(M, \omega)$ be a compact symplectic manifold and $H = (H_t)_{t\in S^1}$ be a smooth 1-periodic Hamiltonian on $M$. A {\bf capped 1-periodic orbit} is a pair $p = ([u], \uds p)$ where $\uds p: S^1 \to M$ is a 1-periodic orbit of the Hamiltonian vector field $X_{H_t}$ and $[u]$ is an equivalence class of maps $u: {\mb D} \to M$ such that (we view ${\mb D}$ as the unit disk in ${\mb C}$)
\beqn
u(e^{ 2\pi {\bm i} t}) = \uds p (t);
\eeqn
two such maps $u_1, u_2: {\mb D} \to M$ are equivalent if 
\beqn
\int_{\mb D} u_1^* \omega = \int_{{\mb D}} u_2^* \omega.
\eeqn
Define the {\bf symplectic action} of a capped orbit $p = ([u], \uds p)$ to be 
\beq\label{eqn:symp-action}
{\mc A}_H(p) =  \int_{\mb D} u^* \omega + \int_{S^1} H_t(\uds p(t)) dt.
\eeq
We denote by $\per(H)$ the set of contractible 1-periodic orbits of $H$, whose elements are denoted by $\uds p, \uds q$, etc. Denote by $\widetilde{\per}(H)$ the set of capped 1-periodic orbits of $H$, whose elements are denoted by $p, q$, etc. It follows from the integrality assumption on the symplectic class that $\widetilde{\per}(H)$ is a ${\mb Z}$-covering of $\per(H)$.

Choose an $\omega$-compatible almost complex structure $J$ on $M$. Let $\Theta = {\mb R}\times S^1$ be the infinite cylinder with standard coordinates $z = s + \i t$. The Floer equation is the first-order equation for smooth maps $u: \Theta \to M$
\beq\label{Floereqn}
\frac{\partial u}{\partial s} + J \left( \frac{\partial u}{\partial t} - X_{H_t}(u) \right) = 0.
\eeq
The associated energy for a map $u: \Theta \to M$ is defined to be 
\beqn
E_H(u):= \frac{1}{2} \int_\Theta \left( \left| \frac{\partial u}{\partial s} \right|^2 + \left| \frac{\partial u}{\partial t} - X_{H_t}(u) \right|^2 \right) ds dt.
\eeqn
Any solution to \eqref{Floereqn} with finite energy necessarily converges as $s \to \pm \infty$ to periodic orbits of $H$. Then one can use a pair of (capped) orbits to label solutions. Let ${\mc M}_{\uds p \uds q}^\floer$ be the set of solutions to \eqref{Floereqn} which converge to $\uds p$ resp. $\uds q$ as $s \to -\infty$ resp. $s \to +\infty$, modulo the obvious time translation. Denote by 
\beqn
{\mc M}_{pq}^\floer \subset {\mc M}_{\uds p \uds q}^\floer
\eeqn
the subset of equivalence classes of solutions $u$ such that the concatenation of $p$ and $u$ is equivalent to $q$ as capped 1-periodic orbits. Then for any solution $u$ to \eqref{Floereqn} representing an element of ${\mc M}_{pq}^\floer$, it is standard that
\beq\label{eqn:floer-energy}
E_H(u) = {\mc A}_H(q) - {\mc A}_H(p).
\eeq
As a result the Floer differential {\it increases} the symplectic action.

The moduli spaces in general are not compact with respect to the $C^\infty_{\rm loc}$-topology. Indeed, the space ${\mc M}_{pq}^\floer$ admits a natural compactification called the Gromov--Floer compactification, denoted by $\ov{\mc M}{}_{pq}^\floer$, incorporating both bubbling of holomorphic spheres and breaking of Floer trajectories, see e.g. \cite[Section 18]{Fukaya_Ono}. It is standard knowledge that $\ov{\mc M}{}_{pq}^\floer$ is a compact Hausdorff topological space.\footnote{It is possible that ${\mc M}_{pq}^\floer = \emptyset$ while $\ov{\mc M}{}_{pq}^\floer \neq \emptyset$.}

\subsubsection{The flow category}

We now package the collection of Floer moduli spaces into a flow category. First we see how this system of moduli spaces fits into Setup \ref{setup:poset}. Let $N\in {\mb Z}_{\geq 0}$ be the minimal Chern number of $(M, \omega)$. The integral symplectic form $\omega$ defines a homomorphism
\beqn
\omega: \pi_2(M) \to {\mb Z}.
\eeqn
Define 
\beqn
\Pi:= \pi_2(M)/ {\rm ker} \omega.
\eeqn
Then $\Pi$ is an infinite cyclic group and $\omega$ induces an injection 
\beqn
\omega: \Pi \to {\mb Z}
\eeqn
Then we define 
\beqn
{\mc P}^\floer:= \widetilde{\rm Per}(H).
\eeqn
Define the partial order by the existence of nonempty Floer moduli spaces, i.e., 
\beqn
p\leq q \Longrightarrow \ov{\mc M}{}_{pq}^\floer \neq \emptyset.
\eeqn
It has a free $\Pi$-action defined by taking the connected sum between a representative of a capped 1-periodic orbit and a representative of an element $a \in \Pi$. Define
\begin{align*}
&\ {\mc A}^{{\mc P}^\floer}:= {\mc A}_H,\ &\ {\rm ind}^{{\mc P}^\floer}:= \text{ Conley--Zehnder\ index}.
\end{align*}
Then ${\mc P}^\floer$ satisfies conditions of Setup \ref{setup:poset}. 

\begin{notation}
As a convention, objects labelled by ${\mc P}^\floer$ are often also labelled by $\floer$. For example, we abbreviate
\begin{align*}
    &\ {\mc A}^\floer:= {\mc A}^{{\mc P}^\floer},\ &\ {\rm ind}^\floer:= {\rm ind}^{{\mc P}^\floer}.
\end{align*}
\end{notation}

One can then describe the stratifications on Floer moduli spaces. As in Notation \ref{notation35}, for each pair $p, q\in {\mc P}^\floer$, one has the homogeneous poset
\beqn
\bA_{pq}^\floer:= \big\{ \alpha = pr_1 \cdots r_l q\ |\ p< r_1 < \cdots < r_l < q\big\}.
\eeqn
Then the moduli space $\ov{\mc M}{}_{pq}^\floer$ an $\bA_{pq}^\floer$-stratified topological space. Indeed, for each $\alpha = pr_1 \cdots r_l q$, one has 
\beqn
\partial^\alpha \ov{\mc M}{}_{pq}^\floer \cong \ov{\mc M}{}_{pr_1}^\floer\times \cdots \times \ov{\mc M}{}_{r_l q}^\floer
\eeqn
as the $\alpha$-stratum is the subset of ``broken trajectories'' of a type described by $\alpha$. 





\begin{defn}
The {\bf Hamiltonian Floer flow category} (associated to $H$ and $J$), denoted by $T^{\rm Floer}$, is the topological flow category over ${\mc P}^\floer$ whose morphism space between $p, q \in \widetilde{\per}(H)$ is the moduli space $\ov{\mc M}{}_{pq}^{\rm Floer}$, and whose composition maps are the natural inclusions
\beqn
\ov{\mc M}{}_{pr}^{\rm Floer}\times \ov{\mc M}{}_{rq}^{\rm Floer} \to \ov{\mc M}{}_{pq}^{\rm Floer},\ \forall p\leq r \leq q.
\eeqn
\end{defn}

\subsection{Morse flow category}\label{subsec:morse-flow}

We describe the definition of the more classical Morse flow category and explain how it fits into the general framework of this paper. We declare that in this paper Morse flows are always the ascending flow, i.e., the flow generated by the gradient vector field. Suppose $(f, g)$ is a Morse--Smale pair on $M$, namely, $f$ is a Morse function and $g$ is a Riemannian metric such that the unstable manifold of any critical point intersects transversely with any stable manifold. We use $\uds x, \uds y$ to denote the critical points of $f$. To ensure the moduli spaces of (unparametrized) gradient flow lines to have smooth structures, we assume that near each $\uds x \in {\rm crit}(f)$, there exists a coordinate chart $(x_1, \dots, x_{2n})$ such that $f = \pm x_1^2 \pm \cdots \pm x_{2n}^2$ and $g = dx_1 \otimes dx_1 + \cdots dx_{2n} \otimes dx_{2n}$. Then by \cite{Wehrheim-Morse}, for any pair of cricial points $\uds x, \uds y \in {\rm crit}(f)$, the moduli space 
\beqn
\ov{\mc M}{}^{\rm Morse}_{\uds x \uds y}
\eeqn
of unparametrized broken flow lines connecting $x$ and $y$ is a smooth manifold with faces. It is standard knowledge that upon choosing orientations on the unstable manifolds of all critical points, one can count rigid Morse flow lines (with signs) and define a ${\mb Z}$-graded chain complex over ${\mb Z}$ and its homology coincides with $H_*(M; {\mb Z})$.

We want to fit the Morse flow category into the abstract framework. Define
\beqn
{\mc P}^\morse:= \Pi \times {\rm crit}(f)
\eeqn
with a partial order defined by  
\beqn
(a, \uds x) \leq (b, \uds y) \Longleftrightarrow a = b\ {\rm and}\ \ov{\mc M}{}_{\uds x \uds y}^\morse \neq \emptyset.
\eeqn
$\Pi$ naturally acts freely on ${\mc P}^\morse$. The action function is defined by
\beqn
{\mc A}^\morse(a, \uds x) = \omega(a) + \epsilon f( \uds x )
\eeqn
where $\epsilon$ is a sufficiently small positive number. The index function is defined by  
\beqn
{\rm ind}^\morse(a, \uds x) \equiv \frac{1}{2} {\rm dim} M - {\rm Morse\ index\ of\ }\uds x\ {\rm mod}\ 2N.
\eeqn
One can easily check that the triple $({\mc P}^\morse, {\mc A}^\morse, {\rm ind}^\morse)$ satisfies conditions of Setup \ref{setup:poset}. Then following Notation \ref{notation35} one obtains a collection of homogeneous posets $\bA_{xy}^\morse$ for all $x, y \in {\mc P}^\morse$. Define
\beqn
\ov{\mc M}{}_{xy}^\morse = \left\{ \begin{array}{cc} \ov{\mc M}{}_{\uds x \uds y}^\morse,\ &\ {\rm if}\ x = (a, \uds x )\ {\rm and}\ y = (a, \uds y),\\
 \emptyset,\ &\ {\rm otherwise}. \end{array} \right.
\eeqn
Then each $\ov{\mc M}{}_{xy}^\morse$ is stratified by the poset $\bA_{xy}^\morse$. We can then define the flow category $T^{\rm Morse}:= T^{\rm Morse}(f, g)$ associated with $(f, g)$ as follows. 

\begin{defn}
The {\bf Morse flow category} $T^\morse$ is the topological flow category over ${\mc P}^\morse$ whose morphism spaces are $T_{xy}^\morse = \ov{\mc M}{}_{xy}^\morse$ and whose composition maps are the natural inclusions
\beqn
\ov{\mc M}{}_{xz}^\morse \times \ov{\mc M}{}_{zy}^\morse \cong \partial^{xzy} \ov{\mc M}{}_{xy}^\morse \hookrightarrow \ov{\mc M}{}_{xy}^\morse.
\eeqn
It is obvious that $T^\morse$ is a strict $\Pi$-equivariant flow category.
\end{defn}

As transversality is already achieved for the Morse flow category and there is no orbifold behavior, one does not need to use derived orbifold lift nor normal complex structures to define the resulting chain complex. However we would like to formally put this case into the general framework of the previous section as it will be necessary when we connect the Morse flow category to objects where transversality fails and orbifold behavior appears. 

First, following the abstract outer-collaring recipe, one can construct an outer-collaring of the Morse flow category. Fix the width $r = 1$. The outer-collared Morse flow category $(T^{\morse})^+$ has morphism spaces being $(\ov{\mc M}{}_{xy}^{\morse})^+$. A priori this is only a topological flow category. However, one can equip the morphism spaces and composition maps with smooth structure. Indeed, by \cite[Theorem 1.4]{Wehrheim-Morse} which proves the associativity of gluing maps for the special kind of Morse--Smale pair $(f, g)$ (see also \cite{Qin_2018}), the original Morse flow category has compatible ``interior'' collars. Then the outer-collaring construction does not alter the feature and put a smooth structure on each space $(\ov{\mc M}{}_{xy}^\morse)^+$ such that the composition maps are smooth.

Second, as transversality is already achieved and the moduli spaces are manifolds but not orbifolds, the collection of outer-collared moduli spaces together with the zero obstruction bundle and the zero Kuranishi map form a derived orbifold lift of $(T^\morse)^+$, denoted by ${\mf D}^\morse$. The outer-collaring provides a collar structure on this lift. There is also the trivial scaffolding and the trivial straightening, which are obviously compatible with the collar structure. In the manifold case, there is only the trivial normal complex structure. Lastly, it is a classical knowledge that upon choosing orientations on all unstable manifolds the Morse moduli spaces inherit coherent orientations. Here the orientation line ${\mf o}_x$ associated with a capped critical point $x = (a, \uds{x})$ is defined to be the orientation line of the tangent space of the stable submanifold of $\nabla_{g}f$ at $\uds{x}$. We summarize these observations as follows.

\begin{lemma}
A choice of orientations on all unstable manifolds of $\nabla^g f$ makes ${\mf D}^\morse$ an oriented and normally complex drived orbifold lift of $(T^\morse)^+$ equipped with a compatible package of additional structures. Moreover, the $0$-perturbation is a strongly transverse FOP perturbation on ${\mf D}^\morse$ which defines the ${\mb Z}/2N$-graded chain complex $(C_*(f;\Lambda), d^\morse)$ of $\Lambda$-modules, whose homology is isomorphic to the $\Lambda$-module
\beqn
H^{(2N)}_{n-*}(M; \Lambda) = \bigoplus_{i\in {\mb Z}/2N } H^{(2N)}_{n-i}(M; {\mb Z})\otimes_{\mb Z} \Lambda.
\eeqn 
Here
\beqn
H_i^{(2N)}(M; {\mb Z}):= \bigoplus_{j \equiv i\ {\rm mod}\ 2N} H_j(M; {\mb Z}).
\eeqn
\end{lemma}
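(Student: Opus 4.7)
The plan is to break the lemma into three sub-claims and verify them separately: (i) the data $\mathfrak{D}^{\morse}$ satisfies all the axioms of an oriented, normally complex derived orbifold lift of $(T^\morse)^+$ together with a compatible package of scaffolding, collar structure, and straightening; (ii) the zero section is a strongly transverse FOP perturbation; (iii) the induced chain complex computes the stated homology. For (i), one starts from Wehrheim's theorem that each $\ov{\mc M}{}^{\morse}_{\uds{x}\uds{y}}$ is a compact smooth manifold with faces of the expected dimension, together with the associativity of gluing for Morse--Smale pairs satisfying the extra regularity hypothesis. Outer-collaring (Section~\ref{sec:outer-collar}) yields canonical product collars while preserving the smooth structure on each $(\ov{\mc M}{}^{\morse}_{xy})^{+}$ and on every composition map. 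We take ${\mc U}_{xy} = (\ov{\mc M}{}^{\morse}_{xy})^+$ viewed as a trivial orbifold, ${\mc E}_{xy} = 0$, ${\mc S}_{xy} = 0$, and $\psi_{xy}$ the identity. Since all isotropy groups are trivial, every fixed-point locus equals the whole space, so the normal complex structure is vacuous; for the same reason every difference bundle ${\mc F}_{\beta\alpha}$ may be taken to be zero (the single-layered case of Definition~\ref{defn:single-layer}), and the germ equivalences \eqref{eqn:germ-equiv} are simply the concatenation maps of $T^\morse$. A trivial straightening (any metric on ${\mc U}_{xy}$ together with the zero connection on the zero bundle) is automatically compatible with the scaffolding and the outer collars. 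The orientation data \eqref{eqn:orient-line} and \eqref{eqn:orient-coh} come from the classical recipe: a choice of orientation on each unstable manifold $W^u(\uds{x})$ trivializes $\mathfrak{o}_x$ and orients the fiber-product description of $\ov{\mc M}{}^{\morse}_{\uds{x}\uds{y}}$ coherently; everything is manifestly $\Pi$-equivariant because the $\Pi$-action only shifts the capping component.

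For (ii), strong transversality is essentially automatic. Since ${\mc E}_{xy} = 0$ and the isotropy is trivial, the fiberwise polynomial condition in Definition~\ref{defn:strongly_transverse} is vacuous and ``being FOP'' imposes no constraint. Strong transversality reduces to ordinary stratified transversality of the zero map from ${\mc U}_{xy}$ to the zero bundle, which holds tautologically. Compatibility of $0$ with the trivial scaffolding and with the outer-collared product structure is immediate from the definitions. Hence $\mathfrak{S}^{\morse} = \{0\}$ qualifies as a strongly transverse FOP perturbation, and the recipe of Subsection~\ref{sec:chain} produces a ${\mb Z}/2N$-graded chain complex $(C_*(f;\Lambda), d^{\morse})$ of free $\Lambda$-modules with rank equal to $\#\operatorname{crit}(f)$, in which the differential counts, with the signs dictated by \eqref{eqn:orient-line}--\eqref{eqn:orient-coh}, the rigid unparametrized ascending gradient trajectories of $(f,g)$.

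For (iii), we identify the homology. The key observation is that when the complex part is empty, the FOP orientation conventions of Definition~\ref{defn:orient-complex-lift} reduce verbatim to the classical orientation signs used in the standard Morse complex, so $d^{\morse}$ agrees with the usual ascending-flow Morse differential with Novikov coefficients. Since the $\Pi$-action on ${\mc P}^\morse = \Pi \times \operatorname{crit}(f)$ is free and a gradient trajectory does not change the capping, the complex splits $\Pi$-equivariantly as the tensor product over ${\mb Z}$ of the $\mathbb{Z}$-graded Morse cochain complex of $(f,g)$ with $\Lambda$; its homology is therefore $H^{*}_{\morse}(M;\mathbb{Z}) \otimes_{{\mb Z}} \Lambda \cong H^{*}(M;{\mb Z}) \otimes_{{\mb Z}} \Lambda$, by the standard isomorphism between Morse and singular cohomology. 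Poincar\'e duality on the closed oriented $2n$-manifold $M$ then yields a $\Lambda$-module isomorphism onto $H_*(M;{\mb Z})\otimes_{{\mb Z}}\Lambda$, which is tautologically the same as the direct sum $\bigoplus_{i\in{\mb Z}/2N} H^{(2N)}_{n-i}(M;{\mb Z})\otimes_{{\mb Z}}\Lambda$ appearing in the lemma.

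The only genuine content, and where one must be careful, is step~(iii): tracking the degree shift between the Morse-index grading and the paper's grading $\mathrm{ind}^{\morse} = n - (\text{Morse index})$, and checking that the FOP orientation isomorphisms \eqref{eqn:orient-line}--\eqref{eqn:orient-coh} reduce to the classical coherent orientations of the ascending Morse complex. Neither is conceptually hard, but both require bookkeeping; the rest of the lemma is essentially a tautological specialization of the general machinery of Section~\ref{sec-3} to a situation where transversality and smoothness have already been achieved by hand.
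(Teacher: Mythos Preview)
Your proposal is correct and follows the same line as the paper, which treats this lemma as a summary of the preceding observations (Wehrheim's smooth-corner structure, trivial obstruction bundle, trivial isotropy, outer-collaring, and classical coherent orientations) rather than supplying a separate proof. One small remark on step~(iii): the detour through Poincar\'e duality is unnecessary, since the ascending-flow complex is literally the Morse cochain complex of $f$ (equivalently the Morse chain complex of $-f$) tensored with $\Lambda$, and the grading shift $\mathrm{ind}^{\morse}=n-\text{(Morse index)}$ can be read off directly.
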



\subsection{PSS and SSP bimodules}\label{subsection_pss}

\subsubsection{Moduli spaces}

Now we describe the moduli spaces which allow us to interpolate between Floer theory and Morse theory. We first set up a convention: PSS moduli spaces are defined by objects with input from Morse critical points and output from 1-periodic (capped) Hamiltonian orbits and SSP moduli spaces parametrizes objects in the reversed direction.

We only describe PSS moduli spaces in detail. The case of SSP moduli spaces is similar. To define the equation and hence the moduli space, we make the following choices.
\begin{enumerate}
    \item An integer $C^{\rm PSS}\in {\mb Z}$ satisfying 
    \beq\label{eqn44x}
    \inf_{(t, x) \in S^1 \times M} H_t (x) >  C^\pss.
    \eeq
    
    \item A smooth, {\it non-decreasing} cut-off function $\rho: {\mb R} \to [0, 1]$ such that 
    \begin{equation}\label{eqn:cut-off}
\rho(s) = \left\{ \begin{array}{cc} 0,\ &\ s \leq 0,\\
                                    1,\ &\ s \geq 1.
                                    \end{array}\right.
\end{equation}
\end{enumerate}
Define a $2$-parameter family of functions 
\beqn
H_{s,t}^{\pss}(x) = (1-\rho(s)) C^\pss + \rho(s) H_t(x).
\eeqn
Then over the cylinder $\Theta$ one has the vector-field-valued $1$-form 
\beqn
X_H^\pss \otimes dt
\eeqn
which is defined by the Hamiltonian vector field of $H_{s,t}^{\pss}(x)$.
Note that it vanishes for $s \leq 0$. The PSS equation is 
\beq\label{eqn:pss}
\frac{\partial u}{\partial s} + J \left( \frac{\partial u}{\partial t} - X_H^\pss (u) \right) = 0,\ {\rm where}\ u \in C^\infty(\Theta, M).
\eeq
The energy of a solution is defined to be and is computed as
\beqn
\begin{split}
E^\pss (u) : = &\  \int_\Theta \| \partial_s u \|^2 ds dt \\
= &\ \int_\Theta \omega( \partial_s u, \partial_t u - X_H^\pss (u)) ds dt\\
= &\ \int_\Theta u^* \omega + \int_\Theta d H_{s, t}^\pss (\partial_s u) ds dt \\
= &\ \int_\Theta u^* \omega + \int_\Theta \frac{\partial}{\partial s} (H_{s,t}^\pss (u)) ds dt - \int_\Theta \frac{\partial H^\pss_{s,t}}{\partial s} ds dt.
\end{split}
\eeqn
If the energy is finite, then by Gromov's removal of singularity theorem, $u$ extends to a $J$-holomorphic map near $-\infty$ and hence automatically determines a cap for the periodic orbit at $+\infty$. Hence one can use a capped 1-periodic orbit $p\in \widetilde{\per}(H)$ to label the moduli space of PSS solutions. It is similar to the case of Floer trajectories that ${\mc M}{}_{\bullet p}^{\rm PSS}$ admits a natural compactification consisting of ``stable PSS trajectories,'' i.e. configurations with both cylindrical and spherical components. Denote the compactification by 
\beq\label{eqn:pss-moduli}
\ov{\mc M}{}_{\bullet p}^{\rm PSS}.
\eeq

For each map $u$ representing an element of $\ov{\mc M}{}_{\bullet p}^{\rm PSS}$, one has the following energy identity
\beqn
E^{\rm PSS}(u) = {\mc A}_H(p) - C^{\rm PSS} - \int_\Theta \rho'(s) (H(u) - C^{\rm PSS}) ds dt.
\eeqn
An important consequence of \eqref{eqn44x} is that 
\beq
\ov{\mc M}{}_{\bullet p}^{\rm PSS} \neq \emptyset \Longrightarrow {\mc A}_H(p) - C^{\rm PSS} > 0.
\eeq
We introduce
\beqn
d_{\bullet p}:= {\mc A}_H(p) - C^{\rm PSS}
\eeqn
and call it the {\bf topological energy} of a solution $u$. 

Now we bring in the Morse theory of a Morse--Smale pair $(f, g)$. Given $\uds x \in {\rm crit}(f)$, consider the unstable manifold $W^u(\uds x)\subset M$ of the flow of $\nabla^g f$  and its compactification $\ov{W^u(\uds x)}$. Then define 
\beqn
\ov{\mc M}{}_{\uds x p}^\pss:= \ev_\bullet^{-1}(\ov{W^u(\uds x)}),
\eeqn
where $\ev_\bullet: \ov{\mc M}{}_{\bullet p}^{\rm PSS} \to M$ is the evaluation map at $-\infty$.
Moreover, if $x = (a, \uds x)\in {\mc P}^\morse$ and $p \in {\mc P}^\floer$, then define 
\beqn
\ov{\mc M}{}_{xp}^\pss:= \ov{\mc M}{}_{\uds x\ (-a) \cdot p}^\pss.
\eeqn

The stratifications on the PSS moduli spaces fit into the general framework of bimodules described in the previous section. Recall that to $x \in {\mc P} = {\mc P}^\morse$ and $p \in {\mc P}' = {\mc P}^\floer$ one associates a homogeneous poset
\beqn
\bA_{xp}:= \bA_{xp}^\pss:= \{ xy_1 \cdots y_k q_l \cdots q_1 p\ |\ x< y_1 < \cdots < y_k, q_l < \cdots < q_1 < p \}
\eeqn
which has the natural partial order induced from inclusion and the depth function 
\beqn
\dep(x y_1 \cdots y_k q_l \cdots q_1 p) = k + l .
\eeqn
The PSS moduli space $\ov{\mc M}{}_{xp}^\pss$ is stratified by $\bA_{xp}^\pss$ with corresponding strata
\beqn
\left( \ov{\mc M}{}_{xp}^\pss \right)_{xy_1 \cdots y_k q_l \cdots q_1 } \cong \ov{\mc M}{}_{xy_1}^\morse\times \cdots \times \ov{\mc M}{}_{y_{k-1}y_k}^\morse \times \ov{\mc M}{}_{y_k q_l}^\pss \times \ov{\mc M}{}_{q_l q_{l-1}}^\floer \times \cdots \times \ov{\mc M}{}_{q_1 p}^\floer
\eeqn
i.e., the subset of configurations with breakings described by the word $xy_1 \cdots y_k q_l \cdots q_1 p$. Gromov compactness shows that there are at most finitely many nonempty stratum. 

To define the SSP moduli spaces, let us choose an integer $C^{\rm SSP}\in {\mb Z}$ such that 
\begin{equation}\label{eqn:SSP+}
\sup_{(t,x) \in S^1 \times M} H_t(x) < C^{\rm SSP}.
\end{equation}
Using the same cut-off function as in \eqref{eqn:cut-off}, we can write down another $2$-parameter family of functions 
\beq\label{eqn:ssp}
H_{s,t}^{\ssp}(x) = (1-\rho(s))H_t(x) + \rho(s) C^{\ssp}.
\eeq
Denote the associated vector-field-valued $1$-form on $\Theta$ by $X_{H}^{\ssp} \otimes dt$, then the SSP equation is written as
$$
\frac{\partial u}{\partial s} + J \left( \frac{\partial u}{\partial t} - X_H^\ssp (u) \right) = 0,\ {\rm where}\ u \in C^\infty(\Theta, M).
$$
Because $X_{H}^{\ssp}dt$ vanishes for $s \geq 0$, any solution $u$ extends to a $J$-holomorphic map near $+ \infty$, which determines a cap for the periodic orbit at $- \infty$. Given $p\in \widetilde{\per}(H)$, we cam similarly consider ``stable SSP trajectories" which constitute a compact moduli space
$$ \ov{\mc{M}}{}_{p \bullet}^{\ssp}. $$
For any representative $u$ of a point in $\ov{\mc{M}}_{p \bullet}^{\ssp}$, its topological energy is defined to be
$$ d_{p \bullet} := C^\ssp - \mc{A}_{H}(p). $$
Due to the choice \eqref{eqn:SSP+}, we see that $d_{p \bullet} > 0$ as long as $\ov{\mc{M}}_{p \bullet}^{\ssp} \neq \emptyset$. Now suppose $\uds x \in {\rm crit}(f)$, let $W^s(\uds x) \subset M$ be the stable submanifold of $\nabla^g f$ and denote by $\ov{W^s(\uds x)}$ its compactification. Introduce the moduli space 
$$ \ov{\mathcal{M}}^{\ssp}_{p \uds x} := \ev_{\bullet}^{-1}(\ov{W^s(\uds x)}),$$
where $\ev_{\bullet}: \ov{\mc{M}}_{p \bullet}^{\ssp} \to M$ is the evaluation map at $+ \infty$. If $x = (a, \uds x)\in {\mc P}^\morse$ and $p \in {\mc P}^\floer$, then define 
\beqn
\ov{\mc M}{}_{px}^\ssp:= \ov{\mc M}{}_{a \cdot p \ \uds x}^\ssp.
\eeqn
Similar to the PSS case, the homogeneous posets associated with SSP spaces are denoted by
\beqn
\bA_{px}^\ssp := \{p q_1 \cdots q_l y_k \cdots y_1 x |\ p < q_1 < \cdots < q_l, y_k < \cdots < y_1 < x \}
\eeqn
for $p \in {\mc P}^\floer$ and $x \in {\mc P}^\morse$, endowed with depth function
$$
\dep(p q_1 \cdots q_l y_k \cdots y_1 x) = k + l .
$$
$\ov{\mc M}{}_{px}^\ssp$ is stratified by $\bA_{px}^\ssp$ with corresponding strata
\beqn
\left( \ov{\mc M}{}_{px}^\ssp \right)_{p q_1 \cdots q_l y_k \cdots y_1 x} \cong \ov{\mc M}{}_{p q_1}^\floer \times \cdots \times \ov{\mc M}{}_{q_{l-1} q_l}^\floer \times \ov{\mc M}{}_{q_l y_k}^\ssp \times \ov{\mc M}{}_{y_k y_{k-1}}^\morse \times \cdots \times \ov{\mc M}{}_{y_1 x}^\morse.
\eeqn

\subsubsection{The bimodule structure}

The PSS and SSP moduli spaces can be packaged into two flow bimodules (see Definition \ref{defn:flow-bimod}). Recall that one has the Floer flow category $T^{\rm Floer}$ and the Morse flow category $T^\morse$. Essentially by the way we compactify the PSS resp. SSP moduli spaces we see that one can define a flow bimodule $M^\pss$ from $T^\morse$ to $T^{\rm Floer}$ and a flow bimodule $M^\ssp$ from $T^\floer$ to $T^\morse$. We explain the specific terms for the PSS bimodule; the SSP case is completely symmetric.

\begin{prop}
For $x \in {\mc P}^{\morse}$ and $p \in {\mc P}^\floer$, define $M^{\pss}_{xp} := \ov{\mc M}{}_{xp}^\pss$. Then together with the natural inclusion of the boundary strata
$$ \ov{\mc M}{}^{\morse}_{xy} \times \ov{\mc M}{}_{yp}^\pss \to \ov{\mc M}{}_{xp}^\pss,$$
$$
\ov{\mc M}{}_{xq}^\pss \times \ov{\mc M}{}_{qp}^\floer \to \ov{\mc M}{}_{xp}^\pss,
$$
the spaces $M^{\pss}_{xp}$ define a flow bimodule from $T^\morse$ to $T^\floer$.
\end{prop}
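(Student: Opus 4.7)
The plan is to verify, in order, the five structural requirements of Definition \ref{defn:flow-bimod}: (i) the compact $\bA_{xp}^\pss$-space structure on each $M_{xp}^\pss$; (ii) the two families of stratified homeomorphisms on the boundary strata; (iii) the uniform energy bound \eqref{eqn34}; (iv) the associativity diagrams; and (v) the strict $\Pi$-action.

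First, for (i) and (ii), the starting point is the Gromov--Floer compactness theorem applied to the PSS equation \eqref{eqn:pss}. Since the cut-off $\rho(s)$ breaks the $\mathbb{R}$-translation symmetry on the Floer side of the cylinder, no quotient is taken on the principal PSS component; sphere bubbles are absorbed into the cappings in the standard way, and the remaining degenerations split off either unparametrized gradient half-trajectories at $-\infty$ (via the evaluation to $\ov{W^u(\uds x)}$ with Wehrheim's compactification of Morse moduli) or broken Floer cylinders at $+\infty$. Together with Gromov compactness, this both exhibits $\ov{\mc M}{}^\pss_{xp}$ as a compact Hausdorff space and produces the bijective correspondence with the indexing by $\bA_{xp}^\pss$. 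The continuity of the two boundary maps in the forward direction is an immediate consequence of the compactness argument; the continuity of their inverses is the gluing theorem, for which one appeals to Wehrheim's associative Morse gluing on the Morse side and classical PSS/Floer gluing on the Floer side. Only topological (not smooth) continuity is required here, so standard gluing results suffice.

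For (iii), the action--energy identity for a representative $u$ of a configuration in $\ov{\mc M}{}_{\uds{x}\,(-a)\cdot p}^{\pss}$, with $x = (a,\uds x)$, reads
$$ E^\pss(u) \;=\; \mc A_H(p) - \omega(a) - C^\pss \;-\; \int_\Theta \rho'(s)\bigl(H_{s,t}^\pss(u) - C^\pss\bigr)\,ds\,dt . $$
Since $\rho'$ is supported in $[0,1]$ and $H$ is uniformly bounded on $M$, the last integral is bounded in absolute value by a constant $C_0$ independent of $x$, $p$, and $u$. Non-emptiness of $M_{xp}^\pss$ forces $E^\pss(u) \geq 0$, which together with ${\mc A}^\morse(x) = \omega(a) + \epsilon f(\uds x)$ yields $\mc A^\morse(x) \leq \mc A^\floer(p) + C$ where $C := -C^\pss + C_0 + \epsilon\max f$; this is exactly condition \eqref{eqn34}.

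For (iv) and (v), the three associativity diagrams all follow because every boundary inclusion is implemented by concatenation of configurations: performing two successive breakings produces the same iterated moduli space regardless of the order in which one factorizes them, which is a set-theoretic tautology that upgrades to a homeomorphism by the homeomorphism statement established in step (ii). The $\Pi$-action is defined by simultaneously shifting the cappings on $\uds x$ and $\uds p$ (equivalently, shifting the $\Pi$-label $a$ in $x = (a,\uds x)$ and the capping of $p$), which preserves the PSS equation and is strictly associative; its equivariance with respect to the left $T^\morse$-action and the right $T^\floer$-action is immediate. The only nontrivial technical point in the whole argument is the inverse continuity in step (ii), i.e.\ the PSS gluing at nodal configurations compatible with the stratification; but at the level of stratified topological spaces, this is covered by existing literature and does not require the refined global chart constructions developed later in the paper.
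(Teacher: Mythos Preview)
Your proposal is correct and follows the same approach as the paper's own proof, which is extremely terse: the paper simply asserts compactness from Gromov compactness plus the topological energy bound, notes that the action difference is uniformly bounded, and states that associativity and $\Pi$-equivariance follow from the construction. Your version spells out each point in greater detail---in particular the explicit action--energy computation for condition \eqref{eqn34} and the role of gluing for the inverse homeomorphisms---but the logical content is the same.
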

\begin{proof}
For any $x \in {\mc P}^{\morse}$ and $p \in {\mc P}^\morse$ the moduli space $M^{\pss}_{xp}$ is compact due to Gromov compactness and the bound on the topological energy. For such $x$ and $p$, indeed the difference of energies ${\mc A}^\morse (x) - {\mc A}^\floer (p)$ is uniformly bounded. The associaticity of inclusions of boundary strata follows from the construction, so is the strict $\Pi$-equivariance property.
\end{proof}

We refer to such a bimodule as the {\bf PSS bimodule}, and the version for the SSP spaces as {\bf SSP bimodule}, denoted by $M^\pss$ and $M^\ssp$ respectively.


\subsection{The pearly bimodule}

We consider the moduli space of (parametrized) $J$-holomorphic maps $u: \mb{CP}^1 \to M$ with two marked points $z_- = 0$ and $z_+ =\infty$. Given a homology class $A \in H_2(M; {\mb Z})$, let ${\mc M}{}_{0,2}^{\rm pearl} (M, J; A)$ be the moduli space of (parametrized) $J$-holomorphic maps 
representing the class $A$ whose domains are smooth. It has a Gromov compactification $
\ov{\mc M}{}_{0,2}^\pearl (M, J; A)$. Each element of this compactification is represented by a stable map whose domain has a distinguished component whose parametrization is fixed. Moreover, by intersecting with the unstable manifold of $\uds x \in {\rm crit} f$ at $z_-$ and with the stable manifold of $\uds y \in {\rm crit} f$ at $z_+$, we have a moduli space 
\beqn
\ov{\mc M}{}_{\uds x \uds y}^\pearl(M, J; A)
\eeqn
For $a \in \Pi$, define 
\beqn
\ov{\mc M}{}_{\uds x \uds y}^\pearl(a):= \bigcup_{\omega(A) = \omega(a)} \ov{\mc M}{}_{\uds x\uds y}^\pearl(M, J; A).
\eeqn
Then given $x, y \in {\mc P}^\morse$ which can be written as $(a, \uds x)$ and $(b, \uds y)$ where $a, b \in \Pi$ and $\uds x, \uds y \in {\rm crit} f$, define 
\beqn
\ov{\mc M}{}_{xy}^\pearl:= \ov{\mc M}{}_{\uds x \uds y}^\pearl (b-a).
\eeqn

The pearly moduli spaces are stratified by configurations which have broken Morse trajectories either on the incoming edge or on the out-going edge. Indeed, for ${\mc P} = {\mc P}' = {\mc P}^\morse$, as in Notation \ref{notation35} there is a system of homogeneous posets indexed by pairs of $x, y \in {\mc P}^\morse$. More explicitly, given $x, y \in {\mc P}^\morse$, define
\beqn
\bA_{xy}^\pearl:= \left\{ x x_1 \cdots x_k y_l \cdots y_1 y \ \left| \ \begin{array}{c} x_i = (a, \uds x_i),\ y_j = (b, \uds y_j),\\
\uds x < \uds x_1 < \cdots < \uds x_k,\ \uds y_l < \cdots < \uds y_1 < \uds y  \end{array} \right. \right\}.
\eeqn
Note that all the $x_i$'s resp. $y_j$'s are marked with the same class $a \in \Pi$ resp. $b \in \Pi$, because otherwise, the morphism space $T^\morse_{x_i x_{i+1}}$ or $T^\morse_{y_j y_{j+1}}$ is empty by the construction in Section \ref{subsec:morse-flow}. Given $\alpha = xx_1 \cdots x_k y_l \cdots y_1 y\in \bA_{xy}^\pearl$, the corresponding stratum in the pearly moduli space is 
\beqn
\Big( \ov{\mc M}{}_{xy}^\pearl \Big)_\alpha \cong \ov{\mc M}{}_{xx_1}^\morse \times \cdots \times \ov{\mc M}{}_{x_{k-1} x_k}^\morse \times \ov{\mc M}{}_{x_k y_l}^\pearl \times \ov{\mc M}_{y_l y_{l-1}}^\morse \times \cdots \times \ov{\mc M}{}_{y_1 y}^\morse.
\eeqn
There are again only finitely many nonempty strata, thanks to Gromov compactness. Using the formulation of flow bimodules, we can see that the collection of pearly moduli spaces and the product structures of various strata provide a flow bimodule from $T^\morse$ to $T^\morse$, which we call the {\bf pearly bimodule} and denote it by $M^\pearl$. The strict $\Pi$-action on $M^\pearl$ follows from the construction. The outer-collaring (of width 1) of $M^\pearl$ provides a bimodule $(M^\pearl)^+$ from $(T^\morse)^+$ to $(T^\morse)^+$.

\subsection{Main Theorems}\label{subsec-main}
Using the notations introduced above, we can state the output of Sections \ref{sec-5}, \ref{sec-6}, and \ref{sec:pss}.

\begin{thm}\label{thm:dorb-lift}
Denote by $(T^\floer)^+$ the outer-collaring (see Section \ref{sec:outer-collar}) of the Hamiltonian Floer flow category $T^\floer$. Then $(T^\floer)^+$ admits a derived orbifold lift (Definition \ref{defn:flow-lift})
\beqn
{\mf D}^\floer = \Big( \big\{ C^\floer_{pq} = ({\mc U}^\floer_{pq}, {\mc E}^\floer_{pq}, {\mc S}^\floer_{pq}, \psi^\floer_{pq}) \big\}_{p< q}, \big\{{\bm \iota}_{\beta \alpha}^\floer \big\}_{\alpha \leq \beta} \Big),
\eeqn
with a collar structure (Definition \ref{defn:collar})
\beqn
\{ \widehat\theta_{\beta\alpha}^{{\rm collar},\floer} \}_{\alpha \leq \beta}, 
\eeqn
and a scaffolding (Definition \ref{defn_scaffolding})
\beqn
\big( {\mc F}_{\beta\alpha}^\floer, {\bm \theta}_{\beta\alpha}^\floer \big)_{\alpha \leq \beta}
\eeqn
such that they are compatible (Definition \ref{defn329}). Moreover, such a lift can be upgraded to an oriented and normally complex derived orbifold lift (Definition \ref{defn:orient-complex-lift}). Further, one can equip the D-chart lift a straightening which is compatible with the collar and scaffoldings.
\end{thm}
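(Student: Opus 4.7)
The plan is to build the derived orbifold lift inductively over pairs $(p,q)$ ordered by the action difference $\mc{A}^\floer(q)-\mc{A}^\floer(p)$, so that by the time we treat $C_{pq}^\floer$ all the products $C_{pr_1}^\floer \times \cdots \times C_{r_l q}^\floer$ indexed by $\alpha \in \bA_{pq}^{\floer}$ with $\dep(\alpha)\geq 1$ have already been constructed, and then to glue them into a chart on the outer-collared boundary of $\ov{\mc M}{}_{pq}^\floer$ before extending to the interior. The basic building block is an analog, for framed $J$-holomorphic \emph{cylinders} with varying domains, of the Abouzaid--McLean--Smith global Kuranishi chart for framed $J$-holomorphic spheres: at each element of $\ov{\mc M}{}^\floer_{pq}$ one chooses a finite-dimensional obstruction space, a domain framing, and a family of marked points, obtains a topological thickening as the space of approximate solutions whose Cauchy--Riemann failure lies in the obstruction, and divides by the residual symmetry group. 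The fibers of the tautological obstruction bundle are complex, giving the fiberwise complex structure that will become part of the normal complex structure.

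To obtain the chart embeddings ${\bm \iota}_{\beta\alpha}^\floer$ and the scaffolding maps ${\bm \theta}_{\beta\alpha}^\floer$ with the associativity and cocycle properties of Definition \ref{defn:flow-lift}(B)(C) and Definition \ref{defn_scaffolding}, I would use the \emph{multi-layered thickening} strategy alluded to in the introduction: when thickening $\ov{\mc M}{}^\floer_{pq}$ near a boundary stratum indexed by $\alpha = pr_1\cdots r_l q$, take the obstruction space to contain, as a direct summand, the product of the obstruction spaces already used for $C_{pr_1}^\floer,\ldots,C_{r_l q}^\floer$, with the extra summand ${\mc F}^\floer_{pq,\alpha}$ playing the role of the difference bundle of the scaffolding. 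The needed gluing analysis, extending that of AMS from spheres to stable broken cylinders with sphere bubbles, produces an open topological embedding of the product of lower charts, stabilized by ${\mc F}^\floer_{pq,\alpha}$, onto a neighborhood of the corresponding boundary stratum of the new chart; condition (B) of Definition \ref{defn_scaffolding} and the product compatibility of Definition \ref{scaffolding_2} follow from the iterative nature of the construction (first break, then re-break each piece). Outer-collaring throughout, as in Section \ref{subsection:outer_collar}, gives the collar structure essentially for free and ensures the compatibility required by Definition \ref{defn329}.

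The next step is the promotion from topological to smooth orbifold charts. Since the thickenings are naturally only topological (the gluing parameters are only continuously compatible with the maps), I would apply the classical Kirby--Siebenmann/Hirsch equivariant smoothing theorem to each chart, but relatively: the smooth structures on the lower-dimensional products $C_{\alpha}^\floer$ are already fixed inductively, and we need to extend them to a smooth structure on the new total chart consistent with the collar and the bundle-theoretic identification with the stabilization. This is a \emph{relative equivariant smoothing} step, performed in the presence of finite isotropy, and it is compatible with the orbifold-with-corners structure because the stabilization maps $\bm\theta_{\beta\alpha}^\floer$ are already topological bundle isomorphisms near the boundary and because outer-collaring provides enough room to interpolate. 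The fiberwise complex structures on the ${\mc F}^\floer_{pq,\alpha}$ induce the normal complex structure of Definition \ref{defn:normal-C}; orientations are obtained from the determinant-of-index isomorphism for the linearized Floer operator glued with the framing choices, as in the usual coherent orientation machinery, and the isomorphisms \eqref{eqn:orient-line}--\eqref{eqn:orient-coh} follow from the product structure of the scaffolding.

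Finally, the straightening is constructed last and inductively: at each stage I would pick a Riemannian metric on ${\mc U}^\floer_{pq}$ and a connection on ${\mc E}^\floer_{pq}$ that restrict, on a neighborhood of each boundary stratum, to the product of the previously-chosen straightenings on $C_\alpha^\floer$ plus the standard metric $\sum dt_i^2$ on the collar directions, as demanded by \eqref{eqn:metric-collar} and the remaining items of Definition \ref{defn330}. A standard partition-of-unity argument extends these boundary data to the interior; strict $\Pi$-equivariance is preserved by averaging over the (free) $\Pi$-action on the finite quotient $\uds{\mc P}$. The main technical obstacle will be the relative equivariant smoothing step, because the orbifold-with-corners structure together with the scaffolding forces us to maintain, simultaneously, smoothness, the product form near each boundary stratum of $\bA_{pq}^\floer$ with all its faces, the bundle splittings \eqref{scaffolding_splitting} and \eqref{eqn:sca-split}, and the integrability of the normal complex structures on all the ${\mc F}^\floer_{\beta\alpha}$; ensuring that the smoothings chosen at each inductive stage are mutually compatible is what accounts for the length of Section \ref{sec-6}.
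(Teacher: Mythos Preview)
Your high-level outline matches the paper's strategy quite closely: build global charts \`a la Abouzaid--McLean--Smith using framed cylinders, use multi-layered thickening so that boundary strata embed as stabilizations of product charts, outer-collar to get collar structures and to reconcile Kuranishi sections, then smooth relatively and inductively, then take quotients and finally straighten. The paper follows exactly this route (Sections \ref{sec-5}--\ref{sec-6}), so your conceptual picture is correct.

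There is, however, one structural point where your proposal glosses over something the paper treats as essential, and where the argument as you have written it would not go through. You describe the smoothing step as ``apply the classical Kirby--Siebenmann/Hirsch equivariant smoothing theorem to each chart, but relatively'', working directly at the level of orbifold charts $C_{pq}^\floer$. The paper does not do this. Instead it works throughout at the level of $G_{pq}$-equivariant \emph{manifolds} $V_{pq}$ (where $G_{pq}\cong U(d_{pq})$ acts almost freely by changing the framing), and applies Lashof's \emph{stable} equivariant smoothing theory: one does not smooth $V_{pq}$ itself but rather $V_{pq}\times{\bm R}_{pq}$ for a suitable orthogonal $G_{pq}$-representation ${\bm R}_{pq}$. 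These stabilizing representations must themselves be chosen coherently across the whole flow category (Definition \ref{defn_stabilization}, Section \ref{induction_scheme}), which is nontrivial because the group changes from stratum to stratum (one uses Frobenius reciprocity to embed ${\bm R}_\alpha$ into ${\bm R}_{pq}$). Moreover, the input to Lashof's theorem is a $G_{pq}$-vector bundle reduction of the tangent microbundle $T_\mu V_{pq}$, and this comes from the $C^1_{\rm loc}$ fiberwise-smooth $G_{pq}$-bundle structure of the forgetful map $\pi_{pq}:V_{pq}\to B_{d_{pq}}$ to the auxiliary moduli of stable cylinders in $\mb{CP}^{d_{pq}}$ (Proposition \ref{prop:c1loc}); this base-space structure is absent from your proposal. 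Only after all of this does one quotient by $G_{pq}$ to obtain the D-charts. Similarly, the normal complex structure is not just inherited from the fibers of the obstruction bundle: it requires a further stabilization by the $G_{pq}$-representation ${\bm Q}_{d_{pq}}$ and an index-theoretic homotopy of the vertical tangent bundle involving asymptotic operators at the periodic orbits (Proposition \ref{base_normal_complex}, Lemma \ref{lem:handwaving}, Theorem \ref{thm:normal-c-orient}).
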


The proof is completed in Theorem \ref{smoothing_theorem}.

\begin{proof}[Proof of Theorem \ref{thm-intro-floer}]
From Theorem \ref{thm:dorb-lift}
this is a corollary of Theorem \ref{thm_FOP_2} and the discussion in Section \ref{sec:chain}.
\end{proof}

\begin{thm}\label{thm:pss-ssp}
Let $(M^\pss)^+$ be the outer-collaring of the flow bimodule $M^\pss$ from $(T^\morse)^+$ to $(T^\floer)^+$. Then $(M^\pss)^+$ has an oriented and normally complex derived orbifold lift 
\beqn
{\mf D}^\pss = \Big( \big\{ C^\pss_{xp} = ({\mc U}^\pss_{xp}, {\mc E}^\pss_{xp}, {\mc S}^\pss_{xp}, \psi^\pss_{xp}) \big\}_{x,p}, \big\{{\bm \iota}_{\beta \alpha}^\pss \big\}_{\alpha \leq \beta} \Big),
\eeqn
with a collar structure
\beqn
\{ \widehat\theta_{\beta\alpha}^{{\rm collar},\pss} \}_{\alpha \leq \beta}, 
\eeqn
and a scaffolding (Definition \ref{defn_scaffolding})
\beqn
\big( {\mc F}_{\beta\alpha}^\pss, {\bm \theta}_{\beta\alpha}^\pss \big)_{\alpha \leq \beta}
\eeqn
such that they are compatible, and they extend the given structures on ${\mf D}^\floer$ and ${\mf D}^\morse$. The same statement holds by reversing the role of  $(T^\floer)^+$ to $(T^\morse)^+$ and replace ${\rm PSS}$ by ${\rm SSP}$.
\end{thm}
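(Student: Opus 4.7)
The plan is to mirror the strategy used for the Hamiltonian Floer flow category in Theorem \ref{thm:dorb-lift}, now in the relative setting of a flow bimodule that extends ${\mf D}^\morse$ and ${\mf D}^\floer$. We proceed by induction organized by the symplectic action ${\mc A}^\floer(p)$ of the Floer end (equivalently by the topological energy $d_{\bullet p}$), so that when we construct $C^\pss_{xp}$ we may assume all charts of strictly smaller action have already been built on the Floer side and on the PSS side. The first step is to produce a topological global Kuranishi chart for each $\ov{\mc M}^\pss_{xp}$ by adapting the framed construction of \cite{AMS} to the PSS equation \eqref{eqn:pss}: since a finite-energy PSS solution extends across $s = -\infty$ to a $J$-holomorphic cap (by removal of singularities) and converges at $s = +\infty$ to a capped $1$-periodic orbit, we combine a framing of the cap with the cylindrical framing already used for Floer cylinders in Section \ref{sec-5}, producing the datum $({\mc U}^\pss_{xp}, {\mc E}^\pss_{xp}, {\mc S}^\pss_{xp})$. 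The incidence condition with $\ov{W^u(\uds x)}$ at the cap endpoint is imposed by a transverse cut, which is permissible because the Morse side has trivial isotropy.

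The crucial requirement is that for every boundary stratum $\alpha = x y_1 \cdots y_k q_l \cdots q_1 p$ of $\bA^\pss_{xp}$, the germ of $C^\pss_{xp}$ along $\partial^\alpha {\mc U}^\pss_{xp}$ factors as the stabilization of the product chart $C^\morse_{x y_1} \times \cdots \times C^\pss_{y_k q_l} \times \cdots \times C^\floer_{q_1 p}$ by a difference bundle ${\mc F}^\pss_{xp, \alpha}$. Since each Morse chart is trivial, this difference bundle vanishes along the purely Morse breakings, so those boundary strata are handled by open embeddings as in Definition \ref{defn:single-layer}; along purely Floer breakings, the difference bundle must split as a direct sum of the already-built scaffolding ${\mc F}^\floer_{q_1 p, \alpha'}$ on ${\mf D}^\floer$ together with gluing parameters between the PSS cap and the Floer cylinder; mixed breakings decompose correctly by the associativity required in Definition \ref{defn:bimod-sca}. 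This is enforced by the multi-layered thickening of Section \ref{subsubsec:multi}, applied so that the Floer-side thickening data is inherited verbatim from ${\mf D}^\floer$ and only the cap-side and new gluing parameters are introduced afresh.

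Given the topological lift and its scaffolding, we apply the relative equivariant smoothing machinery of Section \ref{sec-6} inductively along the stratification of ${\mc U}^\pss_{xp}$, extending the smooth structure already present on $\partial {\mc U}^\pss_{xp}$. The normal complex structure on the fixed-point normal bundles is read off from the intrinsic complex structure on the sphere-bubble gluing parameters and on the obstruction bundle fibers (values of $(0,1)$-forms into a complex bundle); by construction it restricts to the chosen normal complex structure on ${\mf D}^\floer$ and to the trivial one on ${\mf D}^\morse$. Orientations are obtained from the coherent orientation scheme for PSS-type Cauchy--Riemann operators together with the fixed orientations on the unstable manifolds of $\nabla^g f$, and the isomorphism \eqref{eqn:sign-bimod} is then pinned down by a standard determinant-line calculation. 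The collar structure comes from the outer-collaring construction of Section \ref{subsection:outer_collar}, which is automatically compatible with the outer-collared Morse and Floer lifts. The main obstacle will be maintaining all compatibility conditions across the mixed strata simultaneously: the scaffolding must split as a direct sum along independent boundary directions, and the normal complex structure, orientations, and collars must all respect these splittings. This forces the framing and obstruction-bundle data on the PSS side to be chosen equivariantly with respect to the choices already fixed on the Floer side, and is the principal technical content of Section \ref{sec:pss}. The SSP case is entirely symmetric, obtained by reversing the roles of the two cylindrical ends and using the profile \eqref{eqn:ssp}.
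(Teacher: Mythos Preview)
Your proposal is essentially correct and follows the paper's strategy, but there is one organizational difference worth noting. The paper first constructs the K-chart $K^\pss_{\bullet q}$ for the moduli space $\ov{\mc M}^\pss_{\bullet q}$ \emph{without} the Morse constraint, carries out the entire thickening, scaffolding, outer-collaring, and stable smoothing at this level, and only at the very end stabilizes further so that the evaluation map $\ev_\bullet: \widehat V^\pss_{\bullet q} \to M$ becomes a smooth submersion; the Morse incidence is then imposed by a transverse fiber product with $\ov{W^u(\uds x)}$ to obtain $\widehat K^\pss_{xq}$. You instead seem to work with $\ov{\mc M}^\pss_{xp}$ throughout. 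Both approaches work, but the paper's decoupling is cleaner: it lets the Morse-side breakings be handled automatically (they contribute only open embeddings because the evaluation is submersive and the Morse charts are trivial), and the analytical work is done once for each $q$ rather than once for each pair $(x,q)$.

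A minor point: your description of the framing as ``combining a framing of the cap with the cylindrical framing'' is less precise than what the paper does. The paper constructs a single Hermitian line bundle $L_{u,\pss}$ over the entire prestable PSS thimble from the 2-form $\Omega_{u,\pss}$ of \eqref{eqn:pss-2-form}, and the framing is a basis of global sections of this one bundle; the compatibility with the Floer-side framing along the $\partial^{\bullet rq}$ boundary then comes from the product construction of Section \ref{subsection54} applied to the auxiliary spaces $B^\pss_d$ and $B_d$, exactly as in the Floer case.
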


The proof is provided in Section \ref{sec:pss}. The following assertion is proved similarly as the arguments in Section \ref{subsubsec:straight}.

\begin{lemma}
Both ${\mf D}^\pss$ and ${\mf D}^\ssp$ have a straightening extending the straightening on ${\mf D}^\floer$, and they are compatible with the respective collar structure and scaffolding. \qed
\end{lemma}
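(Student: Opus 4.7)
The plan is to prove both statements by a double induction: the outer loop is on the topological energy $d_{xp}$ (respectively $d_{px}$) among the bimodule charts, and the inner loop, for each fixed chart, is on the codimension of boundary strata. Since the $\pss$ and $\ssp$ cases are entirely symmetric, I would only spell out the $\pss$ case.

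For the outer induction, having fixed a chart $C^\pss_{xp}$, I would assume that straightenings have been constructed on every $C^\pss_{x'p'}$ of strictly smaller topological energy. Combined with the straightening already built on ${\mf D}^\floer$ (from Theorem \ref{thm:dorb-lift} and Section \ref{subsubsec:straight}) and the trivial straightening on ${\mf D}^\morse$, this yields product straightenings on every proper stratum $C^\pss_\alpha$ of $C^\pss_{xp}$, for each $\alpha \in \bA^\pss_{xp} \setminus \{xp\}$. The task is then to extend these to a single straightening on all of $C^\pss_{xp}$ that is simultaneously compatible with the collar structure $\{\widehat\theta^{{\rm collar},\pss}_{\beta\alpha}\}$ and the scaffolding $\{{\mc F}^\pss_{\beta\alpha}, {\bm\theta}^\pss_{\beta\alpha}\}$.

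The inner induction would proceed by decreasing $\dep(\alpha)$. At the maximal depth stratum, I push the product straightening on $C^\pss_\alpha$ into a neighborhood of $\partial^\alpha {\mc U}^\pss_{xp}$ via the scaffolding: choosing an auxiliary bundle metric and a metric connection on ${\mc F}^\pss_{xp,\alpha} \to {\mc U}^\pss_\alpha$, the germ equivalence ${\bm\theta}^\pss_{xp,\alpha}$ converts this datum into a straightened metric on a neighborhood of $\partial^\alpha {\mc U}^\pss_{xp}$ with ${\bm\iota}^\pss_{xp,\alpha}$ totally geodesic, and an analogous argument yields the straightened connection on ${\mc E}^\pss_{xp}$ there. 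The collar structure then propagates these data off the stratum in the standard form \eqref{eqn:metric-collar}. Iterating to lower depths, and finally invoking a relative ``CUDV''-type version of \cite[Lemma 3.15, Lemma 3.20]{Bai_Xu_2022}, extends the straightening to the interior of ${\mc U}^\pss_{xp}$.

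The main obstacle will be corner compatibility: when two strata $\alpha < \beta$ meet, the straightening built inductively on a neighborhood of $\partial^\alpha {\mc U}^\pss_{xp}$ must agree, on the overlap, with the one on a neighborhood of $\partial^\beta {\mc U}^\pss_{xp}$. This is precisely where the compatibility axioms between collar and scaffolding (Definition \ref{defn329}), the cocycle identity \eqref{eqn:sca-split} for the difference bundles, and the commutative diagram \eqref{comm_diag_36} become indispensable: they ensure that the two prescriptions for the normal Riemannian structure and connection along the overlap agree as germs, so the induction closes. The fact that the input straightening on ${\mf D}^\floer$ is itself compatible with its own collar and scaffolding further guarantees that the Floer-side factors appearing in a stratum $\alpha = x y_1 \cdots q_1 p$ splice seamlessly with the Morse- and $\pss$-side factors, which is what the ``extends'' clause of the lemma requires.
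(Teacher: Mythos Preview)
Your proposal is correct and follows essentially the same approach as the paper: the paper states that the lemma ``is proved similarly as the arguments in Section \ref{subsubsec:straight},'' and that section carries out exactly the double induction you describe---outer on energy, inner on stratum depth---using the scaffolding to push product straightenings into neighborhoods of boundary strata, propagating via collars, and closing with a relative form of \cite[Lemma 3.15, Lemma 3.20]{Bai_Xu_2022}. The only nuance worth noting is that the paper's Section \ref{subsubsec:straight} leans on the concrete tower of obstruction bundles ${\mc E}_{pq}^{(d)}$ (for all $d \geq d_{pq}$) and Lemma \ref{obundle_property} to guarantee that the bundle metrics and connections constructed at lower energy levels extend coherently to the difference bundles ${\mc F}_{\beta\alpha}$, rather than choosing the latter freely; this is what ensures the overlap compatibility you flag as the main obstacle, and it is implicit in your invocation of \eqref{eqn:sca-split} and \eqref{comm_diag_36} but perhaps deserves explicit mention.
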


As a consequence, Theorem \ref{thm_FOP_3} and the discussion in Section \ref{sec:chain} define the $\Lambda$-linear chain maps
$$
\Psi^{\rm PSS}: CM_{*}(f;\Lambda) \to CF_{*}(H;\Lambda),
$$
$$
\Psi^{\rm SSP}: CF_{*}(H;\Lambda) \to CM_{*}(f;\Lambda).
$$

\begin{thm}
Let $(M^\pearl)^+$ be the flow bimodule from $(T^\morse)^+$ to $(T^\morse)^+$ obtained from the outer-collaring of the pearly bimodule $M^\pearl$. Then $(M^\pearl)^+$ has an oriented and normally complex derived orbifold lift
\beqn
{\mf D}^\pearl = \Big( \big\{ C^\pearl_{xy} = ({\mc U}^\pearl_{xy}, {\mc E}^\pearl_{xy}, {\mc S}^\pearl_{xy}, \psi^\pearl_{xy}) \big\}_{x,y}, \big\{{\bm \iota}_{\beta \alpha}^\pearl \big\}_{\alpha \leq \beta} \Big),
\eeqn
with a collar structure
\beqn
\{ \widehat\theta_{\beta\alpha}^{{\rm collar},\pearl} \}_{\alpha \leq \beta}, 
\eeqn
such that the induced derived orbifold presentation on each $\ov{\mc M}{}^\pearl_{xy}$ is single-layered (Definition \ref{defn:single-layer}), i.e., ${\mf D}^\pearl$ has a trivial scaffolding. Moreover, the orientation structure, normally complex structure, and the collar structure extend the existing ones on ${\mf D}^\morse$.
\end{thm}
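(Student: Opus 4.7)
The plan is to construct the derived orbifold lift of each pearly moduli space $\ov{\mc M}^\pearl_{xy}$ in a manner analogous to the Floer case of Theorem \ref{thm:dorb-lift} and the PSS case of Theorem \ref{thm:pss-ssp}, with the crucial simplification that the ``two ends'' of the pearly configurations are now Morse trajectories rather than Floer cylinders. First I would build a global Kuranishi chart for the moduli space $\ov{\mc M}^\pearl_{\uds x\uds y}(M,J;A)$ of (parametrized) $J$-holomorphic spheres with two marked points $z_\pm$ whose evaluations lie in $\ov{W^u(\uds x)}$ and $\ov{W^s(\uds y)}$ respectively, by adapting the AMS-type construction from \cite{AMS} (and as later carried out in Section \ref{sec-5} for cylinders): choose framings and a stabilizing divisor to convert the sphere moduli into the zero locus of a Cauchy--Riemann section on a finite-dimensional orbifold thickening, then cut down by the fiber-product with the (compactified) unstable and stable manifolds via the two evaluation maps. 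Because the unstable and stable manifolds are smooth submanifolds (in fact, with faces, by \cite{Wehrheim-Morse}), this fiber product produces a smooth $\bA^\pearl_{xy}$-orbifold with corners whose boundary strata are precisely those indexed by Morse breakings at either end.

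Second, I would verify that this system of derived orbifold presentations is single-layered. The key observation is that the poset $\bA^\pearl_{xy}$ only records Morse breakings at the two ends (sphere bubbling is absorbed into the orbifold structure of the sphere-moduli chart itself, not into the flow-bimodule stratification). Given a stratum $\alpha = xx_1 \cdots x_k y_l \cdots y_1 y$, the corresponding product derived orbifold chart is
\[
C_\alpha^\pearl = C^\morse_{xx_1}\times \cdots \times C^\morse_{x_{k-1}x_k} \times C^\pearl_{x_k y_l} \times C^\morse_{y_l y_{l-1}} \times \cdots \times C^\morse_{y_1 y}.
\]
Since each Morse chart $C^\morse_{\cdot\cdot}$ is the trivial lift (the moduli space itself with zero obstruction bundle), stabilization by a Morse chart introduces no new bundle directions. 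Accordingly, the natural open embedding from $C_\alpha^\pearl$ into $\partial^\alpha C_{xy}^\pearl$ provided by Morse gluing (combined with the identity map on the sphere-moduli factor) is an open chart embedding, and every difference bundle ${\mc F}_{\beta\alpha}^\pearl$ can be taken to be zero. This is the single-layered property in the sense of Definition \ref{defn:single-layer}.

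Third, I would upgrade the lift with all required auxiliary structures: the outer-collaring construction of Section \ref{subsection:outer_collar}, carried out in parallel with the Morse outer-collaring of ${\mf D}^\morse$, supplies the collar structure $\{\widehat\theta^{{\rm collar},\pearl}_{\beta\alpha}\}$ and ensures compatibility with the collar structure already fixed on ${\mf D}^\morse$. The normal complex structure comes entirely from the sphere-moduli factor (since the Morse factors contribute only manifold directions with the trivial normal complex structure), and the orientations are obtained by combining the canonical complex orientation on the sphere factor with the Morse orientations chosen on the unstable manifolds, exactly as in the pattern of Definition \ref{defn:orient-bimod}; one then checks that the isomorphisms of orientation lines \eqref{eqn: coh-bimod-1}--\eqref{eqn:coh-bimod-2} hold and are $\Pi$-equivariant.

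The main technical obstacle I expect is making the AMS-style construction for the sphere component truly \emph{compatible} with the Morse fiber product at every boundary stratum, in a way that preserves single-layeredness. Specifically, when a Morse trajectory at one end degenerates (moving to a deeper Morse stratum), one must ensure that the ``multi-layered thickening'' mechanism from Section \ref{subsubsec:multi} --- necessary to handle sphere bubbling inside the sphere-moduli chart --- restricts correctly to give an open embedding at the fiber-product level rather than a proper stabilization. This amounts to showing that the evaluation map from the global sphere chart to $M$ is stratified submersive along the images of the compactified un/stable manifolds (using Lemma \ref{lem:regular-level}), so that taking preimages commutes with passing to boundary strata. Once this transversality of evaluation maps is arranged (by a generic choice of the framings and stabilizing divisors in the AMS construction, as in \cite{AMS}), the verification of axioms (A)--(D) of Definition \ref{defn:bimod-lift} and the construction of compatible straightenings proceeds by the same inductive scheme as in the proofs of Theorems \ref{thm:dorb-lift} and \ref{thm:pss-ssp}.
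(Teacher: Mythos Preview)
Your approach is essentially the same as the paper's: build an AMS-type global Kuranishi chart on the sphere moduli $\ov{\mc M}^\pearl_{\bullet\bullet}(d)$, then cut down by the (compactified) unstable and stable manifolds via the two evaluation maps, and observe that single-layeredness follows because the Morse factors carry the trivial lift.

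One minor correction on your stated technical obstacle. You propose to arrange submersivity of the evaluation maps ``by a generic choice of the framings and stabilizing divisors''; this is not how the paper (or \cite{AMS}) achieves it. Genericity of auxiliary data cannot make an evaluation map on a global chart submersive. Instead, the paper first notes that each $V_{\bullet\bullet}^\pearl(d)$ is a topological manifold \emph{without boundary} (the $\bA^\pearl_{xy}$-stratification appears only after the Morse fiber product), so the stable smoothing can be performed individually for each $d$ with no inductive scaffolding issues at all. Then one performs a \emph{further stabilization} (as in \cite[Lemma~4.5]{AMS}) of the already-smooth chart so that the evaluations $\ev_\pm$ become smooth submersions onto $M$. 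With submersive evaluations in hand, the fiber products with $\ov{W^u(\uds x)}$ and $\ov{W^s(\uds y)}$ are automatically transverse and inherit smooth corner structures stratified by $\bA^\pearl_{xy}$, which is exactly what you need. So your outline is correct, but the mechanism for making the evaluation maps behave is stabilization, not genericity.
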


The proof is provided in Section \ref{sec:pss}. By the argument in Section \ref{subsubsec:straight} (but simpler because the scaffolding is absent in this case), we can find a straightening of ${\mf D}^\pearl$ which is compatible with the collar structure. We fix it once for all.

\begin{cor}\label{cor:unitriangular}
The FOP countings associated to the oriented and normally complex derived orbifold lift ${\mf D}^\pearl$ and the chosen straightening thereon defines a chain map 
\beqn
\Psi^\pearl: CM_*(f;\Lambda) \to CM_*(f;\Lambda)
\eeqn
which is unitriangular, i.e., 
\beqn
\Psi^\pearl - {\rm Id} \in ({\rm End}_\Lambda (CM_*(f;\Lambda)))_+.
\eeqn
In particular, the induced map on homology
\beqn
\Psi^\pearl: H_*(M; \Lambda) \to H_*(M; \Lambda)
\eeqn
is invertible.
\end{cor}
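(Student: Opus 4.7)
The plan is to apply Theorem~\ref{thm_FOP_3} to the flow bimodule ${\mf D}^\pearl$ with source and target both taken to be ${\mf D}^\morse$ equipped with its zero perturbation (which is strongly transverse by the Morse--Smale condition). This produces a $\Lambda$-linear chain map $\Psi^\pearl \colon CM_*(f;\Lambda) \to CM_*(f;\Lambda)$ via the counting recipe of Section~\ref{subsection37}. The substantive content of the corollary is the unitriangularity statement; invertibility will follow formally.

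Identifying each critical point $\uds x \in {\rm crit}(f)$ with $(0, \uds x) \in {\mc P}^\morse$, the coefficient of $\uds y$ in the expansion of $\Psi^\pearl(\uds x)$ relative to the $\Lambda$-basis $\{\uds y\}_{\uds y \in {\rm crit}(f)}$ of $CM_*(f;\Lambda)$ is the formal series $\sum_{b \in \Pi} n_{(0, \uds x)(b, \uds y)} T^{\omega(b)}$, where $n_{(0, \uds x)(b, \uds y)}$ denotes the signed count of isolated zeros in the free locus of the FOP-perturbed section of $C^\pearl_{(0, \uds x)(b, \uds y)}$. The topological energy of any pearly configuration representing a point of $\ov{\mc M}^\pearl_{(0,\uds x)(b,\uds y)}$ is $\omega(b) \geq 0$, with equality only for constant spheres. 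Consequently $n_{(0, \uds x)(b, \uds y)} = 0$ whenever $\omega(b) < 0$, so every matrix entry lies in ${\mb Z}[[T]] \subset \Lambda_0$ and its constant term comes solely from the $b = 0$ stratum (using injectivity of $\omega$ on $\Pi$).

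On the $b = 0$ stratum the pearly moduli space collapses to the Morse-theoretic intersection $\ov{W^u(\uds x)} \cap \ov{W^s(\uds y)}$; by the Morse--Smale hypothesis this is transverse and equal to the single point $\{\uds x\}$ when $\uds x = \uds y$, and either empty or positive-dimensional otherwise. Because the linearization of $\bar\partial$ at a constant $J$-holomorphic map is surjective, the derived orbifold chart $C^\pearl_{(0, \uds x)(0, \uds y)}$ built by the procedure of Section~\ref{sec:pss} carries no essential obstruction along the constant-sphere locus, so a compatible strongly transverse FOP perturbation can be arranged to leave the $A = 0$ count unaltered, yielding $n_{(0, \uds x)(0, \uds y)} = \delta_{\uds x, \uds y}$ with the sign pinned down by the orientation isomorphism~\eqref{eqn:sign-bimod} of Definition~\ref{defn:orient-bimod}, which in the diagonal case $p = p'$ reduces to the canonical pairing ${\mf o}_p^\vee \otimes {\mf o}_p \cong {\mb Z}$. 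Combining with the strictly positive $T$-exponents contributed by the $\omega(b) > 0$ terms yields $\Psi^\pearl - {\rm Id} \in ({\rm End}_\Lambda(CM_*(f;\Lambda)))_+$.

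Invertibility then follows formally: setting $R := {\rm Id} - \Psi^\pearl$, whose matrix entries all lie in $\Lambda_+ = T{\mb Z}[[T]]$, the geometric series $\sum_{k \geq 0} R^k$ converges in the $T$-adic topology on the finitely generated free $\Lambda$-module ${\rm End}_\Lambda(CM_*(f;\Lambda))$ and provides a two-sided $\Lambda$-linear inverse to $\Psi^\pearl$; since $\Psi^\pearl$ is a chain map, each $R^k$ and hence the limit is as well, and both descend to mutually inverse maps on $H_*(M; \Lambda)$. The main anticipated obstacle is the identification carried out in the previous paragraph: one must verify that the framed-sphere style derived orbifold chart from Section~\ref{sec:pss} degenerates transparently at the constant-sphere stratum so that the FOP count really is $\delta_{\uds x, \uds y}$, and that an unwinding of the coherent orientation isomorphisms~\eqref{eqn: coh-bimod-1}--\eqref{eqn:coh-bimod-2} actually evaluates to sign $+1$ on the diagonal.
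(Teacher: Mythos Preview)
Your proposal is correct and follows essentially the same approach as the paper: invoke Theorem~\ref{thm_FOP_3} to produce the chain map, observe that nonemptiness of $\ov{\mc M}^\pearl_{xy}$ forces $\omega(b-a)\geq 0$, and identify the $b=a$ contribution with the single constant sphere at $\uds x$ when $\uds x = \uds y$ (and nothing in virtual dimension zero otherwise). The paper's proof is terser---it does not spell out the geometric-series inversion or the sign verification---so the caveats you flag at the end are honest acknowledgments of points the paper also leaves implicit rather than genuine gaps in your argument.
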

\begin{proof}
The construction of $\Psi^\pearl$ follows from Theorem \ref{thm_FOP_3} and discussions in Section \ref{subsection37}. To prove that $\Psi^\pearl$ is unitriangular, observe that for a pair of capped orbits $x=(\uds{x}, a)$ and $y=(\uds{y},b)$, the moduli space $\ov{\mc M}^\pearl_{xy}$ is nonempty only if $\omega(b-a) \geq 0$. Moreover, when $\omega(b-a) = 0$, the only nonempty moduli space contributing to the counting in $\Psi^\pearl(x)$ is the moduli space of parametrized gradient flow lines from $x$ to itself, which is a single point. It implies that the incidence coefficient $n^\pearl_{xx} = 1$. As a consequence, the statement is proved.
\end{proof}

\subsection{The homotopy}\label{subsec:hmtp}

In this subsection, we describe how to interpolate between the chain maps $\Psi^\ssp \circ \Psi^\pss$ and $\Psi^\pearl$ and prove Theorem \ref{thm-intro-pss}.

We choose a 3-parameter family of Hamiltonians parametrized by $t \in S^1$, $s \in {\mb R}$, and $\tau \in (-1, 1]$ which satisfy the following conditions.
\begin{enumerate}
    \item For each $\tau$, $H_{\tau, s, t}^\hmtp$ is equal to $C^\pss$ for $s$ near $-\infty$ and is equal to $C^\ssp$ for $s$ near $+\infty$.

    \item For all $\tau, s, t, x$ there holds 
    \beqn
    \frac{\partial H_{\tau, s, t}^\hmtp}{\partial s}(x) \geq 0.
    \eeqn
    
    \item As $\tau \to -1$, $H_{\tau, s, t}^\hmtp$ converges to the concatenation of $H_{s, t}^\pss$ and $H_{s, t}^\ssp$. 
    
    \item For $\tau$ near $+1$, $H_{\tau, s, t}^\hmtp$ does not depend on $t\in S^1$ and $x \in M$. In particular, the associated Hamiltonian vector field is zero.
\end{enumerate}

Then consider pairs $(\tau, u)$ where $\tau \in (-1, 1]$ and $u: \Theta \to M$ solving the equation 
\beq\label{eqn_homotopy}
\frac{\partial u}{\partial s} +  J \left( \frac{\partial u}{\partial t}  - X_{H_{\tau, s, t}^\hmtp}(u) \right) = 0.
\eeq
The energy of a solution is defined by 
\beqn
E^\hmtp(\tau, u):= \left\| \partial_s u \right\|_{L^2(\Theta)}^2.
\eeqn
As the Hamiltonian perturbation vanishes near $s = \pm \infty$, any finite energy solution converges at the infinities. Hence one can use an element of $\Pi$ to label solutions. Given $a \in \Pi$, let 
\beqn
{\mc M}{}_{\bullet\bullet}^\hmtp(a)
\eeqn
to be the set of solutions to \eqref{eqn_homotopy} whose $\Pi$-class is $a$. Moreover, by intersecting with (un)stable manifolds of Morse critical points, we can define 
\beqn
{\mc M}{}_{\uds x \uds y}^\hmtp(a)
\eeqn
to be the set of such solutions whose limit at $-\infty$ resp. $+\infty$ lies in the unstable resp. stable manifold of $\uds x$ resp. $\uds y$. Lastly, given $x = (a, \uds x), y = (b, \uds y) \in {\mc P}^\morse$, define 
\beqn
{\mc M}{}_{xy}^\hmtp:= {\mc M}{}_{\uds x \uds y}^\hmtp( b-a).
\eeqn

We compactify this space by adding configurations with sphere bubbles, broken Morse trajectories, and (when $\tau \to -1$) breakings at 1-periodic orbits of $H$. Denote the compactification by
\beqn
\ov{\mc M}{}_{xy}^\hmtp.
\eeqn
We call such moduli spaces the {\bf homotopy moduli spaces}.

\subsubsection{Stratifications on homotopy moduli spaces}

The system of the homotopy moduli spaces can be put in a more abstract narrative to include {\it bimodule compositions} and {\it bimodule homotopies}. However, as its role in the proof of the Arnold conjecture is technical rather than conceptual, we refrain from introducing such frameworks. Here we give a more concrete description of the stratifications on these moduli spaces. Fix $x, y \in {\mc P}^\morse$. We define a poset
\beqn
\bA_{xy}^\hmtp:= \bA_{xy}^{\pss + \ssp} \sqcup \mathring \bA_{xy}^\hmtp \sqcup \bA_{xy}^\pearl.
\eeqn
Here as sets
\beqn
\mathring \bA_{xy}^\hmtp = \bA_{xy}^\pearl
\eeqn
while as sets
\beqn
\bA_{xy}^{\pss + \ssp}:= \Big\{ x x_1 \cdots x_k p_1 \cdots p_s y_l \cdots y_1 y\ |\ xx_1 \cdots x_k p_1 \cdots p_s \in \bA_{xp_s}^\pss,\ p_s y_l \cdots y_1 y \in \bA_{p_s y}^\ssp \Big\}.
\eeqn
Then one can see that the moduli space $\ov{\mc M}{}_{xy}^\hmtp$ has a natural stratification whose strata are indexed by the set $\bA_{xy}^\hmtp$. This then naturally induces a partial order on $\bA_{xy}^\hmtp$ according to the breaking of Morse or Floer trajectories as well as whether the parameter $\tau$ hits $-1$ or $1$. We use $\kappa, \nu$ etc. instead of $\alpha, \beta$ to denote elements of $\bA_{xy}^\hmtp$. Then it is straightforward to check that $\bA_{xy}^\hmtp$ is a homogeneous poset with a unique maximal element and depth function being
\beqn
\dep(\kappa) = \left\{ \begin{array}{ll} {\rm number\ of\ breakings},\ &\ \kappa \in \mathring \bA_{xy}^\hmtp,\\
                                           {\rm number\ of\ breakings}+1,\ &\ \kappa \in \bA_{xy}^\pearl,\\
                                           {\rm number\ of\ breakings},\ &\ \kappa \in \bA_{xy}^{\pss + \ssp}. \end{array} \right.
\eeqn

The types of codimension one (i.e., strata on which the depth function takes value $1$) degenerations of configurations in the homotopy moduli spaces can be described by the following types of stratified embeddings.
\begin{enumerate}
    \item When the parameter $\tau$ hits $+1$, there is a stratified embedding
    \beq\label{hmtp_map_1}
    \vcenter{ \xymatrix{ \ov{\mc M}{}_{xy}^\morse \ar[r] \ar[d] & \partial^+ \ov{\mc M}{}_{xy}^\hmtp \ar[d] \\
                \bA_{xy}^\pearl \ar[r] &                \partial^+ \bA_{xy}^\hmtp}}.
                \eeq
                
                \item When the parameter $\tau$ hits $-1$, for each $p \in {\mc P}^\floer$, there is a stratified embedding 
                \beq\label{hmtp_map_2}
                \vcenter{ \xymatrix{ \ov{\mc M}{}_{xp}^\pss \times \ov{\mc M}{}_{py}^\ssp \ar[r] \ar[d] &  \partial^{xpy} \ov{\mc M}{}_{xy}^\hmtp\ar[d]\\
                           \bA_{xp}^\pss \times \bA_{py}^\ssp \ar[r] &  \partial^{xpy} \bA_{xy}^\hmtp} }.
                           \eeq
                           
                           \item When a Morse trajectory breaks off ``on the left'' there is a stratified embedding 
                           \beq\label{hmtp_map_3}
                           \vcenter{  \xymatrix{  \ov{\mc M}{}_{xx'}^\morse \times \ov{\mc M}{}_{x'y}^\hmtp \ar[r] \ar[d] &    \partial^{xx'y} \ov{\mc M}{}_{xy}^\hmtp \ar[d]\\
                                        \bA_{xx'}^\morse \times \bA_{x'y}^\hmtp \ar[r] & \partial^{xx'y} \bA_{xy}^\hmtp }  }.
                                        \eeq
                                        
    \item When a Morse trajectory breaks off ``on the right'' there is a stratified embedding 
                           \beq\label{hmtp_map_4}
                           \vcenter{  \xymatrix{  \ov{\mc M}{}_{xy'}^\hmtp \times \ov{\mc M}{}_{y'y}^\morse \ar[r] \ar[d] &    \partial^{xy'y} \ov{\mc M}{}_{xy}^\hmtp \ar[d]\\
                                        \bA_{xy'}^\hmtp \times \bA_{y'y}^\morse \ar[r] & \partial^{xy'y} \bA_{xy}^\hmtp }  }.
                                        \eeq
\end{enumerate}
The above maps satisfy a list of obvious associativity properties, which we do not describe explicitly here.

\subsubsection{Outer collaring}

We can also apply the general outer-collaring construction on the homotopy moduli spaces so that it naturally extends the outer-collaring of the Floer, Morse, PSS, and SPP moduli spaces. The same type of structure maps as listed in \eqref{hmtp_map_1}---\eqref{hmtp_map_4} are still present and satisfy the same associativity properties.

\subsubsection{Derived orbifold lift}

\begin{defn}\label{defn_hmtp_lift}
Assume the following objects are given.
\begin{enumerate}
\item A derived orbifold lift ${\mf D}^\floer$ of the outer-collared Hamiltonian Floer flow category $(T^\floer)^+$. (Remember there is also the trivial derived orbifold lift ${\mf D}^\morse$ of the outer-collared Morse flow category $(T^\morse)^+$). 

\item A derived orbifold lift ${\mf D}^\pss$ resp. ${\mf D}^\ssp$ of the outer-collared PSS resp. SSP bimodule which extends ${\mf D}^\floer$ and ${\mf D}^\morse$. 

\item A derived orbifold lift ${\mf D}^\pearl$ of the outer-collared pearly bimodule $(M^\pearl)^+$.

\end{enumerate}
Then a {\bf derived orbifold lift} of the system of the outer-collared homotopy moduli spaces $(\ov{\mc M}{}_{xy}^\hmtp)^+$ consists of the following objects. 
\begin{enumerate}
    \item For each $x, y$, an $\bA_{xy}^\hmtp$-stratified derived orbifold chart $C_{xy}^\hmtp$ of $(\ov{\mc M}{}_{xy}^\hmtp)^+$.
    
    \item Stratified chart embeddings
    \beq
    \vcenter{ \xymatrix{ C_{xy}^\pearl  \ar[r] \ar[d] & \partial^+ C_{xy}^\hmtp\ar[d]\\
              \bA_{xy}^\pearl \ar[r] &     \partial^+ \bA_{xy}^\hmtp} },
    \eeq
    \beq
    \vcenter{ \xymatrix{ C_{xp}^\pss \times C_{py}^\ssp \ar[r] \ar[d] & \partial^{xpy} C_{xy}^\hmtp \ar[d]\\
                         \bA_{xp}^\pss \times \bA_{py}^\pss \ar[r] & \partial^{xpy} \bA_{xy}^\hmtp} },
    \eeq
    \beq
    \vcenter{  \xymatrix{   C_{xx'}^\morse \times C_{x'y}^\hmtp \ar[r] \ar[d]  & \partial^{xx'y} C_{xy}^\hmtp \ar[d] \\
                            \bA_{xx'}^\morse \times \bA_{x'y}^\hmtp \ar[r] & \partial^{xx'y} \bA_{xy}^\hmtp} },
    \eeq
    and 
    \beq
    \vcenter{ \xymatrix{ C_{xy'}^\hmtp \times C_{y'y}^\morse \ar[r] \ar[d] &  \partial^{xy'y} C_{xy}^\hmtp \ar[d] \\
                          \bA_{xy'}^\hmtp \times \bA_{y'y}^\morse \ar[r] & \partial^{xy'y} \bA_{xy}^\hmtp } }.
    \eeq
\end{enumerate}
These objects need to satisfy the following conditions.
\begin{enumerate}
    \item The obvious associativity relations.
    
    \item For each $\kappa \in \bA_{xy}^\hmtp$, there is a product chart $C_\kappa^\hmtp$ obtained from taking corresponding product of certain Morse, Floer, PSS, SSP, pearly, and/or homotopy derived orbifold charts. The above associativity relations induce chart embeddings 
    \beqn
    {\bm \iota}_{\nu\kappa}^\hmtp: C_\kappa^\hmtp \to \partial^\kappa C_\nu^\hmtp\ \forall \kappa \leq \nu.
    \eeqn
    Then $((C_\kappa^\hmtp)_{\kappa \in \bA_{xy}^\hmtp}, ({\bm \iota}_{\nu\kappa})_{\kappa \leq \nu})$ form a derived orbifold presentation of the space $(\ov{\mc M}{}_{xy}^\hmtp)^+$. 
    
    \item When $\kappa \notin \partial^- \bA_{xy}^\hmtp$, the chart embedding ${\bm \iota}_{\nu\kappa}^\hmtp$ is an open embedding. 
    
    \item The obvious strict $\Pi$-equivariance condition. 
\end{enumerate}
\end{defn}

We also need to discuss the additional structures (collar, scaffolding, and straightening) on the derived orbifold lift of the system of homotopy moduli spaces. 

\begin{defn}
In addition to the assumption of Definition \ref{defn_hmtp_lift}, assume the following objects are given.
\begin{enumerate}
    \item A compatible package of additional structures on ${\mf D}^\floer$. Remember that there is also a package of additional structures on ${\mf D}^\morse$ which essentially contains only the collar structure.
    
    \item A compatible package of additional structures on ${\mf D}^\pss$ resp. ${\mf D}^\ssp$ which extend the ones on ${\mf D}^\floer$ and ${\mf D}^\morse$.
    
    \item A compatible package of additional structures on ${\mf D}^\pearl$ which extends the package on ${\mf D}^\morse$.
\end{enumerate}
Let ${\mf D}^\hmtp$ be a derived orbifold lift of the system of homotopy moduli spaces which extends all given derived orbifold lifts. 
\begin{enumerate}

\item A {\bf collar structure} resp. {\bf straightening} on ${\mf D}^\hmtp$ is a collection of collar structures resp. straightenings on the derived orbifold charts $C_{xy}^\hmtp$ satisfying the following condition. Fix $x, y$. For each $\kappa \in \bA_{xy}^\hmtp$, the product of collar structures resp. straightenings on all derived orbifold chart factors defines a collar structure resp. straightening on the product chart $C_\kappa^\hmtp$. Then the collection of collar structures resp. straightenings for all $\kappa \in \bA_{xy}^\hmtp$ is a collar structure resp. straightening of the derived orbifold presentation of $(\ov{\mc M}{}_{xy}^\hmtp)^+$.

\item A {\bf scaffolding} on ${\mf D}^\hmtp$ consists of a collection of scaffoldings
\beqn
\Big( \big( {\mc F}_{\nu\kappa}, {\bm \theta}_{\nu\kappa}\big)_{\kappa \leq \nu} \Big)_{x, y \in {\mc P}^\morse}
\eeqn
on the collection of derived orbifold presentations $((C_\kappa^\hmtp)_{\kappa \in \bA_{xy}^\hmtp}, ({\bm \iota}_{\kappa \leq \nu}))$ which satisfy the following conditions.
\begin{enumerate}
    \item If $\nu \in \partial^- \bA_{xy}^\hmtp$ (i.e. a stratum in the $\tau=-1$ slice of the moduli, i.e., a stratum of configurations with breakings at 1-periodic orbits of $H$), then $({\mc F}_{\nu\kappa}, {\bm \theta}_{\nu\kappa})$ is the product from corresponding difference bundles and stabilization maps on ${\mf D}^\floer$, ${\mf D}^\pss$, ${\mf D}^\ssp$, and/or the trivial one on ${\mf D}^\morse$.
    
    \item If $\kappa \notin \partial^- \bA_{xy}^\hmtp$, then ${\mc F}_{\nu\kappa} = 0$.
    
\end{enumerate}
\end{enumerate}
\end{defn}

Lastly we discuss orientations and normal complex structures on a system of derived orbifold lifts on homotopy moduli spaces. 

\begin{defn}\label{defn:homotopy-normal-C}
Given a derived orbifold lift ${\mf D}^\hmtp$, and a collar structure compatible with a scaffolding $\Big( \big( {\mc F}_{\nu\kappa}, {\bm \theta}_{\nu\kappa}\big)_{\kappa \leq \nu} \Big)_{x, y \in {\mc P}^\morse}$, assume that ${\mf D}^\floer$, ${\mf D}^\pss$, ${\mf D}^\ssp$, ${\mf D}^\morse$ and their scaffoldings and outer-collarings are endowed with a {\bf normal complex structure}. Then a normal complex structure on ${\mf D}^\hmtp$ with the given scaffolding and outer-collaring consists of
\begin{enumerate}
    \item A normal complex structure on all the underlying derived orbifold charts of $(C_\kappa^\hmtp)_{\kappa \in \bA_{xy}^\hmtp}$ such that the embeddings $({\bm \iota}_{\kappa \leq \nu})$ all intertwine with the normal complex structures.
    \item A complex structure on the vector bundles ${\mc F}_{\nu\kappa}$ such that the embeddings underlying ${\bm \theta}_{\nu\kappa}$ all respects the induced normal complex structures.
    \item The structural maps from the outer-collaring and the compatibility equations with the scaffolding intertwine with the normal complex structure as in Definition \ref{defn:normal-C}.
    \item The strict $\Pi$-action preserves the normal complex structures.
\end{enumerate}
\end{defn}

\begin{defn}
Let ${\mf D}^\floer$, ${\mf D}^\pss$, ${\mf D}^\ssp$, ${\mf D}^\morse, {\mf D}^\hmtp$ be as in Definition \ref{defn:homotopy-normal-C} with the respective collared structure, scaffolding, and normal complex structure. Moreover, assume that each of ${\mf D}^\floer$, ${\mf D}^\pss$, ${\mf D}^\ssp$, ${\mf D}^\morse$ is equipped with an orientation in the sense of Definition \ref{defn:orient} and Definition \ref{defn:orient-bimod}. Then an {\bf orientation} on the given normally complex lift of ${\mf D}^\hmtp$ consists of
\begin{enumerate}
\item For all $x,y$, an isomorphism of orientation lines
\begin{equation}\label{eqn:hmtp-sign}
\mathfrak{o}_{C_{xy}^\hmtp} \xrightarrow{\sim} \mathfrak{o}_x^{\vee} \otimes \mathfrak{o}_{y}.
\end{equation}
\item Isomorphisms of orientation lines
\begin{equation}\label{eqn:coh-hmtp}
\begin{aligned}
{\mf o}_{C_{xy}^\pearl} \xrightarrow{\sim} {\mf o}_{\partial^+ C^\hmtp_{xy}}, \\
{\mf o}_{C_{xp}^\pss} \otimes {\mf o}_{C_{py}^\ssp} \xrightarrow{\sim} {\mf o}_{\partial^{xpy}C_{xy}^\hmtp}, \\
{\mf o}_{C_{xx'}^\morse} \otimes {\mf o}_{C_{x'y}^\hmtp} \xrightarrow{\sim} {\mf o}_{\partial^{xx'y}C_{xy}^\hmtp}, \\
{\mf o}_{C_{xy'}^\hmtp} \otimes {\mf o}_{y' y}^\morse \xrightarrow{\sim} {\mf o}_{\partial^{xy'y}C^\hmtp_{xy}}.
\end{aligned}
\end{equation}
Here we omit the cancellation of the orientation lines of the complex scaffolding vector bundles and its dual.
\item The above isomorphisms are preserved by the $\Pi$-action.
\end{enumerate}
\end{defn}

\begin{thm}
There is a $\Lambda$-linear chain homotopy
$$
\Psi^\hmtp: CM_*(f;\Lambda) \to CM_{*-1}(f;\Lambda)
$$
such that 
\begin{equation}\label{eqn:hmtp-equiv}
\Psi^\pearl - \Psi^\ssp \circ \Psi^\pss = d^\morse \circ \Psi^\hmtp - \Psi^\hmtp \circ d^\morse.
\end{equation}
In particular, there holds
\begin{equation}\label{eqn:homology-equal}
\Psi^\ssp \circ \Psi^\pss = \Psi^\pearl
\end{equation}
as graded linear maps on $H_*(M; \Lambda)$.
\end{thm}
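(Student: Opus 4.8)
The plan is to construct $\Psi^\hmtp$ by the same FOP-perturbation machinery already developed for flow categories and flow bimodules, now applied to the system of outer-collared homotopy moduli spaces $(\ov{\mc M}{}^\hmtp_{xy})^+$. Concretely, one first invokes the (not-yet-proved-in-this-excerpt but assumed) existence of an oriented, normally complex derived orbifold lift ${\mf D}^\hmtp$ of this system together with a compatible package of collar structure, scaffolding, and straightening, extending the lifts ${\mf D}^\floer$, ${\mf D}^\morse$, ${\mf D}^\pss$, ${\mf D}^\ssp$, ${\mf D}^\pearl$ already fixed. Then, exactly as in the relative Theorem~\ref{thm_FOP_3}, one extends the already-chosen strongly transverse FOP perturbations on all the boundary pieces (the $\tau = +1$ face carrying $\Psi^\pearl$-data, the $\tau = -1$ faces carrying $C^\pss_{xp}\times C^\ssp_{py}$-data, and the Morse-breaking faces) to a strongly transverse FOP perturbation ${\mc S}^\hmtp_{xy}{}'$ on the interior, using the CUDV form of Proposition~\ref{prop:FOP_existence} and keeping everything $\Pi$-invariant. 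Counting, with signs read off from the orientation isomorphism \eqref{eqn:hmtp-sign}, the isotropy-free zeros of ${\mc S}^\hmtp_{xy}{}'$ lying in the virtual-dimension-one components, and organizing these counts $\Pi$-equivariantly, produces a $\Lambda$-linear map $\Psi^\hmtp\colon CM_*(f;\Lambda)\to CM_{*-1}(f;\Lambda)$; well-definedness (finiteness of the counts) follows from the energy bound analogous to \eqref{eqn34}, here coming from the bounds \eqref{eqn44x} and \eqref{eqn:SSP+} on $C^\pss$ and $C^\ssp$, together with finiteness of ${\mc P}^\morse/\Pi$.

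The identity \eqref{eqn:hmtp-equiv} is then obtained, in the now-standard fashion, by examining the codimension-one boundary of the virtual-dimension-two components of ${\mc U}^\hmtp_{xy}$, i.e.\ by applying the pseudocycle/``$d^2=0$''-type argument to the $1$-dimensional free locus of $({\mc S}^\hmtp_{xy}{}')^{-1}(0)$. The four families of codimension-one strata enumerated in \eqref{hmtp_map_1}--\eqref{hmtp_map_4} contribute exactly the four terms: the $\tau=+1$ stratum \eqref{hmtp_map_1} gives $\Psi^\pearl$; the $\tau=-1$ strata \eqref{hmtp_map_2} give $\Psi^\ssp\circ\Psi^\pss$ (using that the PSS/SSP charts on this face are products and the FOP counts are multiplicative, which is exactly Corollary~\ref{cor28}); the left-Morse-breaking strata \eqref{hmtp_map_3} give $\Psi^\hmtp\circ d^\morse$; and the right-Morse-breaking strata \eqref{hmtp_map_4} give $d^\morse\circ\Psi^\hmtp$. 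Summing the signed counts of these boundary points to zero yields \eqref{eqn:hmtp-equiv} after unwinding the orientation isomorphisms \eqref{eqn:coh-hmtp}; the signs work out precisely because the orientation data is chosen coherently. Passing to homology, the right-hand side of \eqref{eqn:hmtp-equiv} vanishes, giving \eqref{eqn:homology-equal}, namely $\Psi^\ssp\circ\Psi^\pss=\Psi^\pearl$ on $H_*(M;\Lambda)$.

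Combining \eqref{eqn:homology-equal} with Corollary~\ref{cor:unitriangular}, which asserts $\Psi^\pearl = \mathrm{Id} + O(T)$ and hence is invertible on $H_*(M;\Lambda)$, one concludes that $\Psi^\pss\colon H_*(M;\Lambda)\to HF_*(H;\Lambda)$ is injective, which is precisely the statement needed in the proof of Theorem~\ref{thm_intro_main} (and, spelled out at chain level, gives the form $\Psi^\ssp\circ\Psi^\pss = \mathrm{Id} + (\text{positive }T\text{-exponent terms})$ of Theorem~\ref{thm-intro-pss} after choosing the perturbations so that the leading-order term of $\Psi^\pearl$ is literally the identity).

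I expect the main obstacle to be \emph{not} the algebra of \eqref{eqn:hmtp-equiv} — which is formally identical to the $d^2=0$ and chain-map verifications already carried out — but rather the geometric input: constructing the derived orbifold lift ${\mf D}^\hmtp$ of the homotopy moduli spaces \emph{compatibly} with the five previously fixed lifts and their collars, scaffoldings, straightenings, orientations, and normal complex structures. The homotopy moduli space has a genuinely mixed stratification poset $\bA^\hmtp_{xy} = \bA^{\pss+\ssp}_{xy}\sqcup\mathring\bA^\hmtp_{xy}\sqcup\bA^\pearl_{xy}$, with the $\tau=-1$ face being ``multi-layered'' (needing a nontrivial scaffolding inherited from PSS/SSP) while the $\tau=+1$ pearly face is single-layered; threading the relative-equivariant-smoothing and collar-compatibility arguments through this non-uniform boundary structure, and verifying all the associativity diagrams, is the technically delicate part. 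In this section, however, that construction is taken as given (its proof deferred to Section~\ref{sec:pss}), so the argument above is complete modulo that input.
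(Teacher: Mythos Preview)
Your approach is correct and matches the paper's proof essentially step for step: invoke the existence of a compatible oriented normally complex D-chart lift ${\mf D}^\hmtp$ (deferred to Section~\ref{sec:pss}), extend the boundary FOP perturbations to the interior via the relative CUDV argument, define $\Psi^\hmtp$ by counting free-locus zeros, and read off \eqref{eqn:hmtp-equiv} from the codimension-one boundary structure \eqref{hmtp_map_1}--\eqref{hmtp_map_4} together with the orientation data \eqref{eqn:coh-hmtp}.

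There is one bookkeeping slip. You define $\Psi^\hmtp$ by counting zeros in the \emph{virtual-dimension-one} components and derive \eqref{eqn:hmtp-equiv} from the \emph{virtual-dimension-two} components; the paper (and consistency) requires virtual dimension \emph{zero} and \emph{one} respectively. In a component of virtual dimension one, a strongly transverse section has a one-dimensional free zero locus, not isolated points, so there is nothing to count. The degree shift $CM_*\to CM_{*-1}$ does not come from counting in higher virtual dimension; it is already built into the index formula for the homotopy charts, since ${\mc U}^\hmtp_{xy}$ carries the extra $\tau$-direction and hence has virtual dimension congruent to ${\rm ind}^\morse(x)-{\rm ind}^\morse(y)+1$ rather than ${\rm ind}^\morse(x)-{\rm ind}^\morse(y)$. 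With this correction your argument is complete.
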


\begin{proof}
Using the description presented in Section \ref{sec:pss}, there exists a compatible oriented and normally complex derived orbifold lift ${\mf D}^\hmtp$ of the outer-collared homotopy moduli spaces $(\ov{\mc M}{}_{xy}^\hmtp)^+$, with collared structure and scaffolding. Then the relative version of the arguments in Section \ref{subsubsec:straight} can be used to equip ${\mf D}^\hmtp$ with a straightening compatible with the given straightenings on ${\mf D}^\floer$, ${\mf D}^\pss$, ${\mf D}^\ssp$, ${\mf D}^\morse$. Therefore, a relative version of Theorem \ref{thm_FOP_2} allows us to find a compatible system of strongly transverse FOP perturbations of the derived orbifold charts in ${\mf D}^\hmtp$ extending the FOP perturbations on ${\mf D}^\floer$, ${\mf D}^\pss$, ${\mf D}^\ssp$, ${\mf D}^\morse$. Using the sign read off from \eqref{eqn:hmtp-sign}, the map $\Psi^\hmtp$ is defined by the algebraic count of zeroes of the FOP sections over moduli spaces of virtual dimension $0$. The algebraic relation \eqref{eqn:hmtp-equiv} is again read off from the boundary of moduli spaces of virtural dimension $1$, using the coherence of orientations \eqref{eqn:coh-hmtp}. The $\Lambda$-linearity is a result of the strict $\Pi$-action as before.
\end{proof}

\begin{proof}[Proof of Theorem \ref{thm-intro-pss}]
This is a direct consequence of Corollary \ref{cor:unitriangular} and Equation \eqref{eqn:homology-equal}.
\end{proof}

\subsection{Proof of the integral Arnold conjecture}\label{subsec-algebra}

\begin{lemma}(cf. \cite[Appendix]{Harvey_Minervini_2006})
The Novikov ring $\Lambda$ is a principal ideal domain (PID).
\end{lemma}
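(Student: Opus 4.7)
The plan is to prove that $\Lambda$ is a PID by a direct analysis of its ideals, exploiting the $T$-adic completeness of the power series ring ${\mb Z}[[T]]$ and the fact that $T$ becomes a unit in $\Lambda$. That $\Lambda$ is an integral domain is immediate from its realization as the localization of ${\mb Z}[[T]]$ at the multiplicative set $\{T^k\}_{k \geq 0}$.

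Given a nonzero ideal $I \subseteq \Lambda$, I would set $J := I \cap {\mb Z}[[T]]$; since every element of $\Lambda$ is of the form $T^{-k} u$ with $u \in {\mb Z}[[T]]$ and $T^{-1} \in \Lambda^\times$, one has $I = J \cdot \Lambda$, so generating $J$ over ${\mb Z}[[T]]$ will suffice to generate $I$ over $\Lambda$. Next, I would consider the image $J_0 \subseteq {\mb Z}$ of $J$ under the evaluation-at-zero map $u \mapsto u(0)$; a routine check shows $J_0$ is an ideal of ${\mb Z}$, hence $J_0 = (n)$ for some $n \geq 0$. A short argument rules out $n = 0$: if every $u \in J$ satisfied $u(0) = 0$, then $u = T u'$ with $u' = T^{-1} u \in I \cap {\mb Z}[[T]] = J$, so $J = T J$; iterating gives $J \subseteq \bigcap_{k} T^k {\mb Z}[[T]] = (0)$, contradicting $I \neq 0$.

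The remaining step is to show that any $x \in J$ with $x(0) = n$ generates $I$. Given $y \in J$, I would build the quotient iteratively: write $y(0) = m_0 n$, set $s_1 := T^{-1}(y - m_0 x)$, which lies in $J$ because $y - m_0 x$ has vanishing constant term and $I$ is closed under multiplication by $T^{-1}$; then recurse on $s_1$ to obtain integers $m_0, m_1, m_2, \ldots$ and elements $s_k \in J$ satisfying $y = (m_0 + m_1 T + \cdots + m_{k-1} T^{k-1}) x + T^k s_k$ for every $k$. Assembling the partial sums into $q = \sum_{i \geq 0} m_i T^i \in {\mb Z}[[T]]$ and invoking $T$-adic separation of ${\mb Z}[[T]]$ gives $y = q x$, so $J \subseteq (x) \cdot {\mb Z}[[T]]$ and therefore $I = (x)$. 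There is no substantive obstacle in this argument; the only point requiring care is that each recursive step stays inside $J$, which is essentially the content of the localization identity $I = J \cdot \Lambda$.
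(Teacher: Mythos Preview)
Your proof is correct and follows essentially the same approach as the paper's: both identify a generator whose leading coefficient (equivalently, constant term after normalizing into ${\mb Z}[[T]]$) equals the gcd $n$ of all such coefficients, and then construct the quotient $q$ by successive $T$-adic approximation. Your version is somewhat more explicit in framing $\Lambda$ as the localization ${\mb Z}[[T]][T^{-1}]$, working with the contracted ideal $J = I \cap {\mb Z}[[T]]$, and justifying $n \neq 0$ via the Krull intersection $\bigcap_k T^k{\mb Z}[[T]] = 0$, whereas the paper compresses these steps into a single sentence; but the underlying argument is the same.
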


\begin{proof}
By definition, we need to show that each ideal is a principal ideal, i.e., generated by a single element. Let $I \subseteq \Lambda$ be an ideal. Denote by $a_0 \in {\mb Z}_+$ the greatest common divisor of the leading coefficients of all elements of $I$. Then there exists ${\mf x}_0 \in I$ of the form ${\mf x}_0 = a_0 + {\mf y}_0$ with ${\mf y}_0 \in \Lambda_+$. We claim that 
\beqn
I = ({\mf x}_0).
\eeqn
Indeed, given any ${\mf x} \in I$, $a_0$ divides the leading order term of ${\mf x}$. We can inductively find an element ${\mf q}\in \Lambda$ such that ${\mf x} = {\mf q} {\mf x}_0$. Hence $I = ({\mf x}_0)$ and $\Lambda$ is a PID.
\end{proof}

Recall the standard structure results about finitely-generated modules over principal ideal domains. If $R$ is a principal ideal domain and $M$ is a finitely-generated $R$-module, then 
\beqn
M \cong F \oplus T
\eeqn
where $F$ is a free $R$-module of finite rank and $T$ is a torsion $R$-module. Moreover, 
\begin{enumerate}
    \item There exist primes $p_1, \ldots, p_k \in R$ and positive integers $m_1, \ldots, m_k$ such that
    \beqn
    T \cong R/ (p_1^{m_1}) \oplus \cdots \oplus R/ (p_k^{m_k} ).
    \eeqn
    The prime powers $p_i^{m_i}$ are called the {\it elementary divisors}.

    \item There exist nonzero nonunit elements $r_1, \ldots, r_l$ of $R$ such that $r_1 | r_2 |\cdots |r_l$ and
    \beqn
    T \cong R/ ( r_1 ) \oplus \cdots \oplus R/ (r_l ).
    \eeqn
    The elements $r_i$ are called the {\it invariant factors}.
\end{enumerate}

\begin{lemma}\label{lemma413}
Let $R$ be a PID. Let $Z$ be a free $R$-module of finite rank and $S \subseteq Z$ be a submodule. Suppose 
\beqn
Z/ S \cong F \oplus R/ (a_1) \oplus \cdots \oplus R/ (a_k)
\eeqn
where $F$ is a free $R$-module and $a_1|a_2|\cdots |a_k$ are invariant factors. Then 
\beqn
{\rm rank} Z \geq {\rm rank} F + k
\eeqn
and 
\beqn
{\rm rank} S  \geq k.
\eeqn
\end{lemma}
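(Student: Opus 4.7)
The plan is to apply the elementary divisor theorem (Smith normal form) for the pair $S \subseteq Z$ over the PID $R$, and then compare the resulting decomposition of $Z/S$ against the one given in the hypothesis. Since $R$ is a PID and $Z$ is free of finite rank, the submodule $S$ is automatically free of finite rank as well, so we may set $n = \operatorname{rank} Z$ and $m = \operatorname{rank} S$.

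The key step is to invoke the structure theorem for submodules of free modules over a PID: there exist a basis $e_1, \ldots, e_n$ of $Z$ and elements $d_1 \mid d_2 \mid \cdots \mid d_m$ of $R$ such that $d_1 e_1, \ldots, d_m e_m$ form a basis of $S$. Taking quotients yields
$$
Z/S \;\cong\; R/(d_1) \oplus \cdots \oplus R/(d_m) \oplus R^{\,n-m}.
$$
Let $j \in \{0, 1, \ldots, m\}$ be the largest index such that $d_1, \ldots, d_j$ are units in $R$, so that the summands $R/(d_i)$ with $i \leq j$ are zero while $R/(d_{j+1}), \ldots, R/(d_m)$ are nontrivial cyclic modules with invariant factors $d_{j+1} \mid \cdots \mid d_m$.

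The uniqueness part of the structure theorem for finitely generated modules over a PID applied to the two decompositions of $Z/S$ then yields $\operatorname{rank} F = n - m$ and $k = m - j$, with $\{a_1, \ldots, a_k\} = \{d_{j+1}, \ldots, d_m\}$. Consequently $\operatorname{rank} S = m \geq m - j = k$, which is the second inequality, and
$$
\operatorname{rank} Z \;=\; n \;=\; (n-m) + m \;=\; \operatorname{rank} F + \operatorname{rank} S \;\geq\; \operatorname{rank} F + k,
$$
which is the first. There is no substantial obstacle here — the argument is essentially a bookkeeping exercise once the Smith normal form is invoked; the only mild subtlety is remembering to discard the unit elementary divisors $d_1, \ldots, d_j$ when matching against the invariant factors $a_1, \ldots, a_k$ (which by convention are nonunits).
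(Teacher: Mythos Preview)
Your proof is correct but takes a different route from the paper. The paper does not invoke Smith normal form; instead it first tensors the short exact sequence $0 \to S \to Z \to Z/S \to 0$ with the fraction field of $R$ to obtain the exact equality $\operatorname{rank} Z - \operatorname{rank} S = \operatorname{rank} F$, reducing everything to the first inequality. It then picks a prime $p$ dividing $a_1$ (hence dividing every $a_i$), tensors with $R_p := R/(p)$, and observes that each $R/(a_i) \otimes_R R_p \cong R_p$, so $(Z/S) \otimes_R R_p$ has $R_p$-dimension $\operatorname{rank} F + k$; since $Z \otimes_R R_p$ surjects onto this, $\operatorname{rank} Z \geq \operatorname{rank} F + k$. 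Your Smith normal form argument is more structural and delivers the equality $\operatorname{rank} Z = \operatorname{rank} F + \operatorname{rank} S$ in one stroke, from which both inequalities drop out; the paper's base-change argument is lighter in that it avoids the adapted-basis theorem entirely and only needs tensoring with two fields. Both are entirely standard and of comparable length.
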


\begin{proof}
By tensoring with the quotient field of $R$, we can kill all torsions. Then it follows that ${\rm rank} Z - {\rm rank} S = {\rm rank} F$. Hence it suffices to prove the first lower bound. Choose a prime divisor $p$ of $a_1$. By tensoring with $R_p:= R/ (p)$, we can see that 
\beqn
Z\otimes_R R_p / S \otimes_R R_p \cong R_p^{{\rm rank} F + k}.
\eeqn
Therefore,
\beqn
{\rm rank} Z = {\rm dim}_{R_p} Z\otimes_R R_p \geq {\rm rank} F + k.\qedhere
\eeqn
\end{proof}

\begin{lemma}\label{lemma_torsion}
Let $M$ be a finitely-generated $R$-module and $N\subseteq M$ be a submodule. Then
\begin{enumerate}
    \item ${\rm rank} N \leq {\rm rank} M$.
    
    \item For each prime $p\in R$, the number of elementary divisors of $N$ which are powers of $p$ is less than or equal to the number of elementary divisors of $M$ which are powers of $p$.
    
    \item The number of invariant factors of $N$ is not greater than the number of invariant factors of $M$.
\end{enumerate}
\end{lemma}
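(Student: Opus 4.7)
\medskip

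The plan is to prove the three statements in order, by reducing each to a linear-algebra statement over an appropriate field. Throughout, let $K$ denote the field of fractions of $R$, and for a prime $p \in R$ let $R/(p)$ be the corresponding residue field.

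For (1), I would tensor the inclusion $N \hookrightarrow M$ with $K$. Since $K$ is flat over $R$, the induced map $N \otimes_R K \to M \otimes_R K$ remains injective, realizing $N \otimes_R K$ as a $K$-subspace of $M \otimes_R K$. Because ${\rm rank}_R N = \dim_K (N \otimes_R K)$ and similarly for $M$, the inequality is immediate.

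For (2), I would analyze the $p$-torsion subgroup $X[p] := \{x \in X : px = 0\}$ for $X = M, N$, viewed as a module over $R/(p)$. The inclusion $N \subseteq M$ clearly gives $N[p] \subseteq M[p]$, and $R/(p)$ being a field lets us conclude $\dim_{R/(p)} N[p] \leq \dim_{R/(p)} M[p]$. It then suffices to identify $\dim_{R/(p)} M[p]$ with the number of elementary divisors of $M$ which are powers of $p$. Using the elementary divisor decomposition, this reduces to the computation $(R/(p^m))[p] \cong (p^{m-1})/(p^m) \cong R/(p)$, so each $p$-power summand contributes exactly one dimension to $M[p]$, while free summands and summands at other primes contribute nothing.

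For (3), I would use the standard recipe that assembles invariant factors from elementary divisors: if for each prime $p$ one lists the $p$-power elementary divisors in decreasing order of exponent, then the number of invariant factors $l$ equals $\max_p k_p$, where $k_p$ is the number of $p$-power elementary divisors. Applying this to both $M$ and $N$, part (3) reduces to the inequality $\max_p k_p(N) \leq \max_p k_p(M)$, which follows immediately from (2). I do not expect any real obstacle here; the only point deserving care is the equivalence between the two formulations of invariant-factor count, which is a direct consequence of the Chinese remainder theorem applied to the elementary divisor form.
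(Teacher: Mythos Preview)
Your proposal is correct and follows essentially the same approach as the paper: tensoring with the fraction field for (1), comparing the $R/(p)$-vector spaces $N[p] \subseteq M[p]$ together with the computation $(R/(p^m))[p] \cong R/(p)$ for (2), and deducing (3) from (2) via the relation between elementary divisors and invariant factors. The paper is slightly terser in spots (it simply says the invariant-factor bound ``follows immediately''), while you spell out the $\max_p k_p$ description, but the arguments are the same.
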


\begin{proof}
By the structure theorem of finitely-generated modules over PID, we can write 
\begin{align*}
&\ M \cong F \oplus \bigoplus_{i=1}^k R / (p_i^{m_i}),\ &\ N \cong F' \oplus \bigoplus_{j=1}^l R/ (q_j^{n_j})
\end{align*}
where $p_i$, $q_j$ are primes of $R$ and $m_i, n_j \geq 1$. By tensoring the quotient field of $R$, we can see that ${\rm rank} F' \leq {\rm rank} F$. On the other hand, let $M(p)\subseteq M$ be the direct sum of $p$-torsions. Then the inclusion $N \hookrightarrow M$ induces injections $N(p) \hookrightarrow M(p)$ for each prime $p$. Therefore, the collection of $q_j$'s is a subset of the collection of $p_i$'s. Then for each prime $p$, denote 
\beqn
M[p]:= \{ x \in M\ |\ px = 0\}
\eeqn
which is a submodule. Then the inclusion $N\hookrightarrow M$ induces an injection $N[p]\hookrightarrow M[p]$. Notice that $R/(p)$ is a field and $M[p]$ is a $R/(p)$-vector space. Clearly $N[p]$ is a subspace. Moreover, recall that 
\beqn
(R/(p^m))[p] \cong R/(p).
\eeqn
Hence it follows that the number of $p$-elementary divisors of $N$ is no greater than the number of $p$-elementary divisors of $M$. The relation between their numbers of invariant factors follows immediately.
\end{proof}

\begin{proof}[Proof of Theorem \ref{thm_intro_main}]
Now consider the two ${\mb Z} / 2N$-graded $\Lambda$-modules, $HF_*(H;\Lambda)$ and $H_{*}(M;\Lambda)$. Because $\Psi^\ssp \circ \Psi^\pss = \Psi^\pearl = Id + O(T)$, so for each $i\in {\mb Z}/{2N}$, the PSS map embeds $H_{i}(M;\Lambda)$ as a submodule of $HF_i(H;\Lambda)$. Therefore, for the free part, one has the rank inequality
\beqn
{\rm rank} HF_i(H; \Lambda) \geq  {\rm rank} H_*(M;\Lambda),\ i \in {\mb Z}/2N.
\eeqn
As of torsion, Lemma \ref{lemma_torsion} implies the inequality between number of invariant factors
\beqn
\tau_i^\floer \geq \tau_i^\morse,\ i \in {\mb Z}/2N
\eeqn
On the other hand, we know that
\beqn
HF_i (H;\Lambda)= \frac{ {\rm ker} d_i^\floer}{{\rm image} d_{i+1}^\floer}
\eeqn
where both the numerator and denominator are free $\Lambda$-modules. Hence by Lemma \ref{lemma413}, one has 
\beqn
{\rm rank} ({\rm ker} d_i^\floer) \geq {\rm rank} H_i(M;\Lambda) +  \tau_i^\morse
\eeqn
and
\beqn
{\rm rank}({\rm image} d_{i+1}^\floer) \geq \tau_i^\morse.
\eeqn
Therefore, let $\# {\rm Per} H$ be the number of $1$-periodic orbits of $H$, we have
\begin{multline*}
\# {\rm Per} H = {\rm rank} CF_*(H;\Lambda) = \sum_{i\in {\mb Z}/{2N}} {\rm rank} CF_i (H;\Lambda)\\
= \sum_{i\in {\mb Z}/{2N}} \big( {\rm rank} ({\rm ker} d_i^\floer) + {\rm rank} ({\rm image} d_i^\floer)\big)\\
\geq \sum_{i\in {\mb Z}/{2N}} {\rm rank} H_i(M; \Lambda) + 2 \sum_{i\in {\mb Z}/{2N}} \tau_i^\morse\\
= \text{rank} H_{*}(M;{\mb Q}) + 2\sum_{i \in {\mb Z}/2N} \tau_{i}^{(2N)}(M).
\end{multline*}

\end{proof}

\section{Global Kuranishi charts on Floer moduli spaces}\label{sec-5}

In this section we provide the details for the global Kuranishi chart construction of moduli spaces of Floer trajectories. The construction is primarily inspired by the construction by Abouzaid--McLean--Smith \cite{AMS} of the global chart on a single moduli space of genus zero stable maps. The main effort, however, is to generalize their construction to the case of the Hamiltonian Floer flow categories, where global charts must be constructed consistently for infinitely many moduli spaces. Certain care must be taken in order to have the expected properties. This and the next section serve as the construction of a derived orbifold lift of the Hamiltonian Floer flow category: this section deals with the topological construction while Section \ref{sec-6} deals with the smoothing.

\subsection{Basic notions and the main theorem about global charts}

To state the main theorem of this section, we introduce an alternative version of charts. Because our construction of FOP perturbations is carried over orbifolds instead of smooth manifolds with an almost-free action of a compact Lie group, we refrain to give a general treatment of ``global charts" beyond the concrete geometric setting in this paper.




\subsubsection{Kuranishi charts}

We slightly generalize the usual notion of Kuranishi charts. 

\begin{defn}
Let $\bA$ be a countable homogeneous poset and $X$ be an $\bA$-space.

\begin{enumerate}
\item An $\bA$-stratified {\bf (topological) Kuranishi chart} (K-chart for short) on $X$ is a quintuple $(G, V, E, S, \psi)$ where $G$ is a compact Lie group, $V$ is an $\bA$-manifold with a continuous $G$-action, $E \to V$ is a $G$-equivariant vector bundle, $S: V \to E$ is a $G$-equivariant section, and $\psi: S^{-1}(0)/G \to X$ is a homeomorphism. We require the following condition: the stabilizer of each point $x \in V$ is finite, i.e., the $G$-action on $V$ is almost free.


\item A K-chart $(G, V, E, S, \psi)$ is said to be {\bf smooth} if $V$ is a smooth $\bA$-manifold, the $G$-action is smooth, and $E \to V$ is a smooth equivariant vector bundle (we do not impose any smoothness condition on $S$).
\end{enumerate}
We often omit the map $\psi$ in the notation because in the context its meaning will always be clear.
\end{defn}



\begin{rem}
Historically, there are different notions of Kuranishi charts which could be defined either via orbifolds or via equivariant objects. In this paper, the orbifold version will be labelled as ``derived'' and the name ``Kuranishi'' is reserved for the equivariant version while allowing actions by general compact Lie groups. We also use the prefixes ``D-'' and ``K-'' to denote these two versions.
\end{rem}

\begin{defn}
Let $K = (G, V, E, S)$ be a K-chart and $\pi_F: F \to V$ be a $G$-equivariant vector bundle. The {\bf stabilization} of $K$ by $F$, denoted by ${\rm Stab}_F(K)$, is the K-chart 
\beqn
{\rm Stab}_F(K) = (G, F, \pi_F^* E \oplus \pi_F^* F, \pi_F^* S \oplus \tau_F)
\eeqn
where $\tau_F: F \to \pi_F^* F$ is the tautological section.
\end{defn}

\subsubsection{Change of groups}

For K-charts, the notions of open embedding, germ equivalence, and product are almost identical to the case of derived orbifold charts after imposing the equivariance condition with respect to the Lie group action. There are some care to be taken of when the groups of symmetry change. To this end, we introduce the operation of enlarging the symmetry group.

\begin{defn}
Let $K = (G, V, E, S)$ be a K-chart and let $G \hookrightarrow G'$ be a Lie group embedding. Define the \textbf{change of group}, or the {\bf $G'$-equivariantization}, of $K$, to be
\beqn
G'\times_G K:= (G', G'\times_G V, G'\times_G E, S')
\eeqn
where $G'$ acts on the bundle $G'\times_G E \to G'\times_G V$ in the obvious way and 
\beqn
S'([g', x]) = [g', S(x)].
\eeqn
In the rest of the paper, we also use the notations
\beqn
G' (K):= G'\times_G K:= (G', G'(V), G'(E), G'(S)) := (G', G'\times_G V, G'\times_G E, S').
\eeqn
\end{defn}

\begin{lemma}
Suppose $G \hookrightarrow G'$ is a Lie group embedding and let $V$ be a $G$-space. If ${\bm W}$ is a $G'$-representation and $E \to V$ is the trivial $G$-bundle $V \times {\bm W}$ where we view ${\bm W}$ as a $G$-representation, then $G'(E)$ is isomorphic to the trivial $G'$-bundle over $G'(V) = G'\times_G V$ with fiber ${\bm W}$.
\end{lemma}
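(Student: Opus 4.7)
The plan is to construct an explicit $G'$-equivariant bundle isomorphism
\[
\Phi \colon G'\times_G(V\times {\bm W}) \;\longrightarrow\; (G'\times_G V)\times {\bm W},
\]
where the right-hand side carries the diagonal $G'$-action (which is trivial on the ${\bm W}$-factor, since the target is declared to be the trivial $G'$-bundle). The natural candidate is
\[
\Phi\bigl([g',x,w]\bigr) \;=\; \bigl([g',x],\, g'\cdot w\bigr).
\]
Here the expression $g'\cdot w$ requires that ${\bm W}$ carries a $G'$-action, which is exactly the hypothesis of the lemma; note that as a $G$-representation ${\bm W}$ alone, the formula would not make sense.

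First I would check that $\Phi$ is well defined on the quotient $G'\times_G(V\times {\bm W})$. For $g\in G$ one has
\[
\Phi\bigl([g'g,x,w]\bigr) = \bigl([g'g,x],\,(g'g)\cdot w\bigr) = \bigl([g', gx],\, g'\cdot(g\cdot w)\bigr) = \Phi\bigl([g',gx,g\cdot w]\bigr),
\]
using associativity of the $G'$-action on ${\bm W}$ restricted to $G$. By construction $\Phi$ covers the identity on $G'\times_G V$, and it is fiberwise linear in the ${\bm W}$-coordinate, so it is a bundle map. For $G'$-equivariance, given $h\in G'$,
\[
\Phi\bigl(h\cdot[g',x,w]\bigr) = \Phi\bigl([hg',x,w]\bigr) = \bigl([hg',x],\,(hg')\cdot w\bigr) = h\cdot \Phi\bigl([g',x,w]\bigr),
\]
where on the right we use the diagonal $G'$-action on $(G'\times_G V)\times {\bm W}$.

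Finally I would exhibit the two-sided inverse
\[
\Psi\bigl([g',x],w\bigr) \;=\; \bigl[g',x,\,(g')^{-1}\cdot w\bigr],
\]
and check that it is well defined: if $[g',x]=[g'g^{-1},gx]$ for $g\in G$, then
\[
[g'g^{-1},\,gx,\,(g'g^{-1})^{-1}\cdot w] = [g'g^{-1},\,gx,\, g\cdot(g')^{-1}\cdot w] \sim [g',x,\,(g')^{-1}\cdot w],
\]
using once more the equivalence relation defining the balanced product and the fact that $G\subset G'$ acts on ${\bm W}$ via the restriction of the $G'$-action. A direct computation shows $\Phi\circ\Psi=\mathrm{id}$ and $\Psi\circ\Phi=\mathrm{id}$, completing the proof.

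There is really no substantial obstacle here; the only subtle point is conceptual rather than technical, namely that extending the $G$-action on ${\bm W}$ to a $G'$-action is exactly what supplies the trivialization. The argument is a variant of the standard fact that an induced representation from a subgroup is trivial precisely when the original representation extends to the ambient group, applied fiberwise over $V$.
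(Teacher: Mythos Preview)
Your proof is correct and is essentially identical to the paper's: your map $\Phi$ is exactly the paper's map $\zeta$, and you have simply spelled out the inverse and the equivariance check that the paper leaves to the reader. One small expository slip: the parenthetical claim that the $G'$-action is ``trivial on the ${\bm W}$-factor'' is misleading---the trivial $G'$-bundle $(G'\times_G V)\times {\bm W}$ carries the diagonal action $h\cdot([g',x],w)=([hg',x],hw)$, which is precisely what your equivariance computation (correctly) uses.
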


\begin{proof}
Define the map $\zeta: G'(E) = G'\times_G (V \times {\bm W}) \to (G'\times_G V)\times {\bm W}$ by
\beqn
\zeta ([g', (v, w)]) = ([g', v], g' w).
\eeqn
Notice that 
\beqn
\zeta( [g' g, (v, w)]) = ( [g'g, v], g' g w) = ([ g', gv], g' (gw)) = \zeta ([g', (gv, gw)]) = \zeta([g', g(v, w)]).
\eeqn
Hence $\zeta$ is well-defined. It is also straightforward to check that $\zeta$ is a map of $G'$-equivariant vector bundles over $G'\times_G V$ and is an isomorphism.
\end{proof}

\subsubsection{Chart embeddings}
Because of the presence of compact Lie group, the definition of chart embeddings of K-charts also differs slightly from the notion of chart embeddings of derived orbifold charts.

\begin{defn}\label{weak_chart_embedding}
Let $K_1 = (G_1, V_1, E_1, S_1, \psi_1)$ and $K_2 = (G_2, V_2, E_2, S_2, \psi_2)$ be two topological K-charts. 
\begin{enumerate}

\item A {\bf weak K-chart embedding}  from $K_1$ to $K_2$, denoted by 
\beqn
{\bm \iota}_{21}: K_1 \wembed K_2,
\eeqn
consists of a group embedding $G_1 \hookrightarrow G_2$,\footnote{In the concrete situations of this paper, the group embeddings are always fixed by the geometric data, and the embeddings are induced from inclusions} an equivairant topological embedding $\iota_{21}: G_2(V_1) \hookrightarrow V_2$ which has a $G_2$-invariant neighborhood equivariantly homeomorphic to a $G_2$-equivariant vector bundle\footnote{This seemingly redundant requirement is necessary because in the topological category submanifolds do not necessarily have vector bundle neighborhoods.}, and an equivariant vector bundle embedding $\widehat \iota_{21}: G_2(E_1) \hookrightarrow E_2$ covering $\iota_{21}$. (Due to a small defect of the global chart construction we will use, a weak K-chart embedding does not necessarily intertwine with the Kuranishi sections. This explains the adjective ``weak" in the terminology.) 

\item A weak K-chart embedding is called a {\bf K-chart embedding} if the following diagrams commute.
\begin{align*}
&\ \vcenter{ \xymatrix{  G_2(E_1) \ar[r]^{\widehat\iota_{21}} & E_2 \\
            G_2(V_1) \ar[r]_{\iota_{21}} \ar[u]^{G_2(S_1)} & V_2 \ar[u]_{S_2}  }}, &\ \vcenter{\xymatrix{ S_1^{-1}(0)/G_1 \ar[r]^{\iota_{21}} \ar[d]_{\psi_1} & S_2^{-1}(0)/G_2 \ar[d]^{\psi_2} \\
              X \ar[r]_{{\rm id}_X} & X }   }.
              \end{align*}

\item A (weak) K-chart embedding is called a (weak) open embedding if $\iota_{21}$ is a homeomorphism onto an open subset and $\widehat \iota_{21}$ is a bundle isomorphism.
\end{enumerate}
\end{defn}

Notice that (weak) K-chart embeddings can be composed in an obvious way.

\subsubsection{Notations}

\begin{notation}\label{notation:G}
\begin{enumerate}

\item For each positive integer $d$, let $\bA_d$ be the poset of all ordered partitions of $d$, i.e., 
\beqn
\bA_d:= \big\{ (d_0, \ldots, d_l)\ |\ d_0 + \cdots + d_l = d,\ d_i \in {\mb Z}_{>0} \big\}.
\eeqn
The partial order is induced from refinements of partitions. It is a homogeneous poset with a unique maximal element $(d)$ and depth function $\dep(d_0, \ldots, d_l) = l$. Moreover, there are natural inclusions
\beqn
\bA_{d_0} \times \cdots \times \bA_{d_l} \to \partial^{(d_0, \ldots, d_l)} \bA_{d_0 + \cdots + d_l}
\eeqn
which satisfy the obvious associativity relation.

\item For each pair $p, q\in {\mc P}^\floer$, define
\beq
d_{pq}:= {\mc A}^\floer (q) - {\mc A}^\floer  (p).
\eeq
By the integrality assumption of the symplectic action (see Hypothesis \ref{hyp51}), $d_{pq}$ is an integer. In practice we only consider the situation when $d_{pq} \geq 0$. Moreover, there is a natural poset map 
\beq\label{eqn52xxx}
\begin{array}{rcl}
\delta: \bA_{pq}^\floer & \to & \bA_{d_{pq}}\\
\alpha = pr_1 \cdots r_l q & \mapsto & \delta(\alpha) =  (d_{pr_1}, \ldots, d_{r_l q})
\end{array}
\eeq
such that the following diagram commutes.
\beqn
\xymatrix{ \bA_{pr}^\floer \times \bA_{rq}^\floer \ar[r] \ar[d] & \partial^{prq} \bA_{pq}^\floer \ar[d] \\
            \bA_{d_{pr}} \times \bA_{d_{rq}} \ar[r] & \partial^{(d_{pr}, d_{rq})} \bA_{d_{pq}}}
\eeqn

\item The {\bf system of extra symmetries} is the collection of compact Lie groups
    \beqn
    G_d:= \{ g \in PU(d +1)\ |\ g([1, 0, \ldots, 0]) = [1, 0, \ldots, 0]\in \mb{CP}^d  \} \cong U(d).
    \eeqn
    for all $d \geq 1$. Here $PU(d+1)$ acts on $\mb{CP}^d$ in the standard way. The identification with $U(d)$ is given by 
    \beq\label{eqn52}
    U(d) \ni g' \mapsto \left[ \begin{array}{cc} 1 & 0 \\ 0 & g' \end{array} \right]\in PU(d + 1).
    \eeq
    Denote 
    \beqn
    G_{pq}:= G_{d_{pq}}\ \forall p, q\in {\mc P}^\floer, {\mc A}^\floer (p) < {\mc A}^\floer (q).
    \eeqn
    
\item For each $\delta = (d_0, \ldots, d_l) \in \bA_d$, there is a group embedding
\begin{equation}\label{eqn:G-embedding}
G_\delta:= G_{d_0} \times \cdots \times G_{d_l} \hookrightarrow G_d
\end{equation}
defined by
\beqn\label{eqn:group_embedding}
 \left( \left[\begin{array}{cc} 1 & 0 \\ 0 & g_0' \end{array}\right], \dots,
 \left[\begin{array}{cc} 1 & 0 \\ 0 & g_l' \end{array}\right] \right) 
 \mapsto
 \left[\begin{array}{cccc} 1 & 0 & \cdots & 0 \\ 0 & g_0' & \cdots & 0
 \\ 0 & \cdots & \ddots & 0
 \\ 0 & 0 & \cdots & g_l'
 \end{array}\right].
\eeqn

\item For each $\alpha = pr_1 \cdots r_l  \in \bA_{pq}^\floer$, we also denote 
\beqn
G_\alpha:= G_{pr_1 \cdots r_l q}:= G_{pr_1}\times \cdots \times G_{r_l q} = G_{\delta(\alpha)}
\eeqn
and identify it with the embedding image in $G_{pq}$. Then whenever $\alpha \leq \beta$ there is a group embedding $G_\alpha \hookrightarrow G_\beta$.
\end{enumerate}

\end{notation}

\subsubsection{The main statement}
To simplify the notations, through out this section, we use $\ov{\mc M}_{pq}$ to denote the moduli space $\ov{\mc M}{}_{pq}^\floer$.

The following definition is the counterpart of Definition \ref{defn:d-chart-pre}, \ref{defn:flow-lift} for K-charts.

\begin{defn}
A \textbf{weak K-chart presentation (with the system of groups $\{ G_\alpha \}_{\alpha \in \bA^\floer_{pq}}$)} of the $\bA^\floer_{pq}$-space $\ov{\mc M}_{pq}$ consists of the following objects. 
\begin{enumerate}
    \item A collection of K-charts
\beqn
\Big( K_\alpha = (G_\alpha, V_\alpha, E_\alpha, S_\alpha, \psi_\alpha) \Big)_{\alpha\in \bA^\floer_{pq}}
\eeqn
of $( \partial^\alpha \ov{\mc M}_{pq} )_{\alpha \in \bA^\floer_{pq}}$.

\item A collection of weak K-chart embeddings
\beqn
\big\{ {\bm \iota}_{\beta\alpha}: K_\alpha \wembed \partial^\alpha K_\beta      \big\}_{\alpha \leq \beta}
\eeqn
\end{enumerate}
They satisfy the following condition.  
\begin{enumerate}
    \item The weak K-chart embeddings satisfy the cocycle condition. Namely, for any triple of strata $\alpha \leq \beta\leq \gamma$, one has
    \beqn
    {\bm \iota}_{\gamma\beta}\circ {\bm \iota}_{\beta\alpha} = {\bm  \iota}_{\gamma\alpha}.
    \eeqn

    \item For each pair of strata $\alpha \leq \beta$, there are a $G_\alpha$-equivariant vector bundle $F_{\beta\alpha} \to V_\alpha$ and a germ of weak open K-chart embedding
    \beqn
    {\bm \theta}_{\beta\alpha}: {\rm Stab}_{F_{\beta\alpha}}( K_\alpha) \wembed \partial^\alpha K_\beta
    \eeqn
    whose restriction to the zero section coincides with ${\bm \iota}_{\beta\alpha}$. We call ${\bm \theta}_{\beta\alpha}$ a {\bf stabilization map}.
\end{enumerate}
The weak K-chart presentation is called a {\bf K-chart presentation} if all weak K-chart embeddings are K-chart embeddings.
\end{defn}

\begin{defn}\label{Klift_defn}
A \textbf{weak K-chart lift} of the Hamiltonian Floer flow category $T^\floer$ consists of the following objects.
\begin{enumerate}

    \item A collection of topological K-charts
    \beqn
    \big\{ K_{pq} = (G_{pq}, V_{pq}, E_{pq}, S_{pq}, \psi_{pq} ) \big\}_{p< q}
    \eeqn
    of $\ov{\mc M}_{pq}$.
    
    \item A collection of weak K-chart embeddings 
    \begin{equation}\label{eqn:k-chart-embedd}
    \big\{ {\bm \iota}_{prq}: K_{pr}\times K_{rq}  \wembed  \partial^{prq} K_{pq}\big\}_{p< r < q}.
    \end{equation}
    \end{enumerate}
    These objects need to satisfy the following conditions.
    \begin{enumerate}
        \item When $p = q$, $V_{pp}$ is a singleton and $E_{pp} = \{0\}$.
        
        \item The weak chart embeddings satisfy the associativity. More precisely, the following diagram commutes,
        \beqn
        \xymatrix{  &    K_{pr}\times K_{rs}\times K_{sq} \ar[ld] \ar[rd] & \\
                      \partial^{prs} K_{ps} \times K_{sq} \ar[rd] & & K_{pr}\times \partial^{rsq} K_{rq} \ar[ld] \\
                      & \partial^{prsq} K_{pq} & }
        \eeqn
        where the arrows are defined by \eqref{eqn:k-chart-embedd}.
        \item For each stratum $\alpha  =  pr_1 \cdots r_l q \in \bA^\floer_{pq}$, consider the product topological K-chart 
    \beqn
    K_\alpha = (G_\alpha, V_\alpha, E_\alpha, S_\alpha, \psi_\alpha) = K_{pr_1}\times \cdots K_{r_l q}.
    \eeqn
    The above property implies that for each pair of strata $\alpha \leq \beta$ of $pq$, there is a well-defined weak K-chart embedding 
        \beqn
        {\bm \iota}_{\beta\alpha}: K_\alpha \wembed \partial^\alpha K_\beta.
        \eeqn
    Then the collection $((K_\alpha)_{\alpha \in \bA_{pq}}, ({\bm \iota}_{\beta\alpha})_{\alpha \leq \beta})$ form a weak K-chart presentation of $\ov{\mc M}_{pq}$.
    \end{enumerate}
The weak K-chart lift is called a {\bf K-chart lift} if all the weak chart embeddings are chart embeddings. 
\end{defn}

\begin{notation}
We introduce the following notation and convention which are frequently used in this paper. The $G_{pq}$-equivariantization of the product chart $K_\alpha$ is denoted by 
\beqn
K_\alpha^\sim:= G_{pq}\times_{G_\alpha}(K_\alpha) = (G_{pq}, V_\alpha^\sim, E_\alpha^\sim, S_\alpha^\sim).
\eeqn
Then usually the version of notations $K_\alpha$ (or $K_{pr_1\cdots r_l q}$) indicates the objects are constructed from taking products and the version of notations $K_\alpha^\sim$ (or $K_{pr_1 \cdots r_l q}^\sim$) indicates the objects are constructed from taking products and an equivariantization (by a certain group which is clear from the context).
\end{notation}

Now we are ready to state the first main theorem of this section.

\begin{thm}\label{thm:flow-chart}
The Hamiltonian Floer flow category $T^\floer$ admits a weak K-chart lift. 
\end{thm}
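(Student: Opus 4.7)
My plan is to extend the Abouzaid--McLean--Smith global Kuranishi chart construction from genus-zero stable spheres to the setting of Hamiltonian Floer trajectories, producing a system of compatible charts indexed by pairs of capped orbits. The construction will proceed inductively on the topological energy $d_{pq} = {\mc A}^\floer(q) - {\mc A}^\floer(p)$, starting from pairs with minimal (nonzero) energy and extending to larger ones.

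For a fixed pair $p < q$, I would construct $K_{pq} = (G_{pq}, V_{pq}, E_{pq}, S_{pq}, \psi_{pq})$ by framing as follows. Replace the framed $J$-holomorphic spheres of \cite{AMS} by framed $J$-holomorphic cylinders: for a representative $u$ of a class in $\ov{\mc M}_{pq}$, consider an auxiliary line bundle of degree $d_{pq}$ on the (prestable) domain, together with a basis of sections pinning one of the asymptotic orbits to $[1:0:\cdots:0] \in \mb{CP}^{d_{pq}}$. The remaining $d_{pq}$-dimensional freedom in the framing is acted on by $G_{pq} = U(d_{pq})$, identified with the stabilizer of $[1:0:\cdots:0]$ in $PU(d_{pq}+1)$ via \eqref{eqn52}. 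The thickened space $V_{pq}$ parametrizes pairs consisting of a framed prestable map (satisfying the Floer equation modulo an obstruction input) together with an element of a finite-dimensional obstruction vector space, chosen large enough to surject onto the cokernel of the linearization of \eqref{Floereqn} at every point of $\ov{\mc M}_{pq}$. The bundle $E_{pq}$ packages the Floer equation residual and the framing constraints, and $S_{pq}$ is the tautological defining section; the map $\psi_{pq}$ is given by forgetting the framing and the obstruction input.

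The weak chart embedding ${\bm \iota}_{prq}: K_{pr} \times K_{rq} \wembed \partial^{prq} K_{pq}$ will come from the block-diagonal group embedding \eqref{eqn:G-embedding}: a broken trajectory with a framing on each piece is reassembled into a single $\mb{CP}^{d_{pq}}$-framing by embedding the two projective spaces $\mb{CP}^{d_{pr}}$ and $\mb{CP}^{d_{rq}}$ into orthogonal subspaces of $\mb{CP}^{d_{pq}}$. To ensure this is a genuine equivariant topological embedding with a vector bundle neighborhood (and not just a set-theoretic map), I would enlarge the obstruction data compatibly at each inductive step so that the obstruction input on a broken configuration decomposes as a direct sum, up to a finite rank bundle $F_{\beta\alpha}$ over $V_\alpha$ which precisely measures the mismatch. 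This yields the stabilization germ equivalence ${\bm \theta}_{\beta\alpha}: {\rm Stab}_{F_{\beta\alpha}}(K_\alpha) \wembed \partial^\alpha K_\beta$ required in the definition of K-chart presentation. Associativity of these embeddings under triple breakings $prsq$ would follow from the block-diagonal nature of \eqref{eqn:G-embedding} and careful matching of the obstruction decompositions.

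The principal obstacle, which also motivates the adjective \emph{weak} in the statement, is the \emph{multi-layered thickening} alluded to in the introduction and to be developed in Section \ref{sec-5}. A single global choice of obstruction space on $\ov{\mc M}_{pq}$ cannot simultaneously satisfy the decomposition on every stratum $\alpha \in \bA^\floer_{pq}$; instead one must introduce separate obstruction contributions associated to each potential break point and stack them according to the stratification poset. Keeping this bookkeeping consistent across arbitrarily many broken levels, and compatible with the cocycle relations for ${\bm \iota}_{\beta\alpha}$, is the technical heart of the construction. The fact that the obstruction sections may only agree up to a controlled error at the boundary of a stratum is what forces the embeddings to be \emph{weak} K-chart embeddings at this stage; Section \ref{sec-6} will then upgrade them to genuine embeddings after the smoothing process.
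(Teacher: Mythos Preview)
Your overall plan matches the paper's approach closely: extend the Abouzaid--McLean--Smith framing construction to prestable cylinders, use $G_{pq}\cong U(d_{pq})$ acting on framings, build the thickened space $V_{pq}$ from framed cylinders with obstruction inputs, and embed products via the block-diagonal inclusions \eqref{eqn:G-embedding}. The paper carries this out in Sections~\ref{sec-5}.2--5.6, and your sketch is a fair summary of that outline.

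Two points are mischaracterized, however, and they matter for the actual execution.

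\emph{The source of the adjective ``weak.''} You attribute the weakness to the obstruction sections only agreeing ``up to a controlled error'' on boundary strata. In the paper the domain and bundle embeddings $\iota_{prq}$, $\widehat\iota_{prq}$ are honest; what fails to match is the $Q$-component of the Kuranishi map. The framing on a broken configuration is $L^2$-orthonormal on each piece separately, but after the concatenation map $\zeta_{(d_{pr},d_{rq})}$ the resulting frame is not $L^2$-orthonormal with respect to $\Omega_{u,H}$ on the full domain. So $S_{pq}|_{\partial^{prq}V_{pq}}$ differs from $S_{pr}\times S_{rq}$ in the Hermitian-matrix normalization, not in the $O$-part. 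This is fixed not by smoothing but by the outer-collaring in Subsection~\ref{subsection57}, where one interpolates between the two normalization conventions over the collar.

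\emph{What the multi-layered thickening actually indexes.} You describe it as ``separate obstruction contributions associated to each potential break point.'' In the paper the layers are indexed by a fixed increasing sequence $k_1<\cdots<k_d$ with $d=d_{pq}$ (Definition~\ref{defn:multi-thick}), chosen once and for all by Proposition~\ref{prop525}; they are not attached to break points. The point is rather that $V_{rs}$ is built with $d_{rs}$ layers, and the inclusion $V_{rs}^{d_{rs}}\hookrightarrow V_{rs}^{d_{pq}}$ (padding by zero in the extra layers) is what allows the product $V_\alpha$ to embed into $\partial^\alpha V_{pq}$ compatibly. Your description would lead you to try to attach obstruction pieces to strata of $\bA_{pq}^\floer$, which is a different (and more complicated) bookkeeping than what is actually needed.

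Finally, you omit one ingredient that is essential and not present in the closed-curve AMS setting: the construction of the Hermitian line bundle $L_u$ on the prestable cylinder via abelian gauge theory (Lemma~\ref{lemma:connection-exist}, Corollary~\ref{HK2}). Without this there is no canonical degree-$d_{pq}$ line bundle to frame, since the domain is noncompact and the naive pullback $u^*\omega$ does not give integral periods on cylindrical components; the correction by $-d(H_t(u)\,dt)$ and the integrality hypothesis on symplectic actions are what make the framing well-defined.
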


We also refer to the weak K-chart lift as the ``global chart.'' This theorem certainly does not contain all the properties we will need in the final construction. Eventually, we need to upgrade the weak lift to a lift and equip it with a smooth structure.

\subsection{Global chart construction I}

Following \cite[Section 6]{AMS} (also in the same spirit as \cite[Section 3]{Siebert_1999}), the goal of this subsection is to introduce a system of auxiliary moduli spaces $(B_d)_{d\geq 1}$ parametrizing stable holomorphic cylinders in $\mb {CP}^{d}$, which serve as Deligne--Mumford type moduli spaces for stable Floer trajectories. Moreover, we will consider the gluing of different (stable) holomorphic cylinders from different complex projective spaces. Such consideration will play an important role in the study of global charts of the boundary strata of moduli spaces of (stable) Floer trajectories.

\subsubsection{Moduli spaces of stable cylinders}

Following \cite{AMS} and \cite{Abouzaid_Blumberg}, we introduce some auxiliary moduli spaces. Consider a genus zero prestable curve $\Sigma$ with two marked points $z_-$, $z_+$. The two marked points induce a decomposition 
\beqn
\Sigma = \Sigma_{\rm cyl}\cup \Sigma_{\rm sph}
\eeqn
of $\Sigma$ into the cylindrical components and spherical components, where the cylindrical components are determined by the vertices on the line connecting $z_-$ and $z_+$ in the dual graph. Each cylindrical component $\Sigma_i \subset \Sigma$ has two special points corresponding to the negative and positive infinities $z_{i, \pm}$. There is a ${\mb C}^*$-action on $\Sigma_\alpha$, given by biholomorphisms fixing $z_{i, \pm}$. 

To fix the rotational gauge on the cylinder, we introduce the following concept. Identifying ${\mb C}^*$ with $S^1 \times {\mb R}$, a {\bf lateral line} on a cylindrical component $\Sigma_i$ is an ${\mb R}$-orbit $\L_i \subset \Sigma_i$. There are other equivalent notions such as asymptotic markers which can also fix the rotational gauge.

\begin{defn}\hfill\label{stablecylinder}
\begin{enumerate}
\item A {\bf prestable cylinder} is a pair $(\Sigma, {\bf L})$ where $\Sigma$ is a genus zero pretable curve with two marked points $z_-, z_+$ and ${\bf L} = (\L_i)$ where each $\L_i \subset \Sigma_i$ is a lateral line on the cylindrical component $\Sigma_i$. The cylindrical irreducible components are also referred to as the {\bf horizontal levels} of $(\Sigma, {\bf L} )$.

\item A {\bf marked stable cylinder} is a triple $(\Sigma, {\bf z}, {\bf L})$ where $(\Sigma, {\bf L})$ is a prestable cylinder and ${\bf z}$ is a list of extra marked points which are different from $z_\pm$ and the nodal points $z_{i,\pm}$, such that each irreducible component is stable (i.e., with at least three special points). 
\end{enumerate}
\end{defn}

The notion of isomorphisms of marked stable cylinders can be defined in the obvious way. Let $\ov{\mc M}{}_{0, 2, d'}^{\mb R}$ be the moduli space of marked stable cylinders with $d'$ marked points. It is a compactification of the moduli space ${\mc M}_{0,2,d'}^{\mb R}$, the moduli space of marked stable cylinders with $d'$ marked points with only one cylindrical component and zero spherical components. Forgetting the lateral line defines a smooth $S^1$-fibration ${\mc M}_{0,2,d'}^{\mb R} \to {\mc M}_{2+d'}$ to the moduli space of genus $0$ curves with $2+d'$ marked points with smooth domains. Following \cite[Section 3.2]{KSV-95}, $\ov{\mc M}{}_{0, 2, d'}^{\mb R}$ could be obtained from the Deligne--Mumford space $\ov{{\mc M}}_{0, 2+d'}$ by performing real blowups along the irreducible components of the normal crossings divisor defined by stable curves with at least two cylindrical components. $\ov{\mc M}{}_{0, 2, d'}^{\mb R}$ is a smooth manifold with corners: for a more detailed discussion, the reader could refer to \cite[Section 2.1]{Liu_Tian_Floer} or \cite[Section 9.7]{Abouzaid_Blumberg}. To simplify the notation, we usually abbreviate $(\Sigma, {\bf L})$ as $\Sigma$ when there is no ambiguity caused by the context.

\subsubsection{Stable maps to projective spaces}

We consider the moduli space of genus zero stable holomorphic maps into $\mb{CP}^d$ with two marked points. Denote by
\beqn
{\mc F}:= {\mc F}_{0, 2}(d)\subset \ov{\mc M}_{0,2}(\mb{CP}^d, d)
\eeqn
the subset of maps whose image is not entirely contained in any hyperplane. Any two smooth curves in  ${\mc F}$ can be mapped to another by an element of the symmetry group $PGL(d+1)$ of $\mb{CP}^d$. Let ${\mc C} = {\mc C}_{0,2}(d) \to {\mc F}_{0, 2}(d)$ be the universal curve which has an induced $PGL(d+1)$-action.

\begin{lemma}\label{lem:ev-submersion}
Both ${\mc F}_{0,2}(d)$ and ${\mc C}_{0,2}(d)$ are smooth quasi-projective varieties. Moreover, the evaluation maps at two marked points
\beqn
\ev_+, \ev_-: {\mc F}_{0,2}(d) \to \mb{CP}^d
\eeqn
are smooth submersions.
\end{lemma}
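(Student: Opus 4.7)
The plan is to realize ${\mc F}_{0,2}(d)$ as the non-degenerate open locus in $\ov{\mc M}_{0,2}(\mb{CP}^d, d)$, whose basic structure as a projective Deligne--Mumford stack is classical, and then to verify smoothness, trivial isotropy, and submersivity using standard deformation theory plus one clean equivariance trick for the submersion.

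First I would check that the condition ``image not contained in any hyperplane'' is Zariski open on $\ov{\mc M}_{0,2}(\mb{CP}^d, d)$, so ${\mc F}_{0,2}(d)$ is an open substack. Next, I would show that every stable map in ${\mc F}_{0,2}(d)$ has trivial automorphism group: on any non-contracted irreducible component the map is non-constant onto a curve in $\mb{CP}^d$, so any domain automorphism commuting with $u$ must be the identity there; on any contracted component stability forces $\geq 3$ special points, which also rigidifies the automorphism. Hence ${\mc F}_{0,2}(d)$ is a (quasi-projective) variety rather than a stack.

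The heart of smoothness is the vanishing $H^1(C, u^*T\mb{CP}^d) = 0$ for every stable map $(C, z_\pm, u) \in {\mc F}_{0,2}(d)$. I would use the normalization sequence
\begin{equation*}
0 \to u^*T\mb{CP}^d \to \nu_*(\tilde u^* T\mb{CP}^d) \to \bigoplus_{p_j} (T\mb{CP}^d)_{u(p_j)} \to 0,
\end{equation*}
where $p_j$ runs over the nodes of $C$ and $\nu: \tilde C \to C$ is the normalization. On each component $\tilde C_i \cong \mb{CP}^1$ with $\deg \tilde u_i = d_i \geq 0$, the pulled-back Euler sequence gives $H^1(\tilde C_i, \tilde u_i^* T\mb{CP}^d) = 0$. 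At each node the evaluation $H^0(\tilde C_i, \tilde u_i^* T\mb{CP}^d) \to T_{u(p_j)}\mb{CP}^d$ is already surjective using only one adjacent branch: writing sections as tuples $(h_0, \dots, h_d)$ of degree $d_i$ polynomials modulo $\mb{C}\cdot(f_0,\dots,f_d)$, the values at $p_j$ can be prescribed arbitrarily. The long exact sequence then delivers $H^1(C, u^*T\mb{CP}^d) = 0$. Combined with the unobstructed deformation theory of genus-$0$ pointed prestable curves, this gives smoothness of ${\mc F}_{0,2}(d)$; the smoothness of the universal curve ${\mc C}_{0,2}(d)$ follows because its miniversal deformation at each node is the standard smoothing family $xy = t$, whose total space is smooth over a smooth base.

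For the submersivity of $\ev_\pm$, the slick observation is equivariance: the group $PGL(d+1)$ acts on $\mb{CP}^d$, preserves the non-degeneracy condition, and induces a smooth action on ${\mc F}_{0,2}(d)$ by postcomposition, with respect to which $\ev_\pm$ is equivariant. Since $PGL(d+1)$ acts transitively on $\mb{CP}^d$, the derivative of the orbit map at $u$ surjects onto $T_{u(z_\pm)}\mb{CP}^d$, so $d\ev_\pm$ is surjective at every point of ${\mc F}_{0,2}(d)$, including nodal ones. The main technical obstacle will be the careful bookkeeping at nodal domains for the $H^1$-vanishing step, but this is cleanly handled by the normalization sequence once one notices that node-evaluation is already surjective from a single adjacent component.
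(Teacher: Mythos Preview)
Your proposal is correct and essentially matches the paper's approach. The paper proves submersivity by exactly your $PGL(d+1)$-equivariance argument, and for smoothness and quasi-projectivity it simply cites \cite[Lemma 6.4]{AMS}; your deformation-theoretic sketch (normalization sequence, Euler sequence on each component, surjectivity of node-evaluation) is the standard unpacking of that citation.

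One small point to tighten: in the automorphism step you assert that on any non-contracted component ``the map is non-constant onto a curve, so any domain automorphism commuting with $u$ must be the identity.'' Non-constancy alone does not rule out multiple covers, which could carry nontrivial automorphisms. What actually forces each non-contracted component to be an \emph{embedding} onto a rational normal curve in its own linear span is the non-degeneracy of the total map together with the degree count: if the components have degrees $d_i$ and span subspaces $V_i \subset \mb{C}^{d+1}$, then $\dim V_i \leq d_i + 1$, and the tree structure with $n-1$ nodes (each contributing at least a shared line) gives $\dim(\sum V_i) \leq \sum(d_i+1) - (n-1) = d+1$; equality forces $\dim V_i = d_i+1$ for all $i$, hence each $u|_{C_i}$ is a rational normal curve in $\mb{P}(V_i)$ and in particular an embedding. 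With this observation inserted, your argument is complete.
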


\begin{proof}
The first statement follows from \cite[Lemma 6.4]{AMS}. For the submersive property, note that the natural action of $PGL(d+1)$ on $\ov{\mc M}_{0,2}(\mb{CP}^d, d)$ preserves the subspace ${\mc F}_{0, 2}(d)$. Because the linearization of $PGL(d+1)$-action on $\mb{CP}^d$ at any point defines a surjection from its Lie algebra to the tangent space of $\mb{CP}^d$ at this point, the linearization of $\ev_{\pm}$ is surjective as well.
\end{proof}


\subsubsection{Stable cylinders in projective spaces}

For the purpose of studying Floer theory, one needs to consider the ``real'' version of the above moduli spaces.

The moduli spaces of stable cylinders in $\mb{CP}^d$ is the set of equivalence classes of objects
\beqn
(\Sigma, {\bf L}, u)
\eeqn
such that $(\Sigma, {\bf L})$ is a prestable cylinder and $u: \Sigma \to \mb{CP}^d$ is a stable map, i.e., every constant component has at least three special points. The equivalence relation is defined as follows: $(\Sigma, {\bf L}, u) \sim (\Sigma', {\bf L}', u')$ if there is an isomorphism $\varphi: (\Sigma, {\bf L}) \cong (\Sigma', {\bf L}')$ such that $u= u' \circ \varphi$. 

\begin{defn}
Let $\ov{\mc M}{}_{0,2}^{\mb R}(\mb{P}^d, k)$ denote the moduli space of stable cylinders in $\mb{CP}^d$ with degree $k$ times the generator of $H_2(\mb{CP}^d; {\mb Z})$. Define
\beqn
{\mc F}{}_{0,2}^{\mb R}(d) = \{ [\Sigma, {\bf L}, u] \in \ov{\mc M}{}_{0,2}^{\mb R}(\mb{CP}^d, d)\ |\ [\Sigma, u]\in {\mc F}_{0,2}(d) \}.
\eeqn
\end{defn}

Note that the evaluation maps $\ev_{\pm}$ at the two marked points on ${\mc F}_{0,2}(d)$ naturally lift to evaluation maps on ${\mc F}{}_{0,2}^{\mb R}(d)$.

\begin{lemma}\label{lem:r-blowup}
There exists a natural structure of smooth $\bA_d$-manifold on ${\mc F}{}_{0, 2}^{\mb R}(d)$ 
satisfying the following conditions.
\begin{enumerate}
    \item The forgetful map ${\mc F}{}_{0,2}^{\mb R}(d) \to {\mc F}_{0,2}(d)$ is smooth. 
    
    \item The evaluation maps $\ev_{\pm}: [\Sigma, {\bf L}, u] \mapsto u(z_{\pm})$ are smooth and they are stratified submersive (see Definition \ref{defn:strong-sub}).
    
    \item For any element $a \in {\mc F}{}_{0,2}^{\mb R}(d)$ represented by $(u, \Sigma, {\bf L})$, choose a generic collection of $d' = d(d+2)$ hyperplanes and $w_1, \ldots, w_{d'} \in \Sigma$ such that $u$ intersects with $H_i$ transversely at $w_i$. Then the locally defined map given by taking a stable cylinder to the marked stable cylinder where the marked points are specified by the intersections with these hyperplanes is a diffeomorphism to an open subset of $\ov{\mc M}{}_{0, 2+d'}^{\mb R}$.
\end{enumerate}
\end{lemma}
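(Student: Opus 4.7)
The plan is to bootstrap from the complex moduli space ${\mc F}_{0,2}(d)$, whose smooth quasi-projective structure is provided by Lemma \ref{lem:ev-submersion}, to the real version ${\mc F}{}_{0,2}^{\mb R}(d)$ by a fiberwise real blowup construction analogous to the one relating $\ov{\mc M}{}_{0,2+d'}^{\mb R}$ to $\ov{\mc M}{}_{0,2+d'}$ as described in \cite{KSV-95}. Concretely, the forgetful map $\pi: {\mc F}{}_{0,2}^{\mb R}(d) \to {\mc F}_{0,2}(d)$ records a choice of lateral line on each cylindrical component of the domain, so along each locally closed stratum of ${\mc F}_{0,2}(d)$ indexed by a partition $\delta = (d_0, \ldots, d_l) \in \bA_d$ (i.e.\ configurations with $l+1$ cylindrical components of degrees $d_0, \ldots, d_l$), the fiber of $\pi$ is $(S^1)^{l+1}$. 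The total space will then be the real-oriented blowup of ${\mc F}_{0,2}(d)$ along the nested system of strata where at least one additional cylindrical breaking occurs, with corner strata corresponding to simultaneous breakings.

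The key step is to use the local slice theorem provided by the hyperplane intersections. Given a point $[\Sigma, u] \in {\mc F}_{0,2}(d)$, choose $d' = d(d+2)$ hyperplanes $H_1, \ldots, H_{d'}$ in $\mb{CP}^d$ that meet $u(\Sigma)$ transversely at distinct smooth points $w_1, \ldots, w_{d'}$, none of which coincides with the two marked points $z_\pm$ nor the nodes; this is possible because the map has no components contained in a hyperplane. By \cite[Lemma 6.4]{AMS} (see also \cite[Section 3]{Siebert_1999}), the induced map sending a stable map in a neighborhood to its marked domain defines an open embedding of a neighborhood in ${\mc F}_{0,2}(d)$ into $\ov{\mc M}_{0, 2+d'}$. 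Pulling back the real-blowup structure of $\ov{\mc M}{}_{0, 2+d'}^{\mb R}$ along this open embedding produces a local chart for ${\mc F}{}_{0,2}^{\mb R}(d)$, verifying item (3) tautologically. The compatibility of these charts under changing the choice of hyperplanes follows because the $PGL(d+1)$-action on ${\mc F}_{0,2}(d)$ lifts canonically to ${\mc F}{}_{0,2}^{\mb R}(d)$ (the lateral lines are geometric features of the domain), and because the real blowup is functorial with respect to smooth maps transverse to the blowup center.

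To verify smoothness of the forgetful map (item (1)), one notes that pulling back the local diffeomorphism in item (3) along ${\mc F}{}_{0,2}^{\mb R}(d) \to {\mc F}_{0,2}(d)$ recovers the projection $\ov{\mc M}{}_{0,2+d'}^{\mb R} \to \ov{\mc M}{}_{0, 2+d'}$, which is smooth by the classical real blowup construction. For the stratified submersivity of the evaluation maps $\ev_\pm$ (item (2)), I would argue stratum by stratum using the structure established in Lemma \ref{lem:ev-submersion}: on each open stratum indexed by $\delta \in \bA_d$, the real moduli space is a principal $(S^1)^{l+1}$-bundle over the corresponding stratum of ${\mc F}_{0,2}(d)$. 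The evaluation at $z_{\pm}$ depends only on the underlying map (rotating a lateral line does not affect the image of $z_\pm$), so factors as the bundle projection composed with $\ev_{\pm}$ on ${\mc F}_{0,2}(d)$; submersivity on each stratum is then inherited from Lemma \ref{lem:ev-submersion}.

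The main potential obstacle is the compatibility of the local charts across corner strata of different codimensions, i.e.\ verifying that the different real blowup structures induced from different choices of hyperplanes patch together to give a well-defined smooth $\bA_d$-manifold with corners, rather than merely a topological one. The essential point is that the real blowup of a normal crossings divisor is intrinsically defined (independent of how one realizes it inside a smooth ambient), and the smooth transition maps between different slices given by different hyperplane choices descend from the smooth $PGL(d+1)$-action on the complex side. Once this intrinsic characterization is in place, items (1)--(3) fall out of the construction.
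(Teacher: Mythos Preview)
Your argument is correct and reaches the same conclusions, but the route differs from the paper's in an instructive way. You build the smooth structure on ${\mc F}{}_{0,2}^{\mb R}(d)$ \emph{locally}, by pulling back $\ov{\mc M}{}_{0,2+d'}^{\mb R}$ along the hyperplane-intersection embedding ${\mc F}_{0,2}(d) \supset U \hookrightarrow \ov{\mc M}_{0,2+d'}$, so that item (3) becomes the definition of the charts and is tautological. The paper instead gives a \emph{global} description: ${\mc F}{}_{0,2}^{\mb R}(d)$ is realized as an $S^1$-bundle over the real blowup ${\mb R}{\mc F}_{0,2}(d)$ of ${\mc F}_{0,2}(d)$ along the divisors ${\mc F}_{0,2}(d_1,d_2)$, by encoding the lateral lines $(\theta_1,\ldots,\theta_k)$ as the differences $[\theta_{i}-\theta_{i-1}]$ (which become the exceptional $S^1$ parameters of the blowup) together with one residual overall $S^1$. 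Item (3) is then deduced a posteriori from \cite[Proposition 6.5]{AMS}. Your approach is more direct for item (3) but requires the compatibility-of-charts argument you flag at the end; the paper's approach sidesteps that bookkeeping by building the manifold-with-corners structure once, globally, at the cost of making item (3) a separate verification. Both approaches handle items (1) and (2) the same way---factoring through the forgetful map and invoking the $PGL(d+1)$-action from Lemma \ref{lem:ev-submersion}.
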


\begin{proof}
Consider the subspace of ${\mc F}_{0,2}(d)$ which consists of stable holomorphic maps with exactly two cylindrical components of degrees $d_1$ and $d_2$. Denote by the closure of this space in ${\mc F}_{0,2}(d)$ by ${\mc F}_{0,2}(d_1, d_2)$. Then ${\mc F}_{0,2}(d_1, d_2)$ is a smooth divisor in the quasi-projective variety ${\mc F}_{0,2}(d)$. We can consider the real blowup of ${\mc F}_{0,2}(d)$ along ${\mc F}_{0,2}(d_1, d_2)$ ranging over all $d_1 + d_2 = d$ and denote it by ${\mb R}{\mc F}_{0,2}(d)$. 

We claim that ${\mc F}_{0,2}^{\mb R}(d)$ is an $S^1$-bundle over ${\mb R}{\mc F}_{0,2}(d)$. Indeed, given any stable cylinder $(\Sigma, {\bf L}, u) \in {\mc F}{}_{0,2}^{\mb R}(d)$ with $k$ horizontal levels, denote by $\theta_i$ to be the $S^1$-parameter of the lateral line on the $i$-th cylindrical component of $\Sigma$, with the convention that the first component contains $z_-$ while the $k$-th component contains $z_+$. Then the datum
$$ (\Sigma, u, [\theta_2 - \theta_1], \dots, [\theta_k - \theta_{k-1}]) $$
defines a point in ${\mb R}{\mc F}_{0,2}(d)$ because $\theta_i - \theta_{i-1}$ can be identified with the $S^1$-parameter in the exceptional divisor associated to some ${\mc F}_{0,2}(d_1, d_2)$. Conversely, given $\theta \in S^1$ and an element in ${\mb R}{\mc F}_{0,2}(d)$, we can define an element $(\Sigma, {\bf L}, u)$ in ${\mc F}_{0,2}^{\mb R}(d)$ by requiring $(\Sigma, u)$ to be the image of the blow-down map ${\mb R}{\mc F}_{0,2}(d) \to {\mc F}_{0,2}(d)$ and the lateral line ${\bf L}$ is defined by the converse process of the previous construction. Note that these constructions are well-defined after choosing local coordinates near each stratum of ${\mc F}_{0,2}(d)$ and they can be patched up together. Using this description, we can view ${\mc F}_{0,2}^{\mb R}(d)$ as manifold with corners because ${\mb R}{\mc F}_{0,2}(d)$ is so. Moreover, ${\mc F}_{0,2}^{\mb R}(d)$ is actually a manifold with faces, with codimension-$1$ faces corresponding to a partition $d= d_1 + d_2$.

The forgetful map ${\mc F}_{0,2}^{\mb R}(d) \to {\mc F}_{0,2}(d)$ is defined by $(u, \Sigma, {\bf L}) \mapsto (u, \Sigma)$, which factors through the projection ${\mc F}_{0,2}^{\mb R}(d) \to {\mb R}{\mc F}_{0,2}(d)$ and the blow-down map ${\mb R}{\mc F}_{0,2}(d) \to {\mc F}_{0,2}(d)$, therefore it is smooth. The evaluation map $\text{ev}_+$ (resp. $\text{ev}_-$) is the composition of the forgetful map ${\mc F}_{0,2}^{\mb R}(d) \to {\mc F}_{0,2}(d)$ and the ordinary evaluation map $\text{ev}_+: {\mc F}_{0,2}(d) \rightarrow {\mb CP}^d$ (resp. $\text{ev}_-: {\mc F}_{0,2}(d) \rightarrow {\mb CP}^d$), so $\text{ev}_{\pm}$ are smooth as well. Accordingly, $\text{ev}_{\pm}$ are stratified submersive following the same proof of Lemma \ref{lem:ev-submersion} because the action of $PGL(d+1)$ preserves the strata of the simple normal crossings divisor defined by $F_{0,2}(d_1 + d_2)$.

For the last statement, it follows from \cite[Proposition 6.5]{AMS} and the descriptions of ${\mc F}_{0,2}^{\mb R}(d)$ and $\ov{\mc M}{}_{0, 2+d'}^{\mb R}$ as $S^1$-bundles over real blowups.
\end{proof}



\subsubsection{A system of auxiliary moduli spaces}

Stable cylinders in projective spaces serve as models of domains of Floer trajectories. Because of the existence of broken Floer trajectories, we need to construct a system of moduli spaces of stable cylinders to capture the information from these boundary strata.

\begin{notation}\label{notation:B}
\begin{enumerate}

\item Whenever ${\mc A}^\floer (p) \leq {\mc A}^\floer  (r) < {\mc A}^\floer (s) \leq {\mc A}^\floer (q)$, embed $\mb{CP}^{d_{rs}}$ into $\mb{CP}^{d_{pq}}$ using the map
\beqn
[z_0, \ldots, z_{d_{rs}}] \mapsto \big[\ \underbrace{0, \ldots, 0}_{d_{pr}}, z_0, \ldots, z_{d_{rs}}, \underbrace{0, \ldots, 0}_{d_{sq}}\ \big].
\eeqn

\item The {\bf system of auxiliary moduli spaces} is the collection of moduli spaces 
\beqn
B_d:= \big\{ x = [\Sigma, {\bf L}, u] \in {\mc F}{}_{0,2}^{\mb R}(d)\ |\ \ev_-(x) = [1, 0, \ldots, 0] \in \mb{CP}^d \big\}.
\eeqn
For a partition of $d$ given by $\delta$ from which $d = d_0 + \cdots + d_l$, we define
\beqn
B_{\delta} := B_{d_0} \times \cdots \times B_{d_l}.
\eeqn
Using this notation, define
\beqn
B_\alpha:= B_{\delta(\alpha)},\ {\rm where}\ \alpha \in \bA_{pq}^\floer
\eeqn
where $\alpha \mapsto \delta(\alpha)$ is the poset map \eqref{eqn52xxx}.\end{enumerate}
\end{notation}

\begin{lemma}\label{lem:b-delta}
$B_d$ is a smooth $\bA_d$-manifold with a smooth $G_d$-action. 
\end{lemma}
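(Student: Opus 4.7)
The proof will be a direct application of Lemma \ref{lem:r-blowup} and Lemma \ref{lem:regular-level}. Set $y_0 := [1, 0, \ldots, 0] \in \mb{CP}^d$, so that by definition $B_d = \ev_-^{-1}(y_0)$ inside ${\mc F}{}_{0,2}^{\mb R}(d)$. By Lemma \ref{lem:r-blowup}, ${\mc F}{}_{0,2}^{\mb R}(d)$ carries the structure of a smooth $\bA_d$-manifold, the target $\mb{CP}^d$ is a smooth manifold without boundary, and the evaluation map $\ev_-$ is smooth and stratified submersive in the sense of Definition \ref{defn:strong-sub}. Applying Lemma \ref{lem:regular-level} to $f = \ev_-$ at the regular value $y_0$ immediately produces a smooth $\bA_d$-manifold structure on $B_d$, where each stratum $(B_d)_\delta$ for $\delta \in \bA_d$ is cut out from the corresponding stratum of ${\mc F}{}_{0,2}^{\mb R}(d)$ by the regular-value condition.

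For the group action, I would first note that the natural post-composition action of $PGL(d+1)$ on ${\mc F}_{0,2}(d)$ preserves the divisors ${\mc F}_{0,2}(d_1,d_2)$ corresponding to stable maps with prescribed cylindrical levels, because these loci are intrinsic to the combinatorial type of the domain. This action therefore lifts smoothly to the real blowup $\mathbb R {\mc F}_{0,2}(d)$ and further to the $S^1$-bundle ${\mc F}{}_{0,2}^{\mb R}(d) \to \mathbb R {\mc F}_{0,2}(d)$ introduced in the proof of Lemma \ref{lem:r-blowup}, and the lift makes the evaluation map $\ev_-$ equivariant with respect to the standard action on $\mb{CP}^d$. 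Restricting to the compact subgroup $G_d \subset PU(d+1) \subset PGL(d+1)$, which by construction (Notation \ref{notation:G}) is the stabilizer of $y_0$, we obtain a smooth $G_d$-action on ${\mc F}{}_{0,2}^{\mb R}(d)$ that preserves $B_d = \ev_-^{-1}(y_0)$ setwise. Applying the equivariant part of Lemma \ref{lem:regular-level} (with the map $\ev_-$ replaced by the $G_d$-invariant map obtained by composing with a $G_d$-invariant chart around $y_0$, or equivalently by observing that the implicit-function-theorem construction of Lemma \ref{lem:regular-level} is natural with respect to the ambient $G_d$-action since $y_0$ is fixed) yields a smooth $G_d$-action on $B_d$.

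There is no serious obstacle here; the only point requiring a line of care is the compatibility of the $PGL(d+1)$-action with the real blowup, which follows because the strata to be blown up are $PGL(d+1)$-invariant. The fact that $G_d$ fixes $y_0$ rather than the whole of $\mb{CP}^d$ is what allows the equivariant level-set theorem to apply even though Lemma \ref{lem:regular-level} is stated for invariant maps.
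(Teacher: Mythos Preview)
Your argument is correct and matches the paper's own proof, which likewise invokes the blowup description from Lemma \ref{lem:r-blowup}, the stratified submersiveness of $\ev_-$, and Lemma \ref{lem:regular-level}. Your treatment is in fact more explicit than the paper's, which does not spell out the lift of the $PGL(d+1)$-action to the real blowup or the point that $G_d$ fixes $y_0$ (needed to apply the equivariant clause of Lemma \ref{lem:regular-level}); these details are implicit in the paper but you have filled them in correctly.
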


\begin{proof}
This is a consequence of the blowup description of ${\mc F}{}_{0,2}^{\mb R} (d)$ as from Lemma \ref{lem:r-blowup}, the fact that $\ev_{-}: {\mc F}^{\mb R}_{0,2}(d) \rightarrow \mb{CP}^{d}$ is a stratified submersive, and Lemma \ref{lem:regular-level}. 
\end{proof}

For a partition $\delta \in \bA_{d}$, denote by $\partial^{\delta} \mathring{B}$ the locally closed smooth $G_d$-submanifold of $B_d$ given by 
\beqn
\partial^\delta \mathring B:= \partial^\delta B \setminus \bigcup_{\delta' < \delta} \partial^{\delta'} B.
\eeqn

\begin{defn}[Normalized evaluation]\label{defn:norm-ev}
Given $x \in B_d$, its {\bf normalized evaluation} at $z_+$ is a unit vector $(a_0, \ldots, a_d ) \in {\mb C}^d$ specified as follows. Suppose $\delta = (d_0, \ldots, d_l)$ and $x \in \partial^{\delta} \mathring{B}$. Assume that $x$ is represented by a stable cylinder $(\Sigma, {\bf L}, u)$. Let $\Theta_i^0$ be $i$-th cylindrical component of the domain $\Sigma$ and let $u_i^0$ be the restriction of $u$ to this cylindrical component. Using the lateral lines, one can identify $\Theta_i$ with ${\mb C}^*$ with the lateral line identified with the positive real axis, and with $z_{i,-}$ resp. $z_{i,+}$ identified with $0$ resp. $\infty$. Moreover, $u_i^0$ can be written as 
\beqn
u_i^0 (z) = [ f_{i, 0}(z), \ldots, f_{i, d}(z)]
\eeqn
where $f_{i, 0}, \ldots, f_{i, d}$ are complex polynomials of degrees at most $d_i$. Let $d_i^0 \leq d_i$ be the maximal degree of these polynomials. We call a list of complex polynomials $(f_{i, 0}, \ldots, f_{i, d})$ a polynomial representative of $u_i$. For each $i$, the polynomial representative is only unique up to rescaling by a common factor in ${\mb C}^*$ and a common reparametrization $z \mapsto \lambda_i z$ by a positive real number $\lambda_i$. However, there exists a unique set of polynomial representatives for all $u_i$ such that
\begin{enumerate}
    \item $(f_{0, 0}(0), \ldots, f_{0, d}(0)) = (1, 0, \ldots, 0)\in {\mb C}^{d+1}$.
    
    \item For each $i$, the ``evaluation" of $(f_{i, 0}, \ldots, f_{i, d})$ at the marked point $z_{i,+} = \infty$ 
    $$ (a_{i, 0}, \ldots, a_{i, d}) := \lim_{z \to \infty} \frac{1}{z^{d_i^0}}  (f_{i, 0}(z), \ldots, f_{i, d}(z)) $$
    is a unit vector of ${\mb C}^{d+1}$.
    
    \item For each $i\geq 2$, $(f_{i, 0}(0), \ldots, f_{i, d}(0)) = (a_{i-1, 0}, \ldots, a_{i-1, d})$.
\end{enumerate}
Then the unit vector $(a_0, \ldots, a_d):= (a_{l, 0}, \ldots, a_{l, d})$ is called the normalized evaluation of $x\in B_d$ at $z_+$. In particular,
\beqn
\ev_+(x) = [a_0, \ldots, a_d] \in \mb{CP}^d.
\eeqn
\end{defn}

We denote the normalized evaluation map defined as above by
\beqn
\widetilde\ev_+: B_d \to {\mb C}^{d + 1 }.
\eeqn
It is easy to see that $\widetilde\ev_+$ is smooth from its construction.




\subsubsection{Boundary strata of the auxiliary moduli spaces} 

We introduce the following definitions in order to compare the boundary strata of the auxiliary moduli spaces and the products of moduli spaces with lower degrees. This should be thought of as a toy model for the comparison between the restriction of the geometrically-constructed K-charts along the boundary strata and the product of the K-charts from the factorization description of the boundary strata. 

\begin{defn}[Fans and Flags] Fix $d\geq 1$ and $\delta = (d_0, \ldots, d_l) \in \bA_d$.
\begin{enumerate}
    \item A {\bf fan} of type $\delta$ is a list of  linear subspaces of ${\mb C}^{d+1}$
    \beqn
    (W_{d_0}, \ldots, W_{d_l})
    \eeqn
    satisfying 
    \begin{enumerate}
        \item ${\rm dim}_{\mb C} W_{d_i} = d_i + 1$.
        
        \item $W_{d_0} + \cdots + W_{d_l} = {\mb C}^{d +1}$.
        
        \item ${\rm dim}_{\mb C} (W_{d_{i-1}}\cap W_{d_i}) = 1$.
    \end{enumerate}
    Let ${\rm Fan}_\delta$ be the set of all fans of type $\delta$. Then there is a canonical map 
    \beqn
    \partial^\delta B_d \to {\rm Fan}_\delta
    \eeqn
    which sends any equivalence class of stable cylinders to the fan for which $W_{d_i}$ is the subspace such that the image of the $(i+1)$-th level is contained in the projectivization of $W_{d_i}$. Note that such a map is well-defined because the stable cylinder at the $(i+1)$-th level has degree $d_i$.
    
    \item A {\bf flag} of type $\delta$ is a list of subspaces of ${\mb C}^{d +1}$
    \beqn
    V_0 \subset \cdots \subset V_l = {\mb C}^{d+ 1}
    \eeqn
    satisfying 
    \beqn
    {\rm dim}_{\mb C} V_i = d_0 + \cdots + d_i + 1.
    \eeqn
    Let ${\rm Flag}_\delta$ be the set of all flags of type $\delta$. Then there is a standard flag of each type $\delta$ where
    \beqn
    V_i = {\mb C}^{d_0 + \cdots + d_i +1} \times \{0\}^{d_{i+1} +\cdots+d_l}, i = 0, \ldots, l.
    \eeqn
    
    \item There is a canonical map 
    \beqn
    {\rm Fan}_\delta \to {\rm Flag}_\delta
    \eeqn
    which maps $(W_{d_0}, \ldots, W_{d_l})$ to the flag where 
    \beqn
    V_i = W_{d_0} + \cdots + W_{d_i}.
    \eeqn
    
    \item A fan of type $\delta$ is said to be {\bf in the normal position} if the following is true. For each $i$, let 
    \begin{align*}
        &\ W_i^-:= W_{d_0} + \cdots + W_{d_i},\ &\ W_{i}^+:= W_{d_{i+1}} + \cdots + W_{d_l}.
    \end{align*}
    The condition for being a fan implies that $W_i^- \cap W_i^+$ is a line $L_i$. Let $\mathring W_i^\pm$ be the orthogonal complement of $L_i$ in $W_i^\pm$ (with respect to the standard Hermitian inner product of ${\mb C}^{d + 1}$). Then $\mathring W_i^-$ and $\mathring W_i^+$ are orthogonal for all $i = 0, \ldots, l-1$.
    
    \item A point $x \in B_d$ is said to be in the $\delta$-normal position if $x \in \partial^\delta B_d$ and its associated fan of type $\delta$ is in the normal position. Let $(\partial^\delta B_d)^{\rm normal} \subset \partial^\delta B_d$ be the subset of stable cylinders which are in $\delta$-normal position.
    \end{enumerate}
\end{defn}


We first describe the case of a codimension-one stratum of the auxiliary moduli space as a warm-up. Fix $d \geq 2$ and a partition $(d_0, d_1)\in \bA_d$. There is a corresponding stratum $\partial^{(d_0, d_1)} B_d \subset B_d$ from Notation \ref{notation:B}. We would like to define a map 
\beq\label{eqn42}
\zeta_{(d_0, d_1)}: B_{d_0} \times B_{d_1} \hookrightarrow \partial^{(d_0, d_1)} B_d.
\eeq
Given two arbitrary points $x_0\in B_{d_0}$, $x_1 \in B_{d_1}$, $\zeta_{(d_0, d_1)}(x_0, x_1)$ can be defined as follows. Let $u_0: \Sigma_0 \to \mb{CP}^{d_0}$, $u_1: \Sigma_1 \to \mb{CP}^{d_1}$ be representatives. Let $(a_0, \ldots, a_{d_0})$ be the normalized evaluation of $x_0$ (see Definition \ref{defn:norm-ev}). One can represent $u_0$ resp. $u_1$ as a list of holomorphic sections
\beqn
(f_0, \ldots, f_{d_0})\ {\rm resp}.\ (g_0, \ldots, g_{d_1})
\eeqn
of the line bundle $L_0 = u_0^* {\mc O}(1) \to \Sigma_0$ resp. $L_1 = u_1^* {\mc O}(1)  \to \Sigma_1$. Then define $\zeta_{(d_0, d_1)}(x_0, x_1) \in \partial^{(d_0, d_1)} B_d$ to be the point represented by the map
\begin{align}\label{base_product}
u_{(d_0, d_1)}&: \Sigma_0 \vee \Sigma_1 \to \mb{CP}^d ,\\ 
z &\mapsto \left\{ \begin{array}{cc} {\rm [} f_0(z), \ldots, f_{d_0}(z),0, \ldots, 0 {\rm ]},\ &\ z \in \Sigma_0,\\
{\rm [} a_0 g_0(z), \ldots, a_{d_0} g_0(z), g_1(z), \ldots, g_{d_1}(z) {\rm ]},\ &\ z \in \Sigma_1.
\end{array} \right.\label{eqn54}
\end{align}


\begin{lemma}
The map $\zeta_{(d_0, d_1)}$ is equivariant with respect to the group embedding $G_{d_0} \times G_{d_1} \hookrightarrow G_d$ from Notation \ref{notation:G}.
\end{lemma}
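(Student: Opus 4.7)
The plan is to verify equivariance by direct computation on representatives, comparing $\zeta_{(d_0,d_1)}(\gamma_0\cdot x_0, \gamma_1\cdot x_1)$ with $\gamma\cdot \zeta_{(d_0,d_1)}(x_0,x_1)$, where $\gamma\in G_d$ is the image of $(\gamma_0,\gamma_1)\in G_{d_0}\times G_{d_1}$ under the embedding \eqref{eqn:G-embedding}. First, I would unpack the block structure afforded by Notation \ref{notation:G}: write $\gamma_i = \mathrm{diag}(1,\gamma'_i)$ with $\gamma'_i\in U(d_i)$, so that $\gamma = \mathrm{diag}(1,\gamma'_0,\gamma'_1)$ acts on ${\mb C}^{d+1} = {\mb C}\oplus {\mb C}^{d_0}\oplus {\mb C}^{d_1}$ in the evident block-diagonal way.

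Next, I would track how the polynomial representatives of Definition \ref{defn:norm-ev} transform under the $G_{d_i}$-action. If $u_0$ is represented by $(f_0,f_1,\ldots,f_{d_0})$ with normalized evaluation $(a_0,a_1,\ldots,a_{d_0})$ at $z_+$, then $\gamma_0\cdot x_0$ admits the polynomial representative $(f_0,\gamma'_0\cdot(f_1,\ldots,f_{d_0})^T)$. The key point here is that this new representative still satisfies the three normalization conditions of Definition \ref{defn:norm-ev}: the base point $(1,0,\ldots,0)$ at $z_-$ is preserved because $\gamma_0$ stabilizes $[1,0,\ldots,0]$ by definition of $G_{d_0}$, and unitarity of $\gamma'_0$ preserves the unit-norm condition at $z_+$. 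Consequently the normalized evaluation of $\gamma_0\cdot x_0$ is precisely $(a_0,\gamma'_0\cdot(a_1,\ldots,a_{d_0})^T)$. An analogous statement holds for $\gamma_1\cdot x_1$ with representative $(h_0, h_1,\ldots,h_{d_1})$.

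Finally, I would substitute these transformed representatives into the gluing formula \eqref{eqn54}. On $\Sigma_0$ the result is $[f_0,\gamma'_0\cdot(f_1,\ldots,f_{d_0}),0,\ldots,0]$, which agrees with $\gamma$ applied to $[f_0,f_1,\ldots,f_{d_0},0,\ldots,0]$ because $\gamma'_1$ acts trivially on the trailing zeros. On $\Sigma_1$ the formula yields $[a_0 h_0,(\gamma'_0\cdot\widetilde a) h_0,\gamma'_1\cdot(h_1,\ldots,h_{d_1})]$ with $\widetilde a = (a_1,\ldots,a_{d_0})^T$, which matches $\gamma$ applied to $[a_0 h_0,a_1 h_0,\ldots,a_{d_0} h_0,h_1,\ldots,h_{d_1}]$ by the block-diagonal structure of $\gamma$. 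Equivariance follows. No substantive obstacle is anticipated; the only subtle bookkeeping is confirming that the normalized evaluation transforms linearly under $\gamma'_0$, and this reduces to the stabilization property built into the definition of $G_{d_i}$.
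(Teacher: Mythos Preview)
Your proposal is correct and is precisely the direct verification the paper has in mind; the paper's own proof reads in its entirety ``Straightforward from the definitions.'' You have simply unpacked that sentence by tracking the block-diagonal action on polynomial representatives and normalized evaluations, which is exactly what is needed.
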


\begin{proof}
Straightforward from the definitions.
\end{proof}

Hence $\zeta_{(d_0, d_1)}$ can be extended to a $G_d$-equivariant map 
\beq\label{eqn43}
\zeta_{(d_0, d_1)}^\sim: G_d \times_{G_{(d_0, d_1)}} (B_{d_0} \times B_{d_1}) \to \partial^{(d_0, d_1)} B_d. 
\eeq

However, the above map is not surjective as configurations in the image are those stable cylinders whose two levels are in certain ``normal'' positions. 

\begin{lemma}\label{lem:normal-position}
$\zeta_{(d_0, d_1)}^\sim$ is bijective onto $(\partial^{(d_0, d_1)} B_d)^{\rm normal}$.
\end{lemma}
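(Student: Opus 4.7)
The plan is to establish three separate assertions: that the image of $\zeta_{(d_0,d_1)}$ lies in the normal-position stratum, that the induced equivariant map $\zeta^\sim_{(d_0,d_1)}$ is surjective onto it, and that the fibers of $\zeta^\sim_{(d_0,d_1)}$ are singletons. The key unifying observation is that after identifying $G_d \cong U(d)$ via \eqref{eqn52}, the $G_d$-action on $\mb{CP}^d$ lifts to a unitary action on ${\mb C}^{d+1}$ (modulo the irrelevant scalar $U(1)$). Therefore the property of a fan being in normal position is $G_d$-invariant, and the stabilizer inside $G_d$ of the standard subspace $W_0^{\rm std} = \text{span}\{e_0,\ldots,e_{d_0}\}$ (as a subspace, not pointwise) is precisely the subgroup $G_{(d_0,d_1)}$ embedded via \eqref{eqn:G-embedding}.

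For image containment, I would directly compute the fan of $\zeta_{(d_0,d_1)}(x_0, x_1)$ from the formula \eqref{eqn54}. Writing $(a_0,\ldots,a_{d_0})$ for the normalized evaluation of $x_0$ at $z_+$, one sees that the first level of the glued map lies in $W_0 = W_0^{\rm std}$ while the second level lies in $W_1 = L_0 \oplus \text{span}\{e_{d_0+1},\ldots,e_d\}$, where $L_0$ is the line generated by $(a_0,\ldots,a_{d_0}, 0,\ldots,0)$. Since $L_0 \subset W_0^{\rm std}$ and $\text{span}\{e_{d_0+1},\ldots,e_d\}$ is orthogonal to $W_0^{\rm std}$, a direct check shows $\mathring W_0^- \perp \mathring W_0^+$, so the fan is in normal position. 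Equivariance under $G_d$ then extends this to the image of $\zeta^\sim$.

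For surjectivity, given a configuration $[\Sigma, {\bf L}, u] \in (\partial^{(d_0, d_1)} B_d)^{\rm normal}$ with fan $(W_0, W_1)$, I would first note that $e_0 \in W_0$ since $\ev_-(u) = [1,0,\ldots,0]$. Hence there exists $g \in G_d$ with $g(W_0) = W_0^{\rm std}$. After applying $g$, the normal position condition forces $\mathring W_0^+$ to be contained in the orthogonal complement of $\mathring W_0^-$ within ${\mb C}^{d+1}$; a dimension count shows $\mathring W_0^+ = \text{span}\{e_{d_0+1},\ldots,e_d\}$, so $W_1 = L_0 \oplus \text{span}\{e_{d_0+1},\ldots,e_d\}$ for a unique line $L_0 \subset W_0^{\rm std}$. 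One recovers $x_0 \in B_{d_0}$ by restricting $u$ to $\Sigma_0$ and reading off its map into $\mb P(W_0^{\rm std}) \cong \mb{CP}^{d_0}$, and $x_1 \in B_{d_1}$ by expressing $u|_{\Sigma_1}$ in the basis consisting of the normalized-evaluation generator $(a_0,\ldots,a_{d_0},0,\ldots,0)$ of $L_0$ together with $e_{d_0+1},\ldots,e_d$. The matching condition at the glueing node, together with the normalization prescribed in Definition \ref{defn:norm-ev}, is what singles out this specific generator of $L_0$ and ensures that $\zeta_{(d_0,d_1)}(x_0, x_1)$ reproduces $g^{-1} \cdot [\Sigma, {\bf L}, u]$.

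For injectivity, the key is that any two choices of $g \in G_d$ normalizing $W_0$ to $W_0^{\rm std}$ differ by an element of the stabilizer of $W_0^{\rm std}$, which is precisely $G_{(d_0, d_1)}$, and the corresponding pairs $(x_0, x_1)$ are then related by the diagonal $G_{(d_0, d_1)}$-action. Since the normalized evaluation is equivariant under this action, the quotient is well-defined and the map is bijective. I expect the main technical obstacle to be verifying the compatibility between the normalized-evaluation convention of Definition \ref{defn:norm-ev} and the choice of unit generator for $L_0$ in the surjectivity step: one must confirm that the glueing formula \eqref{eqn54} precisely inverts the construction of $(x_0, x_1)$ from the recovered fan, which amounts to bookkeeping with the polynomial representatives and the $U(1)$ rescaling that relates them.
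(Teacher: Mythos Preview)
Your proposal is correct and follows essentially the same approach as the paper: both argue image containment directly from the form of \eqref{eqn54}, obtain surjectivity by using an element of $G_d$ to normalize $W_0$ to the standard coordinate subspace and then invoke the normal-position condition to force $\mathring W_0^+ = \mathrm{span}\{e_{d_0+1},\ldots,e_d\}$, and obtain injectivity from the fact that the stabilizer of the standard flag in $G_d$ is exactly $G_{(d_0,d_1)}$. The bookkeeping concern you flag about the normalized-evaluation convention is precisely the step the paper handles when reconstructing $x_1$ from the polynomial representative of the second level.
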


\begin{proof}
From the definition we know that the image of $\zeta_{(d_0, d_1)}^\sim$ is contained in $(\partial^{(d_0, d_1)} B_d)^{\rm normal}$. We first prove the surjectivity. For any $x \in (\partial^{(d_0, d_1)} B_d)^{\rm normal}$ with associated fan $(W_{d_0}, W_{d_1})$, denote $L_x:= W_{d_0} \cap W_{d_1}$. Let the domain of $x$ be $\Sigma = \Sigma_0 \cup \Sigma_1$. By using a unitary transformation on $\mb{CP}^{d}$ which fixes the point $[1, 0, \ldots, 0]$, we may assume that $W_{d_0}$ is spanned by the first $d_0 +1$ coordinates. Then the first level of $x$ can be viewed as a stable cylinder in $\mb{CP}^{d_0}$. Let the normalized evaluation be $(a_0, \ldots, a_{d_0})$. Then the nodal point of $x$ is mapped to $[a_0, \ldots, a_{d_0}, 0, \ldots, 0]$. Then by the definition of being in the normal position, $\mathring W_0^+$ is the subspace spanned by the last $d_1$ coordinates and $W_{d_1} = \mathring W_0^+ + L_x$. Then the second level of $x$ is represented by the map 
\beqn
u_{rq}(z) = [ a_0 f_0 (z), \ldots, a_{d_0} f_0(z), f_1(z), \ldots, f_{d_1} (z)]
\eeqn
where $(f_0, \ldots, f_{d_1})$ is a list of holomorphic sections of a degree $d_1$ line bundle over $\Sigma_1$. As $(a_0, \ldots, a_{d_0})$ is the normalized evaluation of the first level of $x$, one can see that $x = \zeta_{(d_0, d_1)} (x_0, x_1)$ where $x_0$ is the first level of $x$ and $x_1$ is represented by the map $z\mapsto [f_0(z), \ldots, f_{d_1}(z)]$. Therefore $\zeta_{(d_0, d_1)}^\sim$ is surjective.

To prove that $\zeta_{(d_0, d_1)}^\sim$ is injective, suppose 
\beqn
x = \zeta_{(d_0, d_1)}^\sim ( [g, x_0, x_1]) = \zeta_{(d_0, d_1)}^\sim ( [g', x_0', x_1']).
\eeqn
We may assume $g' = 1$. Then the flag associated to $x$ is the standard one and $g \in G_d$ preserves this flag. Hence $g \in G_{d_0}\times G_{d_1}$. Hence we may also assume $g = 1$. Then $\zeta_{(d_0, d_1)}(x_0, x_1) = \zeta_{(d_0, d_1)}(x_0', x_1')$, which implies $x_0 = x_0'$ and $x_1 = x_1'$. Hence $\zeta_{(d_0, d_1)}^\sim$ is injective.
\end{proof}

It is important to prove the associativity of the product construction.

\begin{prop}\label{associativity}
For a partition $(d_0, d_1, d_2)$ of $d\geq 1$, $x_0 \in B_{d_0}$, $x_1 \in B_{d_1}$, and $x_2 \in B_{d_2}$, there holds
\beqn
\zeta_{(d_0, d_1 + d_2)} ( x_0,  \zeta_{(d_1, d_2)} (x_1, x_2)) = \zeta_{(d_0 + d_1, d_2)} ( \zeta_{(d_0, d_1)} (x_0, x_1), x_2).
\eeqn
\end{prop}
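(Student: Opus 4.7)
My plan is to unwind both sides of the identity by means of the polynomial representatives provided by Definition \ref{defn:norm-ev} and then verify by direct computation that the resulting three-level stable cylinders in $\mb{CP}^{d_0+d_1+d_2}$ coincide. Let $(f_0,\ldots,f_{d_0})$, $(g_0,\ldots,g_{d_1})$, and $(h_0,\ldots,h_{d_2})$ denote polynomial representatives of $x_0$, $x_1$, $x_2$ normalized as in Definition \ref{defn:norm-ev} (these are, for multi-level cylinders, lists of holomorphic sections of the respective tautological pullback line bundles). Denote the associated normalized evaluations at $z_+$ by $(a_0,\ldots,a_{d_0})$, $(b_0,\ldots,b_{d_1})$, and $(c_0,\ldots,c_{d_2})$, which are unit vectors starting with $a_0, b_0, c_0$ respectively.

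The first step is to compute $y := \zeta_{(d_1,d_2)}(x_1,x_2)$ in terms of these representatives. By formula \eqref{eqn54}, $y$ has representative $(g_0,\ldots,g_{d_1},0,\ldots,0)$ on its first level and $(b_0 h_0,\ldots,b_{d_1} h_0, h_1,\ldots,h_{d_2})$ on its second level. I then compute the normalized evaluation of $y$ at $z_+$ by tracing the recursion of Definition \ref{defn:norm-ev}; using $\sum_i |b_i|^2 = 1$ and $\sum_j |c_j|^2 = 1$, a short calculation yields
\[
(B_0,\ldots,B_{d_1+d_2}) \;=\; (b_0 c_0,\ldots,b_{d_1} c_0,\,c_1,\ldots,c_{d_2}),
\]
which is a unit vector because $|c_0|^2 + \sum_{j\geq 1}|c_j|^2 = 1$. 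Applying $\zeta_{(d_0,d_1+d_2)}$ then gives a three-level stable cylinder in $\mb{CP}^{d_0+d_1+d_2}$ whose levels admit the explicit polynomial representatives
\begin{align*}
\text{Level 1:}\quad & (f_0,\ldots,f_{d_0},0,\ldots,0),\\
\text{Level 2:}\quad & (a_0 g_0,\ldots,a_{d_0} g_0,\,g_1,\ldots,g_{d_1},0,\ldots,0),\\
\text{Level 3:}\quad & (a_0 b_0 h_0,\ldots,a_{d_0} b_0 h_0,\,b_1 h_0,\ldots,b_{d_1} h_0,\,h_1,\ldots,h_{d_2}).
\end{align*}

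The second step is the symmetric computation for the right-hand side. Set $w := \zeta_{(d_0,d_1)}(x_0,x_1)$, whose representatives on its two levels are $(f_0,\ldots,f_{d_0},0,\ldots,0)$ and $(a_0 g_0,\ldots,a_{d_0} g_0,g_1,\ldots,g_{d_1})$; its normalized evaluation at $z_+$ is $(A_0,\ldots,A_{d_0+d_1})=(a_0 b_0,\ldots,a_{d_0} b_0,b_1,\ldots,b_{d_1})$, again a unit vector by the computation $|b_0|^2+\sum_{i\geq 1}|b_i|^2=1$. Applying $\zeta_{(d_0+d_1,d_2)}$ produces a three-level stable cylinder whose representatives on the three levels match exactly those of the LHS, level by level; the third level in particular reduces to $(a_0 b_0 h_0,\ldots,a_{d_0} b_0 h_0,\,b_1 h_0,\ldots,b_{d_1} h_0,\,h_1,\ldots,h_{d_2})$ after expansion of $A_i h_0$. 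Since the polynomial representatives satisfying the normalization conditions of Definition \ref{defn:norm-ev} are unique, and since the construction $\zeta$ concatenates domains and lateral lines without altering them, the two stable cylinders are literally the same marked object.

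The computation is structurally straightforward but the main care point will be bookkeeping the recursive normalization in Definition \ref{defn:norm-ev} when the inputs themselves already have multiple levels. I will need to check that the recursion computing the normalized evaluation of $y$ (resp.\ $w$) at $z_+$ is compatible with the one-step scaling used in $\zeta_{(d_0,d_1+d_2)}$ (resp.\ $\zeta_{(d_0+d_1,d_2)}$), which amounts to verifying that at each intermediate nodal point the evaluation from the top of the previous level agrees with the evaluation from the bottom of the next level after applying the scaling of formula \eqref{eqn54}; this is what the identities $(B_0,\ldots,B_{d_1+d_2})$ and $(A_0,\ldots,A_{d_0+d_1})$ above encode. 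Once these compatibilities are in place, the associativity identity follows, and one also sees immediately from the symmetry of the formula that it is equivariant with respect to the natural group embedding $G_{d_0}\times G_{d_1}\times G_{d_2}\hookrightarrow G_{d_0+d_1+d_2}$, so the statement extends to the equivariantized version $\zeta^{\sim}$ as well.
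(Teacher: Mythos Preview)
Your proposal is correct and takes essentially the same approach as the paper: both unwind the two sides by choosing polynomial representatives, compute the normalized evaluations of the intermediate products $\zeta_{(d_1,d_2)}(x_1,x_2)$ and $\zeta_{(d_0,d_1)}(x_0,x_1)$, and then check that the resulting three-level representatives in $\mb{CP}^{d_0+d_1+d_2}$ agree literally. Your bookkeeping of the normalized evaluations $(B_0,\ldots,B_{d_1+d_2})$ and $(A_0,\ldots,A_{d_0+d_1})$ is exactly the key step the paper highlights, and your explicit level-by-level formulas coincide with the paper's $u_{012}$.
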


\begin{proof}
Choose representatives $u_0, u_1, u_2$ of $x_0, x_1, x_2$ respectively. Let
\begin{align*}
&\ (a_0, \ldots, a_{d_0}),\ &\ (b_0, \ldots, b_{d_1})
\end{align*}
be the normalized evaluations of $u_0$ and $u_1$ at $z_+$. Then by definition, $x_{(d_1, d_2)}:= \zeta_{(d_1, d_2)}(x_1, x_2)$ is represented by the map with domain $\Sigma_1 \vee \Sigma_2$
\beqn
u_{(d_1, d_2)} (z) = \left\{ \begin{array}{cc} {\rm [} u_{1, 0} (z), \ldots, u_{1, d_1}(z), 0, \ldots, 0 {\rm ]},\ z\in \Sigma_1,\\
                                       {\rm [} b_0 u_{2, 0}(z), \ldots, b_{d_1} u_{2, 0}(z), u_{2, 1}(z), \ldots, u_{2, d_2}(z){\rm ]},\ z \in \Sigma_2.\end{array}\right.
\eeqn
Then $\zeta_{(d_0, d_1 + d_2)} (x_0, x_{(d_1, d_2)} )$ is represented by the map with domain $\Sigma_0 \vee \Sigma_1 \vee \Sigma_2$
\begin{multline*}
u_{012} (z) \\
= \left\{ \begin{array}{cc} {\rm [} u_{0,0}(z), \ldots, u_{0, d_0}(z), 0, \ldots, 0 {\rm ]},\ & z\in \Sigma_0,\\
                                        {\rm [} a_0 u_{1, 0}(z), \ldots, a_{d_0} u_{1, 0}(z), u_{1, 1}(z), \ldots, u_{1, d_1 }(z), 0, \ldots, 0 {\rm ]},\ &\  z\in \Sigma_1,\\
                                        {\rm [}  b_0 a_0 u_{2, 0}(z), \ldots, b_0 a_{d_0} u_{2, 0}(z) ,  b_1 u_{2, 0}(z), \ldots, b_{d_1} u_{2,0}(z), u_{2, 1}(z), \ldots, u_{2, d_2}(z) {\rm ]},\ &\ z \in \Sigma_2. \end{array} \right.
                                        \end{multline*}
On the other hand, $x_{(d_0, d_1)}:=\zeta_{(d_0, d_1)} (x_0, x_1)$ is represented by the map $u_{(d_0, d_1)}: \Sigma_0 \vee \Sigma_1 \to \mb{CP}^{d_0 + d_1}$ whose representation is the same as $u_{012}|_{\Sigma_0 \vee \Sigma_1}$ above after removing the last $d_2$ zeroes. Its evaluation at $z_+$ is represented by the vector  
\beqn
(b_0 a_0, \ldots, b_0 a_{d_0}, b_1, \ldots, b_{d_1}) \in \mb{C}^{d_0 + d_1 + 1}.
\eeqn
which is a unit vector and is the normalized evaluation of $x_{(d_0, d_1)}$. Hence we can see from the definition of $\zeta_{(d_0  + d_1, d_2)}$ that the point $\zeta_{(d_0 + d_1, d_2)} ( x_{(d_0, d_1)}, x_2)$ is also represented by the map $u^{012}$.
\end{proof}

Now given a partition $d=d_1 + d_2 + d_3$, use \eqref{eqn:G-embedding}, we can define a map 
\beqn
\zeta_{(d_0, d_1, d_2)}^\sim: G_d \times_{G_{d_0}\times G_{d_1}\times G_{d_2}}( B_{d_0} \times B_{d_1}\times B_{d_2}) \to \partial^{(d_0, d_1, d_2)} B_d
\eeqn
as follows. For any $[g, x_0, x_1, x_2 ] \in G_d \times_{G_{d_0}\times G_{d_1}\times G_{d_2}} ( B_{d_0}\times B_{d_1}\times B_{d_2})$, define 
\beqn
\zeta_{(d_0, d_1, d_2)}^\sim ([g, x_0, x_1, x_2]):= g (\zeta_{(d_0, d_1 + d_2)}(x_0, \zeta_{(d_1, d_2)} (x_1, x_2)) ).
\eeqn
It is straightfoward to check that this is a well-defined equivariant map. On the other hand, we can also define a $G_d$-equivariant map 
$$
[g,x_0,x_1,x_2] \mapsto g(\zeta_{(d_0 + d_1, d_2)} ( \zeta_{(d_0, d_1)} (x_0, x_1), x_2)).
$$
The following commutative diagram coming from Proposition \ref{associativity}
\beqn
\xymatrix{ &  G_d (B_{d_0}\times B_{d_1}\times B_{d_2}) \ar[ld] \ar[rd] &\\
  G_d \big( B_{d_0} \times \partial^{(d_1, d_2)} B_{d_1+d_2} \big) \ar[rd] & &  G_d \big( \partial^{(d_0, d_1)} B_{d_0+d_1} \times B_{d_2} \big) \ar[ld]\\
   &  \partial^{(d_0, d_1, d_2)} B_d &  }
\eeqn
shows that these two equivariantization maps agree with each other. Therefore, the map $\zeta_{(d_0, d_1, d_2)}^\sim$ is indeed unambiguously well-defined, independent of the ways of grouping the partitions.

It is straightforward to carry out the above discussion to the case with more factors. Recall for $\delta = (d_0, \ldots, d_l) \in \bA_d$, we defined
\beqn
B_\delta:= B_{d_0}\times \cdots \times B_{d_l}
\eeqn
which has the action of $G_\delta$. Using the group embedding \eqref{eqn:G-embedding}, define 
\beqn
B_\delta^\sim:= G_d \times_{G_\delta} B_\delta.
\eeqn
Then there is a $G_d$-equivariant map 
\begin{equation}\label{eqn:embedding}
\zeta_\delta^\sim: B_\delta^\sim \hookrightarrow \partial^\delta B_d,
\end{equation}
which, for instance, can be constructed by writing $B_\delta^\sim$ as
\beqn
G_d (B_{d_0} \times \cdots \times_{G_{d_{l-2}} \times G_{d_{l-1} + d_l}} (G_{d_{l-1} + d_l}\times_{G_{d_{l-1}}\times G_{d_l}} (B_{d_{l-1}}\times B_{d_l})) ).
\eeqn
Proposition \ref{associativity} indicates that such an inductive construction of the map $\zeta_\delta^\sim$ is independent of the order of the factorization. This embedding is smooth. The following statement follows from the arguments in the proof of Lemma \ref{lem:normal-position} and we record it here.

\begin{lemma}
$\zeta_\delta^\sim$ is bijective onto $(\partial^\delta B_d)^{\rm normal}$. \qed
\end{lemma}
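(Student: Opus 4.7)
The plan is to prove the statement by induction on the length $l+1$ of the partition $\delta = (d_0, \ldots, d_l)$. The case $l = 1$ is precisely Lemma \ref{lem:normal-position}, so the only thing to establish is the inductive step. For this, I would write $\delta = (d_0, \delta')$ with $\delta' = (d_1, \ldots, d_l) \in \bA_{d - d_0}$ and consider the coarsening $\widetilde\delta = (d_0, d - d_0)$, which $\delta$ refines. The inductive definition of $\zeta_\delta^\sim$ (which is well defined by Proposition \ref{associativity}) can be written as
\[
\zeta_\delta^\sim([g, x_0, x_1, \ldots, x_l]) = g \cdot \zeta_{\widetilde\delta}\bigl(x_0,\, \zeta_{\delta'}^\sim([1, x_1, \ldots, x_l])\bigr),
\]
so that the inductive hypothesis applied to $\zeta_{\delta'}^\sim$ can be combined with the codimension-one case applied to $\zeta_{\widetilde\delta}^\sim$.

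For surjectivity, given $x \in (\partial^\delta B_d)^{\rm normal}$, forgetting the refinement of $\delta$ places $x$ inside $\partial^{\widetilde\delta} B_d$, and the normal position condition of $x$ at the single break of $\widetilde\delta$ (i.e.\ at $i = 0$) is just one of the conditions already satisfied. Hence $x \in (\partial^{\widetilde\delta} B_d)^{\rm normal}$, and Lemma \ref{lem:normal-position} produces $g \in G_d$, $x_0 \in B_{d_0}$, and $y \in B_{d - d_0}$ with $x = g \cdot \zeta_{\widetilde\delta}(x_0, y)$. Using the block embedding $\mb{CP}^{d-d_0} \hookrightarrow \mb{CP}^d$ from Notation \ref{notation:B}, the remaining normal position conditions at $i = 1, \ldots, l-1$ for the fan of $x$ correspond, via this block inclusion and the Hermitian orthogonality, to the normal position conditions for the fan of $y$ of type $\delta'$. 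Thus $y \in (\partial^{\delta'} B_{d-d_0})^{\rm normal}$, and the inductive hypothesis supplies $y = \zeta_{\delta'}^\sim([1, x_1, \ldots, x_l])$; reassembling these data gives the desired preimage of $x$.

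For injectivity, suppose $\zeta_\delta^\sim([g, x_0, \ldots, x_l]) = \zeta_\delta^\sim([g', x_0', \ldots, x_l'])$. After translating by $g^{-1}$ we may take $g = 1$, so that the associated flag is the standard one; injectivity in Lemma \ref{lem:normal-position}, applied to the $\widetilde\delta$-coarsening, then forces $g' \in G_{d_0} \times G_{d - d_0}$ and reduces the equality to $x_0 = g_0' x_0'$ together with $\zeta_{\delta'}^\sim([1, x_1, \ldots, x_l]) = \zeta_{\delta'}^\sim([g_1', x_1', \ldots, x_l'])$, where $g' = (g_0', g_1')$. The second equation is handled by the inductive hypothesis.

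The main difficulty will not lie in the overall inductive scheme but in the bookkeeping for how the normal position condition behaves under coarsening and under the block inclusion $\mb{CP}^{d - d_0} \hookrightarrow \mb{CP}^d$. Concretely, one must check that if $(W_{d_0}, W_{d_1}, \ldots, W_{d_l})$ is a fan of type $\delta$ in normal position, then $(W_{d_1}', \ldots, W_{d_l}')$, with $W_{d_i}'$ obtained from $W_{d_i}$ after removing the $L_0$-direction shared with $W_{d_0}$ and re-embedding into the block $\mb{CP}^{d - d_0}$, is again in normal position of type $\delta'$, and conversely. Once this standard-Hermitian bookkeeping is established, the inductive argument sketched above goes through directly.
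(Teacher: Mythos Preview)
Your inductive approach is correct and is essentially the natural way to flesh out the paper's laconic remark that the statement ``follows from the arguments in the proof of Lemma~\ref{lem:normal-position}.'' One small slip: the embedding $\mb{CP}^{d-d_0}\hookrightarrow\mb{CP}^d$ you need is not the block embedding from Notation~\ref{notation:B} (which prepends zeroes) but the isometric embedding $(z_0,z_1,\ldots,z_{d-d_0})\mapsto(a_0 z_0,\ldots,a_{d_0}z_0,z_1,\ldots,z_{d-d_0})$ built into $\zeta_{\widetilde\delta}$ (cf.\ the second map in \eqref{linear_embedding}); this is an isometry because $(a_0,\ldots,a_{d_0})$ is a unit vector, which is what makes your orthogonality bookkeeping go through.
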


Lastly we need to analyze the difference between $B_\delta^\sim$ and $\partial^\delta B_d$. We will show that in fact, $\partial^\delta B_d$ can be viewed as the total space of a $G_{\delta}$-equivariant vector bundle over $B_\delta^\sim$. 

We introduce certain notations which will also be used in the thickening construction. For all $d\geq 0$, denote 
\beq\label{eqn57}
{\bm Q}_d = \tilde {\bm Q}_d/ {\mb R}_+\ {\rm where}\ \tilde {\bm Q}_d:= \Big\{ \tilde h \in \mb{C}^{(d+1)\times (d+1)}\ |\ \tilde h^\dagger   = \tilde h,\ \tilde h_{00} \neq 0 \Big\}.
\eeq
Here our convention is that the indices of the Hermitian matrix $\tilde h \in \tilde {\bm Q}_d$ range from $0$ to $d$; the multiplicative group ${\mb R}_+$ acts on $\tilde {\bm Q}_d$ by scalar multiplication on each entry. The ${\mb R}_+$-orbit of $\tilde h \in \tilde {\bm Q}_d$ is denoted by $[\tilde h]$. We identify ${\bm Q}_d$ with
\beq\label{eqn:q-star}
{\bm Q}_d^*:= \Big\{ h \in \mb{C}^{(d+1)\times (d+1)}\ |\ h^\dagger = h,\ h_{00} = 0 \Big\} 
\eeq
in the way that a Hermitian matrix $h$ with $h_{00} = 0$ is identified with the ${\mb R}_+$-orbit of $\tilde h = I_{d+1} + h$. Then ${\bm Q}_d$ is a real vector space with dimension equal to $d^2 + 2d$. Moreover, by identifying $G_d$ with $U(d)\subset U(d+1)$ using the correspondence \eqref{eqn52}, the usual conjugation action of $U(d+1)$ on $(d+1)\times (d+1)$ Hermitian matrices restricts to a linear action of $G_d$ on ${\bm Q}_d$. 

To go further, for each partition $\delta = (d_0, \ldots, d_l) \in \bA_d$, define 
\beqn
{\bm Q}_\delta:= \Big\{ [\tilde h] \in {\bm Q}_d\ |\ \tilde h_{ij} \neq 0 \Longrightarrow \exists\ a\geq -1 \ {\rm s.t.}\ d_0 + \cdots + d_a \leq i, j \leq d_0 + \cdots + d_{a+1} \Big\}
\eeqn
where for $a=-1$, the constraints on the entries are given by $0 \leq i,j \leq d_0$,
and its complement 
\beqn
\begin{split}
\check {\bm Q}_\delta:= &\ \Big\{ [\tilde h] \in {\bm Q}_d\ |\ \tilde h_{ij} = 0 \ \forall a = 0, \ldots, l,\  d_0 + \cdots + d_a \leq i, j \leq d_0 + \cdots + d_{a+1} \Big\}\\
\cong &\ \Big\{ h \in {\bm Q}_d^*\ |\ h_{ij} = 0 \ \forall a = 0, \ldots, l,\  d_0 + \cdots + d_a \leq i, j \leq d_0 + \cdots + d_{a+1} \Big\}.
\end{split}
\eeqn
This vector space is invariant under the adjoint action of $G_\delta$. Hence we have a  $G_{\delta}$-equivariant splitting 
\beqn
{\bm Q}_d = {\bm Q}_\delta \oplus {\bm Q}_{d,\delta}.
\eeqn
Moreover, ${\bm Q}_d$ is stratified by partitions. When $\delta\leq \eta$, one has the obvious inclusion 
\beqn
{\bm Q}_\delta \subset {\bm Q}_\eta.
\eeqn
Define
\beqn
{\bm Q}_{\eta\delta}:= {\bm Q}_\eta \cap {\bm Q}_{d, \delta}.
\eeqn
Using the $G_\delta$-action on the space ${\bm Q}_{d,\delta}$, we define a $G_{\delta}$-equivariant vector bundle 
\beqn
Q_{d,\delta}:= B_\delta \times {\bm Q}_{d,\delta}.
\eeqn
The $G_d$-equivariantization of this vector bundle is denoted by 
\begin{equation}\label{eqn:stabilization-base}
 Q_{d,\delta}^\sim:= G_d\times_{G_\delta} Q_{d,\delta} \to B_{\delta}^\sim.
\end{equation}

\begin{prop}\label{prop523}
There is a $G_d$-equivariant diffeomorphism 
\beqn
\rho_{\delta}^\sim: Q_{d,\delta}^\sim \cong \partial^\delta B_d
\eeqn
which extends the embedding $\zeta_\delta^\sim: B_\delta^\sim \hookrightarrow \partial^\delta B_d$.
\end{prop}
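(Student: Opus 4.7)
The plan is to realize $\partial^\delta B_d$ as the total space of a $G_d$-equivariant fiber bundle over the normal-position stratum, with fibers parametrized by Hermitian deformations. The key observation underlying this is that ${\bm Q}_d$ can be thought of as the space of (positive scalings of) Hermitian forms on ${\mb C}^{d+1}$ that are nondegenerate on the line ${\mb C}\cdot e_0$, and the block-diagonal subspace ${\bm Q}_\delta$ is exactly those forms that are compatible with the standard fan (in the sense that distinct $W_{d_i}^0$ remain ``orthogonal'' with respect to the form). The complement ${\bm Q}_{d,\delta}$ therefore measures how much a perturbed fan fails to be in normal position.

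First I would reduce to the fan-level statement: I aim to show that the canonical map $\partial^\delta B_d \to {\rm Fan}_\delta$ (from Lemma \ref{lem:normal-position}) factors through a $G_d$-equivariant diffeomorphism ${\rm Fan}_\delta \cong G_d \times_{G_\delta} {\bm Q}_{d,\delta}$. The standard fan $F_0 = (W_{d_0}^0, \ldots, W_{d_l}^0)$ lies in ${\rm Fan}_\delta^{\rm normal}$ and has stabilizer $G_\delta$, so ${\rm Fan}_\delta^{\rm normal}$ is the $G_d$-orbit $G_d/G_\delta$. Any fan of type $\delta$ may be obtained from $F_0$ by rotating by $g \in G_d$ and then applying a positive-Hermitian transformation $\tilde h \in \tilde{\bm Q}_d$, via the polar decomposition of the parabolic stabilizing $[1{:}0{:}\ldots{:}0]$ in $PGL(d+1,{\mb C})$; since block-diagonal Hermitian perturbations preserve each $W_{d_i}^0$ and only rescale them, the effective parameter lies in the complement ${\bm Q}_{d,\delta}$. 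The dimension count ($\dim {\bm Q}_{d,\delta} = d^2 - \sum d_i^2$) matches $\dim \partial^\delta B_d - \dim B_\delta^\sim$, giving the expected transversal slice.

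Next I would lift this description to the full moduli space. Given $(x,h)\in B_\delta \times {\bm Q}_{d,\delta}$, the stable cylinder $\zeta_\delta(x) \in (\partial^\delta B_d)^{\rm normal}$ is built from the standard coordinate-aligned embedding of each $\mb{CP}^{d_i}$ into $\mb{CP}^d$. I then apply the Hermitian transformation $\tilde h = I + h \in \tilde{\bm Q}_d$ (as a linear map ${\mb C}^{d+1} \to {\mb C}^{d+1}$, well-defined up to the ${\mb R}_+$-rescaling that leaves the induced map on $\mb{CP}^d$ unchanged) to push $\zeta_\delta(x)$ to a new stable cylinder whose fan has prescribed Hermitian deviation $h$ from normal position. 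This defines a $G_\delta$-equivariant map $\rho_\delta\colon B_\delta \times {\bm Q}_{d,\delta} \to \partial^\delta B_d$ which restricts to $\zeta_\delta$ when $h=0$; equivariantizing by $G_d$ then yields $\rho_\delta^\sim$. Injectivity follows from Lemma \ref{lem:normal-position} and the uniqueness of the polar decomposition, while surjectivity is the content of the fan-level statement proved in the previous step, combined with the fact that the stable cylinder structure on a fiber of $\partial^\delta B_d \to {\rm Fan}_\delta$ over the standard fan is precisely $B_\delta$. Smoothness of $\rho_\delta^\sim$ and its inverse reduces, via the blow-up description of Lemma \ref{lem:r-blowup}, to the smoothness of the polar decomposition (which is smooth on the open locus where $\tilde h$ is definite on appropriate subspaces, covering all of ${\bm Q}_{d,\delta}$ since the defect is block-off-diagonal only).

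The main obstacle will be implementing the Hermitian-transformation recipe in a way that is compatible with the stratification, so that $\rho_\delta^\sim$ respects the full $\bA_{d}$-stratified structure on both sides (not just the single stratum indexed by $\delta$) and interacts correctly with the inclusions of deeper strata $\eta \geq \delta$. Concretely, one must check the relation ${\bm Q}_{\eta\delta} = {\bm Q}_\eta \cap {\bm Q}_{d,\delta}$ translates into a compatible description of how $\partial^\eta B_d \subset \partial^\delta B_d$ sits inside the bundle $Q_{d,\delta}^\sim$; this will be essential later when the $\rho_\delta^\sim$ are used to produce the scaffolding data of the global K-chart. A secondary subtlety is that, while the construction is ``obvious'' infinitesimally at the standard fan, extending from a tubular-neighborhood statement to a global diffeomorphism uses that the exponential-type map $h \mapsto [I+h]$ is surjective onto all of ${\bm Q}_d$ (modulo ${\bm Q}_\delta$), which needs to be verified directly rather than being a slice theorem for a compact group action.
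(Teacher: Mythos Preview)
Your overall framework---build an explicit equivariant map from the product to $\partial^\delta B_d$ and verify it is a bijection---matches the paper, but the specific transformation you propose has a genuine defect. You act by the full Hermitian matrix $I+h$ with $h\in{\bm Q}_{d,\delta}$. This matrix need not be invertible: for example with $\delta=(1,1)$, $d=2$, the only free entry of $h$ is $h_{02}=b$, and $I+h$ has eigenvalues $1,\,1\pm|b|$, so it becomes singular at $|b|=1$. Thus your $\rho_\delta$ is not even defined on all of $B_\delta\times{\bm Q}_{d,\delta}$, and your remark about ``the open locus where $\tilde h$ is definite'' does not cover the whole domain. Moreover, $I+h$ does not preserve the standard flag, so even where defined the inverse construction via polar decomposition of elements of a parabolic does not produce a Hermitian factor lying in ${\bm Q}_{d,\delta}$; the positive part of a polar decomposition of a parabolic element is generally not block-off-diagonal.

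The paper sidesteps both issues by extracting from each $h\in{\bm Q}_{d,\delta}$ only its strictly upper-block-triangular part $\rho_h$ and acting by $I+\rho_h$. Since $\rho_h$ is nilpotent, $I+\rho_h$ is unipotent and hence always invertible, giving a globally defined map. Since it is upper-block-triangular, it preserves the standard flag; surjectivity then reduces to the elementary fact that any fan whose associated flag is standard can be moved to normal position by an element of the unipotent radical of the parabolic (proved by inductively straightening each $W_{d_i}$), while $G_d$ acts transitively on flags. The parametrization of ${\bm Q}_{d,\delta}$ by strictly upper-triangular blocks (rather than Hermitian matrices) is the correct linear-algebra model, and the Hermitian description is only a convenient bookkeeping for the equivariance.
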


\begin{proof}
We first define a map $\rho_\delta: Q_{d,\delta} \to \partial^\delta B_d$ which extends $\zeta_\delta: B_\delta \to \partial^\delta B_d$ as follows. Given a Hermitian matrix $h \in \check {\bm Q}_\delta$, we identify it with a complex matrix $\rho_h$ obtained by changing all entries in $h$ that are below the diagonal to zero. Then define
\beqn
\rho_\delta (h, x_0, \ldots, x_l):= (I_{d + 1} + \rho_h ) ( \zeta_\delta ( x_0, \ldots, x_d)).
\eeqn
Note that $I_{d + 1} + \rho_h$ is a nonsingular matrix because $\rho_h$ is nilpotent. It is straightforward to check that this map is equivariant with respect to the group embedding $G_\delta \to G_d$. Hence it extends to a $G_d$-equivariant map 
\beqn
\rho_\delta^\sim: Q_{d,\delta}^\sim \to \partial^\delta B_d.
\eeqn
We need to show that this map is a diffeomorphism.

\begin{enumerate}
    \item (Injectivity) Suppose 
    \beqn
  x =   \rho_\delta^\sim ( [ g, h, x_0, \ldots, x_l]) = \rho_\delta^\sim ([g', h', x_0', \ldots, x_l']).
    \eeqn
    Then by the definition of $\rho_\delta^\sim$, we may assume that $g' = 1\in G_d$. Then, notice that the flag induced by $\rho_\delta (h', x_0', \ldots, x_l')$ is the standard one and $g$ preserves the flag. Then $g\in G_\delta$. Hence we may also assume $g = 1 \in G_{pq}$. By considering the fans associated to stable maps, one can also conclude that $h = h'$. As $I_{d+1} + \rho_h$ is invertible, one obtains that $x_i = x_i'$ for all $i = 0, \ldots, l$. Hence $\rho_\delta^\sim$ is injective.
    
    \item (Surjectivity) Let $x\in \partial^\delta B_d$ be represented by a map 
    \beqn
    u = (u_0, \ldots, u_l): \Sigma_0 \vee \cdots \vee \Sigma_l \to \mb{CP}^d.
    \eeqn
    Let $(V_0, \ldots, V_l)$ be the induced flag. Then there exists a unitary matrix $g$ which transforms this flag to the standard one. Hence we may assume the flag associated to $x$ is standard. Let $(W_{d_0}, \ldots, W_{d_l})$ be the fan associated to $x$. We construct an element $h \in {\bm Q}_{d, \delta}$ such that
    \beqn
    (I_{d_{pq}+1} + \rho_h) (W_{d_0}, \ldots, W_{d_l})
    \eeqn
    is in the normal position. $h$ can be constructed inductively. First, let $y_0 \in \mb{C}^{d_0 + 1}$ be the noramlized evaluation of $u_0$ at $z_+$. Regard $y_0$ as a vector in $\mb{C}^{d_0 + d_1 + 1}$. Then there exist vectors $w_1, \ldots, w_{d_1} \in \mb{C}^{d_0 + d_1 +1}$ such that
    \beqn
    W_{d_1} = {\rm span}( y_0, w_1, \ldots, w_{d_1}).
    \eeqn
    Then as $W_{d_0} = {\mb C}^{d_0 + 1}$ and $W_{d_0} + W_{d_1} = {\mb C}^{d_0 + d_1 + 1}$, there exists a complex matrix $A_{d_1}^{d_0}$ whose nonzero entries are contained in the upper-right $d_0 \times d_1$-block such that
    \beqn
    (I_{d_{pr_2}+1} - A_{d_1}^{d_0}) (w_1, \ldots, w_{d_1}) = (e_{d_0 +1}, \ldots, e_{d_0 + d_1} )
    \eeqn
    (where $e_0, \ldots, e_{d_0 + d_1}$ form the standard basis of ${\mb C}^{d_0 + d_1 +1}$). Inductively, one can construct a matrix 
    \beqn
    A:= \left[ \begin{array}{ccccc} 0 & A_{d_1}^{d_0} & A_{d_2}^{d_0} & \cdots & A_{d_l}^{d_0} \\
    0 & 0 & A_{d_2}^{d_1} & \cdots & A_{d_l}^{d_1} \\
    \vdots & \vdots & \vdots & \ddots & \vdots \\
    0 & 0 & 0 & 0 & 0 
    \end{array}\right]
    \eeqn
    such that $(I_{d+1}  - A) ( W_{d_0}, \ldots, W_{d_l})$ is a fan in the normal position. Apply $I_{d +1} - A$ to the stable map $u$, one obtains a stable map $u'$ which is in the normal position in $\partial^\delta B_d$. The nilpotence of $A$ and its block form imply that $-A = \rho_h$ for some $h \in {\bm Q}_{d, \delta}$. As stable maps in normal positions are all in the image of $\zeta_\delta^\sim$, the above argument implies that $\rho$ is surjective.
    \end{enumerate}
The smoothness of $\rho_\delta^\sim$ and its inverse follows from the definition and the smoothness of $\zeta_\delta^\sim$. 
\end{proof}

\subsubsection{Product of the bundle of quadratic forms}

The bundles of quadratic forms play an crucial role in the construction of global charts, as they will be part of the obstruction bundle. We describe the multiplicative structures of these bundles. Define for each $d\geq 1$ the $G_d$-equivariant bundle 
\beqn
Q_d:= B_d \times {\bm Q}_d.
\eeqn
We would like to define a collection of equivariant bundle embeddings
\beqn
\xymatrix{   Q_{d_0}\boxplus Q_{d_1} \ar[rr] \ar[d] & & Q_{d_0 + d_1} \ar[d] \\
             B_{d_0}\times B_{d_1}  \ar[rr]_{\zeta_{(d_0, d_1)}} & & B_{d_0 + d_1} }
             \eeqn
which are associative. Let us look at the partition $d = d_0 + d_1$ and let 
\beqn
Q_{d_0 \to d}, Q_{d_1 \to d} \to B_{(d_0, d_1)} \cong B_{d_0} \times B_{d_1} 
\eeqn
be the bundles $B_{(d_0, d_1)} \times {\bm Q}_{d_0}$ and $B_{(d_0, d_1)}\times {\bm Q}_{d_1}$. Define embeddings 
\begin{align*}
&\ \widehat \zeta_{d_0 \to d}^Q: Q_{d_0 \to d} \to Q_d,\ &\ \widehat \zeta_{d_1 \to d}^Q:  Q_{d_1 \to d} \to Q_d
\end{align*}
as follows. 
\begin{enumerate}
    \item For $\widehat \zeta_{d_0 \to d}^Q$: for each element $h_{d_0} \in {\bm Q}_{d_0}$ (which is regarded as a Hermitian matrix whose $(0, 0)$-entry vanishes), define
\beqn
\Big( \widehat\zeta_{d_0 \to d}^Q (x_{d_0}, x_{d_1}; h_{d_0} ) \Big)_{ij} = \left\{ \begin{array}{cc} (h_{d_0})_{ij},\ &\ 0 \leq i, j \leq d_0,\\
                                             0,\ &\ {\rm otherwise,}
                                             \end{array}\right.
\eeqn
which gives a Hermtian matrix in ${\bm Q}_{d}$ (regarded as a matrix whose $(0, 0)$-entry vanishes). 

\item For $\widehat \theta_{d_1 \to d}^Q$: for $x = (x_{d_0}, x_{d_1})$, let the normalized evaluation of $x_{d_0}$ at $z_+$ to be $(a_0, \ldots, a_{d_0})$. Then there exists an element $g_x \in U (d_0 +1)$ such that 
\beq\label{group2}
g_x (0, \ldots, 1) = (a_0, \ldots, a_{d_0}).
\eeq
Embed $g_x$ into $PU(d_0 +1)$ with image
\beqn
\left[ 
\begin{array}{cc}
(g_x)_{(d_0+1) \times (d_0 + 1)} & 0 \\
0 & I_{d_1}
\end{array}
\right]
\eeqn
and denote it by $g_x^+$. Then for $h_{d_1}\in {\bm Q}_{d_1}$, define the Hermitian matrix with vanishing $(0,0)$-entry
\beqn
h_{d_1}^+\in {\bm Q}_{d},\ (h_{d_1}^+)_{ij} = \left\{  \begin{array}{cc} (h_{d_1})_{i-d_0 \ j-d_0},\ &\ d_0 \leq i, j \leq d,\\
                                             0,\ &\ {\rm otherwise}
                                             \end{array}\right.
                                             \eeqn
                                             and define
\beqn
\widehat\zeta_{d_1 \to d }^Q(x_{d_0 }, x_{d_1}; h_{d_1} ) = g_x^+  h_{d_1}^+ (g_x^+)^{-1}.
\eeqn
We can easily check that this is independent of the choice of $g_x$ satisfying \eqref{group2}.
\end{enumerate}
Then $\widehat\zeta_{d_0 \to d}^Q$ and $\widehat \zeta_{d_1 \to d}^Q$ canonically correspond to bundle maps, which are denoted by the same symbols respectively. Then define a map
\beqn
\widehat\zeta_{(d_0,d_1)}^Q : Q_{d_0 \to d} \oplus Q_{d_1 \to d} \to B_{(d_0, d_1)} \times {\bm Q}_{d}
\eeqn
to be 
\beqn
\widehat\zeta_{(d_0,d_1)}^Q = \widehat \zeta_{d_0 \to d}^Q + \widehat \zeta_{d_1 \to d}^Q.
\eeqn

\begin{lemma}
The bundle map $\widehat\zeta_{(d_0, d_1)}^Q$ is a $G_{d_0}\times G_{d_1}$-equivariant linear bundle embedding with image being the bundle $B_{(d_0, d_1)}\times {\bm Q}_{(d_0, d_1)}$.
\end{lemma}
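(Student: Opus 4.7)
The plan is to verify four properties in succession: (i) fiberwise linearity, (ii) $G_{d_0} \times G_{d_1}$-equivariance, (iii) fiberwise injectivity, and (iv) identification of the image with $B_{(d_0,d_1)} \times {\bm Q}_{(d_0,d_1)}$ as a subbundle of $B_{(d_0,d_1)} \times {\bm Q}_d$, where the isomorphism is the canonical block-diagonal identification ${\bm Q}_{d_0} \oplus {\bm Q}_{d_1} \cong {\bm Q}_{(d_0,d_1)}$. Linearity of each summand $\widehat\zeta_{d_i \to d}^Q$ in its fiber variable is immediate from the formulas: the first map simply pads a $(d_0{+}1)\times(d_0{+}1)$ Hermitian matrix by zeros, while the second conjugates a padded matrix by a fixed unitary depending only on the base point $x$.

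Next, to check $G_{d_0}\times G_{d_1}$-equivariance, I would treat the two summands separately. For $\widehat\zeta_{d_0 \to d}^Q$ the verification is essentially tautological, since the group element $(g_0, g_1)\in G_{d_0}\times G_{d_1}$ embedded into $G_d$ acts on the upper $(d_0{+}1)$-block exactly by $g_0$. For $\widehat\zeta_{d_1 \to d}^Q$ one has to track how $g_x$ changes when $(x_{d_0}, x_{d_1})$ is replaced by $(g_0 x_{d_0}, g_1 x_{d_1})$: the normalized evaluation transforms by $g_0$, so a valid new choice of $g_{x'}$ is $g_0 g_x$, and hence the $(d{+}1)\times(d{+}1)$ lift is $(g_0 g_x)^+ = \mathrm{diag}(g_0,I_{d_1})\, g_x^+$. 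Combining this with the action of $g_1$ on $h_{d_1}$ via conjugation in the lower $(d_1{+}1)$-block yields exactly conjugation of $\widehat\zeta_{d_1\to d}^Q(x;h_{d_1})$ by the block-diagonal embedding of $(g_0,g_1)$, as desired. The choice of $g_x$ is only unique up to right multiplication by a stabilizer in $U(d_0)$, but this stabilizer commutes with the relevant conjugation action on padded matrices, so the construction is independent of the representative.

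For injectivity, the key observation is that $\widehat\zeta_{d_0\to d}^Q(x;h_{d_0})$ is supported in the top-left $(d_0{+}1)\times(d_0{+}1)$ block while $\widehat\zeta_{d_1\to d}^Q(x;h_{d_1}) = g_x^+\, h_{d_1}^+\, (g_x^+)^{-1}$ is supported on the twisted block determined by the image subspace $g_x^+(\mathrm{span}(e_{d_0}, \ldots, e_d))$; these two subspaces intersect transversely (their only common direction is the nodal line spanned by $(a_0,\ldots,a_{d_0},0,\ldots,0)$, where both contributions vanish due to the $(0,0)$-entry constraint on each factor). Thus the sum is a direct sum on fibers, and a dimension count gives $\dim({\bm Q}_{d_0}) + \dim({\bm Q}_{d_1}) = \dim({\bm Q}_{(d_0,d_1)})$.

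The main obstacle is step (iv), identifying the image as a subbundle isomorphic to $B_{(d_0,d_1)} \times {\bm Q}_{(d_0,d_1)}$. The plan is to exploit the fact that for fixed $x$ the bundle map restricts to a vector space isomorphism ${\bm Q}_{d_0} \oplus {\bm Q}_{d_1} \to $ (a subspace of ${\bm Q}_d$ of the correct dimension), and to show that this family of subspaces varies smoothly in $x$ and constitutes a $G_{d_0}\times G_{d_1}$-equivariant subbundle which is pointwise isomorphic to ${\bm Q}_{(d_0,d_1)}$ via the canonical block-diagonal identification (the map $(h_{d_0},h_{d_1}) \mapsto h_{d_0} \oplus h_{d_1}$). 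The bundle isomorphism onto $B_{(d_0,d_1)} \times {\bm Q}_{(d_0,d_1)}$ is then $\widehat\zeta_{(d_0,d_1)}^Q$ itself, composed with this canonical identification on the target. In particular, at the basepoint where $(a_0,\ldots,a_{d_0}) = (0,\ldots,0,1)$ one has $g_x = I$ and the image literally equals $B_{(d_0,d_1)} \times {\bm Q}_{(d_0,d_1)}$, and the equivariance property established in step (ii) propagates this identification to all of $B_{(d_0,d_1)}$ via the $G_d$-equivariantization \eqref{eqn:embedding}.
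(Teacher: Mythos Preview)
Your steps (i)--(iii) follow the paper's approach closely: linearity is read off from the formulas, and the nontrivial equivariance check---the $G_{d_0}$-equivariance of $\widehat\zeta_{d_1\to d}^Q$ via $g_x\mapsto g_0 g_x$---is exactly the verification the paper singles out. The paper defers (iii) and (iv) to ``following the proof of Lemma~\ref{lem:normal-position},'' so your more explicit treatment is in the same spirit. (A minor remark on (iii): your ``nodal line'' sentence conflates subspaces of ${\mb C}^{d+1}$ with subspaces of ${\bm Q}_d$; the clean argument is that the image of $\widehat\zeta_{d_0\to d}^Q$ is supported in the top $(d_0{+}1)$-block, so if $g_x^+ h_{d_1}^+ (g_x^+)^{-1}$ also lies there then $h_{d_1}^+$ is supported in the intersection of the two blocks, namely the single position $(d_0,d_0)$, which vanishes.)

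There is a genuine gap in the final sentence of step (iv). The propagation argument does not work: $G_d$ does not act on $B_{(d_0,d_1)}$ (equivariantization replaces the base by $B_{(d_0,d_1)}^\sim$, a different space), and the group that does act, $G_{d_0}\times G_{d_1}$, is not transitive on normalized evaluations since $G_{d_0}$ fixes the coordinate $a_0$. In fact your earlier observation is the correct one: take $d_0=d_1=1$ and compute directly that for $a_0\neq 0$ the conjugate $g_x^+ h_{d_1}^+(g_x^+)^{-1}$ has nonzero $(0,d)$-entry, hence lies \emph{outside} the block subspace ${\bm Q}_{(d_0,d_1)}\subset{\bm Q}_d$. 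So the image inside $B_{(d_0,d_1)}\times{\bm Q}_d$ is a subbundle whose fiber genuinely varies with $x$, and the lemma's phrasing should be read as a bundle isomorphism rather than a literal equality of subbundles. That isomorphism is what your dimension count together with (i)--(iii) already establishes, and it is all that is used downstream (namely, that $\widehat\zeta_{(d_0,d_1)}^Q$ is an equivariant linear bundle embedding).
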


\begin{proof}
The linearity follows from the definition of ${\bm Q}_d$ and the definition of the maps. The $G_{d_0}\times G_{d_1}$-equivariance of $\widehat \zeta_{d_0 \to d}^Q$ is obvious from the definition; the $G_{d_1}$-equivariance of $\widehat \zeta_{d_1\to d}^Q$ is also straightforward. However, the $G_{d_0}$-equivariance of $\widehat \zeta_{d_1 \to d}^Q$ requires a verification. Indeed, fix $ x = (x_{d_0}, x_{d_1})$; suppose the normalized evaluation of $x_{d_0}$ is $z = ( a_0, \ldots, a_{d_0} )$. Then choose $g \in G_{d_0}$. Then $g x = (g x_{d_0}, x_{d_1} )$ and the normalized evaluation of $g x_{d_0}$ is $g z$. So if $[z] = [a_0, \ldots, a_{d_0}] = g_x [0, \ldots, 1]$, then $[gz] = g g_x [0, \ldots, 1]$. Then one can check the $G_{d_0}$-equivariance. The injectivity and the fact that the image is exactly $B_{(d_0, d_1)}\times {\bm Q}_{(d_0, d_1)}$ is also straightforward to verify following the proof of Lemma \ref{lem:normal-position}.
\end{proof}

The following proposition is the counterpart of Proposition \ref{associativity} for quadratic bundles, which can be proved based on keeping track of the action of $g_x$ from \eqref{group2}.

\begin{prop}\label{prop:Q-gluing}
The bundle embeddings defined above are associative. Namely, for $d_0, d_1, d_2$ such that $d = d_0 + d_1 + d_2$, recall
$$
B_{(d_0, d_1, d_2)} = B_{d_0} \times B_{d_1} \times B_{d_2}.
$$
Then given $x \in B_{(d_0, d_1, d_2)}$ and 
$$
(h_{d_0}, h_{d_1}, h_{d_2}) \in {\bm Q}_{d_0} \times {\bm Q}_{d_1} \times {\bm Q}_{d_2},
$$
one has
\beqn
\widehat \zeta_{(d_0, d_1 + d_2)}^Q (x, h_{d_0}, \widehat \zeta_{(d_1, d_2)}^Q(x, h_{d_1}, h_{d_2})) = \widehat \zeta_{(d_0 + d_1, d_2)}^Q (x, \widehat \zeta_{(d_0, d_1)}^Q( x, h_{d_0}, h_{d_1}), h_{d_2}). \qed
\eeqn
\end{prop}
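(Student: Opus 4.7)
The plan is to leverage linearity. Both sides of the claimed identity are $\mb R$-linear in the triple $(h_{d_0}, h_{d_1}, h_{d_2})\in {\bm Q}_{d_0}\oplus {\bm Q}_{d_1}\oplus {\bm Q}_{d_2}$, since each $\widehat\zeta^Q_{(d_i,d_j)}$ is by construction the sum $\widehat\zeta^Q_{d_i\to d_i+d_j}+\widehat\zeta^Q_{d_j\to d_i+d_j}$ of two linear bundle maps. It therefore suffices to verify the identity separately in the three cases in which exactly one of $h_{d_0}$, $h_{d_1}$, $h_{d_2}$ is nonzero.

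The case in which only $h_{d_0}$ is nonzero is essentially tautological: the image of $\widehat\zeta^Q_{d_0\to\bullet}$ is supported in the upper-left $(d_0+1)\times(d_0+1)$ block, which is fixed entrywise by every conjugation $g_x^+$ that appears in the later layers of either side; hence both sides produce the same matrix with $h_{d_0}$ in this block and zeros elsewhere. The case in which only $h_{d_1}$ is nonzero is analogous but slightly more interesting, with the conjugating unitary acting on the first $d_0+1$ coordinates and sending the standard basis vector $e_0$ to the normalized evaluation of $x_{d_0}$; both sides of the identity build in exactly this conjugation, coming respectively from $\widehat\zeta^Q_{d_1\to d_1+d_2}$ followed by $\widehat\zeta^Q_{d_0\to d}$ on the left, and from $\widehat\zeta^Q_{d_1\to d_0+d_1}$ followed by $\widehat\zeta^Q_{d_0+d_1\to d}$ on the right.

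The essential case is the one where only $h_{d_2}$ is nonzero, and the nontrivial input here is the composition rule for normalized evaluations derived inside the proof of Proposition \ref{associativity}: if $(a_i)$ and $(b_j)$ denote the normalized evaluations of $x_{d_0}$ and $x_{d_1}$, then $\zeta_{(d_0,d_1)}(x_{d_0},x_{d_1})$ has normalized evaluation $(b_0 a_0,\ldots,b_0 a_{d_0},b_1,\ldots,b_{d_1})$. On the left-hand side, the embedded matrix $h_{d_2}^+$ is first conjugated by $g_{x_{d_1}}^+$, acting on the last $d_1+d_2+1$ coordinates, and then by $g_{x_{d_0}}^+$, acting on the first $d_0+d_1+1$ coordinates. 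On the right-hand side it is conjugated directly by $g_{x''}^+$, where $x''=\zeta_{(d_0,d_1)}(x_{d_0},x_{d_1})$. The equality of the two sides therefore reduces to verifying that the composition $g_{x_{d_0}}^+\cdot g_{x_{d_1}}^+$ and $g_{x''}^+$ agree modulo the stabilizer of their common target vector, a stabilizer which by construction acts trivially on the subspace supporting $h_{d_2}^+$; this identity is precisely the composition formula recalled above for normalized evaluations. The main obstacle, which is really pure bookkeeping, is to maintain a consistent convention for the embeddings $U(d_i)\hookrightarrow U(d_i+d_j)\hookrightarrow U(d)$ and their extensions by the identity on unaffected coordinates, so that the two orders of grouping $d=d_0+d_1+d_2$ give genuinely the same element of $PU(d+1)$ modulo the relevant stabilizer.
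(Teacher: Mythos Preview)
Your argument is correct and is precisely the computation the paper has in mind: the paper omits the proof entirely, saying only that it ``can be proved based on keeping track of the action of $g_x$,'' and your linearity reduction together with the composition formula for normalized evaluations from Proposition~\ref{associativity} is exactly that. Two small slips to fix: in the $h_{d_1}$ case the outer map on the left-hand side is $\widehat\zeta^Q_{d_1+d_2\to d}$, not $\widehat\zeta^Q_{d_0\to d}$; and the defining property of $g_x$ in the paper is $g_x(0,\ldots,0,1)=(a_0,\ldots,a_{d_0})$, so it sends the \emph{last} basis vector $e_{d_0}$ (not $e_0$) to the normalized evaluation.
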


As a consequence, there is a well-defined $G_{d_0} \times G_{d_1} \times G_{d_2}$-equivariant linear bundle embedding
$$
\widehat\zeta_{(d_0,d_1,d_2)}^Q: B_{(d_0, d_1, d_2)} \times {\bm Q}_{d_0} \times {\bm Q}_{d_1} \times {\bm Q}_{d_2} \to B_{(d_0, d_1, d_2)} \times {\bm Q}_d,
$$
whose image is $B_{(d_0, d_1, d_2)} \times {\bm Q}_{(d_0, d_1, d_2)}$. The associativity further implies that for any $\delta = (d_0, \dots, d_l) \in \bA_{d}$, there exists a $G_{\delta}$-equivariant linear bundle embedding 
\begin{equation}\label{eqn:bundle-embed}
\widehat \zeta_{\delta}^Q: Q_{\delta} := B_{\delta} \times {\bm Q}_{d_0} \times \cdots \times {\bm Q}_{d_l} \xrightarrow{\sim} B_{\delta} \times {\bm Q}_{\delta} \subset B_{\delta} \times {\bm Q}_{d}.
\end{equation}
Take the $G_d$-equivariantization using the group embedding $G_\delta \hookrightarrow G_d$, we obtain a $G_d$-equivariant vector bundle
$$
Q_{\delta}^{\sim} = G_d \times_{G_\delta} Q_\delta \to B_\delta^\sim
$$
which is $G_d$-equivariantly embedded in $B_\delta^\sim \times {\bm Q}_{d}$.
Combined with the gluing map \eqref{eqn:embedding}, we obtain a $G_d$-equivariant bundle embedding
$$
(\widehat \zeta_{\delta}^Q)^\sim: Q_{\delta}^{\sim} \to Q_d |_{\partial^\delta B_d}
$$
covering the embedding $\zeta_\delta^\sim: B_\delta^\sim \hookrightarrow \partial^\delta B_d$. Using the bundle $\check{Q}_{\delta}^{\sim}$ from \eqref{eqn:stabilization-base}, we can stabilize $\zeta_\delta^\sim$ by $\check{Q}_{\delta}^{\sim}$ to get a diffeomorphism onto its image.


\subsubsection{Normal complex structure}

Here we prove the following result.

\begin{prop}\label{base_normal_complex}
For each $d\geq 1$, $(B_d \times {\bm Q}_d)/G_d$ is a normally complex orbifold with corners.
\end{prop}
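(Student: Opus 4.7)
The plan is to verify the orbifold structure on the quotient and then to construct, for each point and each subgroup of the isotropy, $G$-invariant complex structures on the base and fiber components of the normal bundle separately. First, I would check that the $G_d$-action on $B_d$ has finite stabilizers: for $x = [\Sigma, {\bf L}, u] \in B_d$, any element of $G_d$ stabilizing $x$ must intertwine with a domain automorphism preserving $z_\pm$ and the lateral lines, and by the non-degeneracy condition $x \in {\mc F}_{0, 2}^{\mb R}(d)$ (so that the image of $u$ spans no proper hyperplane) the map from the stabilizer to the domain automorphism group is injective; stability of the cylinder then forces finiteness. Combined with Lemma \ref{lem:b-delta}, this shows $B_d/G_d$ is a smooth effective $\bA_d$-orbifold with corners, and consequently $(B_d \times {\bm Q}_d)/G_d$ is the total space of a $G_d$-equivariant orbifold vector bundle and is itself an orbifold with corners.

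For the base-direction normal bundle, I would exploit the description from Lemma \ref{lem:r-blowup}: $B_d$ is a regular fibre of the stratified submersion $\ev_-$ on ${\mc F}_{0,2}^{\mb R}(d)$, which is an $S^1$-bundle over the real blowup ${\mb R}{\mc F}_{0, 2}(d)$ of the complex quasi-projective variety ${\mc F}_{0,2}(d)$. At a point $x \in B_d$ lying in the stratum indexed by $\delta \in \bA_d$, the tangent space therefore admits a canonical splitting
\beqn
T_x B_d = V_x^{{\mb C}} \oplus V_x^{{\mb R}},
\eeqn
where $V_x^{{\mb C}}$ is the holomorphic deformation space of the underlying stable map in ${\mc F}_{0,2}(d)$ cut by the equation $\ev_- = [1, 0, \ldots, 0]$, and $V_x^{{\mb R}}$ is the real span of the $\dep(\delta) + 1$ lateral-line angular parameters together with the $\dep(\delta)$ normal directions opened by the real blowup. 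Since the $G_d$-action fixes the domain, the lateral lines, and the blowup parameters pointwise, it acts trivially on $V_x^{{\mb R}}$ and complex-linearly on $V_x^{{\mb C}}$. Consequently, for any subgroup $G \subseteq \Gammait_x$, the decomposition $T_x B_d^G = (V_x^{{\mb C}})^G \oplus V_x^{{\mb R}}$ presents the normal bundle $NB_d^G$ as the orthogonal complement of $(V_x^{{\mb C}})^G$ inside $V_x^{{\mb C}}$, which inherits the complex $G$-module structure from $V_x^{{\mb C}}$, with the compatibility for $H \subseteq G$ required by Definition \ref{defn21}(3) being automatic.

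For the fiber directions, writing $h \in {\bm Q}_d$ as $h = \left(\begin{smallmatrix} 0 & v^\dagger \\ v & h' \end{smallmatrix}\right)$ with $v \in {\mb C}^d$ and $h' \in \text{Herm}(d)$ yields a $G_d$-equivariant splitting ${\bm Q}_d \cong {\mb C}^d \oplus \text{Herm}(d)$, where the first summand is the standard representation of $U(d) \cong G_d$ and the second is the adjoint representation. The first summand is already a complex $G_d$-representation. For the second summand and any finite subgroup $G \subseteq U(d)$, complete reducibility decomposes $\text{Herm}(d)$ into $G$-isotypic components; the non-trivial components come in Hermitian-conjugate pairs, each of which assembles into a complex $G$-submodule, reflecting the intrinsic splitting of the complexification $\text{Herm}(d) \otimes_{\mb R} {\mb C} \cong \mathfrak{gl}(d, {\mb C})$ into holomorphic and anti-holomorphic summands exchanged by the involution $A \mapsto A^*$. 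Selecting the holomorphic summand within each conjugate pair in $\check{\text{Herm}}(d)^G$ defines a $G$-invariant complex structure on $\check{{\bm Q}}_d^G$. The main obstacle is verifying the compatibility of these choices across inclusions $H \subseteq G \subseteq \Gammait_x$, i.e., that $\check{{\bm Q}}_d^H|_{{\bm Q}_d^G} \subseteq \check{{\bm Q}}_d^G$ is complex-linear; this reduces to observing that the holomorphic/anti-holomorphic decomposition of $\mathfrak{gl}(d, {\mb C})$ is canonical and $U(d)$-intrinsic, so both complex structures arise by restriction of the same universal decomposition, making all required compatibility conditions automatic and completing the construction of a normal complex structure.
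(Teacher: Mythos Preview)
Your approach has a genuine gap: you attempt to equip the base $B_d/G_d$ and the fibre ${\bm Q}_d$ with normal complex structures separately, but neither carries one on its own.

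For the fibre, the claim that $\check{{\bm Q}}_d^G$ (in particular $\check{{\rm Herm}(d)}^G$) has a canonical $G$-invariant complex structure is false. Your pairing argument handles the off-diagonal blocks ${\rm Hom}(V_i,V_j)\oplus{\rm Hom}(V_j,V_i)$ correctly, but not the diagonal blocks ${\rm Herm}(V_i)$ when an irreducible $V_i \subset {\mb C}^d$ has $\dim_{\mb C} V_i \geq 2$: by Schur ${\rm Herm}(V_i)^G = {\mb R}\cdot I$, so $\check{{\rm Herm}(V_i)}^G$ has odd real dimension $(\dim V_i)^2-1$. For instance with $G=S_3\subset U(2)$ via its $2$-dimensional irreducible, $\check{{\rm Herm}(2)}^G$ is $3$-dimensional and admits no $G$-invariant almost complex structure. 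The invoked holomorphic/anti-holomorphic decomposition of $\mathfrak{gl}(d,{\mb C})$ is a splitting of the \emph{complexification}, not a complex structure on the real form ${\rm Herm}(d)$.

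For the base, your splitting $T_xB_d = V_x^{\mb C}\oplus V_x^{\mb R}$ is fine and does show that the normal to the $G$-fixed locus \emph{in $B_d$} is complex, but what is needed is the normal in an orbifold chart, i.e.\ in a slice transverse to the $G_d$-orbit. The orbit direction $\mathfrak{g}_d\cdot x \subset V_x^{\mb C}$ is totally real (since $J(\xi\cdot x)=(i\xi)\cdot x$ with $i\xi\in{\rm Herm}(d)\not\subset\mathfrak{u}(d)$), so quotienting by it destroys the complex structure and leaves a residual copy of $\check{{\rm Herm}(d)}^G$ in the slice --- again not complex. The paper resolves both issues at once by observing that the ${\bm Q}_d'$-orbit through $x$ (for the affine action $A\cdot x=(I_{d+1}+A)(x)$, with ${\bm Q}_d'\cong{\bm Q}_d$ as $G_d$-representations) \emph{is} a local slice; hence the normal in a chart of $(B_d\times{\bm Q}_d)/G_d$ is $\check{{\bm Q}}_d^\Gamma\oplus\check{{\bm Q}}_d^\Gamma$, and the doubled space carries the obvious $\Gamma$-equivariant complex structure $J(v,w)=(-w,v)$ regardless of whether $\check{{\bm Q}}_d^\Gamma$ does. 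The two non-complex pieces, one from the base slice and one from the fibre, pair up --- precisely what a separated treatment cannot see.
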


\begin{proof}
We first re-examine the relation between $B_d$ and ${\bm Q}_d$. Define 
\beqn
{\bm Q}_d':= \left\{ \left. A  = \left[ \begin{array}{cc} 0 & * \\ 0 &  a \end{array} \right] \ \right| \ a^\dagger = a \right\}
\eeqn
Then one can check that ${\bm Q}_d'\cong {\bm Q}_d$ as $G_d$-spaces where on both the $G_d$ action is given by the conjugation. Then we define an ``action'' of ${\bm Q}_d'$ on $B_d$ by 
\beqn
A\cdot x \mapsto (I_{d+1} + A) (x).
\eeqn
It is easy to see that the ${\bm Q}_d'$-orbit through $x$ is a local slice of the $G_d$-action. Suppose $\Gamma \subset G_d$ is a finite subgroup and $x \in (B_d)^\Gamma$. Let ${\bm Q}_d^\Gamma \subset {\bm Q}_d$ be the $\Gamma$-invariant subspace. Then we see that the ${\bm Q}_d^\Gamma$-orbit through $x$ is contained in the $\Gamma$-fixed locus. We can check that locally it coincides with the fixed locus. Hence the normal direction to to the $\Gamma$-fixed locus of $B_d$ and the orthogonal complement of ${\bm Q}_d^\Gamma$ are isomorphic representations of $\Gamma$. Hence one can define a natural normal complex structure on $(B_d\times {\bm Q}_d)/G_d$. 
\end{proof}

\subsubsection{Abelian gauge theory on punctured spheres}

We include certain simple facts about abelian gauge theory over the infinite cylinder.
Let $\Theta = {\mb R} \times S^1$ be the infinite cylinder, equipped with the standard flat Riemannian metric. Let $(s, t)$ be the standard coordinates where $s \in {\mb R}$ and $t \in S^1$. Consider an everywhere non-negative 2-form 
\beqn
\Omega = \sigma(s, t) ds \wedge dt, \sigma(s, t) \geq 0
\eeqn
which decays exponentially on the ends: there exist a positive real number $\delta>0$ and a sequence of positive real numbers $C_0, C_1, \ldots$ such that 
\beq\label{eqn41}
|\nabla^l \sigma(s, t)| \leq C_l e^{-\delta|s|},\ l = 0, 1, \ldots.
\eeq
We assume that 
\beqn
\int_\Theta \Omega = d \in {\mb Z}_{\geq 0}.
\eeqn

\begin{lemma}\label{lemma:connection-exist}
There exists a unitary connection $A = A_\Omega$ on the trivial line bundle $L = \Theta \times {\mb C}$, unique up to gauge transformation, which satisfies the following conditions:
\begin{enumerate}
    \item The curvature form of $A_\Omega$ is equal to $- 2\pi {\bm i} \Omega$.
    
    \item The holonomy of $A_\Omega$ along any circle $\{s\}\times S^1\subset \Theta$, which is a well-defined element in $U(1)$, converges to the identity as $s \to \pm \infty$.
    \end{enumerate}
\end{lemma}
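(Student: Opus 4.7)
My plan is to work in the temporal gauge $A_s = 0$ and write down the connection explicitly. Since any connection on the trivial complex line bundle is of the form $A = f(s,t)\,ds + g(s,t)\,dt$ with $f, g$ taking values in $i\mathbb{R}$, setting $f \equiv 0$ reduces the curvature condition $F_A = dA = -2\pi i\, \Omega$ to the single ODE $\partial_s g = -2\pi i\, \sigma(s,t)$. The natural candidate is
\[
g(s,t) = -2\pi i \int_{-\infty}^{s} \sigma(s',t)\,ds',
\]
which is globally defined and smooth on $\Theta = \mathbb{R}\times S^1$ by the exponential decay hypothesis \eqref{eqn41}, and periodic in $t$ since $\sigma$ is. This gives an honest unitary connection $A_\Omega = g(s,t)\,dt$ on $L$ with the desired curvature.

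The holonomy of $A_\Omega$ around the circle $\{s\}\times S^1$ is $\exp\!\left(-\int_0^1 g(s,t)\,dt\right)$. As $s \to -\infty$ the inner integral $\int_{-\infty}^s \sigma\,ds'$ tends to $0$ uniformly in $t$ by the exponential decay, so the holonomy tends to $1$. As $s \to +\infty$ it tends to $\exp\!\left(2\pi i \int_\Theta \sigma\,ds\,dt\right) = \exp(2\pi i d) = 1$ because $d \in \mathbb{Z}$. This proves existence, and the integrality of the total flux is precisely what makes both asymptotic holonomies trivial.

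For uniqueness up to gauge, suppose $A, A'$ both satisfy the conclusions. Then $\alpha := A - A'$ is a closed $i\mathbb{R}$-valued $1$-form on $\Theta$. Since $\alpha$ is closed, the period $c := \int_{\{s\}\times S^1} \alpha$ is independent of $s$. On the other hand, $\exp(-c)$ equals the ratio of holonomies of $A$ and $A'$ around $\{s\}\times S^1$; taking $s \to \pm\infty$ forces $\exp(-c) = 1$, so $c \in 2\pi i\mathbb{Z}$. The space $H^1(\Theta; i\mathbb{R})$ is one-dimensional, generated by $i\,dt$, and the image of the map $u \mapsto u^{-1}du$ from $C^\infty(\Theta, U(1))$ to closed $i\mathbb{R}$-valued $1$-forms consists precisely of those closed forms with periods in $2\pi i\mathbb{Z}$. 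Hence there exists $u : \Theta \to U(1)$ with $u^{-1}du = \alpha$, which gives the gauge equivalence $A = u^{-1} A' u + u^{-1}du$. There is no substantial obstacle here: both existence and uniqueness are essentially Poincar\'e lemma computations on the cylinder, with the exponential decay ensuring integrability at the ends and the integrality of $d$ supplying exactly the topological condition needed for trivial asymptotic holonomy.
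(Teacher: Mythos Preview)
Your proof is correct, but it takes a genuinely different route from the paper's. The paper chooses a reference connection $A_0$ equal to $d - \mathbf{i} m_\pm\, dt$ near the ends (with $m_- - m_+ = d$), writes $A = A_0 + \phi\, ds + \psi\, dt$, and reduces the curvature equation together with a gauge-fixing condition to a Cauchy--Riemann equation for $f = \phi + \mathbf{i}\psi$; existence and uniqueness then follow from the Fredholm theory of $\bar\partial$ on weighted Sobolev spaces $W^{1,p,\epsilon}(\Theta)$. Your argument instead works in temporal gauge and writes the connection down explicitly as $g(s,t)\,dt$ with $g = -2\pi\mathbf{i}\int_{-\infty}^s \sigma\,ds'$, checking curvature and holonomy by direct computation. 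The uniqueness arguments are essentially the same in both: a closed $\mathbf{i}\mathbb{R}$-valued $1$-form on the cylinder is, up to gauge, a constant multiple of $dt$, and the holonomy condition forces the period into $2\pi\mathbf{i}\mathbb{Z}$. Your approach is more elementary and self-contained for this lemma; the paper's approach has the advantage that the same weighted Cauchy--Riemann machinery is reused verbatim in the next step (Lemma~\ref{HK}), where one solves for a complex gauge transformation relating $A_\Omega$ to the Chern connection of a smooth metric on $L_d \to \mathbb{CP}^1$.
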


\begin{proof}
Choose two integers $m_-$ and $m_+$ such that $d = m_- - m_+$. Choose a reference smooth connection $A_0$ which is equal to $d - {\bm i} m_\pm dt$ near $\pm \infty$. Consider an arbitrary connection of the form 
\beqn
A = A_0 + \phi ds + \psi dt.
\eeqn
Then the curvature condition $F_A = -2\pi{\bm i} \Omega$ and a gauge fixing condition induced by the trivial product connection give the equations 
\beq\label{eqnCR}
\left\{ \begin{array}{rl} \displaystyle \frac{\partial \psi}{\partial s} - \frac{\partial \phi}{\partial t} & = *( - 2\pi {\bm i} \Omega  - F_{A_0}),\\ [0.2cm]
\displaystyle \frac{\partial \phi}{\partial s} + \frac{\partial \psi}{\partial t} & = 0.\end{array} \right.
\eeq
The left-hand-side is indeed the standard Cauchy--Riemann operator on $f = \phi + {\bm i} \psi$. Introduce a small $\epsilon>0$. Then the operator 
\beqn
\frac{\partial}{\partial \ov{z}}: W^{1,p, \epsilon}(\Theta) \otimes {\mb C} \to L^{p, \epsilon}(\Theta) \otimes {\mb C}
\eeqn
is a Fredholm operator with index $-1$ with trivial kernel and 1-dimensional cokernel spanned by a function with nonzero total integral. By the exponential decay property of $\Omega$ (see \eqref{eqn41}), when $\epsilon < \delta$, the right hand side of \eqref{eqnCR} belongs to $L^{p,\epsilon}$. The choice of the reference connection $A_0$ implies that the right hand side of \eqref{eqnCR} is in the image of the Cauchy--Riemann operator. Therefore there exists a unique solution $f = \phi + {\bm i} \psi$ of class $W^{1, p, \epsilon}$ to \eqref{eqnCR}. Hence the existence is proved. 

For uniqueness, one can see that any other solution $A'$ differs from $A$ by a closed 1-form. Up to gauge transformation, we can assume that the 1-form is $a dt$ for a constant $a$. Then the holonomy condition forces that $a \in {\mb Z}$, which is given by the effect of a gauge transformation. 
\end{proof}

Now we would like to realize $A$ as certain singular connection on a degree $d$ holomorphic line bundle over $\mb{CP}^1$. To this end, let $L_d \to \mb{CP}^1$ be a degree $d$ holomorphic line bundle equipped with a Hermitian metric and let $A_d$ be the Chern connection. Choosing $A_0$ to be the same reference connection as in the proof of Lemma \ref{lemma:connection-exist}, then over $\Theta = \mb{CP}^1\setminus \{\pm\infty\}$ there exists a trivialization of $L_d$ such that 
\beqn
A_d = A_0 + \phi_d ds + \psi_d dt
\eeqn
such that $\phi_d$, $\psi_d$, and all of their derivatives converge exponentially to $0$ as $s \to \pm\infty$. Moreover, the difference of curvature $\Omega_{A_d} - \Omega_A$, measured in cylindrical metric, decays exponentially like $e^{-\delta|s|}$ with all derivatives. Then consider a general complex gauge transformation $g = e^h = e^{h' + {\bm i} h''}$ where $h', h'': \Theta \to {\mb R}$ are functions. Consider the equation $(e^h)^* A_d = A$ which is equivalent to 
\beqn
{\bm i} dh'' + {\bm i} * dh' = A_d - A
\eeqn
which is equivalent to 
\beq\label{eqn44}
\left\{ \begin{array}{cc}  \displaystyle {\bm i} \left( \frac{ \partial h''}{\partial s} - \frac{\partial h'}{\partial t} \right) & = \phi_d - \phi,\\
                           \displaystyle {\bm i} \left( \frac{\partial h'}{\partial s} + \frac{ \partial h''}{\partial t} \right) & = \psi_d - \psi. \end{array} \right.
\eeq
This is again the standard Cauchy--Riemann operator on cylinder. If we regard the left-hand-side as a Fredholm operator from $W^{1,p,\epsilon}$ to $L^{p,\epsilon}$, then it has (complex) index $-1$ with the cokernel generated by a function whose total integral is nonzero. Now we allow $h$ to have nonzero limits at $\pm\infty$ so that $h-h(\pm\infty)$ is of class $W^{1, p, \epsilon}$ near the infinities. Then the Fredholm index becomes $1$ with kernel being the subspace of constant functions. Therefore, there exists a solution to \eqref{eqn44} unique up to adding a constant. We may then view $A$ as the Chern connection on $L_d$ with respect to the Hermitian metric rescaled by $e^{2h'}$, which may be a singular metric on $\mb{CP}^1$ but is smooth over the cylinder and continuous over the poles. We summarize these elementary discussions as the following lemma.

\begin{lemma}\label{HK}
Given a 2-form $\Omega$ as in Lemma \ref{lemma:connection-exist}, there exists a continuous Hermitian metric on $L_d$ whose Chern connection has curvature form equal to $-2\pi {\bm i} \Omega$. Moreover two such Hermitian metrics differ by a constant.
\end{lemma}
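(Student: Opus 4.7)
The plan is to realize the connection $A$ constructed in Lemma \ref{lemma:connection-exist} as the Chern connection of a rescaled Hermitian metric on $L_d$, where the rescaling factor is built from the real part of a complex gauge transformation. All the analytic setup has already been carried out in the paragraph preceding the lemma statement, so the proof reduces to (i) verifying solvability of the Cauchy--Riemann equation \eqref{eqn44} in a suitable weighted Sobolev space, (ii) interpreting the resulting gauge transformation as a metric change, and (iii) a maximum principle argument for uniqueness.

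For existence, I would first fix a smooth reference Hermitian metric on $L_d$ with Chern connection $A_d$, and use a trivialization of $L_d$ over $\Theta = \mathbb{CP}^1 \setminus \{\pm\infty\}$ adapted to $A_0$ so that $A_d - A_0$ and $A - A_0$ both have exponentially decaying components. Setting $g = e^{h}$ with $h = h' + \mathbf{i} h''$ and requiring $(e^h)^* A_d = A$ gives precisely the inhomogeneous Cauchy--Riemann system \eqref{eqn44}. Solving it in the space of functions $h$ with $h - h(\pm \infty) \in W^{1,p,\epsilon}$ for $0 < \epsilon < \delta$, the Cauchy--Riemann operator becomes Fredholm of complex index $1$ with kernel the constants, so the equation admits a solution, unique up to an additive complex constant.

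Then I would define the new Hermitian metric on $L_d$ to be $e^{2h'}$ times the reference metric in the chosen trivialization over $\Theta$. Because $h'$ has well-defined finite real limits at $\pm\infty$ (with exponentially small error, hence continuous extension), the rescaling factor extends continuously across the two poles, producing a continuous Hermitian metric on $L_d$ whose Chern connection over the cylinder agrees with $A$ and whose curvature is therefore $-2\pi\mathbf{i}\Omega$. The main technical point of the argument is the asymptotic behavior encoded in the choice of function space: one must allow nonzero limits at $\pm\infty$ so that the ambiguity of $h$ matches the constant ambiguity of the final metric, while still getting $C^0$ control at the poles.

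For the uniqueness claim, suppose two continuous Hermitian metrics $g_1, g_2$ on $L_d$ have Chern connections with curvature $-2\pi\mathbf{i}\Omega$. Writing $g_1 = e^{2f} g_2$ with $f = \tfrac{1}{2} \log(g_1/g_2)$, the function $f$ is continuous on $\mathbb{CP}^1$, and the equality of curvatures gives $\partial \bar\partial f = 0$ on $\Theta$, so $f$ is harmonic on $\Theta$. Its continuity (hence boundedness) at the two punctures makes the singularities removable, so $f$ extends to a harmonic function on the compact Riemann surface $\mathbb{CP}^1$; by the maximum principle $f$ is constant, and the two metrics differ by a multiplicative constant, as claimed.
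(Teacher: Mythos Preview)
Your proposal is correct and follows essentially the same approach as the paper: the existence argument via solving \eqref{eqn44} in the enlarged weighted Sobolev space with nonzero limits at $\pm\infty$, and the interpretation of $e^{2h'}$ as the conformal factor on the reference metric, are exactly what the paper does in the paragraph preceding the lemma. Your uniqueness argument via the maximum principle on $\mathbb{CP}^1$ is a minor variant of the paper's implicit argument (which deduces uniqueness from the fact that solutions $h$ to \eqref{eqn44} are unique up to an additive constant), and both are valid.
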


We generalize the above lemma to the case of prestable cylinders. On a nodal curve, a smooth/holomorphic function or map means a continuous function or map whose pullback to the normalization is smooth/holomorphic. A smooth/holomorphic $k$-form (with $k\geq 1$) means a collection of smooth/holomorphic $k$-forms on its normalization. Then given a genus zero nodal curve $\Sigma$ with irreducible components $\Sigma_1, \ldots, \Sigma_s$ and integers $d_1, \ldots, d_s$, there exists a unique up to isomorphism smooth/holomorphic line bundle over $\Sigma$ whose restriction to $\Sigma_i$ has degree $d_i$. Moreover, the automorphism group of such a holomorphic line bundle is isomorphic to ${\mb C}^*$. Then Lemma \ref{HK} has the following corollary.

\begin{cor}(cf. \cite[Lemma 6.8]{AMS})\label{HK2}
Let $(\Sigma, {\bf L})$ be a prestable cylinder and $\Omega$ be a smooth 2-form on $\Sigma$. Suppose
\begin{enumerate}
    \item the integration of $\Omega$ over each irreducible component is integral, and
    
    \item over cylindrical components, $\Omega$ satisfies \eqref{eqn41} for some common $\delta>0$.
    \end{enumerate}
Then there exists a holomorphic Hermitian line bundle on $\Sigma$ whose curvature form is $-2\pi {\bm i} \Omega$. Moreover, this line bundle is unique up to isomorphism.
\end{cor}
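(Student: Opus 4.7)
The plan is to build the bundle component-by-component using the results already in hand and then glue at the nodes, exploiting the tree-like structure of the dual graph of a genus-zero prestable cylinder. First, I would decompose $\Sigma$ into its irreducible components $\Sigma_1,\ldots,\Sigma_s$, each biholomorphic to $\mathbb{CP}^1$. On each cylindrical component, Lemma \ref{HK} directly furnishes a holomorphic Hermitian line bundle whose Chern curvature equals $-2\pi\boldsymbol{i}\,\Omega|_{\Sigma_i}$, unique up to rescaling the metric by a positive real constant. On a spherical component, $\Omega|_{\Sigma_i}$ is a smooth 2-form on a compact $\mathbb{CP}^1$ with integer total mass $d_i=\int_{\Sigma_i}\Omega$; the classical $\partial\bar\partial$-lemma on $\mathbb{CP}^1$ then produces a Hermitian metric on the degree-$d_i$ holomorphic line bundle $\mathcal{O}(d_i)$ with Chern curvature $-2\pi\boldsymbol{i}\,\Omega|_{\Sigma_i}$, again unique up to positive real rescaling. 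Concretely, one fixes a reference Hermitian metric with some smooth curvature form $\omega_0$, observes that $-2\pi\boldsymbol{i}\,\Omega|_{\Sigma_i}-\omega_0$ is an exact $(1,1)$-form, writes it as $\partial\bar\partial\varphi$, and rescales the reference metric by $e^{-\varphi}$.

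Next, each node $n_j$ of $\Sigma$ corresponds to a pair of points $(p_j^-,p_j^+)$ on the normalization lying on (possibly coinciding) components. I would choose a unitary identification $\phi_j:L_{i(j)^-}|_{p_j^-}\xrightarrow{\sim}L_{i(j)^+}|_{p_j^+}$ of the two Hermitian fibers. Taking the quotient by these identifications at all nodes produces a holomorphic line bundle on $\Sigma$ equipped with a Hermitian metric that is smooth on each irreducible component and continuous across nodes; by construction its restricted Chern connection has curvature $-2\pi\boldsymbol{i}\,\Omega|_{\Sigma_i}$ on every $\Sigma_i$, which is the global curvature condition we want.

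For uniqueness, given any two such holomorphic Hermitian line bundles $L,L'$ on $\Sigma$, the componentwise uniqueness statements cited above yield isomorphisms $\varphi_i:L|_{\Sigma_i}\xrightarrow{\sim}L'|_{\Sigma_i}$ of holomorphic Hermitian bundles, each determined up to multiplication by a constant in $U(1)$. The $\varphi_i$ need not match the respective gluing data at nodes, but because $\Sigma$ has arithmetic genus zero its dual graph is a tree: one can root the tree at any vertex and inductively adjust the $U(1)$-phase of each $\varphi_i$, working outward from the root, so that it matches the fiber identification at the unique node connecting $\Sigma_i$ to its parent component. This procedure has no obstruction precisely because there are no cycles, and it produces a global isomorphism $L\cong L'$. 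The main subtlety, and where I expect the most care to be required, is exactly this bookkeeping across nodes; the analytic content is entirely packaged in Lemma \ref{HK} and its routine $\mathbb{CP}^1$-analogue, so the remaining work is structural rather than technical.
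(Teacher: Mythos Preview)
Your proposal is correct and is exactly the argument the paper has in mind: the corollary is stated without proof, immediately after the paper records that a genus-zero nodal curve carries a unique (up to $\mathbb{C}^*$) holomorphic line bundle of prescribed componentwise degree, and the remaining content is precisely the componentwise application of Lemma~\ref{HK} (cylindrical pieces) and the $\partial\bar\partial$-lemma (spherical pieces), followed by gluing along the tree. One small remark: the paper's working notion of isomorphism for these Hermitian line bundles is ``holomorphic isomorphism that is an isometry up to a constant factor'' (see Definition~\ref{defn:isomorphism_framed_cylinder}), so the per-component ambiguity is $\mathbb{C}^*$ rather than $U(1)$; your tree argument goes through verbatim with this larger freedom, and in fact the extra $\mathbb{R}_{>0}$ factor is what absorbs the ``metric unique up to a positive constant'' indeterminacy from Lemma~\ref{HK} without any additional bookkeeping.
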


Under Hypothesis \ref{hyp51}, suppose $u: \Sigma \to X$ is a smooth map whose restriction to each cylindrical component converges to a periodic orbit of $H$ at $\pm\infty$ in an exponential rate. Then consider the 2-form $\Omega_{u, H} \in \Omega^2(\Sigma)$ defined as follows:

\beq\label{2form}
\Omega_{u, H} := \left\{ \begin{array}{lr} 
\displaystyle u^* \omega & \text{ on spherical components} \\
\displaystyle  u^* \omega - d( H_t (u) dt) & \text{ on cylindrical components.}
\end{array} \right.
\eeq
Then by Corollary \ref{HK2} there is a unique Hermitian line bundle $L_u \to \Sigma$ if the integration of the $2$-form $\Omega_{u, H}$ is integral over each cylindrical component. In particular, each stable Floer trajectory induces a line bundle over its domain, thanks to Hypothesis \ref{hyp51}. The desired integrality property of $\Omega_{u, H}$ for stable Floer trajectories follows from the Stokes' formula and the integrality of symplectic actions. Moreover, the Hermitian line bundle $L_u$ has strictly positive curvature $2$-forms over the nontrivial components of $u$.



    


\subsection{Global chart construction II}

In this subsection, we provide a construction of a global Kuranishi chart for a single moduli space of Floer trajectories. The purpose is to showcase how to use the auxiliary moduli space $B_{pq}$ as a model for the Deligne--Mumford space in the Hamiltonian Floer theory which allows one to present the moduli space $\ov{\mc M}_{pq}$ as a global quotient, and to present how to adapt the geometric regularization method from \cite{AMS} to this setting. 

\subsubsection{Description of the K-chart.}

We first define the notion of framed curves in the case of Floer trajectories. 

\begin{defn}(cf. \cite[Definition 6.10]{AMS})\label{defn:framed_cylinder}
Given a moduli space $\ov{\mc M}_{pq}$ of Floer trajectories and a stratum $\alpha = pr_1 \cdots r_l q$, denoting $d=d_{pq}$, a {\bf framed cylinder} (of type $\alpha$) is a tuple $(u, \Sigma, F)$ where 
\begin{enumerate}
    \item $\Sigma$ is a prestable cylinder with $l+1$ horizontal levels (see Definition \ref{stablecylinder}). 
    
    \item $u: \Sigma \to M$ is a smooth map whose restriction to each cylindrical component converges to periodic orbits prescribed by the capped orbits $p, r_1,\dots, r_l, q$ at $\pm\infty$ in an exponential rate, and the topological energy of each horizontal level is prescribed by \eqref{eqn:floer-energy}. Moreover, the $2$-form $\Omega_{u, H}$ defined in Equation \eqref{2form} is non-negative and is strictly positive on each unstable component of $u$.
    
    
    \item $F = (f_0, f_1, \ldots, f_d)$ is basis 
    of global sections of the line bundle $L_u$ constructed using the $2$-form \eqref{2form}.     Moreover, the induced holomorphic map 
    \beq\label{framemap}
    \iota_F: \Sigma \to \mb{CP}^d,\ w \mapsto [f_0(w), \ldots, f_d(w)]
    \eeq
    is a stable map, which represents a point $[\iota_F] \in {\mc F}_{0,2}(d)$. Denote by 
    \beqn
    \tilde \iota_F: \Sigma \to {\mc C} = {\mc C}_{0,2}(d)
    \eeqn
    to be the identification between $\Sigma$ and the fiber of the universal curve ${\mc C}_{0,2}(d) \to {\mc F}_{0,2}(d)$ over $[\iota_F]$.\footnote{$\tilde \iota_F$ is called a ``domain map'' in \cite{AMS}.}
\end{enumerate}
\end{defn}

Note that Condition (2) above guarantees the existence of the Hermitian line bundle $L_u \to \Sigma$ by Corollary \ref{HK2} whose restriction to each unstable component has strictly positive degree. We can define the notion of isomorphisms of framed curves in an obvious way. 

\begin{defn}\label{defn:isomorphism_framed_cylinder}
An isomorphism from a framed cylinder $(u, \Sigma, F)$ to another framed cylinder $(u', \Sigma', F')$ consists of an isomorphism $\varphi: \Sigma \to\Sigma'$ of prestable cylinders such that $u' \circ \varphi = u$ and an isomorphism of holomorphic line bundles $\widehat \varphi: L_u \to L_{u'}$ which covers $\varphi$ and which is an {\it isometry up to a constant factor}, such that $f_i' \circ \varphi =  \widehat \varphi \circ f_i$ for all $i = 0, \ldots, d$. Two framed cylinders are isomorphic if there exists an isomorphism between them. Notice that scaling the frame $F = (f_0, \ldots, f_d)$ by a nonzero complex number produces an isomorphic framed curve.
\end{defn}

Given a framed cylinder $(u, \Sigma, F)$ and an element $g \in PGL(d+1)$, we can construct another framed cylinder in the following way. Observe that the sections $f_0, \dots, f_d$ define an embedding $\Sigma \to {\mb CP}^d$, under which the line bundle $L_u$ is the pullback of ${\mc O}(1)$ and the basis $(f_0, \dots, f_d)$ is obtained by pulling back the standard hyperplane sections of ${\mc O}(1)$. As an element of the automorphism group of ${\mc CP}^d$, the action of $g$ can be lifted to an automorphism of the line bundle ${\mc O}(1)$. Therefore, $g$ takes the standard hyperplane sections of ${\mc O}(1)$ to another basis of ${\mc O}(1)$. Composing such an action with the embedding induced by $(f_0, \dots, f_d)$ defines another framed cylinder and we denote it by
\beq\label{eqn:change-framing}
(u, \Sigma, g_* F).
\eeq

We construct a geometric thickening of a moduli space of Floer trajectories using framed cylinders. We choose the following data.

\begin{enumerate}


    \item A Hermitian connection $\nabla^{TM}$ on $TM$ with respect to the Hermitian metric induced from $\omega$ and $J$ satisfying the following condition: $\nabla^{TM}$ is flat near all 1-periodic orbits (which are all embedded and disjoint by assumption) of $H$ and its holonomy along each such orbit is nondegenerate, i.e., does not have $1$ as an eigenvalue.
    
    \item For each moduli space $\ov{\mc M}_{pq}$, a positive integer $k$.\footnote{Later we will need $k$ to be sufficiently large to achieve transversality. We will also need to choose a list of integers instead of a single one to obtain a Kuranishi flow category.}
\end{enumerate}


The following definition introduces the thickening induced by one single line bundle. The discussion here is of expository nature, and the actual thickening we need involves multiple line bundles, as presented in detail in Section \ref{subsubsec:multi}. 

\begin{defn}[Thickened moduli]\label{def:thick-mod}
Fix a nonempty moduli space $\ov{\mc M}_{pq}$. Abbreviate $d = d_{pq}$. We define the following objects.
\begin{enumerate}
    \item The \emph{symmetry group} $G_{pq}$ is $G_{d_{pq}}$.

    \item The \emph{thickened moduli space} $V_{pq}$ parametrizes isomorphism classes of quadruples of the form
\beqn
(u, \Sigma, F, \eta)
\eeqn
where 
\begin{enumerate}
    \item $(u, \Sigma, F)$ is a framed cylinder. Suppose it is of type $\alpha$ for some stratum $\alpha = pr_1 \cdots r_l q$.
    
    \item The framing $F=(f_0, \dots, f_d)$ satisfies 
    $$[f_0(z_-): \cdots : f_d(z_-)] = [1:0:\cdots :0].\footnote{This condition will be used to construct a map from the thickened moduli space to $B_{pq}$.}$$
    
    \item $\eta$ is an element of the vector space 
    \beq\label{obstruction1}
    H^0 ( \ov{\rm Hom}( \iota_F^* T \mb{CP}^d, u^* TM) \otimes \iota_F^* {\mc O}(k) ) \otimes_{\mb C} \ov{H^0( \iota_F^* {\mc O}(k))_0} .
    \eeq
    Here the first $H^0$ is the kernel of the Cauchy--Riemann operator induced from the Hermitian connection $\nabla^{TM}$ on $TM$ and the standard complex structure of $\mb{CP}^d$; on the other hand, $H^0(\iota_F^* {\mc O}(k))_0$ is the space of holomorphic sections of $\iota_F^* {\mc O}(k) \to \Sigma$ which vanish at the two marked points $z_-$ and $z_+$.
    \end{enumerate}
The quadruple $(u, \Sigma, F, \eta)$ needs to satisfy the following perturbation of the Floer's equation (cf. \cite[Equation (6.7)]{AMS}): on each component of $\Sigma$ one has
\beq\label{thickening_equation}
\ov\partial_{J, H} u + \langle \eta \rangle \circ d \iota_F = 0,
\eeq
where $\ov\partial_{J, H} u = (du - X_{H} (u))_{J}^{0,1}$ in which $X_{H} (u) \in \Omega^1(\Sigma, u^* TM)$ is defined by
\beqn
X_{H} (u) := \left\{ \begin{array}{lr} 
\displaystyle 0 & \text{ on spherical components} \\
\displaystyle  dt \otimes X_{H_t} & \text{ on cylindrical components,}
\end{array} \right.
\eeqn
and the map $\eta\mapsto \langle \eta\rangle$ is induced from the Hermitian pairing on ${\mc O}(k)$.

\item The {\it obstruction bundle} $E_{pq} \to V_{pq}$ is the direct sum
\beqn
E_{pq} = O_{pq} \oplus Q_{pq}.
\eeqn
Here 
\beqn
Q_{pq}:= V_{pq} \times {\bm Q}_{d_{pq}}
\eeqn
and ${\bm Q}_d$ for all $d \geq 1$ was defined in \eqref{eqn57}; the fiber of $O_{pq}$ at a point $x \in V_{pq}$ represented by $(u, \Sigma, F, \eta)$ is 
\beqn
H^0 ( \ov{\rm Hom}( \iota_F^* T\mb{CP}^d, u^* TM) \otimes \iota_F^* {\mc O}(k)) \otimes_{\mb C} \ov{H^0( \iota_F^* {\mc O}(k) )_0}\footnote{These vector spaces indeed assemble to a vector bundle over $V_{pq}$ if $k$ is suffciently positive.} .
\eeqn

\item The group $G_{pq}$ acts on $E_{pq} \to V_{pq}$ (on the left) as follows. Given a quadruple $(u, \Sigma, F, \eta)$ where $F = (f_0, f_1, \ldots, f_d)$ and $g \in G_{pq} \subset PGL(d+1)$, define the the framed curve 
\beqn
(u, \Sigma, F') := (u, \Sigma, g_* F)
\eeqn
as from Equation \eqref{eqn:change-framing}. Moreover, $g$ induces linear isomorphisms 
\beqn
g: H^0 ( \ov{\rm Hom}( \iota_F^* T\mb{CP}^d, u^* TM) \otimes \iota_F^* {\mc O}(k)) \to H^0 ( \ov{\rm Hom}( \iota_{F'}^* T\mb{CP}^d, u^* TX) \otimes \iota_{F'}^* {\mc O}(k) ) 
\eeqn
and 
\beqn
g: H^0( \iota_F^* {\mc O}(k))_0 \to H^0( \iota_{F'}^* {\mc O}(k) )_0.
\eeqn
The action on the $Q_{pq}$-component of the obstruction bundle is defined as
\beqn
g \cdot Q \mapsto  g Q g^{-1},\ g \in G_{pq},\ Q \in {\bm Q}_d.
\eeqn

\item The \emph{Kuranishi map} is 
\beqn
\begin{split}
S_{pq}: V_{pq} &\ \to E_{pq}\\
        [u, \Sigma, F, \eta] &\ \mapsto (\eta, Q(u, \Sigma, F))
\end{split}
\eeqn
where $Q(u, \Sigma, F)$ the image of $(d+1)\times (d+1)$ Hermitian matrix whose $(i, j)$-entry is  
\beqn
\left[ \int_\Sigma \langle f_i, f_j \rangle \Omega_{u, H}\right]\in {\bm Q}_d.
\eeqn
It is easy to see that $Q (u, \Sigma, F)$ only depends on the isomorphism class of the framed curve.

\item If $[u, \Sigma, F, \eta] \in S_{pq}^{-1}(0)$, one can see that $u: \Sigma \to M$ represents an element of $\ov{\mc M}_{pq}$. Define the \emph{footprint map}
\beqn
\tilde \psi_{pq}: S_{pq}^{-1}(0) \to \ov{\mc M}_{pq},\ [u, \Sigma, F, \eta]\mapsto [u]
\eeqn
which induces a continuous map
\beqn
\psi_{pq}: S_{pq}^{-1}(0)/G_{pq} \to \ov{\mc M}{}_{pq}.
\eeqn
\end{enumerate}
\end{defn}

\begin{rem}
Our definition closely follows \cite{AMS}. However one difference is that in \cite{AMS}, they used the obstruction space 
\beqn
H^0( \ov{\rm Hom}( \tilde \iota_F^* T{\mc C}, u^* TM) \otimes \tilde \iota_F^*{\mc L}^k) \otimes \ov{H^0( \tilde \iota_F^* {\mc L}^k)}
\eeqn
where $T{\mc C}$ is the tangent bundle of the universal curve ${\mc C}_{0,2}(d)$ and ${\mc L}$ denotes a relatively ample line bundle over the universal family ${\mc C}_{0,2}(d) \to {\mc F}_{0,2}(d)$, and $\tilde \iota_F$ is the domain map,
while we replace this space by ours \eqref{obstruction1}. 
\end{rem}

\begin{lemma}
$\psi_{pq}$ is a homeomorphism.
\end{lemma}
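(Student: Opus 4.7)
The plan is to show that $\psi_{pq}$ is a continuous bijection between compact Hausdorff spaces, and hence a homeomorphism. Continuity is immediate from the definition of the topologies on $V_{pq}$ and $\ov{\mc M}_{pq}$ (both are variants of the Gromov topology) and from the fact that the map $[u,\Sigma,F,\eta]\mapsto [u]$ just forgets the auxiliary data $F$ and $\eta$. Compactness of $S_{pq}^{-1}(0)/G_{pq}$ will follow once the map is shown to be a continuous bijection onto the compact space $\ov{\mc M}_{pq}$, combined with the fact that $V_{pq}$ itself is obtained from a Gromov-type compactification — more precisely, one checks properness: a sequence in $S_{pq}^{-1}(0)$ whose image in $\ov{\mc M}_{pq}$ converges admits, up to the $G_{pq}$-action, a convergent subsequence, because the choices of orthonormal framings form a compact fiber.

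For surjectivity, take any $[u]\in\ov{\mc M}_{pq}$ represented by $u:\Sigma\to M$. By Corollary~\ref{HK2}, using Hypothesis~\ref{hyp51} (which guarantees that $\Omega_{u,H}$ has integral integrals over all components), there is a Hermitian holomorphic line bundle $L_u\to\Sigma$ with curvature $-2\pi\bm{i}\,\Omega_{u,H}$, unique up to scaling of the metric. Provided $k$ (and hence $d = d_{pq}$) has been chosen sufficiently large so that $L_u$ is base-point-free (which holds since $L_u$ has positive degree on each unstable component and is generated by its sections on stable components by a standard vanishing/Riemann--Roch argument on genus zero nodal curves), the $L^2$-inner product $\langle f_i,f_j\rangle_{L^2}=\int_\Sigma\langle f_i,f_j\rangle\Omega_{u,H}$ is positive definite on $H^0(L_u)$ whenever $L_u$ has a full basis of sections with distinct values at $z_-$. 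Pick any orthonormal basis of $H^0(L_u)$ and then apply an element of the unitary group of $H^0(L_u)$ to make the evaluation at $z_-$ equal $[1{:}0{:}\cdots{:}0]$; this produces a framing $F$ with $Q(u,\Sigma,F)=0$. Taking $\eta=0$, Equation~\eqref{thickening_equation} becomes the ordinary Floer equation, which $u$ satisfies, and thus $(u,\Sigma,F,0)$ is a point of $S_{pq}^{-1}(0)$ with $\tilde\psi_{pq}([u,\Sigma,F,0])=[u]$.

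For injectivity, suppose $(u,\Sigma,F,\eta)$ and $(u',\Sigma',F',\eta')$ lie in $S_{pq}^{-1}(0)$ with $[u]=[u']$ in $\ov{\mc M}_{pq}$. Since both are in the zero locus we have $\eta=\eta'=0$, so by the isomorphism of Floer trajectories $[u]=[u']$ there is an isomorphism $\varphi:\Sigma\to\Sigma'$ of prestable cylinders with $u'\circ\varphi=u$; then $\varphi$ lifts to an isometry-up-to-scalar $\widehat\varphi:L_u\to L_{u'}$ by uniqueness in Corollary~\ref{HK2}. The pulled-back framing $\widehat\varphi^*F'$ and the original $F$ are both orthonormal (up to a common positive scalar, using $Q=0$) bases of $H^0(L_u)$ satisfying the normalization at $z_-$. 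Any two such bases differ by an element of the unitary group $U(H^0(L_u))\cong U(d+1)$; the normalization at $z_-$ forces this element to fix the line $[1{:}0{:}\cdots{:}0]\in\mathbb{CP}^d$, so it lies in the stabilizer $G_{pq}\subset PU(d+1)$ after quotienting by the overall scalar (which is absorbed into the isomorphism relation on framed cylinders, Definition~\ref{defn:isomorphism_framed_cylinder}). Thus the two quadruples are related by the $G_{pq}$-action on $V_{pq}$.

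The main obstacle I anticipate is carefully verifying the positive-definiteness and continuous dependence of the $L^2$-pairing on $H^0(L_u)$ when the domain $\Sigma$ degenerates: on constant (bubble) components, $\Omega_{u,H}$ vanishes, so the inner product is supported on the nonconstant components. One must choose $k$ large enough, and use the stability condition on framed cylinders (Definition~\ref{defn:framed_cylinder}, requiring $\iota_F$ to be a stable map), to ensure $H^0(L_u)$ genuinely has dimension $d+1$ and that the pairing remains non-degenerate across all strata. Once this is handled, the fibers of $\tilde\psi_{pq}$ are $G_{pq}$-torsors (modulo the automorphism group of the underlying Floer trajectory, which is absorbed into $G_{pq}$ via the framing), and the continuous bijection/compactness argument gives the homeomorphism.
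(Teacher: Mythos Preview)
Your plan is correct and is essentially the standard argument (the paper itself defers to \cite[Lemma 6.14]{AMS}, which proceeds exactly along these lines: construct an orthonormal framing of $L_u$ normalized at $z_-$ for surjectivity, and observe that any two such framings differ by an element of $G_{pq}$ for injectivity).

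One small confusion to clean up: you write ``provided $k$ (and hence $d=d_{pq}$) has been chosen sufficiently large so that $L_u$ is base-point-free''. The integer $d=d_{pq}$ is not chosen---it is determined by the energy---and is unrelated to the obstruction parameter $k$. Base-point-freeness of $L_u$ is automatic here: on a genus-zero nodal curve, a line bundle with non-negative degree on every component (which holds since $\Omega_{u,H}\geq 0$) is globally generated, and $\dim H^0(L_u)=d+1$ by Riemann--Roch. Your worry about positive-definiteness of the $L^2$-pairing on ghost components is also harmless: on a degree-zero component sections of $L_u$ are constant and are determined by their values at the nodes, so a global section with vanishing $L^2$-norm (computed against $\Omega_{u,H}$, which is supported on non-constant components) must vanish identically. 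With these points clarified, your argument goes through without change.
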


\begin{proof}
See \cite[Lemma 6.14]{AMS}.
\end{proof}

\begin{lemma}
For each $x \in S_{pq}^{-1}(0)$, the stabilizer $G_x \subset G_{pq}$ is isomorphic to the stabilizer of $\psi_{pq}(x) \in \ov{\mc M}_{pq}$. Moreover, in a $G_{pq}$-invariant open neighborhood of $S_{pq}^{-1}(0)$ every point has finite isotropy group.
\end{lemma}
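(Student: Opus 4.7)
The plan is to exhibit for each $x = [u, \Sigma, F, \eta] \in S_{pq}^{-1}(0)$ a natural isomorphism $G_x \cong \mathrm{Aut}(u,\Sigma)$, where $\mathrm{Aut}(u,\Sigma)$ denotes the automorphism group of the underlying stable Floer trajectory (i.e., automorphisms $\varphi$ of the prestable cylinder $\Sigma$ with $u\circ\varphi = u$), and then to promote the resulting finiteness to an open neighborhood of $S_{pq}^{-1}(0)$ by invoking openness of the condition defining $\mathcal{F}_{0,2}(d)$. The central geometric input is that the Hermitian line bundle $L_u \to \Sigma$ produced by Corollary \ref{HK2} is canonically associated to $(u,\Sigma)$: its holomorphic automorphisms (as a line bundle that is an isometry up to a constant) form a copy of $\mathbb{C}^*$, which collapses to nothing after passing from $GL(d+1)$ to $PGL(d+1)$ in the space of framings.

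First I would construct the map $\Phi:\mathrm{Aut}(u,\Sigma) \to G_x$ as follows. Given $\varphi\in\mathrm{Aut}(u,\Sigma)$, choose a lift $\widehat\varphi : L_u \to L_u$ (well defined up to a nonzero scalar). Pulling $F = (f_0,\dots,f_d)$ back by $(\varphi,\widehat\varphi)$ produces a new basis of $H^0(L_u)$, and the transition from $F$ to this new basis determines a well-defined element $g = \Phi(\varphi) \in PGL(d+1)$. Since $\varphi$ fixes the marked point $z_-$ and both bases satisfy the normalization $[f_0(z_-):\cdots:f_d(z_-)] = [1:0:\cdots:0]$, the element $g$ fixes $[1:0:\cdots:0] \in \mathbb{CP}^d$, hence $g \in G_{pq}$. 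By construction $\varphi$ provides an isomorphism of framed cylinders $(u,\Sigma,g_* F) \cong (u,\Sigma,F)$, and a short check using the induced action on $\bar{H}{}^0(\iota_F^*\mathcal{O}(k))_0$ and on $H^0(\overline{\mathrm{Hom}}(\iota_F^*T\mathbb{CP}^d,u^*TM)\otimes\iota_F^*\mathcal{O}(k))$ shows that it intertwines $\eta$ with $g\cdot\eta$, so $g \in G_x$. Conversely, a map $\Psi: G_x\to\mathrm{Aut}(u,\Sigma)$ is supplied directly by the isomorphism of framed cylinders witnessing $g \in G_x$: this isomorphism includes an automorphism $\varphi$ of $\Sigma$ with $u\circ\varphi = u$. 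I would then check $\Phi\circ\Psi = \mathrm{id}$ and $\Psi\circ\Phi = \mathrm{id}$; both follow from the observation that $F$ is a basis of $H^0(L_u)$, so the prescription ``transform $F$ by $g$'' uniquely determines a bundle map covering an automorphism of the domain.

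For the second assertion, the key observation is that the argument above still produces a well-defined injective group homomorphism
\[
G_x \hookrightarrow \mathrm{Stab}_{PGL(d+1)}([\iota_F])\cap G_{pq}
\]
for \emph{every} point $x = [u,\Sigma,F,\eta] \in V_{pq}$, not just those in $S_{pq}^{-1}(0)$. By Definition \ref{def:thick-mod}(2), the induced map $\iota_F: \Sigma \to \mathbb{CP}^d$ represents a point of $\mathcal{F}_{0,2}(d)$; that is, its image is not contained in any hyperplane. For such stable maps the $PGL(d+1)$-stabilizer is finite: any element of $\mathrm{Stab}_{PGL(d+1)}([\iota_F])$ must cover an automorphism of the pointed prestable domain $(\Sigma,z_-,z_+)$, and both the domain automorphism group and the ambiguity in $g$ for a fixed domain automorphism (controlled by automorphisms of a non-degenerate stable map to projective space) are finite. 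Thus $G_x$ is finite for every $x\in V_{pq}$, so we may take the $G_{pq}$-invariant neighborhood in the statement to be $V_{pq}$ itself (or, if $V_{pq}$ is defined on a larger topological space on which the ``image not in a hyperplane'' condition is only ensured near $S_{pq}^{-1}(0)$, the corresponding open subset, which is $G_{pq}$-invariant by the $G_{pq}$-equivariance of $F\mapsto \iota_F$ up to the $PGL(d+1)$-action).

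The only non-routine point I foresee is verifying naturality of $\Phi$ with respect to the action on the obstruction spaces $H^0(\overline{\mathrm{Hom}}(\iota_F^*T\mathbb{CP}^d,u^*TM)\otimes\iota_F^*\mathcal{O}(k))\otimes\bar{H}{}^0(\iota_F^*\mathcal{O}(k))_0$, since this requires matching the natural $PGL(d+1)$-action on sections over $\mathbb{CP}^d$ with the pullback by a domain automorphism $\varphi$; but this is a direct consequence of the fact that $\iota_{g_* F} = g\circ\iota_F$ and $\iota_F\circ\varphi = g\circ\iota_F$, which I would verify explicitly.
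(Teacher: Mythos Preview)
Your first part is essentially correct and matches the approach implicit in the paper's reference to \cite{AMS}, with one omission: you verify that $g = \Phi(\varphi)$ fixes $[1:0:\cdots:0]$, but you never check that $g$ is \emph{unitary}, i.e.\ that $g \in PU(d+1)$ rather than merely $PGL(d+1)$. This uses the vanishing of the $Q$-component of $S_{pq}$: for $x \in S_{pq}^{-1}(0)$ the frame $F$ is $L^2$-orthonormal with respect to $\Omega_{u,H}$, and since any $\varphi \in \mathrm{Aut}(u,\Sigma)$ is trivial on cylindrical components and satisfies $\varphi^*\Omega_{u,H} = \Omega_{u,H}$ on spherical ones, the pulled-back frame is again orthonormal, forcing the change-of-basis matrix to lie in $PU(d+1)$.

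Your second part has a genuine gap. The claim that $\mathrm{Stab}_{PGL(d+1)}([\iota_F]) \cap G_{pq}$ is finite is false. Take $\iota_F$ to be the rational normal curve $z \mapsto [1:z:\cdots:z^d]$ with $z_\pm = 0,\infty$: for each $\lambda \in \mathbb{C}^*$ the reparametrization $\varphi(z) = \lambda z$ is covered by $g = \mathrm{diag}(1,\lambda,\ldots,\lambda^d) \in PGL(d+1)$, which lies in $PU(d+1)$ exactly when $|\lambda| = 1$ and always fixes $[1:0:\cdots:0]$; thus the intersection contains a copy of $S^1$. Your assertion that ``the domain automorphism group \dots\ [is] finite'' fails because $(\Sigma,z_-,z_+)$ is only \emph{prestable}. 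What saves the situation for $G_x$ is the additional constraint $u \circ \varphi = u$ together with the lateral-line condition (which rules out the rotations above) and the positivity of $\Omega_{u,H}$ on unstable components (which forces $u$ non-constant there). Alternatively, and more simply, the second assertion follows from the first by upper semi-continuity of stabilizer dimension for a compact Lie group action on a topological manifold: the locus $\{x : \dim G_x = 0\}$ is open and $G_{pq}$-invariant, and it contains $S_{pq}^{-1}(0)$ by the first part.
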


\begin{proof}
See the proof of \cite[Lemma 6.4]{AMS}.
\end{proof}

Therefore, we see that Definition \ref{def:thick-mod} presents $\ov{\mc M}_{pq}$ as a global quotient preserving the stabilizers. Now we can discuss about the regularity properties.

\subsubsection{Transversality of the perturbed Floer equation}\label{subsubsec:shear}
We review Gromov's graph trick used in the specific setting of global Kuranishi charts in \cite{AMS}. Let $(N, J_N)$ be an almost complex manifold and $E \to N$ be a Hermitian vector bundle with a Hermitian connection $\nabla^E$. Then using the decomposition $TE = T^h E \oplus T^v E$ of the tangent bundle of the total space of $E$ into horizontal and vertical tangent bundles induced from $\nabla$, there is an induced ``product type'' almost complex structure $J^E$ on $E$ by combining $J_N$ with the fiberwise complex structure on $E$. Now consider a ${\mb R}$-linear bundle map 
\beqn
\Psi: E \oplus T N \to T N
\eeqn
satisfying 
\beq
J_N ( \Psi (e, h)) = - \Psi (e, J_N (h))
\eeq
and
\beq
\Psi(e, \Psi(e, h)) = 0.
\eeq
We define 
\beqn
\Phi: T^v E \oplus T^h E \to T^v E\oplus T^h E
\eeqn
whose restriction at each $e \in E$ is 
\beqn
\Phi(v, h) = (v, h + \Psi(e, h)).
\eeqn
It is easy to check that 
\beqn
\Phi^{-1}(v, h) = (v, h - \Psi(e, h)).
\eeqn
Define an almost complex structure on $E$ by
\beqn
J_\Psi^E = \Phi \circ J^E \circ \Phi^{-1}: TE \to T E.
\eeqn
More explicitly,
\beqn
J_{\Psi}^E (v, h) = ( J^E v, J_N h + 2 \Psi( e, J_N h)).
\eeqn

Now we consider the Cauchy--Riemann equation with respect to the sheared almost complex structure on the total space, with an additional Hamiltonian perturbation. Let $\Sigma$ be a Riemann surface with complex structure $j$. Let $X \in \Omega^{0,1}(\Sigma, {\rm Vect}(N))$ be a perturbation. The Hermitian connection on $E$ induces a horizontal lift of $X$ on the total space, denoted by $X^E$. Then consider the Cauchy--Riemann equation for $\tilde{u}: \Sigma \to E$
\beqn
\ov\partial_{J^E_\Psi} \tilde u + X^E( \tilde u) = 0.
\eeqn
If we write $\tilde u = (u, s)$ where $u: \Sigma \to N$ and $s \in \Gamma (u^* E)$, then with respect to the horizontal-vertical decomposition of $TE$, one has 
\beqn
d\tilde u = du \oplus \nabla^E s
\eeqn
and 
\beqn
\begin{split}
(d\tilde u)^{0,1}_{J^E_\Psi} = &\ \frac{1}{2} \left( du + J^E_\Psi \circ du \circ j \right) \oplus (\nabla^E s)^{0,1} \\
                  = &\ \left( \ov\partial_{J_N} u + X(u) + \Psi (s, J_N \circ du \circ j ) \right) \oplus (\nabla^E s)^{0,1}.
                  \end{split}
\eeqn

\begin{lemma}\cite[Lemma 6.18]{AMS} Let $\Sigma$ be a Riemann surface with complex structure $j$ and $\tilde u = (u, s):\Sigma \to E$ be a smooth map. Then $\tilde u$ corresonds to a smooth map $u:= \pi_E \circ \tilde u: \Sigma \to N$ together with a section $s \in \Gamma ( u^* E)$. Under this correspondence, $\tilde u$ is $\tilde J_{\Psi}^E$-holomorphic if and only if $s$ is a holomorphic (with respect to the $(0, 1)$-part of $\nabla$) and $u$ satisfies 
\beqn
\ov\partial_{J_N} u + X(u) +  \Psi( s,  J_N \circ du \circ j ) = 0. \qed
\eeqn
\end{lemma}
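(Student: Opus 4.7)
The plan is to unpack the displayed computation of $(d\tilde u)^{0,1}_{J_\Psi^E}$ that precedes the lemma and then decouple horizontal and vertical components. First I would remark that the correspondence $\tilde u \leftrightarrow (u,s)$ is tautological: given $\tilde u: \Sigma \to E$, set $u = \pi_E \circ \tilde u$, and observe that $\tilde u(z)$ lies in the fiber $E_{u(z)}$, so $z \mapsto \tilde u(z)$ defines a section $s \in \Gamma(u^*E)$; conversely such $(u,s)$ assembles into $\tilde u$. Under the horizontal-vertical splitting $TE = T^h E \oplus T^v E$ induced by the Hermitian connection $\nabla^E$, the differential decomposes as $d\tilde u = du \oplus \nabla^E s$, where we identify $T^h E|_{E_x} \cong T_x N$ via $d\pi_E$ and $T^v E|_e \cong E_{\pi_E(e)}$ canonically.

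Next I would apply the explicit formula for $J_\Psi^E$ recorded just above the lemma, namely $J_\Psi^E(v,h) = (J^E v, J_N h + 2\Psi(e, J_N h))$ at $e \in E$, to compute $\tfrac12(d\tilde u + J_\Psi^E \circ d\tilde u \circ j)$ component by component. The vertical part is purely the $(0,1)$-part of $\nabla^E s$ with respect to the induced Cauchy--Riemann operator, while the horizontal part picks up both $\ov\partial_{J_N} u$ and the shear contribution $\Psi(s, J_N \circ du \circ j)$, reproducing the displayed formula in the excerpt. I would write out this calculation symbolically in one line rather than grinding through it, since it is a direct substitution.

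Then I would handle the Hamiltonian perturbation term. Since $X \in \Omega^{0,1}(\Sigma, {\rm Vect}(N))$ and $X^E$ denotes its horizontal lift via $\nabla^E$, the contribution $X^E(\tilde u)$ lives purely in $T^h E$ and under the identification $T^h E|_e \cong T_{\pi_E(e)} N$ equals $X(u)$; its vertical component vanishes. Therefore the perturbed Cauchy--Riemann equation $\ov\partial_{J_\Psi^E} \tilde u + X^E(\tilde u) = 0$ decouples: the vertical component is $(\nabla^E s)^{0,1} = 0$, which by definition says $s$ is holomorphic with respect to the $(0,1)$-part of $\nabla^E$, and the horizontal component is exactly $\ov\partial_{J_N} u + X(u) + \Psi(s, J_N \circ du \circ j) = 0$. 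The equivalence ``if and only if'' is immediate from this decoupling.

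There is essentially no obstacle, since the proof is a direct unwinding of definitions once the shear $\Phi$ and the horizontal lift are understood; the main care needed is just in keeping track of the two identifications $T^h E \cong \pi_E^* TN$ and $T^v E \cong \pi_E^* E$ so that the horizontal part of the equation is genuinely an equation on $\Sigma \to N$. Since this is cited from \cite[Lemma 6.18]{AMS}, I would be brief and refer there for additional detail.
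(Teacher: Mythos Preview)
Your proposal is correct and is essentially identical to the paper's approach: the paper gives no formal proof (the lemma ends with \qed and is cited from \cite{AMS}), and the displayed computation of $(d\tilde u)^{0,1}_{J_\Psi^E}$ immediately preceding the lemma is exactly the horizontal-vertical decoupling you describe. Your treatment of the Hamiltonian perturbation term via horizontal lift is the only point not already spelled out in the paper's display, and it is handled correctly.
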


\vspace{0.2cm}

\noindent {\it The geometric thickening revisited.}
Recall that one has the smooth quasiprojective variety ${\mc C} = {\mc C}_{0,2}(d)$, which is the universal curve of a submanifold ${\mc F} = {\mc F}_{0,2}(d) \subset \ov{\mc M}_{0,2}(\mb{CP}^d, d)$ (after imposing the constraint at $z_-$ by the point $[1,0, \ldots, 0]$0. Denote by 
\beqn
{\rm univ}: {\mc C} \to {\mc F}
\eeqn
the canonical holomorphic projection map. Also denote by 
\beqn
{\rm ev}: {\mc C} \to \mb{CP}^d
\eeqn
the evaluation map. Consider the almost complex manifold 
\beqn
N = M \times {\mc C}
\eeqn
equipped with the product almost complex structure $J_N = J_M \times J_{\mc C}$. Using the projection $p_M: M \times {\mc C} \to M$, $p_{\mc C}: X \times {\mc C} \to {\mc C}$ and the evaluation map $\ev: {\mc C} \to \mb{CP}^d$, define the vector bundles\footnote{Again, $k$ is chosen to be sufficiently large so that they are vector bundles}
\beqn
\begin{split}
E^0(k):= &\ \ov{\rm Hom} (   (\ev\circ p_{\mc C})^* T \mb{CP}^d, p_M^* TM ) \otimes (\ev\circ p_{\mc C})^* {\mc O}(k),\\
E^1(k):= &\ p_{\mc C}^* {\rm univ}^* ({\rm univ}_* (\ev^* {\mc O}(k)))_0.
\end{split}
\eeqn
Here $({\rm univ}_* (\ev^* {\mc O}(k)))_0$ is the bundle whose fibers are fiberwise global sections of $\ev^* {\mc O}(k)$ which vanish at the two marked points. Define 
\beq\label{thickenbundle}
E(k):= E^0(k) \otimes_{\mb C} \ov{E^1(k)}.
\eeq 
Before talking about the shearing map we also need to specify a Hermitian metric and Hermitian connection on $E$. From the definition we see that all factors of $E$ has a natural Hermitian metric and connection except the bundle $({\rm univ}_*(\ev^* {\mc O}(k)))_0$ which only has a complex structure but no canonical Hermitian structure. We choose an arbitrary one which then induces a Hermitian metric and a Hermitian connection on $E$. This choice does not affect the structures we are going to construct. It is only an auxiliary object to be used to show regularity. The regularity is independent of the choices.

The shearing map $\Psi: E \oplus TN \to TN \cong p_M^* TM \oplus p_{\mc C}^* T{\mc C}$ is defined to be 
\beq\label{shearing}
\Psi( \eta, v_1, v_2) =  \langle \eta \rangle(v_2) \oplus 0.
\eeq
Here the map $\langle \eta \rangle$ is defined at each point $y \in {\mc C}$ (whose evaluation is $z \in \mb{CP}^d$) with a fiber ${\mc C}_y \subset {\mc C}$ the pairing between $e \in {\mc O}(k)|_z$ and the value of $\zeta \in \ov{H^0( {\mc C}_y, \ev^* {\mc O}(k)|_{{\mc C}_y})_0}$ using the Hermitian metric on ${\mc O}(k)$, where we identify $H^0({\mc C}_y,  \ev^* {\mc O}(k) |_{{\mc C}_y})$ with the space of global holomorphic sections on the curve ${\mc C}_y$.

\vspace{0.2cm}

\noindent {\it Lift the Hamiltonian.} 
We need to lift the Hamiltonian vector field to the total space of $E = E(k)$. Let $X_{H_t} \in \Gamma(TM)$ be the time $t$ Hamiltonian vector field on $M$. Then it pulls back to a vector field on $M \times {\mc C}$ which is zero in the $T{\mc C}$-direction. On the other hand, the Hermitian connection on $E$ induces a horizontal-vertical decomposition of $TE$. By identifying the horizontal distribution with the pullback of the tangent bundle of the base, one can lift the vector field $X_{H_t}$ to the total space $E$, which is denoted by $\tilde X_{H_t}$. Notice that by the definition of $E$ and the choice of the Hermitian connection $\nabla^{TM}$, all 1-periodic orbits in the total space of $E$ are contained in the zero section and are exactly the same as the 1-periodic orbits of the original Hamiltonian on $M$ multiplied by the additional factor ${\mc C}$. 

\vspace{0.2cm}

\noindent {\it An alternative description of the thickened moduli space.} 
\begin{defn}
Let $(\Sigma, {\bf L})$ be a prestable cylinder. A smooth map $\tilde u: \Sigma \to E$ is called a $(\tilde J_{\Psi}, H)$-holomorphic map if it is $\tilde J_\Psi$-holomorphic on all sphere components and on each cylindrical components it satisfies the equation 
\beq\label{eqn:Floer-lift}
\frac{\partial \tilde u}{\partial s} + \tilde J_\Psi \left( \frac{\partial \tilde u}{\partial t} - \tilde X_{H_t} (\tilde u) \right) = 0.
\eeq
(This is the same as \eqref{thickening_equation}.)
\end{defn}

\vspace{0.2cm}

Now we give another description of the thickened moduli space.

\begin{defn}
Let ${\mc M}_{\tilde J_\Psi, H}$ be the moduli space of $(\tilde J_\Psi, H)$-holomorphic maps $\tilde u$ from a prestable cylinder $(\Sigma, {\bf L})$ to the total space of $E$ satisfying the following conditions. Write the projection of $\tilde u$ to the base of $E$, which is $N = M \times {\mc C}$, by $(u, \mu)$. 
\begin{enumerate}
    \item $\mu: \Sigma \to {\mc C}$ is a domain map, i.e., it is an isomorphism onto a fiber of ${\mc C}$ which sends the marked points to the marked points.
    
    \item $\tilde u$ converges to periodic orbits of $\tilde X_{H_t}$ at cylindrical nodes. Moreover, its projection onto $M$ has the same homotopy type as elements in $\ov{\mc M}_{pq}$.
    
    \item For each component $\Sigma_\alpha \subset \Sigma$, the degree of the restriction of the underlying map $\uds \mu: \Sigma \to \mb{CP}^{d}$ is equal to the degree determined by the restriction $u: \Sigma_\alpha \to M$.
\end{enumerate}
\end{defn}

Notice that there is also a $G_{pq}$-action on ${\mc M}_{\tilde J_\Psi, H}$.

Now we compare the thickened moduli space $V_{pq}$ with ${\mc M}_{\tilde J_\Psi, H}$. Indeed, given any point of $V_{pq}$ represented by $(\Sigma, u, F, \eta)$, we can identify it with a stable $(\tilde J_\Psi, H)$-holomorphic map $\tilde u = (u, \mu, \eta)$ where we just replace the frame $F$ by the induced map $\uds\mu = \iota_F$. This map is clearly well-defined and $G_d$-equivariant. 

\begin{lemma}(cf. \cite[Lemma 6.25]{AMS} The natural map 
\beqn
V_{pq} \to {\mc M}_{\tilde J_\Psi, H}
\eeqn
is an equivariant open embedding (of topological spaces).
\end{lemma}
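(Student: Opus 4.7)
The proof is a cylindrical/Floer adaptation of \cite[Lemma 6.25]{AMS}. The plan is to proceed in three steps: construct the map explicitly, verify injectivity, and verify openness.

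For well-definedness and equivariance, given a representative $(u, \Sigma, F, \eta)$ of a class in $V_{pq}$, the frame $F$ determines the embedding $\iota_F: \Sigma \to \mb{CP}^d$ of \eqref{framemap} and hence the domain map $\mu := \tilde\iota_F: \Sigma \to {\mc C}$. I set $\tilde u = (u, \mu, \eta)$, viewing $\eta$ as a vertical element of $E|_{(u,\mu)}$. The shearing map \eqref{shearing} is designed so that $(\tilde J_\Psi, H)$-holomorphicity of $\tilde u$ is equivalent to the perturbed Floer equation \eqref{thickening_equation} for $(u, \eta)$ on cylindrical components and its spherical analogue elsewhere, using that $\eta$ automatically lies in the kernel of the Cauchy--Riemann operator defining \eqref{obstruction1}. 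Asymptotic convergence of $\tilde u$ to periodic orbits of the lifted field $\tilde X_{H_t}$ follows from Hypothesis \ref{hyp51} together with the fact that $\nabla^{TM}$ is flat near the $1$-periodic orbits, so the periodic orbits of $\tilde X_{H_t}$ lie in the zero section and are canonically identified with those of $X_{H_t}$. Equivariance under $G_{pq}$ is then immediate because the $G_{pq}$-action on $V_{pq}$ by change of framing \eqref{eqn:change-framing} corresponds, via $F\mapsto \iota_F$, to the $G_{pq}$-action on ${\mc C}$ induced from $\mb{CP}^d$.

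For injectivity and the homeomorphism property, I recover $(u, \mu)$ as the base projection of $\tilde u$, then set $\iota_F := \ev \circ \mu$, and reconstruct $F = (f_0, \ldots, f_d)$ as the pullback to $\Sigma$ of the tautological hyperplane sections of ${\mc O}(1)$. The remaining ambiguity, a simultaneous ${\mb C}^*$-scaling of all $f_i$, is exactly an isomorphism of framed cylinders in the sense of Definition \ref{defn:isomorphism_framed_cylinder}, and the normalization $[f_0(z_-):\cdots:f_d(z_-)] = [1:0:\cdots:0]$ pins down the isomorphism class. Finally $\eta$ is recovered as the vertical part of $\tilde u$ along the zero section. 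All of these assignments are continuous in the natural (compact-open / $C^\infty_{\rm loc}$-up-to-reparametrization / fiberwise-linear) topologies, so the map is a homeomorphism onto its image.

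The main step is openness. The image consists of precisely those classes $[\tilde u]$ in ${\mc M}_{\tilde J_\Psi, H}$, restricted to the natural $G_{pq}$-invariant locus $\ev(\mu(z_-)) = [1:0:\cdots:0]$, whose underlying curve $\ev\circ\mu$ lies in ${\mc F}_{0,2}(d)$, i.e., is not contained in a hyperplane. This nondegeneracy condition is equivalent to $F$ being a basis of global sections of $L_u$ and is open because ${\mc F}_{0,2}(d)$ is open in $\ov{\mc M}_{0,2}(\mb{CP}^d, d)$; once it holds, the reconstruction above goes through on an entire neighborhood, giving openness of the image. I expect the main bookkeeping obstacle to be the verification of the shearing identity on cylindrical components in the presence of the Hamiltonian perturbation and of the convergence of $\tilde u$ through the lift of $\nabla^{TM}$; but both are local-in-stratum questions that reduce to the corresponding statements in \cite{AMS} by straightforward modifications.
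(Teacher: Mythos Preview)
The paper gives no proof here beyond the citation to \cite[Lemma 6.25]{AMS}, and your sketch follows that same route: reconstruct $(u,\mu,\eta)$ from $(u,\Sigma,F,\eta)$ via the domain map $\tilde\iota_F$, check equivariance, and invert by pulling back the hyperplane sections of ${\mc O}(1)$. So the approach matches.

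There is, however, a gap in your openness step. You identify the condition cutting out the image as ``$\ev\circ\mu$ lies in ${\mc F}_{0,2}(d)$'', but this is already built into the definition of ${\mc M}_{\tilde J_\Psi,H}$: the map $\mu$ is required to be a domain map into ${\mc C} = {\mc C}_{0,2}(d)$, which is the universal curve over ${\mc F}_{0,2}(d)$, so nondegeneracy of the framing is automatic for every point of ${\mc M}_{\tilde J_\Psi,H}$. What actually distinguishes the image of $V_{pq}$ is the constraint from Definition~\ref{defn:framed_cylinder}(2) that the $2$-form $\Omega_{u,H}$ be nonnegative and strictly positive on each unstable component; this is what permits the construction of $L_u$ with positive degree there and hence makes $\iota_F$ a stable map. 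Strict positivity on unstable components is open; near $S_{pq}^{-1}(0)$ (where $\eta=0$ and $\Omega_{u,H}$ is the Floer energy density) it holds automatically, and this is all that is needed for the Kuranishi chart. You should replace your openness paragraph with this argument.
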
 \qed

Therefore one can identify the obstruction bundle and the Kuranishi map as defined over the space ${\mc M}_{\tilde J_\Psi, H}$ after restricting to a $G_{pq}$-invariant open subset.

\vspace{0.2cm}

\noindent {\it Regularity.} 
Remember that the thickened moduli space depends on a choice of an integer $k$. To emphasize the role of these integers, we temporarily denote 
\beqn
{\mc M}_{\tilde J_\Psi, H} = {\mc M}_{\tilde J_{\Psi}, H}(k).
\eeqn

\begin{thm}(cf. \cite[Corollary 6.27]{AMS})\label{regularity1}
For a fixed $pq$, there exists a positive integer $k_{pq}>0$ such that if $k \geq k_{pq}$, then each element of $S_{pq}^{-1}(0)$ is a regular element of ${\mc M}_{\tilde J_{\Psi}, H}(k)$, i.e., the linearization of \eqref{eqn:Floer-lift} is surjective. 
\end{thm}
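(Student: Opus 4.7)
My plan is to analyze the linearization of the sheared Floer equation \eqref{eqn:Floer-lift} at an arbitrary zero $[u,\Sigma,F,\eta]$ of the Kuranishi section $S_{pq}$. Since the $Q_{pq}$-component forces the symplectic pairings $\int_\Sigma \langle f_i,f_j\rangle \Omega_{u,H}$ to vanish and the first component forces $\eta = 0$, at such a zero the vertical coordinate $s$ in the total space of $E(k)$ vanishes, so the sheared almost complex structure $J^E_\Psi$ coincides with the product structure along the solution. The linearization then decouples into a Cauchy--Riemann operator $\ov\partial_\mu$ on variations $\nu\in\Gamma(\mu^* T{\mc C})$ of the domain map $\mu = \iota_F$, which is automatically surjective because ${\mc F}_{0,2}(d)$ is a smooth quasi-projective variety and the evaluation maps are submersive (Lemma \ref{lem:ev-submersion}); and the composite operator
$$L : \Gamma(u^*TM) \oplus E_{[u,\Sigma,F,0]} \longrightarrow \Omega^{0,1}(\Sigma, u^*TM), \qquad (\xi,\dot\eta) \longmapsto D_u\xi + \langle \dot\eta\rangle \circ d\iota_F,$$
where $D_u$ is the standard linearized Floer operator. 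The task thus reduces to showing that $L$ is surjective when $k$ is sufficiently large.

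First, for those points in $S_{pq}^{-1}(0)$ corresponding to smooth Floer trajectories, Hypothesis \ref{hyp51}(5) gives surjectivity of $D_u$ by itself, so $L$ is trivially surjective without using the second factor. For configurations carrying sphere bubbles or having broken cylindrical components, I would argue irreducible-component by irreducible-component: by the stability clause of Definition \ref{defn:framed_cylinder} and the non-negativity/strict positivity of $\Omega_{u,H}$ on unstable components, the restriction of $\iota_F$ to each such component is a non-constant holomorphic map of strictly positive degree, so $\iota_F^*{\mc O}(k)$ has degree growing linearly in $k$. Combining a vanishing argument for $H^1$ of the bundle $\ov{\rm Hom}(\iota_F^*T\mathbb{CP}^d, u^*TM)\otimes \iota_F^*{\mc O}(k)$ with an ampleness/separation-of-points argument for $H^0(\iota_F^*{\mc O}(k))_0$, in the same spirit as the proof of \cite[Corollary 6.27]{AMS}, one obtains that for $k$ large the shearing term $\langle\dot\eta\rangle\circ d\iota_F$ spans a subspace of $\Omega^{0,1}(\Sigma,u^*TM)$ which surjects onto $\operatorname{coker}(D_u)$.

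To upgrade this pointwise statement to the existence of a single $k_{pq}$, I would use the compactness of $S_{pq}^{-1}(0)/G_{pq} \cong \ov{\mc M}_{pq}$ and upper semicontinuity of $\dim\operatorname{coker}$: the set of framed curves at which $L$ is surjective is $G_{pq}$-invariant and open, and surjectivity is monotone in $k$ because increasing $k$ only enlarges the obstruction space. A finite subcover of $\ov{\mc M}_{pq}$ by such opens produces a common integer $k_{pq}$ that works globally.

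The main obstacle I expect lies in the treatment of nodal and semi-infinite cylindrical pieces: one must verify that the sections of $\ov{\rm Hom}(\iota_F^* T\mathbb{CP}^d, u^*TM)\otimes \iota_F^*{\mc O}(k)$ used to hit $\operatorname{coker}(D_u)$ are compatible with the weighted Sobolev setup in which $D_u$ is Fredholm on a prestable cylinder converging to nondegenerate periodic orbits. Concretely, this requires that the Hermitian connection $\nabla^{TM}$, chosen to be flat with nondegenerate holonomy near the $1$-periodic orbits, makes the $\ov\partial$-operator on the above bundle Fredholm with the expected index on each cylindrical end, and that holomorphic sections of $\iota_F^*{\mc O}(k)$ vanishing at $z_\pm$ decay exponentially so that their pairing against elements of the first factor lands in the target functional space. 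Matching these asymptotic estimates component-by-component, including across the nodes, is the technical crux of the argument.
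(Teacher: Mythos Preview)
Your decomposition of the linearization is incomplete. Equation \eqref{eqn:Floer-lift} is the Floer equation for a map $\tilde u:\Sigma\to E$ into the \emph{total space} of the thickening bundle $E=E(k)\to M\times{\mc C}$. At a zero of $S_{pq}$ (where the vertical coordinate $s$ vanishes) the linearization is block upper-triangular with respect to the splitting $T\tilde u^*E\cong \mu^*T{\mc C}\oplus u^*TM\oplus (u,\mu)^*E(k)$, and has the form
\[
\begin{pmatrix} D_\mu & 0 & 0\\ 0 & D_u & P\\ 0 & 0 & D_E\end{pmatrix}.
\]
Your operator $L$ is only the middle row; you have silently discarded the bottom row $D_E:W^{1,p}\bigl((u,\mu)^*E(k)\bigr)\to L^p\bigl(\Lambda^{0,1}\otimes (u,\mu)^*E(k)\bigr)$, and correspondingly replaced the infinite-dimensional variation $\dot s\in W^{1,p}\bigl((u,\mu)^*E(k)\bigr)$ by the finite-dimensional subspace $\ker D_E$. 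In other words, you have assumed what needs to be proved: that $D_E$ is surjective.

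This is not a formality. On spherical components $D_E$ is a $\ov\partial$-operator on a holomorphic bundle and surjectivity for large $k$ follows from Riemann--Roch, but on the \emph{cylindrical} components one is on a noncompact domain with asymptotic operator dictated by the chosen connection $\nabla^{TM}$ (flat, nondegenerate holonomy near the orbits). The paper handles this via Lemma~\ref{surjective}: a maximum-principle argument shows that a Cauchy--Riemann operator with everywhere positive-definite curvature on the cylinder is surjective, and Lemma~\ref{lem:positive-curvature} verifies that the curvature of $(u,\mu)^*E(k)$ is indeed positive once $k$ is large. This step is precisely the new ingredient relative to the closed-curve argument of Abouzaid--McLean--Smith, and your outline does not address it. Once $D_E$ is known to be surjective, your treatment of the middle block (peak sections to hit $\operatorname{coker}D_u$, compactness to get a uniform $k_{pq}$) matches the paper's approach.
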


\begin{proof}
Choose $x\in S_{pq}^{-1}(0)$. Notice that this element can be viewed as a representative of ${\mc M}_{\tilde J_\Psi, H}(k)$ for all $k\geq 1$. We will prove that for sufficiently large $k$, $x$ is regular in ${\mc M}_{\tilde J_\Psi, H}(k)$. By compactness of the moduli space $\ov{\mc M}_{pq}$, one can find a common large $k$ which regularizes all elements of $S_{pq}^{-1}(0)$.

Suppose $x$ is represented by the framed cylinder $(\Sigma, u, F, 0)$ where $F$ is a unitary frame of the line bundle $L_u \to \Sigma$ which is induced from the 2-form $\Omega_{u, H}$ whose curvature form is equal to $-2\pi {\bm i} \Omega_{u, H}$. The existence of $L_u$ is guaranteed by Corollary \ref{HK2}. Abbreviate $d = d_{pq}$. Denote the frame $F$ by
\beqn
F:=(f_0, \ldots, f_d).
\eeqn
Then the map 
\beqn
\Sigma \to \mb{CP}^d,\ z \mapsto [ f_0(z), \ldots, f_d(z)]
\eeqn
represents an element in ${\mc F}_{0,2}(d)$. Therefore, the unitary frame $F$ induces a domain map $\mu: \Sigma \to {\mc C}$. Notice that this construction is independent of $k$. Now for any $k$, denote $(u, \mu): \Sigma \to M \times {\mc C}$ which together with the inclusion map of the zero section of $u^* E(k)$ gives an element in ${\mc M}_{\tilde J_\Psi, H}(k)$. 

We would like to show that the linearization of Equation \eqref{eqn:Floer-lift} at $x$ is surjective when $k$ is sufficiently large. We only show its surjectivity over each component, while skipping the argument showing the surjectivity after imposing the matching conditions at nodes (which is the same as the case of pseudoholomorphic curves).

Over each component $\Sigma_\alpha \subset \Sigma$, the deformation space of the map $(u, \mu, 0)$ splits as 
\beqn
W^{1,p}(\Sigma_\alpha, \mu^* T{\mc C}) \oplus W^{1,p}(\Sigma_\alpha, u^* TM ) \oplus W^{1,p}(\Sigma_\alpha, (u, \mu)^* E(k) )
\eeqn
and the linearization is of the block form 
\beq\label{eqn:block-matrix}
\left[ \begin{array}{ccc}  D_\mu &  0  & 0 \\
                            0  &  D_u &  P \\
                            0  &  0     &   D_E   \end{array}\right]
\eeq
where the diagonal terms are the standard linearization of the Cauchy--Riemann equation (with or without Hamiltonian perturbation term, depending on whether the component is spherical or cylindrical), and the off-diagonal term $P$ is the perturbation of the $TM$-direction (i.e. the inhomogeneous term of \eqref{thickening_equation}). Notice that the operator $D_\mu$ is always surjective, reflecting the fact that domain maps to ${\mc C}$ are always unobstructed. This is a classical fact due to the ``convexity" of the bundle $\mu^* T{\mc C}$. Hence one only needs to consider the lower-right $2\times 2$-block. 

We first consider a spherical component $\Sigma_\beta \subset \Sigma$ where the Hamiltonian perturbation term is zero. If $\Sigma_\beta$ has positive degree, then the line bundle $\uds\mu^* {\mc O}(k)$ is a positive line bundle on this component. When $k$ is sufficiently large, one can argue in the same way as the case of \cite{AMS} to show that the linearized operator is surjective, even after restricting to the subspace where the values at nodes vanish. On the other hand, if $\Sigma_\beta$ is a ghost component. Then $(u, \mu)^* E(k)|_{\Sigma_\beta}$ is a trivial vector bundle equipped with the trivial Cauchy--Riemann operator. This is the same as the linearized operator in the $TM$-direction. Hence the linear operator is surjective with kernels being constant sections. 

Second, we consider a cylindrical component $\Sigma_\alpha\subset \Sigma$. We identify $\Sigma_\alpha \cong (-\infty, +\infty) \times S^1$ with cylindrical coordinates $(s, t)$. Using the peak-section argument as in \cite{AMS}, one can show that the $TM$-direction is surjective. Hence we only need to show that the operator 
\beqn
D_E: W^{1,p} ( (u, \mu)^* E(k)) \to L^p(\Lambda^{0,1}\otimes (u, \mu)^* E(k))
\eeqn
is surjective. When $\Sigma_\alpha$ is a ghost component (i.e. a trivial cylinder), this surjectivity is obvious (its kernel is zero). When $\Sigma_\alpha$ is not a ghost component, we need to use the positivity of ${\mc O}(k)$. One can show that the curvature of $(u, \mu)^* E(k)$ is everywhere positive when $k$ is sufficiently large. Indeed, for $S>0$ sufficiently large, $u((-\infty, -S]\cup [S, +\infty))$ is contained in a neighborhood of the union of periodic orbits where $\nabla^{TM}$ is flat. Hence over the region where $|s|\geq S$, the curvature of $(u, \mu)^* E(k)$ is pulled back from the universal curve ${\mc C}$. Hence when $k$ is sufficiently large, the curvature is everywhere positive. Then by Lemma \ref{surjective} below, $D_E$ is surjective on this component.
\end{proof}

\begin{lemma}\label{surjective}
Let $E \to \Theta$ be a Hermitian vector bundle and $\nabla$ be a Hermitian connection on $E$ whose curvature form is everywhere positive definite and whose limiting holonomy has no eigenvalue $1$. Then the Cauchy--Riemann operator 
\beqn
\nabla^{0,1}: W^{1,p} (\Theta, E) \to L^p(\Theta, \Lambda^{0,1}\otimes E)
\eeqn
is surjective.
\end{lemma}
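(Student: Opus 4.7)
The plan is to show triviality of the cokernel via a maximum-principle argument powered by the strict positivity of the curvature. First, the hypothesis that the limiting holonomy has no eigenvalue $1$ makes the asymptotic operator at each end $\{\pm\infty\}\times S^1$ invertible, so by standard Fredholm theory for Cauchy--Riemann operators on cylinders (Floer, Hofer--Salamon, Schwarz) the map $\nabla^{0,1}$ is Fredholm from $W^{1,p}(\Theta,E)$ to $L^p(\Theta, \Lambda^{0,1}\otimes E)$. Surjectivity is then equivalent to vanishing of the cokernel, which via the $L^p$--$L^{p/(p-1)}$ pairing is identified with the kernel of the formal adjoint $(\nabla^{0,1})^*$ acting on $L^{p/(p-1)}$-$(0,1)$-forms. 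Elliptic regularity together with the invertibility of the asymptotic operator imply that any such $\eta$ is smooth and decays exponentially at both ends, together with all its derivatives.

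Next, trivializing $\Lambda^{0,1}$ by $d\bar z$ and writing $\eta = \eta_0\,d\bar z$, a direct check shows that $(\nabla^{0,1})^*\eta = 0$ is equivalent to $\nabla_{\partial_z}\eta_0 = 0$. Using compatibility of the Hermitian connection with the metric together with the commutator identity $[\nabla_{\partial_z},\nabla_{\partial_{\bar z}}] = F(\partial_z,\partial_{\bar z})$, a short computation yields the Weitzenb\"ock-type identity
\beqn
\partial_{\bar z}\partial_z |\eta_0|^2 \;=\; |\nabla_{\partial_{\bar z}}\eta_0|^2 + h(\eta_0,\,F_{z\bar z}\,\eta_0),
\eeqn
where $F_{z\bar z} := F(\partial_z,\partial_{\bar z}) = \frac{i}{2}F(\partial_s,\partial_t)$ is a Hermitian endomorphism of $E$. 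The hypothesis that the curvature is everywhere positive definite is precisely the condition that $F_{z\bar z}$ is positive definite at every point of $\Theta$, so the right-hand side is pointwise non-negative and $|\eta_0|^2$ is subharmonic on $\Theta$.

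Finally I would integrate $\Delta |\eta_0|^2 = 4\partial_{\bar z}\partial_z |\eta_0|^2$ over $\Theta$: the exponential decay of $\eta_0$ and its first derivatives kills the boundary flux at $s = \pm\infty$, giving
\beqn
\int_\Theta \bigl(|\nabla_{\partial_{\bar z}}\eta_0|^2 + h(\eta_0,\,F_{z\bar z}\,\eta_0)\bigr)\,ds\,dt \;=\; 0.
\eeqn
Both terms in the integrand are pointwise non-negative and the second is strictly positive wherever $\eta_0\neq 0$, so they must vanish identically, forcing $\eta_0\equiv 0$. The only non-routine point in executing this plan is the decay of cokernel elements that a priori only lie in $L^{p/(p-1)}$; this is handled by the standard bootstrap against the invertible asymptotic operator (cf.\ Salamon's lectures on Floer homology or Schwarz's thesis), and apart from that the proof is pure Weitzenb\"ock plus maximum principle.
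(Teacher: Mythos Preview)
Your proposal is correct and follows essentially the same strategy as the paper: both identify the cokernel with the kernel of the formal adjoint, derive the same Weitzenb\"ock-type identity showing $|\eta_0|^2$ is subharmonic thanks to the positivity of the curvature, and conclude $\eta_0\equiv 0$. The only cosmetic difference is the finishing move: the paper observes that (after decay) $|\xi|^2$ attains a maximum at some $z_0\in\Theta$ and applies the second-derivative test there to get an immediate contradiction, whereas you integrate the Laplacian over $\Theta$ and use decay to kill the boundary flux; both are standard and equivalent endings to the same argument.
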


\begin{proof}
We prove by using the maximal principle. Suppose this is not the case. Then there exists $\xi$ in the kernel of the formal adjoint of $\nabla^{0,1}$. Using the local coordinate $(s, t)$ on $\Theta$ we write 
\beqn
\nabla = d + \phi ds + \psi dt.
\eeqn
Let the curvature form of $\nabla$ be $Q ds dt$. Then one can identify $\nabla^{0,1}$ with the operator 
\beqn
D^{0,1} =\left( \frac{\partial}{\partial s} + \phi \right)+ {\bm i} \left( \frac{\partial}{\partial t} + \psi \right).
\eeqn
Then its formal adjoint is $- D^{1,0}$. As $\xi \in L^q$, the function $|\xi|^2$ has a maximal point $z_0 \in \Theta$. Then 
\beqn
0 \geq \Delta |\xi|^2 = \frac{\partial}{\partial \ov{z}} \frac{\partial}{\partial z} |\xi|^2 = | D^{0,1}\xi|^2 + \langle \xi, D^{1,0} D^{0,1} \xi \rangle = |D^{0,1}\xi|^2 + \langle \xi, Q \xi \rangle.
\eeqn
As $Q$ is everywhere positive, this is a contradiction.
\end{proof}

\begin{lemma}\label{lem:positive-curvature}
When $k$ is sufficiently large, the curvature of the connection on $\ov{\rm Hom}( \iota_F^* T\mb{CP}^d, u^* TM ) \otimes \iota_F^* {\mc O}(k)$ is everywhere positive. 
\end{lemma}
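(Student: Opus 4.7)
The approach is a direct pointwise comparison of Chern curvatures, exploiting the fact that only the curvature of $\iota_F^*\mc O(k)$ scales linearly in $k$ while the other factors contribute a bounded curvature. First I would write the total curvature of $\mc E := \ov{\mathrm{Hom}}(\iota_F^*T\mb{CP}^d, u^*TM) \otimes \iota_F^*\mc O(k)$ via the tensor product formula as
\begin{equation*}
F_{\mc E} = F_{\ov{\mathrm{Hom}}(\iota_F^*T\mb{CP}^d, u^*TM)} \otimes \mathrm{id} + \mathrm{id} \otimes F_{\iota_F^*\mc O(k)},
\end{equation*}
where the second summand equals $-2\pi i k \cdot \iota_F^*\omega_{FS}$ times the identity, with $\omega_{FS}$ the Fubini--Study $(1,1)$-form on $\mb{CP}^d$. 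Since $F$ is by construction a unitary frame of $L_u$ and the Chern connection on $L_u$ has curvature $-2\pi i\Omega_{u,H}$, the pullback $\iota_F^*\omega_{FS}$ coincides pointwise with the Floer energy density $\Omega_{u,H}$. This identifies the $k$-linear contribution as exactly $2\pi k \Omega_{u,H} \cdot \mathrm{id}$, which is semi-positive.

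Next I would bound the first summand uniformly in $k$ in terms of the geometry of the two factors. Using the standard Hom/tensor curvature formulas, one obtains a pointwise estimate of the form $\|F_{\ov{\mathrm{Hom}}}\| \leq C_1 |d\iota_F|^2 + C_2 |du|^2$, with $C_1$ controlled by the Fubini--Study curvature of $T\mb{CP}^d$ and $C_2$ by the curvature of $\nabla^{TM}$. Two geometric inputs are crucial. First, because $\nabla^{TM}$ was chosen so as to be flat on a neighborhood of every $1$-periodic orbit of $H$, the $|du|^2$-term vanishes identically on the asymptotic ends of every cylindrical component, and $|du|^2$ is uniformly bounded on the remaining compact region by compactness of $\ov{\mc M}_{pq}$. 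Second, since $\iota_F$ is holomorphic, one has the pointwise identity $|d\iota_F|^2 = c\cdot \iota_F^*\omega_{FS} = c \cdot \Omega_{u,H}$ as scalars, so the first term of the estimate is already a fixed multiple of the positive $(1,1)$-form appearing in the $k$-linear contribution.

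Combining, on a non-ghost cylindrical component one obtains an inequality of the form $iF_{\mc E} \geq (2\pi k - C_1 c)\Omega_{u,H} - C_2 |du|^2 \cdot \mathrm{id}$ as Hermitian endomorphisms, and on the asymptotic ends this simplifies to $iF_{\mc E} \geq 2\pi k \Omega_{u,H}$ outright. The main obstacle, which I anticipate to be the hardest step, is controlling the compact interior points at which the density of $\Omega_{u,H} = |\partial_s u|^2\, ds\wedge dt$ vanishes while $|du|^2$ may remain positive because of the $X_{H_t}$-component of $\partial_t u$. At such a point the Floer equation forces $\partial_s u = 0$ and $\partial_t u = X_{H_t}$, so that $F_{\nabla^{TM}}(\partial_s u, \partial_t u) = F_{\nabla^{TM}}(0, X_{H_t}) = 0$ by antisymmetry, killing the leading contribution of the Hom curvature. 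A local Taylor expansion then shows that the Hom curvature vanishes to at least the same order as $\Omega_{u,H}$ along such zeros, using the fact that $d\iota_F$ and $\partial_s u$ vanish together to the same holomorphic order. The comparison thus persists to leading order in a neighborhood of each zero, and uniformity over the compact moduli space $\ov{\mc M}_{pq}$ yields a single integer $k_{pq}$ for which the curvature is pointwise positive on all configurations.
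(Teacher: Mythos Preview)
Your argument hinges on a step that is incorrect. You assert that $\iota_F^*\omega_{FS}$ coincides pointwise with $\Omega_{u,H}$ because $F$ is a ``unitary frame'' of $L_u$. But the condition imposed at a point of $S_{pq}^{-1}(0)$ is only that $(f_0,\ldots,f_d)$ is orthonormal for the \emph{global} $L^2$ pairing $\int_\Sigma\langle\cdot,\cdot\rangle_{L_u}\,\Omega_{u,H}$; this is not a pointwise statement, and it does not force the pullback of the Fubini--Study metric on $\mc O(1)$ to agree with the constructed metric on $L_u$. These are two distinct Hermitian metrics on the same holomorphic line bundle, and their curvature forms differ by $\tfrac{i}{2\pi}\partial\bar\partial\log\big(\sum_j |f_j|^2_{L_u}\big)$, which has no reason to vanish.

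This error propagates through the rest of your outline. The $k$-linear contribution is $2\pi k\,\iota_F^*\omega_{FS}\cdot{\rm id}$, so the genuine degeneracy locus to worry about is $\{d\iota_F=0\}$, not $\{\partial_s u=0\}$; your Floer-equation argument $F_{\nabla^{TM}}(0,X_{H_t})=0$ and the claimed common vanishing order of $d\iota_F$ and $\partial_s u$ both rest on the false identification and say nothing about that locus. The paper's route is much shorter and avoids the issue entirely: the bundle and its connection are pulled back from $M\times\mb{CP}^d$; near $\pm\infty$ only the $\mb{CP}^d$-factor contributes (since $\nabla^{TM}$ is flat there) and is positive for large $k$; on the remaining compact piece $\iota_F^*\omega_{FS}$ is bounded below by a positive constant, because $\iota_F$ restricted to any non-ghost component is in fact an immersion --- the restriction $H^0(\Sigma,L_u)\to H^0(\Sigma_\alpha,L_u|_{\Sigma_\alpha})$ is surjective on a tree of $\mb{CP}^1$'s with nonnegative degrees, so the component map is a Veronese embedding into a linear subspace. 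No Taylor analysis at degenerate points is needed.
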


\begin{proof}
The curvature is the pullback of the curvatures on the corresponding bundles on the target $X \times \mb{CP}^d$. Notice that near $\pm\infty$, $u_M^* TM$ is flat. Therefore, the positivity of the curvature of ${\mc O}(k)$ implies that the positivity of the curvature of the induced connection on $\ov{\rm Hom}( \iota_F^* T\mb{CP}^d, u^* TM ) \otimes \iota_F^* {\mc O}(k)$ is everywhere positive for $k$ sufficiently large, because such a claim is true for the corresponding vector bundles on the target $X \times \mb{CP}^d$.
\end{proof}


\subsubsection{Multi-layered thickening}\label{subsubsec:multi}

In Floer theory we need to deal with many moduli spaces simultaneously, and it is important to make sure that various structures on different moduli spaces fit together in a coherent way. The expected relations between different moduli spaces then require us to consider a more complicated thickening procedure which is referred to as the {\bf multi-layered thickening}. Namely, instead of just considering the perturbation induced from one single relatively ample bundle $\text{ev}^* {\mc O}(k) \to {\mc C}$, we need to look at direct sum of such line bundles. This slight generalization of Abouzaid--McLean--Smith's perturbation scheme will be used in the construction of the K-chart lift.

Fix a positive integer $d$ and abbreviate ${\mc C} = {\mc C}_{0,2}(d)$. Suppose we have a sequence of positive integers $k_1< k_2 < \cdots < k_s$. We are going to define complex vector bundles inductively
\beqn
E_1', E_1, \ldots, E_s', E_s \to M \times {\mc C} = M\times {\mc C}_{0,2}(d_{pq}).
\eeqn
First, define 
\beqn
E_1':=\ov{\rm Hom} ( (\ev\circ p_{\mc C})^* T\mb{CP}^d, p_M^* TM) \otimes (\ev\circ p_{\mc C})^* {\mc O}(k_1).
\eeqn
and 
\beqn
E_1:= E_1' \otimes_{\mb C} \ov{ {\rm univ}^* ( {\rm univ}_* \ev^* {\mc O}(k_1))_0}.
\eeqn
Inductively, suppose we have defined $E_1', E_1, \ldots, E_{i-1}', E_{i-1}$. Then define 
\beqn
E_i':= \ov{\rm Hom} \Big( (\ev\circ p_{\mc C})^* T\mb{CP}^d, p_M^* TM \oplus \bigoplus_{j=1}^{i-1} E_i \Big)  \otimes (\ev\circ p_{\mc C})^* {\mc O}(k_i)
\eeqn
and 
\beq\label{eqn:e-i}
E_i:= E_i' \otimes_{\mb C} \ov{ {\rm univ}^* ( {\rm univ}_* \ev^* {\mc O}(k_i))_0}.
\eeq
Notice that once $TM$ is equipped with a Hermitian connection (and hence a Cauchy--Riemann operator). Together with the holomorphic structure of the involved bundles over ${\mc C}$, the connection on $TM$ induces a Cauchy--Riemann operator on $E_i$. 

\begin{rem}\label{rem:explain-2-layer}
We explain the above construction for the simplest nontrivial example $E_2$. The total space of $E_1 \to M \times {\mc C}$ is an almost complex manifold which has a projection $p_{\mc C}: E_1 \to {\mc C}$. The goal is to construct certain perturbations of a (perturbed) $J$-holomorphic map
$ u: \Sigma \to E_1 $
such that $p_{\mc C} \circ u: \Sigma \to {\mc C}$ is a domain map, which in particular is holomorphic. Note that 
$$ (du)^{0,1}_J \in \ov{\rm Hom}(T\Sigma, u^* TE_1) = \ov{\rm Hom}(T\Sigma, u^* TM \oplus u^* T {\mc C} \oplus u^* E_1),
$$
where the equality is induced from a Hermitian connection on $E_1$. Therefore, to keep the map $p_{\mc C} \circ u$ being a domain map, the perturbation should take value in $\ov{\rm Hom}(T\Sigma, u^* TM \oplus u^* E_1)$. The vector bundle $E_2 \to M \times {\mc C}$ is constructed just as the construction of $E_1 \to M \times {\mc C}$ in order to host an equation of the form \eqref{thickening_equation}, with the assistance of the relatively ample line bundle ${\rm ev}^* {\mc O}(k_2) \to {\mc C}$.
\end{rem}



\begin{defn}[Multi-layered thickening]\label{defn:multi-thick}
For any pair $p,q$ of capped periodic orbits, denote $d = d_{pq}$. Let $V_{pq}^d= V_{pq}(k_1, \ldots, k_d)$ be the moduli space of tuples
\beqn
(\Sigma, u, F, \eta_1, \ldots, \eta_d)
\eeqn
where $\Sigma$ is a prestable cylinder, $u: \Sigma \to M$ is a smooth map with topological energy $d$ converging to $p$ and $q$ at the two marked points, $F$ is a frame of the line bundle $L_{u,\Omega, H}\to \Sigma$ inducing a holomorphic map $\iota_F: \Sigma \to \mb{CP}^d$ such that the framing $F=(f_0, \dots, f_d)$ satisfies $[f_0(z_-): \cdots : f_d(z_-)] = [1:0:\cdots :0]$, and the corresponding domain map $\tilde \iota_F: \Sigma \to {\mc C}$, and (denoting by $\tilde u = (u, \tilde \iota_F): \Sigma \to M \times {\mc C}$)
\beqn
\eta_i\in \tilde{u}^* E_i = \Gamma \left( \ov{\rm Hom} \big( \iota_F^* T\mb{CP}^d, u^* TM \oplus \bigoplus_{j=1}^{i-1} \tilde u^* E_j \big) \otimes \iota_F^* {\mc O}(k_i) \right) \otimes \ov{ H^0(\iota_F^* {\mc O}(k_i))_0} 
\eeqn
satisfying the following equations
\begin{equation}\label{eqn:floer-thicken}
\left\{
\begin{aligned}
\ov\partial_{J, H} u +  \pi_{u^* TM} \left( \sum_{i=1}^d \langle \eta_i \rangle \circ d\iota_F \right) &\ = 0,\\
\ov\partial \eta_i + \pi_{E_i} \left( \sum_{j = i+1}^d \langle \eta_j \rangle \circ d\iota_F \right) &\ = 0,\ i = 1, \ldots, d.
\end{aligned}\right.
\end{equation}
\end{defn}

\begin{rem}
Continuing Remark \ref{rem:explain-2-layer}, we explain the origin of the above equations for $d=2$. Note that the pair $(\tilde{u}, \eta)$ defines a smooth map $\tilde{u}_1: \Sigma \to E_1$. The purpose is to write down a perturbation of certain $\ov{\partial}$-equation for $\tilde{u}_1$ using a section of $\eta_2 \in \tilde{u}^* E_2$, which could be schematically written as
$$
\ov{\partial}_{\tilde{J}_\Psi, H}  u_1 + \langle \eta_2 \rangle \circ d \iota_F = 0,
$$
where $\tilde{J}_\Psi$ is the sheared almost complex structure constructed using \eqref{shearing} and $u_1$ is obtained from $\tilde{u}_1$ by forgetting the domain map. Then Equation \eqref{eqn:floer-thicken} is obtained by projecting the above equation using $\pi_{u^* TM}$ and $\pi_{E_1}$. For general $d \geq 2$, one just need to carry out this construction iteratively. 
\end{rem}

Similar to the case with a single thickening bundle, we have a regularity result for the above defined multi-layered thickened moduli space. The details are given in Subsection \ref{multi_regularity}. We remark that the Fredholm model for this thickened moduli space depends on certain choices which may not be canonical. However, being regular or not is a condition independent of these choices. As in Definition \ref{def:thick-mod}, $V_{pq}^d$ is part of the data of a K-chart.

\vspace{0.2cm}

\noindent {\it Obstruction bundle} The first part of the obstruction bundle has its fiber over a point represented by $(\Sigma, u, F, \eta)$ the vector space
\beq\label{eqn:obstruction-O}
\left\{ \zeta = \left( \begin{array}{c} \zeta_1 \\
\vdots \\
\zeta_d \end{array}\right) \left| \begin{array}{c} \zeta_i \in \Gamma \left( \ov{\rm Hom}(\iota_F^* T\mb{CP}^d, u^* TM \oplus \displaystyle \bigoplus_{j=1}^{i-1} \tilde u^* E_j \right) \otimes \iota_F^* {\mc O}(k_i) \otimes \ov{ H^0(\iota_F^* {\mc O}(k_i))_0},  \\
\ov\partial \zeta_i + \pi_{E_i} \left( \displaystyle \sum_{j=i+1}^d \langle \zeta_j \rangle \circ d \iota_F\right) = 0\end{array} \right. \right\}.
\eeq
It is standard that it is finite-dimensional. We denote this bundle by 
\beqn
O_{pq} = O_{pq}^{(d)} \to V_{pq}.
\eeqn

On the other hand, as before, the second factor of the obstruction bundle is the trivial bundle $Q_{pq} := Q_d$ whose fiber ${\bm Q}_d$ is defined by \eqref{eqn57}. Let $Q_{pq} \to V_{pq}$ be the trivial bundle. Note that from its definition, $Q_{pq}$ is only a fiber bundle with a canonical smooth section, where the section corresponds to the element in $Q_d$ represented by the identity matrix. However, using the vector space ${\bm Q}^*_d$ defined by \eqref{eqn:q-star}, the bundle $Q_{pq}$ can be endowed with a vector bundle structure. We switch between these two viewpoints in different contexts. We define 
\beqn
E_{pq} = O_{pq}\oplus Q_{pq} \to V_{pq}.
\eeqn
Moreover, similar to the single-layered case, there is a $G_{pq}$-action on $V_{pq}$ as well as on the bundles $O_{pq}$ and $Q_{pq}$ making $E_{pq}$ an equivariant vector bundle. 

\vspace{0.2cm}

\noindent {\it Properties of obstruction bundle.} There are many structural facts about the obstruction bundles which will play important roles in the construction of the K-chart lift of the Hamiltonian Floer flow category. We summarize them here. First, recall that the thickening depends on the choice of the sequence $k_1 < \cdots < k_d$. We (will) fix an increasing sequence $\{k_i\}_{i = 1}^\infty$ such that for all $pq$, the thickening $V_{pq}(k_1, \ldots, k_{d_{pq}})$ is regular (see Proposition \ref{prop525}). However, we can have higher obstruction spaces. Given $d' > d \geq d_{pq}$, one can construct thickenings $V_{pq}(k_1, \dots, k_d)$ and $V_{pq}(k_1, \dots, k_{d'})$, over which live the two vector bundles
\begin{align*}
    &\ O_{pq}^{(d)} \to V_{pq}(k_1, \dots, k_d),\ &\ O_{pq}^{(d')} \to V_{pq}(k_1, \dots, k_{d'}).
\end{align*}
As we use sections with higher $k_i$ labels to perturb sections with lower $k_i$ labels, there is a natural inclusion 
\beq\label{obstruction_inclusion}
O_{pq}^{(d)} \hookrightarrow O_{pq}^{(d')}
\eeq
covering the natural inclusion $V_{pq}(k_1, \dots, k_d) \hookrightarrow V_{pq}(k_1, \dots, k_{d'})$
such that when $d< d' < d''$, the following diagram commutes
\beqn
\vcenter{ \xymatrix{ O_{pq}^{(d)} \ar[r] \ar@/_1.0pc/@[][rr] & O_{pq}^{(d')} \ar[r] & O_{pq}^{(d'')}}  }.
\eeqn
Then we can define the direct limit of the obstruction bundles as 
\beq
O_{pq}^\infty:= \lim_{d \to \infty} O_{pq}^{(d)}.
\eeq

The obstruction bundle also splits over each boundary stratum. Let $\alpha = pr_1 \cdots r_l q \in \bA_{pq}^\floer$ and let $\partial^\alpha V_{pq} \subset V_{pq}$ be the subspace of the thickened moduli space which consists of elements such that the underlying map $(u, \Sigma)$ is of type $\alpha$ (see Definition \ref{defn:framed_cylinder}). Then one has a direct sum decomposition
\beq\label{obstruction_splitting}
O_{pq}^{(d)}|_{\partial^\alpha V_{pq}} \cong O_{pq; pr_1}^{(d)} \oplus \cdots \oplus O_{pq; r_l q}^{(d)}
\eeq
where the fiber of $O_{pq; r_i r_{i+1}}^{(d)}$ over the point $[\Sigma, u, F, \eta]$ is 
\beqn
\left\{ \zeta = (\zeta_1, \ldots, \zeta_d)^T\ |\ {\rm supp} \zeta \subset \Sigma_{r_i r_{i+1}} \subset \Sigma  \right\},
\eeqn
where we use the convention that $r_0 = p$ and $r_{l+1}=q$.
\eqref{obstruction_splitting} is a direct sum because elements of $O_{pq}^{(d)}$ necessarily vanish at nodes connecting cylindrical components.

\vspace{0.2cm}

\noindent {\it Kuranishi section.} Define 
\beqn
\begin{split}
S^d_{pq}: V_{pq} &\ \to E_{pq}\\
        [\Sigma, u, F, \eta] &\ \mapsto ( \eta, H_F)
\end{split}
\eeqn
where $H_F\in Q_d$ is represented by the Hermitian matrix whose entries are
\beqn
\int_\Sigma \langle f_i, f_j \rangle_{L_{u, \Omega, H}} \Omega_u,
\eeqn
where $f_0, \dots, f_d$ is a basis representing the frame $F$.
It is also the same as the single-layered case that $S^d_{pq}$ is $G_{pq}$-equivariant,

Therefore, one obtains a quadruple
\beq\label{eqn:k-chart-floer}
K_{pq} = (G_{pq}, V_{pq}, E_{pq}, S_{pq}) := (G_{pq}, V^d_{pq}, E^d_{pq}, S^d_{pq})
\eeq
which is a candidate for a global Kuranishi chart on $\ov{\mc M}_{pq}$. Still, there is a canonical map 
\beqn
S_{pq}^{-1}(0)/ G_{pq} \to \ov{\mc M}_{pq}
\eeqn
which defines a homeomorphism onto its image.

\subsubsection{Regularity for the multi-layered thickening}\label{multi_regularity}

Now we prove the regularity of the multi-layered thickening. First we transform the description of the thickened moduli space to a moduli space where one can use Gromov's graph trick. Recall that one has the bundles $E_1, \ldots, E_d \to M \times {\mc C}_d$. Then one can define the tower of bundles 
\beqn
{\mc E}_d \to {\mc E}_{d-1} \to \cdots \to {\mc E}_1 \to M \times {\mc C}_d
\eeqn
where each ${\mc E}_i$ is the total space of the pullback of $E_i \to M \times {\mc C}_d$ onto the previous ${\mc E}_i \to M \times {\mc C}_d$. Then ${\mc E}_i$ is still a complex vector bundle over $M \times {\mc C}_d$. To proceed we would like to equip each ${\mc E}_i$ with a Hermitian metric and a Hermitian connection. From the construction one can see that all the ingredients in constructing these bundles have a natural Hermitian structure except for the bundle $({\rm univ}_* \ev^* {\mc O}(k_i))_0$, which is the bundle of fiberwise global sections of the holomorphic line bundle $\ev^* {\mc O}(k_i)$ that vanish at the markings $z_-$ and $z_+$. We choose an arbitrary smooth Hermitian metric on ${\rm univ}_* \ev^* {\mc O}(k_i)$. Then each ${\mc E}_i$ has an induced Hermitian metric. Moreover, together with the Chern connection on all involved holomorphic vector bundles and the chosen Hermitian metric on $TM$, ${\mc E}_i$ is equipped with a Hermitian connection. 

Now we inductively define a sheared almost complex structure on the total space of ${\mc E}_i$ similar to the constructions in Section \ref{subsubsec:shear}. The details are almost identical and omitted. We denote the resulting almost complex structure on ${\mc E}_d$ by $J_\Psi$.

\begin{thm}\label{regularity2}
For a given list $k_1 < \cdots < k_{d-1}$ of positive integers. There exists $k^* >0$ such that for all $k_d \geq k^*$, for all pairs $pq$ with $d_{pq} \leq d$, all elements of $S_{pq}^{-1}(0)$ are regular elements of ${\mc M}_{J_{\tilde \Psi}, H}(k_1, \ldots, k_d)$.  
\end{thm}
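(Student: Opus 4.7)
The proof will generalize that of Theorem \ref{regularity1} to the multi-layered setting, combining the iterated Gromov graph trick with the peak-section argument. The plan is as follows. Using the tower $\mc{E}_d \to \mc{E}_{d-1} \to \cdots \to \mc{E}_1 \to M \times \mc{C}_d$ equipped with the sheared almost complex structure $\tilde J_\Psi$, a solution $(\Sigma, u, F, \eta_1, \ldots, \eta_d)$ of the perturbed Floer equation \eqref{eqn:floer-thicken} is identified with a $(\tilde J_\Psi, H)$-holomorphic map $\tilde u_d \colon \Sigma \to \mc{E}_d$ whose projection to $\mc{C}_d$ is a domain map; this reduces the problem to the surjectivity of a single linearized Cauchy-Riemann operator on the tower.

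At a solution $x = (\Sigma, u, F, 0) \in S_{pq}^{-1}(0)$, the deformation space decomposes as
\[ W^{1,p}(\Sigma, \mu^* T\mc{C}_d) \oplus W^{1,p}(\Sigma, u^* TM) \oplus \bigoplus_{i=1}^{d_{pq}} W^{1,p}(\Sigma, \tilde u^* E_i), \]
and the linearization takes a block upper-triangular form whose diagonal blocks are $D_\mu$, $D_u$, and $D_{E_1}, \ldots, D_{E_{d_{pq}}}$, together with off-diagonal ``peak-section'' operators $P_{ij}$ (for $j > i$) arising from linearizing in $\delta \eta_j$ the equation governing $\eta_i$ or $u$. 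The block $D_\mu$ is automatically surjective by the convexity of the vertical tangent bundle of the universal curve.

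The main geometric step will be to show that, for large $k_d$, the Hermitian bundle $\tilde u^* E_d$ has positive curvature on the entire domain $\Sigma$. On cylindrical components, the curvature contribution from $\iota_F^* \mc{O}(k_d)$ grows linearly in $k_d$ while the chosen Hermitian connection on $TM$ is flat near the periodic orbits, so an analogue of Lemma \ref{lem:positive-curvature} yields uniform positivity; the same analysis works on positive-degree spherical components, and on ghost components $D_{E_d}$ reduces to the standard $\bar \partial$-operator on a trivial bundle. Applying Lemma \ref{surjective} on cylinders and the Riemann-Roch argument on spheres then yields surjectivity of $D_{E_d}$ with a kernel whose dimension grows with $k_d$, parametrized by peak sections of $\iota_F^* \mc{O}(k_d)$. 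These peak sections in $\ker D_{E_d}$ will then be used, via the off-diagonal maps $P_{id}$, to correct the cokernels of the lower blocks $D_u, D_{E_1}, \ldots, D_{E_{d-1}}$, exactly as in \cite[Section 6]{AMS}; the surjectivity of these lower blocks at the fixed $k_1, \ldots, k_{d-1}$ was arranged at the preceding inductive steps.

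The main obstacle will be ensuring that the composite peak-section maps $\ker D_{E_d} \to \operatorname{coker} D_u$ and $\ker D_{E_d} \to \operatorname{coker} D_{E_i}$ become simultaneously surjective for large $k_d$, uniformly over the moduli space. This requires a careful quantitative analysis near cylindrical nodes, where convergence rates depend on the nondegeneracy of the holonomies of the periodic orbits, as well as care in propagating peak sections through several layers at once. Once pointwise surjectivity is established at each $x$, the compactness of $\overline{\mc{M}}_{pq}$, combined with the finiteness (by Gromov compactness) of the set $\{pq : d_{pq} \leq d\}$, yields a uniform $k^*$, completing the proof.
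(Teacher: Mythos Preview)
Your overall strategy is correct and matches the paper's underlying idea: make $D_{E_d}$ surjective via positivity of curvature (Lemma~\ref{surjective} and the analogue of Lemma~\ref{lem:positive-curvature}), and use peak sections in the $E_d$-direction to kill the cokernels of the lower blocks. However, the paper's presentation is considerably cleaner than yours. Rather than decomposing into $d+2$ blocks $D_\mu, D_u, D_{E_1},\ldots, D_{E_d}$ and worrying about simultaneously correcting each of $\operatorname{coker} D_u,\operatorname{coker} D_{E_1},\ldots,\operatorname{coker} D_{E_{d-1}}$, the paper simply replaces the almost complex target $(M,J)$ by $(\mc{E}_{d-1}, J_{d-1})$, the total space of the first $d-1$ thickening bundles with its sheared almost complex structure. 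This packages $u,\eta_1,\ldots,\eta_{d-1}$ into a single map, reducing the problem \emph{verbatim} to the single-layered case of Theorem~\ref{regularity1}: one $3\times 3$ block matrix with blocks $D_\mu$, $D_{\mc{E}_{d-1}}$, $D_{E_d}$, handled exactly as before. Your ``main obstacle'' of simultaneously surjecting onto several cokernels then disappears---it is absorbed into a single application of the peak-section argument on $\mc{E}_{d-1}$.

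One point in your proposal is genuinely confused and should be removed: you write that ``the surjectivity of these lower blocks at the fixed $k_1,\ldots,k_{d-1}$ was arranged at the preceding inductive steps.'' This is false and contradicts the rest of your plan. The theorem makes no assumption on $k_1,\ldots,k_{d-1}$; the operators $D_{E_i}$ for $i<d$ need not be surjective (the curvature of $\tilde u^* E_i$ may fail to be positive). The entire burden of surjectivity is carried by the top layer $E_d$, which is exactly why the statement reads ``there exists $k^*$'' depending only on the given $k_1,\ldots,k_{d-1}$. (Also, the direct sum in your deformation space should run to $d$, not $d_{pq}$, since the thickening has $d$ layers for every pair with $d_{pq}\le d$.)
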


\begin{proof}
The proof is essentially the same as the single-layered case. Indeed, we can replace the almost complex manifold $(M, J)$ by the pair $({\mc E}_{d-1}, J_{d-1})$ where $J_{d-1}$ is the sheared almost complex structure on the total space ${\mc E}_{d-1}$ lifted via the chosen connections. Then we can proceed by writing the linearized Cauchy--Riemann operator in the block form as in \eqref{eqn:block-matrix}. As long as $k_d$ is sufficiently positive, we can make the linearization of the first equation in \eqref{eqn:floer-thicken} surjective by the peak section argument from \cite[Proposition 6.26]{AMS}. For the surjectivity of the linearization of the second part in \eqref{eqn:floer-thicken}, apply Lemma \ref{surjective} and Lemma \ref{lem:positive-curvature}.
\end{proof}

\begin{lemma}\label{lem:fiberwise-smooth}
Each fiber of the natural forgetful map 
\beq\label{eqn:pi-V-B}
\pi_{pq}: V_{pq} \to B_{pq}
\eeq
which takes a representative $(\Sigma, u, F, \eta_1, \ldots, \eta_d)$ to the equivalence class of the holomorphic map
$$
\iota_F : \Sigma \to {\mb CP}^d
$$
has a canonical structure of smooth manifold and the restriction of the obstruction bundle $O_{pq}^{(d)}$ to each fiber is canonically a smooth vector bundle. Moreover, each $g \in G_{pq}$ induces diffeomorphisms between fibers and smooth isomorphisms between the fiberwise restrictions of the obstruction bundle.
\end{lemma}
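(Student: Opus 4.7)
The plan is to show that, once a representative $(\Sigma_0, {\bf L}_0, \iota_0)$ of $b\in B_{pq}$ is fixed, the fiber $\pi_{pq}^{-1}(b)$ arises as the (finite-dimensional) zero locus of a smooth Fredholm section on a Banach manifold, with all ingredients depending canonically on $b$ up to the finite automorphism group of the stable map $\iota_0$.

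First I would introduce the Banach manifold $\mathcal{B}$ of $W^{1,p}_\delta$-maps $u : \Sigma_0 \to M$ that converge exponentially to the underlying periodic orbits of $p$ and $q$ at the two cylindrical ends in the homotopy class dictated by the cap data and satisfy the standard matching conditions at the nodes of $\Sigma_0$; here $\delta>0$ is chosen smaller than the spectral gap associated with the nondegeneracy of the asymptotic orbits. This is a standard smooth Banach manifold construction from Hamiltonian Floer theory. The vector spaces in which the $\eta_i$ live are holomorphic sections of vector bundles over the \emph{fixed} domain $\Sigma_0$ built only from $u^* TM$, the pullback of tangent/line bundles along the \emph{fixed} map $\iota_0$, and the already-constructed $\tilde u^* E_j$ ($j<i$); so by induction on $i$ they organize into smooth finite-rank vector bundles $\mathcal{E}_i \to \mathcal{B}$ (their ranks are constant because the underlying $\bar\partial$-operators, with $k_i$ large, have vanishing cokernel by Lemma~\ref{surjective} and Lemma~\ref{lem:positive-curvature}). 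One then obtains a smooth Banach manifold $\widetilde{\mathcal{B}}:=\mathcal{E}_1 \oplus \cdots \oplus \mathcal{E}_d$ parametrizing tuples $(u,\eta_1,\ldots,\eta_d)$ on the fixed domain.

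Next, the left-hand side of the system \eqref{eqn:floer-thicken} defines a smooth Fredholm section of a Banach bundle over $\widetilde{\mathcal{B}}$, and the fiber $\pi_{pq}^{-1}(b)$ is, by construction, its zero locus modulo the finite automorphism group ${\rm Aut}(\Sigma_0,\iota_0)$. Theorem~\ref{regularity2} gives surjectivity of the full linearization at every point of $S_{pq}^{-1}(0)$; restricting to the fiber direction (i.e.\ removing the always-surjective $D_\mu$-block appearing in the proof of Theorem~\ref{regularity1}, which accounts precisely for variations in $B_{pq}$), one sees that the linearization of the fiberwise section is surjective at all such points, and hence in a $G_{pq}$-invariant open neighborhood of $S_{pq}^{-1}(0)$. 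Combined with elliptic regularity and the implicit function theorem, this gives the desired smooth manifold structure on the fiber. The main obstacle is arguably the careful setup of the weighted Banach spaces on a nodal cylindrical domain and the verification that changing the representative $(\Sigma_0,\iota_0)$ within the equivalence class $b$ produces a canonically diffeomorphic smooth structure, both of which are standard.

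For the smooth vector bundle claim about $O_{pq}^{(d)}$, the fiber is, by \eqref{eqn:obstruction-O}, the kernel of a continuous family of linear elliptic operators of the form $\zeta\mapsto \bar\partial\zeta+(\text{lower order depending on }(u,\eta))$ parametrized smoothly by the fiber; after choosing the $k_i$ large, Lemma~\ref{surjective} and Lemma~\ref{lem:positive-curvature} force the cokernels to vanish identically, so the kernels have constant rank and assemble into a smooth vector bundle by the standard Fredholm-family argument. Finally, for the $G_{pq}$-action I would observe that any $g\in G_{pq}\subset PGL(d+1)$ acts on frames by push-forward and on the $\eta_i$ through its induced linear action on sections over $\Sigma_0$; because $g$ is a holomorphic automorphism of $\mb{CP}^d$ intertwining all of the bundle data, this transformation carries solutions of \eqref{eqn:floer-thicken} on the fiber over $b$ to solutions on the fiber over $g\cdot b$, is fiberwise smooth (indeed linear in $(u,\eta)$), and hence is automatically a diffeomorphism between the two fibers and a smooth bundle isomorphism between the two restrictions of $O_{pq}^{(d)}$.
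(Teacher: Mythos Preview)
Your proposal is correct and follows essentially the same approach as the paper's proof: fix the domain and framing map representing $b\in B_{pq}$, realize the fiber as the zero set of a smooth Fredholm section on a Banach manifold of maps with fixed domain, and invoke the vertical surjectivity obtained by removing the $D_\mu$-block from the linearization in Theorem~\ref{regularity2}. The paper's proof is much terser (about four sentences), but the underlying argument is identical; your only superfluous step is quotienting by ${\rm Aut}(\Sigma_0,\iota_0)$, which is trivial since points of ${\mc F}_{0,2}(d)$ have no nontrivial automorphisms.
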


\begin{proof}
As the domain curve of elements in each fiber of $\pi_{pq}$ is fixed, a fiber of $\pi_{pq}$ is the zero locus of a smooth Fredholm section of a Banach vector bundle. The regularity along the vertical direction implies that the fibers are smooth because the complex structure on the domain is unchanged. For the same reason the restriction of $O_{pq}^{(d)}$ to each fiber is smooth. Lastly, if $g \in G_{pq}$, the $g$ induces a smooth identification between two Banach vector bundles which intertwines with the Fredholm sections. 
\end{proof}

In fact, the forgetful map $\pi_{pq}: V_{pq} \to B_{pq}$ is a $G_{pq}$-equivariant map between stratified spaces, where the underlying map between posets is the same as \eqref{eqn52xxx}. Lemma \ref{lem:fiberwise-smooth} says that this map further has a fiberwise smooth structure. However, Lemma \ref{lem:fiberwise-smooth} \emph{does not} assert that the locally closed stratum $\partial^{\alpha} \mathring{V}_{pq}$ has a smooth structure. Indeed, the $\bA_d$-stratification on $B_d$ ignores the appearance of sphere bubbles. More refined structures of $\pi_{pq}$ will be explored in Section \ref{sec-6}.

\subsection{Global chart construction III}\label{subsection54}

\subsubsection{Inductive construction of the thickened moduli spaces}

\begin{prop}\label{prop525}
There exists an increasing sequence of positive integers 
\beqn
k_1 < k_2 < \cdots < k_d < \cdots
\eeqn
which satisfies the following condition: for each $d \geq 1$, each $pq$ with $d_{pq} \leq d$, the thickened moduli space $V_{pq}(k_1, \ldots, k_d)$ is regular near the zero locus of the Kuranishi map.
    
    
    
    
\end{prop}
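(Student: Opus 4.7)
The plan is to build the sequence $\{k_d\}_{d \geq 1}$ inductively on $d$, invoking Theorem \ref{regularity2} at each step. The key preliminary observation is that although the collection of pairs $pq$ with $d_{pq} \leq d$ is a priori infinite, the $\Pi$-action on ${\mc P}^\floer$ preserves $d_{pq}$ by \eqref{eqn:action-shift} and \eqref{eqn:CZ-shift}, and the quotient ${\mc P}^\floer / \Pi = \widetilde{\per}(H)/\Pi \cong \per(H)$ is finite by Setup \ref{setup:poset}. Since the thickened moduli spaces $V_{pq}(k_1,\ldots,k_d)$, the bundles $E_{pq}$, the Kuranishi sections $S_{pq}$, and the regularity condition are all strictly $\Pi$-equivariant (the $\Pi$-action permutes capped orbits by attaching representatives of classes in $\Pi$ at the marked points, an operation commuting with the thickening construction), it suffices at each inductive stage to verify regularity for only finitely many pairs $pq$, namely one representative in each $\Pi$-orbit of pairs $(p,q)$ with $d_{pq} \leq d$.

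For the base case $d = 1$, choose representatives of the finitely many $\Pi$-orbits of pairs $pq$ with $d_{pq} = 1$. Applying Theorem \ref{regularity1} (the single-layer regularity theorem) to each representative gives a lower bound $k_{pq}$, and we set $k_1$ to be any integer at least as large as the maximum of these lower bounds. By $\Pi$-equivariance, regularity then holds for every $pq$ with $d_{pq} \leq 1$.

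For the inductive step, suppose $k_1 < k_2 < \cdots < k_{d-1}$ have been chosen so that $V_{pq}(k_1, \ldots, k_{d_{pq}})$ is regular near $S_{pq}^{-1}(0)$ for all $pq$ with $d_{pq} \leq d-1$. Choose a finite set $\mathcal{R}_d$ of representatives of $\Pi$-orbits of pairs $pq$ with $d_{pq} = d$. Applying Theorem \ref{regularity2} with the given list $k_1, \ldots, k_{d-1}$ yields, for each $pq \in \mathcal{R}_d$, a threshold $k^*_{pq}$ such that any $k_d \geq k^*_{pq}$ makes every element of $S_{pq}^{-1}(0) \subset V_{pq}(k_1,\ldots,k_d)$ regular. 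Set
\[
k_d := \max\Bigl(\, k_{d-1} + 1,\ \max_{pq \in \mathcal{R}_d} k^*_{pq} \,\Bigr),
\]
which is well-defined because $\mathcal{R}_d$ is finite. By construction $k_d > k_{d-1}$, and the $\Pi$-equivariance of the construction propagates regularity to all $pq$ with $d_{pq} = d$. For pairs $pq$ with $d_{pq} < d$, the thickened moduli space $V_{pq}(k_1, \ldots, k_{d_{pq}})$ was already regular at the previous stage, and this is unaffected by the choice of $k_d$; alternatively, by the natural inclusion of obstruction bundles \eqref{obstruction_inclusion}, enlarging the list of perturbation parameters can only enlarge the image of the linearization.

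The main subtlety to address is precisely this reduction to finitely many pairs via $\Pi$-equivariance; once that is in place, the induction is essentially a formal consequence of Theorem \ref{regularity2}. A minor technical point worth checking is that the regularity established in Theorem \ref{regularity2} holds on an open neighborhood of $S_{pq}^{-1}(0)$ (not just at $S_{pq}^{-1}(0)$), but this is automatic since surjectivity of a Fredholm operator is an open condition in $C^0$ along the fiber.
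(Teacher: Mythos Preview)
Your proof is correct and takes essentially the same approach as the paper: induction on $d$ using Theorem \ref{regularity2} at each step. The paper's proof is a single sentence, so your version simply fills in details---in particular the $\Pi$-finiteness reduction, which the paper leaves implicit (and note that Theorem \ref{regularity2} as stated already asserts a single threshold $k^*$ uniform over all $pq$ with $d_{pq} \leq d$, so your per-representative maximum and separate handling of the case $d_{pq} < d$ are not strictly needed, though they do no harm).
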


\begin{proof}
This statement can be proved by induction on $d_{pq}$, combining the proof for the case of a single multi-layered thickening in Theorem \ref{regularity2}.  
\end{proof}

\begin{rem}
One can choose the sequence $k_1, k_2, \ldots$ such that the corresponding thickening of $\pss$, $\ssp$, $\pearl$, and the homotopy moduli spaces are all regular (see Section \ref{sec:pss}). 
\end{rem}

From now on we fix the sequence $k_1< k_2< \cdots$ which satisfies the conditions of Proposition \ref{prop525}. By shrinking to an open neighborhood of $S_{pq}^{-1}(0)$ inside $V_{pq}$, one obtains a  global Kuranishi chart. Just as Lemma \ref{lem:fiberwise-smooth}, there are also the natural forgetful maps
\beqn
\pi_{pq}: V_{pq} \to B_{pq}.
\eeqn
These maps should be thought of as the natural forgetful map which takes a $J$-holomorphic curve to its moduli parameter in the Deligne--Mumford space.

\subsubsection{Product construction: the case of two factors}

To obtain a K-chart lifting, one needs to define embeddings of products of charts into corresponding boundary strata of a bigger chart. We call the definition of such embeddings the product construction. This construction needs to be compatible with the product construction of the auxiliary moduli spaces $B_d$.

We first consider the case of two factors. Consider a nonempty codimension one stratum $\ov{\mc M}_{prq} \subset \ov{\mc M}_{pq}$. We would like to define an equivariant map 
\beqn
\vartheta_{prq}: V_{pr}\times V_{rq} \to \partial^{prq} V_{pq}
\eeqn
which lifts the map $\zeta_{prq}$ (see \eqref{eqn42} and \eqref{eqn43}). Suppose a point $\tilde x_{pr}\in V_{pr}$ is represented by a quadruple $(\Sigma_{pr}, u_{pr}, F_{pr}, \eta_{pr})$ and a point $\tilde x_{rq}\in V_{rq}$ is represented by a quadruple $(\Sigma_{rq}, u_{rq}, F_{rq}, \eta_{rq})$. Let $x_{pr}\in B_{pr}$, $x_{rq}\in B_{rq}$ be the underlying stable maps into projective spaces. Then using the map $\zeta_{prq}: B_{pr}\times B_{rq} \to \partial^{prq}B_{pq}$ one can define a point 
\beqn
x_{prq}:= \zeta_{prq}(x_{pr}, x_{rq})
\eeqn
which can be represented by a stable cylinder $\uds \mu_{prq}: \Sigma_{prq} \to \mb{CP}^{d_{pq}}$. Moreover, there is a natural framing 
\beqn
F_{prq} = (f_0, \ldots, f_{d_{pq}})
\eeqn
which lifts $u_{prq}$ defined as follows. Let $L_{prq}\to \Sigma_{prq}$ be the pullback of ${\mc O}(1) \to \mb{CP}^{d_{pq}}$ by $u_{prq}$ which carries the pullback of the Fubini--Study metric. Then the map $u_{prq}$ can be represented by a basis
\beqn
f_0, \ldots, f_{d_{pq}} 
\eeqn
of $H^0(L_{prq})$ which is well-defined up to a ${\mb C}^*$-factor. Moreover, the $M$-factors of $u_{pr}$ and $u_{rq}$ naturally define a map 
\beqn
u_{prq}: \Sigma_{prq} \to M
\eeqn
which converges to the correct periodic orbits at $z_\pm$. Lastly, one needs to define the combined vector $\eta_{prq}$. 

We first describe the single-layered case. The map $u_{prq}: \Sigma_{prq}\to \mb{CP}^{d_{pq}}$ induces linear embeddings
\begin{align}\label{linear_embedding}
&\ \mb{CP}^{d_{pr}} \cong Y_{pr}\subset \mb{CP}^{d_{pq}},\ &\ \mb{CP}^{d_{rq}}\cong Y_{rq} \subset \mb{CP}^{d_{pq}}.
\end{align}
Indeed, using the notations similar to \eqref{eqn54}, the first embedding is 
\beqn
[z_0, \ldots, z_{d_{pr}}] \mapsto [z_0, \ldots, z_{d_{pr}}, \underbrace{0, \ldots, 0}_{d_{rq}} ]
\eeqn
and the second embedding is 
\beqn
[z_0, \ldots, z_{d_{rq}}] \mapsto [a_0 z_0, \ldots, a_{d_{pr}} z_0, z_1, \ldots, z_{d_{rq}}].
\eeqn
Here $(a_0, \ldots, a_{d_{pr}})\in \mb{C}^{d_{pr}+1}$ is the normalized evaluation of the first component at $z_+$. Over these embeddings there are the natural bundle maps
\begin{align*}
&\ \vcenter{  \xymatrix{ {\mc O}(k) \ar[r] \ar[d] & {\mc O}(k) \ar[d] \\
                          \mb{CP}^{d_{pr}} \ar[r] & \mb{CP}^{d_{pq}}  } },\ &\ \vcenter{ \xymatrix{ {\mc O}(k) \ar[r] \ar[d] & {\mc O}(k) \ar[d] \\
                                      \mb{CP}^{d_{rq}} \ar[r] & \mb{CP}^{d_{pq}} } }.
\end{align*}
By definition,
\beqn
\eta_{pr}\in H^0 \left( \ov{\rm Hom} ( \iota_{F_{pr}}^* T\mb{CP}^{d_{pr}}, u^* TM) \otimes \iota_{F_{pr}}^* {\mc O}(k) \right) \otimes \ov{ H^0( \iota_{F_{pr}}^* {\mc O}(k))_0}.
\eeqn
Notice that there are natural linear inclusions
\begin{align*}
&\ H^0(\iota_{F_{pr}}^* {\mc O}(k))_0 \hookrightarrow H^0(\iota_{F_{prq}}^* {\mc O}(k))_0,\ &\ H^0(\iota_{F_{rq}}^* {\mc O}(k))_0 \hookrightarrow H^0( \iota_{F_{prq}}^* {\mc O}(k))_0
\end{align*}
defined by extending by zero. Moreover, the linear embeddings \eqref{linear_embedding} induce bundle maps
\beqn
\ov{\rm Hom}(\iota_{F_{pr}}^* T\mb{CP}^{d_{pr}}, u_{pr}^* TM) \otimes \iota_{F_{pr}}^* {\mc O}(k) \to \ov{\rm Hom}(\iota_{F_{prq}}^* T\mb{CP}^{d_{pq}}, u_{prq}^* TM) \otimes \iota_{F_{prq}}^* {\mc O}(k) 
\eeqn
and
\beqn
\ov{\rm Hom}(\iota_{F_{rq}}^* T\mb{CP}^{d_{rq}}, u_{rq}^* TM) \otimes \iota_{F_{rq}}^* {\mc O}(k) \to \ov{\rm Hom}(\iota_{F_{prq}}^* T\mb{CP}^{d_{pq}}, u_{prq}^* TM) \otimes \iota_{F_{prq}}^* {\mc O}(k)
\eeqn
covering the inclusion maps between domains.
These are defined by extending elements in $Hom$ by zero in the normal directions to $Y_{pr}$ resp. $Y_{rq}$. As elements of $H^0( \ov{\rm Hom} ( \iota_{F_{pr}}^* T\mb{CP}^{d_{pr}}, u^* TM) \otimes \iota_{F_{pr}}^* {\mc O}(k))$ vanish at nodes (because of the holonomy of $\nabla^{TM}$), one naturally has the linear inclusions 
\beqn
H^0 \left( \ov{\rm Hom}(\iota_{F_{pr}}^* T\mb{CP}^{d_{pr}}, u_{pr}^* TM) \otimes \iota_{F_{pr}}^* {\mc O}(k) \right) \to H^0 \left(  \ov{\rm Hom}(\iota_{F_{prq}}^* T\mb{CP}^{d_{pq}}, u_{prq}^* TM) \otimes \iota_{F_{prq}}^* {\mc O}(k) \right)
\eeqn
and
\beqn
H^0 \left( \ov{\rm Hom}(\iota_{F_{rq}}^* T\mb{CP}^{d_{rq}}, u_{rq}^* TM) \otimes \iota_{F_{rq}}^* {\mc O}(k) \right) \to H^0 \left(  \ov{\rm Hom}(\iota_{F_{prq}}^* T\mb{CP}^{d_{pq}}, u_{prq}^* TM) \otimes \iota_{F_{prq}}^* {\mc O}(k)\right).
\eeqn
Therefore, by taking the sum of the images of $\eta_{pr}$ and $\eta_{rq}$, one obtains an element 
\beqn
\eta_{prq}\in  H^0 \left(  \ov{\rm Hom}(\iota_{F_{prq}}^* T\mb{CP}^{d_{pq}}, u_{prq}^* TM) \otimes \iota_{F_{prq}}^* {\mc O}(k)\right)\otimes \ov{H^0 (\iota_{F_{prq}}^* {\mc O}(k))_0}.
\eeqn
Consider the quadruple $(\Sigma_{prq}, u_{prq}, F_{prq}, \eta_{prq})$. One can check that this is a framed cylinder of type $prq$ which represents an element $\tilde x_{prq}$ in the thickened moduli space $V_{prq}$. It is also straightforward to see that we just defined a continuous embedding 
\beqn
\vartheta_{prq}: V_{pr}\times V_{rq} \to \partial^{prq} V_{pq}
\eeqn
which is equivariant with respect to the group inclusion $G_{pr}\times G_{rq} \hookrightarrow G_{pq}$. Denote the image of $\vartheta_{prq}$ by $V_{prq}$. The equivariance implies that one can extend $\vartheta_{prq}$ to 
\beq\label{eqn:theta-V}
\vartheta_{prq}^\sim : G_{pq}\times_{G_{prq}} ( V_{pr}\times V_{rq}) \to \partial^{prq} V_{pq}
\eeq
whose image is denoted (in the same pattern as $B_{(d_0, d_1)}^\sim$) by 
\beqn
V_{prq}^\sim \subset \partial^{prq} V_{pq}  \subset V_{pq}.
\eeqn

\begin{rem}
However, the restriction of the Kuranishi map $S_{pq}: V_{pq} \to E_{pq}$ to the stratum $\partial^{prq} V_{pq}$ does not match with the Kuranishi map of the product of $S_{pr}\times S_{rq}$. For example, if $x_{pr}\in V_{pr}$ and $x_{rq}\in V_{rq}$ have orthonormal framings, the corresponding framing on $\vartheta_{prq}(x_{pr}, x_{rq})$ is not necessarily orthonormal (with respect to the $L^2$-pairing). Nonetheless, the Hermitian matrix part of the Kuranishi maps are given by two different kinds of normalization scheme, which can be interpolated once we have an outer-collaring (see Subsection \ref{subsection57}). 
\end{rem}

Just as the space $B_{d}$, for $p<r<s<q$, the embeddings of the thickened moduli spaces $V_{pq}$ fit in to the following commutative diagram
\beq\label{diag-V}
\xymatrix{ &  G_{pq} (V_{pr}\times V_{rs}\times V_{sq}) \ar[ld] \ar[rd] &\\
  G_{pq} \big( V_{pr} \times \partial^{rsq} V_{rq} \big) \ar[rd] & &  G_{pq} \big( \partial^{prs} V_{ps} \times V_{sq} \big) \ar[ld]\\
   &  \partial^{prsq} V_{pq} &  },
\eeq
where the arrows are constructed from \eqref{eqn:theta-V}. Indeed, the associativity on the gluing of the underlying map $(u, \Sigma)$ and the superposition of perturbation sections $\eta$ is straightforward from definition, while the associativity of the framing component $F$ follows from Proposition \ref{associativity}.

\vspace{0.2cm}

\noindent {\it Embedding obstruction bundles.} We now describe how the obstruction bundles $E_{pr}\to V_{pr}$ and $E_{rq}\to V_{rq}$ can be embedded into $E_{pq}$ restricted to $V_{prq}$. Recall that 
\begin{align*}
&\ E_{pr} = O_{pr}\oplus Q_{pr},\ &\ E_{rq} = O_{rq}\oplus Q_{rq}
\end{align*}
where the $O$-factors are the spaces of the sections $\eta$ and the $Q$-factors are the trivial bundle which are use to normalize the framings. The normalization strategy is to use the $L^2$-metric on the domain curves and to require the framings are orthonormal. 

Indeed, the embeddings $\vartheta_{{prq}}$ and $\vartheta_{{prq}}^\sim$ described above illustrate how to embed the $O$-factors. Hence there are natural bundle maps
\beqn
\xymatrix{ O_{prq} = O_{pr} \boxplus O_{rq} \ar[rr]^-{\widehat\vartheta_{prq}^O} \ar[d] & & O_{pq}|_{\partial^{prq} V_{pq}}  \ar[d] \\ 
           V_{prq} = V_{pr}\times V_{rq} \ar[rr]_-{\vartheta_{prq}}      &     & \partial^{prq} V_{pq} }.
\eeqn
Moreover, as the $Q$-bundles are pulled back from the auxiliary spaces $B_d$ and we have defined the corresponding bundle embeddings, we just pull back these embeddings to the thickened moduli space. Hence we have the bundle maps
\beqn
\xymatrix{ Q_{prq} = Q_{pr}\boxplus Q_{rq} \ar[rr]^-{\widehat \vartheta_{prq}^Q} \ar[d] & &  Q_{pq}|_{\partial^{prq} V_{pq}} \ar[d] \\
              V_{prq} = V_{pr} \times V_{rq} \ar[rr]_-{\vartheta_{prq}} & & \partial^{prq} V_{pq}} .
\eeqn
By taking the direct sum of $\widehat\vartheta_{prq}^O$ and $\widehat \vartheta_{prq}^Q$, we obtain the map
\beqn
\xymatrix{ E_{prq} = E_{pr}\boxplus E_{rq} \ar[rr]^-{\widehat \vartheta_{prq}} \ar[d] & &  E_{pq}|_{\partial^{prq} V_{pq}} \ar[d] \\
              V_{prq} = V_{pr} \times V_{rq} \ar[rr]_-{\vartheta_{prq}} & & \partial^{prq} V_{pq}} .
\eeqn
Take the $G_{pq}$-equivariantization with respect to the embedding $G_{pr} \times G_{rq} \hookrightarrow G_{pq}$, the above diagram lifts to
\beqn
\xymatrix{ G_{pq}(E_{prq}) \ar[rr]^-{\widehat \vartheta_{prq}^{\sim}} \ar[d] & &  E_{pq}|_{\partial^{prq} V_{pq}} \ar[d] \\
              G_{pq}(V_{prq}) \ar[rr]_-{\vartheta_{prq}^{\sim}} & & \partial^{prq} V_{pq}} 
\eeqn
and we denote the image of $G_{pq}(E_{prq})$ under the map $\widehat \vartheta_{prq}^{\sim}$ by $E^\sim_{prq}$. For a quadruple of elements $p<r<s<q$, the following diagram lives over \eqref{diag-V}.
\beq\label{diag-E}
\xymatrix{ &  G_{pq} (E_{pr}\times E_{rs}\times E_{sq}) \ar[ld] \ar[rd] &\\
  G_{pq} \big( E_{pr} \times \partial^{rsq} E_{rq} \big) \ar[rd] & &  G_{pq} \big( \partial^{prs} E_{ps} \times E_{sq} \big) \ar[ld]\\
   &  \partial^{prsq} E_{pq} &  }
\eeq
Beyond the ingredients of showing the commutativity of \eqref{diag-V}, Proposition \ref{prop:Q-gluing} covers the associativity of the superposition of the $Q$-component of the obstruction bundle.

\subsubsection{The general case}

Now we describe the product construction in the general situation, i.e. with multi-layered thickenings and multiple breakings. Consider a stratum $\alpha = pr_1 \cdots r_l q \in \bA^\floer_{pq}$. We can form the product chart 
\beqn
K_\alpha := K_{pr_1}\times \cdots \times K_{r_k q}
\eeqn
whose domain is $V_\alpha:= V_{pr_1}\times \cdots \times V_{r_l q}$ and whose obstruction bundle is 
\beqn
E_\alpha:= E_{pr_1}\boxplus \cdots \boxplus E_{r_l q}.
\eeqn
We would like to construct an equivariant embedding $V_\alpha$ into $\partial^\alpha V_{pq}$ and embed the obstruction bundle equivariantly as well. 

To embed the thickened moduli space, the first step is to unify the number of layers. Recall that in the multi-layered thickening strategy, each moduli space $\ov{\mc M}_{rs}$ is thickened to $V_{rs}^{d_{rs}}$ which depends on the sequence of integers $k_1< \cdots < k_{d_{rs}}$. 
\begin{lemma}
There exists a natural $G_{rs}$-equivariant inclusion
\beqn
V_{rs}^d \hookrightarrow V_{rs}^{d'}\ {\rm if}\ d < d'
\eeqn
defined by 
\beqn
[\Sigma, u, F, \eta_1, \ldots, \eta_d] \mapsto [\Sigma, u, F, \eta_1, \ldots, \eta_d, \underbrace{0, \ldots, 0}_{d'-d}]
\eeqn
where the last $d' - d$ zeroes are the zero vectors of the bundles of $(u, \iota_F)^* E_{k_{d+1}}$, $\ldots$, $(u, \iota_F)^* E_{k_{d'}}$. 
\end{lemma}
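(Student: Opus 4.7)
The strategy is to verify four properties in sequence: the defining equations of the larger thickening are satisfied, the map descends to isomorphism classes, it is $G_{rs}$-equivariant, and it is a topological embedding. None of these steps should be hard; the content of the lemma is almost entirely bookkeeping to confirm that the auxiliary perturbation layers decouple when the higher $\eta_j$ vanish.

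First, I would verify well-definedness at the level of representatives. Given $(\Sigma, u, F, \eta_1, \ldots, \eta_d)$ satisfying the $d$-layer system \eqref{eqn:floer-thicken}, I need to show $(\Sigma, u, F, \eta_1, \ldots, \eta_d, 0, \ldots, 0)$ satisfies the analogous system for $d'$ layers. The first equation in \eqref{eqn:floer-thicken} for the $d'$-layer system differs from that of the $d$-layer system only by the additional sum $\pi_{u^*TM}\bigl(\sum_{j=d+1}^{d'}\langle \eta_j\rangle \circ d\iota_F\bigr)$, which vanishes when $\eta_{d+1} = \cdots = \eta_{d'} = 0$. For the auxiliary equations with $1 \le i \le d$, the extra summands $\sum_{j=d+1}^{d'}\langle \eta_j\rangle \circ d\iota_F$ on the left-hand side vanish for the same reason, so the equation reduces to the $d$-layer version. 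For indices $d < i \le d'$, the equation becomes $\ov\partial 0 + \pi_{E_i}(0) = 0$, which is trivially satisfied. Note that since the framing $F$ still induces the same $\iota_F: \Sigma \to \mb{CP}^{d_{rs}}$ and the bundles $E_i$ are defined from the fixed data $TM$, ${\mc O}(k_i)$, $T\mb{CP}^{d_{rs}}$, no ambiguity arises from the change of total layer count.

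Next, the map descends to the quotient by the equivalence relation defining $V_{rs}^d$. Two representatives are equivalent iff there is an isomorphism $\varphi: \Sigma \to \Sigma'$ of prestable cylinders with $u' \circ \varphi = u$ together with a covering bundle isomorphism $\widehat\varphi: L_u \to L_{u'}$ taking $F$ to $F'$ and intertwining the $\eta_i$'s via the induced pullback on the relevant $\ov{\mathrm{Hom}}$ and ${\mc O}(k_i)$ bundles. Since the zero section is canonically preserved under any bundle isomorphism, the extension by zero is manifestly compatible with this equivalence, and the map descends to a set-theoretic injection $V_{rs}^d \hookrightarrow V_{rs}^{d'}$.

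The $G_{rs}$-equivariance is immediate: the action modifies the framing $F$ via the inclusion $G_{rs} \hookrightarrow PGL(d_{rs}+1)$ and acts linearly on each $\eta_i$ through the induced action on the bundles $E_i$. Linearity implies that the action fixes the zero vector in each $E_i$, so inserting zeros in the last $d'-d$ slots commutes with the action. Continuity in the Gromov-type topology on the thickened moduli spaces (convergence of maps, of framings, and of perturbation parameters over shrinking neighborhoods of nodes) is trivial because adding fixed zero entries affects none of these data. The induced map is a homeomorphism onto its image, characterized as the closed subset where the last $d'-d$ perturbation components vanish; no step here requires more than a direct check.
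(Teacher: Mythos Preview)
Your proposal is correct and follows the same approach as the paper's proof, which is extremely terse: it simply observes that since higher $\eta_j$ only perturb the equations for lower $\eta_i$, setting $\eta_{d+1},\ldots,\eta_{d'}$ to zero leaves the $d$-layer system unchanged, and that equivariance holds because the $G_{rs}$-action acts through the framing. Your version spells out the same reasoning in more detail and additionally verifies that the map descends to isomorphism classes and is a topological embedding, points the paper leaves implicit.
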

\begin{proof}
Indeed, as we use $\eta_j$ to perturb the equation for $\eta_i$ if $j > i$, the degree $d$ version of \eqref{eqn:floer-thicken} implies that the representative $(\Sigma, u, F, \eta_1, \ldots, \eta_d, \underbrace{0, \ldots, 0}_{d'-d})$ satisfies the degree $d'$ version of \eqref{eqn:floer-thicken}. As the $G_{rs}$-action changes the framing $F$, such a map is indeed $G_{rs}$-equivariant.
\end{proof}

Now abbreviate $d = d_{pq}$. For $\alpha = p r_1 \cdots r_l q$, one then obtains a $G_{\alpha}$-equivariant inclusion between the thickened moduli spaces
\beqn
V_\alpha = V^{d_{pr_1}}_{pr_1} \times \cdots \times V^{d_{r_l q}}_{r_l q} \hookrightarrow V^{d}_\alpha :=  V^{d}_{pr_1} \times \cdots \times V^{d}_{r_l q}.
\eeqn
The space $V^{d}_\alpha$ on the right hand side can be embedded into $\partial^\alpha V_{pq}^{d_{pq}}$. Indeed, it is straightforward to generalize the construction in the previous single-layered case with only two horizontal levels and the associativity can be verified as before.

\vspace{0.2cm}

\noindent {\it Embedding obstruction bundles}

\vspace{0.2cm}

Now for each stratum $\alpha = pr_1 \cdots r_l q \in \bA_{pq}^\floer$, define the bundles
\beqn
O_\alpha:= O_{pr_1 \to pq}\oplus \cdots \oplus O_{r_l q \to pq}:= O_{pr_1}\boxplus \cdots \boxplus O_{r_l q} \to V_\alpha = V_{pr_1}\times \cdots \times V_{r_l q}
\eeqn
and 
\beqn
Q_\alpha:= Q_{pr_1 \to pq} \oplus \cdots \oplus Q_{r_l q \to pq}:= Q_{pr_1}\boxplus \cdots \boxplus Q_{r_l q} \to V_\alpha = V_{pr_1}\times \cdots \times V_{r_l q}.
\eeqn
The construction of the two-factor case and the corresponding associativity induce $G_\alpha$-equivariant bundle embeddings 
\beqn
\xymatrix{ E_\alpha:= O_\alpha\oplus Q_\alpha \ar[rr]^-{ \widehat\vartheta_\alpha = \widehat \vartheta_\alpha^O \oplus \widehat \vartheta_\alpha^Q } \ar[d]  & & E_{pq}|_{\partial^\alpha V_{pq}} \ar[d] \\ 
           V_\alpha  \ar[rr]_-{\vartheta_\alpha} & &  \partial^\alpha V_{pq}}.
\eeqn

\begin{notation}
The $G_{pq}$-equivariantization of the embedding $\vartheta_\alpha$ is denoted by 
\beqn
\vartheta_\alpha^\sim: G_{pq}\times_{G_\alpha} V_\alpha \to \partial^\alpha V_{pq}
\eeqn
and its image is denoted by 
\beqn
V_\alpha^\sim \subset \partial^\alpha V_{pq}.
\eeqn
Points in $V_\alpha^\sim$ are those maps whose underlying map into $\mb{CP}^d$ are in $\delta(\alpha)$-normal position and whose obstruction vector are coming from factors of $\alpha$. 
\end{notation}

\subsubsection{More on obstruction bundles}

Here we single out a particular pattern of the obstruction bundles which will be used several times in the rest of the paper. We fix $p, q\in {\mc P}^\floer$ and $\alpha = pr_1 \cdots r_l q \in \bA_{pq}^\floer$. For each $d \geq d_{pq}$, one has part of the obstruction bundles
\beqn
O_{pr_1}^{(d)} \to V_{pr_1}, \ldots, O_{r_l q}^{(d)}\to V_{r_l q}.
\eeqn
Via the above product construction, denote their images by 
\beqn
O_{pr_1 \to pq}^{(d)},\ldots, O_{r_l q \to pq}^{(d)}  \to V_\alpha = V_{pr_1}\times \cdots \times V_{r_l q}.
\eeqn
They are embedded into the bundle $O_{pq}^{(d)} |_{V_\alpha}$. Denote their (direct) sum by 
\beqn
O_\alpha^{(d)} \to V_\alpha.
\eeqn
Notice that inside $O_{pq}^{(d)}|_{V_\alpha}$ there are also subbundles which are embedded images of $O_\beta^{(d)}|_{V_\alpha}$ for all intermediate strata $\beta$. 

In many inductive constructions, we wouldl like to choose structures (such as inner products, connections, maps, etc.) on $O_{pq}^{(d)}$ inductively. When we make this choice over a particular stratum $V_\alpha \subset V_{pq}$, we would like to extend the existing structures on those subbundles coming from product construction. These subbundles have overlaps and the existing structures agree on overlaps by the correct inductive hypothesis. However, this does not guarantee the existence of an extension; indeed some extra properties of these subbundles $O_\beta^{(d)}$ is needed. 

\begin{lemma}\label{obundle_property}
Fix $\alpha = pr_1 \cdots r_l q \in \bA_{pq}^\floer$. Consider the two subbundles 
\beqn
O_{pr_l \to pq}^{(d)}\to V_{pr_l q} \subset O_{pq}^{(d)}|_{V_{pr_lq}},  O_{r_1 q \to pq}^{(d)} \to V_{pr_1 q} \subset O_{pq}^{(d)}|_{V_{pr_1 q}}. 
\eeqn
\begin{enumerate}
    \item One has 
    \beq\label{obundle_intersection}
    O_{pr_l \to pq}^{(d)} \cap O_{r_1 q\to pq}^{(d)} = O_{r_1 r_l\to pq}^{(d)}.
    \eeq

    \item For each $\beta \in \bA_{pq}^\floer$ with $\alpha \leq \beta$, there one has
    \beq\label{obundle_inclusion}
    O_\beta^{(d)}|_{V_\alpha} \subset O_{pr_l \to pq}^{(d)}|_{V_\alpha} + O_{r_1 q \to pq}^{(d)}|_{V_\alpha}.
    \eeq
    \end{enumerate}
\end{lemma}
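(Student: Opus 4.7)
The plan is to reduce both assertions to the direct-sum decomposition \eqref{obstruction_splitting} applied at the deepest stratum $V_\alpha$. This decomposition holds because our Hermitian connection $\nabla^{TM}$ is flat near each embedded $1$-periodic orbit with nondegenerate holonomy, which forces every global holomorphic section appearing in the obstruction complex \eqref{eqn:obstruction-O} to vanish at all cylindrical nodal points of $\Sigma$; this nodal vanishing propagates through the recursive definition \eqref{eqn:e-i} of the bundles $E_i$, so it persists for each slot $\zeta_i$ in the multi-layered obstruction. Writing $r_0 = p$ and $r_{l+1} = q$, we therefore have
\begin{equation*}
O_{pq}^{(d)}\big|_{V_\alpha} \;=\; \bigoplus_{k=0}^{l} O_{pq;\, r_k r_{k+1}}^{(d)}\big|_{V_\alpha},
\end{equation*}
where the $k$-th summand consists of tuples whose sections are supported on the cylindrical component $\Sigma_{r_k r_{k+1}}$.

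Next, I would identify each of the three bundles in the statement with an explicit block in this direct sum. By tracing through the product-construction embedding $\widehat\vartheta^O$, which extends each local section of $O_{pr_l}^{(d)}$ by zero outside the $pr_l$-piece of the glued domain, and then applying the nodal-vanishing decomposition once more to break $\Sigma_{pr_l}$ into its $l$ further cylindrical components in the $\alpha$-breaking, I obtain
\begin{equation*}
O_{pr_l\to pq}^{(d)}\big|_{V_\alpha} \;=\; \bigoplus_{k=0}^{l-1} O_{pq;\, r_k r_{k+1}}^{(d)}\big|_{V_\alpha}, \qquad O_{r_1 q\to pq}^{(d)}\big|_{V_\alpha} \;=\; \bigoplus_{k=1}^{l} O_{pq;\, r_k r_{k+1}}^{(d)}\big|_{V_\alpha},
\end{equation*}
and similarly $O_{r_1 r_l\to pq}^{(d)}\big|_{V_\alpha} = \bigoplus_{k=1}^{l-1} O_{pq;\, r_k r_{k+1}}^{(d)}\big|_{V_\alpha}$ (which is zero when $l=1$). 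Identity \eqref{obundle_intersection} then becomes the elementary index-set identity $\{0,\ldots,l-1\} \cap \{1,\ldots,l\} = \{1,\ldots,l-1\}$ applied to the block decomposition, together with the observation that each $O_{pq;\, r_k r_{k+1}}^{(d)}$ appears as an honest direct summand (so intersection coincides with intersection of index sets).

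For the inclusion \eqref{obundle_inclusion}, note that any $\beta \in \bA_{pq}^\floer$ with $\alpha \leq \beta$ has the form $\beta = p r_{i_1} \cdots r_{i_m} q$ with $\{r_{i_j}\} \subseteq \{r_1,\ldots,r_l\}$. Setting $i_0 = 0$ and $i_{m+1} = l+1$, the same nodal-vanishing argument applied first to $\beta$ and then refined at $\alpha$ yields
\begin{equation*}
O_{\beta}^{(d)}\big|_{V_\alpha} \;=\; \bigoplus_{j=0}^{m}\, \bigoplus_{k:\, i_j \le k < i_{j+1}} O_{pq;\, r_k r_{k+1}}^{(d)}\big|_{V_\alpha}.
\end{equation*}
Because every index $k \in \{0,\ldots,l\}$ lies in $\{0,\ldots,l-1\}$ or in $\{1,\ldots,l\}$, each summand $O_{pq;\, r_k r_{k+1}}^{(d)}|_{V_\alpha}$ is contained in $O_{pr_l\to pq}^{(d)}|_{V_\alpha}$ or in $O_{r_1 q\to pq}^{(d)}|_{V_\alpha}$, which yields \eqref{obundle_inclusion}.

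The one step that needs care is the block identification of the image of $\widehat\vartheta^O$ at $V_\alpha$: the embeddings have been built inductively out of two-factor pieces, and one must verify that this iterated construction commutes with the further refinement \eqref{obstruction_splitting} obtained by passing from a coarser stratum to $V_\alpha$. This compatibility is essentially dual to the associativity already established for diagram \eqref{diag-E}: extension-by-zero in the normal directions to the linear subspaces $Y_{rs} \subset \mb{CP}^{d_{pq}}$ respects the further splitting of $Y_{rs}$ along finer fans, and the vanishing at cylindrical nodes guarantees that no gluing terms arise in the refinement. Once this compatibility is recorded, both \eqref{obundle_intersection} and \eqref{obundle_inclusion} reduce to the elementary combinatorics of index sets displayed above.
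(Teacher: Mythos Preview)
Your block identifications
\[
O_{pr_l\to pq}^{(d)}\big|_{V_\alpha} \;=\; \bigoplus_{k=0}^{l-1} O_{pq;\, r_k r_{k+1}}^{(d)}\big|_{V_\alpha}, \qquad O_{r_1 q\to pq}^{(d)}\big|_{V_\alpha} \;=\; \bigoplus_{k=1}^{l} O_{pq;\, r_k r_{k+1}}^{(d)}\big|_{V_\alpha}
\]
are not correct, and this is the crux of the matter. The blocks $O_{pq;\, r_k r_{k+1}}^{(d)}$ in \eqref{obstruction_splitting} carry only the support constraint ${\rm supp}\,\zeta \subset \Sigma_{r_k r_{k+1}}$. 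The image $O_{pr_l \to pq}^{(d)}$ of the product embedding carries a second, independent constraint: the $\ov{\rm Hom}$ factor must vanish on the normal directions to the linear subspace $Y_{pr_l} \subset \mb{CP}^{d_{pq}}$. Thus $O_{pr_l\to pq}^{(d)}|_{V_\alpha}$ is a \emph{proper} subbundle of the right-hand side above, and your index-set intersection argument does not see this constraint at all. You acknowledge the normal-direction extension in your last paragraph, but only as a compatibility remark; it is never used in the actual argument for \eqref{obundle_intersection}.

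The paper's proof confronts both constraints directly at a point $x \in V_{pr_1 r_l q}$: the support condition cuts down to sections on $\Sigma_{r_1 r_l}$, and then the fan structure in normal position forces a section that vanishes on the normals to both $Y_{pr_l}$ and $Y_{r_1 q}$ to vanish on the normal to $Y_{r_1 r_l} = Y_{pr_l} \cap Y_{r_1 q}$. For part \eqref{obundle_inclusion} the paper bypasses any block decomposition entirely: writing $\beta = ps_1\cdots s_m q$ with $\{s_j\}\subset\{r_i\}$, one has $r_1 \le s_1$ and $s_m \le r_l$, so each summand $O_{s_j s_{j+1}\to pq}^{(d)}$ is already contained in one of $O_{pr_l\to pq}^{(d)}$ or $O_{r_1 q\to pq}^{(d)}$ by associativity of the product embedding. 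Your route through the support blocks fails here for the same reason: $O_{pq;\, r_k r_{k+1}}^{(d)}$ is \emph{not} contained in $O_{pr_l\to pq}^{(d)}$, so the final step of your argument for \eqref{obundle_inclusion} does not go through.
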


\begin{proof}
We first consider the single-layered case. Choose a point $x \in V_{pr_1 r_l q}$ with underlying curve $\Sigma$. Recall that vectors in $O_{pq}$ are $\eta \in H^0(Y) \otimes \ov{H^0(L)_0}$ where $Y \to \Sigma$ is a Hermitian vector bundle and $L \to \Sigma$ is a holomorphic line bundle. Let $\Sigma_{r_1 r_l}\subset \Sigma$ be the union of components ``between'' $r_1$ and $r_l$. Then a vector in the intersection \eqref{obundle_intersection} is $\eta\in H^0(Y) \otimes \ov{ H^0(L)_{r_1 r_l}}$ where $H^0(L)_{r_1 r_l}\subset H^0(L)_0$ is the subspace of holomorphic sections supported in $\Sigma_{r_1 r_l}$ which vanish at the corresponding two nodes. On the other hand, recall $Y = \ov{\rm Hom} (\iota_F^* T\mb{CP}^d, u^* TM)\otimes \iota_F^* {\mc O}(k)$. As the underlying curve inside $\mb{CP}^d$ is in normal position, there is a corresponding fan $W_{x, pr_1}, W_{x, r_1 r_l}, W_{x, r_l q}$ in $\mb{C}^{d+1}$ in the normal position. The condition $\eta\in O_{pr_l\to pq}$ implies that $\eta$ vanishes on the normal direction of $W_{x, pr_1} + W_{x, r_1 r_l}$; the condition $\eta \in O_{r_1 \to pq}$ implies that $\eta$ vanishes on the normal direction of $W_{x, r_1 r_l} + W_{x, r_l q}$. Therefore, $\eta$ vanshes on the normal direction of $W_{x, r_1 r_l}$. By the way we define the inclusion $O_{r_1 r_l \to pq} \hookrightarrow O_{pq}$, we see that $\eta\in O_{r_1 r_l \to pq}$. This proves \eqref{obundle_intersection} in the single layered case. The multi-layered case, more generally, can be proved by induction on the number of layers.

For \eqref{obundle_inclusion}, suppose $\beta = ps_1, \ldots, s_m q$. Then $s_1 \geq r_1$ and $s_m \leq r_l$. Hence each summand of $O_\beta^{(d)}$ i.e., one of $O_{ps_1 \to pq}^{(d)}$, $\ldots$, $O_{s_m q \to pq}^{(d)}$, is either contained in $O_{pr_l \to pq}^{(d)}$ or in $O_{r_1 q \to pq}^{(d)}$. 
\end{proof}

\subsection{Stabilization maps}\label{subsection55}

Now we state our proposition regarding the stabilization maps.

\begin{prop}
Fix $p< q$ in ${\mc P}^\floer$. There exist the following objects. 
\begin{enumerate}

\item For each $\alpha \in \bA_{pq}^\floer$, a $G_\alpha$-equivariant 
subbundle $F_{pq, \alpha} \subset E_{pq}|_{V_\alpha}$ which is complementary to $E_\alpha$ and carries a stratification by $G_\alpha$-invariant subbundles
\beqn
F_{\beta\alpha}\ \forall \beta\in \bA_{pq}^\floer, \alpha \leq \beta
\eeqn
such that 
\beqn
E_\alpha \oplus F_{\beta\alpha} = E_\beta|_{V_\alpha}.
\eeqn

\item A germ of $G_\alpha$-equivariant 
embedding
\beqn
\theta_{pq, \alpha}: {\rm Stab}_{F_{pq, \alpha}}(V_\alpha) \to \partial^\alpha V_{pq}
\eeqn
satisfying the following conditions.
\begin{enumerate}
\item Its $G_{pq}$-equivariantization is a germ of open embedding.

\item Its restriction to $V_\alpha$ (the zero section) coincides with the embedding $\iota_{pq,\alpha}: V_\alpha \to \partial^\alpha V_{pq}$.

\item For each stratum $\beta$ between $\alpha$ and $pq$, one has
\beq\label{stabilization_property}
\theta_{pq, \alpha} \left( {\rm Stab}_{F_{\beta\alpha}} (V_\alpha) \right) \subset \partial^\alpha V_\beta.
\eeq
\end{enumerate}
\end{enumerate}
\end{prop}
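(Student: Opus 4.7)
The plan is to build the difference bundles $F_{\beta\alpha}$ and the stabilization germ $\theta_{pq,\alpha}$ from two independent ingredients that live, respectively, in the obstruction bundles $O_{pq}^{(d)}$ and the quadratic forms bundles $Q_{pq}$. Concretely, the $Q$-direction has already been handled globally on the auxiliary moduli spaces: Proposition \ref{prop523} exhibits $\partial^\delta B_d$ as (the image of) $Q_{d,\delta}^\sim \to B_\delta^\sim$ via the map $\rho_\delta^\sim$, and this splits off $G_\delta$-equivariantly as $Q_d = Q_\delta \oplus Q_{d,\delta}$. Pulling this decomposition back along the forgetful map $\pi_{pq}:V_{pq}\to B_{pq}$ of \eqref{eqn:pi-V-B} gives the $Q$-component of the scaffolding: set $F_{\beta\alpha}^Q := \pi_{pq}^* Q_{\delta(\beta),\delta(\alpha)}|_{V_\alpha}$, which is automatically $G_\alpha$-equivariant, satisfies the cocycle identity \eqref{eqn:sca-split} on the $Q$-side, and extends $V_\alpha \subset \partial^\alpha V_{pq}$ to an open neighborhood in the $Q$-direction via $\rho_{\delta(\alpha)}^\sim$.

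For the $O$-component, the strategy is to fix a single $G_\alpha$-invariant Hermitian inner product on $O_{pq}^{(d)}|_{V_\alpha}$ (produced by averaging any smooth inner product over the compact group $G_\alpha$) and to define $F_{\beta\alpha}^O$ as the orthogonal complement of $O_\alpha^{(d)}$ inside $O_\beta^{(d)}|_{V_\alpha}$. The resulting family is $G_\alpha$-equivariant by construction, and satisfies $F_{\beta\alpha}^O \subseteq F_{\gamma\alpha}^O$ whenever $\alpha \leq \beta \leq \gamma$ by the inclusion \eqref{obundle_inclusion} of Lemma \ref{obundle_property}. The identity \eqref{obundle_intersection} in the same lemma is exactly what is needed to verify the cocycle decomposition $F_{\gamma\alpha}^O = F_{\beta\alpha}^O \oplus F_{\gamma\beta}^O|_{V_\alpha}$ for triples $\alpha \leq \beta \leq \gamma$. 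Setting $F_{\beta\alpha} := F_{\beta\alpha}^O \oplus F_{\beta\alpha}^Q$ and $F_{pq,\alpha} := F_{pq,\alpha}^O \oplus F_{pq,\alpha}^Q$ then gives a $G_\alpha$-equivariant splitting $E_\beta|_{V_\alpha} = E_\alpha \oplus F_{\beta\alpha}$ as required.

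To construct the germ $\theta_{pq,\alpha}:{\rm Stab}_{F_{pq,\alpha}}(V_\alpha) \to \partial^\alpha V_{pq}$, one combines two geometric operations which commute because they act on complementary directions. In the $Q$-direction one uses the $G_{pq}$-equivariant diffeomorphism $\rho_{\delta(\alpha)}^\sim: Q_{d,\delta(\alpha)}^\sim \cong \partial^{\delta(\alpha)} B_d$ from Proposition \ref{prop523}, lifted fiberwise to the thickened moduli; this realizes a neighborhood of $V_\alpha$ along normalization/framing-change directions. In the $O$-direction, a vector $\eta' \in F_{pq,\alpha}^O \subset O_{pq}^{(d)}|_{V_\alpha}$ is added to the existing perturbation $\eta$ carried by a point of $V_\alpha$, producing a framed configuration in $\partial^\alpha V_{pq}$ whose underlying map solves the perturbed equation \eqref{eqn:floer-thicken} on nearby fibers via the implicit function theorem (this is where the fiberwise regularity of Lemma \ref{lem:fiberwise-smooth} together with the regularity of Theorem \ref{regularity2} is used). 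That the two operations commute and assemble into a single equivariant germ follows from the product construction in Subsection \ref{subsection54}, in particular the commutative diagrams \eqref{diag-V} and \eqref{diag-E}, which guarantee that the framing-change and section-extension operations are compatible with the left $G_{pq}$-equivariantization.

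It remains to verify that the $G_{pq}$-equivariantization is a germ of open embedding and that the nesting \eqref{stabilization_property} holds. The first follows by a dimension count plus injectivity: on the one hand $\dim \partial^\alpha V_{pq} = \dim V_\alpha + {\rm rank}\, F_{pq,\alpha}$ by the way $F_{pq,\alpha}$ complements $E_\alpha$ in $E_{pq}|_{V_\alpha}$, and injectivity near the zero section is immediate from the fact that $\rho_\delta^\sim$ is a diffeomorphism and that $\eta \mapsto \eta + \eta'$ is injective in $\eta'$. The second is automatic from our inductive definition of $F_{\beta\alpha}$ as a subbundle of $F_{pq,\alpha}|_{V_\alpha}$ and from the corresponding nesting statement for $\rho_\delta^\sim$ that already holds in $B_d$. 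The main obstacle in this whole construction is really the compatibility of the $O$-side complements across all intermediate strata $\alpha \leq \beta \leq pq$ simultaneously; what makes it work is that a single averaged Hermitian inner product on $O_{pq}^{(d)}|_{V_\alpha}$ produces all the $F_{\beta\alpha}^O$ at once, and Lemma \ref{obundle_property} is precisely the algebraic input ensuring that the resulting subbundles are nested and split additively in the way demanded by \eqref{eqn:sca-split}.
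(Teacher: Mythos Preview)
Your construction of the difference bundles $F_{\beta\alpha}$ is essentially the paper's: an averaged inner product on the $O$-part gives the orthogonal complement, and the $Q$-part is the fixed complement ${\bm Q}_{d,\delta}$. This is fine.

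The construction of $\theta_{pq,\alpha}$, however, has real gaps. You propose to handle the $Q$- and $O$-directions separately and then assert that ``the two operations commute'', citing diagrams \eqref{diag-V} and \eqref{diag-E}. Those diagrams encode associativity of the product embeddings $V_\alpha \hookrightarrow \partial^\alpha V_{pq}$ across different strata; they say nothing about commutativity of a framing-change correction with a perturbation-addition correction. In fact these corrections do not commute in any obvious sense: transforming by $I+\rho_h$ changes the thickened equation (which is not $G_{pq}^{\mb C}$-invariant), and adding $\eta'$ also changes it; each requires correcting $u$. The paper avoids this by defining a single approximate solution map $[\Sigma,u,\mu,\eta_\alpha,\eta_{pq,\alpha},h_{pq,\alpha}] \mapsto (I+\rho_{h_{pq,\alpha}})[\Sigma,u,\mu,\eta_\alpha+\eta_{pq,\alpha}]$ that simultaneously moves in both directions, and then applies one implicit-function-theorem correction using a continuous family of right inverses.

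Two further gaps concern the properties you claim for $\theta_{pq,\alpha}$. First, ``dimension count plus injectivity'' does not give openness of the equivariantization: you need local surjectivity, which the paper obtains from the gluing theorem (the implicit function theorem gives surjectivity fiberwise over $B_d$, and the $Q$-direction covers a neighborhood of $B_{\delta(\alpha)}^\sim$ in $\partial^{\delta(\alpha)}B_d$). Second, the nesting \eqref{stabilization_property} is not ``automatic from the definition of $F_{\beta\alpha}$ as a subbundle'': the stabilization map involves the IFT correction, and you must know that the correction of an approximate solution lying in $\partial^\alpha V_\beta$ remains in $\partial^\alpha V_\beta$. The paper secures this by choosing the right inverses to be block upper-triangular with respect to the filtration by levels, so that the correction never leaves the $\beta$-layer.
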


In fact we would like the maps constructed to have better properties. The discussion here is to showcase the analytical part of the construction. The above proposition is enough to establish the construction at this moment, i.e., Theorem \ref{pre-Kuranishi}.

\subsubsection{The difference bundle}

We first construct the bundle appeared in the stabilization map. Recall that $E_{pq} \to V_{pq}$ is naturally decomposed into $O_{pq}$ and $Q_{pq}$ where the latter is a trivial bundle. The embedding $\iota_{pq, \alpha}: V_\alpha \to \partial^\alpha V_{pq}$ is covered by a bundle embedding $O_\alpha \to O_{pq}|_{\partial^\alpha V_{pq}}$. We can take a $G_{pq}$-invariant inner product on $O_{pq}$ and consider the orthogonal complement $O_{pq, \alpha}:= (O_\alpha)^\bot$. On the other hand, we have already chosen a good complement of $Q_\alpha \subset Q_{pq}$, denoted by $Q_{pq, \alpha}$ (which is trivial). Define 
\beqn
F_{pq, \alpha}:= O_{pq, \alpha}\oplus Q_{pq, \alpha}.
\eeqn

\subsubsection{Analytical setup}

As expected, the stabilization map is constructed using the infinite-dimensional implicit function theorem in a global fashion. We need a global analytical setup for this operation.

Fix $pq$ and abbreviate $d = d_{pq}$. For each point $\uds x \in B_d$, its fiber in the universal curve is a well-defined prestable cylinder $\Sigma_{\uds x}$ over which there is a holomorphic map $\uds \mu: \Sigma_{\uds x} \to \mb{CP}^d$. Then there is a fiber bundle over $\Sigma_{\uds x} \times M$ denoted by
\beqn
E_{\uds x}^{(d)} \to \Sigma_{\uds x} \times M.
\eeqn
More explicitly, in the single-layered case,
\beq\label{fiberwise_bundle}
E_{\uds x}^{(d)} = \ov{\rm Hom} ( \uds \mu^* T\mb{CP}^d, TM ) \otimes \uds \mu^* {\mc O}(k) \otimes \ov{ H^0( \uds \mu^* {\mc O}(k))_0}.
\eeq
If we fix the point $\uds x$, then the fiber $\pi_{pq}^{-1}(\uds x)$ is a subset of the space of sections $(u, \eta): \Sigma \to E_{\uds x}^{(d)}$ where $u: \Sigma \to M$ is a map and $\eta$ is a section of $E_{\uds x}^{(d)}|_{{\rm graph} (u)}$. 

Now fix a positive integer $l$. Let ${\mc C}^l(x):= C^l(\Sigma, E_{\uds x}^{(d)})$ be the space of $C^l$-sections $(u, \eta): \Sigma \to E_{\uds x}^{(d)}$. Here we impose the asymptotic constraints at nodes and marked points to periodic orbits. Take the union over $B_d$, we obtained a well-defined topological space, denoted by ${\mc C}^l_{pq}$. It is also stratified by $\bA_{pq}^\floer$. Now notice that one has a $G_{pq}$-action on ${\mc C}^l_{pq}$ and an invariant subspace 
\beqn
V_{pq} = V_{pq}^{(d)} \subset {\mc C}^l_{pq}.
\eeqn
In fact, the complexification $G_{pq}^{\mb C}$ acts on ${\mc C}_l^{pq}$ by transforming the underlying points $\uds x$ and the maps $(u, \eta)$, although $V_{pq}$ is not $G_{pq}^{\mb C}$-invariant. 

Now we consider the difference between $V_\alpha$ and $\partial^\alpha V_{pq}$. We regard both as subspaces of the infinite-dimensional object, i.e. 
\beqn
V_\alpha \subset \partial^\alpha V_{pq} \subset \partial^\alpha  {\mc C}_{pq}^l.
\eeqn
We define an {\bf approximate solution map}
\beq\label{eqn:app-solution}
\theta_{pq, \alpha}^{\rm app}: {\rm Stab}_{F_{pq, \alpha}} (V_\alpha) \to \partial^\alpha {\mc C}_{pq}^l.
\eeq
by
\beqn
[\Sigma, u, \mu, \eta_\alpha, \eta_{pq, \alpha}, h_{pq, \alpha}] \mapsto ({\rm Id} + \rho_{h_{pq, \alpha}} ) [\Sigma, u, \mu, \eta_\alpha + \eta_{pq, \alpha}].
\eeqn
Here $\rho_{h_{pq, \alpha}}$ is the upper-triangular matrix defined in the proof of Proposition \ref{prop523}. The matrix ${\rm Id} + \rho_{h_{pq, \alpha}}$ acts on the framing map $\uds{\mu}: \Sigma \to {\mb CP}^d$ and the obstruction sections. This is a $G_\alpha$-equivariant map and satisfies
\beqn
\theta_{pq, \alpha}^{\rm app} \left( {\rm Stab}_{F_{\beta\alpha}} (V_\alpha) \right) \subset \partial^\beta {\mc C}_{pq}^l.
\eeqn
However, it is not necessarily an exact solution, because the extra $\eta_{pq, \alpha}$ can perturb the equation and the equation is not invariant under the complexification of $G_{pq}$. We need to use right inverses to correct the solution.

\begin{rem}
When we construct stabilization maps inductively, the approximate solution map may not be just the linear addition in the fiber direction. In fact we may also need to deform the map $u$.
\end{rem}

We first discuss the notion of linearizations of the defining equation of the thickened moduli space more carefully. Fix a real number $a > 2$. Fix an element $x \in V_{pq}$. 

\begin{defn}
Given $\uds x \in B_d$, one has the Banach manifold ${\mc B}_{pq, \uds x}$ of $W^{1,a}$-sections $(u, \eta): \Sigma_{\uds x} \to E_{\uds x}^{(d)}$ which are asymptotic to prescribed periodic orbits and a Banach bundle ${\mc E}_{pq, \uds x} \to {\mc B}_{pq, \uds x}$ of corresponding inhomogeneous terms of class $L^a$. Here the Banach norms are defined with respect to the cylindrical metric on cylindrical components and arbitrary smooth metric on spherical components. 

For each point $x \in \pi_{pq}^{-1}(\uds x)$, the linearization of the defining equation at $x$ is a Fredholm operator
\beq
D_{pq, x}: T_x {\mc B}_{pq, \uds x} \to {\mc E}_{pq, \uds x}|_x.
\eeq
By the regularity statement Proposition \ref{prop525}, the linearizations are always surjective in our setting. Then we can talk about bounded right inverses. A right inverse of $D_{pq, x}$ is denoted by $T_{pq, x}$. When $pq$ is understood, we use $D_x$ and $T_x$ instead.
\end{defn}

To obtain global stabilization maps using the implicit function theorem, we need to choose a family of right inverses to the linearization. Obviously, we need a continuity condition. We define the notion of continuity by comparing with the local families obtained from the standard gluing procedure. Let us briefly recall the gluing construction in the context of pseudoholomorphic curves which should be most familiar to the reader. Given a nodal $J$-holomorphic curve $(\Sigma, u)$ with a stable domain $\Sigma$ therefore finite automorphism group, there exists a local universal unfolding of the domain $\Sigma$. There is also a way to identify any compact subset of the nodal $\Sigma$ away from the nodes with a compact subset of nearby fibers. Choose a right inverse $T$ at the point represented by $(\Sigma, u)$. Then using such identifications and certain cut-and-paste procedure, one can obtain a family of approximate right inverses for nearby solutions. For details, the reader could consult \cite[Equation (C.7.7)]{pardon-VFC} for instance. These approximate right inverses determine exact right inverses which have the same images, as one could read off from \cite[Equation (C.7.30)]{pardon-VFC}. If $\zeta$ is a parameter in a neighborhood of the point represented by $(\Sigma, u)$, then we denote by $T_\zeta^{\rm glue}$. The construction further depends on some auxiliary data including Riemannian metrics which define the parallel transport to compare the deformation space between the central curve and nearby curves, and a cut-off function which is needed for the construction of the pre-gluing map \cite[Equation (C.7.9)]{pardon-VFC}. These data are referred to as a {\bf gluing profile}.

Similar construction applies to our current situation for the thickened moduli spaces $V_{pq}$. Indeed, as the spaces $B_d$ are locally isomorphic to the moduli space $\ov{\mc M}{}_{0, d'+2}^{\mb R}$ for some $d' > 0$,the space of stable marked cylinders we can compare nearby fibers of the fiberwise Banach manifolds and identify compact subsets away from nodes with subsets of nearby fibers. The corresponding gluing construction can be performed in a similar way. The reader could consult Proposition \ref{prop:c1loc} for more arguments along this line.

\begin{defn}[Continuous family of right inverses]
A family of right inverses $\{ T_x \}_{x \in V_{pq}}$ is said to be {\bf continuous} at $x$ if the following is true. On one hand, for each $x \in V_{pq}$ and any representative $\tilde x = (\Sigma, u, \mu, \eta)$, the restriction of $T_x$ to each irreducible component of the domain defines a right inverse for the restriction of the linearized Cauchy--Riemann operator. Then given a gluing profile for $\tilde{x}$, these right inverses induce a right inverse $T_\zeta^{\rm glue}$ for any $\zeta$ near $\tilde{x}$. On the other hand, $T_x$ induces a family of right inverses $T_\zeta$ over this open neighborhood. Then for each compact set $Z \subset \Sigma \setminus \{ {\rm nodes} \}$ there holds
\beqn
\lim_{\zeta \to 0}  \| T_\zeta^{\rm glue} - T_\zeta \|_{Z} = 0.
\eeqn
Here we use the operator norm on the domain $Z$. 
\end{defn}

\begin{rem}
We left to the reader to check that the above notion of continuity is intrinsic, i.e., independent of the choice of the gluing profile. In fact one only needs to check that the family of right inverse obtained from one gluing profile is continuous with respect to another gluing profile. 
\end{rem}

\begin{lemma}
For each pair $p< q$ in ${\mc P}^\floer$, there exists a $G_{pq}$-equivariant continuous family of right inverses $T_x$ for $x \in V_{pq}$.
\end{lemma}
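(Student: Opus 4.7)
The plan is to construct the family $\{T_x\}_{x \in V_{pq}}$ by first producing local continuous families via the gluing profile, patching them with a partition of unity, and then averaging over $G_{pq}$. The key structural fact is that the set of bounded right inverses of a surjective Fredholm operator is an affine space modeled on bounded operators into the kernel, so convex combinations of right inverses are again right inverses.

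First, for each $x_0 \in V_{pq}$ I would fix a representative and a gluing profile; by regularity, pick any right inverse $T_{x_0}^{(0)}$ for $D_{x_0}$. The gluing profile produces, for $\zeta$ in a neighborhood $U_{x_0}$, an approximate right inverse $T_\zeta^{(0),\rm glue}$ obtained by cutting off $T_{x_0}^{(0)}$ onto each irreducible component and parallel transporting. For $\zeta$ close to $x_0$ the operator $D_\zeta T_\zeta^{(0),\rm glue}$ is a small perturbation of the identity, so
\[
T_\zeta^{(x_0)} := T_\zeta^{(0),\rm glue} \bigl( D_\zeta T_\zeta^{(0),\rm glue}\bigr)^{-1}
\]
is an exact right inverse. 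The dependence on $\zeta$ is continuous in the operator norm, and by construction $\|T_\zeta^{(x_0)} - T_\zeta^{(x_0),\rm glue}\|_Z \to 0$ on any compact $Z \subset \Sigma \setminus \{\text{nodes}\}$ as $\zeta \to x_0$; this is exactly the continuity condition at $x_0$ with respect to that gluing profile, and one checks that changing the profile only alters $T_\zeta^{(0),\rm glue}$ by an operator whose restriction to $Z$ is $o(1)$.

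Next, since $\overline{V_{pq}^{\rm compact}}$, any neighborhood of $S_{pq}^{-1}(0)$ relevant for the Kuranishi chart, is compact modulo $G_{pq}$, cover it by finitely many $G_{pq}$-invariant open sets $\widetilde U_i := G_{pq} \cdot U_{x_i}$. Choose a $G_{pq}$-invariant partition of unity $\{\rho_i\}$ subordinate to this cover (obtained from an arbitrary one by averaging over $G_{pq}$). On $\widetilde U_i$ define $T^{(i)}_x$ by extending $T_x^{(x_i)}$ using the $G_{pq}$-action in the formal sense $T^{(i)}_{g\cdot\zeta} := g \cdot T^{(x_i)}_\zeta \cdot g^{-1}$ for $\zeta \in U_{x_i}$, $g \in G_{pq}$; this is well-defined on each orbit because $G_{pq}$ acts by bundle isomorphisms intertwining the linearizations (Lemma~\ref{lem:fiberwise-smooth} and its extension). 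Set
\[
T_x^{(1)} := \sum_i \rho_i(x)\, T^{(i)}_x.
\]
This is a bounded right inverse at each $x$ (as a convex combination of right inverses), depends continuously on $x$, and satisfies the continuity condition with respect to any gluing profile because each summand does and the $\rho_i$ are continuous. Finally, average over the compact group: $T_x := \int_{G_{pq}} g^{-1} T^{(1)}_{g\cdot x} g\, dg$ with respect to normalized Haar measure. Again this is a convex combination of right inverses, hence a right inverse; it is $G_{pq}$-equivariant by construction; and averaging preserves the $C^0_{\rm loc}$ convergence to gluing-profile families since the $G_{pq}$-action is continuous.

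The main subtle point is verifying the continuity condition survives the averaging: one needs that for any gluing profile at $x$, the displayed $L^2$-type bound $\|T_\zeta^{\rm glue} - T_\zeta\|_Z \to 0$ is preserved after taking the Haar integral. This reduces to uniform estimates on how the gluing profile transforms under $g \in G_{pq}$, which follows from compactness of $G_{pq}$ and the fact that $g$ acts on the universal curve by a biholomorphism of $\mb{CP}^d$ fixing the base-point framing, hence preserves the cut-off functions and parallel transport identifications up to uniformly controlled data. This compactness/uniformity step is the only place where one needs to be careful; everything else is a formal consequence of surjectivity and the affine structure on the space of right inverses.
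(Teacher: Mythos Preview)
Your proposal is correct and follows essentially the same approach as the paper: construct local continuous families via the gluing profile, patch by a partition of unity using the affine structure on right inverses, and average over $G_{pq}$ for equivariance. Your intermediate step of first extending each local family $G_{pq}$-equivariantly before patching is unnecessary (and has a minor well-definedness issue at points with nontrivial stabilizer), but the final Haar-averaging step you perform anyway suffices by itself, which is exactly what the paper does.
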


\begin{proof}
It suffices to construct a continuous family of right inverses without the equivariance condition as we can average over the symmetry group $G_{pq}$. Moreover, as any convex linear combination of two right inverses is still a right inverse, we only needs to construct continuous families locally and patch together using a continuous partition of unity. From the definition we know the local family of right inverses given by a gluing construction is locally continuous. 
\end{proof}

Let us fix a choice of a $G_{pq}$-equivariant continuous family of right inverses $T_x$ for $x \in V_{pq}$. Now using the product construction, for $\alpha  = pr_1 \cdots r_l q \in \bA_{pq}^\floer$, one obtains a fiberwise Banach manifold ${\mc B}_{\alpha, \uds x}$ for each $\uds x\in B_{\delta(\alpha)}$ and corresponding linearized Cauchy--Riemann operator $D_{\alpha, x}$ at each $x \in \pi_\alpha^{-1}(\uds x)$. A chosen family of right inverses $T_{pr_1}, \ldots, T_{r_l q}$ provides a family of right inverse $T_{\alpha, x}$ for all $x \in V_\alpha$. Now as we have another layer of thickening, and we would like to solve the corresponding equation for this ``thicker'' problem, we need to stabilize the right inverse. 

First, notice that we have the natural inclusions
\beqn
\xymatrix{
{\mc E}_{\alpha, \uds x} \ar[r] \ar[d] & {\mc E}_{pq, \uds x} \ar[d]\\
{\mc B}_{\alpha, \uds x} \ar[r]        & {\mc B}_{pq, \uds x} }.
\eeqn
The differences are given by deformations/obstructions in the difference between the two bundles over $M$, $E_{\alpha, \uds x}$ and $E_{pq, \uds x}$. By looking at the thickened equation \eqref{eqn:floer-thicken}, we can see that the linearization at $x \in V_\alpha \subset \partial^\alpha V_{pq}$ is block upper-triangular
\beqn
D_{pq, x} = \left[ \begin{array}{cc}  D_{\alpha, x} & * \\   0 &  D_{pq, \alpha, x} \end{array} \right]
\eeqn
where $D_{pq, \alpha, x}$ is a surjective linear Cauchy--Riemann operator. Hence we can extend $T_{\alpha, x}$ to a block-upper triangular right inverse
\beqn
T_{pq, x} = \left[ \begin{array}{cc} T_{\alpha, x} & *  \\ 0 & T_{pq, \alpha, x} \end{array} \right].
\eeqn
In fact this only depends on the choice of the right inverse $T_{pq, \alpha, x}$ and we can make this choice continuously depending on $x$.

The last ingredient we need for applying the implicit function theorem is a choice of local charts in the Banach manifold and local frames of the Banach vector bundle. In fact there is a natural one coming from the Riemannian metric on $M$. For each $\uds x \in B_d$, the fiberwise Banach manifold is a space of maps into the total space of a vector bundle $E_{pq, \uds x} \to M$ which is essentially a tensor bundle over $M$. Hence the parallel transport using the Levi--Civita connection associated to the metric $\omega(\cdot, J)$ induces local charts of the fiberwise Banach manifold. 

We would like to say that so far the right inverse and the local charts/frames are chosen for points which are in normal position. If we go slightly away from the locus of normal position, the parallel transport along shortest geodesics automatically produce a family of local charts/frames as well as right inverses.

\subsubsection{Definition of the stabilization map}

Now we can describe the stabilization map. Let us temporarily denote by ${\mc F}_{pq}$ the nonlinear Cauchy--Riemann operator \eqref{eqn:floer-thicken}. First, apply the approximate solution map \eqref{eqn:app-solution}
\beqn
\theta_{pq, \alpha}^{\rm app}: {\rm Stab}_{F_{pq, \alpha}}(V_\alpha) \to \partial^\alpha {\mc C}_{pq}^l.
\eeqn
(which is in fact only defined in a small disk bundle in $F_{pq, \alpha}$). Then there exists $\epsilon>0$ such that for all $x \in V_\alpha$ and $(\eta_{pq, \alpha}, h_{pq,\alpha}) \in F_{pq, \alpha}|_x$ with $\| \eta_{pq, \alpha} \| + \| h_{pq, \alpha}\| < \epsilon$, ${\mc F}_{pq}(\theta^{\rm app}_{pq, \alpha}(x, \eta_{pq, \alpha}, h_{pq, \alpha}))$ is very small. Then using the local charts on the fiberwise Banach manifold induced from the Levi--Civita connection centered at $x$ and the corresponding right inverse of the linear operator, one obtains an exact solution to ${\mc F}_{pq}(x) = 0$ which is sufficiently close to the approximate solution and its difference with the approximate solution is contained in the image of the right inverse. 

There are several things to check. First, we need to show that the map is continuous. In fact, this is due to the continuity of the family of right inverses and approximate solutions and how Gromov topology on $V_{pq}$ is defined. Second, we show that the map is injective. This is a consequence of the implicit function theorem and the fact that (on each fiber over $B_{pq}$), the approximate solution is transverse to the image of the right inverse we use. Third, we show that the map is onto an open subset. Indeed, from the construction we know that the stabilization map is onto an open subset on each fiber over $B_d$. As the image also covers a neighborhood of $B_{\delta(\alpha)}^\sim$ inside $\partial^{\delta(\alpha)} B_d$, the stabilization map is onto an open subset. In fact, this is exactly the content of the gluing theorem presented in \cite[Appendix C]{pardon-VFC}. Again, the reader could refer to Proposition \ref{prop:c1loc} to see how to implement the aforementioned theorem into our setting. Lastly, it satisfies the stratum-preserving property \eqref{stabilization_property} which can be seen from the fact that the right inverses we used are all block upper-triangular. 

\begin{rem}
It is natural to require the stabilization map to be compatible with compositions of chart embeddings. In fact to achieve this goal one needs to make a number of compatible choices, including showing that such compatible choices exist. The choices include a system of compatible inner products on the obstruction bundles, a system of stabilization maps, and a system of bundle identifications. The details are given in Appendix \ref{appendixc}. 
\end{rem}

\subsection{Weak K-chart lift}

We summarize the construction we have done. The following theorem is a more comprehensive version of Theorem \ref{thm:flow-chart}.

\begin{thm}\label{pre-Kuranishi}
There exist the following objects.
\begin{enumerate}
    \item A weak K-chart lift of $T^\floer$ (see Definition \ref{Klift_defn}), consisting of a collection of K-charts $K_{pq} = (G_{pq}, V_{pq}, E_{pq}, S_{pq}, \psi_{pq})$ and a collection of weak K-chart embeddings
    \beqn
    {\bm \iota}_{prq}: K_{pr}\times K_{rq} \wembed \partial^{prq} K_{pq}
    \eeqn
    (which satisfy the required conditions).
    
    \item A collection of $G_{pq}$-equivariant stratified maps
    \beqn
    \vcenter{ \xymatrix{ V_{pq} \ar[r]^{\pi_{pq}} \ar[d] & B_{d_{pq}}   \ar[d]\\
                \bA_{pq}^\floer \ar[r]_\delta & \bA_{d_{pq}} } } .
    \eeqn
\end{enumerate}
Moreover, the collection of maps $\pi_{pq}$ satisfy the following conditions. 
\begin{enumerate}
    \item Every fiber of $\pi_{pq}$ has a canonical smooth structure and the restriction of $E_{pq}$ to each fiber of $\pi_{pq}$ has a canonical smooth bundle structure. Moreover, each $g \in G_{pq}$ induces diffeomorphisms between fibers and smooth bundle isomorphisms between the fiberwise restrictions of the obstruction bundles.
    
    \item The collection of maps $\pi_{pq}$ are compatible in the following sense. For each $\alpha = pr_1 \cdots r_l q \in \bA_{pq}^\floer$, the products of the projections induces a map 
    \beqn
    \pi_\alpha: V_\alpha \to B_{\delta(\alpha)}.
    \eeqn
    Then the following diagram commutes.
    \beqn
    \vcenter{ \xymatrix{ V_\alpha \ar[r]^{\vartheta_\alpha} \ar[d]_{\pi_\alpha} &  \partial^\alpha V_{pq}\ar[d]^{\pi_{pq}} \\
                         B_{\delta(\alpha)} \ar[r]_{\zeta_{\delta(\alpha)}} & \partial^{\delta(\alpha)} B_{d_{pq}}}}.
    \eeqn
\end{enumerate}
\end{thm}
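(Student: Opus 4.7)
The plan is to assemble the theorem from the pieces already developed in Sections 5.2--5.5, using the multi-layered thickening as the definition of the K-charts and the product construction as the source of the weak K-chart embeddings. First, I would apply Proposition \ref{prop525} to fix, once and for all, an increasing sequence of integers $k_1<k_2<\cdots$ such that, for every pair $p<q$ in ${\mc P}^\floer$, the multi-layered thickening $V_{pq}=V_{pq}(k_1,\ldots,k_{d_{pq}})$ is regular in an invariant neighborhood of $S_{pq}^{-1}(0)$. After shrinking to such a neighborhood, the quintuple
\beqn
K_{pq}=(G_{pq},V_{pq},E_{pq},S_{pq},\psi_{pq})
\eeqn
from \eqref{eqn:k-chart-floer} is a topological K-chart of $\ov{\mc M}_{pq}$ with almost-free $G_{pq}$-action; the computation that $\psi_{pq}$ is a homeomorphism onto $\ov{\mc M}_{pq}$ and respects stabilizers was already carried out after Definition \ref{def:thick-mod}, so items (A) of Definition \ref{Klift_defn} and the ``singleton at $p=q$'' condition hold by inspection.

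Next, for each $p<r<q$, I would take the weak K-chart embedding ${\bm\iota}_{prq}=(\vartheta_{prq},\widehat\vartheta_{prq})$ produced by the two-factor product construction of Subsection 5.4, i.e.\ the equivariant map $\vartheta_{prq}:V_{pr}\times V_{rq}\to\partial^{prq}V_{pq}$ together with the bundle map $\widehat\vartheta_{prq}=\widehat\vartheta_{prq}^O\oplus\widehat\vartheta_{prq}^Q$ covering it. The images are precisely the loci $V_{prq}^\sim$ where the underlying stable cylinder sits in normal position and the perturbation vectors decompose as a sum of images from the two factors. The failure of ${\bm\iota}_{prq}$ to intertwine the Kuranishi sections stems entirely from the $Q$-component (different normalization of framings) as remarked after \eqref{eqn:theta-V}, which is why ``weak'' is needed here and will be repaired later in Section 6 by outer-collaring. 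Associativity for triples $p<r<s<q$ follows from the two commuting diagrams \eqref{diag-V} and \eqref{diag-E}, whose verifications were sketched using the associativity at the base level (Proposition \ref{associativity}) and at the $Q$-bundle level (Proposition \ref{prop:Q-gluing}), together with the fact that the superposition of the inhomogeneous terms $\eta$ and of the maps $u$ is manifestly associative.

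With the charts and embeddings in hand, condition (C) of Definition \ref{Klift_defn}---that the collection $(K_\alpha)_{\alpha\in\bA_{pq}^\floer}$ with the induced embeddings ${\bm\iota}_{\beta\alpha}$ form a weak K-chart presentation of $\ov{\mc M}_{pq}$---then follows formally from the two-factor associativity by an induction on $\dep(\alpha)$; the required ``stabilization map'' component ${\bm\theta}_{\beta\alpha}$ is supplied by the construction of Subsection \ref{subsection55}, and its germ-open image property uses the infinite-dimensional implicit function theorem applied to a continuous $G_{pq}$-equivariant family of right inverses to the linearized thickened Cauchy--Riemann operator (whose existence was established there via averaging and a partition of unity argument, together with Proposition \ref{prop525}). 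The stratum-preserving property \eqref{stabilization_property} is automatic from the block-upper-triangular form of the right inverses.

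Finally, the forgetful maps $\pi_{pq}:V_{pq}\to B_{d_{pq}}$ are defined as in \eqref{eqn:pi-V-B}, sending $[\Sigma,u,F,\eta]$ to the class of $\iota_F:\Sigma\to\mb{CP}^{d_{pq}}$; they are $G_{pq}$-equivariant and preserve strata (with underlying poset map $\delta$ from \eqref{eqn52xxx}) by construction of $V_{pq}$ and the conditions on $F$ in Definition \ref{defn:multi-thick}. The fiberwise smoothness of both $V_{pq}$ and $E_{pq}\to V_{pq}$, together with the smoothness of the $G_{pq}$-action restricted to fibers, is exactly the content of Lemma \ref{lem:fiberwise-smooth} combined with fiberwise regularity. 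The compatibility diagram with $\zeta_{\delta(\alpha)}$ follows because the product construction of $\vartheta_\alpha$ was defined precisely to cover the base-level gluing map $\zeta_{\delta(\alpha)}$ of Notation \ref{notation:B}: at each stage the framing on the glued configuration is built from the framings of the factors using the normalized evaluations, which is the defining recipe of $\zeta_{(d_0,d_1)}$ in \eqref{base_product}.

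I expect the main obstacle to be in making the chart embeddings and stabilization maps truly coherent across all depths in $\bA^\floer_{pq}$, rather than merely for codimension one faces. Concretely, one must verify that the inductively constructed stabilization maps ${\bm\theta}_{\beta\alpha}$ intertwine on overlaps $V_\alpha\subset V_{\beta_1}\cap V_{\beta_2}$, which requires compatible choices of $G$-invariant inner products on the obstruction bundles $O_{pq}^{(d)}$ (so that $F_{pq,\alpha}$ can be chosen satisfying \eqref{eqn:sca-split}), and compatible gluing profiles/right inverses so that the implicit-function-theorem outputs agree on intersections of stabilization neighborhoods. The structural input needed is precisely Lemma \ref{obundle_property}, which guarantees that the subbundles $O_\beta^{(d)}$ satisfy the intersection and sum identities necessary for a relative extension argument to succeed. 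The remaining routine checks (that ${\bm\iota}_{prq}$ is a homeomorphism onto its image with the required $G$-equivariant tubular neighborhood, that $\psi_{pq}$ matches compositions, and that ${\bm\iota}_{prq}$ covers the underlying maps of stratifying posets) follow directly from the constructions outlined above.
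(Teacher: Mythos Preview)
Your proposal is correct and follows essentially the same approach as the paper, which treats this theorem as a summary of the constructions already carried out in Sections 5.2--5.5 (the paper gives no separate proof beyond the sentence ``We summarize the construction we have done''). One minor point: your final paragraph about the coherence obstacle for the stabilization maps ${\bm\theta}_{\beta\alpha}$ is overcautious for \emph{this} theorem---the definition of weak K-chart presentation only asks that some stabilization map exist for each pair $\alpha\leq\beta$, with no cocycle condition among the ${\bm\theta}$'s themselves, so the single-stratum construction of Subsection \ref{subsection55} suffices here; the full coherent system you worry about is precisely the content of Proposition \ref{bigprop}, which is stated and proved separately afterward.
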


\subsection{Outer collaring}\label{subsection57}

We need to apply the outer-collaring construction in order to improve the weak K-chart lift constructed above to a K-chart lift and set space for the inductive smoothing procedure. Recall that the outer-collaring construction has been reviewed in Subsection \ref{subsection:outer_collar} in the topological category, including both the case of a single stratified space or a flow category, bimodule, etc. There is no more complexity to generalize to the case of the K-chart lift ${\mf K}^\floer$. Indeed, for each pair $p<q$, if we apply the outer-collaring construction to the thickened moduli space $V_{pq}$ (with a fixed width, say $1$), then one obtains a new $\bA_{pq}^\floer$-manifold, denoted by $V_{pq}^+$. The $G_{pq}$-action naturally extends to a $G_{pq}$-action on $V_{pq}^+$. The obstruction bundle $E_{pq}$ and the Kuranishi section $S_{pq}$ are also naturally extended to a bundle $E_{pq}^+\to V_{pq}^+$ and a section $S_{pq}^+: V_{pq}^+ \to E_{pq}^+$. The footprint map $\psi_{pq}: S_{pq}^{-1}(0)/G_{pq} \to \ov{\mc M}{}_{pq}^\floer$ is also canonically extended to a map 
\beqn
\psi_{pq}^+: (S_{pq}^+)^{-1}(0)/ G_{pq} \to (\ov{\mc M}{}_{pq}^\floer)^+
\eeqn
where (recall that) $(\ov{\mc M}{}_{pq}^\floer)^+$ is the outer-collaring of $\ov{\mc M}{}_{pq}^\floer$. Notice that the structure maps required for a K-chart lift of a flow category are also obtained among the new charts via outer-collaring and their properties are still satisfied. In summary, one obtains a K-chart lift $({\mf K}^\floer)^+$ of the outer-collared flow category $(T^\floer)^+$. 

In a similar way, one can apply the outer-collaring construction to the system of auxiliary moduli spaces. Each space $B_d$ becomes a new space $B_d^+$ and the projection maps $\pi_{pq}: V_{pq} \to B_{d_{pq}}$ are extended to $\pi_{pq}^+: V_{pq}^+ \to B_{d_{pq}}^+$ with listed properties in Theorem \ref{pre-Kuranishi} remain valid. 

What we need to take care of is the smoothness of the outer-collared auxiliary moduli spaces. Though $B_d$ is a smooth manifold with faces, it is not {\it a priori} clear whether $B_d^+$ has an automatically induced smooth structure or if the outer-collaring of the maps $\zeta_{(d_0, d_1)}: B_{d_0} \times B_{d_1} \to \partial^{(d_0, d_1)} B_{d_0+d_1}$ are still smooth. What we learned from the work of Fukaya--Oh--Ohta--Ono \cite{FOOO_smooth} is that moduli spaces of pseudoholomorphic curves have a stronger structure near corners which comes from their ``exponential decay estimates'' associated to the gluing construction. This kind of structure is formalized in \cite[Chapter 25]{FOOO_Kuranishi} under the name ``admissible smooth structure.'' In short, an admissible structure on a smooth manifold (or orbifold) with faces is an atlas of smooth charts such that the coordinate changes among them satisfy certain exponential decay estimates near the boundary and corner; moreover, the outer-collaring of an admissible smooth manifold or orbifolds automatically carries an admissible smooth structure. This argument applies to our situation of $B_d$. Indeed, $B_d$ comes from the moduli space of stable maps into $\mb{CP}^d$ which is an algebraic object. The exponential decay estimate holds automatically, for both the original moduli and the real blowup $B_d$. Therefore, the outer-collaring $B_d^+$ is still smooth and the embeddings
\beqn
\zeta_{(d_0, d_1)}^+: B_{d_0}^+\times B_{d_1}^+ \to B_{d_0+d_1}^+
\eeqn
are still smooth.

\subsubsection{Matching the Kuranishi sections}

The collection of global Kuranishi charts constructed in the previous section does not have matching Kuranishi maps over boundary or corners. Using the room constructed by the outer-collaring operation, we can connect these Kuranishi maps.

\begin{prop}
There exist a collection of $G_{pq}$-equivariant sections
\beqn
\check S_{pq}^+: V_{pq}^+ \to E_{pq}^+
\eeqn
satisfying the following conditions.
\begin{enumerate}
    \item The restriction of $\check S_{pq}^+$ to $V_{pq} \subset V_{pq}^+$ coincides with $S_{pq}: V_{pq} \to E_{pq}$.
    
    \item With respect to the direct sum decomposition $E_{pq}^+ = O_{pq}^+ \oplus Q_{pq}^+$, if we write $\check S_{pq}^+$ as $\check S_{pq}^{O, +}\oplus \check S_{pq}^{Q, +}$, then $\check S_{pq}^{O,+} = S_{pq}^{O, +}$. 
    
    \item There is a homeomorphism
    \beqn
    \psi_{pq}^+: (\check S_{pq}^+)^{-1}(0)/ G_{pq} \cong \ov{\mc M}{}_{pq}^\floer.
    \eeqn
    
    \item Let $\check \psi_{pq}$ be the corresponding extension of the footprint map. If we replace $S_{pq}^+$ by $\check S_{pq}^+$ and replace $\psi_{pq}^+$ by $\check \psi_{pq}^+$ for all $pq$, then the collection 
    \beqn
    ({\mf K}^\floer)^+:= \Big( (K_{pq}^+)_{p<q}, ({\bm \iota}_{prq}^+)_{p< r< q} \Big)
    \eeqn
    is a K-chart lift of the flow category $(T^\floer)^+$.
\end{enumerate}
\end{prop}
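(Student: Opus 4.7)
The plan is to proceed by induction on the energy difference $d_{pq} = \mc A^\floer(q) - \mc A^\floer(p)$, modifying only the $Q$-component of the Kuranishi section while leaving the $O$-component untouched. For the base case, when $\bA_{pq}^\floer$ is the singleton $\{pq\}$, there is no proper boundary stratum and one simply sets $\check S_{pq}^+ := S_{pq}^+$. For the inductive step, assume $\check S_{rs}^+$ has been constructed for all pairs with $d_{rs} < d_{pq}$, satisfying the four required properties. The construction in Subsection \ref{subsection54} shows that the $O$-component is automatically additive under the product embeddings $\widehat\vartheta_\alpha^O$, so the condition $\check S_{pq}^{O,+} = S_{pq}^{O,+}$ is consistent on every boundary stratum and the task reduces to constructing the modified $Q$-component.

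The $Q$-components on the boundary strata are already prescribed by the inductive hypothesis: on the image of $V_\alpha^+$ under the K-chart embedding $\vartheta_\alpha^+$, one requires $\check S_{pq}^{Q,+}$ to agree with the product $\check S_{pr_1}^{Q,+}\boxplus\cdots\boxplus\check S_{r_l q}^{Q,+}$ transported via $\widehat\vartheta_\alpha^Q$. Because the bundle $Q_{pq}$ is a trivial vector bundle (via its identification with ${\bm Q}_d^*$) and the embeddings $\widehat\zeta^Q_{\delta}$ assemble associatively by Proposition \ref{prop:Q-gluing}, these prescriptions are compatible on overlaps $\partial^\beta V_{pq}^+$ for $\alpha \leq \beta$. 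One then extends the prescription into the collar region $V_{pq}^+ \setminus V_{pq}$ by linear interpolation in the vector bundle $Q_{pq}^+$: at collar parameter $0$, use the original $S_{pq}^Q$; at collar parameter $-1$, use the prescribed product section; on the interior of the collar, use a smooth convex combination depending only on the collar parameters. The associativity of the outer-collaring (Lemma \ref{outer_product}) together with associativity of the $Q$-gluing ensures the interpolation is well-defined across multiple overlapping collars, and $G_{pq}$-equivariance is automatic since every ingredient is equivariant.

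It remains to verify the footprint condition and the K-chart embedding condition. The footprint map: on $V_{pq}$ the $O$- and $Q$-equations are unchanged, so $(\check S_{pq}^+)^{-1}(0)\cap V_{pq}/G_{pq}$ recovers $\ov{\mc M}{}_{pq}^\floer$ as before. On each collar piece $\partial^\alpha V_{pq}\times[-1,0]^{\bF_\alpha}$, the $O$-equation cuts out a copy of $\partial^\alpha\ov{\mc M}{}_{pq}^\floer$ fiberwise, while the interpolated $Q$-equation is satisfied along the entire collar because at both endpoints (and hence along the linear path between them) the respective Hermitian matrix vanishes on the image of the lower-dimensional zero loci---both the original and the product framing are already orthonormal there, so the interpolation is trivial on the zero locus. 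Gluing these contributions via the outer-collaring relations yields the desired homeomorphism with $(\ov{\mc M}{}_{pq}^\floer)^+$. The K-chart embedding condition for $\check{\bm \iota}_{prq}^+$ is then immediate from the boundary prescription of $\check S_{pq}^{Q,+}$.

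The main obstacle is the compatibility of the interpolation across corners of different codimensions, that is, making sure that the same $\check S_{pq}^{Q,+}$ is prescribed on $\partial^\beta V_{pq}\times[-1,0]^{\bF_\alpha\setminus\bF_\beta}$ regardless of whether one restricts from a depth-$\dep(\alpha)$ face first or a depth-$\dep(\beta)$ face first. This is where the associativity properties encoded in diagram \eqref{diag-E} and Proposition \ref{prop:Q-gluing} are essential; they guarantee that the inductively constructed sections $\check S_{rs}^{Q,+}$, when combined via the product embeddings for any factorization $\alpha = pr_1\cdots r_l q$, yield the same value, so the interpolation gluing data descend unambiguously along the identifications built into the outer-collaring of $V_{pq}$.
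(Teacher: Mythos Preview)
Your overall strategy (induct on $d_{pq}$, leave the $O$-component alone, and use the collar to interpolate the $Q$-component) matches the paper's. However, your execution differs from the paper's in a way that introduces a genuine gap, specifically in the footprint verification.

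You assert that on the zero locus ``both the original and the product framing are already orthonormal there, so the interpolation is trivial.'' This is false. Take a point in $V_{prq}^\sim$ coming from the product of two points in $S_{pr}^{-1}(0)$ and $S_{rq}^{-1}(0)$: each factor has an $L^2$-orthonormal frame on its own domain $\Sigma_{pr}$, $\Sigma_{rq}$. The combined frame on $\Sigma = \Sigma_{pr}\cup\Sigma_{rq}$, built by the recipe \eqref{base_product}, has $L^2$-Gram matrix
\[
H_F \;=\; \begin{pmatrix} I_{d_0+1} + \|g_0\|_{\Sigma_{rq}}^2\, a\,a^\dagger & 0 \\ 0 & I_{d_1} \end{pmatrix},
\]
where $a=(a_0,\ldots,a_{d_0})^T$ is the normalized evaluation. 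This is \emph{not} a multiple of the identity, so $S_{pq}^Q$ does \emph{not} vanish at the product zero locus. The two endpoint $Q$-sections therefore vanish on \emph{different} $G_{pq}$-slices of $\partial^{prq}V_{pq}$, and there is no reason for the zero locus of a convex combination to be a single $G_{pq}$-orbit per Floer trajectory. Your footprint homeomorphism with $(\ov{\mc M}{}_{pq}^\floer)^+$ is thus unproven. A secondary issue is that the product section is only defined on $V_{prq}^\sim\subsetneq\partial^{prq}V_{pq}$, so your ``value at $t=-1$'' is not defined on the full collar face; you do not say what $\check S_{pq}^{Q,+}$ is on the complement.

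The paper avoids both issues by interpolating the \emph{framing} rather than the section value: for $(x,t)$ in the collar with underlying map $\uds\mu$, one uses Proposition~\ref{prop523} to produce the transformation $I_{d_{pq}+1}+\rho_Q$ taking $\uds\mu$ to normal position, sets $\uds\mu^t=(I+\chi(t)\rho_Q)\uds\mu$, and then (for $t$ near $-1$, where $\uds\mu^t$ is in normal position and hence decomposes into two pieces) interpolates between the $L^2$-normalization on $\Sigma$ and the product normalization. Because this is defined for \emph{every} $x\in\partial^{prq}V_{pq}$, the section is globally defined, and because at each $t$ the condition is a genuine normalization of the frame, the zero locus at every collar level is a $G_{pq}$-slice.
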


\begin{proof}
The failure of the naively obtained outer-collaring to be a K-chart lift is that the normalization condition (i.e. the section $S_{pq}^{Q, +}: V_{pq}^+ \to Q_{pq}^+$) does not match with the products on the boundary. We need to interpolate between $S_{pq}^{Q, +}$ and the products on the collar parts. Here we only describe the situation near a codimension one stratum $prq$. Let $(x, t)\in V_{pq}^+$ be a point in the collar region $\partial^{prq} V_{pq}\times [-1, 0]$. Geometrically, $x$ is represented by a framed cylinder $(\Sigma, u, \mu, \eta)$ where $\Sigma = \Sigma_{pr}\cup \Sigma_{rq}$ and the map $\mu: \Sigma \to {\mc C}_{0, 2}(d_{pq})$ is represented by a frame 
\beqn
F = (f_0, \ldots, f_{d_{pq}}) \in (H^0( L_u, \Sigma))^{d_{pq}+1}.
\eeqn
Proposition \ref{prop523} tells us that the underlying map $\uds\mu: \Sigma \to \mb{CP}^{d_{pq}}$ can be transformed canonically to a map $\uds\mu^{\rm normal}: \Sigma \to \mb{CP}^{d_{pq}}$ via an invertible complex matirx of the form $I_{d_{pq}+1} + \rho_Q$. Choose a smooth cut-off function $\chi(t)$ which is equal to $1$ for $t$ near $-1$ and euqal to $0$ for $t$ near $0$. Then define 
\beqn
\uds\mu^t = (I_{d_{pq}+1} + \chi(t) \rho_Q ) \uds \mu: \Sigma \to \mb{CP}^{d_{pq}}.
\eeqn
Then $\uds \mu^t$ is in normal position for $t$ near $-1$ and corresponding to a frame
\beqn
F^t = (f_0^t, \ldots, f_{d_{pq}}^t)\in (H^0(L_u, \Sigma))^{d_{pq}+1}
\eeqn
(which is well-defined up to ${\mb C}^*$). For those $t$ near $-1$, the frame $F^t$ can be reconstructed from a frame on the component $\Sigma_{pr}$ and a frame on $\Sigma_{rq}$ and one can interpolate between the normalization using from the $L^2$-inner product on $\Sigma$ and the normalization using the product construction. The situation of the interpolation near deeper strata can be iterated by a routine induction argument.
\end{proof}

\subsubsection{Collar structure}

The outer-collaring provides an analogous collar structure as in the case of derived orbifold liftings. To save notations we would rather view the collar structure as being given by ``interior'' collars than ``exterior'' ones. Therefore, we have obtained a K-chart lift of a flow category over ${\mc P}^\floer$ such that each chart $K_{pq} = (G_{pq}, V_{pq}, E_{pq}, S_{pq})$ has a system of collars
\beq\label{eqn:collar-interior}
\vcenter{ \xymatrix{    E_{pq}|_{\partial^\alpha V_{pq}} \times [0, \epsilon)^{\bF_\alpha} \ar[rr]^-{\widehat \theta_{pq, \alpha}^{\rm collar}} \ar[d] &  &     E_{pq}\ar[d]\\
              \partial^\alpha V_{pq}\times [0, \epsilon)^{\bF_\alpha} \ar[rr]_-{\theta_{pq, \alpha}^{\rm collar}} & &   V_{pq} }},\ \forall \alpha \in \bA_{pq}^\floer
              \eeq
              which are compatible in a sense similar to the requirements of collar structures of a derived orbifold lift stated in Definition \ref{defn:collar}, where one needs to replace the notions for derived orbifold charts by the ones for K-charts, with extra care taken on the equivariance with respect to the Lie group actions. The readers are invited to fill in the details.

\subsection{Constructing scaffolding}\label{subsection59}

The stabilization map constructed previously describes the relation between a product chart and the corresponding stratum of a larger thickened moduli space. However, to inductively construct the FOP perturbations, we need to specify such stabilization maps in all such situations and we require that the stabilization maps are compatible in some natural way. Such a requirement was packaged in terms of the notion ``scaffolding'' for derived orbifold lifts of a flow category and in particular, in the smooth category (see Definition \ref{defn_scaffolding} and Definition \ref{scaffolding_2}). In our construction of the derived orbifold lift of $(T^\floer)^+$, we need a scaffolding-like structure to do smoothing. Hence, we first need such structures in the topological category, also in the equivariant setting but not in the orbifold setting. Below we state what we can construct in this setting; its proof is given in Appendix \ref{appendixc}. Just as \eqref{eqn:collar-interior}, we simplify the notations by pretending that the K-charts are equipped with interior collars. The reader should keep in mind that the following proposition holds only after taking the outer-collaring. In the sequel, the resulting structure from the following proposition will be referred to as a {\bf collared scaffolding}.

\begin{prop}\label{bigprop}
There exist the following objects. 
\begin{enumerate}
    \item For each $pq$ and each integer $d \geq d_{pq}$, a $G_{pq}$-invariant inner product on the bundle $O_{pq}^{(d)} \to V_{pq}$. To state the next class of objects, introduce the following notions. Let $O_{pq,\alpha}\subset O_{pq}|_{V_\alpha}$ be the orthogonal complement of $O_\alpha \to V_\alpha$ and let $Q_{pq, \alpha}\subset Q_{pq}|_{V_\alpha}$ be the orthogonal complement of $Q_\alpha\to V_\alpha$ (with resepct to the trivial inner product on $Q_{pq}$). Denote
    \beqn
    F_{pq, \alpha}:= O_{pq, \alpha}  \oplus  Q_{pq, \alpha}  \subset E_{pq}|_{V_\alpha}
    \eeqn
    and its equivariantization 
    \beqn
    F_{pq,\alpha}^\sim:= G_{pq}\times_{G_\alpha} F_{pq, \alpha} \subset E_{pq}|_{V_\alpha^\sim}.
    \eeqn
    
    \item For each stratum $\alpha$, a stratified topological embedding over the poset $\partial^\alpha \bA^\floer_{pq}$
    \beq\label{eqn:sca-K}
    \theta_{pq,\alpha}: {\rm Stab}_{F_{pq,\alpha}}( V_\alpha) \hookrightarrow \partial^\alpha V_{pq}
    \eeq
    whose restriction to $V_\alpha$ coincides with the embedding $\iota_{pq,\alpha}: V_\alpha \hookrightarrow \partial^\alpha V_{pq}$ such that its equivariantization
    \beqn
    \theta_{pq,\alpha}^\sim: {\rm Stab}_{F_{pq,\alpha}^\sim} (V_\alpha^\sim) \to \partial^\alpha V_{pq}
    \eeqn
    is an open embedding. This embedding induces a projection map
    \beq
    \pi_{pq, \alpha}: \partial^\alpha V_{pq} \to V_\alpha^\sim.
    \eeq

    \item For each $d \geq d_{pq}$, an equivariant bundle isomorphism
    \beq
    \widehat \theta_{pq,\alpha}^{(d),\sim}: \pi_{pq,\alpha}^* \Big( O_{pq}^{(d)}|_{V_\alpha^\sim} \Big) \cong O_{pq}^{(d)}|_{\partial^\alpha V_{pq}}.
    \eeq
    \end{enumerate}
    These objects need to satisfy the following list of conditions. 

\begin{enumerate}

\item[(A1)] For all $pq$ and $d' > d \geq d_{pq}$, the inclusion $O_{pq}^{(d)}\hookrightarrow O_{pq}^{(d')}$ \eqref{obstruction_inclusion} is isometric.


\item[(A2)] For each stratum $\alpha = pr_1 \cdots r_l q$ of $pq$, the bundle embedding
\beq\label{eqn:o-embedding}
\widehat \iota_{pq, \alpha}^{(d),\sim}: O_{pq,\alpha}^{(d)} \hookrightarrow O_{pq}^{(d)}|_{\partial^\alpha V_{pq}}
\eeq
is isometric. By Lemma \ref{lemma62} below, it follows that, for any intermediate stratum $\beta = ps_1 \cdots s_m q$ between $\alpha$ and $pq$, the following is true. Abbreviate $\alpha_0 = pr_1 \cdots s_1$, $\ldots$, $\alpha_m = s_m \cdots r_l q$ as strata of $ps_1$, $\cdots$, $s_m q$ respectively. Then
\beqn
O_{pq, \alpha} \cong O_{pq, \beta} |_{V_\alpha} \oplus \left( O_{ps_1, \alpha_0} \boxplus \cdots \boxplus O_{s_m q, \alpha_m} \right).
\eeqn
(We explain the notations here. The bundles $O_{ps_1, \alpha_0} $ etc. in the second summand above are subbundles of $O_{ps_1}|_{V_{\alpha_0}}$ etc. Their product is a subbundle of $O_{pq}|_{V_\alpha}$.) It implies that the bundle $O_{pq,\alpha}$ is stratified (linearly) by all strata between $\alpha$ and the top stratum, where the stratum corresponding to the intermediate stratum $\beta$ is the subbundle
\beqn
O_{\beta\alpha}:= O_{ps_1, \alpha_0} \boxplus \cdots \boxplus O_{s_m q, \alpha_m}.
\eeqn

Similarly, the bundle $Q_{pq, \alpha}$ is also stratified by subbundles $Q_{\beta\alpha}$ where 
\beqn
Q_{\beta\alpha} = Q_{ps_1, \alpha_0} \boxplus \cdots \boxplus Q_{s_m q, \alpha_m}.
\eeqn
Denote 
\beqn
F_{\beta\alpha}: = O_{\beta\alpha} \oplus Q_{\beta\alpha} \subset F_{pq, \alpha}.
\eeqn
Then there is a splitting
\beq\label{ssss}
F_{pq, \alpha} = F_{pq, \beta}|_{V_\alpha} \oplus F_{\beta\alpha}.
\eeq

\item[(B1)] The stabilization maps are stratum-preserving, i.e., for any intermediate stratum $\beta$ between $\alpha$ and the top stratum, there holds
\beqn
\theta_{pq,\alpha} \Big( {\rm Stab}_{F_{\beta\alpha}} ( V_\alpha) \Big) \subset \partial^\alpha V_\beta,
\eeqn
where $F_{\beta \alpha}$ is viewed as a subbundle of $F_{pq, \alpha}$ under the splitting \eqref{ssss}, and the space $V_{\beta}$ is identified with its image in $\partial^{\beta} V_{pq}$ under $\iota_{pq, \beta}$.

\item[(B2)] The above restriction of the stabilization map coincides with the product of stabilization maps. More precisely this means the following. Abbreviate $\alpha_0 = pr_1 \cdots s_1$, $\ldots$, $\alpha_m = s_m \cdots r_l q$ as strata of $ps_1$, $\cdots$, $s_m q$ respectively. The stabilization maps $\theta_{ps_1, \alpha_0}$, $\cdots$, $\theta_{s_m q, \alpha_m}$ are germs of homeomorphisms 
\beqn
\theta_{ps_1, \alpha_0}: {\rm Stab}_{F_{ps_1, \alpha_0}}  ( V_{\alpha_0}) \to\partial^{\alpha_0} V_{ps_1},\ \cdots,\ \theta_{s_m q, \alpha_m}: {\rm Stab}_{F_{s_m q, \alpha_m}}  ( V_{\alpha_m}) \to \partial^{\alpha_m} V_{s_m q}.
\eeqn
Their product gives a germ of homeomorphisms
\beqn
{\rm Stab}_{F_{ps_1, \alpha_0} \boxplus \cdots \boxplus F_{s_m q, \alpha_m}} (V_\alpha)  \to \partial^{\alpha_0} V_{ps_1}\times \cdots \times \partial^{\alpha_m} V_{s_m q} \hookrightarrow \partial^\alpha V_{pq}.
\eeqn
Notice that the left hand side is just ${\rm Stab}_{F_{\beta\alpha}}(V_\alpha)$. We require that 
\beqn
\theta_{pr_1, \alpha_0}\times \cdots \times \theta_{s_m q, \alpha_m} = \theta_{pq, \alpha}|_{{\rm Stab}_{F_{\beta\alpha}}(V_\alpha)}.
\eeqn
(Compare this conditions with the conditions in Definition \ref{scaffolding_2}).

\item[(C1)] The bundle isomorphisms satisfy the following commutative diagram for all $d < d'$.
\beqn
\xymatrix{   \pi_{pq,\alpha}^* \left( O_{pq}^{(d)}|_{V_\alpha^\sim}\right) \ar[r]^{\widehat \theta_{pq,\alpha}^{(d),\sim}} \ar[d]_{\eqref{obstruction_inclusion}}  &  O_{pq}^{(d)}|_{\partial^\alpha V_{pq}} \ar[d]^{\eqref{obstruction_inclusion}} \\
              \pi_{pq,\alpha}^* \left( O_{pq}^{(d')}|_{V_\alpha^\sim} \right)  \ar[r]_{\widehat \theta_{pq,\alpha}^{(d'),\sim}}   &  O_{pq}^{(d')}|_{\partial^\alpha V_{pq}} }.
\eeqn
Hence they induce an isomorphism 
\beqn
\widehat \theta_{pq, \alpha}^O: \pi_{pq, \alpha}^* \left( O_{pq}^{\infty}|_{V_\alpha^\sim}\right) \cong O_{pq}^\infty|_{\partial^\alpha V_{pq}}. 
\eeqn


\item[(C2)] The bundle isomorphism $\widehat \theta_{pq, \alpha}^O$ preserves the stratification. Namely, for any intermediate stratum $\beta$ between $\alpha$ and $pq$, one has 
\beqn
\widehat \theta_{pq, \alpha}^O \left( \pi_{pq,\alpha}^* ( O_\beta|_{V_\alpha^\sim})\right) = O_\beta |_{\partial^\alpha V_{pq}}.
\eeqn

\item[(D)] Let $\zeta_{pq, \alpha} = \zeta_{pq, \beta} + \zeta_{\beta\alpha}$ be a point of $F_{pq, \alpha}$ at a point $x_\alpha \in V_\alpha$ with respect to the splitting \eqref{ssss}. Then the above condition (C3) implies that the bundle isomorphism $\widehat \theta_{pq, \alpha}^O$ identifies $\zeta_{pq, \beta}$ with a vector of $F_{pq,\beta}$ at $x_\beta = \theta_{pq, \alpha}(x_\alpha, \zeta_{\beta\alpha}) \in \partial^\alpha V_\beta$. Then we require that 
\beqn
\theta_{pq, \beta}(x_\beta, \zeta_{pq, \beta} ) = \theta_{pq, \alpha} (x_\alpha, \zeta_{pq, \alpha} ).
\eeqn
This implies that 
\beq
\pi_{pq, \alpha} \circ \pi_{pq, \beta} = \pi_{pq, \alpha}.
\eeq

\item[(E)] The above implies that 
\beqn
\pi_{pq, \alpha}^*( O_{pq}^\infty|_{V_\alpha^\sim} ) = \pi_{pq, \beta}^* \left( \pi_{pq, \alpha}^* (O_{pq}^\infty|_{V_\alpha^\sim} ) |_{\partial^\alpha V_\beta^\sim}\right).
\eeqn
We require that the bundle isomorphism satisfies 
\beq
\widehat \theta_{pq, \alpha}^O = \widehat \theta_{pq, \beta}^O \circ \widehat \theta_{pq, \alpha}^O.
\eeq
More precisely, using notations in condition (D) above, the following diagram commutes.
\beqn
\xymatrix{  O_{pq}^\infty|_{x_\alpha}  \ar[r]^{\widehat \theta_{pq, \alpha}^O} \ar[rd]_{\widehat\theta_{pq, \alpha}^O }     &  O_{pq}^\infty|_{ x_\beta } \ar[d]^{\widehat\theta_{pq, \beta}^O } \\
 & O_{pq}^\infty|_{\theta_{pq, \alpha}(x_\alpha, \zeta_{pq, \alpha})} }
\eeqn

\item[(F)] Given any intermediate stratum $\beta = ps_1 \cdots s_m q$, using the notations used in (B2), one has 
\beqn
\widehat \theta_{pq, \alpha}^O|_{\partial^\alpha V_\beta} = \widehat \theta_{ps_1, \alpha_0}^O \times \cdots \times \widehat \theta_{s_m q, \alpha_m}^O.
\eeqn

\item[(G)] The metric on $O_{pq}^{(d)}$, the stabilization map $\theta_{pq, \alpha}$, and the bundle isomorphism $\widehat \theta_{pq, \alpha}^O$ respect collars. More precisely, 
\begin{enumerate}
    \item The bundle isomorphism associated to the collar structure $\widehat \theta_{pq, \alpha}^{\rm collar}$ is isometric. It follows that following diagram commutes
    \beqn
    \xymatrix{ F_{pq, \beta}|_{\partial^\alpha V_\beta} \times [0, \epsilon)^{\bF_\alpha \setminus \bF_\beta} \ar[rr]^-{\widehat \theta_{pq, \alpha}^{\rm collar}} \ar[d]  & & F_{pq, \beta} \ar[d] \\
               E_{pq}|_{\partial^\alpha V_\beta} \times [0, \epsilon)^{\bF_\alpha \setminus \bF_\beta}  \ar[rr]_-{\widehat \theta_{pq, \alpha}^{\rm collar}}  &    & E_{pq}|_{V_\beta} }
    \eeqn
    
    \item The following diagram commutes.
    \beqn
    \xymatrix{   \partial^\alpha \left(  {\rm Stab}_{F_{pq, \beta}}(V_\beta) \right) \times [0, \epsilon)^{\bF_\alpha \setminus \bF_\beta} \ar[rr] \ar[d]   & &   {\rm Stab}_{F_{pq, \beta}} (V_\beta) \ar[d] \\
                  \partial^\alpha  V_{pq}  \times [0, \epsilon)^{\bF_\alpha \setminus \bF_\beta} \ar[rr] & &   \partial^\beta V_{pq} }.
    \eeqn
    
    \item The following diagram commutes.
    \beqn
    \xymatrix{  ( \pi_{pq, \beta}^* O_{pq}^\infty|_{V_\beta^\sim} )|_{\partial^\alpha V_{pq}} \times [0, \epsilon)^{\bF_\alpha \setminus \bF_\beta} \ar[rr] \ar[d] & & \pi_{pq, \beta}^* O_{pq}^\infty|_{V_\beta^\sim} \ar[d]  \\
          O_{pq}^\infty|_{\partial^\alpha V_{pq}} \times [0, \epsilon)^{\bF_\alpha \setminus \bF_\beta}    \ar[rr]     & &  O_{pq}^\infty|_{\partial^\beta V_{pq}} } 
    \eeqn
\end{enumerate}

\end{enumerate}
\end{prop}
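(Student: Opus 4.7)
The plan is to construct all the data inductively on the energy difference $d_{pq}$, with a nested induction on depth inside each poset $\bA_{pq}^\floer$. The base case is $d_{pq}$ minimal, so that $\bA_{pq}^\floer$ is a singleton and there is nothing to match on the boundary: pick any $G_{pq}$-invariant inner product on each $O_{pq}^{(d)}$ (by equivariant averaging), extending along the inclusions \eqref{obstruction_inclusion}, and take the stabilization map and bundle isomorphisms to be the identity.

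For the inductive step, assume all structures have been built whenever $d_{rs} < d_{pq}$. On each boundary stratum $V_\alpha \subset \partial^\alpha V_{pq}$ the inductive hypothesis forces the inner product on each subbundle $O_{\beta\alpha} \subset O_{pq}^{(d)}|_{V_\alpha}$ to be the product of already-defined inner products. Compatibility on overlaps is exactly the content of Lemma \ref{obundle_property}: the identity $O_{pr_l\to pq}^{(d)}\cap O_{r_1 q\to pq}^{(d)} = O_{r_1 r_l\to pq}^{(d)}$ and the inclusion \eqref{obundle_inclusion} guarantee that the prescribed metrics on the various $O_{\beta\alpha}$'s glue to a well-defined inner product on $O_\alpha^{(d)} \subset O_{pq}^{(d)}|_{\partial V_{pq}}$. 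Using equivariant partitions of unity over $V_{pq}$ and iterating over degrees $d$, this metric extends $G_{pq}$-invariantly to all of $V_{pq}$ so that (A1) and (A2) hold, and the orthogonal complements $O_{pq,\alpha}$ and $F_{pq,\alpha}$ are then well-defined with the stratification \eqref{ssss}.

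Next, the stabilization maps $\theta_{pq,\alpha}$ are built via the analytical scheme of Subsection \ref{subsection55}. Choose a $G_{pq}$-equivariant continuous family of right inverses to the linearized operators $D_{pq,x}$ whose restriction to each boundary stratum $\partial^\alpha V_{pq}$ is the block-upper-triangular extension of the products of the previously constructed right inverses on the factors, with the off-diagonal blocks determined by the inductively chosen inner product. The outer-collaring of Subsection \ref{subsection57} furnishes the space to extend these inverses from a neighborhood of the boundary inward, and equivariant partitions of unity patch local choices together. Feeding these right inverses and the approximate solution map \eqref{eqn:app-solution} into the implicit function theorem yields the desired stabilization maps. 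The bundle isomorphisms $\widehat\theta_{pq,\alpha}^{(d),\sim}$ are then obtained by parallel transport of fiberwise kernels along the paths used in the implicit function theorem, compatibly with the inclusions \eqref{obstruction_inclusion}, so (C1)--(C2) hold.

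The main obstacle is arranging the cocycle conditions (D)--(F) simultaneously, which assert that the stabilization maps and bundle identifications can be iterated through intermediate strata in any order and give the same answer. The crux is to choose the continuous family of right inverses so that its block-triangular extensions commute: concretely, by taking the right inverse at each point to have image exactly equal to the orthogonal complement (with respect to the already-constructed inner product) of the subspace determined by the boundary factors. With this choice, the iterated implicit-function solutions coincide because at every stage the correction lies in the same orthogonal subspace, so the three diagrams in (D)--(F) reduce to associativity of the block decomposition. Finally, the collar compatibility (G) is automatic because the outer-collaring construction (Subsection \ref{subsection57}) yields all analytical ingredients as pullbacks from the boundary over the collar region, so both the metric and the stabilization map are translation-invariant in the collar direction.
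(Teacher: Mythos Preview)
Your overall architecture matches the paper's Appendix~C: a double induction on energy and then on depth within each $\bA_{pq}^\floer$, using Lemma~\ref{obundle_property} to check that the induced inner products on $O_{ps\to pq}^{(d)}$ and $O_{r_1 q\to pq}^{(d)}$ agree on their intersection before extending to all of $O_{pq}^{(d)}|_{V_\alpha}$, then running the implicit function scheme of Subsection~\ref{subsection55} with block upper-triangular right inverses, and relying on outer-collaring for (G). That part is fine.

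The gap is your mechanism for (D)--(F). You write that the cocycle conditions reduce to associativity once one takes ``the right inverse at each point to have image exactly equal to the orthogonal complement (with respect to the already-constructed inner product) of the subspace determined by the boundary factors.'' But the inner product you have built lives on the finite-dimensional obstruction bundle $O_{pq}^{(d)}$, whereas the image of a right inverse to $D_{pq,x}$ is a closed complement to $\ker D_{pq,x}$ inside the infinite-dimensional Banach space $T_x{\mc C}^{1,a}(\Sigma_{\uds x}, E_{\uds x}^{(d)})$. These are different objects, and there is no inner product on the latter with which to take orthogonal complements (the spaces are $W^{1,a}$ for $a>2$, not Hilbert). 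So as stated your prescription does not determine a right inverse, and the claimed reduction of (D)--(F) to ``associativity of the block decomposition'' does not follow.

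The paper's route is different: it identifies \emph{three} kinds of auxiliary data that must be chosen compatibly---continuous families of right inverses, families of approximate solution maps, and families of local Banach-manifold charts/trivializations---and argues that each can be extended from a lower stratum to a higher one and interpolated over the collar region (convex combination for right inverses, interpolation of underlying maps into $M$ followed by fiberwise linear interpolation for approximate solutions, interpolation of center maps for charts). Once all three families agree with the products on $V_\alpha$ and are extended coherently, the identities in (D)--(F) hold because the implicit-function correction at each stage is determined by the same triple of choices. Your proposal omits the local chart/trivialization data entirely and replaces the interpolation step with a single orthogonality condition that is not well-posed in this setting; that is the missing ingredient.
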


\begin{proof}[Sketch of proof]
See Appendix \ref{appendixc}.
\end{proof}

\begin{lemma}\label{lemma62}
Suppose the bundles $O_{pq}^{(d)}\to V_{pq}$ are equipped with $G_{pq}$-invariant inner products which satisfy conditions (A1) and (A3) of Proposition \ref{bigprop}. Then the following is true. Consider a pair of strata $\alpha, \beta \in \bA_{pq}^\floer$ where $\alpha = pr_1 \cdots r_l q$ and $\beta = ps_1 \cdots s_m q$. Denote $\alpha_0 = pr_1 \cdots s_1$, $\ldots$, $\alpha_m = s_m \cdots r_l q$. Consider the bundles 
\beqn
O_{ps_1, \alpha_0} \to V_{\alpha_0},\ \cdots,\ O_{s_m q, \alpha_m} \to V_{\alpha_m}
\eeqn
whose product is a subbundle 
\beqn
O_{\beta\alpha} \subset O_{pq}|_{V_\alpha}.
\eeqn
Then as subbundles of $O_{pq}|_{V_\alpha}$, one has
\beqn
O_{pq, \alpha} = O_{pq, \beta} |_{V_\alpha} \oplus O_{\beta\alpha}.
\eeqn
\end{lemma}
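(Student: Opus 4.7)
The plan is to obtain the decomposition by stacking two orthogonal splittings with respect to the chosen inner products, and then reading off the orthogonal complement of $O_\alpha$ inside $O_{pq}|_{V_\alpha}$. First, the defining splitting $O_{pq}|_{V_\beta} = O_\beta \oplus O_{pq,\beta}$ restricts over $V_\alpha$ to give $O_{pq}|_{V_\alpha} = O_\beta|_{V_\alpha} \oplus O_{pq,\beta}|_{V_\alpha}$. Second, I will decompose $O_\beta|_{V_\alpha}$ orthogonally as $O_\alpha \oplus O_{\beta\alpha}$. Stacking these yields $O_{pq}|_{V_\alpha} = O_\alpha \oplus O_{\beta\alpha} \oplus O_{pq,\beta}|_{V_\alpha}$, and since the last two summands are both orthogonal to $O_\alpha$, the orthogonal complement $O_{pq,\alpha}$ of $O_\alpha$ is precisely $O_{\beta\alpha} \oplus O_{pq,\beta}|_{V_\alpha}$, which is the desired identification.

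The nontrivial step is the second decomposition, where condition (A2) enters crucially. The associativity of chart embeddings in Definition \ref{Klift_defn}(3) identifies $O_\alpha \hookrightarrow O_{pq}|_{V_\alpha}$ with the box-sum $\boxplus_j O_{\alpha_j}$ embedded inside $O_\beta|_{V_\alpha} = \boxplus_j O_{s_j s_{j+1}}|_{V_{\alpha_j}}$ via the product of embeddings $\widehat\iota_{s_j s_{j+1}, \alpha_j}$. By the isometric property (A2) applied to $\beta$, the inner product on $O_{pq}$ restricts on $O_\beta|_{V_\alpha}$ to the box-sum of the chosen inner products on the factors $O_{s_j s_{j+1}}|_{V_{\alpha_j}}$; hence the orthogonal complement of $O_\alpha$ inside $O_\beta|_{V_\alpha}$ splits as the box-sum of the fiberwise complements. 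Applying (A2) once more to each factor $s_j s_{j+1}$ with its stratum $\alpha_j$ identifies the complement of $O_{\alpha_j}$ inside $O_{s_j s_{j+1}}|_{V_{\alpha_j}}$ with $O_{s_j s_{j+1}, \alpha_j}$; taking the box-sum recovers $O_{\beta\alpha}$.

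The argument is uniform in the layer $d$ and is compatible with the inclusions $O_{pq}^{(d)} \hookrightarrow O_{pq}^{(d')}$ since the latter are isometric by (A1); the whole decomposition is $G_\alpha$-equivariant because all the chosen inner products are $G$-invariant under the natural inclusions $G_\alpha \subset G_\beta \subset G_{pq}$. The main obstacle is purely organizational, namely tracking the compatibility of inner products through the nested product construction indexed by the refinement $\alpha \leq \beta$, a task that conditions (A1) and (A2) are engineered to automate; with those in hand, the rest is linear algebra of orthogonal complements of subbundles inside $O_{pq}|_{V_\alpha}$.
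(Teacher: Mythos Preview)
Your proposal is correct and is precisely the linear algebra that the paper's one-word proof (``Linear algebra.'') leaves implicit: you restrict the orthogonal splitting $O_{pq}|_{V_\beta} = O_\beta \oplus O_{pq,\beta}$ to $V_\alpha$, then use the isometric embedding condition (A2) to identify the metric on $O_\beta|_{V_\alpha}$ with the box-sum metric and hence the orthogonal complement of $O_\alpha$ inside $O_\beta|_{V_\alpha}$ with $O_{\beta\alpha}$; stacking gives the claim. One small correction: the lemma hypothesis cites ``(A1) and (A3)'', but Proposition~\ref{bigprop} lists no (A3); what you actually need and use is (A2), so the ``(A3)'' in the statement is a typo, and your second application of (A2) to each factor is in fact just the definition of $O_{s_j s_{j+1}, \alpha_j}$ as the orthogonal complement.
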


\begin{proof}
Linear algebra.
\end{proof}

\section{Smoothing}\label{sec-6}

In this section, we use equivariant (relative) smoothing theory to equip the K-chart presentation of various moduli spaces constructed in Section \ref{sec-5} with smooth structures after stabilization. The proof is based on induction, which is quite similar to the construction of straightening and coherent FOP perturbations discussed in Section \ref{sec-3} and Section \ref{sec-4}. After taking the associated quotient orbifold of the smooth K-chart presentation, we obtain a derived orbifold lift of the Hamiltonian Floer flow category $T^\floer$. Moreover, we explain how to construct a normal complex structure on the resulting derived orbifold lift.

\subsection{Preliminaries}
We list out some basic notions relevant for the smoothing theory in this subsection. Then we state the relative smoothing statement Theorem \ref{relative_smoothing} which will proved in Appendix \ref{appendixb}. Following that, some elementary technical results in the smoothing construction will be proved.

\subsubsection{Microbundles}
We start by discussing some notions and facts about microbundles and their roles in smoothing theory.

\begin{defn}
Let $X$ be a topological space. 
\begin{enumerate}

\item A {\bf microbundle} (of rank $n$) over $X$ is a diagram 
\beqn
\xymatrix{ X \ar[r]^s & {\ms E} \ar[r]^p & X }
\eeqn
where ${\ms E}$ is a topological space, $p$ and $s$ are continuous maps satisfying
\begin{enumerate}
    \item $p \circ s = {\rm Id}_X$.
    
    \item for each $x \in X$ there exist a neighborhood $U_x \subset X$ of $x$, a neighborhood ${\ms V}_x\subset {\ms E}$ of $s(x)$, and a homeomorphism
    \beqn
    h_x: U_x \times {\mb R}^n \to {\ms V}_x
    \eeqn
    for which $p \circ h_x = {\rm pr}_{U_x}$ and $h_x|_{U_x \times \{0\}} = s$. Here ${\rm pr}_{U_x}$ is the natural projection onto the factor $U_x$. 
\end{enumerate}
Often we use a single symbol ${\ms E}$ to denote a microbundle.

\item Given two microbundles over $X$ (not necessarily of the same rank)
\begin{align*}
&\ \xymatrix{ X \ar[r]^{s_1} & {\ms E}_1 \ar[r]^{p_1} & X},\ &\ \xymatrix{ X \ar[r]^{s_2} & {\ms E}_2 \ar[r]^{p_2} & X}
\end{align*}
a {\bf morphism} from ${\ms E}_1$ to ${\ms E}_2$ is a germ of continuous maps $\phi: U_1 \to E_2$ defined over a neighborhood ${\ms U}_1 \subset {\ms E}_1$ of $s_1(X)$ which commutes with the maps $s_1, s_2$ and $p_1, p_2$. Two morphisms $\phi_1, \phi_2$ from ${\ms E}_1$ to ${\ms E}_2$, defined over ${\ms U}_1\subset {\ms E}_1$ and ${\ms U}_2 \subset {\ms E}_2$ respectively, are called {\bf equivalent} if there exists a neighborhood of $s_1(X)$ in ${\ms E}_1$ contained in ${\ms U}_1 \cap {\ms U}_2$ on which $\phi_1 = \phi_2$.

\item Using the notations as above, a morphism between microbundles
\begin{equation*}
\begin{tikzcd}
                                       & {\ms E}_1 \arrow[rd, "p_1"] \arrow[dd, "\phi"'] &   \\
X \arrow[ru, "s_1"] \arrow[rd, "s_2"'] &                                           & X \\
                                       & {\ms E}_2 \arrow[ru, "p_2"']                    &  
\end{tikzcd}
\end{equation*}
is said to be an {\bf isomorphism} if $\phi$ maps ${\ms U}_1$ homeomorphically onto an open subset ${\ms U}_2 \subset {\ms E}_2$ containing $s_2(X)$.

\item An {\bf isotopy} between two isomorphisms $\phi_0, \phi_1: {\ms E} \to {\ms E}'$ of microbundles over $X$ is a family of microbundle isomorphisms $\tilde \phi_t: {\ms E} \to {\ms E}'$, $t \in [0, 1]$ which defines a continuous map from ${\ms E} \times [0, 1]\to {\ms E}'$, such that $\tilde \phi_0$ is equivalent to $\phi_0$ and $\tilde \phi_1$ is equivalent to $\phi_1$. 
\end{enumerate}
\end{defn}

\begin{example}
\begin{enumerate}
    \item If $p: E \to X$ is an ordinary real vector bundle, then together with the zero section $E$ defines a microbundle over $X$. We denote this microbundle by $E_\mu$.
    
    \item Let $X$ be a topological manifold. The {\bf tangent microbundle} is the diagram 
    \begin{align}
        \xymatrix{ X \ar[r]^-{\Delta} & X \times X \ar[r]^-{p_1} & X }
    \end{align}
    where $p_1$ is the projection onto the first factor. We denote the tangent microbundle of $X$ by $T_\mu X$.
\end{enumerate}
\end{example}

If a topological manifold admits a smooth structure, then its tangent microbundle is isomorphic to the associated tangent bundle. In general, a microbundle ${\ms E}$ over a space $X$ may or may not come from a vector bundle. We recall the following definition. 

\begin{defn}
Let ${\ms E} \to X$ be a microbundle. A {\bf vector bundle lift} (or {\bf vector bundle reduction}) of ${\ms E}$ is a vector bundle $E \to X$ together with a microbundle isomorphism $E_\mu \to {\ms E}$. 
\end{defn}

Microbundles have many properties similar to vector bundles. One can construct microbundles by taking certain operations.

\begin{defn}[Direct sums of microbundles]
Let $\xymatrix{ X \ar[r]^{s_i} & {\ms E}_i \ar[r]^{p_i} & X}$, $i = 1, 2$ be two microbundles and let $\Delta: X \to X \times X$ be the diagonal inclusion map. Let 
\beqn
{\ms E}_{12}:= (p_1 \times p_2)^{-1}( \Delta(X)) \subset {\ms E}_1 \times {\ms E}_2.
\eeqn
Then the {\bf direct sum} of ${\ms E}_1$ and ${\ms E}_2$ is the microbundle 
\beqn
\xymatrix{ X \ar[rr]^-{(s_1 \times s_2) \circ \Delta} & & {\ms E}_{12} \ar[rr]^-{p_1 \times p_2} & & \Delta(X) \cong X}.
\eeqn
\end{defn}

On the other hand, the notions of pullbacks and restrictions of microbundles can be defined in the same way as for the case of vector bundles. The correspondence $E \mapsto E_\mu$ commutes with these operations.

For the purpose of discussing the uniqueness of stable (equivariant) smoothings later, we need to compare different vector bundle lifts. 

\begin{defn}(\cite{Lashof_1979})
Let $\phi_i: (E_i)_\mu \to {\ms E}$, $i = 0, 1$ be two vector bundle lifts of a microbundle ${\ms E}$ over $X$. We say that $\phi_0$ and $\phi_1$ are 
\begin{enumerate}

\item {\bf equivalent} if there exists a vector bundle isomorphism $\psi: E_0 \to E_1$ (which induces a microbundle isomorphism $\psi: (E_0)_\mu \to (E_1)_\mu$ such that $\phi_0$ is equivalent to $\phi_1 \circ \psi$,

\item {\bf isotopic} if there exist a vector bundle $E \to X \times [0,1]$ and a continuous family of vector bundle lifts $\phi_{t}: (E |_{X \times \{t\}})_{\mu} \to {\ms E}$ which restricts to $\phi_0$ and $\phi_1$ when $t=0, 1$.
\end{enumerate}
\end{defn}

\begin{example}
Suppose $M$ is a smooth manifold. Then there is a canonical isotopy class of vector bundle reductions of $T_\mu M$. Choose a Riemannian metric $g$ on $M$ and let $\exp$ be the associated exponential map. Then there is an open neighborhood $U(TM)$ of the zero section of $TM$ such that the exponential map is defined over $U(TM)$. Then define 
\beqn
\varphi_g: U(TM) \to M \times M,\ (x, \xi) \mapsto (x, \exp_x \xi)
\eeqn
which is a vector bundle reduction of $T_\mu M$. Moreover, the vector bundle reduction defined by any two Riemannian metrics are isotopic as microbundle isomorphisms. 
\end{example}

Now we discuss the equivariant situation. 
\begin{defn}
Let $G$ be a topological group and $X$ be a $G$-space. A {\bf $G$-microbundle} is a microbundle $\xymatrix{ X \ar[r]^s & {\ms E} \ar[r]^p & X}$ where ${\ms E}$ is a $G$-space such that $p$ and $s$ are both equivariant. 
\end{defn}

All previous notions for non-equivariant microbundles can be easily extended to the equivariant situation. We just emphasize that if $X$ is a $G$-topological manifold, then its tangent microbundle $T_\mu X$ is a $G$-microbundle; if $G$ is a compact Lie group and $X$ is a smooth $G$-manifold, then its tangent microbundle $T_\mu X$ admits a $G$-equivariant vector bundle lift (by its smooth tangent bundle); a concrete lift can be given via the exponential map associated to a $G$-invariant Riemannian metric; moreover, the equivariant isotopy class of such a lift is unique.

\subsubsection{Stable equivariant smoothings}

Let $M$ be a topological manifold (without boundary). A {\bf smoothing} of $M$ is by definition a pair $(N, \phi)$ where $N$ is a smooth manifold and $\phi: N \to M$ is a homeomorphism. A smoothing is also equivalent to a smooth structure $\alpha$ on $M$, i.e., a maximal atlas of $C^\infty$ compatible charts. Two smooth structures $\alpha_0$, $\alpha_1$ on $M$ are called {\bf isotopic} if there is an isotopy (through homeomorphisms) between ${\rm Id}_M$ and a diffeomorphism $\phi: M_{\alpha_0} \to M_{\alpha_1}$, where $M_{\alpha_i}$ is the smooth manifold given by equipping $M$ the smooth structure $\alpha_i$. 

If $G$ is a compact Lie group acting on $M$, then a {\bf $G$-smoothing} is a smooth structure $\alpha$ on $M$ such that the $G$-action is smooth. Two $G$-smoothings $\alpha_0$ and $\alpha_1$ are $G$-isotopic if there is an isotopy through $G$-equivariant homeomorphisms between ${\rm Id}_M$ and a diffeomorphism from $M_{\alpha_0}$ to $M_{\alpha_1}$.

For the non-equivariant case, one can see that a necessary condition for a topological manifold being smoothable is that its tangent microbundle admits a vector bundle lift. In fact, this is almost also a sufficient condition. Once the tangent microbundle of a topological manifold $M$ admits a vector bundle lift, then $M$ is {\bf stably smoothable}, i.e., there is a smoothing $(M \times {\mb R}^k)_{\alpha}$ on $M\times {\mb R}^k$ for some $k \geq 0$ \cite[Section 5]{Milnor_micro_1}. Two stable smoothings $(M\times {\mb R}^{k_i})_{\alpha_i}$, $i = 0, 1$, are {\bf stably isotopic} if there exist $k_0', k_1' \geq 0$ with $k_0 + k_0' = k = k_1 + k_1'$ and an isotopy between $(M\times {\mb R}^{k_0})_{\alpha_0} \times {\mb R}^{k_0'}$ and $(M\times {\mb R}^{k_1})_{\alpha_1} \times {\mb R}^{k_1'}$ as smoothings on $M \times {\mb R}^k$, where the Euclidean spaces are endowed with the standard smooth structure. The work of Lashof \cite{Lashof_1979} basically shows that this correspondence extends to the $G$-equivariant case. Let us introduce the notion of stable $G$-smoothings. 

\begin{defn}\cite{Lashof_1979}
Let $G$ be a compact Lie group and $M$ be a topological $G$-manifold. 

\begin{enumerate}
    \item A {\bf stable $G$-smoothing} of $M$ is a $G$-smoothing on the product $M \times {\bm R}$ where ${\bm R}$ is a finite-dimensional orthogonal representation of $G$. 
    
    \item Two stable $G$-smoothings, $(M \times {\bm R}_i)_{\alpha_i}$, $i = 0, 1$, of $M$, are {\bf stably $G$-isotopic}, if there exist orthogonal representations ${\bm R}_0'$, ${\bm R}_1'$ of $G$ such that ${\bm R}_0 \oplus {\bm R}_0'\cong {\bm R} \cong {\bm R}_1 \oplus {\bm R}_1'$ as $G$-representations and $(M \times {\bm R}_0)_{\alpha_0}\times {\bm R}_0'$ is isotopic to $(M \times {\bm R}_1)_{\alpha_1}\times {\bm R}_1'$ as $G$-smoothings on $M\times {\bm R}$.
    
    \item A {\bf stable $G$-vector bundle lift} of $T_\mu M$ is a $G$-vector bundle lift of $T_\mu M \oplus R$, where $R \to M$ is the trivial bundle $M \times {\bm R}$ with ${\bm R}$ being an orthogonal representation of $G$.
    
    \item Two stable $G$-vector bundle lifts, $\phi_i: (E_i)_\mu \cong T_\mu M \oplus R_i$, $i = 0, 1$, are {\bf stably $G$-isotopic}, if there are orthogonal $G$-representations ${\bm R}_0'$, ${\bm R}_1'$, with ${\bm R}_0 \oplus {\bm R}_0' \cong {\bm R} \cong {\bm R}_1 \oplus {\bm R}_1'$ such that the induced $G$-vector bundle lifts
    \beqn
    \phi_i \oplus {\rm Id}_{R_i'}: (E_i \oplus R_i')_\mu \cong T_\mu M \oplus R_i \oplus R_i' \cong T_\mu M \oplus R,\ i = 0, 1
    \eeqn
    are $G$-isotopic.
    \end{enumerate}
\end{defn}

Any stable $G$-smoothing $(M \times {\bm R})_\alpha$ of $M$ induces a stable $G$-vector bundle lift 
\beqn
T (M \times {\bm R})_\alpha \to T_\mu M \oplus R.
\eeqn
However it is not obvious that the stable $G$-isotopy class of the lift only depends on the stable $G$-isotopy class of the smoothing. By using an intermediate notion of stable sliced concordance among stable $G$-smoothings, Lashof \cite{Lashof_1979} showed that there is indeed such a correspondence sending stable $G$-isotopy classes of stable $G$-smoothings to stable $G$-isotopy classes of stable $G$-vector bundle reductions of $T_\mu M$. In fact, this correspondence is bijective. 

\begin{thm}\cite{Lashof_1979}\label{lashof_theorem} Let $G$ be a compact Lie group and $M$ be a topological $G$-manifold which only has finitely many orbit types. Suppose $E \to M$ is a $G$-equivariant vector bundle and $\varphi: E_\mu \to T_\mu M$ is a $G$-equivariant vector bundle reduction of $T_\mu M$, then there exists an orthogonal $G$-representation ${\bm R}$ and a $G$-smoothing $\alpha$ on $M \times {\bm R}$ satisfying 
\begin{enumerate}
    \item There is a $G$-equivariant vector bundle isomorphism $\rho: p_M^* E \oplus R \cong T(M \times {\bm R})_\alpha$ where $p_M: M \times {\bm R} \to M$ is the natural projection.
    
    \item Denote by $\rho_{\mu}$ the induced map between microbundles from $\rho$. Then the vector bundle reduction $\exp_{M \times R} \circ \rho_\mu: p_M^* E_\mu \oplus R \to T_\mu ( M \times {\bm R})$ is isotopic to the stabilization $p_M^* \varphi \oplus {\rm Id}_R$.\footnote{This claim is not explicitly stated in \cite{Lashof_1979} but can be observed from Lashof's construction.} 
\end{enumerate}
Moreover, the correspondence $\varphi \mapsto \alpha$ induces a bijection between stable isotopy classes of stable $G$-smoothings of $M$ and stable isotopy classes of stable $G$-vector bundle reductions of $T_\mu M$.
\end{thm}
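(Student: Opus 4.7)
The approach is to invoke Lashof's main result \cite{Lashof_1979}, which establishes the bijective correspondence between stable $G$-isotopy classes of stable $G$-smoothings of $M$ and stable $G$-isotopy classes of stable $G$-vector bundle reductions of the tangent microbundle $T_\mu M$. Lashof's argument combines equivariant immersion theory (to realize microbundle data as an open embedding of a vector bundle into a representation) with an equivariant sliced concordance formalism to pass between smoothings and tangent reductions; the finite orbit type hypothesis is what makes both ingredients work in the equivariant setting, allowing for Mostow--Palais equivariant embeddings into representations and an equivariant handle-by-orbit-type induction. Applied to the reduction $\varphi: E_\mu \to T_\mu M$, this directly provides a representation ${\bm R}$, a $G$-smoothing $\alpha$ on $M \times {\bm R}$, and the vector bundle isomorphism $\rho: p_M^* E \oplus R \cong T(M \times {\bm R})_\alpha$ from item (1).

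For the isotopy claim in item (2), which is not explicit in \cite{Lashof_1979}, the plan is to unpack Lashof's construction. First, I would choose a $G$-equivariant complement $F \to M$ with $E \oplus F \cong M \times {\bm W}$ for some orthogonal representation ${\bm W}$; such a complement exists because $E$ is an equivariant vector bundle over a $G$-space with finitely many orbit types. The reduction $\varphi$ together with this trivialization yields a stable $G$-microbundle isomorphism $T_\mu M \oplus F_\mu \cong M \times {\bm W}$, which by equivariant immersion theory is realized by a $G$-equivariant open embedding of a neighborhood of the zero section of $F$ into ${\bm W}$. Taking ${\bm R} = {\bm W}$, the smoothing $\alpha$ is then obtained by transporting the smooth structure of ${\bm W}$ onto $M \times {\bm R}$ via the tautological identification of $M \times {\bm R}$ with the total space of $E \oplus F$ and the equivariant open embedding into ${\bm W}$.

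Under this identification, the tangent bundle of $(M \times {\bm R})_\alpha$ is tautologically isomorphic to the pullback $p_M^*(E \oplus F) = p_M^* E \oplus R$, and this tautological isomorphism is precisely $\rho$. The exponential map of any $G$-invariant Riemannian metric on $(M \times {\bm R})_\alpha$ then yields a vector bundle reduction $\exp_{M \times R} \circ \rho_\mu$ of $T_\mu(M \times {\bm R})$ which, after restriction to the zero section $M \subset M \times {\bm R}$ and projection onto the $T_\mu M$-factor, recovers $\varphi$ up to isotopy, both because any two $G$-invariant metrics yield isotopic exponential maps and because the construction was rigged to be compatible with $\varphi$ at the microbundle level. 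The bijectivity in the last sentence of the theorem follows from Lashof's injectivity (via stable sliced concordance translating stable isotopy of reductions to stable $G$-isotopy of smoothings) and surjectivity (the construction above) results in the same reference.

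The main obstacle I expect is the equivariant immersion theorem itself, which underpins the existence half of the bijection: one must show that every stable $G$-equivariant vector bundle reduction of $T_\mu M$ is geometrically realized by an equivariant open embedding into a representation. This requires a careful orbit-type-by-orbit-type induction and uses finiteness of orbit types crucially, together with equivariant analogues of the Kirby--Siebenmann product structure theorem and of Haefliger--Po\'enaru immersion theory. By contrast, the additional compatibility assertion in item (2) is comparatively routine once the construction is made explicit, as it amounts to observing that the tangent bundle of a tubular neighborhood recovers the bundle data used to build it.
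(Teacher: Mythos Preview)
The paper treats this theorem as a cited result from \cite{Lashof_1979} and does not supply its own proof; the only addition is the footnote observing that item (2) ``can be observed from Lashof's construction.'' Your proposal does essentially the same thing: invoke Lashof for the bijection and for item (1), and then unpack the construction to justify item (2). In that sense your approach matches the paper's treatment.

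One point of comparison worth noting: the paper does reconstruct Lashof's argument explicitly when proving the \emph{relative} version (Theorem \ref{thmb3} in Appendix \ref{appendixb}), and the concrete mechanism there is somewhat different from your sketch. You phrase the key step as ``equivariant immersion theory'' producing an open embedding of a neighborhood of the zero section of $F$ into ${\bm W}$. The paper's reconstruction instead (i) embeds $M$ equivariantly into a representation ${\bm R}$ via Mostow--Palais, (ii) obtains an equivariant retraction $r: N_\iota \to \iota(M)$ from a tubular neighborhood via Jaworowski's extension theorem, and (iii) writes down an explicit map $\psi: M \times {\bm R} \to {\bm R} \times {\bm W}$ by $(x,y) \mapsto (\theta(x,\delta(x)y'), \exp_x^{-1}(r(\theta(x,\delta(x)y'))))$, which is a homeomorphism onto an open set and hence pulls back the standard smooth structure. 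This explicit formula is what makes item (2) immediately verifiable. Your abstract invocation of ``equivariant immersion theory'' and ``Haefliger--Po\'enaru'' is not wrong in spirit, but it leaves the compatibility in item (2) at the level of a plausibility argument rather than a computation; the Mostow--Jaworowski route makes it a direct check.
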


Our construction relies on the following relative version of Lashof's theorem. 

\begin{thm}\label{relative_smoothing}
Let $G$, $M$, $E$, and $\varphi$ be as in Theorem \ref{lashof_theorem}. Let $C \subset M$ be a $G$-invariant closed set and $U \subset M$ be a $G$-invariant open neighborhood of $C$. Suppose 
\begin{enumerate}
    \item $U$ is equipped with a $G$-smoothing $\alpha_0$ and $E|_U$ is equipped with a smooth $G$-equivariant vector bundle structure.
    
    \item $\varphi|_U: (E|_U)_\mu \to T_\mu U$ is a smooth microbundle reduction and is isotopic to the microbundle reduction $TU_{\alpha_0} \to T_\mu U$.
    \end{enumerate}
Then there exists an orthogonal $G$-representation ${\bm R}$ and a $G$-smoothing $\alpha$ on $M \times {\bm R}$ such that over a $G$-invariant open neighborhood $U' \subset U$ of $C$, $(U'\times {\bm R})_\alpha$ is diffeomorphic to the product $(U')_{\alpha_0} \times {\bm R}$.
\end{thm}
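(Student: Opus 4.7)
The plan is to mimic Lashof's argument \cite{Lashof_1979} but carry it out relative to the prescribed data on $U$, using that all obstructions can be resolved by further $G$-stabilization. First, I would reduce to a more convenient form of the microbundle reduction. The hypothesis supplies an isotopy between $\varphi|_U$ and the smooth microbundle reduction $T U_{\alpha_0} \to T_\mu U$ obtained from the exponential map of a $G$-invariant Riemannian metric on $U_{\alpha_0}$. By the microbundle isotopy extension theorem (applied equivariantly, which is possible since $G$ is compact and $U$ has only finitely many orbit types), I may, after shrinking $U$ to an invariant open neighborhood $U'$ of $C$, replace $\varphi$ by an equivariantly isotopic vector bundle lift $\varphi'\colon E_\mu \to T_\mu M$ whose restriction to $U'$ equals the exponential microbundle reduction coming from $\alpha_0$. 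Thus I may assume from the start that $\varphi$ is already ``smooth'' on a neighborhood of $C$.

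Next, I would recall the mechanism behind Theorem \ref{lashof_theorem}. Lashof's construction stabilizes $\varphi$ by a large orthogonal $G$-representation ${\bm R}$ so that the stabilized microbundle reduction $\varphi \oplus \mathrm{Id}_R$ can be realized by a $G$-equivariant immersion of $M \times {\bm R}$ into a smooth $G$-manifold. The target of the immersion is built out of $G$-equivariant tubular neighborhoods of orbit types, assembled via a handle-by-handle procedure, and the pullback of its smooth structure provides the required $G$-smoothing $\alpha$. All constructions are governed by a lifting problem against the forgetful map $BO_G \to B\mathrm{Top}_G$ of equivariant classifying spaces, where $\varphi$ determines a section over $M$ and the existence of $\alpha$ amounts to lifting this section through the fiber $\mathrm{Top}_G/O_G$.

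The relative step is now formal: I already possess the desired lift over a neighborhood $U'$ of $C$, namely the section associated with the smooth structure $\alpha_0$ and the chosen $G$-equivariant smooth structure on $E|_{U'}$. What remains is an extension problem for a section of a fibration, which can be solved $G$-equivariantly after further stabilization because the homotopy groups of the fiber $\mathrm{Top}_G/O_G$ vanish in the stable range for each orbit type. Cellularly extending over the invariant strata of $M \setminus U'$ using equivariant obstruction theory (and enlarging ${\bm R}$ whenever obstructions appear), I obtain a global lift that restricts to the prescribed one on $U'$. Translating back via Lashof's correspondence yields a $G$-smoothing $\alpha$ on $M \times {\bm R}$ agreeing with $(U')_{\alpha_0} \times {\bm R}$ on an invariant open neighborhood of $C$, and agreeing after possibly shrinking $U'$ further to absorb the region where the immersion construction was perturbed.

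The main obstacle is the equivariant obstruction theory: one must verify that for each isotropy subgroup $H \subseteq G$ occurring in $M$, the appropriate relative homotopy groups of $\mathrm{Top}_H/O_H$ stably vanish, and that the stabilizing representation ${\bm R}$ can be chosen uniformly across all orbit types. This is where the hypothesis that $M$ has only finitely many orbit types is essential, since it bounds the cellular complexity and allows one to pick a single ${\bm R}$ large enough to kill all obstructions simultaneously. The compatibility of the extension with the already-chosen smooth $G$-bundle structure on $E|_{U'}$ is built in automatically because the section we extend records precisely that smooth structure on the overlap.
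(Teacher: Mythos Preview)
Your proposal takes a genuinely different route from the paper. The paper's proof (Theorem~\ref{thmb3}) is entirely constructive and follows Lashof's original argument step by step, checking relativity at each stage: it uses Mostow's embedding theorem (Lemma~\ref{Mostow_embedding}) to produce a $G$-equivariant embedding $\iota\colon M \to {\bm R}$ that is smooth near the prescribed closed set, then Jaworowski's extension theorem (Theorem~\ref{Jaworowski}) to build a retraction $r\colon N_\iota \to \iota(M)$ that is likewise smooth there, and finally writes down an explicit map $\psi\colon M \times {\bm R} \to {\bm R} \times {\bm W}$ (in the trivial-bundle case $E = M \times {\bm W}$) whose pullback of the standard smooth structure gives the desired $G$-smoothing. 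The non-trivial case is reduced to the trivial one by passing to the total space of a stable complement $F$ of $E$. No obstruction theory or classifying spaces enter.

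Your approach, by contrast, recasts the problem as a relative lifting/section-extension problem over an equivariant analogue of $B\mathrm{Top}/BO$ and appeals to equivariant obstruction theory. This is a reasonable philosophy, but as written it has a real gap: the sentence ``the homotopy groups of the fiber $\mathrm{Top}_G/O_G$ vanish in the stable range for each orbit type'' is doing all the work, and it is neither a standard black box nor quite the right formulation. In equivariant smoothing theory the relevant fibers are spaces like $\mathrm{Top}(V)/O(V)$ for $H$-representations $V$, the stabilization is by adding representations rather than trivial factors, and the statement that obstructions can be killed after such stabilization is essentially the content of Lashof's theorem rather than an input to it. You also invoke an equivariant microbundle isotopy extension theorem without a reference; this is plausible but not standard. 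What your outline buys, if these ingredients were supplied, is a more conceptual and potentially more flexible argument; what the paper's approach buys is a self-contained proof that avoids setting up the equivariant obstruction machinery and makes the relative compatibility transparent at every step via explicit formulas.
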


\begin{proof}
See Appendix \ref{appendixb}.
\end{proof}

\subsubsection{Topological submersions}

Now we start to prepare for smoothing the K-chart lift of the Hamiltonian Floer flow category following a generalization of the strategy used in \cite{AMS}. First, we discuss topological submersions which are relevant for constructing lifts of tangent microbundles.

\begin{defn}\label{product_neighborhood}
Let $M$ be an $\bA$-space and $B$ be a smooth $\bA'$-manifold. Let $\pi: M \to B$ be a continuous map covering a map between the posets $\bA \to \bA'$.
\begin{enumerate}
    \item Let $p \in M$ and $b = \pi(p) \in B$. A {\bf product neighborhood} of $p$ is a pair $(W, \iota)$ where $W$ is an open neighborhood of $p$ (denoting $W|_b = \pi^{-1}(b) \cap W$) and $\iota: W \to W|_b \times \pi(W)$ is an homeomorphism satisfying 
    \begin{itemize}
        \item $\pi(W)$ is an open neighborhood of $b$;
    
        \item $\pi \circ \iota^{-1}: W|_b \times \pi(W) \to \pi(W)$ is the projection to $\pi(W)$;
        
        \item $\iota|_{W|_b}: W|_b \to W|_b \times \{b\}$ is the identity map.
        
    \end{itemize}
    
    \item The map $\pi$ is called a {\bf topological submersion} if every point $p \in M$ admits a product neighborhood and each fiber is a topological manifold (of a fixed dimension) without boundary. In this case, the {\bf vertical tangent microbundle} $T_\mu^{\rm vt}M$ is the microbundle 
    \beqn
    \xymatrix{ M \ar[r]^-{\Delta} & M \times_B M \ar[r]^-{p_1} & M}
    \eeqn
    where $p_1 : M \times_B M \to M$ is the projection to the first factor, viewing the fiber product $M \times_B M$ as a subset of $M \times M$.
\end{enumerate}
\end{defn}

\begin{rem}
Our primary examples of topological submersions come from the forgetful map $\pi_{pq}: V_{pq} \to B_{pq}$ in \eqref{eqn:pi-V-B}. It is important to notice that the fibers are indeed topological manifolds without boundary, because the complex structure on the domain of the element $(\Sigma, u, F, \eta_1, \dots, \eta_d)$ has been fixed.
\end{rem}

Now we include group actions. Let $G$ be a compact Lie group. Recall that a $G$-action on an $\bA$-space is a continuous $G$-action which preserve each stratum. We assume that $G$ acts continuously on $M$ and smoothly on $B$, and that $\pi: M \to B$ is $G$-equivariant. Then for each $g\in G$, $p\in M$, and a product neighborhood $(W, \iota)$ of $p$, $(g(W), g_* \iota)$ is a product neighborhood of $g(p)$, where 
\beqn
g_* \iota(w) = g( \iota( g^{-1}(w))).
\eeqn
Here $g: W|_b \times \pi(W) \to W|_{g(b)} \times \pi(g(W))$ is the diagonal action.

\begin{defn}\cite[Definition 4.19]{AMS}
For each $p\in M$, denote by $G_p \subset G$ the stabilizer of $p$. A $G_p$-invariant product neighborhood of $p$ is a product neighborhood $(W, \iota)$ of $p$ which is $G_p$-invariant, i.e., $(g(W), g_* \iota)= (W, \iota)$ for any $g \in G_p$.

We say that the $G$-equivariant map $\pi: M \to B$ is a {\bf $G$-equivariant topological submersion} if every point $p \in M$ admits a $G_p$-invariant product neighborhood. We say that the $G$-action is {\bf fiberwise locally linear} if for each $b\in B$, the action of $G_b \subset G$ on $M_b$ is locally linear.
\end{defn}

\begin{rem}
Any $G$-equivariant vector bundle $p: E \to B$ is a fiberwise locally linear equivariant topological submersion.
\end{rem}

Now we recall how to construct smoothings of the total space of a topological submersion with a fiberwise smooth structure.

\begin{defn}\cite[Definition 4.27]{AMS}
Let $G$ be a compact Lie group. Let $M$ be a topological manifold with a $G$-action. Let $B$ be a smooth $G$-manifold and $\pi: M \to B$ be a $G$-equivariant topological submersion. 

\begin{enumerate}

\item A {\bf fiberwise smooth structure} on $M$ (with respect to the map $\pi$) is a choice of smooth structures (by definition a maximal $C^\infty$-compatible atlas) on all fibers of $\pi$.

\item Given a fiberwise smooth structure on $M$, two product neighborhoods $\iota_i: W_i \to W_i|_{b_i} \times \pi(W_i)$, $i = 1, 2$ are {\bf $C_{\rm loc}^1$-compatible} if for each $p \in W_1 \cap W_2$ there exists a product neighborhood $\iota: W \to W|_b \times \pi(W)$, $b = \pi(p)$ satisfying the following conditions. For each $v \in \pi(W)$, the definition of product neighborhoods (see Definition \ref{product_neighborhood}) implies that the map 
\beqn
\iota_v: W|_v \to \{v\}\times W|_b
\eeqn
is a homeomorphism. Consider the family of maps
\beqn
\eta_v: W|_b \to W_i|_{b_i},\ w \mapsto \Pi_i \big(  \iota_i (\iota_v^{-1}(w))\big)
\eeqn
where $\Pi_i: W_i|_{b_i}\times \pi(W_i) \to W_i|_{b_i}$ is the projection. Then for each $v$ and $i = 1, 2$, $\eta_v$ is a smooth map from $W|_b$ to $W_i|_{b_i}$ and varies with $v$ continuously with respect to the $C_{\rm loc}^1$-topology.

\item A {\bf fiberwise smooth $C_{\rm loc}^1$ $G$-bundle} is a $G$-equivariant topological submersion $\pi: M \to B$ together with a collection $(\iota_i)_{i \in I}$ of $C_{\rm loc}^1$-compatible $G_p$-invariant product neighborhoods around a collection of points $(p_i)_{i\in I}$ whose domains cover $M$.

\end{enumerate}

\end{defn}

\begin{rem}
Despite our previous discussions work under the setting that $M$ is an $\bA$-manifold, the above notions is defined for $M$ being a manifold without boundary. In practice, when we perform smoothing inductively, we apply Theorem \ref{relative_smoothing} to the top stratum of a topological $\bA$-manifold $M$. Therefore, it suffices to set up certain parts of the theory for manifolds without boundary or corners.
\end{rem}

\subsubsection{Fiberwise submersions}

A fiberwise smooth $C_{\rm loc}^1$-bundle $\pi: M \to B$ has a well-defined {\bf vertical tangent bundle}, denoted by $T^{\rm vt} M$, by patching up the tangent bundles of the fibers, and an isotopy class of $G$-vector bundle lift
\beqn
T^{\rm vt}M \to T_\mu^{\rm vt}M
\eeqn
by using a $G$-invariant fiberwise Riemannian metric (see \cite[Lemma 4.29]{AMS}).

Now suppose $\pi: M \to B$ is a fiberwise smooth $C_{\rm loc}^1$ $G$-bundle. There is a well-defined $G$-microbundle
\beqn
\pi^* T_\mu B \oplus T^{\rm vt}_\mu M,
\eeqn
a well-defined $G$-vector bundle
\beqn
\pi^* TB \oplus T^{\rm vt} M,
\eeqn
and a well-defined $G$-isotopy class of vector bundle lifts
\beq\label{eqn:mu-split}
(\pi^* TB \oplus T^{\rm vt}M)_\mu \to \pi^* T_\mu B \oplus T^{\rm vt}_\mu M
\eeq
constructed using the exponential map after further choosing a Riemannian metric on $B$. Note that both the space of fiberwise Riemannian metrics on $M$ varying continuously over $B$, and the space of Riemannian metrics are contractible. Therefore, all these choices induce the same isotopy class of the vector bundle reduction for the direct sum $\pi^* T_\mu B \oplus T^{\rm vt}_\mu M$.
To qualify the condition for stable $G$-smoothing (see Theorem \ref{lashof_theorem}), one only needs to identify the right hand side with the tangent microbundle of the total space $M$. However this identification is not canonical even up to isotopy of microbundle isomorphisms. This is what we need to construct during the smoothing process.

\begin{rem}
Different data for constructing the splitting \eqref{eqn:mu-split} from \cite[Lemma 4.24, Proposition 4.26]{AMS} can in fact be interpolated with each other, so the splittings \eqref{eqn:mu-split} arising from this way should be isotopic to each other. Keeping track of these choices are important for showing that the ``invariants" constructed in this paper are indeed independent of all the auxiliary choices, but we do not need such invariance results for our applications.
\end{rem}

\begin{defn}\cite[Definition 4.22]{AMS}\label{splitting_defn}
Suppose $\pi: M \to B$ is a $G$-equivariant topological submersion. Let $W \subset M$ be a $G$-invariant open subset. A {\bf $G$-equivariant fiberwise submersion along $W$} is a continuous map $\phi: \widetilde W \to M$ where $\widetilde W \subset W \times W$ is an open neighborhood of the diagonal satisfying the following properties.
\begin{enumerate}
    \item $\phi_{q, b}:= \phi|_{\widetilde W \cap ( \{q\} \times M_b)}$ is a homeomorphism onto an open subset of $M_{\pi(q)}$ for all $b\in \pi(W)$ and $q \in W$.
    
    \item $\phi_{q, \pi(q)}$ sends each point $(q, q') \in \widetilde W \cap ( \{q\}\times M_{\pi(q)})$ to $q'$.
    
    \item $\widetilde W$ is a $G$-invariant set with respect to the diagonal $G$-action on $W \times W$ and $\phi$ is a $G$-equivariant map.
\end{enumerate}
\end{defn}

\begin{rem}
Suppose $M$ is smooth and $\pi: M \to B$ is a smooth submersion. Then one can construct a fiberwise submersion using a $G$-invariant Riemannian metric and the exponential map in the horizontal direction.
\end{rem}

\begin{rem}
A $G$-equivariant fiberwise submersion along $W = M$ provides a $G$-microbundle morphism 
\beqn
\tau: T_\mu M \to T_\mu^{\rm vt} M
\eeqn
which is defined as 
\beqn
M \times M \supseteq \widetilde M \ni (p, p') \mapsto (p, \phi(p, p'))
\eeqn
which is the identity morphism on the sub-microbundle $T_\mu^{\rm vt} M \subset T_\mu M$. On the other hand, there is a canonical $G$-microbundle morphism 
\beqn
P: T_\mu M \to \pi^* T_\mu B,\ (p, p') \mapsto (p, (\pi(p), \pi(p'))).
\eeqn
Hence the $G$-equivariant fiberwise submersion induces an isomorphism of microbundles 
\beqn
P \oplus \tau: T_\mu M \to \pi^* T_\mu B \oplus T^{\rm vt}_\mu M.
\eeqn
If $\pi$ is a fiberwise smooth $C_{\rm loc}^1$ $G$-bundle, then a $G$-equivariant fiberwise submersion along $M$ induces a well-defined $G$-isotopy class of vector bundle lift 
\beqn
(\pi^* TB \oplus T^{\rm vt} M)_\mu \cong T_\mu M.
\eeqn
\end{rem}

\begin{lemma}\label{splitting_lemma}
Let $C \subset M$ be a $G$-invariant closed set and $U \subset M$ be a $G$-invariant open neighborhood of $C$. Let $D \subset M$ be another $G$-invariant closed set. Suppose $\phi: \widetilde U \to M$ is a $G$-equivariant fiberwise submersion along $U$. Then there exists a $G$-invariant open neighborhood $W$ of $C \cup D$ and a $G$-equivariant fiberwise submersion $\psi: \widetilde W \to M$ along $W$ which coincides with $\phi$ in a small neighborhood of the diagonal of $C \times C$ in $U \times U$.
\end{lemma}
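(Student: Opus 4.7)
The argument is a standard ``continuous-upgrade-to-dense-version'' (CUDV) extension: I will build new local $G$-equivariant fiberwise submersions near each point of $D$ using the product neighborhoods supplied by the $G$-equivariant topological submersion hypothesis, and then patch them with the existing $\phi$ by means of a $G$-invariant partition of unity and convex combinations carried out in local fiber charts. This mirrors the strategy used by Abouzaid--McLean--Smith in the analogous single-chart setting (e.g. the arguments surrounding Definition \ref{splitting_defn}), the only novelty being the bookkeeping needed to keep $\phi$ untouched near $C$.

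First, for each $p \in D$ pick a $G_p$-invariant product neighborhood $(W_p, \iota_p)$ with $\iota_p : W_p \xrightarrow{\sim} W_p|_{\pi(p)} \times \pi(W_p)$, guaranteed by the $G$-equivariant topological submersion structure. The product structure produces a canonical $G_p$-equivariant fiberwise submersion $\phi_p : \widetilde{W}_p \to M$ by $\phi_p(q, q') := \iota_p^{-1}(q_0, \pi(q'))$, where $q_0$ denotes the fiber-coordinate of $q$ under $\iota_p$. Translation by $G$ extends $\phi_p$ to a $G$-equivariant fiberwise submersion along $G \cdot W_p$, using that $(W_p, \iota_p)$ is $G_p$-invariant. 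Choose an open invariant set $U'$ with $C \subset U' \subset \overline{U'} \subset U$, and cover $D \setminus U'$ by finitely many such translated product neighborhoods $G \cdot W_{p_1}, \ldots, G \cdot W_{p_N}$ (using local compactness of $M$ and the fact that $G$-orbits are compact). Then $W := U \cup \bigcup_i G \cdot W_{p_i}$ is a $G$-invariant open neighborhood of $C \cup D$.

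Next, select a $G$-invariant partition of unity $\{\rho_0, \rho_1, \ldots, \rho_N\}$ subordinate to the cover $\{U, G \cdot W_{p_1}, \ldots, G \cdot W_{p_N}\}$ of $W$ with $\rho_0 \equiv 1$ on an even smaller neighborhood $U'' \subset U'$ of $C$; such a partition exists because the compact Lie group $G$ admits invariant partitions of unity subordinate to any invariant open cover. Build $\psi$ inductively on $i$, setting $\psi_0 := \phi$ on (a neighborhood of the diagonal in) $U$. Having defined a $G$-equivariant fiberwise submersion $\psi_{i-1}$ over a neighborhood of $\mathrm{supp}(\rho_0) \cup \cdots \cup \mathrm{supp}(\rho_{i-1})$, combine it with $\phi_{p_i}$ using the weight $\tau_i := \rho_i / (\rho_0 + \cdots + \rho_i)$. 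Over the overlap, pick a common local product chart of $\pi$; in such a chart each of $\psi_{i-1}$ and $\phi_{p_i}$ is encoded by a continuous function $h_\bullet(q, v', b')$ valued in a fiber coordinate patch satisfying the normalization $h_\bullet(q, v', \pi(q)) = v'$. Form the convex combination $h_i := (1 - \tau_i) h_{i-1} + \tau_i h_{p_i}$, which satisfies the same normalization and defines the desired $\psi_i$.

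The main obstacle is verifying that the convex-combined fiber map $v' \mapsto h_i(q, v', b')$ remains a local homeomorphism onto its image, as convex combinations of homeomorphisms need not be homeomorphisms in general. This is handled by shrinking $\widetilde{W}$ so that, in a common fiber coordinate chart, each of $h_{i-1}(q, \cdot, b')$ and $h_{p_i}(q, \cdot, b')$ is uniformly $C^0$-close to the identity (which holds because both equal the identity at $b' = \pi(q)$, by continuity of $h_\bullet$ in $b'$, and by compactness considerations on suitable relatively compact subsets). Invariance of domain then forces the convex combination to be a local homeomorphism on each fiber slice, so $\psi_i$ is a bona fide fiberwise submersion. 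A secondary bookkeeping point is that $\rho_0 \equiv 1$ on the neighborhood $U''$ of $C$ ensures $\tau_i \equiv 0$ there for all $i \geq 1$, so the inductive patching leaves $\phi$ unchanged in a neighborhood of the diagonal of $C \times C$; shrinking $\widetilde{W}$ once more if needed concludes the construction.
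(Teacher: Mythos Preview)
Your overall strategy---local models from product neighborhoods, then a partition-of-unity patching---is the standard route and is in the spirit of what the paper cites from \cite{AMS}. The paper itself does not give an independent argument; it simply points to \cite[Lemma 4.24, Proposition 4.25]{AMS}.

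There is, however, a genuine gap in your justification of the key step. You claim that once $h_{i-1}(q,\cdot,b')$ and $h_{p_i}(q,\cdot,b')$ are $C^0$-close to the identity in a fiber chart, ``invariance of domain then forces the convex combination to be a local homeomorphism.'' This is not correct: invariance of domain takes injectivity as \emph{input} (a continuous injection between $n$-manifolds is open), not as output. Being $C^0$-close to the identity does not imply injectivity---already in dimension one there are continuous maps $g$ with $|g(x)-x|$ arbitrarily small that are not monotone, and convex combinations of two homeomorphisms of $\mathbb{R}^n$ close to the identity need not be injective in general. So as written, your patched $\psi_i$ may fail condition (1) of Definition \ref{splitting_defn}.

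The fix, in the setting where this lemma is actually used, is to invoke the $C^1_{\rm loc}$ fiberwise smooth structure (established in Proposition \ref{prop:c1loc}): in smooth fiber charts the maps $h_\bullet(q,\cdot,b')$ are $C^1$, and their fiber derivatives equal the identity at $b'=\pi(q)$, hence are $C^1$-close to the identity for $b'$ near $\pi(q)$. A convex combination then has invertible derivative and is a local diffeomorphism by the inverse function theorem. If you want the lemma in the bare topological category as stated, you would need a different patching mechanism than linear interpolation of the maps themselves; this is presumably where the argument in \cite{AMS} takes additional care.
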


\begin{proof}
This is a restatement of the extension lemma \cite[Lemma 4.24]{AMS} and a relative version of \cite[Proposition 4.25]{AMS}.
\end{proof}

\subsection{Pre-smoothing operations and stable complex structures}

This subsection can be viewed as a continuation of Section \ref{sec-5}, in which we explore the more refined information of the K-charts $K_{pq} = (V_{pq}, E_{pq}, G_{pq}, S_{pq})$. These properties will be used in the construction of smoothing. Furthermore, we also present materials relevant for building the normal complex structure.

\subsubsection{Topological submersions}

We have the following analogue of \cite[Corollary 6.28, 6.29]{AMS}.

\begin{prop}\label{prop:c1loc}
The natural forgetful map 
\beqn
\pi_{pq}: V_{pq} \to B_{d_{pq}}
\eeqn
is a $G_{pq}$-equivariant topological submersion and has the structure of a $C_{\rm loc}^1$ fiberwise smooth $G_{pq}$-bundle.
\end{prop}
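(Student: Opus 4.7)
The assertion is essentially local around each $x \in V_{pq}$, and the plan is to adapt the pseudoholomorphic-sphere argument of \cite[Corollary 6.28, Corollary 6.29]{AMS} to framed Floer cylinders with multi-layered thickening. Fix a representative $(\Sigma, u, F, \eta_1, \ldots, \eta_{d})$ of $x$, and set $b = \pi_{pq}(x) \in B_{d_{pq}}$. By Lemma \ref{lem:b-delta} and Lemma \ref{lem:r-blowup}, $B_{d_{pq}}$ is a smooth $\bA_{d_{pq}}$-manifold near $b$, and pulling back the universal curve over ${\mc F}_{0,2}(d_{pq})$ gives a local universal unfolding of the prestable cylinder $\Sigma$ together with a pregluing identification between compact subsets of $\Sigma\setminus\{\mathrm{nodes}\}$ and the corresponding subsets of the nearby fibers $\Sigma_{b'}$. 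Choose a $G_x$-equivariant gluing profile (an invariant Riemannian metric on $M$ together with cutoff functions) as in Subsection \ref{subsection55}; averaging over the finite stabilizer $G_x$ keeps all of this $G_x$-invariant.

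The first main step is to build the product neighborhood. Using the universal unfolding and the gluing profile, construct a continuous approximate solution map that assigns, to a point $b'$ near $b$ in $B_{d_{pq}}$ and a small deformation $\xi$ of $(u, F, \eta_1, \ldots, \eta_d)$ keeping the complex structure of $\Sigma$ fixed, a pregluded configuration $(u_{b'}^{\mathrm{app}}, F_{b'}^{\mathrm{app}}, \eta_{b'}^{\mathrm{app}})$ over $\Sigma_{b'}$ satisfying the perturbed Floer equation \eqref{eqn:floer-thicken} up to an $L^a$-small error. The regularity Theorem \ref{regularity2} gives surjectivity of the vertical linearization, so one can choose a $G_x$-equivariant continuous family of bounded right inverses $T_{b',x'}$ as in the construction of stabilization maps. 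The implicit function theorem then produces a unique genuine solution close to each approximate solution, whose image is contained in a $G_x$-invariant neighborhood $W \subset V_{pq}$ of $x$. This defines a continuous $G_x$-equivariant map
\[
\iota : (W \cap \pi_{pq}^{-1}(b)) \times \pi_{pq}(W) \longrightarrow W,
\]
and the implicit function theorem, combined with the uniqueness of nearby genuine solutions near any pregluded approximate solution, shows that $\iota$ is a homeomorphism satisfying the three conditions of Definition \ref{product_neighborhood}. The $G_x$-equivariance promotes the product neighborhood to a $G_x$-invariant one, establishing that $\pi_{pq}$ is a $G_{pq}$-equivariant topological submersion. Since fibers of $\pi_{pq}$ have the canonical smooth structure from Lemma \ref{lem:fiberwise-smooth}, this also equips $V_{pq}$ with a fiberwise smooth structure.

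The second main step is to check the $C^1_{\rm loc}$-compatibility of two product neighborhoods $(W_1, \iota_1)$ and $(W_2, \iota_2)$ around points $p_1, p_2 \in V_{pq}$ at an overlap point $p \in W_1 \cap W_2$. After fixing a local product neighborhood $(W, \iota)$ at $p$ constructed from yet another gluing profile, the transition maps $\eta_v: W|_b \to W_i|_{b_i}$ of Definition \ref{product_neighborhood}(2) are obtained by composing the two gluing constructions with the shared implicit function theorem step. On any compact subset of $\Sigma \setminus \{\mathrm{nodes}\}$, the difference between the two approximate solutions, as well as the images of the corresponding right inverses, depends continuously on $v$ in $C^1_{\rm loc}$ by the standard gluing estimates (see e.g.\ the estimates underlying \cite[Appendix C]{pardon-VFC}); the continuity of the chosen family of right inverses was built into the very definition of the stabilization maps in Subsection \ref{subsection55}, so the resulting family $\eta_v$ depends continuously on $v$ in the $C^1_{\rm loc}$-topology as required.

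The hard part will be ensuring that these constructions can be carried out at points $x$ lying in deep boundary strata of $V_{pq}$ and in a manner consistent with the $G_{pq}$-action and with the stratification of $B_{d_{pq}}$. At such points the domain $\Sigma$ has several cylindrical and spherical nodes and the unfolding in $B_{d_{pq}}$ happens along a corner; one must verify, as in \cite[Section 4]{AMS}, that the implicit function theorem can be applied uniformly as the gluing parameter approaches the stratum, and that the product neighborhood respects the product decomposition of the boundary of $B_{d_{pq}}$ coming from Proposition \ref{prop523}. This is where the associativity of the gluing construction (Proposition \ref{associativity} and diagram \eqref{diag-V}), the $G_x$-equivariance of the chosen right inverses, and the already verified fiberwise regularity are combined to produce product neighborhoods compatible with the stratification, thereby completing the proof.
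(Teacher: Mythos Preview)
Your proposal is correct and follows essentially the same strategy as the paper's proof: both reduce to the standard gluing construction for Floer trajectories (as in \cite[Appendix C]{pardon-VFC}) to produce product neighborhoods, and then invoke the neck-region estimates to upgrade to $C^1_{\rm loc}$-compatibility. The only notable difference is presentational: the paper makes the link to Pardon's gluing theorem concrete by intersecting the framing map $\iota_F$ with $d(d+2)$ generic hyperplanes to obtain local stabilizing divisors, thereby identifying a neighborhood in $B_{d_{pq}}$ with an open subset of $\ov{\mc M}{}_{0,2+d'}^{\mb R}$ and casting the gluing map in the explicit form of \cite[(C.10.3)]{pardon-VFC}; you instead invoke Lemma \ref{lem:r-blowup} (whose item (3) is exactly this hyperplane argument) and describe the unfolding more abstractly. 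The paper also flags the mild Morse--Bott degeneracy of the asymptotic operator along the ${\mc C}$ and $E_i$ directions, which you do not mention but which causes no difficulty.
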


\begin{proof}
This statement essentially a reformulation of the standard gluing construction in Floer theory which is presented for instance, in \cite[Appendix C]{pardon-VFC}. The slight difference is, here we are in a Morse--Bott situation because the asymptotic operator of elements in the moduli space ${\mc M}_{J_{\tilde \Psi}, H}(k_1, \ldots, k_d)$ has nontrivial kernel along the ${\mc C}$ and ${\mc E}_i$ direction. However, this does not introduce any problem to the argument.

As a first step, let us show that $\pi_{pq}$ is a $G_{pq}$-equivariant topological submersion. Consider a point $x \in V_{pq}$ represented by $(u, \Sigma, F, 0)$. Using the sheared almost complex structure and Gromov's graph trick, we can view $(u, \Sigma, F, \eta_1, \dots, \eta_d)$ as a pseudo-holomorphic stable cylinder contained in the $0$-section of ${\mc E}_d \to M \times {\mc C}$. Recall that the framing $F$ defines a holomorphic map $\iota_F: \Sigma \to \mb{CP}^d$ of degree $d$. Let us choose $d' = (d+2)d$ generic hyperplanes $H_1, \ldots, H_{d'} \subset \mb{CP}^d$ which intersect the image of $\uds{\mu}$ transversely at points different from nodes and markings, and choose
\beqn
z_i \in v(\Sigma) \cap H_i,\ i = 1, \ldots, d'.
\eeqn
Then the map $[v] \mapsto (z_1, \ldots, z_{d'})$ is a local diffeomorphism from ${\mc F}{}_{0,2}^{\mb R}(d)$ to an open subset $j: U \subset \ov{\mc M}{}_{0,2 +d'}^{\mb R}$. These intersection points play the role as local stabilizing divisors. Namely, we choose the preimage of the divisors $H_1, \dots, H_{d'}$ under the projection map ${\mc E}_d \to M \times {\mc C} \to {\mc C} \to {\mb CP}^d$ to be the local stabilizing divisors. By keeping track of the positions of the marked points obtained from intersecting the framing map with these divisors, we obtain map
$$
\mu_{H}: V_{pq} \to \ov{\mc M}{}_{0,2 +d'}^{\mb R}
$$
defined over a neighborhood of $x \in V_{pq}$. 

Let $K_{\rm map}$ be the kernel of the linearized Cauchy--Riemann operator at $(u, \iota_{F}, 0)$ (without deforming the underlying map to $\mb{CP}^d$). The gluing map
$$
g: U \times K \to V_{pq}
$$
defined by \cite[Equation (C.10.3)]{pardon-VFC} fit into the following diagram:
\beqn
    \vcenter{ \xymatrix{ U \times K \ar[r]^g \ar[d]_{{\rm pr}_U} & V_{pq}   \ar[d]^{\mu_H}\\
                U \ar[r]_j & \ov{\mc M}{}_{0,2 +d'}^{\mb R} }  }.
\eeqn
According to \cite[Section C.12]{pardon-VFC}, $g$ is a homeomorphism onto its image. The commutativity of the above diagram follows from the definition \cite[Equation (C.10.4)]{pardon-VFC}. Furthermore, the restriction of the gluing map $g$ to a fiber $\{a\} \times K$ is smooth, see \cite[Section C.9]{pardon-VFC} (this has already been explored in Section \ref{sec-5}). Note that the topological submersion property holds over a $G_{pq}$-invariant open neighborhood of $S_{pq}^{-1}(0)$ in $V_{pq}$ by the transversality assumption. Up to restricting to such an open subset, we see that $\pi_{pq}: V_{pq} \to B_{pq}$ is indeed a $G_{pq}$-equivariant topological submersion.

It remains to see that $\pi_{pq}$ actually has a $C^1_{\rm loc}$ $G_{pq}$-bundle structure. Note that the gluing construction of $J$-holomorphic curves is based on applying a Newton--Picard iteration scheme to a pre-glued curve. By \cite[Proposition B.11.1, C.11.1]{pardon-VFC}, the $L^2$-norm of the honest solution near the ends of the neck region controls all $C^k$-norms through the region. With this in mind, the rest of the argument follows from \cite[Corollary 6.29]{AMS}.
\end{proof}

As a consequence, we see that the tangent microbundle of the interior of $V_{pq}$ admits a vector bundle lift by \eqref{eqn:mu-split} and Lemma \ref{splitting_lemma}. This suffices for us to construct a $G_{pq}$-equivariant smoothing on the interior of $V_{pq}$, but it falls short of providing a smoothing of $V_{pq}$ as a manifold with corners. This problem will be solved on the outer-collaring $V_{pq}^+$ by performing an inductive argument.

\subsubsection{Vertical stable complex structures}

Following \cite[Section 11]{Abouzaid_Blumberg}, we describe how to construct a stabilization of the vertical tangent bundle $T^{\rm vt}V_{pq}$ of the $C^1_{\rm loc}$ $G_{pq}$-bundle $\pi_{pq}: V_{pq} \to B_{pq}$ such that the resulting vector bundle admits a complex structure. We also explain how to use these stable complex structures to construct normal complex structures on derived orbifold charts.

For any $1$-periodic orbit $\uds{p}$ of $H$, fix a unitary trivialization of the complex vector bundle $\uds{p}^* TM \to S^1$. Moreover, we choose a complex linear connection $\nabla$ defined on the pullback of $TM$ under the projection map $S^1 \times M \to M$, such that its restriction to the graph of any $1$-periodic orbit is induced by the chosen trivializations. For a $1$-periodic orbit $\uds{p}$, using the projection ${\rm pr}_{S^1}: {\mb R} \times S^1 \to S^1$, we consider the complex vector bundle
$$
(\uds{p} \circ {\rm pr}_{S^1})^* TM \to {\mb R} \times S^1.
$$
Denote by $B_{H, \uds{p}}(t): S^1 \to \text{End}(\uds{p}^* TM)$ the section obtained by differentiating the flow of $X_{H_t}$, i.e., 
\beqn
B_{H, \uds p}(t) ( W ) = \nabla_W X_{H_t},\ \forall W \in T_{\uds p(t)} M.
\eeqn
Using a smooth cut-off function, we can consider the Cauchy--Riemann operator
$$
\nabla^{0,1}_{\uds{p}}: \Omega^0((\uds{p} \circ {\rm pr}_{S^1})^* TM) \to \Omega^{0,1}((\uds{p} \circ {\rm pr}_{S^1})^* TM)
$$
such that
$$
\nabla^{0,1}_{\uds{p}} = \left\{ \begin{array}{rr}  (\nabla - B_{H,\uds{p}} \otimes dt)_{J}^{0,1} \text{ near } -\infty\\
     \nabla^{0,1} \text{ near } + \infty.       
            \end{array} 
            \right.
$$
It is a standard fact that $\nabla^{0,1}_{\uds{p}}$ defines an elliptic operator. Therefore, we can find a finite dimensional {\bf complex} vector space $V_{\uds{p}}^-$ and a linear map $\lambda_{\uds{p}}: V_{\uds{p}}^- \to \Omega^{0,1}((\uds{p} \circ {\rm pr}_{S^1})^* TM)$ such that the map
$$
\nabla^{0,1}_{\uds{p}} \oplus \lambda_{\uds{p}}: \Omega^0((\uds{p} \circ {\rm pr}_{S^1})^* TM) \oplus V_{\uds{p}}^- \to \Omega^{0,1}((\uds{p} \circ {\rm pr}_{S^1})^* TM)
$$
is surjective. This map is called the asymptotic operator of $\uds{p}$. We fix a choice of $V_{\uds{p}}^-$ and $\lambda_{\uds{p}}$, and denote the kernel of the above map by $V_{\uds{p}}^+$. The virtual vector space $(V_{\uds{p}}^+ , V_{\uds{p}}^-)$ will be used to construct a stable complex structure on $T^{\rm vt}V_{pq}$.

\begin{lemma}\label{lem:handwaving}
For any $p, q \in T^\floer$ such that $\ov{\mc M}{}^\floer_{pq} \neq \emptyset$, there exists a complex vector bundle $I^{\rm vt}_{pq} \to V_{pq}$ such that there exists an homotopy between the vector bundles
\begin{equation}\label{eqn:stable-complex}
V_{\uds{q}}^+ \oplus T^{\rm vt}V_{pq} \oplus V_{\uds{p}}^{-} \cong V_{\uds{q}}^- \oplus I^{\rm vt}_{pq} \oplus V_{\uds{p}}^{+}.
\end{equation}
\end{lemma}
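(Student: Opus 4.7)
The plan is to realize $T^{\rm vt}V_{pq}$ as the kernel bundle of a family of real-linear Cauchy--Riemann operators, cap off both cylindrical ends by linear gluing with the asymptotic operators constructed just above, and then extract a stable complex structure by deforming the glued family to a complex linear one.

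For each $x = [u, \Sigma, F, \eta_1,\ldots,\eta_d] \in V_{pq}$, the vertical tangent space $T^{\rm vt}_x V_{pq}$ is canonically the kernel of the linearization $D_x$ of the perturbed Floer equation \eqref{eqn:floer-thicken} on the fixed domain $\Sigma_{\pi_{pq}(x)}$, using exponentially weighted Sobolev spaces modelled on the Floer-type asymptotic operators at $\uds{p}$ (at $-\infty$) and $\uds{q}$ (at $+\infty$). These operators are surjective Fredholm by Proposition \ref{prop525} and assemble into a continuous family on a Hermitian Hilbert bundle over $V_{pq}$ via the $C^1_{\rm loc}$-bundle structure of Proposition \ref{prop:c1loc}. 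Let $\widetilde{\nabla}^{0,1}_{\uds{p}}$ denote the orientation-reversed cap operator obtained from $\nabla^{0,1}_{\uds{p}}$ by $s \mapsto -s$, which has standard $\bar{\partial}$ behavior near $-\infty$ and Floer-type behavior for $\uds{p}$ near $+\infty$. By the standard linear pre-gluing construction (parallel to the analysis underlying Proposition \ref{prop:c1loc} but performed directly at the linear level) we can fiberwise glue $\widetilde{\nabla}^{0,1}_{\uds{p}}$ to the $-\infty$ end of $D_x$ and $\nabla^{0,1}_{\uds{q}}$ to its $+\infty$ end, producing a continuous family of surjective Fredholm operators $\widehat{D}_x$ on an extended cylinder whose Hermitian bundle has standard $\bar{\partial}$ asymptotic behavior at both new ends.

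Because the asymptotic operators of $\widehat{D}_x$ are now complex linear, the anti-complex-linear part of $\widehat{D}_x$ is compactly supported; scaling this part to zero via $t \in [0,1]$ yields a family homotopy of Fredholm operators from $\widehat{D}_x$ to a complex linear family $\widehat{D}^{\mathbb{C}}_x$ that remains surjective after appropriate finite-dimensional stabilization. Let $I^{\rm vt}_{pq}$ denote the resulting stabilized kernel bundle of $\widehat{D}^{\mathbb{C}}_x$, which inherits a complex structure from the complex linearity of the family. The linear gluing isomorphism for kernels of surjective Fredholm operators, applied together with the stabilizations $\lambda_{\uds{p}}, \lambda_{\uds{q}}$ and the dual stabilization for the orientation reversed $\widetilde{\nabla}^{0,1}_{\uds{p}}$ (in which the roles of $V^+_{\uds{p}}$ and $V^-_{\uds{p}}$ are swapped), then provides the required identification
\begin{equation*}
V^+_{\uds{q}} \oplus T^{\rm vt}V_{pq} \oplus V^-_{\uds{p}} \cong V^-_{\uds{q}} \oplus I^{\rm vt}_{pq} \oplus V^+_{\uds{p}},
\end{equation*}
which is simply the index additivity $[\widehat{D}_x] = [\widetilde{\nabla}^{0,1}_{\uds{p}}] + [D_x] + [\nabla^{0,1}_{\uds{q}}]$ realized at the level of stabilized kernel bundles. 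The main technical obstacle is verifying that the linear pre-gluing and subsequent Fredholm homotopy can be carried out continuously in $x \in V_{pq}$ across bubbling strata; however, the required uniform estimates are exactly those already established for the nonlinear gluing underlying Proposition \ref{prop:c1loc}, reducing this step to a routine but lengthy bookkeeping exercise.
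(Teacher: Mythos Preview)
Your approach is correct and follows the same overall scheme as the paper: identify $T^{\rm vt}V_{pq}$ as the kernel bundle of a surjective family of linearized operators, cap with the asymptotic operators attached to $\uds p$ and $\uds q$, and deform to a complex linear family whose kernel is the desired $I^{\rm vt}_{pq}$. The implementation, however, differs in one notable way.

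You glue caps at \emph{both} ends simultaneously, using the orientation-reversed operator $\widetilde{\nabla}^{0,1}_{\uds p}$ at $-\infty$, and then scale the (now compactly supported) anti-linear part to zero. The paper instead avoids the reversed operator entirely: it introduces an auxiliary parametrized space $\tilde V_{pq}\to V_{pq}$ obtained by adding a marked point on the lateral line, and uses the position of this point together with a cut-off function to interpolate directly between the two direct-sum operators
\[
D^{\rm vt}_{u,\mu,\eta}\oplus\bigl(\nabla^{0,1}_{\uds q}\oplus\lambda_{\uds q}\bigr)\oplus 0
\quad\text{and}\quad
D^{{\rm vt},\mb C}_{u,\mu,\eta}\oplus\bigl(\nabla^{0,1}_{\uds p}\oplus\lambda_{\uds p}\bigr)\oplus 0,
\]
so that the \emph{same} unstabilized cap operators $\nabla^{0,1}_{\uds p}$, $\nabla^{0,1}_{\uds q}$ appear at the two ends of the homotopy, and the exact spaces $V^\pm_{\uds p}$, $V^\pm_{\uds q}$ enter without any swap. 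Surjectivity of $D^{{\rm vt},\mb C}$ is then obtained geometrically for $k$ large, rather than by an abstract finite-dimensional stabilization. Your route is a bit more direct, but the claim that the ``dual stabilization'' of $\widetilde{\nabla}^{0,1}_{\uds p}$ literally exchanges $V^+_{\uds p}$ and $V^-_{\uds p}$ is only true at the stable level and deserves a word of justification; the paper's marked-point homotopy sidesteps this bookkeeping entirely.
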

\begin{proof}
We explain the construction in the case of a single-layered thickening. The vertical tangent bundle of $V_{pq}$ can be identified with the kernel of the linearized operator associated with \eqref{thickening_equation} with the framing $F$ being kept fixed. To be more precise, using the model as in the proof of Theorem \ref{regularity1}, given an element in $V_{pq}$ with representative $(\Sigma, u, \mu, \eta)$, the deformation operator over each component $\Sigma_{\alpha}$ is of the form
\beqn
\begin{aligned}
W^{1,p}(\Sigma_{\alpha}, u^* TM) \oplus W^{1,p}(\Sigma_{\alpha}, (u,\mu)^*E(k)) \to 
L^{p}(\Lambda^{0,1}(\Sigma_{\alpha}, u^* TM)) \oplus L^{p}(\Lambda^{0,1}(\Sigma_{\alpha}, u^* TM)) \\
(\xi, \eta') \mapsto (D_{u}\xi + P(\eta'), D_E \eta').
\end{aligned}
\eeqn
By taking the direct sum of these linear operators ranging over all the components of $\Sigma$, and impose the usual matching condition in the case of nodal curves, we indeed identify $T^{\rm vt}V_{pq}$ over $(\Sigma, u, \mu, \eta)$ with the kernel of a Fredholm operator which is schematically written as
$$
D^{\rm vt}_{u, \mu, \eta}: {\mc E}_{u, \mu, \eta} \to {\mc F}_{u, \mu, \eta}.
$$
Now we consider the concatenation of the three operators $D^{\rm vt}_{u, \mu, \eta} \oplus \nabla^{0,1}_{\uds{q}} \oplus \lambda_{\uds{q}} \oplus 0$
\beqn
{\mc E}_{u, \mu, \eta} \oplus (\Omega^0((\uds{q} \circ {\rm pr}_{S^1})^* TM) \oplus V_{\uds{q}}^-) \oplus V_{\uds{p}}^- \to  {\mc F}_{u, \mu, \eta} \oplus \Omega^{0,1}((\uds{q} \circ {\rm pr}_{S^1})^* TM).
\eeqn
This operator is Fredholm and surjective, with kernel identified with
$$
T^{\rm vt}V_{pq} \oplus V^{+}_{\uds{q}} \oplus V_{\uds{p}}^-.
$$
We will construct a homotopy of this operator following the proof of \cite[Proposition 11.30]{Abouzaid_Blumberg}. To this end, introduce an auxialiary moduli space $\tilde{V}_{pq} \to V_{pq}$. It is described by adding an additional marked point on the compactified lateral line ${\mb R} \cup \{ \pm \infty \} \cong [0,1]$ of the cylindrical components and the map $\tilde{V}_{pq} \to V_{pq}$ is the associated forgetful map. The fiber of the universal family over $\tilde{V}_{pq}$ is the same as the fiber of $V_{pq}$ when the newly-added marked point $\neq \pm \infty$; when it is given by $\pm \infty$, we insert a cylinder between the consecutive two cylindrical components which is mapped to the common asymptotic orbit. In particular, $\tilde{V}_{pq} \to V_{pq}$ is a fiber bundle with fiber $\cong [0,1]$. For an element $(\Sigma, u, \mu, \eta) \in V_{pq}$, the domain of the universal family over $(\Sigma, u, \mu, \eta) \times \{1\}$ coincides with the domain of $D^{\rm vt}_{u, \mu, \eta} \oplus \nabla^{0,1}_{\uds{q}} \oplus \lambda_{\uds{q}} \oplus 0$.

Write 
$$
D^{\rm vt}_{u, \mu, \eta} = D^{{\rm vt}, {\mb C}}_{u, \mu, \eta} + Y^{\rm vt}_{u, \mu, \eta}
$$
where $D^{{\rm vt}, {\mb C}}_{u, \mu, \eta}$ is a complex linear Cauchy--Riemann operator and $Y^{\rm vt}_{u, \mu, \eta}$ is a $0$-th order differential operator which is complex anti-linear. Then we can consider another operator $D^{{\rm vt}, {\mb C}}_{u, \mu, \eta} \oplus \nabla^{0,1}_{\uds{p}} \oplus \lambda_{\uds{p}} \oplus 0 $
$$
{\mc E}_{u, \mu, \eta} \oplus (\Omega^0((\uds{p} \circ {\rm pr}_{S^1})^* TM) \oplus V_{\uds{p}}^-) \oplus V_{\uds{q}}^- \to {\mc F}_{u, \mu, \eta} \oplus \Omega^{0,1}((\uds{p} \circ {\rm pr}_{S^1})^* TM).
$$
If this Fredholm operator is surjective, then there exists a finite dimensional complex vector space $(I^{\rm vt}_{pq})_{u, \mu, \eta}$ such that the kernel of this operator is
$$
(I^{\rm vt}_{pq})_{u, \mu, \eta} \oplus V_{\uds{p}}^+ \oplus V_{\uds{q}}^-.
$$
Note that using the position of the additional marked point on the lateral line $\tilde{V}_{pq} \to V_{pq}$ and a cut-off function defined on the domains of the universal family over $\tilde{V}_{pq}$ whose differential has support on the horizontal component where the marked point lives on, we can construct a homotopy between two Fredholm operators
$$
D^{\rm vt}_{u, \mu, \eta} \oplus \nabla^{0,1}_{\uds{q}} \oplus \lambda_{\uds{q}} \oplus 0 \text{ and } D^{{\rm vt}, {\mb C}}_{u, \mu, \eta} \oplus \nabla^{0,1}_{\uds{p}} \oplus \lambda_{\uds{p}} \oplus 0
$$
which varies smoothly as we vary $(\Sigma, u, \mu, \eta)$ along the fiber of $V_{pq} \to B_{pq}$.

Moreover, $D^{{\rm vt}, {\mb C}}_{u, \mu, \eta}$ can be made surjective if the $k$ as from $E(k)$ is chosen to be large enough. This fact can be derived from exactly the same proof of \cite[Proposition 6.26]{AMS}. Therefore the statement is proved by defining $I^{\rm vt}_{pq}$ to be the vector bundle over $V_{pq}$ by assembling $(I^{\rm vt}_{pq})_{u, \mu, \eta}$.
\end{proof}

\begin{cor}\label{cor:handwaving}
If the almost free action of $G_{pq}$ on $V_{pq}$ is smooth, then the induced orbifold vector bundle $T^{\rm vt}V_{pq}$ on the quotient orbifold $V_{pq}/G_{pq}$ has a normal complex structure.
\end{cor}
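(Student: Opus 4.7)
The plan is to use the $G_{pq}$-equivariant stable isomorphism \eqref{eqn:stable-complex} from Lemma~\ref{lem:handwaving} to transfer the complex structure on the auxiliary complex bundle $I^{\rm vt}_{pq}$ onto the nontrivial isotypical summands of $T^{\rm vt}V_{pq}$. This is exactly the data required to equip the orbifold vector bundle with a normal complex structure in the sense of Definition~\ref{defn21}, item (3).

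First, for any subgroup $G \subseteq G_{pq}$ with nonempty fixed locus $V_{pq}^G$, I would observe that the finite-dimensional complex vector spaces $V_{\uds{p}}^\pm$ and $V_{\uds{q}}^\pm$ appearing in \eqref{eqn:stable-complex} are pulled back from a point, so under the induced $G_{pq}$-equivariant structure they carry the trivial $G$-action. Decomposing both sides of \eqref{eqn:stable-complex} restricted to $V_{pq}^G$ into $G$-isotypical components, these four constant complex summands sit entirely in the $G$-trivial parts, and the $G$-nontrivial parts get canonically identified as $G$-equivariant real vector bundles,
\[
\check{(T^{\rm vt}V_{pq})}^G \;\cong\; \check{(I^{\rm vt}_{pq})}^G.
\]
Because $I^{\rm vt}_{pq}$ is a $G_{pq}$-equivariant complex bundle, the right-hand side has a tautological $G$-invariant complex structure, which pulls back to the desired structure $J_G$ on $\check{(T^{\rm vt}V_{pq})}^G$.

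Next I would verify the compatibility axioms of Definition~\ref{defn21}. For a pair of nested subgroups $H \subset G$, the natural inclusion $\check{(T^{\rm vt}V_{pq})}^H|_{V_{pq}^G} \hookrightarrow \check{(T^{\rm vt}V_{pq})}^G$ is intertwined by the stable equivalence with the analogous inclusion on the $I^{\rm vt}_{pq}$ side, hence is automatically $\mathbb{C}$-linear. Invariance under chart embeddings follows from the naturality of the construction of Lemma~\ref{lem:handwaving} with respect to the gluing and equivariantization maps introduced in Section~\ref{sec-5}.

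The main technical issue is that the stable equivalence \eqref{eqn:stable-complex} is only determined up to homotopy, whereas a normal complex structure asks for genuine complex structures. However, on a real $G$-representation whose nontrivial isotypical parts are prescribed complex $G$-representations, the space of $G$-invariant complex structures realizing the prescribed stable class is contractible, being a product of contractible spaces of complex structures compatible with each irreducible summand. Consequently any two choices yield equivalent normal complex structures, and the downstream constructions (notably the definition of FOP perturbations) are unaffected by this ambiguity.
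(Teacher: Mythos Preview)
Your argument is correct and matches the paper's approach: the key point is that the auxiliary spaces $V_{\uds p}^\pm$, $V_{\uds q}^\pm$ carry the trivial $G_{pq}$-action on fibers (the paper asserts this directly; your justification ``pulled back from a point'' is slightly imprecise since a trivial bundle can carry a nontrivial fiberwise action, but the conclusion is right because $G_{pq}$ only moves the framing and not the asymptotic orbit data), so passing to nontrivial isotypical parts in \eqref{eqn:stable-complex} identifies $\check{(T^{\rm vt}V_{pq})}^G$ with $\check{(I^{\rm vt}_{pq})}^G$. You also spell out the compatibility checks and the homotopy-versus-genuine-structure issue in more detail than the paper's terse proof.
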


\begin{proof}
This follows from Lemma \ref{lem:handwaving} and the follwing fact. For $p \in {\mc P}$ (also for $q \in {\mc P}$), the trivial vector bundles $V_{\uds{p}}^+ \times V_{pq} \to V_{pq}$ and $V_{\uds{p}}^- \times V_{pq} \to V_{pq}$ can be viewed as $G_{pq}$-equivariant vectors for which $G_{pq}$ acts trivially on the factor $V_{\uds{p}}^{\pm}$. In particular, we can use $V_{\uds{p}}^{\pm}$ to stabilize the space $V_{pq}$. Because the induced $G_{pq}$-action on $V_{\uds{p}}^\pm \times V_{pq} \to V_{pq}$ is trivial on the first factor, they do not affect the \emph{normal} directions to the union of $G_{pq}$-orbits with the same isotropy group of both the tangent bundle of $V_{pq}$. Therefore the statement follows from taking the nontrivial part of the representations of \eqref{eqn:stable-complex} after taking the $G_{pq}$-quotient. 
\end{proof}

\subsubsection{Stabilizing the K-chart lift}

In the equivariant smoothing theorem of Lashof \cite{Lashof_1979} a smooth structure can only be obtained after stabilizing the manifold. Given a $G$-manifold $M$, a stabilization of $M$ is $M \times {\bm R}$ where ${\bm R}$ is an orthogonal $G$-space.\footnote{We could try to extend the theory to the case of nontrivial bundles but that is not completely necessary.} This operation fits into the stabilization of Kuranishi charts. As we are dealing with infinitely many Kuranishi charts associated to a K-chart lift of the Floer category $T^\floer$ (or its outer-collaring), once we stabilize certain charts (and their products) and equipped them with smooth structures, a ``larger" chart needs to be stabilized by a larger representation in order to include the stabilization of products corresponding to its strata. Thus we need to introduce the following concept which describes how a system of stabilizations should fit together.

\begin{defn}\label{defn_stabilization}
Let $T^{\mc P}$ be a flow category over ${\mc P}$ and suppose the collection $\{ K_{pq}=(G_{pq}, V_{pq}, E_{pq}, S_{pq}) \}_{p<q}$ of Kuranishi charts with the collection $ \{ {\bm \iota}_{prq}\}_{p< r < q}$ of K-chart embeddings define a K-chart lift of $T^{\mc P}$ (see Definition \ref{Klift_defn}). A {\bf stabilization} of consists of the following objects.

\begin{enumerate}
    \item For each $T_{pq}$ a finite-dimensional orthogonal $G_{pq}$-representation ${\bm R}_{pq}$. Denote by $R_{pq}$ the trivial $G_{pq}$-equivariant vector bundle $V_{pq} \times {\bm R}_{pq}$. Then for each stratum $\alpha = pr_1 \cdots r_l q$, define the $G_\alpha$-equivariant vector bundle $R_\alpha$ over $V_\alpha = V_{pr_1} \times \cdots V_{r_l q}$ to be 
    \begin{equation}\label{eqn:direct-sum}
    R_{pr_1} \boxplus \cdots \boxplus R_{r_l q}.
    \end{equation}
    
    \item For any pair of strata $\alpha \leq \beta$, an equivariant bundle embedding
    \beqn
    \xymatrix{  R_\alpha \ar[r]^{\phi_{\beta\alpha}} \ar[d] & R_\beta|_{\partial^\alpha V_\beta} \ar[d]\\
                V_\alpha \ar[r]  & \partial^\alpha V_\beta}.
    \eeqn
\end{enumerate}
Assume that these data satisfy the following compatibility condition.
\begin{enumerate}
    
    \item[(A)] For each triple of strata $\alpha \leq \beta \leq \gamma$ one has $\phi_{\gamma\beta}\circ \phi_{\beta\alpha} = \phi_{\gamma\alpha}$.
    
    \item[(B)] The bundle embeddings are induced from products. Namely, given a stratum $\alpha = pr_1 \cdots r_l q \in \bA^{\mc P}_{pq}$, there exist equivariant isometric linear embeddings 
    $$
    \phi_{r_{i}r_{i+1}}: R_{r_{i}r_{i+1}} \hookrightarrow R_{pq} \text{ for } i=0,\dots,l
    $$
    with respect to the group embeddings $G_{r_{i}r_{i+1}} \hookrightarrow G_{pq}$ such that 
    $$
    \phi_{pq, \alpha} = \phi_{pq, pr_1} \times \cdots \times \phi_{pq, r_l q},
    $$
    and $\phi_{\beta \alpha}$ for a general pair $\beta \leq \alpha$ is constructed similarly using the corresponding factorization.
    
    
\end{enumerate}
\end{defn}

Given a stabilization containing $\{ {\bm R}_{pq}\}$ and $\{ \phi_{\beta\alpha}\}$ we can define another K-chart lift for $T^{\mc P}$. Its collection of Kuranishi charts are 
\beqn
\widehat K_{pq}:= (G_{pq}, \widehat V_{pq}, \widehat E_{pq}, \widehat S_{pq}):= {\rm Stab}_{R_{pq}} (K_{pq});
\eeqn
its collection of chart embeddings
\beqn
\widehat {\bm \iota}_{prq}: \widehat K_{pr} \times \widehat K_{rq} \to \partial^{prq} \widehat K_{pq}
\eeqn
are defined as follows. Given $(x_{pr}, e_{pr}) \in \widehat V_{pr}$, $(x_{rq}, e_{rq}) \in \widehat V_{rq}$, define
\beqn
\widehat \iota_{prq}( ( x_{pr}, e_{pr}), (x_{rq}, e_{rq})) = ( \iota_{prq}(x_{pr}, x_{rq}), \phi_{pq, prq}(e_{pr}, e_{rq}))
\eeqn
which is clearly equivariant with respect to the group map $G_{pr}\times G_{rq} \to G_{pq}$.

\begin{lemma}
The data $\{ \widehat K_{pq} \}_{p<q}$ with $\{ \widehat {\bm \iota}_{prq} \}_{p<r<q}$ is a K-chart lift of $T^{\mc P}$.
\end{lemma}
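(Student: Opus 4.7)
The plan is to verify the three conditions of Definition~\ref{Klift_defn} for the stabilized system $\{\widehat K_{pq}\}_{p<q}$ together with $\{\widehat{\bm \iota}_{prq}\}_{p<r<q}$, by reducing them to the corresponding properties of the unstabilized lift $\{K_{pq}\}$ from Theorem~\ref{pre-Kuranishi} and the structural conditions (A)--(B) of Definition~\ref{defn_stabilization}.

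First I would check that each $\widehat K_{pq}=(G_{pq},\widehat V_{pq},\widehat E_{pq},\widehat S_{pq})$ is a topological $\bA_{pq}^{\mc P}$-stratified K-chart of $\ov{\mc M}{}_{pq}$. By the definition of stabilization, $\widehat V_{pq}$ is the total space of $R_{pq}\to V_{pq}$; it inherits a $G_{pq}$-action, an $\bA_{pq}^{\mc P}$-stratification pulled back from $V_{pq}$, and the tautological section summand of $\widehat S_{pq}$ forces $\widehat S_{pq}^{-1}(0)$ to coincide with $S_{pq}^{-1}(0)$ along the zero section. Hence $\widehat\psi_{pq}:=\psi_{pq}$ is still a homeomorphism onto $\ov{\mc M}{}_{pq}$, and the isotropy groups remain the same finite groups as in $K_{pq}$. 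In the degenerate case $p=q$ we take ${\bm R}_{pp}=0$, so $\widehat K_{pp}=K_{pp}$ is still the trivial chart over a point, which handles condition (a) of Definition~\ref{Klift_defn}.

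Next I would verify that each $\widehat{\bm\iota}_{prq}$ is a weak K-chart embedding. The underlying map $\widehat\iota_{prq}$ is $G_{pr}\times G_{rq}\hookrightarrow G_{pq}$-equivariant by construction. It is a topological embedding with a $G_{pq}$-invariant vector bundle neighborhood because $\iota_{prq}$ has one by Theorem~\ref{pre-Kuranishi}, and the isometric linear embedding $\phi_{pq,prq}$ has an orthogonal complement $R_{pq,prq}\subset R_{pq}|_{V_{prq}^\sim}$; combining a tubular neighborhood of $\iota_{prq}(V_{pr}\times V_{rq})$ with this fiberwise linear complement produces a vector bundle neighborhood of the image in $\widehat V_{pq}$. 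The bundle embedding on $\widehat E_{pq}$ is $\widehat\iota_{prq}\oplus\phi_{pq,prq}$ on the $R$-summand and the original $\widehat\iota_{prq}$ on the $\pi^*E_{pq}$-summand. The associativity in Definition~\ref{Klift_defn}(2) then follows by combining the associativity of the original ${\bm\iota}_{prq}$ with condition (A): along the $R$-factor both compositions compute the common map $\phi_{pq,prsq}=\phi_{pq,ps}\circ(\phi_{ps,prs}\times\mathrm{id})=\phi_{pq,pr}\circ(\mathrm{id}\times\phi_{pr,rsq})$.

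The main step is condition (3) of Definition~\ref{Klift_defn}: that for each $pq$ the product charts $\widehat K_\alpha=\widehat K_{pr_1}\times\cdots\times\widehat K_{r_lq}$ together with the induced weak chart embeddings $\widehat{\bm\iota}_{\beta\alpha}$ form a weak K-chart presentation of $\ov{\mc M}{}_{pq}$. Using the difference bundles $F_{\beta\alpha}\to V_\alpha$ and stabilization germs ${\bm\theta}_{\beta\alpha}\colon\mathrm{Stab}_{F_{\beta\alpha}}(K_\alpha)\wembed\partial^\alpha K_\beta$ of the unstabilized presentation (which exist by Theorem~\ref{pre-Kuranishi} combined with Proposition~\ref{bigprop}), I would define
\[
\widehat F_{\beta\alpha}:=F_{\beta\alpha}\oplus R_{\beta\alpha},
\]
where $R_{\beta\alpha}$ is the $G_\alpha$-equivariant orthogonal complement of the image of $\phi_{\beta\alpha}\colon R_\alpha\hookrightarrow R_\beta|_{\partial^\alpha V_\beta}$. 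The cocycle relation $\widehat F_{\gamma\alpha}=\widehat F_{\beta\alpha}\oplus\widehat F_{\gamma\beta}|_{V_\alpha}$ then reduces to the corresponding decomposition for $F$ and the orthogonality identity $R_{\gamma\alpha}=R_{\beta\alpha}\oplus R_{\gamma\beta}|_{V_\alpha}$, which follows from $\phi_{\gamma\alpha}=\phi_{\gamma\beta}\circ\phi_{\beta\alpha}$ in condition (A). The stabilization germs $\widehat{\bm\theta}_{\beta\alpha}$ are then the direct sum of the ${\bm\theta}_{\beta\alpha}$ with the linear maps induced by the orthogonal decompositions of the $R$-factors; condition (B) guarantees that this assignment respects products of factors, so the $G_\beta$-equivariantization is a germ of open embedding as required.

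The main obstacle, such as it is, lies in this last step: checking that the $R$-factor is incorporated correctly into the germ equivalence $\mathrm{Stab}_{\widehat F_{\beta\alpha}}(\widehat K_\alpha)\simeq\partial^\alpha\widehat K_\beta$ and that the associativity diagram analogous to~\eqref{comm_diag_36} commutes after stabilization. This is a bookkeeping exercise once one observes that taking products of K-charts commutes with stabilization by a direct sum of representations, and that the group inclusions $G_\alpha\hookrightarrow G_\beta$ used to define $\phi_{\beta\alpha}$ are exactly the ones used in the definition of ${\bm\theta}_{\beta\alpha}$. The remaining conditions, including the strict $\Pi$-equivariance of the stabilized data, are inherited directly from the corresponding strict equivariance of the representations ${\bm R}_{pq}$ and the embeddings $\phi_{\beta\alpha}$.
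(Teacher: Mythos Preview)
Your proposal is correct and is just a fully unpacked version of the paper's proof, which reads in its entirety: ``This follows directly from the definition.'' Your verification of the conditions of Definition~\ref{Klift_defn} is exactly the routine check the paper declines to write out. One small comment: you invoke Theorem~\ref{pre-Kuranishi} and Proposition~\ref{bigprop} to obtain the stabilization germs $\bm\theta_{\beta\alpha}$ for the unstabilized lift, but the lemma is stated for an \emph{arbitrary} K-chart lift equipped with a stabilization in the sense of Definition~\ref{defn_stabilization}; the existence of those germs is already part of the hypothesis (it is built into condition~(3) of Definition~\ref{Klift_defn} via the notion of weak K-chart presentation), so no appeal to the Floer-specific constructions is needed.
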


\begin{proof}
This follows directly from the definition.
\end{proof}

We need to equip a stabilization certain extra structures.

\begin{defn}
Suppose the K-chart lift ${\mf K}^{\mc P}$ is equipped with a collar structure. Then a stabilization is called {\bf collared} if there are equivariant bundle isomorphisms
\beqn
\xymatrix{ R_{pq}|_{\partial^\alpha V_{pq}} \times [0, \epsilon)^{\bF_\alpha} \ar[r] \ar[d]   &       R_{pq} \ar[d]\\
                    \partial^\alpha V_{pq} \times [0, \epsilon)^{\bF_\alpha} \ar[r] & V_{pq} }
                    \eeqn
                    such that the bundle embeddings are collared, i.e., 
                    \beqn
                    \xymatrix{   R_\beta|_{\partial^\alpha V_\beta} \times [0, \epsilon)^{\bF_\alpha \setminus \bF_\beta} \ar[r] \ar[d] &  R_\gamma|_{\partial^\alpha V_\gamma} \times [0, \epsilon)^{\bF_\alpha \setminus \bF_\beta} \ar[d] \\
                                  R_\beta  \ar[r] & R_\gamma|_{\partial^\beta V_\gamma} }.
                    \eeqn

\end{defn}

Note that although our stabilization is constructed using product bundles, the stabilization maps are not necessarily induced from the linear maps between the representations. Therefore, the above definition is not entirely superfluous. 

We will also use not-necessarily-trivial inner products on those trivial bundles $R_{pq}$ in the construction. 

\begin{defn}
Let ${\mf K}^{\mc P}$ be a K-chart lift of $T^{\mc P}$ equipped with a collar structure and a collared stabilization ${\mf R}$. A {\bf collared system of inner products} on ${\mf R}$ is a collection of $G_{pq}$-equivariant inner products on the bundle $R_{pq}$ such that
\begin{enumerate}
    \item All the embeddings $\phi_{\beta\alpha}: R_\alpha \to R_\beta|_{\partial^\alpha V_\beta}$ are isometric.
    
    \item The collar isomorphisms are isometric.
\end{enumerate}
\end{defn}

Suppose ${\mf R}$ is equipped with a collared system of inner products. Then define 
\beqn
R_{\beta\alpha} \subset R_\beta|_{V_\alpha}
\eeqn
to be the orthogonal complement of $R_\alpha$ in $R_\beta|_{V_\alpha}$. Then by basic linear algebra one can see that whenever $\alpha \leq \beta \leq \gamma$, as subbundles of $R_\gamma|_{V_\alpha}$, one has 
\beqn
R_{\gamma\alpha} = R_{\gamma\beta}|_{V_\alpha} \oplus R_{\beta\alpha}.
\eeqn
Then the original scaffolding can be extended to include the extra pieces $R_{\beta\alpha}$. Indeed, define 
\beqn
\widehat F_{\beta\alpha}:= \pi^* F_{\beta\alpha} \oplus \pi^* R_{\beta\alpha}
\eeqn
where $\pi$ denotes (temporarily) the projection $V_{pq}\times {\bm R}_{pq} \to V_{pq}$ and the induced projection $V_\alpha \times {\bm R}_\alpha \to V_\alpha$. The stabilization described in Proposition \ref{bigprop} is also extended to a map
\beqn
{\rm Stab}_{\widehat F_{pq, \alpha}}( \widehat V_\alpha) \to \partial^\alpha \widehat V_{pq}.
\eeqn
After taking the equivariantization, we obtain a map
$$
G_{pq} \times_{G_{\alpha}} ({\rm Stab}_{\widehat F_{pq, \alpha}}( \widehat V_\alpha)) \to \widehat V_{pq}
$$
which defines an equivariant open embedding. We would like to remark that the orthogonal complement of the $G_{pq}$-equivariantization of $R_{pq, \alpha} \to V_{\alpha}$ in $R_{pq}|_{G_{pq}(V_\alpha)}$ is not necessarily a product bundle. Therefore, some extra care needs to be taken in the smoothing process.

\subsubsection{An induction scheme of stabilization}\label{induction_scheme}

In the smoothing process, the construction of successive stabilizations is elementary but complicated. In general, the stabilizations are constructed inductively, similar to many other inductive constructions. We would like to describe how the induction can be carried out. We first look at the simple case when a moduli space has strata of codimension at most 2. Assume $p<q$ in ${\mc P}^\floer$ and let $V_{pq}$ be the domain of a global Kuranishi chart of $\ov{\mc M}{}_{pq}^\floer$. In the following discussion, we assume that the tangent microbundle of the underlying thickened moduli space of the relevant K-charts admits a vector bundle lift, which is the necessary condition for us to apply the smoothing theory. Moreover, in the discussion of extending the smoothing relatively, we assume that the lifts are compatible in a suitable sense. The construction of vector bundle lifts and the precise meaning of compatibility will be detailed in Section \ref{subsection64}.

Note that we also need to endow the vector bundle $E_{pq} \to V_{pq}$ with a smooth structure after suitable stabilization. However, this is rather straightforward after smoothing $V_{pq}$: we can simply approximate the classifying map of $E_{pq}$ by a smooth map which stays in the same homotopy class.

\vspace{0.2cm}

\noindent {\it The case with no boundary or corner.} When $V_{pq}$ has no boundary or corner, then it is the initial step of the induction. One can find a trivial $G_{pq}$-bundle $V_{pq}\times {\bm R}_{pq}$ induced by a suitable representation $R_{pq}$ needed for stable smoothing.

\vspace{0.2cm}

\noindent {\it The case with only codimension one boundary strata.} When $V_{pq}$ has only codimension one boundary strata, without loss of generality, suppose there is only one boundary stratum $V_{prq}$. Using the induction hypothesis, suppose ${\bm R}_{pr}$ and ${\bm R}_{rq}$ have been given. Consider the representation ${\bm R}_{pr}$ resp. ${\bm R}_{rq}$ of $G_{pr}$ resp. $G_{rq}$. By Frobenius reciprocity (see \cite[Theorem 7.47]{Sepanski}), there exist orthogonal representations ${\bm R}_{pr}^{pq}$ resp. ${\bm R}_{rq}^{pq}$ of $G_{pq}$ which contains $G_{pr}$ resp. $G_{rq}$ as subrepresentations of $G_{pr}$ resp. $G_{rq}$. Then define 
\beqn
{\bm R}_{pq} = {\bm R}_{pr}^{pq}\oplus {\bm R}_{rq}^{pq}
\eeqn
which is an orthogonal $G_{pq}$-representation. 

Now we define the bundle embeddings required for stabilization. We introduce more notations. Over the product $V_{prq} = V_{pr} \times V_{rq}$, define 
\begin{align*}
&\ R_{pr\to pq}:= R_{pr}\boxplus \{0\},\ &\ R_{rq \to pq}:= \{0\}\boxplus R_{rq}.
\end{align*}
Then 
\beqn
R_{prq}= R_{pr\to pq}\oplus R_{rq\to pq}. 
\eeqn
We would like the bundle embedding $\phi_{pq, prq}$ to be the sum of 
\begin{align*}
&\ \phi_{pr \to pq}: R_{pr\to pq} \to R_{pq}|_{\partial^{prq} V_{pq}},\ & \ \phi_{rq \to pq}: R_{rq \to pq} \to R_{pq}|_{\partial^{prq} V_{pq}}.
\end{align*}
Indeed, $\phi_{pr \to pq}$ and $\phi_{rq\to pq}$ are the ones canonically induced from the linear inclusions
\begin{align*}
&\ {\bm R}_{pr} \hookrightarrow {\bm R}_{pr}^{pq} \hookrightarrow {\bm R}_{pq},\ &\ {\bm R}_{rq} \hookrightarrow {\bm R}_{rq}^{pq} \hookrightarrow {\bm R}_{pq}.
\end{align*}
Then all the requirements of Definition \ref{defn_stabilization} are automatically satisfied. In addition, for the purpose of stable smoothing, one can also take an additional direct sum to ${\bm R}_{pq}$ by another orthogonal $G_{pq}$-space ${\bm R}_{pq, 0}$. 

Note that in this final step, we need to apply the relative smoothing result Theorem \ref{relative_smoothing}. Indeed, the space $R_{prq}$ admits a smooth structure by taking the product of smooth structures on $R_{pr}$ and $R_{rq}$, whose existence is based on the induction hypothesis. Then the equivariantization $G_{pq}(R_{prq})$ has a smooth structure. Denote by $G_{pq}(R_{prq}^\perp)$ the equivariantization of the orthogonal complement of $R_{prq}$ in $R_{pq} |_{V_{prq}}$. Using the projection map $R_{prq} \to V_{prq}$,  the vector bundle $G_{pq}(R_{prq}^\perp)$ can be pulled back to $G_{pq}(R_{prq})$ and the total space of this vector bundle can be endowed with a smooth structure, by approximating the classifying map. This in turn equips $R_{pq} |_{\partial^{prq}V_{pq}}$ with a smooth structure. Using the collar structure
$$
\partial^{prq}V_{pq} \times [0,\epsilon) \to V_{pq}
$$
which extends to a collar structure of the stabilized charts, we see that an open neighborhood of $R_{pq} |_{\partial^{prq}V_{pq}}$ has a smooth structure using the product decomposition 
$$
R_{pq} |_{\partial^{prq}V_{pq}} \times [0,\epsilon).
$$
By replacing the open interval $[0,\epsilon)$ by a closed interval of the form $[0,\frac{\epsilon}{2}]$, a suitable compatibility between the vector bundle lifts guarantees that we can apply Theorem \ref{relative_smoothing} to obtain the $G_{pq}$ orthogonal representation ${\bm R}_{pq, 0}$ which induces a stabilized smoothing for $V_{pq}$ extending the previously constructed stabilized smoothings on $V_{pr}$ and $V_{rq}$.

\begin{rem}
We hope that the above arguments showcase the importance of applying Frobenius reciprocity, approximations of classifying maps of equivariant vector bundles, and the relative smoothing result in our inductive construction. Moreover, as we can see from above, outer-collaring conveniently provide us with an automatic smoothing near the boundary stratum. Additionally, it is important to keep in mind that the compatibility between the vector bundle lifts of tangent microbundles of the total space and boundary stratum is crucial for the application of Theorem \ref{relative_smoothing}.
\end{rem}

\vspace{0.2cm}

\noindent {\it The case with only codimension one or two strata.} Now we consider the case when $\bA_{pq}^{\mc P} = \{ pq, prq, psq, prsq\}$. Suppose the representations 
\beqn
{\bm R}_{pr},\ {\bm R}_{rs},\ {\bm R}_{sq}, {\bm R}_{ps}, {\bm R}_{rq}
\eeqn
are chosen so that they induce compatible stabilized smoothings except the top stratum $V_{pq}$. Define the bundles $R_{pr\to pq}$ etc. in a way similar to the previous case. Suppose we have also defined bundle embeddings
\beqn
\phi_{pr \to ps},\ \phi_{rs \to ps},\ \phi_{rs \to rq},\ \phi_{sq \to rq}.
\eeqn
In addition, we make the following assumptions, which basically says that for $ps$ and $rq$, the stabilization data are constructed as in the previous case.
\begin{enumerate}
    \item ${\bm R}_{ps}$ is the direct sum
    \beqn
    {\bm R}_{ps} = {\bm R}_{pr}^{ps}\oplus {\bm R}_{rs}^{ps} \oplus {\bm R}_{ps,0}
    \eeqn
    of orthogonal $G_{ps}$-spaces such that ${\bm R}_{pr}^{ps}$ resp. ${\bm R}_{rs}^{ps}$ contains ${\bm R}_{pr}$ resp. ${\bm R}_{rs}$ as subrepresentations of $G_{pr}$ resp. $G_{rs}$. Similarly, there is a direct sum
    \beqn
    {\bm R}_{rq} = {\bm R}_{rs}^{rq} \oplus {\bm R}_{sq}^{rq} \oplus {\bm R}_{rq,0}
    \eeqn
    of orthogonal $G_{rq}$-spaces such that ${\bm R}_{rs}^{rq}$ resp. ${\bm R}_{sq}^{rq}$ contains ${\bm R}_{rs}$ resp. ${\bm R}_{sq}$ as subrepresentations of $G_{rs}$ resp. $G_{sq}$.
    
    \item The bundle embedding 
    \beqn
    \phi_{pr\to ps}: R_{pr\to ps} \to R_{ps}|_{\partial^{prs} V_{ps}}
    \eeqn
    is induced from the linear inclusion ${\bm R}_{pr}\hookrightarrow {\bm R}_{pr}^{ps} \hookrightarrow {\bm R}_{ps}$. Similar requirement applies to other bundle embeddings.
\end{enumerate}

Now we define an orthogonal $G_{pq}$-space ${\bm R}_{pq}$. There are two codimension one strata, $prq$ and $psq$. For the $G_{pr}$-space ${\bm R}_{pr}$, by Frobenius reciprocity, there is a $G_{pq}$-orthogonal space ${\bm R}_{pr}^{pq}$ which contains ${\bm R}_{pr}$ as a subrepresentation of $G_{pr}$. We choose similarly ${\bm R}_{rq}^{pq}$, ${\bm R}_{ps}^{pq}$, ${\bm R}_{sq}^{pq}$. For the purpose of stable smoothing, we choose another $G_{pq}$-orthogonal space ${\bm R}_{pq, 0}$, which is constructed similarly as the case of one boundary stratum by applying relative smoothing. Then define 
\beqn
{\bm R}_{pq}:= \Big( {\bm R}_{pr}^{pq}\oplus {\bm R}_{rq}^{pq}\Big) \oplus \Big( {\bm R}_{ps}^{pq}\oplus {\bm R}_{sq}^{pq}\Big) \oplus {\bm R}_{pq, 0}.
\eeqn
We rewrite this decomposition as 
\beq\label{decomposition55}
{\bm R}_{pq} = {\bm R}_{pq, r}\oplus {\bm R}_{pq, s} \oplus {\bm R}_{pq, 0}.
\eeq
Then we want to define the bundle embeddings into 
\beqn
R_{pq}:= R_{pq, r} \oplus R_{pq, s} \oplus R_{pq, 0}.
\eeqn
This time the bundle embeddings are not purely induced from linear maps between the representations. 

\begin{enumerate}
    \item We define $\phi_{pr \to pq}$ and $\phi_{sq \to pq}$. We only describe $\phi_{pr\to pq}$ in detail. With respect to \eqref{decomposition55}, we write 
    \beqn
    \phi_{pr\to pq} = \phi_{pr, r} \oplus \phi_{pr, s} \oplus 0.
    \eeqn
    We define $\phi_{pr, r}: R_{pr\to pq} \to R_{pq, r}$ to be the one induced from the linear inclusion
    \beqn
    {\bm R}_{pr} \hookrightarrow {\bm R}_{pr}^{pq} \hookrightarrow {\bm R}_{pq, r}.
    \eeqn
    For $\phi_{pr, s}: R_{pr\to pq} \to R_{pq, s}$, notice that it is defined over $V_{prq}$. We first define it over $\partial^{prsq} V_{prq}$, which is induced from the existing bundle map
    \beqn
    \phi_{pr\to ps}: R_{pr\to ps} \to R_{ps}
    \eeqn
    and its equivariantization and stabilization (by the bundle associated to the scaffolding), composed with the linear inclusion
    \beqn
    R_{ps} \hookrightarrow R_{ps}^{pq} \hookrightarrow R_{pq, s}.
    \eeqn
    To extend to the whole $V_{prq}$, we use the collar structure and a cut-off function on $V_{prq}$ which is supported in the collar region near $\partial^{prsq} V_{prq}$ and which only depends on the collar coordinate. 
    
    \item We define $\phi_{rs\to pq}$. Again, we write $\phi_{rs \to pq} = \phi_{rs, r} \oplus \phi_{rs, s} \oplus 0$. The first component $\phi_{rs, r}$ is induced from the bundle map \beqn
    \phi_{rs \to rq}: R_{rs \to rq} \to R_{rq}
    \eeqn
    composed with the linear inclusion ${\bm R}_{rq} \hookrightarrow {\bm R}_{rq}^{pq} \hookrightarrow {\bm R}_{pq, r}$; the second component $\phi_{rs, s}$ is induced from the bundle map 
    \beqn
    \phi_{rs \to ps}: R_{rs \to ps} \to R_{ps}
    \eeqn
    composed with the linear inclusion ${\bm R}_{ps} \hookrightarrow {\bm R}_{ps}^{pq} \hookrightarrow {\bm R}_{pq, s}$.
    
    \item The definitions of $\phi_{ps \to pq}$ and $\phi_{rq \to pq}$ is similar to the case of $\phi_{pr \to pq}$ and $\phi_{sq \to pq}$. We only describe in detail the bundle map $\phi_{ps \to pq}$ (which should be defined over $V_{psq}$). We write $\phi_{ps\to pq}$ as $\phi_{ps, r} \oplus \phi_{ps, s}\oplus 0$. The second component $\phi_{ps, s}$ is naturally induced from the linear inclusion ${\bm R}_{ps} \hookrightarrow {\bm R}_{ps}^{pq} \hookrightarrow {\bm R}_{pq, s}$, For the first component $\phi_{ps, r}$, we decompose it further as 
    \beqn
    \phi_{ps, r}= \phi_{ps, pr} \oplus \phi_{ps, rs}.
    \eeqn
    We first define it over the boundary stratum $\partial^{prsq} V_{psq}$ and then use a cut-off function to turn it off as we leave this stratum towards the interior of $V_{psq}$. Consider the composition of orthogonal projections
    \beqn
    {\bm R}_{ps} \to {\bm R}_{pr}^{ps}\oplus {\bm R}_{rs}^{ps} \to {\bm R}_{pr}\oplus {\bm R}_{rs}
    \eeqn
    which induces a bundle map 
    \beqn
    R_{ps \to pq}|_{V_{prsq}} \to R_{pr \to pq}|_{V_{prsq}} \oplus R_{rs \to pq}.
    \eeqn
    Then using the linear inclusions ${\bm R}_{pr}\to {\bm R}_{pr}^{pq}$ and the map $R_{rs \to rq} \to R_{rq}\to R_{rq}^{pq}$, one defines the map $\phi_{ps, r}$ over the closed set $V_{prsq}$. One can then equivariantize, stabilize to define it over $\partial^{prsq} V_{psq}$. Lastly, use a cut-off function to extend this component to a neighborhood. 
\end{enumerate}

\begin{rem}
We put effort on defining the embeddings maps between charts in the above discussions, which contains certain distinct features than the case with fewer strata because the bundle maps are no longer linear. We would like to remark that the compatibility between the stabilized smoothings of $V_{prq}$ and $V_{psq}$ near their common stratum $V_{prsq}$ is already guaranteed by the inductive nature of our construction.
\end{rem}

The above inductive strategy, especially the case with codimension two strata, indicates that it seems difficult to construct a stabilization of K-chart lifts of a flow category using purely linear maps between representations. This brings in an extra layer of complexity in the following smoothing process because we would like the bundle embeddings also to be smooth.

\subsection{The main theorem about smoothing}
We state the main theorem on smoothing the outer-collared K-chart lift of $(T^\floer)^+$. In the following, a smoothing on a $\bA$-manifold $V$ equipped with a collar structure (Definition \ref{defn:collar}) is called a {\bf collared smoothing} if the smoothing is equal to the product of the standard smooth structure on $[0,\epsilon)^{{\mb F}_{\alpha}}$ and a smoothing on $\partial^{\alpha} V$ over the collar region $\partial^\alpha V \times [0,\epsilon)^{{\mb F}_{\alpha}}$. The same notion is also used for a smoothing of a vector bundle.

\begin{thm}\label{smoothing_theorem}
Given the outer-collared K-chart lift ${\mf K}^+$ of the outer-collared flow category $(T^{\floer})^+$ equipped with a collared scaffolding (see Proposition \ref{bigprop}), there exist the following objects. We revome the ``$+$'' to make the notations more succinct.

\begin{enumerate}

    \item A stabilization of ${\mf K}$ (see Definition \ref{defn_stabilization}), given by a collection of orthogonal $G_{pq}$-spaces ${\bm R}_{pq}$ and a collection of bundle embeddings.
    
    \item A collared $G_{pq}$-smoothing on $\widehat V_{pq}:= V_{pq} \times {\bm R}_{pq}$.
    
    \item A collared bundle $G_{pq}$-smoothing on the $O(n)$-bundle $\widehat O_{pq} \to \widehat V_{pq}$, where $\widehat O_{pq}$ is the pullback of $O_{pq}$ under the natural projection $\widehat V_{pq} \to V_{pq}$.
    
    \item A collared $G_{pq}$-invariant inner product on the trivial bundle $R_{pq} \to V_{pq}$ inducing an inner product on the vector bundle $\widehat R_{pq} \to \widehat V_{pq}$, where $\widehat R_{pq}$ is the pullback of $R_{pq}$ under the natural projection $\widehat V_{pq} \to V_{pq}$.
    
    \item A collared $G_{pq}$-smoothing on the $O(n)$-bundles $\widehat R_{pq}$. 
    
    
\end{enumerate}
These objects satisfy the following conditions.

\begin{enumerate}

    \item[(A)] The smoothing of $\widehat V_{pq}$ induces a smooth bundle structure $\widehat Q_{pq}$ as it is induced from the $G_{pq}$-representation ${\bm Q}_{pq}$. Then the smooth structures on $\widehat O_{pq}$ and $\widehat R_{pq}$ induce a smooth structure on the obstruction bundle 
    \beqn
    \widehat E_{pq} = \widehat O_{pq} \oplus \widehat Q_{pq}\oplus \widehat R_{pq}.
    \eeqn

    \item[(B)] Boundary smoothings are given by products and stabilizations. More precisely, this means the following. For any stratum $\alpha = pr_1 \cdots r_l q$ of $\bA^\floer_{pq}$, the smoothings on $\widehat V_{pr_1}$, $\ldots$, $\widehat V_{r_l q}$ induce a smooth structure on the prodcut $\widehat V_\alpha = \widehat V_{pr_1}\times \cdots \times \widehat V_{r_l q}$ and its equivariantization $\widehat V_\alpha^\sim$; the smooth bundle structures on $\widehat O_{pr_1}$, $\ldots$, $\widehat O_{r_l q}$ induce a smooth structure on the bundle $\widehat O_\alpha^\sim  \to \widehat V_\alpha^\sim$; there is also a canonical smooth bundle structure on $R_\alpha^\sim$ as it is induced from trivial bundles. Then 
    
    \begin{enumerate}
    
    \item The embedding $\widehat V_\alpha \hookrightarrow \partial^\alpha \widehat V_{pq}$ is smooth.

    \item The bundle embedding $\widehat O_\alpha \hookrightarrow \widehat O_{pq}|_{\partial^\alpha \widehat V_{pq}}$ and the bundle embedding $\widehat R_\alpha \hookrightarrow \widehat R_{pq}|_{\partial^\alpha \widehat V_{pq}}$ are  smooth bundle embeddings. It follows that the orthogonal complement $\widehat O_{pq, \alpha}$ and the orthogonal complement $\widehat R_{pq, \alpha}$ are smooth bundles over $\widehat V_\alpha$. It follows that the bundle
    \beqn
    \widehat F_{pq,\alpha} = \widehat O_{pq,\alpha} \oplus \widehat Q_{pq, \alpha} \oplus \widehat R_{pq, \alpha} \to \widehat V_\alpha
    \eeqn
    is smooth, where the smooth structure on $\widehat Q_{pq, \alpha}$ is induced from the orthogonal complement of the $G_{pq}$-subrepresentation ${\bm Q}_{\alpha} \hookrightarrow {\bm Q}_{pq}$. 
        
    \item The stabilization map 
        \beqn
        \widehat \theta_{pq, \alpha}: {\rm Stab}_{\widehat F_{pq, \alpha}} \Big( \widehat V_\alpha \Big) \to \partial^\alpha V_{pq}
        \eeqn
        (which is the $\theta_{pq,\alpha}$ in \eqref{eqn:sca-K} lifted) is a smooth embedding.
        
        \item Let $\widehat \pi_{pq, \alpha}: \partial^\alpha \widehat V_{pq} \to \widehat V_\alpha^\sim$ be the projection map induced from the stabilization map above, which is smooth. Then bundle isomorphism
        \beqn
        \widehat \vartheta_{pq, \alpha}: \widehat \pi_{pq, \alpha}^* \Big( \widehat O_{pq} |_{\widehat V_\alpha}  \Big) \to \widehat O_{pq}|_{\partial^\alpha \widehat V_{pq} }
        \eeqn
        (included in the scaffolding) is smooth.
    \end{enumerate}
    

\end{enumerate}

\end{thm}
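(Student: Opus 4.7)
The plan is to prove Theorem \ref{smoothing_theorem} by induction on the energy gap $d_{pq} = {\mc A}^\floer(q) - {\mc A}^\floer(p)$, following the strategy sketched in Subsection \ref{induction_scheme}. For the base case, when $\bA^\floer_{pq}$ is the singleton $\{pq\}$, the space $V_{pq}$ has no codimension-one stratum (after outer-collaring, the moduli is a closed topological $G_{pq}$-manifold) and $V_{pq}\to B_{pq}$ is a fiberwise smooth $C^1_{\mathrm{loc}}$ $G_{pq}$-bundle by Proposition \ref{prop:c1loc}. Combined with the smoothness of $B_{pq}$, the splitting \eqref{eqn:mu-split} furnishes a $G_{pq}$-equivariant vector bundle reduction of $T_\mu V_{pq}$, so Lashof's Theorem \ref{lashof_theorem} yields an orthogonal $G_{pq}$-representation ${\bm R}_{pq}$ and a $G_{pq}$-smoothing on $\widehat V_{pq} = V_{pq}\times{\bm R}_{pq}$. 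The smoothings of $\widehat O_{pq}$ and $\widehat R_{pq}$ are then obtained by approximating the classifying maps of these equivariant vector bundles by smooth ones, up to a further stabilization of ${\bm R}_{pq}$ if necessary.

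For the inductive step, assume the theorem holds for all pairs $p'q'$ with $d_{p'q'} < d_{pq}$. For each stratum $\alpha = pr_1\cdots r_l q \in \bA^\floer_{pq}$, the product of the inductively constructed stabilizations yields a smooth structure on $\widehat V_\alpha$ together with smooth bundle structures on $\widehat O_\alpha$ and $\widehat R_\alpha$. We then build ${\bm R}_{pq}$ stratum by stratum following the recipe in Subsection \ref{induction_scheme}: for each codimension-one face $prq$ we apply Frobenius reciprocity to produce $G_{pq}$-representations ${\bm R}_{pr}^{pq}$, ${\bm R}_{rq}^{pq}$ containing ${\bm R}_{pr}$, ${\bm R}_{rq}$ as subrepresentations of the respective subgroups, and set ${\bm R}_{pq}$ to be the direct sum of all such pieces (one for each codimension-one face) together with a stabilizing summand ${\bm R}_{pq,0}$ to be chosen later. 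The bundle embeddings $\phi_{\beta\alpha}$ are defined inductively using the collar structure and cutoff functions to interpolate between the linearly induced maps near deep strata and the identity near the top stratum, exactly as illustrated in the codimension-two example in Subsection \ref{induction_scheme}. The scaffolding hypothesis (Proposition \ref{bigprop}), in particular properties (A2), (B2) and (G), ensures that these definitions are consistent on overlaps and that the resulting $\widehat\theta_{pq,\alpha}$ respect the collars.

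The heart of the proof is then the application of the relative equivariant smoothing Theorem \ref{relative_smoothing}. Using the collar structure on $V_{pq}$ and the smoothings already constructed on all boundary strata, the outer-collaring gives a $G_{pq}$-invariant open neighborhood $U$ of $\partial V_{pq}$ equipped with a collared $G_{pq}$-smoothing: near $\partial^\alpha V_{pq}\times [0,\epsilon)^{{\mb F}_\alpha}$ we simply take the product of the smoothing on $\partial^\alpha \widehat V_{pq}$ (which is the $G_{pq}$-equivariantization of the inductively built $\widehat V_\alpha$) with the standard smoothing of the collar directions, and the scaffolding compatibility ensures these match on overlaps. Meanwhile, over the interior of $V_{pq}$, Proposition \ref{prop:c1loc} and the splitting \eqref{eqn:mu-split} produce a $G_{pq}$-equivariant vector bundle $E_{pq}^{\mathrm{tan}}$ lifting $T_\mu V_{pq}$. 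We verify that over $U$ this lift is $G_{pq}$-isotopic (through vector bundle reductions) to the tangent bundle of the prescribed collared smoothing; this requires showing that the fiberwise submersion built from the gluing construction agrees up to isotopy with the one obtained from the product collar, which in turn follows from Lemma \ref{splitting_lemma}. Granting this compatibility, Theorem \ref{relative_smoothing} produces the desired orthogonal summand ${\bm R}_{pq,0}$ and a collared $G_{pq}$-smoothing on $V_{pq}\times({\bm R}_{pq}\oplus{\bm R}_{pq,0})$ restricting to the prescribed one near $U$. A further relative approximation of classifying maps (applied fiberwise over the smooth manifold just obtained) smooths $\widehat O_{pq}$ and $\widehat R_{pq}$ while preserving the product structure near the boundary, and averaging over $G_{pq}$ produces the required invariant inner product on $R_{pq}$.

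The main obstacle will be establishing the precise isotopy-compatibility of the vector bundle reductions of $T_\mu V_{pq}$ that is required to invoke Theorem \ref{relative_smoothing} relatively; this is the analog of what is done in \cite{AMS} for a single moduli space, but here the reduction on $\partial V_{pq}$ is constructed inductively from products, stabilizations by the scaffolding bundles ${\mc F}_{pq,\alpha}$, and equivariantizations by $G_{pq}$, and one must check that all of these operations intertwine canonically with the splitting \eqref{eqn:mu-split} coming from the submersion $\pi_{pq}$. Once this compatibility is verified, preservation of the collar structure and of the normal complex structure (provided by Lemma \ref{lem:handwaving} and Corollary \ref{cor:handwaving} after taking the quotient by $G_{pq}$) follows, and the strict $\Pi$-equivariance is maintained by carrying out all choices $\Pi$-equivariantly, which is possible because $\uds{\mc P}^\floer/\Pi$ is finite and $\Pi$ acts trivially on each symmetry group $G_{pq}$.
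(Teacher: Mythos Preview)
Your overall strategy---induction on the energy gap, Lashof for the boundaryless base case, Frobenius reciprocity to build ${\bm R}_{pq}$, collars to propagate the boundary smoothing inward, and then Theorem \ref{relative_smoothing} to fill in the interior---is exactly the approach the paper takes in Subsection \ref{subsection64}, and you have correctly identified the compatibility of microbundle reductions (your final paragraph) as the crux of the argument.

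There is, however, one genuine gap. In your third paragraph you write that the smoothing on $\partial^\alpha \widehat V_{pq}$ is ``the $G_{pq}$-equivariantization of the inductively built $\widehat V_\alpha$''. This is not correct: the equivariantization $\widehat V_\alpha^\sim$ is only a proper submanifold of $\partial^\alpha \widehat V_{pq}$; the full boundary stratum is identified, via the stabilization map $\widehat\theta_{pq,\alpha}$, with the total space of the bundle $\widehat F_{pq,\alpha}^\sim = \widehat O_{pq,\alpha}^\sim \oplus \widehat Q_{pq,\alpha}^\sim \oplus \widehat R_{pq,\alpha}^\sim$ over $\widehat V_\alpha^\sim$. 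To equip this total space with a smooth structure you therefore need $\widehat F_{pq,\alpha}$ to be a \emph{smooth} bundle over $\widehat V_\alpha$, and in particular you need a smooth structure on $\widehat O_{pq}|_{\widehat V_\alpha}$ (so that its orthogonal complement $\widehat O_{pq,\alpha}$ is smooth). But the induction hypothesis only gives you a smooth structure on the proper subbundle $\widehat O_\alpha \subset \widehat O_{pq}|_{\widehat V_\alpha}$, and for different intermediate strata $\beta$ you get various intermediate subbundles $\widehat O_\beta|_{\widehat V_\alpha}$ with their own smooth structures that must all be simultaneously extended. The paper handles this in its ``Inductive Construction II'' by invoking the structural property of the obstruction bundles recorded in Lemma \ref{obundle_property}: all the intermediate $\widehat O_\beta^{(d_0)}|_{\widehat V_\alpha}$ sit inside the sum $\widehat O_{pr_l\to pq}^{(d_0)} + \widehat O_{r_1 q\to pq}^{(d_0)}$, and those two summands intersect exactly in $\widehat O_{r_1 r_l\to pq}^{(d_0)}$, so one can glue their smooth structures via an elementary linear-algebra lemma (Lemma \ref{lemma637}) and then extend arbitrarily to the remaining complement. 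Only after this is done can you declare $\partial^\alpha\widehat V_{pq}$ smooth and proceed to the relative smoothing of the interior. Your proposal has the order reversed (smooth the base via Theorem \ref{relative_smoothing}, \emph{then} approximate the classifying maps of $\widehat O_{pq}$), which would not work as stated because the boundary smoothing itself already depends on having smoothed the obstruction bundle over the lower strata.
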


Assuming this theorem, it is straightforward to construct a derived orbifold lift of the flow category $(T^\floer )^+$ by taking group quotients. Remove the $\widehat{\cdot}$ from the notation. Define
\beqn
C_{pq} = ({\mc U}_{pq}, {\mc E}_{pq}, {\mc S}_{pq}, \psi_{pq})
\eeqn
where ${\mc U}_{pq} = V_{pq}/G_{pq}$, ${\mc E}_{pq} = E_{pq}/ G_{pq}$, ${\mc S}_{pq}: {\mc U}_{pq} \to {\mc E}_{pq}$ is the induced orbibundle section, and $\psi_{pq}:{\mc S}_{pq}^{-1}(0) \to \ov{\mc M}{}_{pq}^+$ is the induced footprint map. Moreover, for $\alpha = pr_1 \cdots r_l q$, one has the natural identification
\beqn
K_\alpha/G_\alpha \cong C_\alpha:= C_{pr_1}\times \cdots \times C_{r_l q}.
\eeqn

The scaffolding in the K-chart sense also descends to a scaffolding in the D-chart sense. Indeed, for $\alpha, \beta \in \bA_{pq}^\floer$ with $\alpha \leq \beta$, the bundle
\beqn
{\mc F}_{\beta\alpha}:= F_{\beta\alpha}/G_\alpha \to {\mc U}_\alpha
\eeqn
is a smooth orbifold vector bundle over ${\mc U}_\alpha$. Then the stabilization map ${\rm Stab}_{F_\beta\alpha} (V_\alpha) \to \partial^\alpha V_\beta$ induces 
\beqn
{\bm \theta}_{\beta\alpha}: {\rm Stab}_{{\mc F}_{\beta\alpha}}(C_\alpha) \to \partial^\alpha C_\beta.
\eeqn
One checkes directly that the data $({\mc F}_{\beta\alpha}, {\bm \theta}_{\beta\alpha})_{\alpha \leq \beta}$ form a scaffolding in the D-presentation of $\ov{\mc M}{}_{pq}^+$. It should be pointed out that these derived orbifold charts are effective due to the geometric construction.

\begin{thm}\label{thm:normal-c-orient}
The D-chart lift of $(T^\floer)^+$ induced from $\{ C_{pq} \}$ equipped with the compatible collar structure and scaffolding can be upgraded to an oriented and normally complex derived orbifold lift by doing a further stabilization of the corresponding K-chart lift.
\end{thm}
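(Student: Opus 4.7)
The plan is to enlarge the stabilization produced by Theorem \ref{smoothing_theorem} by complex $G_{pq}$-representations (applied inductively along $\bA^\floer_{pq}$ exactly as in the scheme of Section \ref{induction_scheme}) so that the vertical stable complex isomorphism supplied by Lemma \ref{lem:handwaving} is realized by an actual $G_{pq}$-invariant complex structure on the stabilized vertical tangent bundle. Concretely, I would include into each $\bm R_{pq}$ the complex summand $V_{\uds p}^+ \oplus V_{\uds p}^- \oplus V_{\uds q}^+ \oplus V_{\uds q}^-$ (with trivial $G_{pq}$-action, since these asymptotic data depend only on the underlying periodic orbit and so are automatically $\Pi$-equivariant), together with an additional complex $G_{pq}$-space absorbing the bundle $I^{\rm vt}_{pq}$ and any further complex summand needed to make the compatibility homotopies on boundary strata strictly equal on the nose. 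Using Frobenius reciprocity in the category of complex $G_{pq}$-representations, this can be arranged compatibly with all the product embeddings $\phi_{\beta\alpha}$, and the cut-off interpolations between adjacent strata can be performed via the exponential map of the Hermitian structure group $U(\bm R_{pq})$, so that every bundle embedding appearing in the scaffolding stays complex-linear.

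Granted this further stabilization, the normal complex structure on the derived orbifold lift is obtained by superimposing two sources. Horizontally, Proposition \ref{base_normal_complex} endows $(B_{d_{pq}} \times \bm Q_{d_{pq}})/G_{pq}$ with a normal complex structure, and the projection $\widehat V_{pq}\to B_{d_{pq}}$ is a fiberwise smooth $C^1_{\rm loc}$ $G_{pq}$-bundle by Proposition \ref{prop:c1loc}; vertically, Corollary \ref{cor:handwaving} (applied after the complex stabilization above has trivialized the instability) provides a normal complex structure on the vertical tangent bundle. The direct sum of these gives a $\Gamma$-invariant complex structure on $N{\mc U}^\Gamma_{pq}$ for every finite $\Gamma\subset G_{pq}$, and an analogous analysis applied to $\widehat E_{pq}=\widehat O_{pq}\oplus \widehat Q_{pq}\oplus \widehat R_{pq}$ (the first summand being complex by construction of the obstruction spaces \eqref{eqn:obstruction-O}, the second by Proposition \ref{base_normal_complex}, the third by the complex stabilization just carried out) yields a $\Gamma$-invariant complex structure on $\check{\widehat{\mc E}}{}^\Gamma_{pq}$. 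The scaffolding subbundles ${\mc F}_{\beta\alpha}$ inherit complex structures as $G_\alpha$-invariant subbundles of the above, and all stabilization maps ${\bm\theta}_{\beta\alpha}$ respect the complex structures by the complex-linearity obtained in the first step. The orientation is then assembled by setting $(V_p^+,V_p^-):=(V_{\uds p}^+,V_{\uds p}^-)$ with orientation line ${\mf o}_p:={\mf o}_{(V_p^+,V_p^-)}$; the complex structures on $\widehat V_{pq}$ and $\widehat E_{pq}$ canonically orient $C_{pq}$, and the stable complex isomorphism of Lemma \ref{lem:handwaving} translates, via the usual index-gluing formula for Cauchy--Riemann operators under Floer concatenation, into the required isomorphisms \eqref{eqn:orient-line} and \eqref{eqn:orient-coh} (the scaffolding contribution ${\mf o}_{{\mc F}_{prq,pq}}\otimes {\mf o}_{{\mc F}_{prq,pq}}^\vee$ is canonically trivial because ${\mc F}_{prq,pq}$ is complex). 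The $\Pi$-equivariance is automatic since the asymptotic data $V_{\uds p}^\pm$ depend only on the underlying periodic orbit.

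The main obstacle is step one: managing the inductive complex stabilization so that the strict boundary-product compatibility of Definition \ref{defn:normal-C} holds simultaneously across all of $(T^\floer)^+$, not merely up to homotopy. Two points deserve care. First, Lemma \ref{lem:handwaving} only produces a homotopy of vector bundles, and each such homotopy must be absorbed into the stabilization along a codimension-one stratum $\alpha$ by enlarging the complex summand of $\bm R_{pq}$; the enlargements must themselves be chosen compatibly across deeper strata, which is again a Frobenius-reciprocity argument but now in the \emph{complex} representation ring. Second, the bundle embeddings constructed in Section \ref{induction_scheme} are not linear maps between representations but involve cut-off interpolations near collared corners; forcing these interpolations to preserve a complex structure requires performing the interpolation inside the Hermitian general linear group $GL_{\mb C}(\bm R_{pq})$ rather than the real one, which is possible because $GL_{\mb C}(\bm R)$ is still connected with contractible fibers over its real analogue. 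Once these two points are handled, the remaining verifications are routine bookkeeping along the induction on $\dep(\alpha)$.
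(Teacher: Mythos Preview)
There is a genuine gap in your treatment of the horizontal (base) direction. You invoke Proposition \ref{base_normal_complex} to supply a normal complex structure coming from $(B_{d_{pq}}\times \bm Q_{d_{pq}})/G_{pq}$, and then separately claim that the $\widehat Q_{pq}$-summand of the obstruction bundle is complex ``by Proposition \ref{base_normal_complex}''. But that proposition does not say either factor is normally complex on its own: its proof shows that for each finite $\Gamma\subset G_{pq}$ the normal bundle $NB_{d_{pq}}^\Gamma$ and the non-fixed part $\check{\bm Q}_{d_{pq}}^\Gamma$ are isomorphic \emph{real} $\Gamma$-representations, so only their \emph{direct sum} carries a canonical complex structure. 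In your setup the $\pi^* TB_{d_{pq}}$-piece sits in $T\widehat V_{pq}$ while the $\bm Q_{d_{pq}}$-piece sits in the obstruction bundle $\widehat E_{pq}$; the definition of normal complex structure (Definition \ref{defn21}(3)) requires $N{\mc U}^\Gamma$ and $\check{\mc E}^\Gamma$ to be complex \emph{separately}, so you cannot pair them across the two slots. The paper fixes exactly this by doing a \emph{further} stabilization by a second copy $\widehat Q'_{pq}$ of the trivial bundle with fiber $\bm Q_{d_{pq}}$: after that, $T\widehat{\widehat V}_{pq}$ contains $\pi^* TB_{d_{pq}}\oplus \widehat Q'_{pq}$ (normally complex by Proposition \ref{base_normal_complex}) and the obstruction bundle contains $\widehat Q_{pq}\oplus \widehat Q'_{pq}$ (two copies of a real space, hence complex). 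Your stabilization by $V_{\uds p}^\pm$, $V_{\uds q}^\pm$, and $I^{\rm vt}_{pq}$ does not touch this issue, since all of those carry the trivial $G_{pq}$-action and therefore contribute nothing to the normal directions.

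Relatedly, your vertical step is over-engineered. Corollary \ref{cor:handwaving} already says that $T^{\rm vt}V_{pq}$ descends to a normally complex orbibundle \emph{without} any further stabilization, precisely because the asymptotic spaces $V_{\uds p}^\pm$ carry the trivial $G_{pq}$-action and hence do not appear in $\check{(\cdot)}^\Gamma$; the paper simply cites this corollary rather than absorbing those spaces into $\bm R_{pq}$. So the delicate inductive complex-representation bookkeeping you flag as the ``main obstacle'' is largely unnecessary: the only further stabilizations the paper performs are (i) replacing $\bm R_{pq}$ by $\bm R_{pq}\oplus \bm i\,\bm R_{pq}$ to make it unitary, and (ii) the extra copy of $\bm Q_{d_{pq}}$ just described, both of which are compatible with boundary products for trivial reasons (the $\bm Q$-inclusions are induced by linear maps of representations).
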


\begin{proof}
In the case of a single chart $({\mc U}_{pq}, {\mc E}_{pq}, {\mc S}_{pq}, \psi_{pq})$, recall that ${\mc U}_{pq}$ is obtained from a quotient $\widehat V_{pq} / G_{pq}$, such that
\begin{enumerate}
    \item The space $\widehat V_{pq}$ is equal to the product $V_{pq} \times {\bm R}_{pq}$.
    \item There is a $C^1_{\rm loc}$ $G_{pq}$-bundle structure $\pi_{pq}: V_{pq} \to B_{pq}$ which ensures that the tangent space of the smoothing of $\widehat V_{pq}$ is isotopic to the direct sum
    \beq\label{eqn:direct-sum-C}
    R_{pq} \oplus T^{\rm vt}V_{pq} \oplus \pi^* TB_{pq},
    \eeq
    where we abuse the notation to denote the corresponding bundles obtained by the pullback under the natural projection map $\widehat V_{pq} \to V_{pq}$.
\end{enumerate}
The first simple obervation is that the stabilizing factors ${\bm R}_{pq}$ can be taken to be unitary representations. We can simply use ${\bm R}_{pq} \oplus {\bm i} {\bm R}_{pq}$ instead of a possibly real representation ${\bm R}_{pq}$. Moreover, the bundle inclusions $R_\alpha \oplus {\bm i} R_\alpha \hookrightarrow R_{pq}\oplus {\bm i}R_{pq}$ are then complex linear and the inner products on the trivial bundles $R_{pq}\oplus {\bm i} R_{pq}$ become Hermitian. 

We would like to do a further stabilization to the (already smoothed) K-chart lift. Indeed, we stabilize by the $G_{pq}$-representation ${\bm Q}_{d_{pq}}$. Let $\widehat Q_{pq}' \to \widehat V_{pq}$ be the trivial bundle (to be distinguished from the original $Q$-bundle). Notice that the bundle inclusions $Q_\delta \hookrightarrow Q_d$ over $B_\delta \subset B_d$ for all $\delta \in \bA_d$ is defined via a linear inclusion ${\bm Q}_\delta \to {\bm Q}_d$ the corresponding bundle. Then there are natural bundle inclusions 
\beqn
\widehat Q_\alpha' \to \widehat Q_{pq}'|_{\widehat V_\alpha}.
\eeqn
As this bundle map is induced from linear map between representation spaces, it is automatically smooth. Hence we obtained another smooth K-chart lift whose K-charts are
\beqn
\widehat{\widehat{K}}_{pq} = {\rm Stab}_{\widehat Q_{pq}'} \widehat K_{pq}.
\eeqn
It induces a new D-chart lift. 

Now we consider the normal complex structures on this new D-chart lift. Indeed, the obstruction bundles are now all complex. Indeed, 
\beqn
\widehat{\widehat{E}}_{pq} = \widehat{\widehat{O}}_{pq} \oplus \widehat{\widehat{R}}_{pq} \oplus \widehat{\widehat{Q}}_{pq} \oplus \widehat{\widehat{Q}}{}_{pq}'
\eeqn
and we give the last two summands the natural complex structure. 

Now the quotient orbifold 
\beqn
\widehat{\widehat{V}}_{pq}/ G_{pq}
\eeqn
is normally complex. Indeed, \eqref{eqn:direct-sum-C} becomes
\beqn
T \widehat{\widehat{V}}_{pq} \cong \big( R_{pq}\otimes {\mb C}\big) \oplus T^{\rm vt} V_{pq} \oplus (\pi_{pq}^* TB_{d_{pq}} \oplus \widehat{\widehat{Q}}{}_{d_{pq}}' ).
\eeqn
As shown in Proposition \ref{base_normal_complex}, the quotient of the last summand above is normally complex. Moreover, by Lemma \ref{lem:handwaving} and Corollary \ref{cor:handwaving}, the quotient of $T^{\rm vt} V_{pq}$ is also normally complex.



To obtain a normal complex structure on the D-chart lift, we further check:
\begin{enumerate}
    \item A complex structure on the scaffolding. Indeed, the original difference bundles
    $$
    \widehat F_{pq,\alpha} = \widehat O_{pq,\alpha} \oplus \widehat Q_{pq, \alpha} \oplus \widehat R_{pq, \alpha}
    $$
    is already complex as explained below. First, the bundle $\widehat O_{pq,\alpha}$ is complex because we can choose the metric on the $O$-bundles to be Hermitian. The factor $Q_{pq, \alpha}$ is complex because it consists of ``off-diagonal" Hermitian matrices. As mentioned above, the stabilization in the smoothing can be chosen to be complex representations, therefore $\widehat R_{pq, \alpha}$ also has a complex structure. Then the further stabilization by ${\bm Q}_{d_{pq}}$ gives an extra copy of $\widehat Q_{pq, \alpha}$ which is complex as well.
    
    \item Compatibility between the information on the product boundary charts and the information on the restriction of the chart to the boundary. This follows from the constructions \cite[Section 11]{Abouzaid_Blumberg} by incorporating the geometry of the broken trajectories from the thick-and-thin decomposition of the domains.
    \item The compatibility with collars follows from the construction.
\end{enumerate}
By examing conditions of Definition \ref{defn:normal-C} we obtained a normal complex structure on this (further stabilized) D-chart lift of $(T^\floer)^+$.

For any capped orbit $p \in T^\floer$, we define its associated virtual vector space to be
$$
(V_p^+, V_p^-) := (V_{\uds p}^+, V_{\uds p}^-),
$$
where the right hand side is constructed before Lemma \ref{lem:handwaving}. Then the existence of orientation (Definition \ref{defn:orient}) is a classical result in Floer theory \cite{Floer_Hofer_Orientation}. For a modern exposition, the reader could refer to \cite[Section C.13]{pardon-VFC}.
\end{proof}

\subsubsection{Constructing straightening}\label{subsubsec:straight}

We sketch how to construct a straightening on the D-chart lift which is compatible with the existing collar and scaffolding structures. This construction relies on the concrete geometric feature of the obstruction bundles. We do not know if instead the straightening can be constructed in a more abstract level.

We start with moduli spaces without boundary or corners. Let $C_{pq}$ be the D-chart. Then by the method of \cite[Lemma 3.15, Lemma 3.20]{Bai_Xu_2022}, one can construct a straightening on the pair $({\mc U}_{pq}, {\mc E}_{pq})$. Indeed, notice that there are a sequence of obstruction bundles ${\mc E}_{pq}^{(d)}$ for $d \geq d_{pq}$. We can actually construct, not just a connection on ${\mc E}_{pq}^{(d_0)}$ for the lowest $d_0 = d_{pq}$ (which is required for straightening), but actually a sequence of metrics on all ${\mc E}_{pq}^{(d)}$ and metric connections such that the natural inclusion ${\mc E}_{pq}^{(d)} \hookrightarrow {\mc E}_{pq}^{(d')}$ (for $d< d'$) preserves the metric and connection.

Inductively, suppose we have constructed such compatible structures for all $C_{rs}$ for $d_{rs} < d_{pq}$. These structures induce metrics and metric connections on the difference bundles ${\mc F}_{\beta\alpha}$. Now consider the chart $C_{pq}$. Over a lowest stratum $\alpha = pr_1 \cdots r_l q$, the existing structures induce metrics and metric connections on ${\mc E}_\beta^{(d)}|_{{\mc U}_\alpha}$ for all $d\geq d_{pq}$ and proper strata between $\alpha$ and $pq$. We would like to extend the metric and connection to ${\mc E}_{pq}^{(d)}$ over ${\mc U}_\alpha$ for all $d\geq d_{pq}$. Starting from the lowest $d_0 = d_{pq}$. Properties proved in Lemma \ref{obundle_property} shows that we can first extend the metric and metric connection to the sum ${\mc E}_{pr_l \to pq}^{(d_0)} + {\mc E}_{r_1 q \to pq}^{(d_0)}$. This will be compatible with all existing metrics and connections. Then we extend (arbitrarily but satisfying the requirement for straightening) to ${\mc E}_{pq}^{(d_0)}|_{{\mc U}_\alpha}$. Then it induces a metric and connection on the difference bundle ${\mc F}_{pq, \alpha}$. Via the stabilization map from ${\mc U}_\alpha$ to $\partial^\alpha {\mc U}_{pq}$, one obtains a metric on $\partial^\alpha {\mc U}_{pq}$ which is straightened. Next, we can inductively construct metrics and connections on ${\mc E}_{pq}^{(d)}$ for higher $d$. Once we finished for the lowest stratum $\alpha$, we can use the collar structure to extend to all nearby higher strata. For a next stratum $\beta$, we carry out the induction from $d_0$ above again. We omit the details.

\subsection{Proof of Theorem \ref{smoothing_theorem}}\label{subsection64}

To save notations, we remove all the superscript ``${}^+$'' which indicates the outer-collaring. Instead, we keep in mind that all objects and structures have corresponding collar structures or respect the collar structure.

\subsubsection{Smoothing charts without boundary}

Consider a moduli space $\ov{\mc M}_{pq}$ of Floer trajectories which has no codimension one strata. Let $K_{pq} = (G_{pq}, V_{pq}, E_{pq}, S_{pq})$ be the global Kuranishi chart constructed. Then $V_{pq}$ has no boundary or corner and one can apply Lashof's smoothing theory directly. In this case there is no outer-collaring to the chart. The projection $\pi_{pq}: V_{pq} \to B_{pq}$ is a $G_{pq}$-equivariant $C^1_{\rm loc}$ fiberwise smooth topological submersion. Then by the existence of $G$-equivariant fiberwise submersions (Lemma \ref{splitting_lemma}), one can choose a $G$-equivariant fiberwise submersion of $\pi_{pq}$ which induces an isotopy class of $G_{pq}$-vector bundle reductions of $T_\mu V_{pq}$. By Lashof's theorem (Theorem \ref{lashof_theorem}) on stable $G$-smoothings, there exists a stable $G_{pq}$-smoothing on $V_{pq}$, i.e., a finite-dimensional orthogonal representation ${\bm R}_{pq}$ of $G_{pq}$ and a $G_{pq}$-invariant smooth structure on the product
\beqn
\widehat V_{pq}:= V_{pq}\times {\bm R}_{pq}.
\eeqn
Let $\widehat \pi_{pq}: \widehat V_{pq} \to B_{pq}$ be composition $\widehat V_{pq} \to V_{pq} \to B_{pq}$. Then there is an isomorphism of $G_{pq}$-equivariant vector bundles
\beqn
T\widehat V_{pq} \cong T^{\rm vt} \widehat V_{pq} \oplus \widehat \pi_{pq}^* T B_{pq}. 
\eeqn
(However, the projection $\widehat \pi_{pq}$ may not be smooth and the fibers may not be smooth submanifolds.)

Moreover one can give smooth structures on the obstruction bundles. Indeed, one can inductively construct a structure of smooth equivariant $O(n)$-bundles on $\widehat O_{pq}^{(d)} \to \widehat V_{pq}$ for all $d \geq d_{pq}$ such that $\widehat O_{pq}^{(d)}$ is a smooth subbundle of $\widehat O_{pq}^{(d+1)}$ by smoothing the relevant equivariant classifying map. The bundle $\widehat R_{pq} \to \widehat V_{pq}$, which is the trivial bundle with fiber ${\bm R}_{pq}$ automatically has a smooth structure. We also use the inner product on ${\bm R}_{pq}$ to equip the bundle $\widehat R_{pq}$ with an inner product structure, which is a smooth inner product.

\subsubsection{Smoothing charts with boundary and corners}

Now consider a chart $K_{pq} = (G_{pq}, V_{pq}, E_{pq}, S_{pq})$. We have assumed that $K_{pq}$ has a collar structure which was actually constructed via outer-collaring. We need to construct a stable smoothing which is of ``product type'' near the boundary and which extends the existing stable smoothings. We first state the induction hypothesis. As an initial remark, we induct on the energy of the moduli spaces $d_{pq}$. 

\vspace{0.2cm}

\noindent {\it Induction Hypothesis I. Stabilization.} Suppose we have the following data.
\begin{enumerate}
\item An orthogonal $G_{rs}$-representation ${\bm R}_{rs}$ for each $rs$ with $d_{rs} < d_{pq}$. We define a bundle 
\beqn
R_{uv \to rs}^\sim \to \partial^{ruvs} V_{rs}\ {\rm whenever}\ ruvs \in \bA_{rs}^\floer
\eeqn
as follows. Consider the product $V_{ruvs} = V_{ru}\times V_{uv}\times V_{vs}$. Then the trivial bundle $R_{uv} = V_{uv}\times {\bm R}_{uv} \to V_{uv}$ is pulled back to $V_{ruvs}$. Via the stabilization map ${\rm Stab}_{F_{rs, ruvs}} (V_{ruvs}) \hookrightarrow \partial^{ruvs} V_{rs}$ this trivial bundle is pulled back to a trivial bundle. Then using the $G_{rs}$-equivariantization we obtain a not-necessarily-trivial vector bundle 
\beqn
R_{uv \to rs}^\sim \to \partial^{ruvs} V_{rs}\cong G_{rs}\times_{G_{ruvs}} ({\rm Stab}_{F_{rs, ruvs}} (V_{ruvs})).
\eeqn

\item A $G_{rs}$-equivariant bundle embedding 
\beqn
    \phi_{uv\to rs}: R_{uv \to rs} \to R_{rs}|_{\partial^{ruvs} V_{rs}}.
    \eeqn
    
\item For each $\alpha = ru_1 \cdots u_l s\in \bA_{rs}^\floer$, define a bundle map
\beqn
\phi_{rs, \alpha}: R_\alpha := R_{ru_1} \boxplus \cdots \boxplus R_{u_l s} \to R_{rs}
\eeqn
to be the sum of the maps $\phi_{ru_1 \to rs}, \ldots, \phi_{u_l s\to rs}$. We require that the bundle map is actually a bundle embedding. This induces bundle embeddings 
\beqn
\phi_{\beta\alpha}: R_\alpha \to R_\beta\ \alpha \leq \beta,\ \alpha, \beta \in \bA_{rs}^\floer.
\eeqn
We require that the collection $\{R_{rs}\}$ and the collection $\{\phi_{\beta\alpha}\}$ satisfy the requirements for stabilization of a K-chart lift (see Definition \ref{defn_stabilization}) up to level $d_{pq}$, i.e., the K-charts for moduli spaces $\ov{\mc M}_{rs}^\floer$ with $d_{rs} < d_{pq}$ are equipped with stabilizations induced from $R_{rs}$ with compatibility conditions satisfied.

\item We also assume we have a $G_{rs}$-invariant inner product on the trivial bundle $R_{rs} \to V_{rs}$ for all $d_{rs} < d_{pq}$ such that the bundle maps $\phi_{rs, \alpha}$ are all isometric.
\end{enumerate}
Moreover, we assume that the representations ${\bm R}_{rs}$ and bundle embeddings $\phi_{uv \to rs}$ are of the following particular forms. (One can see the discussion in Section \ref{induction_scheme} for why we impose the following requirement.)
\begin{enumerate}
    \item We have a $G_{rs}$-equivariant orthogonal decomposition 
    \beqn
    {\bm R}_{rs} = {\bm R}_{rs,-}\oplus {\bm R}_{rs, 0}
    \eeqn
    and a $G_{rs}$-equivariant orthogonal decomposition
    \beqn
    {\bm R}_{rs,-} = \bigoplus_{r< w < s} {\bm R}_{rs, w} = \bigoplus_{r< w < s} {\bm R}_{rw}^{rs} \oplus {\bm R}_{ws}^{rs}.
    \eeqn
    Intuitively, ${\bm R}_{rs, -}$ comes from lower stratum via Frobenius reciprocity, and ${\bm R}_{rs, 0}$ is chosen so that a relative smoothing exists. Then one has the (trivial) bundles $R_{rs, -}$, $R_{rs, 0}$, $R_{rw}^{rs}$, $R_{ws}^{rs}$ over $V_{rs}$. 
    
    \item Whenever $r<w <s$, the representations ${\bm R}_{rw}^{rs}$ resp. ${\bm R}_{ws}^{rs}$ contains ${\bm R}_{rw}$ resp. ${\bm R}_{ws}$ as orthogonal subrepresentations of $G_{rw}$ resp. $G_{ws}$. 
    
    \item For all $uv\prec rs$ (which implies $r \leq u < v \leq s$), abbreviate $\phi_{uv} = \phi_{uv \to rs}$. Then the range of $\phi_{uv}$ is contained in $R_{rs, -}$ and can be written as 
    \beqn
    \phi_{uv} = \bigoplus_{r<w<s} \phi_{uv, w},\ \phi_{uv, w}: R_{uv\to rs} \to R_{rs, w}.
    \eeqn
    Moreover, for each $w$, the map $\phi_{uv, w}$ satisfies the following conditions. We write $\phi_{uv, w}: R_{uv \to rs} \to R_{rs, w} = R_{rw}^{rs} \oplus R_{ws}^{rs}$ as the sum $\phi_{uv, rw} \oplus \phi_{uv, ws}$.
    \begin{enumerate}
        \item If $w\leq u$, then $\phi_{uv, rw} = 0$. Moreover, over the stratum $V_{rwuvs} \subset V_{ruvs}$, $\phi_{uv, ws}$ is induced from the bundle map 
        \beqn
        \xymatrix{ R_{uv \to ws} \ar[rr]^{\phi_{uv \to ws}} \ar[d] & & R_{ws} \ar[d] \\
                    V_{wuvs} \ar[rr] & & V_{ws} }
        \eeqn
        and the linear inclusion ${\bm R}_{ws} \hookrightarrow {\bm R}_{ws}^{rs}$.
        
        \item If $w \geq v$, then $\phi_{uv, ws} = 0$. Moreover, over the stratum $V_{ruvws} \subset V_{ruvs}$, $\phi_{uv, rw}$ is induced from the bundle map
        \beqn
        \xymatrix{ R_{uv \to rw} \ar[rr]^{\phi_{uv \to rw}} \ar[d] & & R_{rw} \ar[d] \\
                    V_{ruvw} \ar[rr] & & V_{rw}}
        \eeqn
        and the linear inclusion ${\bm R}_{rw}\hookrightarrow {\bm R}_{rw}^{rs}$.
        
        \item If $u < w < v$, then the following is true. Consider the inclusion
        \beqn
        {\bm R}_{uw} \oplus {\bm R}_{wv} \hookrightarrow {\bm R}_{uw}^{uv} \oplus {\bm R}_{wv}^{uv} \hookrightarrow {\bm R}_{uv}.
        \eeqn
        The orthogonal projection 
        \beqn
        {\bm R}_{uv} \to {\bm R}_{uw} \oplus {\bm R}_{wv}
        \eeqn
        is $G_{uwv}$-equivariant, hence induces a bundle map
        \beqn
        R_{uv}|_{V_{uwv}} \to R_{uw} \boxplus R_{wv}.
        \eeqn
        On the other hand, there are bundle embeddings
        \begin{align*}
        &\ R_{uw \to rw} \hookrightarrow R_{rw} \hookrightarrow R_{rw}^{rs},\ &\ R_{wv \to ws} \hookrightarrow R_{ws} \hookrightarrow R_{ws}^{rs}.
        \end{align*}
        Then over the stratum $V_{ruwvs} \subset V_{ruvs}$, the map $\phi_{uv, w}$ is the composition 
        \beqn
        R_{\uds{uv\to rs}}|_{V_{\uds{ruwvs}}} \to R_{\uds{uw\to rs}}\oplus R_{\uds{wv \to rs}} \to R_{rw}^{rs} \oplus R_{ws}^{rs}.
        \eeqn
        
        \item If none of the above happends, then $\phi_{uv, w} = 0$.
    \end{enumerate}
\end{enumerate}
Moreover, the bundles and bundle maps will be added with $\widehat{\cdot}$ when lifted to the stabilizations $\widehat V_{rs}$. Such lifts are canonical.

\vspace{0.2cm}

\noindent {\it Induction Hypothesis II. Fiberwise submersion.} Whenever $d_{rs}< d_{pq}$, we have chosen a $G_{rs}$-equivariant fiberwise submersion (Definition \ref{splitting_defn}) of $\widehat \pi_{rs}: \widehat V_{rs} \to B_{rs}$, which is a microbundle isomorphism
\beqn
T_\mu \widehat V_{rs} \cong T^{\rm vt}_\mu \widehat V_{rs} \oplus \widehat \pi_{rs}^* T_\mu B_{d_{rs}}
\eeqn
which is the canonical projection map on the second component. Together with the structure of $C_{\rm loc}^1$ fiberwise smooth $G_{rs}$-bundle on $\widehat \pi_{rs}$, this provides an isotopy class of vector bundle lifts 
\beq\label{lift}
T^{\rm vt} \widehat V_{rs} \oplus \widehat \pi_{rs}^* TB_{d_{rs}} \to T_\mu \widehat V_{rs},
\eeq
where the isotopy class is fixed by Equation \eqref{eqn:mu-split}.

\vspace{0.2cm}

\noindent {\it Induction Hypothesis III. Stable smoothings and bundle smoothings.} Whenever $d_{rs} < d_{pq}$, we have chosen a $G_{rs}$-invariant smoothing on $\widehat V_{rs}:= V_{rs}\times {\bm R}_{rs}$ in the $G_{rs}$-isotopy class corresponding to stable isotopy class of the vector bundle lift \eqref{lift}. Moreover, we have chosen a structure of smooth $G_{rs}$-equivariant $O(n)$-bundle on $\widehat O_{rs}^{(d)} \to \widehat V_{rs}$ for all $d \geq d_{rs}$ and a structure of smooth $G_{rs}$-equivariant $O(n)$-bundle on $\widehat R_{rs}$.

Before we state the conditions about these structures, we summarize a few consequences of these induction hypotheses. First, the stable smoothings induce, for each stratum $\alpha = pr_1 \cdots r_l q$, a $G_{\alpha}$-equivariant smooth structure on the product
\beqn
\widehat V_\alpha = \widehat V_{pr_1} \times \cdots \times \widehat V_{r_k q}
\eeqn
and hence a $G_{pq}$-equivariant smooth structure on $\widehat V_\alpha^\sim$ by taking the equivariantization. Second, the smooth structures on the vector bundles induce smooth structures on the product bundles $\widehat O_\alpha$ and $\widehat R_\alpha$. Third, as the bundle $Q_{rs}\to V_{rs}$ is trivial, the pullback $\widehat Q_{rs} \to \widehat V_{rs}$ is automatically smooth. Similarly, the bundle 
\beqn
\widehat Q_{rs, \alpha} = (\widehat Q_\alpha)^\bot \to \widehat V_\alpha
\eeqn
is smooth because it comes from a product bundle.

Now we state the conditions.  
\begin{enumerate}
    \item The bundle embedding
\beqn
\widehat \phi_{\beta\alpha}: \widehat R_\alpha \to \widehat R_\beta|_{\partial^\alpha \widehat V_\beta}
\eeqn
is smooth.\footnote{This is not automatic as this bundle embedding is not purely induced from linear maps between representation spaces.} This implies that the orthogonal complement 
\beqn
\widehat R_{rs, \alpha} \to \widehat V_\alpha
\eeqn
is a smooth bundle.

\item Whenever $d' > d \geq d_{rs}$, the natural bundle embedding $\widehat O_{rs}^{(d)} \to \widehat O_{rs}^{(d')}$ induced from \eqref{obstruction_inclusion} is smooth.

\item For all $d \geq d_{rs}$, the bundle embedding 
\beqn
\widehat \phi_{uv \to rs}: \widehat O_{uv \to rs}^{(d)} \to \widehat O_{rs}^{(d)} |_{\partial^{ruvs} V_{rs}}
\eeqn
is smooth. This implies that (when $d = d_{rs}$) the orthogonal complement 
\beqn
\widehat O_{rs,\alpha} \to V_\alpha
\eeqn
is a smooth bundle.

\item It follows that  
\beqn
\widehat F_{rs,\alpha}:= \widehat O_{rs, \alpha} \oplus \widehat Q_{rs,\alpha} \oplus \widehat R_{rs,  \alpha}
\eeqn
over the smooth manifold with corners $\widehat V_\alpha$ is smooth. Then we require that the stabilization map 
\beqn
\widehat \theta_{rs, \alpha}: {\rm Stab}_{\widehat F_{rs, \alpha}} ( \widehat V_\alpha)   \to \partial^\alpha \widehat V_{rs}
\eeqn
is a germ of diffeomorphisms onto an open set. These structures ensure that the projection map 
\beqn
\widehat \pi_{rs, \alpha}: \partial^\alpha V_{rs} \to \widehat V_\alpha^\sim
\eeqn
is smooth.

\item For all $d \geq d_{rs}$, the bundle isomorphism 
\beqn
\widehat \psi_{rs, \alpha}: \widehat \pi_{rs, \alpha}^* \Big( \widehat O_{rs}^{(d)}|_{\widehat V_\alpha^\sim} \Big) \to \widehat O_{rs}^{(d)} |_{\partial^\alpha V_{rs}}
\eeqn
is smooth.



\item The existing smoothings (on both domains and bundles) respect the corner structures. Namely, for all $rs$ with $d_{rs} < d_{pq}$, item (B) of Theorem \ref{smoothing_theorem} is satisfied.
\end{enumerate}
    
\vspace{0.2cm}

Now we start to construct the stabilized smoothings corresponding to the Kuranishi chart $K_{pq}$. 

\vspace{0.2cm}

\noindent {\it Inductive Construction I. We construct an orthogonal $G_{pq}$-space ${\bm R}_{pq}$ (which defines the trivial bundle $R_{pq} \to V_{pq}$) and bundle embeddings 
\beqn
\phi_{rs \to pq}: R_{rs \to pq} \to R_{pq}|_{\partial^{prsq} V_{pq}}
\eeqn
which, together with the existing stabilizations and bundle embeddings, satisfy the conditions required for stabilizations.}

\vspace{0.2cm}

\begin{enumerate}

\item Firstly, we need to find an orthogonal $G_{pq}$-representation ${\bm R}_{pq}$ such that the stabilization $R_{pq} = V_{pq} \times {\bm R}_{pq}$ receives embeddings from $R_\alpha$ for all $\alpha \in \bA^\floer_{pq}$.

To start, for each $p<w<q$, consider the $G_{pw}$-representation ${\bm R}_{pw}$ and the $G_{wq}$-representation ${\bm R}_{wq}$ granted by the induction hypothesis. By Frobenius reciprocity, there exist an orthogonal $G_{pq}$-representation ${\bm R}_{pw}^{pq}$ which contains ${\bm R}_{pw}$ as a subrepresentation of $G_{pw}$, and an orthogonal $G_{pq}$-representation ${\bm R}_{wq}^{pq}$ which contains ${\bm R}_{wq}$ as a subrepresentation of $G_{wq}$. Then define 
\beq\label{decomposition}
{\bm R}_{pq}:= \bigoplus_{p<w<q} {\bm R}_{pw}^{pq}\oplus {\bm R}_{wq}^{pq}
\eeq
which is an orthogonal $G_{pq}$-space. Now we define the bundle embeddings 
\beqn
\phi_{rs\to pq}: R_{rs \to pq} \to R_{pq}|_{\partial^{prsq} V_{pq}}.
\eeqn
We abbreviate the decomposition \ref{decomposition} as 
\beqn
{\bm R}_{pq} = \bigoplus_w {\bm R}_{pq, w} = \bigoplus_{p<w<q} {\bm R}_{pw}^{pq}\oplus {\bm R}_{wq}^{pq}.
\eeqn
Abbreviate the bundle embedding to be defined by $\phi_{rs}$. Then we define it to be the direct sum
\beqn
\phi_{rs} = \bigoplus_w \phi_{rs, w} = \bigoplus_w \phi_{rs, pw} \oplus \phi_{rs, wq}\ {\rm where}\ \phi_{rs, w}: R_{rs\to pq} \to R_{pq, w}|_{\partial^{prsq} V_{pq}}.
\eeqn
In fact, for each $w$, one can define the restriction of $\phi_{rs, w}$ to the corresponding stratum involving $w$, such as $pwrsq$, $prswq$, or $prwsq$ in the form described in {\it Induction Hypothesis I}. These are all codimension-1 boundary strata of $prsq$. Then using the collar structure of existing objects and a cut-off function which only depends on the collar coordinate, one can turn off the corresponding maps $\phi_{rs, w}$ once we go away from this corner. On the other hand, if $w = r$ or $w = s$, we just have a bundle embedding coming from the induction hypothesis. One can check that the newly constructed objects still satisfy the requirement for a stabilization up to level $d_{pq}$ (see Definition \ref{defn_stabilization}). 

\item Secondly, we would like to construct an inner product on $R_{pq}$. 

We start from a minimal $\alpha = pr_1 \cdots r_l q$. Notice that inside $R_{pq}|_{\partial^\alpha V_{pq}}$, there are a collection of embedded subbundles $R_{\beta\to pq} |_{\partial^\alpha V_{pq}}$ for all $\beta$ with $\alpha \leq \beta < pq$ with an inner product equipped. We first check that, for any pair $\beta, \beta'$ of such strata, their inner products agree on the overlap 
\beqn
R_{\beta\to pq} \cap R_{\beta' \to pq}\subset R_{pq}.
\eeqn
In fact the intersection $\partial^\beta V_{pq}\cap \partial^{\beta'} V_{pq}$ is a deeper stratum $\partial^{\beta\# \beta'} V_{pq}$. Here $\beta\# \beta'$ stands for the stratum described by the word which includes all the intermediate capped orbits in $\beta$ and $\beta'$ between $p$ and $q$. We can in fact check from the explicit construction that 
\beqn
R_{\beta \to pq} \cap R_{\beta' \to pq} = R_{\beta\# \beta' \to pq}
\eeqn
Hence by induction hypothesis, the inner products agrees on overlaps. 

Now there are two special codimension one strata, $\beta_1 = pr_1 q$ and $\beta_l = pr_l q$. We can check that 
\beqn
R_{\beta \to pq} \subset R_{\beta_1 \to p q} + R_{\beta_l \to pq}.
\eeqn
Hence we can define an inner product on the sum $R_{\beta_1 \to pq} + R_{\beta_l \to pq}$ which extends the existing ones. Then extend arbitrarily to $R_{pq}$ over this stratum. 

Inductively, we can use the collar structure near $\alpha$ to extend the inner product on $\partial^\alpha V_{pq}$ to a neighborhood. Then the same argument above can be applied to construct an inner product inductively on $R_{pq}$. 

\end{enumerate}

\vspace{0.2cm}

\noindent {\it Inductive Construction II. We construct smooth bundle structures on the obstruction bundles $\widehat O_{pq}^{(d)} \to \widehat V_{pq}$ for all $d\geq d_{pq}$ over $\partial \widehat V_{pq}$.} The construction is inductive on strata $\alpha = pr_1 \cdots r_l q$ of $pq$. We start with a minimal $\alpha$. Then there are the vector bundles
\beqn
O_\alpha^{(d)} \to V_\alpha = V_{pr_1} \times \cdots \times V_{r_l q},\ \forall d \geq d_{pq}.
\eeqn
Denote by its pullback to $\widehat V_\alpha$ by 
\beqn
\widehat O_\alpha^{(d)} \to \widehat V_\alpha.
\eeqn
The induction hypothesis granted a smooth $O(n)$-bundle structure on it which is the product of each individual factor. Notice that this is a subbundle of $\widehat O_{pq}^{(d)} |_{\widehat V_\alpha}$. We would like to extend this smooth structure to a smooth structure on $\widehat O_{pq}|_{\widehat V_\alpha}$. 

Start with $d = d_0 = d_{pq}$ and remember $\widehat O_{pq} = \widehat O_{pq}^{(d_0)}$. Consider any intermediate stratum $\beta = ps_1 \cdots s_m q$. Then one has the inclusion induced by \eqref{eqn:o-embedding}
\beqn
\widehat O_\alpha^{(d_0)} \subset \widehat O_\beta^{(d_0)} |_{\widehat V_\alpha} \subset \widehat O_{pq}|_{\widehat V_\alpha}.
\eeqn
Notice that by the induction hypothesis, the smooth structure on $\widehat O_\beta^{(d_0)} |_{\widehat V_\alpha}$ extends the smooth structure on $\widehat O_\alpha^{(d_0)}$. This requires that the smooth structure on $\widehat O_{pq}|_{\widehat V_\alpha}$ needs to be an extension of all such smooth structures. Indeed, consider two codimension one strata, $pr_1 q$ and $pr_l q$. Consider the two bundles 
\begin{align*}
&\ \widehat O_{pr_l \to pq}^{(d_0)} |_{\widehat V_\alpha},\ &\ \widehat O_{r_1 q \to pq}^{(d_0)} |_{\widehat V_\alpha}.
\end{align*}
Abbreviate them temporarily by $\widehat O_{r_l}^{(d_0)}$ and $\widehat O_{r_1}^{(d_0)}$ respectively. Then it is easy to see that for any intermediate stratum $\beta$, one has 
\beq\label{eqn:sum}
\widehat O_\beta^{(d_0)} |_{\widehat V_\alpha} \subset \widehat O_{r_l}^{(d_0)} + \widehat O_{r_1}^{(d_0)}.
\eeq
Then Lemma \ref{lemma637} below provides a smooth structure on the sum $\widehat O_{r_l}^{(d_0)} + \widehat O_{r_1}^{(d_0)}$. Then we extend this extension to $\widehat O_{pq}|_{\widehat V_\alpha}$ by choosing an arbitrary smooth structure on its orthogonal complement. 

\begin{lemma}\label{lemma637}
Let $M$ be a smooth manifold and let $E \to M$ be a topological vector bundle equipped with an inner product. Let $E_1, E_2 \subset E$ be subbundles such that $E = E_1 + E_2$. Moreover, suppose $E_1$ and $E_2$ are equipped with structures of smooth vector bundles with respect to which the restriction of the inner product is smooth, such that the intersection $E_1 \cap E_2$ is both a smooth subbundle of $E_1$ and a smooth subbundle of $E_2$. Then there exists a unique smooth bundle structure of $E$ which extends the smooth bundle structures on $E_1$ and $E_2$ such that the inner product is smooth.
\end{lemma}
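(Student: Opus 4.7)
The plan is to construct the desired smooth structure on $E$ by exhibiting it as the image of a canonical topological isomorphism from a direct sum of three smooth vector bundles. First I would use the hypothesis that the inner product restricts smoothly to each $E_i$ and that $E_1\cap E_2$ is a smooth subbundle of both $E_1$ and $E_2$: the orthogonal complement $F_1 := (E_1\cap E_2)^{\perp_1}$ taken inside $E_1$ is then a smooth subbundle of $E_1$, and analogously $F_2 := (E_1\cap E_2)^{\perp_2}\subset E_2$ is smooth. This yields smooth orthogonal decompositions $E_i = F_i \oplus (E_1\cap E_2)$ for $i=1,2$, where the two splittings agree on the common summand $E_1\cap E_2$.

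Next I would consider the continuous vector bundle map
$$
\Phi: F_1 \oplus F_2 \oplus (E_1\cap E_2) \longrightarrow E, \qquad (f_1,f_2,v)\longmapsto f_1+f_2+v,
$$
and verify that it is a topological isomorphism. Surjectivity follows from $E_1+E_2 = E$: any $e\in E$ decomposes as $e=e_1+e_2$ with $e_i\in E_i$, and further writing $e_i=f_i+v_i$ with $f_i\in F_i,\ v_i\in E_1\cap E_2$ produces a preimage $(f_1,f_2,v_1+v_2)$. For injectivity, suppose $f_1+f_2+v=0$; then $-f_2=f_1+v\in E_1$, so $f_2\in E_1\cap E_2$, but $f_2\in F_2=(E_1\cap E_2)^{\perp_2}$, forcing $f_2=0$, and symmetrically $f_1=0$, hence $v=0$. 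Having established that $\Phi$ is a topological isomorphism, I transport the evident product smooth vector bundle structure of $F_1\oplus F_2\oplus(E_1\cap E_2)$ to $E$ via $\Phi^{-1}$. Under this smooth structure, both $E_1=F_1\oplus(E_1\cap E_2)$ and $E_2=F_2\oplus(E_1\cap E_2)$ sit as smooth subbundles with the prescribed smooth structures, so the extension property is automatic.

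The main obstacle is verifying that the given inner product on $E$ is smooth with respect to this structure. Restricting to $E_1\times_M E_1$ and $E_2\times_M E_2$ is smooth by hypothesis, which handles the diagonal blocks $F_i\otimes F_i$, the block $(E_1\cap E_2)\otimes (E_1\cap E_2)$, and (by definition of the orthogonal complements) forces the cross pairings $F_i\otimes(E_1\cap E_2)$ to vanish identically. The only nontrivial component is the cross pairing
$$
F_1\otimes F_2\longrightarrow \underline{\mathbb R},\qquad (f_1,f_2)\longmapsto \langle f_1,f_2\rangle_E.
$$
I would control this by choosing smooth orthonormal frames $\{f_i\}$ of $F_1$ and $\{g_j\}$ of $F_2$ (smoothness within $E_1$, $E_2$ makes Gram--Schmidt available within each smooth bundle) and then reducing the question to smoothness of the scalar functions $x\mapsto\langle f_i(x),g_j(x)\rangle$; these are the Gram entries of the topologically orthogonal projection $F_2\to F_1$ induced by the inner product. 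Equivalently, one shows that the topological orthogonal projection $\pi_{E_1}:E\to E_1$ sends $F_2$ into $F_1$ (which follows from $\pi_{E_1}(f_2)\perp(E_1\cap E_2)$ inside $E_1$) and that this projection is smooth when $E$ is equipped with the structure $\Phi_*$.

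Finally, for uniqueness I would argue that any smooth structure on $E$ that extends those on $E_1,E_2$ and makes the inner product smooth must coincide with $\Phi_*$: smoothness of the inner product together with smoothness of the inclusion $E_1\hookrightarrow E$ forces the topological orthogonal complement of $E_1$ in $E$ to be a smooth subbundle, and the argument above identifies it canonically with $F_2$ through $\Phi$, recovering the direct-sum splitting and hence the smooth structure. I expect the hardest step to be the verification of smoothness of the $F_1\otimes F_2$ cross pairing; this is where one must leverage the inductively controlled structure produced in the previous steps (smooth splittings of the scaffolding bundles together with the collared product structure near $\partial^\alpha \widehat V_{pq}$) rather than pure linear algebra, so that the Gram entries $\langle f_i,g_j\rangle$ inherit smoothness from the smoothness already built into the stabilization maps on lower strata.
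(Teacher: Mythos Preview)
Your decomposition $F_1 \oplus F_2 \oplus (E_1\cap E_2)$ and addition map $\Phi$ are exactly the paper's construction (the paper writes $E_1', E_2'$ for your $F_1, F_2$); the entire proof there is the two-line assertion that this gives a ``canonical isometric bundle isomorphism'' $E' \to E$. You supply the omitted injectivity and surjectivity checks and, unlike the paper, correctly isolate the one delicate point: smoothness of the cross pairing $\langle F_1, F_2\rangle$.

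On that point you are right to be uneasy, and your instinct to fall back on the surrounding geometry is well founded, because the abstract hypotheses do not suffice. The paper's word ``isometric'' would require $F_1 \perp F_2$ in $E$, which fails already over a point: with $E=\mathbb{R}^3$ and the standard inner product, $E_1=\mathrm{span}(e_1,e_2)$, $E_2=\mathrm{span}(e_1,e_2+e_3)$, one gets $F_1=\mathrm{span}(e_2)$ and $F_2=\mathrm{span}(e_2+e_3)$, which are not orthogonal. More seriously, over $M=\mathbb{R}$ take $E=M\times\mathbb{R}^3$ with the standard inner product modified only by setting $\langle e_2,e_3\rangle_x=h(x)$ for a continuous non-smooth $h$ with $|h|<1$, and $E_i=M\times\mathrm{span}(e_1,e_{i+1})$ with their obvious smooth structures: every hypothesis of the lemma holds, the smooth structure on $E$ extending those on $E_1,E_2$ is forced to be the standard product structure, yet the full inner product is not smooth. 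So the lemma as stated is not literally true in the generality claimed; in the paper's actual application the obstruction bundles carry a canonical direct-sum splitting indexed by domain components of the underlying curve, and it is that extra structure---not pure linear algebra---that controls the cross terms. Your final paragraph has the right diagnosis; just be aware that you are patching a genuine gap in the stated lemma rather than merely filling in routine details.
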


\begin{proof}
Define a smooth vector bundle $E'\to M$ from $E_1$ and $E_2$ as 
\beqn
E':= E_1' \oplus E_2' \oplus (E_1 \cap E_2)
\eeqn
where $E_1'$ resp. $E_2'$ is the orthogonal complement of $E_1 \cap E_2$ in $E_1$ resp. $E_2$. Then there is an canonical isometric bundle isomorphism $E' \to E$. 
\end{proof}

As $\widehat O_{pq, \alpha}$ is the orthogonal complement of $\widehat O_\alpha^{(d_0)}$, the above smooth structure induces a smooth structure on $\widehat O_{pq} |_{\widehat V_\alpha} \to \widehat V_\alpha$, hence one on $\widehat O_{pq,\alpha} \to \widehat V_{\alpha}$. On the other hand, run the same argument as above, one can cook up a smooth $O(n)$-bundle structure on $\widehat R_{pq}|_{\widehat V_\alpha}$ which extends existing ones. Hence the orthogonal complement $\widehat R_{pq, \alpha}$ is a smooth $O(n)$-bundle. The bundle $\widehat Q_{pq, \alpha}$, which is essentially trivial, automatically carries a smooth structure. Hence the bundle 
\beq\label{eqn:smooth-scaffold}
\widehat F_{pq, \alpha} = \widehat O_{pq, \alpha} \oplus \widehat Q_{pq, \alpha} \oplus \widehat R_{pq, \alpha}
\eeq
is smooth. Then the stabilization map (see \eqref{eqn:sca-K})
\beqn
\widehat \theta_{pq, \alpha}: {\rm Stab}_{\widehat F_{pq, \alpha}} ( \widehat V_\alpha)  \to \partial^\alpha \widehat V_{pq}
\eeqn
and its equivariantization induce a smooth structure on the stratum $\partial^\alpha \widehat V_{pq}$. 

Next we can inductively construct smooth structures on the bundle $\widehat O_{pq}^{(d)}|_{V_{pq, \uds\alpha}}$ for all $d \geq d_0 = d_{pq}$. This is similar to the construction of inner products. We omit the details. Using equivariantizations and the bundle isomorphisms coming from the stabilization map associated to $\widehat F_{pq, \alpha}$, one obtains smooth structures on the bundle $\widehat O_{pq}^{(d)} |_{\partial^\alpha \widehat V_{pq}}$ for all $d \geq d_{pq}$. Using the collar structure, we extend slightly to a neighborhood of $\partial^\alpha \widehat V_{pq}$ inside $\widehat V_{pq}$. 

We can carry on the induction for all strata $\alpha = pr_1 \cdots r_l q$. Suppose we have constructed a smooth structure near the boundary of $\partial^\alpha \widehat V_{pq}$ and smooth bundle structures on $\widehat O_{pq}^{(d)}$ over this neighborhood. Now start with $d  = d_0 = d_{pq}$ and we would like to construct the smooth structure of $\partial^\alpha \widehat V_{pq}$ and the smooth bundle structure on $\widehat O_{pq}^{(d_0)}|_{\partial^\alpha \widehat V_{pq}}$. First, the induction hypothesis granted a smooth structure on the product $\widehat V_{\alpha}$. This is compatible with all the smooth structures near the boundary of $\partial^\alpha \widehat V_{pq}$. Then similar to above, one can have a smooth structure on the sum (the right hand side of \eqref{eqn:sum}). Then extend to a smooth structure on $\widehat O_{pq}|_{\partial^\alpha \widehat V_{pq}}$. The stabilization map then grants a smooth structure on $\partial^\alpha \widehat V_{pq}$ which extends the existing one near the boundary. The bundle isomorphism provides a smooth bundle structure of $\widehat O_{pq}|_{\partial^\alpha \widehat V_{pq}}$. Inductively, one extends the smooth bundle structure to $\widehat O_{pq}^{(d)}|_{\partial^\alpha \widehat V_{pq}}$ for all $d \geq d_0$. Then use the collar structure we extend the structures slightly into the interior of $\widehat V_{pq}$.

\vspace{0.2cm}

\noindent {\it Inductive Construction III. We construct a $G_{pq}$-equivariant vector bundle reduction of $T_\mu \widehat V_{pq}$ which is smooth near the boundary and is in the the isotopy class of the vector bundle reduction induced from the existing smoothings.} The previous construction induces a $G_{pq}$-smoothing on a neighborhood $N_\epsilon(\partial \widehat V_{pq})$ of the boundary, using the smooth structures on bundles of the form \eqref{eqn:smooth-scaffold} and equivariantization. Hence there is a corresponding (isotopy class) of vector bundle lift of the tangent microbundle $T_\mu \widehat V_{pq}$ restricted to this region. From the construction we can see that 
\beqn
T \left( N_\epsilon (\partial \widehat V_{pq}) \right) \cong \left( T^{\rm vt} \widehat V_{pq} \oplus \widehat \pi_{pq}^* T B_{d_{pq}}\right)|_{N_\epsilon( \partial \widehat V_{pq})}.
\eeqn 
On the other hand, we know that via a fiberwise submersion on $\widehat V_{pq}$ one can obtain another vector bundle lift of $T_\mu \widehat V_{pq}$ by the same vector bundle.

We claim that these two vector bundle lifts, when restricted to their common domains, are in the same stable $G_{pq}$-isotopy class of vector bundle lifts. We show why this is the case if $\alpha$ is a codimension 1 stratum. In this case, we know that 
\beqn
\partial^\alpha V_{pq} = {\rm Stab}_{F_{pq, \alpha}} (V_\alpha)
\eeqn
i.e., the boundary $\partial^\alpha V_{pq}$ is a disk bundle over $V_\alpha$. We can choose an (invariant) connection on $F_{pq, \alpha}$ to induce a smooth splitting 
\beqn
T F_{pq, \alpha} \cong T^{\rm vt} F_{pq, \alpha} \oplus \pi_{F_{pq, \alpha}}^* TV_\alpha.
\eeqn
It also induces a corresponding splitting on the microbundle level. Although the fibers of the projection onto $B_{\delta(\alpha)}$ are no longer smooth, the microbundle version of the above splitting can still be restricted to each fiber. Then combining with the microbundle splitting of $T_\mu V_\alpha$ one obtains
\beqn
T_\mu (\partial^\alpha V_{pq}) \cong \pi_{F_{pq,\alpha}}^* (F_{pq, \alpha})_\mu \oplus \pi_{F_{pq, \alpha}}^* T_\mu V_\alpha \cong T^{\rm vt}_\mu (\partial^\alpha V_{pq}) \oplus \pi^* T_\mu B_{\delta(\alpha)}.
\eeqn
Notice that this is an equivariant fiberwise submersion of $\partial^\alpha V_{pq} \to \partial^{\delta(\alpha)} B_d$ (see Definition \ref{splitting_defn}). The collar structure can be used to extend this fiberwise submersion trivially into the collar region. Using Lemma \ref{splitting_lemma} one can extend it globally to $V_{pq}$. Then using the fiberwise smooth structure we obtained a vector bundle lift of $T_\mu V_{pq}$. 

On the other hand, the vector bundle lift on $V_\alpha$ induced from smoothing is in the same stable isotopy class of vector bundle lifts as the one from fiberwise smooth structure and microbundle splitting. Therefore, one can find another orthogonal $G_{pq}$-space ${\bm R}_{pq}'$ such that the ${\bm R}_{pq}'$-stabilization of the interior vector bundle lift of $T_\mu\widehat V_{pq}$ is isotopic to the one from the boundary smooth structure. Also notice that one can choose a $G_{pq}$-invariant continuous cut-off function on the overlap of the two domains of the vector bundle lifts to interpolate this two vector bundle lifts using the isotopy. Note that this is one salient feature of the existence of collar structures: we are free to interpolate between different data over the collar regions. Therefore, we have obtained a vector bundle lift of $T_\mu \widehat V_{pq}$ which is smooth near the boundary. 

In general, when $\alpha$ is not a codimension 1 stratum, there are many intermediate stratum. However, one can choose connections on all the intermediate difference bundles $F_{\beta\alpha}$ compatibly, utilizing Lemma \ref{obundle_property}. This still allows us to build the interior microbundle splitting which extends the boundary ones. We omit the details.

\vspace{0.2cm}

\noindent {\it Inductive Construction IV. Smoothing for $pq$.} In the previous steps, we have obtained a smooth structure on a neighborhood of $\partial \widehat V_{pq}$, a smooth $O(n)$-bundle structure of $\widehat O_{pq}^{(d)}$ over this neighborhood, and a $G_{pq}$-equivariant vector bundle reduction 
\beqn
\rho_{pq}: (T^{\rm vt} \widehat V_{pq} \oplus \widehat \pi_{pq}^* B_{pq} )_\mu \to T_\mu \widehat V_{pq}
\eeqn
which is smooth near $\partial \widehat V_{pq}$. Then one can apply the relative version of Lashof's $G$-smoothing theorem (see Appendix \ref{appendixb} and Theorem \ref{thmb3}) to obtain another $G_{pq}$-orthogonal space ${\bm R}_{pq, 0}$ and a $G_{pq}$-smoothing on ${\rm Int} \widehat{V_{pq}}\times {\bm R}_{pq, 0}$ which coincides with the stabilization of the existing smoothing. Redefine ${\bm R}_{pq}$ by taking direct sum with ${\bm R}_{pq, 0}$ and redefine $\widehat V_{pq}$, $\widehat O_{pq}^{(d)}$ etc. Moreover, one can extend the smooth structures on $\widehat O_{pq}^{(d)}$ inductively to the interior of $\widehat V_{pq}$. Furthermore, we extend the existing inner product on the trivial bundle $R_{pq}$ to the whole $\widehat V_{pq}$ smoothly.

\vspace{0.2cm}

Finally, we see that we have constructed a smoothing for any K-chart $K_{pq}$ with energy $d_{pq}$. Furthermore, they satisfy the induction hypotheses listed at the beginning. Therefore, Theorem \ref{smoothing_theorem} holds. \qed

\section{Constructions for PSS, SSP, and the homotopy}\label{sec:pss}

In this subsection, we discuss K-charts for PSS, SSP, and continuation type moduli spaces. The constructions here are carried out almost verbatim as the case of moduli spaces of Floer trajectories. Therefore, our main purpose here is to fix the notations, and only the key modifications are presented in detail.

\subsection{PSS bimodule and SSP bimodule}

\subsubsection{Thimbles and auxiliary moduli spaces}

\begin{defn}\label{def:thimble}\hfill
\begin{enumerate}
    \item A \textbf{prestable PSS thimble} is a triple $(\Sigma, \Sigma_{\text{PSS}}, {\bf L})$ where $\Sigma$ is a genus $0$ prestable curve with two marked points $z_-, z_+$, with $\Sigma_{\text{PSS}}$ being a distinguished horizontal irreducible component of $\Sigma$, and ${\bf L}=(\L_i)$ where $\L_i$ is a lateral line on each horizontal component lying between $\Sigma_{\text{PSS}}$ (included!) and the marked point $z_+$ (see Figure \ref{fig:my_label}).
    
    \begin{figure}[h]
        \centering
        \includegraphics{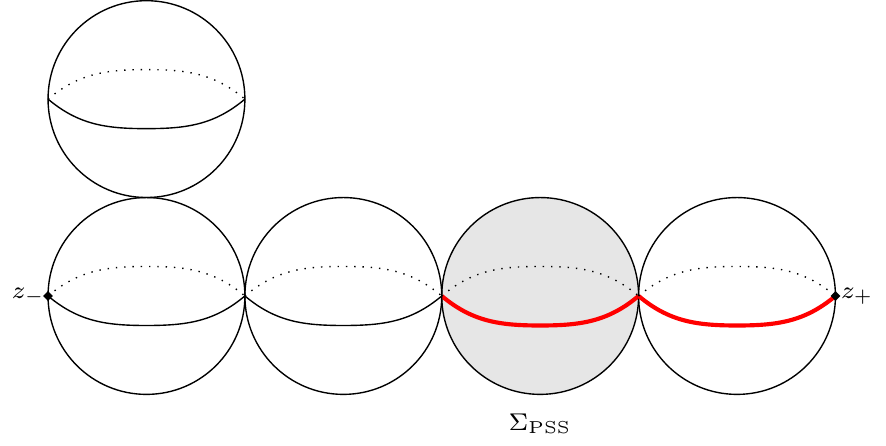}
        \caption{An example of prestable PSS thimble. The red curves are the lateral lines and the gray sphere is $\Sigma_{\rm PSS}$ on which we interpolate between the Hamiltonian $H$ and a constant function.}
        \label{fig:my_label}
    \end{figure}
    \item A \textbf{prestable SSP thimble} is a triple $(\Sigma, \Sigma_{\text{SSP}}, {\bf L})$ where $\Sigma$ is a genus $0$ prestable curve with two marked points $z_-, z_+$, with $\Sigma_{\text{SSP}}$ being a distinguished horizontal irreducible component of $\Sigma$, and ${\bf L}=(\L_i)$ where $\L_i$ is a lateral line on each horizontal component lying between the marked point $z_-$ and $\Sigma_{\text{SSP}}$.
\end{enumerate}
\end{defn}

By imposing extra marked points and stability conditions, one can define the notion of stable marked PSS/SSP thimble as in Definition \ref{stablecylinder}. Given an integer $d>0$, the moduli space of stable PSS resp. SSP thimbles in $\mb{CP}^d$ is the set of equivalence classes of objects
\beqn
(\Sigma, \Sigma_{\rm PSS}, {\bf L}, u),\ \ \ \ \ {\rm resp.}\ \ \ \ \ (\Sigma, \Sigma_{\rm SSP}, {\bf L}, u)
\eeqn
where $(\Sigma, \Sigma_{\text{PSS}} / \Sigma_{\text{SSP}}, {\bf L})$ is a prestable PSS/SSP thimble and $u: \Sigma \to \mb{CP}^d$ is a stable map. We define ${\mc F}^{\text{PSS}}_{0,2}(d)$ to be the subset of moduli spaces of stable PSS thimbles such that the underlying map $u: \Sigma \to \mb{CP}^d$ is not contained in any hyperplane, and ${\mc F}^{\text{SSP}}_{0,2}(d)$ can be defined similarly. These two moduli spaces are smooth manifolds with corners. Just as Lemma \ref{lem:r-blowup}, the following statements hold.
\begin{enumerate}
    \item The natural forgetful map ${\mc F}^{\text{PSS}}_{0,2}(d), {\mc F}^{\text{SSP}}_{0,2}(d) \to {\mc F}_{0,2}(d)$ is smooth.
    \item The evaluation maps $\text{ev}_{\pm}$ at the marked points $z_{\pm}$ are smooth and stratified submersive onto $\mb{CP}^d$.
    \item By intersecting with $d(d+2)$ generic hyperplanes in $\mb{CP}^d$, the moduli spaces ${\mc F}^{\text{PSS}}_{0,2}(d), {\mc F}^{\text{SSP}}_{0,2}(d)$ are locally diffeomorphic to a real blowup of the Deligne--Mumford space $\ov{\mc{M}}_{0,d(d+2)}$.
\end{enumerate}
Note that we do not necessarily equip all the horizontal components of $(\Sigma, {\bf L})$ with a lateral line, so the real blowup in (3) is different from the one in Lemma \ref{lem:r-blowup}.

The spaces ${\mc F}_{0,2}^{\rm PSS}(d)$ and ${\mc F}_{0,2}^{\rm SSP}(d)$ are stratified by the same poset $\bA_d$ of ordered partitions of $d$ (with the same depth function, see Notation \ref{notation:G}). However, the geometric meaning of an element $\delta = (d_0, \ldots, d_l) \in \bA_d$ is different. For example, in the PSS case, the integer $d_0$ represents the total degree of the components between $z_-$ and $\Sigma_{\rm PSS}$ (included). 

\begin{notation}

\begin{enumerate}

\item Let $p,q$ be capped 1-periodic capped Hamiltonian orbits. Introduce the integers
\begin{align*}
&\ d_{\bullet q}:= {\mc A}_H(q) - C^\pss,\ &\ d_{p\bullet}:= C^\ssp - {\mc A}_H(p)
\end{align*}
where $C^\pss$ resp. $C^\ssp$ is the constant chosen to define the $\pss$ resp. $\ssp$ moduli spaces (see Subsection \ref{subsection_pss}). 

\item Extra symmetry groups
\beqn
G^\pss_{\bullet q}:= \{ g \in PU(d_{\bullet q}+1) \ |\ g([1, 0, \ldots, 0]) = [1, 0, \ldots, 0]\in \mb{CP}^{d_{\bullet p}}  \} \cong U(d_{\bullet p})
\eeqn
\beqn
G_{p \bullet}^\ssp:= \{ g \in PU(d_{p \bullet} +1)\ |\ g([1, 0, \ldots, 0]) = [1, 0, \ldots, 0]\in \mb{CP}^{d_{p \bullet}} \} \cong U(d_{p\bullet} )
\eeqn
where the isomorphisms are induced from \eqref{eqn52}.

\item Homogeneous posets with partial order induced from refinement of words
$$
\bA_{\bullet q}^\pss := \{ \bullet r_1 \cdots r_l q | r_1 < \cdots < r_l < p \text{ as objects in } T^\floer\}
$$
$$
\bA_{p \bullet}^\ssp := \{ p r_1 \cdots r_l  \bullet | p < r_1 < \cdots < r_l \text{ as objects in } T^\floer\},
$$
with depth functions
$$
\dep(\bullet r_1 \cdots r_l q) = l, \ \dep(p r_1 \cdots r_l  \bullet) = l.
$$
The concatenation of words induces maps between homogeneous posets
$$
\bA_{\bullet p}^\pss \times \bA^\floer_{pq} \to \bA_{\bullet q}^\pss
$$
$$
\bA^\floer_{pq} \times \bA_{q \bullet}^\ssp \times  \to \bA_{p \bullet}^\ssp.
$$
There are natural maps of posets 
$$
\delta^\pss: \bA_{\bullet q}^\pss \to \bA_{d_{\bullet q}}, \ \delta^\ssp: \bA_{p \bullet}^\ssp \to \bA_{d_{p \bullet}}
$$
defined by
$$
\delta^\pss(\bullet r_1 \cdots r_l q) := (d_{\bullet r_1}, \dots, d_{r_l q}), \ \delta^\ssp(p r_1 \cdots r_l \bullet) := (d_{p r_1}, \dots, d_{r_l \bullet}),
$$
such that the following diagrams commute.
\beqn
    \vcenter{ \xymatrix{ \bA_{\bullet p}^\pss \times \bA^\floer_{pq} \ar[r] \ar[d]_{\delta^\pss \times \delta} & \bA_{\bullet q}^\pss  \ar[d]^{\delta^\pss}\\
                \bA_{d_{\bullet p}} \times \bA_{d_{pq}} \ar[r] & \bA_{d_{\bullet q}} }  }
\eeqn
\beqn
    \vcenter{ \xymatrix{ \bA^\floer_{pq} \times \bA_{q \bullet}^\ssp \ar[r] \ar[d]_{\delta \times \delta^\ssp} & \bA_{p \bullet}^\ssp  \ar[d]^{\delta^\pss}\\
                \bA_{d_{pq}} \times \bA^\ssp_{d_{q \bullet}} \ar[r] & \bA_{d_{p \bullet}} }  }
\eeqn


\item For an index $\alpha = \bullet r_1 \cdots r_l p \in \bA^\pss_{\bullet q}$, there is an embedding of Lie groups
$$ G^{\text{PSS}}_{\alpha} := G^{\text{PSS}}_{\bullet r_1} \times \cdots \times G_{r_l q} \hookrightarrow G^{\text{PSS}}_{\bullet q} = G^{\text{PSS}}_{\bullet q} $$
as from Notation \ref{notation:G}, which further induces an embedding $G^{\text{PSS}}_{\alpha} \hookrightarrow G^{\text{PSS}}_{\beta}$ for any pair $\alpha \leq \beta$ in $\bA_{\bullet p}^\pss$. Analogously, there are embeddings of Lie groups $G^{\text{SSP}}_{\alpha} \hookrightarrow G^{\text{SSP}}_{\beta}$ where $\alpha \leq \beta$ are in $\bA^\ssp_{p \bullet}$.
\item The system of auxiliary moduli spaces
\beqn
B^{\text{PSS}}_{d}:= \big\{ x = [\Sigma, \Sigma_{\text{PSS}}, {\bf L},  u] \in {\mc F}^{\text{PSS}}_{0,2}(d)\ |\ \ev_-(x) = [1, 0, \ldots, 0] \in \mb{CP}^{d} \big\},
\eeqn
\beqn
B^{\text{SSP}}_{d}:= \big\{ x = [\Sigma, \Sigma_{\text{SSP}}, {\bf L},  u] \in {\mc F}^{\text{SSP}}_{0,2}(d)\ |\ \ev_-(x) = [1, 0, \ldots, 0] \in \mb{CP}^{d} \big\},
\eeqn
where $\ev_{-}(x)$ is the image of the evaluation map of $u$ at the marked point $z_-$.
Each $B^{\text{PSS}}_d$ resp. $B^{\text{SSP}}_d$ is a smooth $\bA_d$-manifold and has a $G_d$-action. 

\item For $\alpha = \bullet r_1 \cdots r_l q \in \bA^\pss_{\bullet q}$ (resp. $p r_1 \cdots r_l \bullet \in \bA^\ssp_{p \bullet}$), define
$$
B^\pss_{\alpha} := B^\pss_{d_{\bullet r_1}} \times \cdots \times B_{d_{r_l q}}
$$
$$
\text{resp. } B^\ssp_{\alpha} := B_{d_{p r_1}} \times \cdots \times B^\ssp_{d_{r_l \bullet}}.
$$
Then $B^\pss_{\alpha}$ (resp. $B^\ssp_{\alpha}$) has a smooth $G_{\alpha}^\pss$ (resp. $G_{\alpha}^\ssp$) action.

\item Just as Equation \eqref{eqn:embedding}, under the group embedding $G^\pss_{\alpha} \hookrightarrow G_{\bullet q}^\pss$, there is an equivariant embedding
$$
B^\pss_{\alpha} \hookrightarrow B^\pss_{\bullet q},
$$
which is part of the data of the embeddings of the form
$$
B^\pss_{\alpha} \hookrightarrow B^\pss_\beta
$$
for any $\alpha \leq \beta$ under the group embedding $G_{\alpha}^\pss \hookrightarrow G_{\beta}^\pss$. This fact is also true after replacing PSS by SSP.
\end{enumerate}
\end{notation}
 
\subsubsection{Description of global charts}
The following statement allows to construct framings from solutions to the PSS/SSP equations. The proof is the same as the proof of Corollary \ref{HK2} using the integrality assumptions.

\begin{prop}
Let $(\Sigma, \Sigma_{\rm PSS}, {\bm L})$ be a prestable PSS thimble, and suppose that
$$
u: \Sigma \to M
$$
is a {\bf stable PSS thimble}, i.e. the restriction of $u$ to the irreducible component $\Sigma_{\rm PSS}$ solves the PSS equation \eqref{eqn:pss}, its restriction to the cylindrical components on the right of $\Sigma_{\rm PSS}$ solves the Floer equation \eqref{Floereqn}, while its restriction to other components solves the genuine $J$-holomorphic curve equation, and $u$ has finite automorphism group. Consider the $2$-form $\Omega_{u, \pss}$ defined by
\beq\label{eqn:pss-2-form}
\Omega_{u, \pss} := \left\{ \begin{array}{lr} 
\displaystyle u^* \omega - d( H_{s,t}^\pss (u) dt) & \text{ on the distinguished component $\Sigma_\pss$} \\
\displaystyle  u^* \omega - d( H_t (u) dt) & \text{ on cylindrical components on the right of $\Sigma_\pss$} \\
\displaystyle u^* \omega & \text{ on other irreducible components.} 
\end{array} \right.
\eeq
Then there exists a holomorphic Hermitian line bundle on $\Sigma$ whose curvature form is $-2 \pi {\bm i}\Omega_{u, \pss}$. Moreover, this line bundle is unique up to isomorphism.

Similarly, if $(\Sigma, \Sigma_{\rm SSP}, {\bm L})$ is a prestable SSP thimble and $u: \Sigma \to M$ is a {\bf stable SSP thimble} which solves the SSP equation \eqref{eqn:ssp} on $\Sigma_{\ssp}$, there exists a Hermitian line bundle over $\Sigma$, unique up to isomorphism, such that the curvature of its Chern connection is equal to $-2 \pi {\bm i}\Omega_{u, \pss}$, where
\beqn
\Omega_{u, \ssp} := \left\{ \begin{array}{lr} 
\displaystyle u^* \omega - d( H_{s,t}^\ssp (u) dt) & \text{ on the distinguished component $\Sigma_\ssp$} \\
\displaystyle  u^* \omega - d( H_t (u) dt) & \text{ on cylindrical components on the left of $\Sigma_\ssp$} \\
\displaystyle u^* \omega & \text{ on other irreducible components.} \qed
\end{array} \right.
\eeqn
\end{prop}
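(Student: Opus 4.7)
The plan is to reduce the statement directly to Corollary \ref{HK2}, since that result already produces a Hermitian line bundle on a nodal prestable curve from a 2-form that is integral on each component and exponentially decaying on cylindrical ends. Under this reduction, the work to be done is to verify the two hypotheses of Corollary \ref{HK2} for the 2-form $\Omega_{u, \pss}$ defined in \eqref{eqn:pss-2-form}, and then to invoke the uniqueness from Corollary \ref{HK2} to conclude. The SSP case will be handled in an entirely parallel fashion, swapping the roles of $\pm\infty$ and replacing $C^\pss$ by $C^\ssp$, so I would only write out the PSS case in detail.

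For the decay hypothesis, on any cylindrical component on the right of $\Sigma_\pss$, the map $u$ converges exponentially to a nondegenerate $1$-periodic orbit of $H$ at each puncture (by the standard Floer asymptotic analysis under Hypothesis \ref{hyp51}). Hence $\Omega_{u,\pss} = u^*\omega - d(H_t(u)\,dt)$ has derivatives of all orders decaying like $e^{-\delta |s|}$ with a uniform $\delta>0$. On $\Sigma_\pss$ near $z_+$ the same asymptotic analysis applies to the PSS equation \eqref{eqn:pss} (since $H^\pss_{s,t}=H_t$ for $s\geq 1$), and near $z_-$ one has $H^\pss_{s,t}\equiv C^\pss$ and the equation reduces to $\bar\partial_J u = 0$; by Gromov removal of singularities $u$ extends smoothly across $z_-$, so $\Omega_{u,\pss}$ extends smoothly across $z_-$ and in particular satisfies \eqref{eqn41} trivially near $-\infty$. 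Spherical components require no decay check.

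The only non-routine point is the integrality hypothesis, which I would verify component by component. On each spherical component, $\Omega_{u,\pss} = u^*\omega$ is integral because $[\omega]$ lies in the image of $H^2(M;\mathbb{Z})\to H^2(M;\mathbb{R})$. On each cylindrical component between capped orbits $r < s$, Stokes' theorem gives
\[
\int \Omega_{u,\pss} = \mc{A}_H(s) - \mc{A}_H(r) = d_{rs} \in \mathbb{Z}
\]
by Hypothesis \ref{hyp51}. Finally, on $\Sigma_\pss$ itself, letting $r$ be the asymptotic capped orbit at its right end (or $r=q$ if there is no further breaking), Stokes' theorem together with the asymptotic value $C^\pss$ at $z_-$ (using that $u$ extends holomorphically there) and the asymptotic Hamiltonian $H_t$ at $z_+$ yields
\[
\int_{\Sigma_\pss} \Omega_{u,\pss} = \mc{A}_H(r) - C^\pss \in \mathbb{Z},
\]
since both $\mc{A}_H(r)$ and $C^\pss$ are integers by Hypothesis \ref{hyp51} and the choice $C^\pss\in\mathbb{Z}$ made in \eqref{eqn44x}. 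The remaining bookkeeping is to check that sphere bubbles attached to $\Sigma_\pss$ (which break $\Sigma_\pss$ into a nodal union of spheres) redistribute this integer among integers by the argument just applied to each bubble.

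With both hypotheses verified, Corollary \ref{HK2} produces the required holomorphic Hermitian line bundle $L_{u,\pss}\to\Sigma$ and guarantees its uniqueness up to isomorphism. The main obstacle I anticipate is purely notational, namely keeping track of the combinatorics of bubbling on $\Sigma_\pss$ and at the PSS/Floer nodes when writing out the Stokes computation; this is mildly tedious but follows the same template as the proof of Corollary \ref{HK2}. Once written, reversing the roles of the ends and substituting $C^\ssp$ for $C^\pss$ yields the SSP assertion verbatim.
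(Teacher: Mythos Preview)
Your proposal is correct and follows exactly the approach the paper indicates: the paper simply states that ``the proof is the same as the proof of Corollary~\ref{HK2} using the integrality assumptions'' and places a \qed\ in the statement, so your explicit verification of the exponential decay and componentwise integrality hypotheses (via Hypothesis~\ref{hyp51} and the integer choice of $C^{\pss}$) is precisely what is intended. One small clarification: $\Sigma_{\pss}$ is by definition a single irreducible component, so your remark about bubbles ``breaking $\Sigma_{\pss}$ into a nodal union'' is off---the sphere bubbles are separate irreducible components already covered by your $u^*\omega$ case.
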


Given a stable PSS (resp. SSP) thimble, let us denote the line bundle constructed as above by $L_{u, \pss}$ (resp. $L_{u,\ssp}$). Now we can introduce the concept of framed curves in the context of PSS/SSP map.

\begin{defn}
Given a moduli space $\ov{\mc M}{}^\pss_{\bullet q}$ of stable PSS thimbles and a stratum $\alpha = \bullet r_1 \cdots r_l q$ which corresponds to thimbles breaking at the orbits $r_1, \dots, r_l$, denoting $d=d_{\bullet q}$, a {\bf framed PSS thimble} (of type $\alpha$) is a tuple $(u, \Sigma, F)$ where 
\begin{enumerate}
    \item $\Sigma$ is a prestable PSS thimble with $l$ horizontal levels on the right of $\Sigma_\pss$. (see Definition \ref{def:thimble}). 
    
    \item $u: \Sigma \to M$ is a smooth map whose restriction to each cylindrical component on the right of $\Sigma_\pss$ (included) converges to periodic orbits prescribed by the capped orbits $p, r_1,\dots, r_l, q$ at $\pm\infty$ in an exponential rate, and the topological energy of these levels are prescribed accordingly. Moreover, the $2$-form $\Omega_{u, \pss}$ defined in Equation \eqref{eqn:pss-2-form} is non-negative and is strictly positive on each unstable component of $u$.
    
    
    \item $F = (f_0, f_1, \ldots, f_d)$ is basis 
    of global sections of the line bundle $L_{u,\pss}$ constructed using the $2$-form \eqref{eqn:pss-2-form}. Moreover, the induced holomorphic map 
    \beq
    \iota_{F, \pss}: \Sigma \to \mb{CP}^d,\ w \mapsto [f_0(w), \ldots, f_d(w)]
    \eeq
    is a stable map, which represents a point $[\iota_{F,\pss}] \in {\mc F}^\pss_{0,2}(d)$. Denote by 
    \beqn
    \tilde \iota_{F, \pss}: \Sigma \to {\mc C}^\pss = {\mc C}^\pss_{0,2}(d)
    \eeqn
    to be the identification between $\Sigma$ and the fiber of the universal curve ${\mc C}^\pss_{0,2}(d) \to {\mc F}^\pss_{0,2}(d)$ over $[\iota_{F,\pss}]$. This will also be called the domain map.
\end{enumerate}
\end{defn}

By replacing PSS with SSP, we obtain the definition of {\bf framed PSS thimble}. Isomorphisms of framed PSS/SSP thimbles are defined in the same way as Definition \ref{defn:isomorphism_framed_cylinder}. The compact Lie group $PU(d+1)$ acts on the space of framed PSS/SSP thimbles by changing the framing. The stabilizer of this $PU(d+1)$ action at $(u, \Sigma, F)$ agrees with the automorphism group of the map $u: \Sigma \to M$ if $u$ is a stable PSS/SSP thimble.

Now we can repeat the global thickening construction in the setting of PSS/SSP maps as we did in the case for Floer trajectories. Let $p$ be a capped $1$-periodic orbit. Fix $d = d_{\bullet q}$. Let 
\beqn
k_1 < \cdots < k_d
\eeqn
be a sequence of positive integers. Define $V_{\bullet q}^\pss:= V_{\bullet q}^\pss (k_1, \ldots, k_d)$ be the moduli space of tuples
\beqn
(\Sigma, u, F, \eta_1, \ldots, \eta_d)
\eeqn
where $\Sigma$ is a prestable PSS thimble, $u: \Sigma \to M$ is a smooth map with topological energy $d$ converging to $q$ at the marked point $z_+$, $F$ is a frame of the line bundle $L_{u,\pss}\to \Sigma$ inducing a holomorphic map $\iota_{F,\pss}: \Sigma \to \mb{CP}^d$ such that the framing $F=(f_0, \dots, f_d)$ satisfies $[f_0(z_-): \cdots : f_d(z_-)] = [1:0:\cdots :0]$, and the corresponding domain map $\tilde \iota_{F,\pss}: \Sigma \to {\mc C}^\pss$, and (denoting by $\tilde u = (u, \tilde \iota_{F, \pss}): \Sigma \to M \times {\mc C}^\pss$)
\beqn
\eta_i\in \tilde{u}^* E_i = \Gamma \left( \ov{\rm Hom} \big( \iota_{F,\pss}^* T\mb{CP}^d, u^* TM \oplus \bigoplus_{j=1}^{i-1} \tilde u^* E_j \big) \otimes \iota_{F,\pss}^* {\mc O}(k_i) \right) \otimes \ov{ H^0(\iota_{F,\pss}^* {\mc O}(k_i))_0} 
\eeqn
satisfying the following equations
\begin{equation}
\left\{
\begin{aligned}
\ov\partial_{J, H^\pss} u +  \pi_{u^* TM} \left( \sum_{i=1}^d \langle \eta_i \rangle \circ d\iota_{F,\pss} \right) &\ = 0,\\
\ov\partial \eta_i + \pi_{E_i} \left( \sum_{j = i+1}^d \langle \eta_j \rangle \circ d\iota_{F,\pss} \right) &\ = 0,\ i = 1, \ldots, d,
\end{aligned}\right.
\end{equation}
where $E_i$ is defined in the same way as \eqref{eqn:e-i}, and the $\ov{\partial}$-operator $\ov\partial_{J, H^\pss}$ is read off from the PSS equation \ref{eqn:pss}.

By replacing PSS with SSP, we can similarly define the thickened moduli space $V_{p \bullet}^\ssp = V_{p \bullet}^\ssp (k_1, \dots, k_d)$ for a capped orbit $p$.

The space $V_{\bullet q}^\pss$ (resp. $V_{p \bullet}^\ssp$) admits a $G_{\bullet q}^\pss = G_{d_{\bullet q}}$ (resp. $G_{p \bullet}^\ssp = G_{d_{p \bullet}}$) action extending the action on the framing. The obstruction bundle over $E_{\bullet q}^\pss \to V_{\bullet q}^\bullet$ also has a direct sum decomposition
\beqn
E_{\bullet q}^\pss = O_{\bullet q}^\pss \oplus Q_{\bullet q}^\pss
\eeqn
where $O_{\bullet q}^\pss$ is specified by \eqref{eqn:obstruction-O}, and $Q_{\bullet q}^\pss$ is the trivial vector bundle from ${\bm Q}_{d_{\bullet q}}$ as in \eqref{eqn57}. This is a $G^\pss_{\bullet q}$-equivariant vector bundle. As for the Kuranishi section, it is defined by
\beqn
\begin{split}
S_{\bullet q}^\pss: V_{\bullet q}^\pss &\ \to E_{\bullet q}^\pss \\
        [\Sigma, u, F, \eta] &\ \mapsto ( \eta, H_F)
\end{split}
\eeqn
where $H_F\in Q_{\bullet q}^\pss$ is represented by the Hermitian matrix whose entries are
\beqn
\int_\Sigma \langle f_i, f_j \rangle_{L_{u, \pss}} \Omega_{u, \pss},
\eeqn
where $f_0, \dots, f_d$ is a basis representing the frame $F$. It is obvious that there exists a homeomorphism 
\beqn
\psi_{\bullet q}^\pss: (S_{\bullet q}^\pss)^{-1}(0) / G^\pss_{\bullet q} \xrightarrow{\sim} \ov{\mc M}{}_{\bullet q}^\pss.
\eeqn
This finishes the description of the K-chart
\beqn
K_{\bullet q}^\pss := K_{\bullet q}^\pss (k_1, \ldots, k_d): = (G^\pss_{\bullet q}, V_{\bullet q}^\pss, E^\pss_{\bullet q}, S_{\bullet q}^\pss, \psi_{\bullet q}^\pss).
\eeqn
The same description applies to the SSP moduli space and we obtain the K-chart
\beqn
K_{p \bullet}^\ssp := K_{p \bullet}^\ssp = (G^\ssp_{p \bullet}, V_{p \bullet}^\ssp, E_{p \bullet}^\ssp, S_{p \bullet}^\ssp, \psi_{p \bullet}^\ssp)
\eeqn
for the moduli space $\ov{\mc M}{}_{p \bullet}^\ssp$.

The following statement follows from the proof of Theorem \ref{regularity2}.
\begin{prop}
For fixed $k_1, \dots, k_{d-1}$, as long as $k_d$ is sufficiently large, the thickened moduli spaces $V_{\bullet q}^\pss = V_{\bullet q}^\pss(k_1, \ldots, k_d)$ and $V_{p\bullet}^\ssp = V_{p \bullet}^\ssp(k_1, \ldots, k_d)$ is regular near the zero locus of the Kuranishi section and $E_{\bullet q}^\pss \to V_{\bullet q}^\pss$, $E_{p \bullet}^\ssp \to V_{p \bullet}^\ssp$ are indeed vector bundles. \qed
\end{prop}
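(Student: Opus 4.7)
The plan is to adapt the proof of Theorem \ref{regularity2} verbatim to the PSS and SSP settings, exploiting the fact that the PSS/SSP equations differ from the Floer equation only through the $s$-dependent interpolation $H_{s,t}^{\pss}$ or $H_{s,t}^{\ssp}$, which interpolates between the Hamiltonian $H_t$ and a constant. First, I would use the sheared almost complex structure $J_\Psi$ on the tower of bundles ${\mc E}_d \to \cdots \to {\mc E}_1 \to M \times {\mc C}^\pss$ together with Gromov's graph trick to convert the PSS-perturbed equation into a pseudo-holomorphic curve equation (with Hamiltonian perturbation on the component $\Sigma_\pss$ and its right-hand cylindrical levels) on the total space of ${\mc E}_d$. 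An element of $(S^\pss_{\bullet q})^{-1}(0)$ is then a stable pseudo-holomorphic thimble sitting in the zero section.

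Next, I would fix a point of $(S^\pss_{\bullet q})^{-1}(0)$ and write the linearized operator at that point as a block upper-triangular matrix with respect to the decomposition of the deformation space into $\mu^* T{\mc C}^\pss \oplus u^*TM \oplus (u,\mu)^* E(k_d)$, exactly as in Equation \eqref{eqn:block-matrix}. Surjectivity of the domain block $D_\mu$ is automatic by convexity of $T{\mc C}^\pss$. For the $TM$-block, I would replace $(M,J)$ by $({\mc E}_{d-1}, J_{d-1})$ and invoke the peak section construction of \cite[Proposition 6.26]{AMS}, verifying that it goes through unchanged on each irreducible component of the thimble: on spherical components (whether ghost or positive degree) the argument is identical to the Floer case, while on cylindrical components --- including the distinguished PSS component $\Sigma_\pss$ --- the presence of the $s$-dependent term $H_{s,t}^\pss$ does not affect the peak section argument because $H_{s,t}^\pss$ is a compactly supported perturbation of the constant $C^\pss$ plus $\rho(s) H_t$, and the relevant Fredholm theory is insensitive to this zeroth-order perturbation.

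For the top block $D_E$ acting on $(u,\mu)^* E(k_d)$, I would invoke Lemma \ref{surjective} and Lemma \ref{lem:positive-curvature}. The key observation is that near $z_-$, since $H^\pss_{s,t} = C^\pss$ is constant for $s \leq 0$, the map extends by Gromov's removal of singularities to a point in $M$, so the curvature of the pullback connection on $E(k_d)$ is genuinely the pullback of the positive curvature of ${\mc O}(k_d)$ on a neighborhood of $\ev_-$; meanwhile near $z_+$ the trivialization of $TM$ along the asymptotic periodic orbit $\uds{q}$ ensures flatness, so $E(k_d)$ there reduces to a twist of a positive line bundle. Thus for $k_d$ sufficiently large the connection on $(u,\mu)^* E(k_d)$ has pointwise positive curvature on each unstable cylindrical component, which is the hypothesis of Lemma \ref{surjective}. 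The SSP case is symmetric under reflection $s \mapsto -s$.

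The main obstacle will be handling the distinguished component $\Sigma_\pss$, where the $s$-dependence of $H_{s,t}^\pss$ prevents a naive appeal to either the closed-curve setting (near $z_-$ the domain is genuinely cylindrical, not a punctured sphere) or the Floer cylinder setting (the equation is not translation invariant). However, because $H_{s,t}^\pss$ converges exponentially to its limits at $s = \pm\infty$ on the compactly-normalized thimble, the asymptotic operators at both ends are essentially the complex-linear standard Cauchy--Riemann operator twisted by a Morse-Bott perturbation; standard elliptic theory guarantees both the Fredholm property and the applicability of the maximum principle argument of Lemma \ref{surjective}. Once regularity is established, the vector bundle structure on $E^\pss_{\bullet q}$ (resp. $E^\ssp_{p\bullet}$) follows from the implicit function theorem together with the constancy of the index of the linearization along the thickened moduli space, exactly as in the Floer case, so the argument transports with only cosmetic modifications to accommodate the thimble geometry.
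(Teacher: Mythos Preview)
Your proposal is correct and matches the paper's approach exactly: the paper's entire proof consists of the single sentence ``The following statement follows from the proof of Theorem \ref{regularity2},'' and your write-up is precisely a careful unpacking of what that adaptation entails. The block upper-triangular decomposition, the peak section argument for the $TM$-direction, and the appeal to Lemma \ref{surjective} and Lemma \ref{lem:positive-curvature} for the $E(k_d)$-direction are exactly the ingredients invoked (implicitly) by the paper, so there is nothing to add.
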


\subsubsection{The K-chart lift of the flow bimodule}

We would like the system of K-charts obtained above provide a K-chart lift of the flow bimodule $M^\pss$ resp. $M^\ssp$. Still we describe the PSS case with more details. Recall that the moduli space \eqref{eqn:pss-moduli} is a $\bA^\pss_{\bullet q}$-space. Given a stratum $\alpha = \bullet r_1 \cdots r_l q$, the moduli space 
\beqn
\partial^\alpha \ov{\mc M}{}_{\bullet q}^\pss = \ov{\mc M}{}_{\bullet r_1}^\pss \times \ov{\mc M}{}_{r_1 r_2}^\floer \times \cdots \times \ov{\mc M}{}^{\floer}_{r_l q}
\eeqn
has a product K-chart
\beqn
K_{\alpha}^\pss := K_{\bullet r_1}^\pss \times K_{r_1 r_2} \times \cdots \times K_{r_l q}
\eeqn
where $K_{r_i r_{i+1}}$ is the K-chart constructed in \eqref{eqn:k-chart-floer} for $\ov{\mc M}{}_{r_i r_{i+1}}^\floer$. Similarly, for $\alpha = p r_1 \cdots r_l \bullet$, the moduli space
\beqn
\partial^{\alpha} \ov{\mc M}{}_{p \bullet}^\ssp = \ov{\mc M}{}^\floer_{pr_1} \times \cdots \times \ov{\mc M}{}_{r_{l-1} r_l}^\floer \times \ov{\mc M}{}^\ssp_{r_l \bullet}
\eeqn
has a product K-chart
\beqn
K_{\alpha}^\ssp := K_{pr_1} \times \cdots \times K_{r_{l-1} r_l} \times K_{r_l \bullet}^\ssp.
\eeqn
Recall that the flow category $T^\floer$ is endowed with a weak K-chart lift as from Theorem \ref{thm:flow-chart} of the form
\beqn
    \big\{ K_{pq} = (G_{pq}, V_{pq}, E_{pq}, S_{pq}, \psi_{pq} ) \big\}_{p< q},
\eeqn
together with the collection of weak K-chart embeddings 
\beqn
\big\{ {\bm \iota}_{prq}: K_{pr}\times K_{rq}  \wembed  \partial^{prq} K_{pq}\big\}_{p< r < q}.
\eeqn

\begin{thm}
For a capped $1$-periodic orbits $p$ resp. $q$, there exist a collection of weak K-chart embeddings 
\begin{equation*}
    \big\{ {\bm \iota}_{\bullet r q}: K_{\bullet r}^\pss \times K_{rq}  \wembed  \partial^{\bullet rq} K_{\bullet q}^\pss \big\}_{r<q}
\end{equation*}
\begin{equation*}
    \text{resp. }\big\{ {\bm \iota}_{pr \bullet}: K_{pr} \times K^\ssp_{r \bullet}  \wembed  \partial^{pr \bullet} K_{p \bullet}^\ssp \big\}_{p<r},
\end{equation*}
such that the following statements are true.
\begin{enumerate}
    \item The following diagrams are commutative under the chart embeddings:
     \beqn
        \xymatrix{  &    K^\pss_{\bullet r} \times K_{rs}\times K_{sq} \ar[ld] \ar[rd] & \\
                      \partial^{\bullet rs} K_{\bullet s}^\pss \times K_{sq} \ar[rd] & & K_{\bullet r}^\pss \times \partial^{rsq} K_{rq} \ar[ld] \\
                      & \partial^{\bullet rsq} K^\pss_{\bullet q}, & }
        \eeqn
    \beqn
        \xymatrix{  &    K_{rs} \times K_{sp}\times K^\ssp_{p \bullet} \ar[ld] \ar[rd] & \\
                      \partial^{rsp} K_{rp} \times K^\ssp_{p \bullet} \ar[rd] & & K_{rs} \times \partial^{sp \bullet} K^\ssp_{s \bullet} \ar[ld] \\
                      & \partial^{\bullet rsp} K^\ssp_{\bullet p}. & }
        \eeqn
        
    \item For each stratum $\alpha = \bullet r_1 \cdots r_l q \in \bA_{\bullet q}^\pss$ resp. $\alpha = p r_1 \cdots r_l \bullet \in \bA_{p\bullet}^\ssp$, define the product chart
    \beqn
    K_\alpha^\pss := K_{\bullet r_1}^\pss \times \cdots \times K_{r_l q}
    \eeqn
    \beqn
    {\rm resp.} K^\ssp_{\alpha} := K_{pr_1} \times \cdots \times K^\ssp_{r_l \bullet}.
    \eeqn
    Then for any pair $\alpha \leq \beta$ in $\bA_{\bullet q}^\pss$ resp. $\bA_{p\bullet}^\ssp$, there are induced product weak embeddings
    $$ {\bm \iota}^\pss_{\beta \alpha}: K^\pss_{\alpha} \wembed \partial^{\alpha} K^\pss_{\beta} \text{\ resp. } {\bm \iota}^\ssp_{\beta \alpha}: K^\ssp_{\alpha} \wembed \partial^{\alpha}K^\ssp_{\beta}.$$
    Then the system 
    $$ \{ K^\pss_{\alpha} \}_{\alpha \in \bA_{\bullet q}^\pss} \text{  resp. } \{ K^\ssp_\alpha  \}_{\alpha \in \bA_{p\bullet}^\ssp} $$
    defines a weak K-chart presentation of the $\bA_{\bullet q}^\pss$-space $\ov{\mc M}{}_{\bullet q}^\pss$ resp. the $\bA_{p \bullet}^\ssp$-space $\ov{\mc{M}}{}_{p \bullet}^\ssp$.
    
    \item After outer-collaring, one can match up the Kuranishi sections so that the weak K-chart presentation is a genuine K-chart presentation. 
    
    \item The natural projection $\pi_{\bullet q}^\pss: V_{\bullet q}^\pss \to B_{d_{\bullet q}}^\pss$ resp. $\pi_{p \bullet}^\ssp: V_{p \bullet}^\ssp \to B_{d_{p \bullet}}^\ssp$ has a $C_{\rm loc}^1$ equivariant fiberwise smooth bundle structure. 

\end{enumerate}
\end{thm}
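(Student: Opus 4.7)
The plan is to mimic the construction in Section \ref{sec-5} for the Floer flow category, treating the distinguished PSS/SSP component as a ``boundary'' layer whose geometric behavior differs from the Floer cylinders but whose combinatorics of breaking is captured by the posets $\bA^\pss_{\bullet q}$ and $\bA^\ssp_{p\bullet}$. I will only discuss the PSS case; the SSP case is completely symmetric. First I would construct the product embedding
$\vartheta_{\bullet rq} \colon V^\pss_{\bullet r} \times V_{rq} \to \partial^{\bullet rq} V^\pss_{\bullet q}$
by generalizing the gluing recipe \eqref{base_product}--\eqref{eqn54}: given framed representatives $(\Sigma_{\bullet r}, u_{\bullet r}, F_{\bullet r}, \eta_{\bullet r})$ and $(\Sigma_{rq}, u_{rq}, F_{rq}, \eta_{rq})$, attach the Floer cylinder on the right of the PSS thimble, form the nodal curve $\Sigma_{\bullet r}\vee \Sigma_{rq}$, use the normalized evaluation of $F_{\bullet r}$ at $z_+$ to build the combined framing $F_{\bullet rq}$ into $\mb{CP}^{d_{\bullet q}}$, and superimpose the obstruction sections via the linear embeddings $\mb{CP}^{d_{\bullet r}},\mb{CP}^{d_{rq}}\hookrightarrow \mb{CP}^{d_{\bullet q}}$ exactly as in Section \ref{subsection54}. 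Equivariance with respect to the group inclusion $G^\pss_{\bullet r}\times G_{rq} \hookrightarrow G^\pss_{\bullet q}$ is immediate from the construction, so we obtain a weak K-chart embedding ${\bm \iota}_{\bullet rq}$ covering an embedding $\pi^\pss_{\bullet q}\circ \vartheta_{\bullet rq}$ compatible with the map $\zeta^\pss_\delta$ on the auxiliary moduli spaces $B^\pss_\bullet$.

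Second, the associativity of the two triangular diagrams is a direct analogue of Proposition \ref{associativity} together with Proposition \ref{prop:Q-gluing}: the key computation is that the combined framing and normalized evaluation are associative under the three-way decomposition $\Sigma_{\bullet r}\vee \Sigma_{rs}\vee \Sigma_{sq}$, which follows verbatim by tracking the $a_i$ and $b_j$ coefficients as in the proof of Proposition \ref{associativity}. The same reasoning handles iterated embeddings, so the induced maps ${\bm \iota}^\pss_{\beta\alpha}$ for $\alpha \le \beta$ in $\bA^\pss_{\bullet q}$ are well-defined and cocycle-compatible. Combined with the inclusion of the obstruction subbundles (following Lemma \ref{obundle_property}), the collection $\{K^\pss_\alpha\}_{\alpha\in \bA^\pss_{\bullet q}}$ with embeddings $\{{\bm \iota}^\pss_{\beta\alpha}\}$ forms a weak K-chart presentation of the $\bA^\pss_{\bullet q}$-space $\ov{\mc M}{}^\pss_{\bullet q}$.

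Third, to promote the weak presentation to a genuine K-chart presentation, I would apply the outer-collaring of Subsection \ref{subsection57} uniformly to $\{V^\pss_{\bullet q}\}$, $\{V_{pq}\}$, and the auxiliary spaces $\{B^\pss_d\}$, $\{B_d\}$, which are smooth manifolds with faces admitting admissible smooth structures in the sense of Fukaya--Oh--Ohta--Ono. The interpolation between the $L^2$-normalization of the framing and the product normalization over the collar region is carried out exactly as in the Floer case, using the canonical transformation $\mathrm{I}_{d+1}+\rho_Q$ from Proposition \ref{prop523} together with a cut-off along the collar coordinate, and iterated along deeper strata by induction on the depth function of $\bA^\pss_{\bullet q}$.

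Finally, the $C^1_{\rm loc}$ fiberwise smooth bundle structure of $\pi^\pss_{\bullet q}$ follows from the same gluing analysis used in Proposition \ref{prop:c1loc}: one chooses local stabilizing divisors in $\mb{CP}^{d_{\bullet q}}$ that intersect the framing image transversely away from nodes and markings, uses them to obtain a local diffeomorphism into a suitable real blowup of $\ov{\mc M}{}_{0,2+d'}$, and invokes the Newton--Picard gluing estimates of Pardon to get the $C^1_{\rm loc}$ behavior across the neck region. The only genuinely new technical point is the treatment of the distinguished PSS component $\Sigma_\pss$, where the Hamiltonian perturbation $H^\pss_{s,t}$ varies with $s$; the main obstacle I expect is verifying that the exponential decay estimates for the cut-off $\rho(s)$ in \eqref{eqn:cut-off} are compatible with the admissible smooth structure on the base blowup, so that the collared structure on $B^\pss_d$ induced by outer-collaring agrees with the one coming from the geometric gluing. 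This compatibility is the PSS-analogue of the point established in Subsection \ref{subsection57} for Floer cylinders and should follow from the same admissibility argument applied to the moduli space of $\mb{CP}^d$-valued stable PSS thimbles, since that space is still algebraic once one forgets the perturbation.
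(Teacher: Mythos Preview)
Your proposal is correct and follows essentially the same approach as the paper's sketch proof, which simply points to Section \ref{subsection54} for the embedding construction, notes that associativity and the weak K-chart presentation are routine, invokes outer-collaring to match the $Q$-summand of the Kuranishi sections, and cites the Floer case for the $C^1_{\rm loc}$ structure. Your version is more detailed than the paper's, and your closing concern about the $s$-dependent Hamiltonian on $\Sigma_{\rm PSS}$ is a non-issue for exactly the reason you identify: the auxiliary moduli space $B^\pss_d$ parametrizes holomorphic maps to $\mb{CP}^d$ with no Hamiltonian perturbation, so its admissible smooth structure is obtained the same way as for $B_d$.
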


\begin{proof}[Sketch of proof]
The embeddings ${\bm \iota}_{\bullet rq}^\pss$ and ${\bm \iota}_{pr\bullet}^\ssp$ are defined in a similar way as in Section \ref{subsection54} when we embed a product of K-charts of a list of Floer moduli spaces. It is routine to check that these embeddings satisfy the listed commutative diagrams and give weak K-chart presentations of the corresponding PSS and SSP moduli spaces. Notice that for the same reason, the product Kuranishi section only matches with the boundary restriction of the larger Kuranishi chart in the $O$-summand. We can use outer-collaring and an interpolation to match their $Q$-summand. Lastly, the projection $\pi_{\bullet q}^\pss$ and $\pi_{p \bullet}^\ssp$ have the structure of $C_{\rm loc}^1$ equivariant fiberwise smooth bundles for the same reason as in the case of Floer trajectories.
\end{proof}

\subsubsection{Scaffolding}

We could state and prove a proposition similar to Proposition \ref{bigprop}. However, instead of doing the tedious thing, we only describe how the existing scaffoldings for the K-chart lift of $(T^\floer)^+$ can be extended. Notice that we will not use any object or structure we chose in Section \ref{sec-6}. Essentially (for the PSS case), we would like to choose subbundles
\beqn
F_{\bullet q, \alpha}^\pss \subset O_{\bullet q}|_{V_\alpha^\pss}
\eeqn
and define stabilization maps
\beqn
{\rm Stab}_{F_{\bullet q, \alpha}}( V_\alpha^\pss) \to \partial^\alpha V_{\bullet q}^\pss
\eeqn
whose $G_{\bullet q}$-equivariantization is an open embedding. (We skip the discussion of the companion bundle isomorphisms.) This bundle $F_{\bullet q, \alpha}$ is indeed the orthogonal complement of $E_\alpha^\pss$ inside $E_{\bullet q}^\pss$; a good choice of an inner products can be made inductively as in the proof of Proposition \ref{bigprop} which extend the inner products we have already chosen on the obstruction bundles over the thickend Floer moduli spaces. Then the stabilization map is still constructed using the implicit function theorem which depends on choosing a family of local Banach manifold charts, a family of approximate solutions, and a family of right inverses. These objects can all be extended from the existing ones we have chosen for the Floer moduli spaces. Therefore a compatible system of stabilization maps can be constructed. Of course, the collar structure is necessary for the construction.

\subsubsection{Stable smoothing}

We would like to construct a system of stable equivariant smoothings of $V_{\bullet q}^\pss$ resp. $V_{p\bullet}^\ssp$. Notice that we have chosen stabilizations by $G_{pq}$-representations ${\bm R}_{pq}$ for all $p<q$ in ${\mc P}^\floer$. The method we used in Subsection \ref{subsection64} can be used again here to construct stabilizations ${\bm R}_{\bullet q}^\pss$ and to construct smoothings on $\widehat V_{\bullet q}^\pss:= V_{\bullet q}^\pss \times {\bm R}_{\bullet q}^\pss$ whose boundary is diffeomorphic to products of stabilized thickenings.

We can also require that the stabilizations satisfy the following property. Namely, there exist smooth submersive evaluation maps
\beqn
\ev_\bullet: \widehat V_{\bullet q}^\pss \to M
\eeqn
which coincides with the original evaluation map on $V_{\bullet q}^\pss$. Indeed, by observation of Abouzaid--McLean--Smith \cite[Lemma 4.5]{AMS}, for a single smooth Kuranishi chart, on a further stabilization there exists a smooth submersive extension of the evaluation map. Then when we inductively construct stable smoothings, we can always achieve this extra condition.

It follows that by intersecting with the unstable manifolds of the given Morse function $f: M \to {\mb R}$, one obtains smooth Kuranishi charts 
\beqn
\widehat K_{xq}^\pss = (G_{\bullet q}^\pss, \widehat V_{xq}^\pss, \widehat E_{xq}^\pss, \widehat S_{xq}^\pss)
\eeqn
where $\widehat V_{xq}^\pss\subset \widehat V_{\bullet q}^\pss$ is the intersection with the (outer-collared) compactified unstable manifold $\ov{W^u(\uds{x})}\subset M$. It is straightforward to check that we obtained a smooth K-chart lift of the flow bimodule $(M^\pss)^+$. 

Then after taking quotient, we obtained a D-chart lift of $(M^\pss)^+$ equipped with a collar structure and a scaffolding. The straightening that is compatible with the collar structure and the scaffolding can be constructed in a way similar to the case of Floer trajectories.

\subsubsection{Normal complex structures and orientaions}

The construction of normal complex structure and orientation on $\widehat K_{xq}^\pss$ essentially follows from the proof of Theorem \ref{thm:normal-c-orient}. and the relevant discussions in \cite[Section 11]{Abouzaid_Blumberg}. Two remarks should be pointed out. First, the stabilization from \cite[Lemma 4.5]{AMS} and the fiber product with unstable submanifolds does not affect the \emph{normal} complex structure, because the changes on the tangent bundles are all included in the trivial summand of the corresponding representation of the isotropy group. Second, when constructing orientation structures, the virtual vector bundle associated with an object $x \in T^\morse$ is defined by $(T W^u(\uds{x}), 0)$.

\subsection{Pearly bimodule}

Consider the pearly bimodule defined in Section \ref{sec-4}. Here we describe a K-chart lift of it and show how to use the same argument to construct a stable smoothing. First, for each $d$, consider an auxiliary moduli space which is similar to $B_d$, $B_d^\pss$, and $B_d^\ssp$ defined before. Indeed, let ${\mc F}_{0,2}^\pearl(d)$ be the moduli space of parametrized 2-marked holomorphic spheres into $\mb{CP}^d$ which have degree $d$ and whose images are not entirely contained in a hyperplane. In particular, each point of ${\mc F}_{0,2}^\pearl(d)$ is represented by a stable map with 2 marked points that have a distinguished component whose parametrization is fixed. ${\mc F}_{0,2}^\pearl(d)$ is a smooth manifold (with no boundary or corners). Moreover there are holomorphic submersions
\beqn
\ev_\pm: {\mc F}_{0,2}^\pearl(d) \to \mb{CP}^d.
\eeqn
Let
\beqn
B_d^\pearl \subset {\mc F}_{0,2}^\pearl(d)
\eeqn
be the subset of elements whose evaluation at $z_-$ is $[1, 0, \ldots, 0]\in \mb{CP}^d$. Then $B_d^\pearl$ is a smooth manifold. There is a $G_d \cong U(d)$-action on $B_d^\pearl$.

Now consider the corresponding space $\ov{\mc M}{}_{\bullet\bullet}^\pearl(d)$ of stable parametrized holomorphic spheres in $M$ whose degree is $d$. Using multi-layered thickening associated to a sequence of integers $k_1<k_2 < \cdots < k_d < \cdots$ one can produce a similar thickened moduli space
\beqn
K_{\bullet \bullet}^\pearl(d) = ( G_d, V_{\bullet\bullet}^\pearl(d), E_{\bullet\bullet}^\pearl(d), S_{\bullet\bullet}^\pearl).
\eeqn
The domain $V_{\bullet\bullet}^\pearl(d)$ admits a $G_d$-action and an equivariant projection map
\beqn
\pi_{\bullet\bullet}^\pearl: V_{\bullet\bullet}^\pearl(d) \to B_d^\pearl.
\eeqn
Moreover, by taking intersections with stable and unstable submanifolds of the Morse function $f: M \to {\mb R}$, one obtains stratified charts
\beqn
K_{xy}^\pearl = (G_{xy}, V_{xy}^\pearl, E_{xy}^\pearl, S_{xy}^\pearl)
\eeqn
This collection of charts obviously defines a K-chart lift of the pearly bimodule $M^\pearl$.

\begin{thm}
There exists a ``single-layered'' normally complex derived orbifold lift of the outer-collared pearly bimodule $(M^\pearl)^+$.
\end{thm}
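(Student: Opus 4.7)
The strategy parallels the construction in Section \ref{sec-5} and Section \ref{sec-6} for the Floer flow category, but with a crucial simplification: the parametrization of the distinguished sphere is fixed, so the auxiliary moduli space $B_d^\pearl$ is a smooth $G_d$-manifold \emph{without} boundary or corners, and the entire stratification of $\ov{\mc M}{}^\pearl_{xy}$ arises only from breaking of Morse trajectories on either side of the parametrized pearl. I would begin by recording the thickened K-chart lift $\{K^\pearl_{xy}\}$ already obtained in the excerpt, and then upgrade it to an oriented, normally complex derived orbifold lift of $(M^\pearl)^+$ equipped with a compatible collar structure.

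The core point is that the system of K-chart embeddings is single-layered. For a stratum $\alpha = x x_1\cdots x_k y_l \cdots y_1 y \in \bA^\pearl_{xy}$ the product chart
\[
K^\pearl_\alpha = K^\morse_{xx_1}\times\cdots\times K^\morse_{x_{k-1}x_k}\times K^\pearl_{x_k y_l}\times K^\morse_{y_l y_{l-1}}\times\cdots\times K^\morse_{y_1 y}
\]
has the Morse factors $K^\morse_{x_i x_{i+1}}$, $K^\morse_{y_j y_{j+1}}$ given by the \emph{trivial} K-charts (the outer-collared Morse moduli spaces with trivial group, zero obstruction bundle, and zero Kuranishi section). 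In particular the symmetry group of $K^\pearl_\alpha$ is $G_{d} = G_{d_{xy}}$, matching that of $K^\pearl_{xy}$, and the thickening degree $d$ does not change along any stratum. Consequently the chart embedding
\[
{\bm \iota}^\pearl_{xy,\alpha}\colon K^\pearl_\alpha \wembed \partial^\alpha K^\pearl_{xy}
\]
is, after outer-collaring and interpolation of the Kuranishi section (as in Section \ref{subsection57}), an equivariant \emph{open} chart embedding: one simply embeds the broken Morse configurations into the fiber product with $W^u$ and $W^s$ using the smoothness of the relevant evaluation maps. Thus every difference bundle $F_{\beta\alpha}$ may be taken to vanish, which is the defining property of a single-layered system (Definition \ref{defn:single-layer}); this is why no scaffolding construction is needed here.

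With single-layering in place, the smoothing is significantly easier than in Section \ref{subsection64}. Starting with the fact that $B_d^\pearl$ is smooth without corners and $\pi^\pearl_{\bullet\bullet}\colon V^\pearl_{\bullet\bullet}(d)\to B_d^\pearl$ is a $G_d$-equivariant $C^1_{\rm loc}$ fiberwise smooth bundle (proved exactly as in Proposition \ref{prop:c1loc}), I would apply Lashof's stable equivariant smoothing theorem to equip $V^\pearl_{\bullet\bullet}(d)$ with a stable smoothing, then smooth the evaluation maps so that they become submersions onto $M$ (using the trick of Abouzaid--McLean--Smith invoked in the PSS case), and finally take fiber products with the unstable and stable manifolds $\ov{W^u(\uds x)}$, $\ov{W^s(\uds y)}$ to obtain smooth K-charts $\widehat K^\pearl_{xy}$. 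Because each stratum of $V^\pearl_{xy}$ is obtained by fiber product with products of Morse moduli spaces, which are themselves smooth manifolds with faces meeting in the Morse--Smale way, a straightforward inductive argument on $\dep(\alpha)$ together with the relative version of Lashof's theorem (Theorem \ref{relative_smoothing}) produces stable smoothings that agree with products over each boundary stratum. Taking $G_{xy}$-quotients yields a derived orbifold lift $\{C^\pearl_{xy}\}$ with a compatible collar structure, and the straightening is then constructed directly as in Section \ref{subsubsec:straight}, but without the scaffolding bookkeeping.

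Finally, the normal complex structure and orientation are obtained exactly as in the proof of Theorem \ref{thm:normal-c-orient} and the surrounding discussion, restricted to the pearl setting. One further stabilizes by unitary representations ${\bm R}_{xy}\oplus{\bm i}{\bm R}_{xy}$ and by the quadratic bundle ${\bm Q}_{d_{xy}}$ to make all stabilizing factors complex, and uses the asymptotic-operator stabilization of Lemma \ref{lem:handwaving} (now with $V_p^\pm$ chosen for Morse critical points, where the asymptotic operator is simply a finite-dimensional elliptic problem on the half-cylinder with limit a Morse critical point) to put a complex structure on $T^{\rm vt}V^\pearl_{xy}$ stably. Compatibility with the product structure on boundary strata follows from the single-layered nature of the lift. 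The orientation is assembled from the canonical complex orientations on the stabilizing bundles together with the orientations $\mathfrak{o}_x = \mathfrak{o}_{TW^u(\uds x)}$ on the Morse side, and the coherence isomorphisms follow the same pattern as in Definition \ref{defn:orient-bimod}. The only technical subtlety I expect is verifying, in the equivariant smoothing step, that the product smooth structure on a boundary stratum $\partial^\alpha V^\pearl_{xy}$ coming from the Morse factors genuinely extends (stably) across the interior; but thanks to the absence of scaffolding and the fact that the Morse factors contribute trivial equivariant bundles, this reduces to a single relative application of Theorem \ref{relative_smoothing} at each induction step rather than the elaborate multi-layer bookkeeping required in Section \ref{subsection64}.
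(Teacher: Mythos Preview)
Your proposal is correct and follows essentially the same route as the paper. The paper's proof sketch is in fact even more streamlined than what you wrote: since $B_d^\pearl$ has no corners, $V_{\bullet\bullet}^\pearl(d)$ is a $G_d$-manifold without boundary, so a \emph{single} application of Lashof's theorem (not the relative version) already smooths it; the $\bA^\pearl_{xy}$-stratification arises only after taking the fiber product with $\ov{W^u(\uds x)}\times\ov{W^s(\uds y)}$ via the smoothed submersive evaluations, and the boundary compatibility is then automatic from the Morse side, with no inductive invocation of Theorem \ref{relative_smoothing} required. Similarly, for the normal complex structure the paper simply appeals to the closed-curve argument of \cite{AMS} rather than the asymptotic-operator machinery of Lemma \ref{lem:handwaving}; since the pearl domain is a closed sphere with marked points there are no cylindrical ends and hence no $(V_{\uds p}^+,V_{\uds p}^-)$ stabilizations in the Floer sense---your description of ``asymptotic operators for Morse critical points'' is not quite the right picture here, though the conclusion you reach is still valid.
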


\begin{proof}[Sketch of Proof]
We can construct stable smoothings of $K_{\bullet\bullet}^\pearl(d)$ individualy for each $d\geq 1$ to obtain smooth Kuranishi charts. To save notations, still denote them by $K_{\bullet\bullet}^\pearl(d)$. We can do a further stabilization such that the evaluations at $z_\pm$ are smooth submersions onto $M$. Hence each $K_{xy}^\pearl$ becomes smooth stratified Kuranishi charts. Its outer-collaring is again smooth Kuranishi charts. The scaffoldings are trivial as the Morse theory part is already regular. Then taking quotient by $G_{xy}$ one obtains a D-chart lift of $(M^\pearl)^+$, denoted by $({\mf D}^\pearl)^+$. As the scaffolding is trivial, this is a single-layered lift. Lastly, there is a normal complex structure and an orientation on $({\mf D}^\pearl)^+$ for the same reason as the case of \cite{AMS}. 
\end{proof}

\subsection{The homotopy}

Lastly, we explain the collection of D-chart lifts of the system of moduli spaces referred to as the ``homotopy moduli spaces.'' Although this {\it ad hoc} collection lacks a formal package, the constructions are almost identical to previous cases. The global chart construction, including the multi-layered strategy, is the same as the case of Floer trajectories or PSS/SSP cases. Then one obtains a weak K-chart lift of the collection of moduli spaces. By taking outer-collaring and interpolating between the product Kuranishi section and the boundary restriction of Kuranishi sections, one obtains a K-chart lift of the outer-collared system. The construction of scaffolding (see Proposition \ref{bigprop}) has completely the same induction procedure. We can also run the smoothing procedure without essential differences.

\appendix

\section{Product of canonical Whitney stratifications}\label{appendixa}

\subsection{Basics of Whitney stratifications}

We review the basic notions of (Whitney) prestratifications and stratification. Following Mather \cite{Mather_1973}, our convention is the same as our previous work \cite{Bai_Xu_2022}. 

\begin{defn}\label{defna1}
Let $S$ be a topological space. A {\bf prestratification} on $S$ is a decomposition 
\beqn
S = \bigsqcup_{\alpha \in {\mf A}} S_\alpha
\eeqn
of $S$ into the disjoint union of locally closed subsets satisfying the following condition
\begin{enumerate}
    \item The decomposition is locally finite.
    
    \item (Axiom of frontier) If $\alpha, \beta \in {\mf A}$, $S_\alpha \cap \ov{S_\beta} \neq \emptyset$, then $S_\alpha \subset \ov{S_\beta}$.
\end{enumerate}
Each $S_\alpha$ is called a {\bf stratum} of the prestratification. The axiom of frontier induces a partial order among strata: $S_\alpha \leq S_\beta$ if $S_\alpha \subset \ov{S_\beta}$. We use the symbol ${\mf A}$ to denote the prestratification as well as the partially ordered set of strata. A space equipped with a prestratification ${\mf A}$ is called an ${\mf A}$-stratified space.
\end{defn}

Now we consider the notion of stratifications. Two subsets $A, B \subseteq M$ are called {\bf equivalent} at $x \in M$ if there exists an open neighborhood $U_x$ of $x$ such that $A\cap U_x = B \cap U_x$. An equivalence class is called a {\bf setgerm} at $x$. Given a prestratification ${\mc A}$ on $S\subseteq M$, it assigns to each $x \in S$ a setgerm represented by the unique stratum that contains $x$. A {\bf stratification} of a subset $S \subset M$ is a rule which assigns to each point $x \in S$ a set-germ $S_x$, such that for each $x\in S$, there is an open neighborhood $U_x$ of $x$ and a prestratification of $U_x \cap S$ such that the setgerm-valued function restricted to $U_x$ is induced from this prestratification.

\begin{defn}[Whitney stratification]\label{defna2}
Let $M$ be a smooth manifold. 
\begin{enumerate}
    \item Given two disjoint smooth submanifolds $S$, $S'$, we say that the pair $(S, S')$ satisfies {\bf Whitney's condition (b)} at $x \in S' \cap \ov{S}$ if the following is true: suppose $x_i \in S$, $x_i' \in S'$ are two sequences converging to $x \in \ov{S} \cap S'$. Suppose the sequence of tangent spaces $T_{x_i} S$ converges to a subspace $H \subset T_x M$ and the sequence of secant lines $\ov{x_i x_i'}$ converges to a line $L \subset T_x M$, then $L \subset H$. 

\item Let $S \subset M$ be a subset. A prestratification on $S$ is called a {\bf Whitney prestratification} if each stratum $S_\alpha$ is a smooth submanifold and each pair $(S_\alpha, S_\beta)$ of strata with $S_\beta \subsetneq \ov{S_\alpha}$ satisfies Whitney's condition (b) at every point of $S_\beta$.

\item A stratification on $S \subset M$ is called a {\bf Whitney stratification} if for each point $x \in S$ there exists an open neighborhood $U_x \subset S$ of $x$ and a Whitney prestratification on $U_x$ which induces this stratification inside $U_x$. We use the symbol ${\mc S}$ to denote a Whitney stratification on $S$, which is a setgerm-valued function on $S$.
\end{enumerate}
\end{defn}

On the same set there could be many different Whitney stratifications. One can use a partial order among them to compare. Suppose ${\mc S}_1$ and ${\mc S}_2$ are two Whitney stratifications, which assign to each $x$ setgerms $S_{1,x}$ and $S_{2,x}$ respectively. Then define 
\beqn
S_i^{(k)}= \{ x \in S\ |\ {\rm dim}_{\mb R} S_{i, x}  \leq k\},\ i = 1, 2.
\eeqn
Then one has descending sequence of closed sets 
\beqn
S_i = S_i^{(m)} \supseteq S_i^{(m-1)} \supseteq \cdots \supseteq S_i^{(0)}
\eeqn
where $m= {\rm dim}_{\mb R}M$.

\begin{defn}\label{minimal}
${\mc S}_1  < {\mc S}_2$ if there exists an integer $k$ such that 
\beqn
S_1^{(l)} = S_2^{(l)}\ \forall l > k\ {\rm and}\ S_1^{(k)} \subsetneq S_2^{(k)}.
\eeqn
\end{defn}

If a Whitney stratification is minimal, then it is unique. Moreover, a minimal Whitney stratification on $S\subseteq M$ is invariant under diffeomorphisms of $M$ which preserve $S$ set-wise (see \cite[Lemma A.11]{Bai_Xu_2022}).

\begin{thm}\label{thm:Whitney_1965}\cite{Whitney_1965}
Given a smooth complex algebraic variety and $S \subseteq M$ a constructible set. There exists a minimal Whitney stratification which is induced from a Whitney prestratification whose strata are all smooth complex algebraic submanifolds. 
\end{thm}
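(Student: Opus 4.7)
The plan to establish Theorem \ref{thm:Whitney_1965} is to first produce \emph{some} Whitney prestratification of $S$ with smooth complex algebraic strata, and then to distill from the family of all such prestratifications a canonical minimal one in the sense of Definition \ref{minimal}. For the existence step I would follow Whitney's original recursive construction \cite{Whitney_1965}: passing to the Zariski closure $\overline{S}$ and iteratively removing singular loci yields a finite descending chain
\[
\overline{S} \supseteq \operatorname{Sing}(\overline{S}) \supseteq \operatorname{Sing}(\operatorname{Sing}(\overline{S})) \supseteq \cdots
\]
of closed algebraic subsets, whose successive differences (intersected with $S$ or with $\overline{S}\setminus S$) are smooth algebraic locally closed submanifolds. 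The resulting decomposition need not yet satisfy Whitney's condition (b), but the core content of Whitney's 1965 paper is that for any pair of smooth algebraic strata $(X,Y)$ with $Y \subset \overline{X}$, the ``bad locus'' in $Y$ where condition (b) fails is a proper complex algebraic subset of $Y$. One iteratively subdivides strata by adjoining these bad loci as new lower-dimensional strata; since each subdivision strictly decreases the dimension of some stratum and complex algebraic subvarieties satisfy the descending chain condition on dimension, the process terminates after finitely many steps at a Whitney prestratification with smooth algebraic strata.

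For the minimality step I would pass to the intersection of all such prestratifications. Given any Whitney prestratification $\mathcal{S}$ of $S$ with algebraic strata, the associated filtration $\{S^{(k)}_{\mathcal{S}}\}_{k\geq 0}$ consists of closed constructible subsets of $S$, and the coarsening order of Definition \ref{minimal} corresponds to pointwise containment of these filtrations (a coarser stratification has larger strata and hence smaller $S^{(k)}$). Set
\[
S^{(k)}_{\min} := \bigcap_{\mathcal{S}} S^{(k)}_{\mathcal{S}},
\]
where the intersection ranges over all Whitney prestratifications with algebraic strata. Because closed algebraic subsets of $M$ satisfy the descending chain condition, this a priori infinite intersection is realized by a finite sub-intersection at each dimension, so each $S^{(k)}_{\min}$ is again a closed constructible set. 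I would then take the strata of the candidate minimal prestratification to be the connected components of the successive differences $S^{(k)}_{\min}\setminus S^{(k-1)}_{\min}$ and verify that the resulting decomposition is itself a Whitney prestratification; by construction it is then minimal in the sense of Definition \ref{minimal}.

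The main obstacle is precisely this final verification. The filtration $\{S^{(k)}_{\min}\}$ is obtained by a pointwise minimum construction, whereas Whitney's condition (b) is a condition on \emph{pairs} of adjacent strata, so a priori the minimum of two Whitney filtrations need not itself be Whitney. To circumvent this I would appeal to the algebro-geometric characterization of condition (b) developed by Teissier: for smooth complex algebraic strata, Whitney's condition (b) along $Y \subset \overline{X}$ is equivalent to an intrinsic equimultiplicity statement for the polar varieties of $X$ along $Y$. This equivalence both identifies the locus where any Whitney prestratification must introduce a stratum boundary (it is algebraically cut out by the failure of equimultiplicity) and shows that its complement in each $Y$ is Whitney-regular with respect to $X$, which forces $\{S^{(k)}_{\min}\}$ to be realized by an honest Whitney prestratification with algebraic strata. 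Once this is established, uniqueness of the minimum is immediate from Definition \ref{minimal}, and the invariance properties of the canonical Whitney stratification under diffeomorphisms preserving $S$ follow formally from the intrinsic nature of the Teissier characterization.
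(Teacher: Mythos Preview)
Your proposal is plausible but takes a genuinely different route from the paper. The paper follows Mather's direct descending construction: starting from $S^{(m)}=S$, one defines $S^{(k-1)}\subset S^{(k)}$ to be exactly the set of points where $S^{(k)}$ is singular, or has the wrong local dimension, or where Whitney's condition (b) for some pair $(\mathring S^{(l)}, S^{(k)}_{\mathrm{reg}})$ with $l>k$ fails. Whitney's theorem guarantees this bad locus is algebraic of strictly lower dimension, so the descent terminates, and the strata $\mathring S^{(k)}=S^{(k)}\setminus S^{(k-1)}$ form a Whitney prestratification which is \emph{automatically} minimal: any other Whitney stratification must put every point of $S^{(k-1)}$ into its $(k-1)$-skeleton, because those are precisely the points where regularity or condition (b) forces a stratum boundary.

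By contrast, you separate existence from minimality and then try to recover the minimal filtration as an intersection $S^{(k)}_{\min}=\bigcap_{\mathcal S} S^{(k)}_{\mathcal S}$. This forces you to confront the real difficulty head-on---showing that the pointwise minimum of Whitney filtrations is again Whitney---and you resolve it by invoking Teissier's equimultiplicity characterization of condition (b). That is correct but is a much heavier input than what the paper uses: Mather's construction needs only the single fact (already in Whitney's 1965 paper) that the failure locus of condition (b) is a proper algebraic subvariety, whereas your route imports the full polar-variety machinery. The paper's approach also has the practical advantage that the explicit filtration $\{S^{(k)}\}$ is exactly what is used downstream in the product arguments (Propositions \ref{propa4} and \ref{propa5}), so the direct construction is not just more economical but better adapted to the applications.
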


We sketch the constructive proof following Mather \cite{Mather_1973} as this construction will be needed for further discussions. Suppose the (real) dimension of $M$ is $m$. Then we construct inductively a sequence of closed algebraic subsets 
\beqn
S = S^{(m)} \supseteq S^{(m-1)} \supset \cdots \supseteq S^{(0)}
\eeqn
satisfying the following conditions.
\begin{enumerate}
    \item Each $S^{(l)}$ is a closed algebraic subset of $M$ of real dimension at most $l$.

    \item Each $\mathring{S}^{(l)}:=S^{(l)} \setminus S^{(l-1)}$ is a smooth complex algebraic submanifold of (real) dimension $l$. It can empty, for example, when $l$ is odd.
    
    \item For each pair $l > k$, the pair $(\mathring S^{(l)}, \mathring S^{(k)})$ satisfies Whitney's condition (b) at every point of $\mathring S^{(k)}$.
\end{enumerate}
Indeed, suppose $S^{(m)}, \cdots, S^{(k)}$ have been constructed. Then we define $S^{(k-1)} \subset S^{(k)}$ to be the subset of points $x \in S^{(k)}$ satisfying one of the following conditions.
\begin{enumerate}
    \item $x$ is a singular point of $S^{(k)}$ or a regular point of $S^{(k)}$ with local (real) dimension strictly less than $k$.
    
    \item $x$ is a regular point of $S^{(k)}$ with local dimension $k$ and there exists $l>k$ such that the pair $(\mathring S^{(l)}, S^{(k)}_{reg})$ does not satisfy Whitney's condition (b) at $x$. Here $S_{reg}^{(k)} \subseteq S^{(k)}$ is the Zariski open subset of regular points of local dimension $k$.
\end{enumerate}
Basic algebraic geometry implies that the set of points satisfying the first condition above is a closed algebraic set of real dimension strictly less than $k$. On the other hand, it is a fundamental theorem of Whitney that the set of points satisfying the second condition above is also a closed algebraic set of real dimension strictly less than $k$. Then this produces a Whitney prestratification whose strata are all connected components of all $\mathring S^{(k)}$. It is not difficult to prove that this Whitney stratification is the minimal one. 

\subsection{Product of canonical Whitney stratifications}

Now we consider if the product of canonical Whitney stratifications on a product algebraic set is the canonical one. Let $M_1, M_2$ be smooth complex algebraic varieties and $S_1 \subseteq M_1$, $S_2 \subseteq M_2$ be complex algebraic subsets, equipped with the canonical Whitney stratification of with levels $S_i^{(k)}$. Denote
\begin{align*}
&\  M = M_1 \times M_2,\ &\ S = S_1 \times S_2.
\end{align*}
For each $k\geq 0$, denote  
\beqn
\mathring S_{\times}^{(k)}:= \bigcup_{k_1 + k_2 = k} \mathring S_1^{(k_1)} \times \mathring S_2^{(k_2)}.
\eeqn
It is easy to see that the prestratification on $S$ whose strata are all connected components of all $\mathring S_\times^{(k)}$ is a Whitney prestratification. Denote the associated Whitney stratification by ${\mc S}_\times$. It is easy to see the induced dimension filtration on $S$ is given by 
\beqn
S_\times^{(k)} = \bigcup_{k_1 +k_2 = k} S_1^{(k_1)} \times S_2^{(k_2)}.
\eeqn

\begin{prop}\label{propa4}
The Whitney stratification ${\mc S}_\times$ on $S$ is the minimal (canonical) one.
\end{prop}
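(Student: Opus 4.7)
\emph{Plan.} I would prove the proposition by descending induction on $k$, establishing $S_\times^{(k)} = S_{\rm can}^{(k)}$ for each $k$, where $\{S_{\rm can}^{(k)}\}$ is the dimension filtration of the canonical minimal Whitney stratification produced by Theorem \ref{thm:Whitney_1965}. The base case $k = \dim_{\mb R}S$ is trivial. For the induction step, I would appeal to Mather's inductive characterization recalled after Theorem \ref{thm:Whitney_1965}: once $S_{\rm can}^{(k)}$ is known, the set $S_{\rm can}^{(k-1)}$ consists of those points of $S_{\rm can}^{(k)}$ that are either singular, of local dimension strictly less than $k$, or smooth of local dimension $k$ but failing Whitney's condition (b) against some $(\mathring S_{\rm can}^{(l)}, S_{{\rm can},{\rm reg}}^{(k)})$ with $l > k$. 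The inductive hypothesis supplies $S_{\rm can}^{(l)} = S_\times^{(l)}$ for $l \geq k$ and identifies the higher strata $\mathring S_{\rm can}^{(l)} = \mathring S_\times^{(l)} = \bigsqcup_{l_1 + l_2 = l} \mathring S_1^{(l_1)} \times \mathring S_2^{(l_2)}$.

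The inclusion $S_{\rm can}^{(k-1)} \subseteq S_\times^{(k-1)}$ is immediate: any $(x,y) \in \mathring S_\times^{(k)}$ lies in some $\mathring S_1^{(k_1)} \times \mathring S_2^{(k_2)}$ with $k_1 + k_2 = k$, and is therefore a smooth point of $S_\times^{(k)}$ of local dimension $k$ at which Whitney's condition (b) holds with every higher product stratum. This is because, under the splitting $T_{(x,y)}(M_1 \times M_2) = T_x M_1 \oplus T_y M_2$, the condition (b) for a pair $(\mathring S_1^{(l_1)} \times \mathring S_2^{(l_2)}, \mathring S_1^{(k_1)} \times \mathring S_2^{(k_2)})$ decomposes factorwise and each factor condition is satisfied by the canonicality of ${\mc S}_1$ and ${\mc S}_2$. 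Hence $\mathring S_\times^{(k)}$ is disjoint from $S_{\rm can}^{(k-1)}$, giving the claimed inclusion.

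For the reverse inclusion $S_\times^{(k-1)} \subseteq S_{\rm can}^{(k-1)}$, I would take $(x,y) \in S_\times^{(k-1)}$ with $x \in \mathring S_1^{(k_1)}$, $y \in \mathring S_2^{(k_2)}$, $k_1 + k_2 < k$, and verify Mather's criterion at level $k$. If $(x,y)$ is singular in $S_\times^{(k)}$ or has local dimension there strictly less than $k$, the criterion applies directly. Otherwise $S_\times^{(k)}$ is locally a smooth $k$-manifold through $(x,y)$, and the local dimension formula $\max\{l_1 + l_2 \leq k : x \in \overline{\mathring S_1^{(l_1)}},\, y \in \overline{\mathring S_2^{(l_2)}}\} = k$, together with the axiom of frontier forcing $l_i \geq k_i$, produces a pair $(l_1, l_2)$ with $l_1 + l_2 = k$ and at least one strict inequality; assume WLOG $l_1 > k_1$. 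Local smoothness of $S_\times^{(k)}$ at $(x,y)$ forces $\overline{\mathring S_1^{(l_1)}}$ to be smooth of dimension $l_1$ at $x$, and the minimality of the canonical stratification of $S_1$, applied at the step removing $x$ from $\mathring S_1^{(l_1)}$, then yields some $l_1' > l_1$ in $L_1 = \{l : x \in \overline{\mathring S_1^{(l)}}\}$ at which Whitney's condition (b) fails for $(\mathring S_1^{(l_1')}, S_{1,{\rm reg}}^{(l_1)})$ in $M_1$. I would propagate this failure to the product by multiplying the failing test sequences in $M_1$ with sequences in $\mathring S_2^{(l_2)}$ converging to $y$: the splitting of tangent spaces gives $H = H_1 \oplus H_2$ and the splitting of secant lines gives $L = L_1 \oplus L_2$, so $L_1 \not\subseteq H_1$ survives as $L \not\subseteq H$. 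Since $l := l_1' + l_2 > k$ and $\mathring S_1^{(l_1')} \times \mathring S_2^{(l_2)} \subseteq \mathring S_\times^{(l)}$, this witnesses Whitney-(b) failure for $(\mathring S_{\rm can}^{(l)}, S_{\rm can,reg}^{(k)})$ at $(x,y)$, placing $(x,y)$ in $S_{\rm can}^{(k-1)}$.

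\emph{Main obstacle.} The principal technical difficulty lies in the last paragraph: identifying the right factor index $l_1$, then promoting it via minimality of ${\mc S}_1$ to the strictly larger $l_1'$ whose Whitney-(b) failure survives the passage to the product with the correct index $l_1' + l_2 > k$. Two refinements must be handled carefully. First, if the local dimension of $S_1$ at $x$ is already $k_1$ (so no strict inequality in the first factor is available), then the strict inequality must arise in $L_2$, and the symmetric argument with the roles of $S_1$ and $S_2$ swapped takes over. Second, if multiple top product strata $\mathring S_1^{(l_1)} \times \mathring S_2^{(l_2)}$ with $l_1 + l_2 = k$ pass through $(x,y)$, the smoothness of $S_\times^{(k)}$ constrains each of their closures to be smooth at the respective factor points, and one applies the above promotion argument to any single such top stratum, so the multiplicity does not obstruct the construction of the Whitney-(b) witness.
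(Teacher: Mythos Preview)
Your proposal is correct and follows essentially the same approach as the paper's proof: both rely on Mather's inductive characterization of the canonical filtration and propagate a Whitney--(b) failure from one factor to the product. The paper frames this as a proof by contradiction (find the largest $l$ where $S^{(l)} \subsetneq S_\times^{(l)}$ and derive a Whitney--(b) failure at a putative point of $\mathring S^{(l+1)}$), while you frame it as a direct descending induction establishing $S_\times^{(k)} = S_{\rm can}^{(k)}$ level by level; these are equivalent.

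One small presentational difference worth noting: the paper splits the propagation step into two cases depending on whether $x_2 \in \mathring S_2^{(l_2)}$, using constant second coordinate in case~1 and combining Whitney--(b) failures in both factors in case~2. Your uniform treatment---pairing the failing $M_1$-sequences with a single auxiliary sequence $z_\nu \in \mathring S_2^{(l_2)}$ converging to $y$---actually handles both of the paper's cases at once and is slightly cleaner. Just be careful with the phrasing ``$L = L_1 \oplus L_2$'': if you use the \emph{same} sequence $z_\nu$ for both test sequences $(y_{1,\nu}, z_\nu)$ and $(x_{1,\nu}, z_\nu)$, the secant line is purely in the $M_1$ direction (so $L = L_1 \times \{0\}$), and then $L_1 \not\subseteq H_1$ immediately gives $L \not\subseteq H_1 \oplus H_2$. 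Make this choice explicit to avoid the ambiguity of a line ``splitting'' as a direct sum.
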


\begin{proof}
We know that the canonical Whitney stratification, denoted by ${\mc S}$, is the minimal one. Then one has ${\mc S} \leq {\mc S}_\times$. Suppose this is not an equality. Then by definition (see Definition \ref{minimal}), there exists $l$ such that 
\beqn
S^{(k)} = S_\times^{(k)}\ \forall k > l,\ S^{(l)} \subsetneq S_\times^{(l)}.
\eeqn
Therefore, there exists a point 
\beqn
x = (x_1, x_2) \in S^{(l+1)} = S_\times^{(l+1)} = \bigcup_{l_1 + l_2 = l+1} S_1^{(l_1)} \times S_2^{(l_2)},
\eeqn
$x \in S_\times^{(l)}$, $x \notin S^{(l)}$. Then by the construction of the canonical Whitney stratification, $x$ is a regular point of $S^{(l+1)}$ of local dimension $l+1$. Then $x$ is a regular point of the union of $S_1^{(l_1)} \times S_2^{(l_2)}$ for all possible $l_1 + l_2 = l+1$. Then for some $l_1, l_2$ with $l_1 + l_2 = l+1$, $x = (x_1, x_2)$ is a regular point of $S_1^{(l_1)} \times S_2^{(l_2)}$ with local dimension $l+1$. This implies that $x_i$ is a regular point of local dimension $l_i$ in each factor. 

However, $x \in S_\times^{(l)}$ also implies that either $x_1 \in S_1^{(l_1 -1)}$ or $x_2 \in S_2^{(l_2-1)}$. Without loss of generality, we assume $x_1 \in S_1^{(l_1-1)}$. Then as $x_1$ is a regular point of $S_1^{(l_1)}$, it implies that for some $k_1>l_1$, Whitney's condition (b) for $(\mathring S_1^{(k_1)}, S_{1,  reg}^{(l_1)})$ fails at $x_1$. Then there exists a sequence $y_{1, \nu} \in \mathring S_1^{(k_1)}$ and a sequence $x_{1, \nu}\in S_{1, reg}^{(l_1)}$, both of which converging to $x_1$ such that $T_{y_{1, \nu}} \mathring S_1^{(k_1)}$ converges to a subspace $H_1 \subset T_{x_1} M_1$, the secant line $\ov{x_{1, \nu} y_{1,\nu}}$ converges to a line $L_1 \subset T_{x_1} M$, but $L_1 \nsubseteq H_1$. Now we separate the discussion in two scenarioes
\begin{enumerate}
\item If $x_2 \in \mathring S_2^{(l_2)}$, then consider the two sequences $(x_{1, \nu}, x_2) \in S_{1, reg}^{(l_1)} \times S_{2, reg}^{(l_2)} \subset S^{(l+1)}_{reg}$, $(y_{1, \nu}, x_2) \in \mathring S_1^{(k_1)} \times \mathring S_2^{(l_2)} \subset \mathring S^{(k_1 + l_2)}$. Then one can see that Whitney's condition (b) fails for $(\mathring S^{(k_1 + l_2)}, S_{reg}^{(l + 1)})$ at the limit $x = (x_1, x_2)$, which is a contradiction that $x \in \mathring S^{(l+1)}$. 

\item Suppose $x_2\notin \mathring S_2^{(l_2)}$. Then by construction, there exist some $k_2 > l_2$, a sequence $y_{2, \nu}\in \mathring S_2^{(k_2)}$ converging to $x_2$, a sequence $x_{2, \nu}\in S_{2, reg}^{(l_2)}$ converging to $x_2$, such that the sequence of tangent spaces $T_{y_{2,\nu}} \mathring S_2^{(k_2)}$ converges to $H_2$ and the sequence of secant lines $\ov{x_{2, \nu} y_{2,\nu}}$ converges to a line $L_2$ but $L_2$ is not contained in $H_2$. Then consider the sequence of points $y_\nu = (y_{1, \nu}, y_{2, \nu}) \in \mathring S_1^{(k_1)}\times \mathring S_2^{(k_2)} \subset \mathring S^{(k_1+ k_2)}$, the sequence of points $x_\nu = (x_{1, \nu}, x_{2, \nu})\in S_{1, reg}^{(l_1)} \times S_{2, reg}^{(l_2)}\subset S_{reg}^{(l+1 + l_2)} = S_{reg}^{(l+1)}$. This breaks the hypothesis that $x \in \mathring S^{(l+1)}$. 
\end{enumerate}

\end{proof}

\subsection{Relative case}

Now we consider the relative case. Let $S$ be equipped with a prestratification ${\mf A}$
\beqn
S = \bigsqcup_{\alpha \in {\mf A}} S_\alpha
\eeqn
such that each stratum $S_\alpha$ is algebraic. A Whitney stratification ${\mc S}$ on $S$ is said to respect the given prestratification if for each $x \in S$, the germ $S_x$ is contained in the stratum $S_\alpha$ which contains $x$. We call such Whitney stratifications ${\mf A}$-Whitney stratifications. We proved in \cite{Bai_Xu_2022} that there exists a unique minimal ${\mf A}$-Whitney stratification, which we call the canonical ${\mf A}$-Whitney stratification. 

For the purpose of this paper, we need to verify that such canonical Whitney stratification is natural with respect to products. Let $M, N$ be two smooth complex algebraic varieties and $S \subset M$, $T \subset N$ be constructible subsets. Given algebraic prestratifications
\begin{align*}
&\ S = \bigsqcup_{\alpha \in {\mf A}} S_\alpha,\ &\ T = \bigsqcup_{\beta \in {\mf B}} T_\beta,
\end{align*}
one has an associated decomposition 
\beqn
R:=S \times T = \bigsqcup_{(\alpha, \beta) \in {\mf A} \times {\mf B}} T_{(\alpha, \beta)} = \bigsqcup_{(\alpha, \beta) \in {\mf A}\times {\mf B}} S_\alpha \times T_\beta.
\eeqn
We can check that this is still a prestratification with algebraic strata. Moreover, the induced partial order on the set of strata ${\mf A} \times {\mf B}$ is the product one:
\beqn
(\alpha, \beta) \leq (\alpha', \beta') \Longleftrightarrow \alpha \leq \alpha'\ {\rm and}\ \beta \leq \beta'.
\eeqn
Let the prestratification be denoted by ${\mf A}\times {\mf B}$. On $S$ resp. $T$ there are minimal ${\mf A}$- resp. ${\mf B}$-Whitney stratification, whose product is also a Whitney stratification on $R = S \times T$.

\begin{prop}\label{propa5}
The product Whitney stratification on $R = S \times T$ is the minimal ${\mf A}\times {\mf B}$-Whitney stratification.
\end{prop}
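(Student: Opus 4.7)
The plan is to reduce Proposition \ref{propa5} to an argument that runs parallel to the proof of Proposition \ref{propa4}, the only extra feature being that all dimension filtrations, strata, and limiting sequences must be constrained to respect the prestratification ${\mf A}\times {\mf B}$. The key observation is that the canonical ${\mf A}$-Whitney stratification of $S$ is constructed by running Mather's inductive procedure \emph{inside each} open stratum $S_\alpha$ (using the fact that each $S_\alpha$ is a smooth complex algebraic manifold and taking closures within $S$); similarly for $T$. Both factors being minimal, this gives a concrete and local description of the canonical stratifications which behaves naturally under products.

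Concretely, I would first build the ${\mf A}\times {\mf B}$-adapted dimension filtration on $R = S\times T$ by the formula
\[
 R_\times^{(k)} \;=\; \bigcup_{k_1+k_2 = k}\ \bigcup_{(\alpha,\beta) \in {\mf A}\times {\mf B}} S_{\alpha}^{(k_1)}\times T_{\beta}^{(k_2)},
\]
where $S_{\alpha}^{(k_1)}$ (resp.\ $T_{\beta}^{(k_2)}$) denotes the $k_1$-dimensional level of the canonical ${\mf A}$-Whitney (resp.\ ${\mf B}$-Whitney) stratification intersected with the closure of $S_\alpha$ in $S$ (resp.\ $T_\beta$ in $T$). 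A direct check, using that each $S_\alpha\times T_\beta$ is algebraic and that the product of strata satisfying Whitney's condition (b) still satisfies (b), shows that the product stratification ${\mc S}\times {\mc T}$ is an ${\mf A}\times {\mf B}$-Whitney stratification with this filtration. In particular, writing ${\mc S}_\times^{\mathrm{rel}}$ for the canonical ${\mf A}\times {\mf B}$-Whitney stratification, we already have ${\mc S}_\times^{\mathrm{rel}} \leq {\mc S}\times {\mc T}$.

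For the reverse inequality, I would argue by contradiction exactly as in Proposition \ref{propa4}. Suppose the product stratification is not minimal; then one finds an index $l$ and a point $x = (x_1,x_2) \in R_\times^{(l+1)} \setminus {\mc S}_\times^{\mathrm{rel},(l)}$ which is a regular point of some $S_\alpha^{(l_1)}\times T_\beta^{(l_2)}$ with $l_1+l_2 = l+1$, while still lying in $R_\times^{(l)}$. As before, one of the factors, say $x_1$, must fail to be a regular point of maximal local dimension in $S_\alpha^{(l_1)}$, so by the Mather/Whitney construction applied inside the single stratum $S_\alpha$, there exist sequences $x_{1,\nu}\in S_{\alpha, {\rm reg}}^{(l_1)}$ and $y_{1,\nu}\in \mathring S_\alpha^{(k_1)}$ with $k_1>l_1$, converging to $x_1$ and violating Whitney's condition (b). Pairing these with either the constant sequence $x_2$ (when $x_2$ lies in the top level of $T_\beta^{(l_2)}$) or with an analogously constructed pair in $T_\beta$, one produces sequences in $R$ that stay inside $S_\alpha\times T_\beta$ and witness a violation of Whitney's condition (b) at $x$ for the product strata — contradicting $x \in \mathring R^{(l+1)}$ of the canonical ${\mf A}\times {\mf B}$-Whitney stratification.

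The main technical nuisance — the place where the relative nature of the statement actually requires care — is verifying that the approximating sequences supplied by Mather's construction in each factor indeed stay inside the prescribed strata $S_\alpha$ and $T_\beta$ (rather than leaking into some adjacent stratum of the prestratification). This is where I would use local closedness of $S_\alpha$ and $T_\beta$ together with the fact that the Mather construction inside a single smooth algebraic manifold only involves Zariski-open subsets of that manifold; since $x_i$ already lies in $\overline{S_\alpha}$ or $\overline{T_\beta}$ by construction, sufficiently close approximants of $x_i$ along the non-Whitney tangent directions remain in $S_\alpha$ or $T_\beta$. Once this point is pinned down, the argument is a verbatim adaptation of Proposition \ref{propa4} and yields ${\mc S}\times {\mc T} = {\mc S}_\times^{\mathrm{rel}}$.
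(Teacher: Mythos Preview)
Your overall strategy matches the paper's: assume the product stratification is not minimal, locate a point $x=(x_1,x_2)$ that is regular of dimension $l+1$ in some $S_\alpha^{(l_1)}\times T_\beta^{(l_2)}$ but lies in the product filtration at level $l$, and then produce sequences witnessing a failure of Whitney's condition (b) in the product, contradicting $x\in\mathring R^{(l+1)}$. However, the step you flag as a ``technical nuisance'' is a genuine gap, and your proposed fix does not work.

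The issue is that in the relative setting the Mather construction does \emph{not} stay inside a single prestratum. A point $x_1\in S_\alpha$ may drop to $S_\alpha^{(l_1-1)}$ because Whitney's condition (b) fails for the pair $(\mathring S_{\alpha'}^{(k_1)}, S_{\alpha,\mathrm{reg}}^{(l_1)})$ with $\alpha'>\alpha$ strictly larger in the prestratification; the offending sequence $y_{1,\nu}$ then lives in $S_{\alpha'}$, not in $S_\alpha$, and no amount of local closedness of $S_\alpha$ prevents this. Your claim that ``the Mather construction inside a single smooth algebraic manifold only involves Zariski-open subsets of that manifold'' is false here: $S_\alpha$ need not be smooth, and the canonical ${\mf A}$-Whitney stratification is defined precisely so that Whitney regularity holds across all pairs of prestrata, not just within one.

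The paper resolves this by running an induction on the partial order of ${\mf A}\times{\mf B}$: assume the result holds for all $(\alpha',\beta')>(\alpha,\beta)$, and then prove a separate claim that $y$ cannot lie in the closure of any $\mathring S_{\alpha'}^{(p')}$ with $\alpha'>\alpha$ and $p'\leq p$ (else $x$ would already belong to $R_{(\alpha,\beta)}^{(k)}$). This forces any Whitney-violating sequence from a higher prestratum to come from a piece of dimension $p'\geq p+1$, after which the contradiction goes through exactly as in Proposition~\ref{propa4} with $\alpha'\geq\alpha$ allowed. You should replace your local-closedness argument with this induction.
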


\begin{proof}
We prove by induction. Proposition \ref{propa4} implies that for each top stratum $(\alpha_0, \beta_0) \in {\mf A}\times {\mf B}$, the product Whitney stratification coincides with the minimal Whitney stratification on $S_{\alpha_0} \times T_{\beta_0}$. Now given a stratum $(\alpha, \beta) \in {\mf A} \times {\mf B}$. Suppose for any $(\alpha', \beta') > (\alpha, \beta)$, we have proved that the product Whitney stratification on $S_{\alpha'} \times T_{\beta'}$ coincides with the restriction of the minimal $({\mf A}\times {\mf B})$-Whitney stratification. We would like to show that it is still the case for the pair $(\alpha, \beta)$. 

Suppose our claim is false. Then there exists $k\geq 0$ such that 
\beqn
(S_\alpha \times T_\beta)^{(l)} = \bigcup_{p + q = l} S_{\alpha}^{(p)} \times T_\beta^{(q)},\ \forall l \geq k+1,\ (S_\alpha \times T_\beta)^{(k)} \subsetneq \bigcup_{p + q = k} S_\alpha^{(p)} \times T_\beta^{(q)}.
\eeqn
Then one can choose a point 
\beq
x \in (S_\alpha \times T_\beta)^{(k+1)} \setminus (S_\alpha \times T_\beta)^{(k)}
\eeq
such that
\beq\label{yyy}
x\in \bigcup_{p + q = k} S_\alpha^{(p)} \times T_\beta^{(q)}. 
\eeq
By the construction of the minimal Whitney stratifications on constructible sets (sketched after Theorem \ref{thm:Whitney_1965}), one knows that $x = (y, z)$ is a regular point of $(S_\alpha \times T_\beta)^{(k+1)}$ of local dimension $k+1$. This implies that for some $p+q = k+1$, $x$ is a regular point of $S_{\alpha}^{(p)} \times T_{\beta}^{(q)}$ of local dimension $k+1$. Then $y$ resp. $z$ is a regular point of $S_{\alpha}^{(p)}$ resp. $T_{\beta}^{(q)}$ of dimension $p$ resp. $q$. Moreover, \eqref{yyy} implies that either $y \in S_\alpha^{(p-1)}$ or $z\in T_\beta^{(q-1)}$.

We claim that $y$ is not in the boundary of any $\mathring S_{\alpha'}^{(p')}$ with $\alpha' > \alpha$ and $p' \leq p$. If it is the case, assume that $z \in \mathring T_\beta^{(q')}$ for some $q' \leq q$. Then we see that 
\beqn
x = (y, z) \in \partial \mathring S_{\alpha'}^{(p')} \times \mathring T_\beta^{(q')} \subset \partial \left( \mathring S_{\alpha'}^{(p')} \times \mathring T_\beta^{(q')} \right) \subset \partial \mathring R_{(\alpha', \beta)}^{(p'+q')}.
\eeqn
Notice that $p' + q' \leq k+1$ and $(\alpha', \beta) > (\alpha, \beta)$. This contradicts the fact that $x \notin R_{(\alpha, \beta)}^{(k)}$. Similarly, $z$ is not in the boundary of any $\mathring T_{\beta'}^{(q')}$ with $\beta'> \beta$ and $q' \leq q$.

Without loss of generality, assume we are in one of the following two scenarios.

\begin{enumerate}

\item $y \in S_\alpha^{(p-1)}$ and $z \in \mathring T_\beta^{(q)}$. Then we know that for some $\alpha' \geq \alpha$ and $p' \geq p+1$, Whitey's condition (b) fails for the pair $(\mathring S_{\alpha'}^{(p')}, S_{\alpha, reg}^{(p)})$ at $y$. Then this implies that Whitney's condition (b) fails for the pair 
\beqn
\left( \mathring R_{(\alpha', \beta)}^{(p'+q)}, R_{(\alpha, \beta), reg}^{(p+q)} \right)
\eeqn
at $x = (y, z)$. This contradicts the assumption that $x \notin R_{(\alpha, \beta)}^{(k)}$.
 
\item $y \in S_\alpha^{(p-1)}$ and $z \in T_\beta^{(q-1)}$. Then we know for some $\alpha' \geq \alpha$, $\beta' \geq \beta$, $p' \geq p+1$, $q' \geq q+1$, Whitney's condition (b) fails for the pair $(\mathring S_{\alpha'}^{(p')} \times \mathring T_{\beta'}^{(q')}, S_{\alpha, reg}^{(p)} \times T_{\beta, reg}^{(q)})$ at $(y, z)$. It follows that Whitney's condition (b) fails for the pair 
\beqn
\left( \mathring R_{(\alpha', \beta')}^{(p'+q')}, R_{(\alpha, \beta), reg}^{(p+q)} \right).
\eeqn
at $x = (y, z)$. This contradicts the assumption that $x \notin R_{(\alpha, \beta)}^{(k)}$.
\end{enumerate}
\end{proof}

\section{Relative stable equivariant smoothing}\label{appendixb}

Our construction in this paper relies on equivariant smoothings of infintely many moduli spaces. It is necessary to have an extension of the stable $G$-smoothing results of Lashof \cite{Lashof_1979} to a relative setting. Namely, if a $G$-manifold $M$ has a $G$-smoothing over an open set, then one can extend the existing smoothing to $M$ once the tangent microbundle admits a $G$-vector bundle reduction which is compatible with the existing smoothing. 

Lashof's original construction relies on two technical results. The first one is Mostow's embedding theorem \cite{Mostow_1957} and the second is Jaworowski's extension theorem \cite{Jaworowski_1976}. We need to recall these two theorems in order to obtain a generalization of Lashof's results on stable $G$-smoothing to the relative setting.

\begin{lemma}\cite[Lemma 5.2]{Mostow_1957} \label{Mostow_embedding}
Let $G$ be a compact Lie group of transformations on a metric space $E$, and let $T_1, T_2$ be invariant subsets with $E = T_1 \cup T_2$ and $T_2$ be closed. Assume that there exist $G$-equivariant topological embeddings $\varphi_i: T_i \to {\bm R}_i$, $i = 1, 2$ where ${\bm R}_1, {\bm R}_2$ are orthogonal $G$-spaces. Then there exists a $G$-equivariant topological embedding $\varphi:E \to {\bm R}_1 \oplus {\bm R}_2$ such that
\beqn
\varphi|_{T_2} = (0, \varphi_2).
\eeqn
\end{lemma}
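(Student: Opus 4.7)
The plan is to construct $\varphi$ by pasting together an extension of $\varphi_2$ to all of $E$ with a modification of $\varphi_1$ that vanishes on $T_2$. The closedness of $T_2$ provides the essential hook: it allows both a Tietze-type extension of $\varphi_2$ and the construction of an invariant cutoff function that interpolates between $T_2$ and its complement.

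First I would replace the given metric on $E$ by a $G$-invariant one by averaging over the compact group $G$; this does not change the underlying topology but makes $d(\cdot, T_2)$ continuous and $G$-invariant. I would then extend $\varphi_2$ to a continuous $G$-equivariant map $\tilde{\varphi}_2: E \to {\bm R}_2$ using the equivariant Tietze extension theorem (applicable since $T_2 \subset E$ is closed and ${\bm R}_2$ is a finite-dimensional orthogonal representation of the compact group $G$). Next, define a continuous $G$-invariant cutoff $\lambda: E \to [0,1]$ by $\lambda(x) = d(x, T_2) / (1 + d(x, T_2))$, which vanishes precisely on $T_2$ and is strictly positive elsewhere. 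With these ingredients in hand, I would construct $\varphi$ by setting $\varphi(x) = (\tilde{\varphi}_1(x),\, \tilde{\varphi}_2(x))$, where $\tilde{\varphi}_1(x) = \lambda(x) \cdot \varphi_1(x)/(1 + \|\varphi_1(x)\|)$ for $x \in T_1$ and $\tilde{\varphi}_1(x) = 0$ for $x \in T_2$; the two definitions agree on $T_1 \cap T_2$ (where $\lambda = 0$), and the boundedness of the normalized $\varphi_1$ guarantees continuity of $\tilde{\varphi}_1$ across the boundary of $T_2$.

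Equivariance of $\varphi$ follows immediately from the equivariance of each ingredient, and the identity $\varphi|_{T_2} = (0, \varphi_2)$ is built in. For injectivity, the case where both points lie in $T_2$ is handled by injectivity of $\varphi_2$, while the case where both lie in $T_1 \setminus T_2$ is handled by recovering $\varphi_1(x)$ from $\tilde{\varphi}_1(x)$ up to a positive scalar and invoking injectivity of $\varphi_1$. To rule out the mixed case, where one point lies in $T_2$ and another in $T_1 \setminus T_2$, I would absorb an auxiliary trivial one-dimensional $G$-summand into the bounded normalization (replacing $\varphi_1(x)/(1+\|\varphi_1(x)\|)$ by $(\varphi_1(x),1)/(1+\|\varphi_1(x)\|)$ inside an enlarged ${\bm R}_1$), so that the normalized factor never vanishes; then $\tilde{\varphi}_1(x) = 0$ forces $\lambda(x) = 0$, i.e. $x \in T_2$.

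The main obstacle, in my view, is verifying that $\varphi$ is a \emph{topological} embedding rather than merely a continuous injection: one must show that $\varphi^{-1}$ is continuous on $\varphi(E)$, which is delicate near points of $T_2$ that are accumulated by sequences in $T_1 \setminus T_2$. The plan is to argue that for any sequence $(x_n)$ with $\varphi(x_n)\to\varphi(x_0)$, the second coordinate forces $\tilde{\varphi}_2(x_n) \to \tilde{\varphi}_2(x_0)$, while the first coordinate, via the convergence $\lambda(x_n) \to \lambda(x_0)$, controls the distance from $x_n$ to $T_2$; combining these with the fact that $\varphi_1, \varphi_2$ are individually topological embeddings on their domains will yield $x_n \to x_0$. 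The argument requires care in the mixed case $x_0 \in T_2$ with infinitely many $x_n \in T_1 \setminus T_2$, and this is where the invariant metric, the precise form of $\lambda$, and the bounded normalization are used simultaneously.
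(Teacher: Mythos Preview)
The paper does not prove this lemma; it merely quotes it from Mostow's 1957 paper and uses it as a black box in the proof of Theorem~\ref{thmb3}. Your proposed argument is essentially the classical construction that appears in Mostow's original proof: extend $\varphi_2$ equivariantly by Tietze, multiply a bounded renormalization of $\varphi_1$ by a $G$-invariant cutoff vanishing on $T_2$, and take the pair. So in spirit you are reproducing the cited proof rather than offering an alternative.

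Two remarks. First, your auxiliary trivial one-dimensional summand is in fact needed for injectivity even in the case where both points lie in $T_1\setminus T_2$ (not only for the mixed case): without it, $\tilde\varphi_1$ determines $\varphi_1$ only up to a positive scalar, which is not enough. With the summand, the last coordinate $\lambda(x)/(1+\|\varphi_1(x)\|)$ lets you recover the scalar and hence $\varphi_1(x)$ itself. This enlargement of ${\bm R}_1$ by a trivial $G$-line is standard and harmless in the paper's application (where one only needs \emph{some} orthogonal $G$-space as target), though it means the target is strictly larger than the ${\bm R}_1\oplus{\bm R}_2$ literally written in the statement.

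Second, your sketch for the homeomorphism-onto-image property has a soft spot at exactly the place you flag. From $\tilde\varphi_1(x_n)\to 0$ you can indeed extract $\lambda(x_n)\to 0$ (since $\|\tilde\varphi_1(x)\|$ is comparable to $\lambda(x)$ once the auxiliary summand is present), hence $d(x_n,T_2)\to 0$ and $y_n\in T_2$ with $d(x_n,y_n)\to 0$. But to conclude $y_n\to x_0$ via the embedding $\varphi_2$ you need $\tilde\varphi_2(y_n)-\tilde\varphi_2(x_n)\to 0$, and $d(x_n,y_n)\to 0$ alone does not give this without some uniform continuity or local compactness hypothesis. In the paper's use case $E=M$ is a topological manifold, so local compactness is available and the argument goes through; in the bare generality of an arbitrary metric $G$-space one should either add such a hypothesis or include $\lambda$ as a separate coordinate so the argument becomes purely order-of-limits.
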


\begin{thm}\cite[Theorem 2.2]{Jaworowski_1976}\label{Jaworowski}
Let $X$ be a locally compact $G$-space which can be equivariantly embedded into some finite-dimensional orthogonal $G$-vector space. Suppose $X$ has only finitely many orbit types. Let $A \subset X$ be a closed $G$-subspace and $f: A \to Y$ be a continuous $G$-map to a locally compact separable metrizable $G$-space $Y$. If for each $x \in X \setminus A$ the fixed point set $Y^{G_x}$ is an ANR (absolute neighborhood retract), then $f$ can be extended to a continuous $G$-map from a neighborhood of $A$ in $X$. 
\end{thm}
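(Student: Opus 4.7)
The plan is to prove Jaworowski's extension theorem by induction on orbit types, which is the standard strategy for upgrading non-equivariant ANR extension results (in the spirit of Dugundji) to the equivariant category. Since $X$ has only finitely many orbit types, fix an enumeration $(H_1), \ldots, (H_n)$ of the conjugacy classes of isotropy subgroups, ordered so that $(H_i) \leq (H_j)$ whenever $H_j$ is subconjugate to $H_i$; in particular, the orbits with the largest isotropy are listed first. For each $k$, let $X_{(H_k)} \subset X$ denote the (locally closed) subspace of points whose stabilizer is conjugate to $H_k$, and set $X_{\geq k} = \bigcup_{j \leq k} X_{(H_j)}$, which is a closed $G$-invariant subspace of $X$. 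The inductive hypothesis is that $f$ has been extended to a continuous $G$-equivariant map on a $G$-invariant open neighborhood $U_{k-1}$ of $A \cup X_{\geq k-1}$.

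First, I would reduce the extension step from the inductive hypothesis to a local problem via the Mostow--Palais slice theorem: for any point $x \in X_{(H_k)}$ with $G_x = H$, a $G$-invariant neighborhood of $Gx$ is equivariantly homeomorphic to the twisted product $G \times_H S_x$ for an $H$-invariant slice $S_x$. Specifying a $G$-map $G \times_H S_x \to Y$ is the same datum as specifying an $H$-equivariant map $S_x \to Y$, and the images of points in $S_x$ with stabilizer exactly $H$ must lie in $Y^H$. By the ordering of orbit types, the stratum $X_{(H_k)}$ meets the slice $S_x$ in an $H$-space on which $H$ acts with trivial generic isotropy, and off this stratum the slice is already inside the inductively constructed $U_{k-1}$. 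Hence the local extension problem is: given an $H$-equivariant map defined on the closed subset $(A \cup X_{\geq k-1}) \cap S_x$ of the $H$-space $S_x$ that sends a dense set into the ANR $Y^H$, extend it $H$-equivariantly to a neighborhood. I would handle this by embedding $S_x$ equivariantly into a finite-dimensional orthogonal $H$-representation (granted by Mostow's equivariant embedding theorem, Lemma \ref{Mostow_embedding}), using a tubular neighborhood of the closed set together with a $G$-invariant partition of unity, and invoking the ANR property of $Y^H$ together with Dugundji's extension theorem in the Hilbert cube / ANR formulation. Averaging over $H$ (or using an $H$-equivariant retraction onto $Y^H$ near the boundary of the locus of stabilizer exactly $H$) produces the required $H$-equivariant local extension.

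Next, I would globalize by covering the compact set $X_{(H_k)} \setminus U_{k-1}$ (or, more carefully, covering $X$ locally near this stratum) by finitely many such $G$-invariant tubes $G \cdot S_{x_1}, \ldots, G \cdot S_{x_m}$, and using a $G$-invariant partition of unity subordinate to these tubes plus $U_{k-1}$ to patch the local $G$-equivariant extensions with the already-existing extension on $U_{k-1}$. The partition-of-unity step needs the target $Y$ to admit a local convex-combination structure on the relevant fixed-point sets; this is where the ANR hypothesis on $Y^{G_x}$ is used a second time, since the convex combinations must remain in the appropriate fixed-point ANR near each orbit. This produces a $G$-invariant open neighborhood $U_k$ of $A \cup X_{\geq k}$ together with a $G$-equivariant extension of $f$, completing the inductive step. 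After $n$ steps one obtains the desired extension on a $G$-invariant neighborhood of $A = A \cup X_{\geq n}$.

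The main technical obstacle will be the compatibility of the local extensions across orbit types near the frontier of each stratum $X_{(H_k)}$: when a point in $X_{(H_k)}$ has some nearby stratum $X_{(H_j)}$ with $H_j < H_k$, the image under $f$ (once extended) must simultaneously lie in $Y^{H_k}$ at the central point and in $Y^{H_j}$ on the lower stratum, with continuous transition. The slice theorem arranges the source geometry so that these constraints are linear, but ensuring the target values move continuously from $Y^{H_k}$ into $Y^{H_j}$ requires carefully choosing the retractions furnished by the ANR property of each $Y^{H}$ to be compatible under the inclusions $Y^{H_k} \subset Y^{H_j}$. The standard resolution is to work inside a single ambient $G$-ANR neighborhood of $f(A)$ in $Y$ and to use an equivariant version of Dugundji's extension along slice directions; making this precise, with the finiteness of orbit types controlling the induction, is exactly the content of Jaworowski's original argument, and I would follow that outline.
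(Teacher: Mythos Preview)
The paper does not prove this statement at all: Theorem \ref{Jaworowski} is simply quoted from \cite{Jaworowski_1976} as a black box and then used in the proof of Theorem \ref{thmb3}. There is no ``paper's own proof'' to compare against.

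Your sketch is a reasonable outline of the standard approach to equivariant ANR extension theorems and is in the spirit of Jaworowski's original argument: induction over a stratification by orbit types, reduction to a fixed-isotropy problem via the slice theorem, and use of the ANR hypothesis on each $Y^{H}$ to perform the non-equivariant extension on slices. The honest caveat you flag at the end---compatibility of the ANR retractions for different $Y^{H}$ across adjacent strata---is the real content, and you correctly identify that resolving it is exactly what Jaworowski's paper does. So your proposal is not so much an independent proof as a summary of the cited reference; for the purposes of this paper that is all that is needed, since the authors treat the result as known.
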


Now we want to extend Lashof's construction to a relative setting. 

\begin{thm}\label{thmb3}
Let $M$ be a topological $G$-manifold with only finitely many orbit types. Let $O \subset M$ be an open subset such that its closure $\ov{O}$ is compact. 
Suppose that
\begin{enumerate}
    \item the tangent microbundle $T_\mu M$ of $M$ admits a $G$-vector bundle reduction $\varphi: E_\mu \to T_\mu M$ for a vector bundle $E \to M$.
    
    \item A $G$-invariant open neighborhood $U$ of $\ov{O}$ is equipped with a $G$-smoothing and $E|_U$ is equipped with a smooth $G$-bundle structure such that the reduction $\varphi$ is smooth over $U$. Moreover, $\varphi$ is in the same isotopy class of the canonical $TU \to T_\mu U$.
\end{enumerate}
Then there exists a finite-dimensional orthogonal representation ${\bm R}$ of $G$ and a $G$-smoothing $\alpha$ on $M \times {\bm R}$ satisfying 
\begin{enumerate}
    \item The induced stable isotopy class of $G$-vector bundle reductions of $T_\mu( M \times R)$ coincides with the ${\bm R}$-stabilization of $\varphi$. 
    
    \item There exists a $G$-invariant open neighborhood $U'$ of $\ov{O}$ with $\ov{U'} \subset U$, such that the restriction of the $G$-smoothing $\alpha$ on $U'\times {\bm R}$ is diffeomorphic to the given $G$-smoothing on $U' \times {\bm R}$. 
\end{enumerate}
\end{thm}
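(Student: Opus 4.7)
The plan is to adapt Lashof's framework, in which stable $G$-smoothings of a topological $G$-manifold are in bijection with pairs consisting of (i) an equivariant topological embedding $\varphi:M\hookrightarrow V$ into an orthogonal $G$-representation $V$, together with (ii) a $G$-equivariant ``normal tube,'' i.e. a $G$-equivariant homeomorphism of an open neighborhood of $\varphi(M)$ in $V$ onto the total space of a $G$-vector bundle over $M$ stably complementary to $E$. The relative statement then reduces to the problem of extending such data from a smaller neighborhood of $\bar O$ to all of $M$, and this is precisely what the relative versions of Lemma \ref{Mostow_embedding} and Theorem \ref{Jaworowski} are designed for.

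First I would choose a shrinking $\bar O\subset U''\subset \bar{U''}\subset U$. Since $\bar{U''}$ is a compact smooth $G$-manifold with finitely many orbit types, Mostow's equivariant embedding theorem in the smooth category yields an equivariant smooth embedding $\varphi_2:\bar{U''}\hookrightarrow {\bm R}_2$ into some orthogonal $G$-representation together with an equivariant smooth tubular neighborhood $\tau_2:\nu_2\to{\bm R}_2$. Independently, Mostow's theorem provides a continuous equivariant embedding $\varphi_1:M\hookrightarrow {\bm R}_1$. Applying Lemma \ref{Mostow_embedding} with $T_1=M$, $T_2=\bar{U''}$ produces an equivariant topological embedding $\varphi:M\hookrightarrow {\bm R}_1\oplus{\bm R}_2$ whose restriction to $\bar{U''}$ coincides with $(0,\varphi_2)$ and is therefore smooth there.

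Next I would construct the vertical (normal) bundle over $M$. Over $\bar{U''}$ we already have $\nu_2\to\bar{U''}$ satisfying $E|_{\bar{U''}}\oplus\nu_2\cong ({\bm R}_1\oplus{\bm R}_2)_{\bar{U''}}$ as smooth $G$-bundles, the identification being provided by the hypothesis that $\varphi|_U:(E|_U)_\mu\to T_\mu U$ is isotopic to the canonical smooth reduction. Adding a further $G$-representation ${\bm R}_3$ to absorb a stable complement of $E$ on all of $M$, which exists because $M$ has finitely many orbit types, I obtain a $G$-bundle $\tilde\nu\to M$ extending $\nu_2\oplus R_3|_{\bar{U''}}$ and satisfying $E\oplus\tilde\nu\cong M\times({\bm R}_1\oplus{\bm R}_2\oplus{\bm R}_3)$. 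The key extension step is then to build an equivariant normal tube $\tau:\tilde\nu\supset N\to{\bm R}_1\oplus{\bm R}_2\oplus{\bm R}_3$ along $\varphi(M)$ that restricts, near $\bar O$, to the stabilization of $\tau_2$. This is precisely the setting of Theorem \ref{Jaworowski}: the partially defined equivariant homeomorphism $\tau_2\oplus\operatorname{id}$ is a continuous equivariant map from a closed $G$-subspace of $\tilde\nu$ into the target representation; the fixed-point ANR hypothesis is satisfied since fixed-point sets of linear actions on Euclidean spaces are Euclidean; thus Jaworowski's theorem extends $\tau_2$ to a neighborhood.

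With the tube $\tau$ in hand, the standard Lashof construction produces a $G$-invariant smoothing $\alpha$ on $M\times({\bm R}_1\oplus{\bm R}_2\oplus{\bm R}_3)$ whose associated stable vector bundle reduction is $\varphi$ stabilized, and which by construction coincides with the product smoothing on a still smaller neighborhood $U'\Subset U''$ of $\bar O$. The main obstacle is Step three: ensuring the extended tube $\tau$ agrees with $\tau_2$ on a full neighborhood of $\bar O$ (not merely on $\bar O$ itself) and that the resulting homeomorphism is actually an \emph{open embedding} rather than merely a map into the target, so that the induced smoothing is well defined and matches the given one on $U'$. This will require a careful ``CUDV'' style shrinking argument combined with the isotopy hypothesis on $\varphi|_U$, which lets one interpolate between the tube extension produced by Jaworowski and the canonical smooth tube coming from the existing smoothing; possibly one must further stabilize by an auxiliary representation to straighten the interpolation, mirroring the role of such stabilizations throughout Lashof's original proof.
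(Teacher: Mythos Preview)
Your high-level strategy---combine a relative Mostow embedding with Jaworowski's extension theorem inside Lashof's framework---matches the paper's, but there is a real difference in \emph{what} gets extended by Jaworowski, and this difference is exactly the gap you flag at the end.

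You propose to use Jaworowski to extend the normal \emph{tube} $\tau_2$, i.e.\ an open embedding of the normal bundle into the representation space. But Jaworowski's theorem only produces a continuous $G$-map extending the given one; nothing guarantees the extension is injective, let alone an open embedding. You acknowledge this (``ensuring\ldots the resulting homeomorphism is actually an open embedding rather than merely a map''), but the proposed fix---a CUDV shrinking and an interpolation using the isotopy hypothesis---does not obviously work: the region where you need injectivity is precisely where you have no smooth structure and no control over the Jaworowski output.

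The paper sidesteps this by applying Jaworowski to extend a \emph{retraction} rather than a tube. Concretely, after producing the embedding $\iota:M\hookrightarrow{\bm R}$ (smooth near $\bar O$, as you do), one builds a smooth equivariant retraction $r_{U'}$ of a tubular neighborhood of $\iota(U')$ and then applies Jaworowski with $A=r_{U'}^{-1}(\bar{U''})\cup\iota(M)$; since the extension restricts to the identity on $\iota(M)$ it is automatically a retraction $r:N_\iota\to\iota(M)$, smooth near $\iota(\bar O)$. No injectivity issue arises. The open embedding $\psi:M\times{\bm R}\to{\bm R}\times{\bm W}$ is then written down by Lashof's explicit formula
\[
\psi(x,y)=\bigl(x+\delta(x)y',\ \exp_x^{-1}\!\bigl(r(x+\delta(x)y')\bigr)\bigr),
\]
which is a homeomorphism onto an open set by the same argument as in the absolute case, and is visibly smooth near $\bar O\times\{0\}$ because $r$ and $\exp$ are. (Here one has first reduced to the case $E=M\times{\bm W}$ by passing to the total space of a stable complement $F$, which also avoids your auxiliary step of extending the bundle isomorphism $E|_{\bar{U''}}\oplus\nu_2\cong\underline{{\bm R}_1\oplus{\bm R}_2}$ to all of $M$.)

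So: same ingredients, but swap ``extend the tube'' for ``extend a retraction, then use Lashof's formula.'' That single change removes the obstacle you identify.
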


\begin{proof}
We generalize the construction of Lashof \cite{Lashof_1979} to the current setting. We first consider the case that $E \to M$ is trivial. Assume that 
\beqn
E = M \times {\bm W}
\eeqn
where ${\bm W}$ is an orthogonal $G$-space. The vector bundle lift is given by a map 
\beqn
\exp: M \times {\bm W} \to M \times M
\eeqn
which is smooth over $U \times {\bm W}$. For each $x\in M$, denote the restriction of $\exp$ to $\{x\}\times {\bm W}$ by 
\beqn
\exp_x: {\bm W} \to \{x\}\times M
\eeqn
which is a homeomorphism onto an open neighborhood of $x$. 

Now we prove the following claims.

\vspace{0.2cm}

\noindent {\it Claim A. There exist an orthogonal $G$-representation ${\bm R}$ and a $G$-equivariant topological embedding $\iota:M \to {\bm R}$ which is smooth near $\ov{O}$. }

\vspace{0.2cm}

\noindent {\it Proof of Claim A.} As $\ov{O}$ is compact, by a theorem of Palais (see \cite[Theorem III]{Palais_1957}), there is a smooth $G$-equivariant embedding from an open neighborhood of $\ov{O}$ into an orthogonal $G$-space ${\bm R}_1$. We may assume that this embedding is defined and smooth over $U$, denoted by $\iota_U: U \to {\bm R}_1$. Let $U' \subset U$ be a $G$-invariant open neighborhood of $\ov{O}$ such that $\ov{U'}\subset U$. Then $M':=M \setminus \ov{U'}$ is a topological $G$-manifold with only finitely many orbit types. Hence by Mostow's embedding theorem (Theorem \ref{Mostow_embedding}), there exists a $G$-orthogonal representation ${\bm R}_2$ and a $G$-equivariant topological embedding $\iota_{M'}: M' \to {\bm R}_2$. Then by Lemma \ref{Mostow_embedding}, writing $M = \ov{U'} \cup M'$, there exists a $G$-equivariant topological embedding 
\beqn
\iota: M \to {\bm R}_1 \oplus {\bm R}_2
\eeqn
such that its restriction to $\ov{U'}$ coincides with $(\iota_U, 0)$. In particular, the restriction of $\iota$ to $U'$ is smooth. Now define ${\bm R}:= {\bm R}_1 \oplus {\bm R}_2$. \hfill {\it End of the proof of Claim A.}

\vspace{0.2cm}

\noindent {\it Claim B. There is a $G$-invariant open neighborhood $N_\iota \subset {\bm R}$ and a continuous $G$-equivariant retraction $r: N_\iota \to \iota(M)$ which is smooth near $\iota(\ov{O})$. }

\vspace{0.2cm}

\noindent {\it Proof of Claim B.} Note that there exist a $G$-invariant open neighborhood $N_{U'} \subset {\bm R}_1$ of $\iota_U(U')$ and a smooth $G$-equivariant retraction 
\beqn
r_{U'}: \ov{N_{U'}} \to \iota_U(U').
\eeqn
Then $r_{U'}$ induces a retraction from $\ov{N_{U'}} \times {\bm R}_2$ to $\iota(U')$. Choose a smaller open neighborhood $U''$ of $\ov{O}$ with $\ov{U''}\subset U'$. By applying Jaworowski's extension theorem (Theorem \ref{Jaworowski}) to $X = {\bm R}_1 \oplus {\bm R}_2$, $A = ( r_{U'}^{-1}( \ov{U''}) \times {\bm R}_2 )\cup \iota(M)$, and $Y = \iota(M)$, one obtains an extension, i.e., a $G$-invariant open neighborhood $N_\iota$ of $\iota(M)$ and a $G$-invariant retraction $r: N_\iota \to \iota(M)$ which coincides with $r_{U'}$ near $\iota(\ov{U''})$. Indeed, such an extension is a retraction because it extends the identity map $Y \to Y$. In particular, it is smooth near $\iota(\ov{O})$. \hfill {\it End of the proof of Claim B.}

\vspace{0.2cm}

\noindent {\it Claim C. There is a $G$-equivariant continuous map 
\beqn
\psi: M \times {\bm R} \to {\bm R} \times {\bm W}
\eeqn
satisfying 
\begin{enumerate}
    \item $\psi$ is a homeomorphism onto an open subset.
    
    \item $\psi$ is a diffeomorphism near $\ov{O}\times \{0\}$ with respect to the given $G$-smoothing near $\ov{O}$.
\end{enumerate}
}

\noindent {\it Proof of Claim C.} Consider the $G$-equivariant map 
\beqn
\theta: M \times {\bm R} \to {\bm R},\ (x, y) \mapsto  x+ y.
\eeqn
There exists a continuous function $\delta: M \to (0, +\infty)$ such that
\beqn
|y| < \delta(x) \Longrightarrow \theta(x, y) \in N_\iota.
\eeqn
We can choose $\delta$ such that it is smooth over $U$. Let $y \mapsto y'$ be a $G$-invariant smooth map which maps ${\bm R}$ diffeomorphically onto the unit ball ${\bm R}(1)$. Then define 
\beqn
\begin{split}
    \psi: M \times {\bm R} \to &\ {\bm R} \times {\bm W},\\
          (x, y) \mapsto &\ \Big( \theta (x, \delta(x)y'), \exp_x^{-1} ( r(\theta(x, \delta(x) y')) ) \Big).
          \end{split}
\eeqn
This map is of the same form of the one used in \cite[Theorem 1.3]{Lashof_1979}. Hence this map is a homeomorphism onto an open subset of ${\bm R} \times {\bm W}$. Moreover, by the explicit construction one can verify that $\psi$ is a smooth embedding near $\ov{O}\times \{0\}\subset \ov{O} \times {\bm R}$. \hfill {\it End of the proof of Claim C.}

\vspace{0.2cm}

Therefore, $\psi$ pulls back the standard $G$-invariant smooth structure on ${\bm R} \times {\bm W}$ to a smooth structure on $M \times {\bm R}$, giving a stable $G$-smoothing on $M$. Moroever, as $\psi$ is smooth near $\ov{O}\times \{0\}$, this stable $G$-smoothing is diffeomorphic to the ${\bm R}$-stabilization of the given one. Lastly, one can check easily that the induced stable $G$-vector bundle reduction of $T_\mu( M \times {\bm R})$ is in the same isotopy class of the given one. 

Now we consider the general case where $E \to M$ is not necessarily trivial. We know that $E$ has a stable inverse, which is a $G$-equivariant vector bundle $\pi_F: F \to M$ such that $E \oplus F \cong M \times {\bm W}$ for some orthogonal $G$-space ${\bm W}$. We may assume that both $E$ and $F$ are smooth $G$-equivariant vector bundles near $\ov{O}$ and the isomorphism $E \oplus F \cong M \times {\bm W}$ is smooth there. We may assume assume that the $G$-vector bundle reduction, which is equivalent to a map
\beqn
\exp: E \to M \times M
\eeqn
is smooth near $\ov{O}$.

Consider the total space $F$. It is a topological $G$-manifold hence has a $G$-tangent microbundle $T_\mu F$. By \cite[Lemma 1.6]{Lashof_1979}, the $G$-vector bundle reduction
\beqn
\exp: E \to M \times M
\eeqn
of $T_\mu M$ induces a $G$-vector bundle reduction 
\beqn
\widehat\varphi: F \times {\bm W} \cong \pi_F^* (E \oplus F) \to F \times F
\eeqn
of $T_\mu F$. Notice that there is an induced $G$-smoothing in a neighborhood of $\pi_F^{-1}(\ov{O})$ in the total space of $F$. The construction of $\widehat\varphi$ in \cite[Lemma 1.6]{Lashof_1979} can be carried out to guarantee that with respect to the smooth structures on $E$ and $F$ near $\ov{O}$, $\widehat\varphi$ is smooth near $\pi_F^{-1}(\ov{O})$. Then by the special case when $E$ is trivial, one can construct a $G$-smoothing on $F \times {\bm W}$ for some orthogonal $G$-space ${\bm R}$ which coincides with the induced smooth structure near $\pi_F^{-1}(\ov{O}) \times {\bm R}$. Then consider 
\beqn
M \times {\bm W} \times {\bm R} = ( E \oplus F) \times {\bm R} = \pi_F^* E \times {\bm R}
\eeqn
which is the total space of a $G$-equivariant vector bundle over $F$. Hence we can choose a smooth bundle structure on this bundle making the total space $M \times {\bm W} \times {\bm R}$ smooth. As $E$ and $F$ are already smooth near $\ov{O}$, we can guarantee that the smooth bundle structure is induced from the original smooth bundle structures on $E$ and $F$ near $\ov{O}$. 
\end{proof}

\section{A proof sketch of Proposition \ref{bigprop}}\label{appendixc}

\subsection{Analytical setup}


We provide a systematic setup which generalizes the case of a single stabilization map in Subsection \ref{subsection55}. For each $pq$ and each integer $d \geq d_{pq}$ we describe a topological space ${\mc C}_{pq}^{(d)}$ which is morally a stratified Banach fiber bundle over $B_{d_{pq}}$. Indeed, for each $\uds x \in B_{d_{pq}}$, its fiber over the universal curve is a prestable cylinder $\Sigma_{\uds x}$ over which there is a stable map $\uds\mu_{\uds x}: \Sigma_{\uds x} \to \mb{CP}^{d_{pq}}$ representing $\uds x$. Then there are bundles 
\beqn
E_{\uds x}^{(d)} \to \Sigma_{\uds x} \times M
\eeqn
whose fibers are similar to the one \eqref{fiberwise_bundle} in the single-layered case. Fixing $a>2$, there is then a space ${\mc C}^{1,a} (\Sigma_{\uds x}, E_{\uds x}^{(d)})$ of $W^{1, a}$-sections $(u, \eta): \Sigma_{\uds x} \to E_{\uds x}^{(d)}$, where $u: \Sigma_{\uds x} \to M$ is a map asymptotic to possible periodic orbits at cylindrical nodes or the two marked points and $\eta$ is a section of $\Sigma_{\uds x}^{(d)}|_{{\rm graph} u}$ such the total map $(u, \eta)$ is of class $W^{1,a}$. Define 
\beqn
{\mc C}_{pq}^{(d)} = \bigsqcup_{\uds x \in B_{d_{pq}}} {\mc C}^{1,a}( \Sigma_{\uds x}, E_{\uds x}^{(d)}).
\eeqn
It is stratified by $\alpha \in \bA_{pq}^\floer$. There are natural inclusions
\beqn
{\mc C}_{pq}^{(d)} \hookrightarrow {\mc C}_{pq}^{(d')} \ \forall d'>d.
\eeqn
There are also inclusions
\beqn
\partial^\alpha V_{pq} \subset \partial^\alpha {\mc C}_{pq}^{(d_{pq})} \subset \partial^\alpha {\mc C}_{pq}^{(d)}\ \forall d \geq d_{pq}.
\eeqn

Notice that there is a $G_{pq}^{\mb C}$-action on ${\mc C}_{pq}^{(d)}$.

Over each ${\mc C}^{1,a}(\Sigma_{\uds x}, E_{\uds x}^{(d)})$ there is also a Banach space bundle denoted by $Y_{\uds x}$ whose fiber at $(u, \eta)$ is the space of $L^a$-sections of certain bundles along $(u, \eta)$. We omit its detailed definition. Indeed there is a section ${\mc F}_{\uds x}^{(d)}$ of this Banach space bundle whose zero locus in the $d = d_{pq}$ case is the fiber of $V_{pq}$ over $\uds x$. 

One can consider the linearization of the (multi-layered) thickened Floer equation at any $x \in \partial^\alpha V_{pq}$. Indeed, there are sequences of linear operators
\beqn
D_x^{(d)}: T_x {\mc C}^{1,a}(\Sigma_{\uds x}, E_{\uds x}^{(d)}) \to Y_{\uds x}|_x.
\eeqn
In fact, by the nature of the thickening equation (perturbations with lower $d$ does not deform the equation for perturbations with higher $d'$), the linear operators are always block upper-triangular, i.e., when $d< d'$, one has 
\beqn
D_x^{(d')} = \left[ \begin{array}{cc} D_x^{(d)} & * \\ 0 & D_x^{[d,d']}\end{array}\right]
\eeqn
where the last block is a linear Cauchy--Riemann operator on some vector bundle. 

Our construction guarantees that all these linearizations are surjective. Hence we can consider their bounded right inverses. By the above block upper-triangular form, we can have right inverses which are also block upper-triangular. We always assume this without further clarification in the rest of this appendix.

The construction of a compatible system of stabilization maps (as well as other structures included in Proposition \ref{bigprop}), as we showed in the case of a single stabilization map, depends on the choices of three kinds of structures. They are 1) continuous families of right inverses, 2) families of approximate solution maps, and 3) families of local Banach manifold charts and local Banach vector bundle trivializations. A crucial point of our proof presented in this appendix is that these three kinds of objects can always be obtained from local to global and can always be interpolated between each other. 

We first define these notions under the current setting.

\begin{defn}
An {\bf approximate solution map} (for level $d$) is a germ of continuous maps
\beqn
O_{pq}^{(d)} \to {\mc C}_{pq}^{(d)}
\eeqn
which satisfies the following conditions.
\begin{enumerate}
    \item It is level preserving, i.e., if $d'<d$, then it sends $O_{pq}^{(d')}\subset O_{pq}^{(d)}$ into ${\mc C}_{pq}^{(d')}$.
    
    \item It is fiberwise smooth. It means the following. For each $\uds x \in B_{d_{pq}}$, the fiber $\pi_{pq}^{-1}(\uds x) \subset V_{pq}$ of the thickened moduli is smooth, the fiberwise restriction of the level-$d$ obstruction bundle $O_{pq}^{(d)}$ is smooth, and the Banach manifold ${\mc C}^{1,a}(\Sigma_{\uds x}, O_{\uds x}^{(d)})$ is also smooth. Then the fiberwise restriction of the approximate solution map is smooth.
    
    \item Over $V_\alpha\subset V_{pq}$ for any $\alpha \in \bA_{pq}^\floer$, the approximate solution map is equal to the product.
\end{enumerate}
\end{defn}

\begin{rem}
\begin{enumerate} 
\item An obvious choice of such approximate solution map is just to add sections of the obstruction bundle linearly in the infinite-dimensional space. This obvious option does not deform the underlying map into $M$. However in certain stages of our construction we must use more general kinds of approximate solution maps which deforms the underlying map.

\item To construct the stabilization map from a product $V_\alpha$ to $\partial^\alpha V_{pq}$, we do not need to use the whole approximate solution map for all $d$, but only its restriction to some subbundle of $O_{pq}^{(d_{pq})}$ restricted to $V_\alpha$. However, using the approximate solution map to $O_{pq}^{(d)}$ for all $d$ helps one to extend the map we actually need from a lower to a higher stratum.

\item Remember that the obstruction bundle also contains the quadratic form part $Q_{pq}$. When constructing the actual approximate solutions to the thickened equation, we also need to change the approximate solution by variables from $Q_{pq}$ via the $G_{pq}^{\mb C}$-actions.

\end{enumerate}
\end{rem}

We can also define the notion of continuous family of bounded right inverses, for all levels, generalizing the case of Subsection \ref{subsection55}. 

We can also define the notion of families of local charts/trivializations. The obvious choices are the one coming from parallel transport along shortest geodesics starting at any $x \in V_{pq}$. We should regard this kind of choices as ``continuous'' when the domain changes. However, we may also need to use the chart induced from a map $x'$ near $x$. We can also define a notion of continuity.

\subsection{The inductive proof of Proposition \ref{bigprop}}

We have the following induction procedure to construct the structures we need.

\vspace{0.2cm}

\noindent {\bf Step 1.} We first consider any minimal thickened moduli space $V_{pq}$ which does not have boundary or corners. 
\begin{enumerate}
    \item We can choose $G_{pq}$-invariant inner products on $O_{pq}^{(d)} \to V_{pq}$ inductively on $d\geq d_{pq}$ such that the embeddings $O_{pq}^{(d)} \to O_{pq}^{(d+1)}$ for all $d\geq d_{pq}$ are isometric. 
    
    \item We can also inductively construct continuous families of vertical right inverses of all levels $d \geq d_{pq}$ inductively.
    
    \item We can also construct a collection of approximate solution map. For each $d\geq d_{pq}$, we can choose a germ of maps 
    \beqn
    O_{pq}^{(d)} \to {\mc C}_{pq}^{(d)}
    \eeqn
    defined by 
    \beqn
    [\Sigma, u, \mu, \eta, \zeta]\mapsto [\Sigma, u, \mu, \eta + \zeta].
    \eeqn
    The summation is defined because $\eta$ and $\zeta$ are contained in the same vector bundle.
\end{enumerate}

\vspace{0.2cm}

\noindent {\bf Step 2.} Suppose we have constructed the three kinds of structures up to an energy level $d_{pq}$. There are certain compatibility conditions we need to require. For example, they must respect the collars. We will describe the required conditions we we need them. Now we start to consider the structures over the thickened moduli space $V_{pq}$ and the stabilizations maps from each $V_\alpha$ to $\partial^\alpha V_{pq}$. In this step we describe how to define these structures over a minimal stratum $\alpha = pr_1 \cdots r_l q$. 
\begin{enumerate}

\item We first consider the metric on $O_{pq}^{(d)}$ restricted to $V_\alpha$. Start with $d_0 = d_{pq}$. We view this stratum as contained in $V_{pr_1 r_l q}$. Denote $r_1 = r$, $r_l = s$. Then there are the two subbundles 
\begin{align*}
&\ O_{ps\to pq}^{(d_0)},\ &\ O_{r q \to pq}^{(d_0)}
\end{align*}
on which the expected properties and inner products we have already constructed induce unique inner products. We verify that these two inner products on these two subbundles agree on their intersection. By the canonical decomposition \eqref{obstruction_splitting}, one has 
\beqn
O_{ps\to pq}^{(d_0)} \cap O_{rq \to pq}^{(d_0)} \subset O_{pq; rs}^{(d_0)}.
\eeqn
In fact we can show that 
\beqn
O_{ps\to pq}^{(d_0)} \cap O_{rq \to pq}^{(d_0)} = O_{rs \to pq}^{(d_0)}.
\eeqn
Then over this intersection the existing inner products agree. Moreover $O_{ps \to pq}^{(d_0)} + O_{rq\to pq}^{(d_0)}$ contains all existing subbundles of $O_{pq}^{(d_0)}|_{V_\alpha}$. Hence they induce an inner product on the sum $O_{ps \to pq}^{(d_0)} + O_{rq \to pq}^{(d_0)}$ which agrees with all existing induced ones. Then extend it arbitrarily to $O_{pq}^{(d_0)}|_{V_\alpha}$.

\item Now we obtained an orthogonal complement 
\beqn
O_{pq, \alpha} \subset O_{pq}|_{V_\alpha}.
\eeqn
Define 
\beqn
F_{pq, \alpha}:= O_{pq, \alpha} \oplus Q_{pq, \alpha} \to V_\alpha.
\eeqn
At this moment we can define the stabilization map 
\beqn
\theta_{pq, \alpha}: {\rm Stab}_{F_{pq, \alpha}}(V_\alpha) \to \partial^\alpha V_{pq}
\eeqn
In fact, the existing approximate solution maps can be extended to one on $O_{pq}^{(d_0)}$ in the same fashion as we extend the inner product in the above. Then we can restrict the approximate solution map to $O_{pq, \alpha}$. Then as we did in \eqref{eqn:app-solution}, we can define a map from $F_{pq, \alpha} = O_{pq, \alpha} \oplus Q_{pq, \alpha}$ to $\partial^\alpha {\mc C}_{pq}^{(d_0)}$. One can also extend the family of right inverse and the existing charts, in the same fashion as we extended the inner products above. Then it produces the way to correct the approximate solutions to exact solutions canonically. Then the corresponding projection map 
\beqn
\pi_{pq,\alpha}: \partial^\alpha V_{pq} \to V_\alpha^\sim
\eeqn
is obtained. From the construction, we can see that the projections are compatible with all existing ones when restrict to $\partial^\alpha V_\beta$ for an intermediate stratum $\beta$. The bundle isomorphism
\beqn
\widehat \theta_{pq, \alpha}: \pi_{pq, \alpha}^* \left( O_{pq}^{(d)}|_{V_\alpha^\sim} \right) \to O_{pq}^{(d)}
\eeqn
for all $d$ can be obtained.

\item Now we inductively construct inner products on $O_{pq}^{(d)}$ over $V_\alpha$ for all $d \geq d_0$. Suppose we have done this for some $d$. Then similar to the above case, one can obtain a unique inner product on the sum 
\beqn
O_{ps \to pq}^{(d)} + O_{rq \to pq}^{(d)}.
\eeqn
We can verify that this metric agrees with the one on $O_{pq}^{(d-1)}$ on the intersection $O_{pq}^{(d-1)} \cap ( O_{ps \to pq}^{(d)} + O_{rq \to pq}^{(d)})$. Indeed, 
\beqn
O_{pq}^{(d-1)} \cap ( O_{ps \to pq}^{(d)} + O_{rq \to pq}^{(d)}) = O_{ps \to pq}^{(d-1)} + O_{rq \to pq}^{(d-1)}.
\eeqn
Hence there exists a further extension to $O_{pq}^{(d)}$ over the set $V_\alpha$.

Then using the bundle isomorphisms we can extend the inner products to $O_{pq}^{(d)}|_{\partial^\alpha V_{pq}}$. At last, the approximate solution map, the right inverse map, and the local charts can be extended to $\partial^\alpha V_{pq}$.

\end{enumerate}

\vspace{0.2cm}

\noindent {\bf Step 3.} Using the collar structure one can extend all we have constructed over $\partial^\alpha V_{pq}$ to a small neighborhood. Now we would like to carry out the induction for a higher stratum $\beta$. Indeed, for the higher stratum one has its own construction which can be irrelevant to what we just did. What we need to do here is to connect the structures over the collar region near $\partial^\alpha V_\beta$. 

In fact, the interpolation between bundle metrics can be easily done. The interpolation between right inverses can also be done using convex combinations (notice that convex combinations of block upper-triangular operators are still block upper-triangular). The more nonlinear interpolation is the one for approximate solution and the one for local charts/trivializations. For approximate solutions, we can first interpolate (arbitrarily) between the underlying maps into $M$, then interpolate over the fiber (linear) direction of $E_{\uds x}^{(d)}$. For local charts/trivializations, as they are induced from parallel transport ``centered'' at families of maps into $M$, we only needs to interpolate the certer maps. Indeed, the maps we want to interpolate between are close enough to each other. We can do the same construction for all stratum of $V_{pq}$ inductively.

Lastly, the construction of the stabilizations maps implies that 
\beqn
\pi_{pq, \alpha} = \pi_{\beta\alpha}\circ \pi_{pq, \beta}
\eeqn
The bundle isomorphisms $\widehat \theta_{pq, \beta}$ for all stratum can be obtained as well.

\bibliography{reference}

\bibliographystyle{amsalpha}

\end{document}